\newcommand{\ud}{\,\mathrm{d}}
\newcommand{\paramY}{\langle Y\rangle}
\newcommand{\from}{\colon}
\newcommand{\symdiff}{\mathbin{\triangle}}
\newcommand{\WR}{W_\mathsf{L}}
\newcommand{\XR}{X_\mathsf{R}}
\newcommand{\XL}{X_\mathsf{L}}
\newcommand{\YR}{Y_\mathsf{R}}
\newcommand{\YL}{Y_\mathsf{L}}
\newcommand{\Rz}{\mathsf{Rsz}}
\newcommand{\R}{\mathbb{R}}
\newcommand{\N}{\mathbb{N}}
\newcommand{\Z}{\mathbb{Z}}
\newcommand{\HH}{\mathbb{H}}
\newcommand{\cB}{\mathcal{B}}
\newcommand{\cC}{\mathcal{C}}
\newcommand{\cD}{\mathcal{D}}
\newcommand{\cM}{\mathcal{M}}
\newcommand{\cQ}{\mathcal{Q}}
\newcommand{\cH}{\mathcal{H}}
\newcommand{\cL}{\mathcal{L}}
\newcommand{\cS}{\mathcal{S}}
\newcommand{\cT}{\mathcal{T}}
\newcommand{\C}{\mathbb{C}}
\newcommand{\zero}{\mathbf{0}}
\newcommand{\one}{\mathbf{1}}
\newcommand{\Gsm}{G^{\mathrm{sm}}}
\newcommand{\Glg}{G^{\mathrm{lg}}}
\DeclareMathOperator{\Cone}{Cone}
\DeclareMathOperator{\supp}{supp}
\DeclareMathOperator{\spt}{spt}
\DeclareMathOperator{\diam}{diam}
\newcommand{\Kor}{\mathrm{Kor}}
\DeclareMathOperator{\Lip}{Lip}
\DeclareMathOperator{\id}{id}
\DeclareMathOperator{\slope}{slope}
\DeclareMathOperator{\interior}{int}
\DeclareMathOperator{\pv}{p.v.}
\newcommand{\sE}{\mathsf{e}}
\newcommand{\sO}{\mathsf{o}}
\newcommand{\sml}{\mathrm{sm}}
\newcommand{\lrg}{\mathrm{lg}}
\newcommand{\Aff}{\mathsf{Aff}}
\newcommand{\rI}{\textup{I}}
\newcommand{\rII}{\textup{II}}
\newtheorem{thm}{Theorem}[section]
\newtheorem{prop}[thm]{Proposition}
\newtheorem{lemma}[thm]{Lemma}
\newtheorem{cor}[thm]{Corollary}
\theoremstyle{remark}
\newtheorem{question}[thm]{Question}
\title{The Riesz transform on intrinsic Lipschitz graphs in the Heisenberg group}
\author{Vasileios Chousionis}
\address{Department of Mathematics, University of Connecticut}
\email{vasileios.chousionis@uconn.edu}
\author{Sean Li}
\address{Department of Mathematics, University of Connecticut}
\email{sean.li@uconn.edu}
\author{Robert Young}
\address{Courant Institute of Mathematical Sciences, New York University}
\email{ryoung@cims.nyu.edu}
\thanks{V.~C.\ was supported by Simons Foundation Collaboration grant 521845. R.~Y.\ was supported by NSF grant 2005609}
\date{\today}
\begin{document}

\maketitle
\begin{abstract}
We prove that the Heisenberg Riesz transform is $L_2$--unbounded on a family of intrinsic Lipschitz graphs in the first Heisenberg group $\HH$. We construct this family by combining a method from \cite{NY2} with a stopping time argument, and we establish the $L_2$--unboundedness of the Riesz transform by 
introducing several new techniques to analyze singular integrals on intrinsic Lipschitz graphs.
These include a formula for the Riesz transform in terms of a singular integral on a vertical plane and bounds on the flow of singular integrals that arises from a perturbation of a graph. On the way, we use our construction to show that the strong geometric lemma fails in $\HH$ for all exponents in $[2,4)$. 

Our results are in stark contrast to two fundamental results in Euclidean harmonic analysis and geometric measure theory: Lipschitz graphs in $\R^n$ satisfy the strong geometric lemma, and the $m$--Riesz transform is $L_2$--bounded on $m$--dimensional Lipschitz graphs in $\mathbb{R}^n$ for $m\in (0,n)$.
\end{abstract}

\tableofcontents

\section{Introduction}

Given a Radon measure $\nu$ in $\R^n$, the $m$--dimensional Riesz transform is formally defined by
$$T^{m}\nu(x)=\int R_m(x-y) \ud \nu (y),$$
where $R_m(x)=x |x|^{-m-1}$ is the $m$--dimensional Riesz kernel. 
If $\Gamma \subset \R^n$ is an $m$--dimensional Lipschitz graph and $\nu_\Gamma=\mathcal{H}^m|_{\Gamma}$ is the restriction of the $m$--dimensional Hausdorff measure on $\Gamma$, then $$f \mapsto T^m[f \ud \nu_\Gamma]$$
defines a bounded operator in $L_2(\Gamma):=L_2(\mathcal{H}^m
|_{\Gamma})$. 
This fundamental result was first obtained by Calderon in \cite{cal77} for $1$--dimensional Lipschitz graphs in the complex plane with sufficiently small Lipschitz constant. (In this case the $1$--dimensional Riesz kernel $R_1$ essentially coincides with the Cauchy kernel $k(z)=z^{-1}, z \in \C$.) The restriction on the Lipschitz constant was removed a few years later by Coifman, McIntosh and Meyer \cite{cmm}. Finally, Coifman, David and Meyer \cite{cdm} proved that $T^m$ is bounded in $L_2(\Gamma)$ for all $m$--dimensional Lipschitz graphs $\Gamma$ by showing that the $m$--dimensional case can be reduced to the $1$--dimensional case via the method of rotations. 

The $L_2$--boundedness of Riesz transforms on Lipschitz graphs has been pivotal for the research program which started in the early 80s with the aim of relating the analytic behavior of singular integrals on subsets of $\R^n$ to the geometric structure of these sets. In particular, David and Semmes \cite{dsbook, DS1} developed the theory of uniform rectifiability hoping to characterize the $m$--Ahlfors regular sets $E \subset \R^n$ on which the Riesz transforms $T^m, m \in (0,n),$ are bounded in $L_2(E)$; uniformly rectifiable sets can be built out of Lipschitz graphs and can be approximated by Lipschitz graphs at most locations and scales. David proved in \cite{david88} that if $E$ is $m$--uniformly rectifiable  then $T^m$ is bounded in $L^2(E)$. He and Semmes \cite{DS1} conjectured that the converse is also true. That is, if $E$ is an $m$--Ahlfors regular set such $T^m$ is bounded in $L^2(E)$ then $E$ is $m$--uniformly rectifiable. The conjecture was proved by Mattila, Melnikov and Verdera in \cite{mmv} for $m=1$ and by Nazarov, Tolsa and Volberg \cite{ntv} for $m=n-1$. It remains open for integers $m \in (1,n-1)$.

Riesz transforms have also played a crucial role in characterizing removable sets for Lipschitz harmonic functions. A compact set $E \subset \R^n$ is removable for Lipschitz harmonic functions  if  whenever $U \supset E$ is open and $f\from U \to \R$ is Lipschitz and harmonic in $U \setminus E$, then $f$ is harmonic in $U$. Uy \cite{uy} showed that if $\mathcal{H}^{n-1}(E)=0$ then $E$ is removable, while $\dim_{H}(E)>n-1$ implies that $E$ is not removable. 

Characterizing the removable sets $E$ with $\mathcal{H}^{n-1}(E)>0$ involves the Riesz transform $T^{n-1}$. If $E$ is $(n-1)$--upper regular and $T^{n-1}$ is bounded on $L_2(E)$ then $E$ is \textbf{not} removable for Lipschitz harmonic functions, see  \cite[Theorem 4.4]{MR1372240}. On the other hand, if $\mathcal{H}^{n-1}(E)<\infty$ and $E$ is not removable for Lipschitz harmonic functions, then there exists some Borel set $F \subset E$  with $\mathcal{H}^{n-1}(F)>0$ such that $T^{n-1}$ is bounded in $L_2(F)$, see \cite{volberg}. 

Due to important contributions from several people it is now known that a compact set $E\subset \R^n$ with $\mathcal{H}^{n-1}(E)>0$ is removable for Lipschitz harmonic functions if and only if $E$ is purely $(n-1)$--unrectifiable, that is, $E$ intersects every $\mathcal{C}^1$ hypersurface in a set of vanishing $(n-1)$--dimensional Hausdorff measure. One of the key ingredients in the proof of the  ``only if'' direction is the $L_2(\Gamma)$--boundedness of $T^{n-1}$ for Lipschitz graphs of codimension $1$. The harder ``if'' direction was proved by David and Mattila \cite{dm} (for $n=2$), and Nazarov, Tolsa and Volberg \cite{ntv, ntv2} for $n\geq 3$. We also mention that the $L_2$--boundedness of the Cauchy transform/$1$--dimensional Riesz transform was the key tool in geometrically characterizing removable sets for bounded analytic functions, see \cite{tolsabook, verdsurvey} for the long and interesting history of this problem.

There is a natural analogue of the codimension--$1$ Riesz kernel in the Heisenberg group $\HH$. Recall that in $\R^n$ the Riesz kernel $R_{n-1}(x):=x|x|^{-n}$ is a constant multiple of the gradient of the fundamental solution of the Laplacian. 
 Sub-Riemannian analogues of the Laplacian, known as sub-Laplacians, have been extensively studied in Carnot groups and sub-Riemannian manifolds since the early 70s and the works of Stein, Folland, and others \cite{stfol, fol, fol}. A thorough treatment of this fully-fledged theory can be found in \cite{BLU}. In particular, the (canonical) sub-Laplacian in $\HH$ is defined as 
$$\Delta_{\HH}=\XL^2+\YL^2,$$
where 
$$\XL f(h):=\frac{\partial f}{\partial x} (h)-\frac{1}{2}y(h) \frac{\partial f}{\partial z} (h)\mbox{ and }\YL f(h):=\frac{\partial f}{\partial y} (h)+\frac{1}{2}x(h) \frac{\partial f}{\partial z} (h)$$ are the left invariant vector fields which generate the horizontal distribution in $\HH$. By a classical result of Folland \cite{fol}, see also \cite[Example 5.4.7]{BLU}, the fundamental solution of $\Delta_{\HH}$ is $\|\cdot\|_{\Kor}^{-2}$ where $\|\cdot\|$ is the Koranyi norm in $\HH$. One then defines the Heisenberg Riesz kernel in $\HH$ as 
$$ \mathsf{R}(x):=\frac{\nabla_{\HH} \|x\|_{\Kor}^{-2}}{2}= \left( \frac{x (x^2+y^2)-4yz}{\|v\|_{\Kor}^6}, \frac{y(x^2+y^2)+4xz}{\|v\|_{\Kor}^6} \right),$$
where $\nabla_{\HH}f=(\XL f, \YL f)$ is the horizontal gradient in $\HH$. We note that $\mathsf{R}$ is a smooth, $(-3)$--homogenous, Calder\'on-Zygmund kernel, see Section \ref{sec:ker} for more details.

Given a Radon measure in $\HH$, the corresponding Heisenberg Riesz transform is the convolution-type singular integral formally defined by
$$T^{\mathsf{R}} \nu (p)= \int  \mathsf{R} (y^{-1}p) \ud \nu (y).$$
It is natural to ask whether this transform is related to rectifiability and uniform rectifiability in the same way that the Euclidean Riesz transform is, and to describe the sets $E\subset \HH$ such that $T^{\mathsf{R}}$ is bounded in $L_2(E):=L_2(\mathcal{H}^3|_{E})$, where $E\subset \HH$ and $\mathcal{H}^3$ is the $3$--dimensional Hausdorff measure induced by the metric $d(x,y)=\|x^{-1}y\|_{\Kor}$.

The first difficulty in this project is defining analogues of Lipschitz graphs in $\HH$. Unlike the Euclidean case, we cannot define Lipschitz graphs as the images of Lipschitz maps from $\R^2$ to $\HH$ or $\R^3$ to $\HH$; by a result of Ambrosio and Kirchheim \cite{AK}, $\mathcal{H}^3(f(\R^3))=0$ for all Lipschitz functions $f\from \R^3 \rightarrow \HH$. Franchi, Serapioni and Serra Cassano  \cite{FSS1} introduced an \textit{intrinsic} notion of Lipschitz graphs in Carnot groups which has been very influential in the development of sub-Riemannian geometric measure theory, see e.g.\ \cite{SCnotes,perttirev} and the references therein. Intrinsic Lipschitz graphs satisfy a cone condition which will be defined in Section \ref{sec:ilgs}. Moreover, they are $3$--Ahlfors regular and thus the question of the $L_2$--boundedness of the Heisenberg Riesz transform on intrinsic Lipschitz graphs makes sense.


Indeed, if $\Gamma$ is an intrinsic Lipschitz graph of a bounded function and $\nu_{\Gamma}=\mathcal{H}^3|_{\Gamma}$ the double truncations $$T^{\mathsf{R}}_{r,R}[f \ud \nu_{\Gamma}](p):=\int_{B(p,R) \setminus B(p,r)}\mathsf{R}(y^{-1}p)f(y) \ud \nu_{\Gamma} (y)$$ are well defined for $f \in L_2(\Gamma)$, $x \in \Gamma$ and $0<r<R<\infty$. As usual, we do not know \emph{a priori} that the principal values
$$T^{\mathsf{R}} [f \ud \nu_{\Gamma}] (p)= \pv (p)\int  \mathsf{R} (y^{-1}p) f(y) \ud \nu_{\Gamma} (y):=\lim_{\substack{r\to 0 \\ R\to \infty}} T^{\mathsf{R}}_{r,R}[f \nu_{\Gamma}](p)$$
exist for $\mathcal{H}^3$--a.e. $p \in \Gamma$, so we say that the Heisenberg Riesz transform $T^{\mathsf{R}}$ is bounded in $L_2(\Gamma)$ if the truncations $T^{\mathsf{R}}_{r,R}$ are uniformly bounded in $L_2(\Gamma)$; that is if there exists some $C>0$ such that
$$\|T^{\mathsf{R}}_{r,R} [f \ud \nu_{\Gamma}]\|_{L_2(\Gamma)} \leq C \|f\|_{L_2(\Gamma)}$$
for all $f \in L_2(\Gamma)$ and $0<r<R<\infty$.

The question of the boundedness of the Heisenberg Riesz transform was first discussed in \cite{CM}, where it was noted that the Heisenberg Riesz transform is $L_2$--bounded on the simplest examples of intrinsic Lipschitz graphs: the vertical planes (planes in $\HH$ which contain the center $\langle Z \rangle=\{(0,0,z):z\in \R\}$). Recently, some partial results provided hope that, as in the Euclidean case, the Heisenberg Riesz transform might be $L_2$--bounded on intrinsic Lipschitz graphs. First, in \cite{CFO2} it was shown that the Heisenberg Riesz transform is $L_2$--bounded on compactly supported intrinsic $C^{1,\alpha}$ graphs, and in \cite{FO} it was shown that it is also $L_2$--bounded on intrinsic Lipschitz graphs of the form $\Gamma_{\R^2} \times \R \subset \HH$ where $\Gamma_{\R^2}$ is a Euclidean Lipschitz graph in $\R^2$. In this paper we prove that, surprisingly and unlike the Euclidean case, $T^{\mathsf{R}}$ is not $L_2$--bounded on certain intrinsic Lipschitz graphs.

\begin{thm} 
\label{mainthmintro}There exists a compactly supported intrinsic Lipschitz graph $\Gamma$ such that the Heisenberg Riesz transform is unbounded in $L_2(\Gamma)$.
\end{thm}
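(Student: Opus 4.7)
The plan is to construct $\Gamma$ as a controlled multi-scale perturbation of a vertical plane $V$, on which the Heisenberg Riesz transform is already known to be $L_2$-bounded by the result of \cite{CM}. Following the strategy of \cite{NY2}, I would build the intrinsic graph function $\phi$ over $V$ as a sum $\phi = \sum_{k} \phi_k$, where each $\phi_k$ is supported on disjoint subregions of $V$ at a dyadic scale $r_k = 2^{-k}$ and has amplitude $\epsilon_k$ small enough that the accumulated intrinsic cone condition survives. A stopping-time argument decides, at each scale, which regions receive a new bump: a region is frozen once its local intrinsic gradient has grown past a fixed threshold, while unfrozen regions continue to be refined. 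The outcome is a compactly supported intrinsic Lipschitz graph $\Gamma$.

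Next I would derive a formula representing $T^{\mathsf{R}}_\Gamma$ as a singular integral on $V$ by pulling back through the intrinsic graph parametrization. This should take the shape $T^{\mathsf{R}}_\Gamma = T^{\mathsf{R}}_V + \Phi$, where $\Phi$ is a ``flow'' correction that captures how $\phi$ displaces points off of $V$ and transforms the Heisenberg Riesz kernel accordingly. Since $T^{\mathsf{R}}_V$ is bounded in $L_2(V)$, the question reduces to showing that $\Phi$ has arbitrarily large operator norm.

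Then I would analyze the contribution of each individual bump $\phi_k$ to $\Phi$. Because $\mathsf{R}$ is smooth and $(-3)$-homogeneous, one expects each bump to give a localized, scale-invariant contribution to $\Phi$ of $L_2$ norm of order $\epsilon_k$ against an appropriate localized test function. Using the dyadic separation of scales and the vanishing-moment structure of $\mathsf{R}$, I would argue that contributions from bumps at different scales are close to orthogonal in $L_2(V)$ and therefore add essentially in quadrature. Summing over $N$ active scales then produces a contribution of size $\sim (\sum_k \epsilon_k^2)^{1/2}$, which, by choosing the stopping-time parameters with enough active scales, can be made arbitrarily large while keeping the intrinsic Lipschitz constant bounded; this forces $T^{\mathsf{R}}$ to be unbounded in $L_2(\Gamma)$.

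The main obstacle is proving the lower bound for each bump's contribution and establishing the near-orthogonality across scales. The Heisenberg Riesz kernel has enough cancellation that generic perturbations of a vertical plane could a priori produce only small $L_2$ effects, and the non-abelian structure of $\HH$ means that a bump at scale $r_k$ interacts not only with local geometry but also with the cumulative shear coming from coarser scales. The stopping-time selection must therefore do double duty: it must both preserve the intrinsic Lipschitz condition and force the bumps into positions where the leading-order contribution to $\Phi$ does not cancel. Making these two requirements compatible, and carrying out the flow-type perturbation estimates for $\mathsf{R}$ rigorously, is the heart of the argument, and should simultaneously imply the failure of the strong geometric lemma advertised in the introduction.
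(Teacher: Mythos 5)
Your high-level roadmap — the stopping-time modification of the \cite{NY2} construction, the reduction to a singular integral on a vertical plane, the quasi-orthogonality across scales, and the quadrature summation — matches the paper's, but the quantitative core is missing, and without it the ``can be made arbitrarily large'' step fails. With dyadic unit-aspect-ratio bumps, the intrinsic gradient contribution of a bump and its $\beta$-number (hence Riesz) contribution are of the same order: both measure the height of the bump relative to its horizontal extent. The stopping time freezes a region once the accumulated gradient exceeds a threshold, which caps the $L_2$ norm of the gradient, and hence also the quadrature sum of the per-scale $\beta$-numbers, at $O(1)$ on the surviving domain. So the Riesz response built up this way is also $O(1)$, not arbitrarily large.

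The paper avoids this by giving each bump a large aspect ratio $A$: width $\sim A r_i$, height $\sim r_i^2$ in the $z$-direction, amplitude $\sim A^{-1} r_i$. This decouples the two quantities: the intrinsic gradient per bump is $\sim A^{-2}$, while the $\beta$-number and first-order Riesz response per bump are $\sim A^{-1}$. The stopping time now tolerates $N \sim A^4$ active scales, and the quadrature-summed first-order response grows like $\sqrt{N} A^{-1} \sim A$, which is unbounded. Your sketch also implicitly treats the per-scale contributions as exactly additive; the paper controls the nonlinearity with a Taylor expansion, bounding the second-order error per scale by $\lesssim A^{-3}$, and it is precisely the spread between $A^{-1}$ (first order) and $A^{-3}$ (second order) that makes $N \lesssim A^4$ compatible with an unbounded total. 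Without the aspect-ratio parameter you have neither the room between first and second order nor the decoupling between the Riesz response and the Lipschitz constraint, so the argument as sketched does not close. Your instinct about cumulative shear from coarser scales is the right one: the paper addresses it by deriving two distinct formulas for the first-variation of the Riesz integral, one effective at small scales and one at large scales, and neither alone suffices.
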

We also record that if $\Gamma$ is the intrinsic Lipschitz graph from Theorem \ref{mainthmintro} then the Heisenberg Riesz transform is unbounded in $L_p(\Gamma)$ for all $p \in (1,\infty)$. This follows by its unboundedness in $L_2(\Gamma)$ combined with \cite[Theorem 1.1]{ntv-cotlar} and the remark right after that theorem. 

The need to characterize the lower-dimensional sets on which the (Euclidean) Riesz transform and other singular integrals are bounded in $L_2$ led to the development of uniform rectifiability in Euclidean spaces. In the Heisenberg group, intrinsic Lipschitz graphs have been used to study rectifiability \cite{MSS,FSSCDiff} and quantitative rectifiability \cite{CFO1,NY1, NY2, CLY, FORig,Rig}, and, although not explicitly stated, it has been anticipated that intrinsic Lipschitz graphs should be the building blocks of uniformly rectifiable sets. Theorem \ref{mainthmintro} suggests that in $\HH$, notions of uniform rectifiability based on intrinsic Lipschitz graphs and notions of uniform rectifiability based on singular integrals may diverge, and points to deep differences between the theory of uniform rectifiability in $\HH$ and its Euclidean counterpart.

On the way to proving Theorem \ref{mainthmintro} we also prove that the strong geometric lemma fails in the first Heisenberg group, thus further highlighting the divergence between Euclidean and Heisenberg concepts of uniform rectifiability. In order to make our statement precise we first introduce codimension--1 $\beta$--numbers. If $E$ is a Borel subset of the $(2n+1)$-dimensional Heisenberg group $\HH_n$, \(x\in \HH_n\), and $r>0$ 
we define
\begin{equation}\label{eq:def-beta} \beta_{E}(x,r)=\inf_{L \in \mathsf{VP}} r^{-2n-1} \int_{B(x,r)\cap E} \frac{d (y,L)}{r}  \, \ud  \mathcal{H}^{2n+1}(y)
\end{equation}
where in the infimum, $\mathsf{VP}$ stands for \textit{vertical planes} and denotes the set of codimension--$1$ planes which are parallel to the $z$-axis. 

In \cite{CLY} we proved that if $\Gamma$ is an intrinsic $\lambda$--Lipschitz graph in $\HH_n, n \ge 2,$ then, for any ball $B=B(y,R)\subset \HH_n$,
  \begin{align}
  \int_0^R \int_{B \cap \Gamma} \beta_{\Gamma}(x,r)^2 \ud \mathcal{H}^{2n+1}(x) \frac{\ud r}{r} \lesssim_{\lambda} R^{2n+1}. \label{eq:betasgl}
  \end{align}
This is called the strong geometric lemma. (We actually established \eqref{eq:betasgl} for an $L_2$ version of $\beta$--numbers, which easily implies \eqref{eq:betasgl} as it is stated here.)

The strong geometric lemma holds for Lipschitz graphs in $\R^n$ by a result of Dorronsoro, obtained in \cite{dor}, and is one of the foundations of uniform rectifiability in $\R^n$. In particular, an Ahlfors regular subset of $\R^n$ satisfies a Euclidean analogue of \eqref{eq:betasgl}, with constants depending only on $n$ and the Ahlfors regularity constant of the set, if and only if it is uniformly rectifiable, see \cite{DS1}. 

However, the next theorem shows that the situation is very different in $\HH_1$. In fact, the strong geometric lemma fails in $\HH_1$ for all exponents $s \in [2,4)$.

\begin{thm} \label{thm:beta-theorem}There exist a constant $\lambda>0$, a radius $R>0$, and a sequence of $\lambda$--intrinsic Lipschitz graphs $(\Gamma_n)_{n \in \N}$ such that $\zero \in \Gamma_n$ for all $n$ and
 $$\lim_{n \to \infty} \int_{0}^R\int_{B(0,R) \cap \Gamma_n}  \beta_{\Gamma_n}(x,r)^s \ud \mathcal{H}^{3}(x) \frac{\ud r}{r}=+\infty $$
 for all $s \in [2,4)$.
\end{thm}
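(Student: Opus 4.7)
The plan is to construct $(\Gamma_n)$ as iterated bump perturbations of a vertical plane, and to show each $\Gamma_n$ is simultaneously non-flat at $n$ distinct dyadic scales while maintaining a uniform intrinsic Lipschitz constant. The building block is an intrinsic graph function $\phi_0\from W\to\R$ on a fixed vertical plane $W$, supported in a unit ball of $W$, chosen so that $\Gamma_{\phi_0}$ is $\lambda_0$-intrinsic Lipschitz yet ``genuinely curved'': there exist $\delta_0,c_0>0$ and a set $E_0\subset \Gamma_{\phi_0}$ with $\mathcal H^3(E_0)\geq c_0$ on which $\beta_{\Gamma_{\phi_0}}(x,r)\geq \delta_0$ for $r\in[1/2,1]$. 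Such a bump exists in $\HH_1$ by the mechanism of \cite{NY2}; it cannot exist in $\R^n$ or in $\HH_n$ with $n\geq 2$, where the strong geometric lemma is known to hold by \cite{dor} and \cite{CLY} respectively, and this contrast is precisely the phenomenon the theorem is exploiting.

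To form $\Gamma_n$, one places rescaled copies of $\phi_0$ at dyadic scales $r_k=2^{-k}$ for $k=1,\dots,n$ inside $B(\zero,R)$, with amplitudes calibrated so that the resulting graph is $\lambda$-intrinsic Lipschitz with $\lambda$ independent of $n$. Because intrinsic graph functions in $\HH_1$ do not compose linearly under the non-commutative group law, naive superposition destroys the Lipschitz bound. The role of the stopping time is to abort the bump at every location where the accumulated perturbation would push the intrinsic Lipschitz constant past $\lambda$, while ensuring that at each scale $r_k$ a positive proportion of bumps survives. The key geometric claim to prove is that the surviving bumps at scale $r_k$ produce a ``$\beta$-bad'' set $E_{n,k}\subset \Gamma_n\cap B(\zero,R)$ with $\mathcal H^3(E_{n,k})\geq c$ on which $\beta_{\Gamma_n}(x,r)\geq \delta_n$ for all $r\in[r_k/2,r_k]$.

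Once those two properties are in place, the theorem follows by a direct summation:
\begin{align*}
\int_0^R \int_{B(\zero,R)\cap \Gamma_n} \beta_{\Gamma_n}(x,r)^s \, \ud \mathcal H^3(x) \, \frac{\ud r}{r}
\;\gtrsim\; \sum_{k=1}^n \int_{r_k/2}^{r_k} \int_{E_{n,k}} \delta_n^s \, \ud \mathcal H^3(x) \, \frac{\ud r}{r}
\;\gtrsim\; n \, \delta_n^s.
\end{align*}
Balancing $n$ stacked bumps against the intrinsic Lipschitz bound in $\HH_1$ is expected to force $\delta_n\sim n^{-1/4}$ (the weight-$2$ vertical direction producing the $1/4$), so the lower bound becomes $n^{1-s/4}\to+\infty$ precisely for $s<4$, matching the range of exponents in the statement.

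The hard step is the construction together with the stopping-time analysis: one has to choose bump positions, the aborting rule, and the bump amplitudes simultaneously so that the surviving bump set at each scale carries a definite proportion of the graph's measure while the global intrinsic Lipschitz constant of $\Gamma_n$ stays bounded. The non-commutativity of $\HH_1$ forces interactions between bumps at different scales that have no Euclidean analogue, and controlling these is the main technical content of the paper. Once this construction is achieved, the $\beta$ lower bound at surviving points and the summation above yield Theorem~\ref{thm:beta-theorem}.
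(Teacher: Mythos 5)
Your proposal matches the paper's approach: the paper constructs $\Gamma_n$ by iteratively adding bumps of aspect ratio $A$ at geometric scales $r_i = A^{-1}\rho^{-i}$ using a stopping-time-modified version of the \cite{NY2} construction, shows that roughly $N\approx A^4$ scales can be stacked before the stopped region becomes significant (Proposition~\ref{p:stopping-time}), and that each surviving scale contributes $\beta\approx A^{-1}$ on a definite fraction of the graph (Lemma~\ref{l:gamma-number-bounds}), giving $\int\!\!\int\beta^s \gtrsim A^{4-s}$ — which under the reparameterization $n\sim A^4$ is exactly your $n\delta_n^s\sim n^{1-s/4}$. Your exponent heuristic $\delta_n\sim n^{-1/4}$ and the stopping-time strategy are in precise agreement with Sections~\ref{sec:construction} and~\ref{sec:beta-numbers} of the paper.
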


The intrinsic Lipschitz graphs in Theorems \ref{mainthmintro} and \ref{thm:beta-theorem} are obtained by modifying a process for constructing intrinsic graphs which appeared recently in \cite[Section 3.2]{NY2}. The method introduced in \cite{NY2} produces bumpy intrinsic graphs which are far from  vertical planes at many scales. However, the intrinsic gradients of the intrinsic graphs produced in \cite{NY2} are $L_2$--bounded but not bounded, so the resulting intrinsic graphs are not intrinsic Lipschitz. We overcome this obstacle by applying a stopping time argument leading to intrinsic Lipschtz graphs which retain key properties of the examples from \cite{NY2}.  

The intrinsic Lipschitz graphs that we construct are determined by the following parameters:
\begin{enumerate}
\item $i \in \N$; the number of steps in the construction,
\item $A \in \N$; the aspect ratio of the initial bumps, and
\item a scaling factor $\rho>1$.
\end{enumerate}
In particular, our intrinsic Lipschitz graphs are intrinsic graphs of functions $f_{i,A, \rho}\from V_0 \to \R$, where $V_0=\{y=0\}$ and where $f_{i,A, \rho}$ is supported on the unit square $[0,1]\times \{0\}\times [0,1]$.

For $i\ll A^4$, we show that the intrinsic Lipschitz graph $\Gamma=\Gamma_{f_{i,A, \rho}}$ has many bumps at scale $r_i := A^{-1}\rho^{-i}$, so 
$$\int_{B(0,R) \cap \Gamma} \beta_{\Gamma}(x,r_i)^s \ud \mathcal{H}^{3}(x) \approx A^{-s}.$$
Since there are roughly $A^4$ such scales, this implies Theorem~\ref{thm:beta-theorem}.

Theorem \ref{mainthmintro} takes much longer to prove and it employs several novel arguments. 
We first perform a ``reduction to vertical planes'' by proving that the principal value of singular integrals with smooth, orthogonal, and $(-3)$--homogeneous kernels on intrinsic Lipschitz graphs can be expressed as the principal value of a related singular integral on a vertical plane. This is achieved in Section \ref{sec:redvertplane}. 

More precisely, let $\phi\from \HH\to \R$ be a smooth and bounded intrinsic Lipschitz function with intrinsic graph $\Gamma_\phi$. Denote by $\Psi_\phi:\HH \to \Gamma_{\phi}$ the projection of $\HH$ to $\Gamma_\phi$ along cosets of $\paramY$. The projection restricts to a homeomorphism from $V_0$ to $\Gamma_\phi$ (but not a biLipschitz map), and we let $\eta_\phi:=(\Psi_\phi)_{\ast} \cL|_{V_0}$ be the pushforward of the Lebebegue measure $\cL|_{V_0}$ to $\Gamma_\phi$. Then $\eta_\phi$ is bounded above and below by multiples of $\mathcal{H}^3|_{\Gamma_\phi}$, see Section \ref{sec:ilgs}. It follows from our results in Section \ref{sec:redvertplane} that if $g\from \HH\to \R$ is a Borel function which is constant on cosets of $\paramY$ then
\begin{equation}
\label{eq:paramrieszintro}
   \Rz_{\phi} g = T^\mathsf{R}[g\ud \eta_\phi], 
\end{equation}
where $\Rz_{\phi} (g)$ is the \textit{parametric Riesz transform} of $g$ defined for $p \in \HH$ by
\begin{equation}\label{eq:paramrieszintrodef}
\Rz_{\phi} g(p)  = \pv(\Psi_\phi(p)) \int_{\Psi_\phi(p) V_0} \mathsf{R}(\Psi_{\phi}(v)^{-1}\Psi_\phi(p)) g(v)\ud v. 
\end{equation}

We then obtain $L_2$ bounds on the parametric Riesz transform of the identity function on the intrinsic Lipschitz graphs $f_{i,A ,\rho}$ produced by our construction. More precisely, we obtain the following proposition.  
\begin{prop} \label{prop:mainBounds-tilde-T} There is a $\delta>0$ such that for all sufficiently large $A>1$, there is a $\rho_A>1$ such that if $N=\lfloor \delta A^4\rfloor$, $\phi_A=f_{N,A, \rho_A}$ is the function produced in the construction of Section~\ref{sec:construction} and $U$ is the unit square $[0,1]\times \{0\}\times [0,1] \subset V_0$, then 
$$\|\Rz_{\phi_A}\one\|_{L_2(U)} \gtrsim A,$$
where $\one$ is the function equal to $1$ on all of $\HH$.
\end{prop}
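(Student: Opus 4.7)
The plan is to exploit the multi-scale bump structure of $\phi_A$ and to accumulate a nontrivial lower bound on $\Rz_{\phi_A}\one$ scale by scale. Write the construction as a sequence $0=\phi^{(0)},\phi^{(1)},\dots,\phi^{(N)}=\phi_A$, in which $\phi^{(i+1)}$ is obtained from $\phi^{(i)}$ by inserting roughly $r_i^{-3}$ disjoint bumps of amplitude $\sim A^{-1}$ and horizontal size $r_i = A^{-1}\rho_A^{-i}$. Since $V_0$ is a subgroup of $\HH$ and $\mathsf{R}(x,0,z)=(x^3,\,4xz)/(x^4+16z^2)^{3/2}$ is odd in $x$, the parametric Riesz transform of $\one$ on the unperturbed vertical plane vanishes: $\Rz_{\phi^{(0)}}\one\equiv 0$ on $V_0$. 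Hence it suffices to lower bound the telescoping sum $\sum_{i=0}^{N-1}\bigl(\Rz_{\phi^{(i+1)}}\one-\Rz_{\phi^{(i)}}\one\bigr)$ in $L_2(U)$.

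For the single-scale contribution, I would invoke the ``flow of singular integrals under perturbation'' estimates announced in the introduction. Geometrically, each scale-$i$ bump tilts the graph off $V_0$ in a prescribed, non-cancelling direction of amplitude $\sim A^{-1}$, and a first-order Taylor expansion of $\mathsf{R}$ in the $y$-variable (combined with the asymmetry of the bump profile) should yield a contribution to $\Rz_{\phi^{(i+1)}}\one-\Rz_{\phi^{(i)}}\one$ of magnitude $\sim A^{-1}$ on the bump footprint. Since the scale-$i$ bumps are disjoint and there are $\sim r_i^{-3}$ of them inside $U$, each of footprint area $\sim r_i^{3}$, the total $L_2^2(U)$ contribution of the $i$th scale is $\gtrsim r_i^{-3}\cdot r_i^{3}\cdot A^{-2} = A^{-2}$, uniformly in $i$.

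For near-orthogonality across scales, I would choose $\rho_A$ large enough that the flow produced by the scale-$i$ bumps decays like $\rho_A^{-c(j-i)}$ at any finer scale $j>i$, so that the cross-terms in $\bigl\langle \Rz_{\phi^{(i+1)}}\one-\Rz_{\phi^{(i)}}\one,\,\Rz_{\phi^{(j+1)}}\one-\Rz_{\phi^{(j)}}\one\bigr\rangle_{L_2(U)}$ are summably small and absorb into the diagonal terms. Summing over the $N\asymp \delta A^4$ scales then yields
\begin{equation*}
\|\Rz_{\phi_A}\one\|_{L_2(U)}^2 \;\gtrsim\; N\cdot A^{-2} \;\gtrsim\; \delta A^{2},
\end{equation*}
and the proposition follows upon taking square roots.

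The main obstacle is the \emph{signed} single-scale lower bound. A Calder\'on--Zygmund type upper bound on the perturbation flow is routine, but here one needs to know that the scale-$i$ contribution does not cancel against the nonzero value of $\Rz_{\phi^{(i)}}\one$ already generated by the coarser scales. This is where the stopping-time feature of the construction should be decisive: on every bump footprint at scale $i$, the coarser graph $\phi^{(i)}$ must be sufficiently flat (relative to a vertical plane) that the computation of the scale-$i$ flow reduces, up to controllable error, to a model problem on $V_0$ in which the odd symmetry of $\mathsf{R}$ identifies a non-cancelling leading term of size $A^{-1}$. Once this model computation is in hand, the orthogonality and summation steps should follow by standard Littlewood--Paley-type absorption arguments.
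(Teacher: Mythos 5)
Your high-level strategy — a scale-by-scale decomposition, a single-scale lower bound of size $A^{-1}$ in $L_2$, near-orthogonality across scales, and a sum over $N\approx\delta A^4$ scales — is the same as the paper's, and you correctly identify the signed single-scale bound as the crux. However, there is a genuine gap in how the error from higher-order terms is handled, and a few of your subsidiary claims don't hold as stated.

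The paper does not establish quasi-orthogonality of the raw telescoping differences. It writes $F_{f_{i+1}} - F_{f_i} = G_i'(0) + E_i$ via Taylor's theorem, where $G_i'(0)$ is the first-order (linearized) perturbation and $\|E_i\|_\infty\lesssim\sup_t\|G''_i(t)\|_\infty\lesssim A^{-3}$. Orthogonality is proved only for the first-derivative pieces $G_i'(0)$; the error sum is controlled separately by the pointwise bound and the triangle inequality:
\begin{equation*}
\Bigl\|\sum_i \bigl(F_{f_{i+1}}-F_{f_i}\bigr)\Bigr\|_{L_2(U)} \;\ge\; \Bigl\|\sum_i G_i'(0)\Bigr\|_{L_2(U)} - \sum_i \|E_i\|_{L_2(U)} \;\gtrsim\; \sqrt{N}\,A^{-1} - N A^{-3} = \bigl(\sqrt{\delta}-C\delta\bigr)A,
\end{equation*}
which is $\gtrsim A$ once $\delta$ is fixed small. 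Your version, which seeks near-orthogonality of the raw differences $\Rz_{\phi^{(i+1)}}\one-\Rz_{\phi^{(i)}}\one$, runs into trouble: the cross-terms $\langle E_i, G_j'(0)\rangle$ have no reason to be small beyond the trivial $L_\infty$ bound $A^{-4}$, and summing over $N^2$ pairs gives $\delta^2 A^4$, which dominates the diagonal $N A^{-2}=\delta A^2$ for $A$ large. So the ``absorption into the diagonal'' step fails unless you first peel off the error sum as the paper does.

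Two further inaccuracies. First, the actual off-diagonal estimate is $|\langle G_i'(0),G_j'(0)\rangle|\lesssim\rho^{-\epsilon}$ with $\epsilon$ fixed and \emph{no} improvement in $|j-i|$; the geometric decay $\rho^{-c(j-i)}$ you posit is not proved and is not needed, since $\rho_A$ is allowed to grow with $A$. Second, the single-scale lower bound is not a consequence of ``the odd symmetry of $\mathsf{R}$ identifies a non-cancelling leading term''. The paper must reduce $G_i'(0)$ to a translation-invariant singular integral on the tangent plane, then to a one-dimensional integral against a kernel $L_a(z)$, and finally \emph{compute} that $L_0(1)\ne 0$; this nonvanishing is a non-formal fact about the Riesz kernel, not a parity argument. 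Also, the stopping time's role is to keep $\|\nabla_{f_i}f_i\|_\infty\le 1$ and preserve a positive measure of active pseudoquads — the quantitative flatness you invoke ($\|\nabla^2_{f_i}f_i\|,\|Zf_i\|=O(\rho^{-1})$) comes from taking $\rho$ large, independently of the stopping time.
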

Proposition \ref{prop:mainBounds-tilde-T} is the most crucial part in the proof of Theorem \ref{mainthmintro}  and combined with \eqref{eq:paramrieszintro} leads relatively quickly to the proof of Theorem \ref{mainthmintro}; see Section \ref{sec:proofofmainthm}.

We prove Proposition~\ref{prop:mainBounds-tilde-T} by analyzing the family of singular integrals $\Rz_{\alpha + t \gamma}$ that arises from a perturbation of an intrinsic Lipschitz function $\alpha$ by a smooth function $\gamma$. This requires new methods to handle the noncommutativity of $\HH$. That is, for functions $a,b \from \R^{n-1}\to \R$, let $\Rz^{\text{Euc}}_{a}$ denote the Euclidean parametric Riesz transform, defined as in \eqref{eq:paramrieszintrodef}. The translation-invariance of the Riesz transform implies that $\Rz^{\text{Euc}}_{a}\one = \Rz^{\text{Euc}}_{a+c}\one$ for any $c\in \R$, so  
$$\Rz^{\text{Euc}}_{a + t b}\one = \Rz^{\text{Euc}}_{a_0 + t b_0}\one,$$
where $a_0=a-a(\zero)$ and $b_0=b-b(\zero)$ both vanish at $\zero$.

This identity does not hold in $\HH$. In $\HH$, translation-invariance implies that if $\Gamma_{\alpha_1}$ is a left-translate of $\Gamma_{\alpha_2}$, then $\Rz_{\alpha_1}\one$ is a left-translate of $\Rz_{\alpha_2}\one$. Unfortunately, $\Gamma_{\alpha + c}$ is a right-translate of $\Gamma_\alpha$, so there is typically no relationship between $\Rz_{\alpha}\one$ and $\Rz_{\alpha + c}\one$.

We solve this problem by writing  $\Rz_{\alpha+t\gamma}\one$ in two ways: first, the direct calculation \eqref{eq:paramrieszintrodef}, and second, $\Rz_{\alpha+t\gamma}\one = \Rz_{\alpha_t}\one \circ \lambda_t$, where each $\lambda_t$ is a left-translation and $\alpha_t$ is a family of functions such that $\alpha_t(\zero)=0$ for all $t$ and $\Gamma_{\alpha_t}=\lambda_t(\Gamma_{\alpha+t\gamma})$. Though these expressions represent the same function, one is easier to estimate at large scales and one is easier to estimate at small scales, and many of the bounds used in the proof of Proposition~\ref{prop:mainBounds-tilde-T} will use one expression at large scales and the other expression at small scales.

Our results lead naturally to several new questions. For example, it is well known \cite[Theorem 20.15]{Mabook}, that if $\Gamma \subset \R^n$ is an $m$--dimensional Lipschitz graph and $f \in L_1(\Gamma)$  then the principal values of the Riesz transform $T^{m}[f \ud \nu_m] (p)$, exist for $\mathcal{H}^m$--a.e. $x \in \Gamma$. The proof uses that $T^m$ is $L_2$--bounded
and in light of Theorem \ref{mainthmintro}, it is quite unclear if the same result holds in $\HH$. We do anticipate that a modification of the construction in the current paper might be used to produce an intrinsic Lipschitz graph $\Gamma$ such that principal values of $T^{\mathsf{R}}[f \ud \nu_{\Gamma}]$ fail to exist $\nu_{\Gamma}$--a.e.\ for (certain) functions  $f \in L_1$, but we will not consider this problem here.

Another interesting problem is the following. Theorem \ref{mainthmintro} asserts that intrinsic Lipschitz regularity is not sufficient for the $L_2$--boundedness of the Heisenberg Riesz transform. On the other hand, according to \cite{CFO2}, intrinsic $C^{1,\alpha}$ regularity is indeed sufficient. Therefore, one could look for ``intermediate'' geometric regularity conditions on intrinsic graphs that would imply the $L^2$--boundedness of the Heisenberg Riesz transform. In particular, and in light of Theorem \ref{thm:beta-theorem}, it would be interesting to answer the following questions:
\begin{question} Let $\Gamma \subset \HH$ be an intrinsic Lipschitz graph which satisfies the Carleson condition \eqref{eq:betasgl}. Is it true that $T^{\mathsf{R}}$ is bounded in $L_2(\Gamma)$?
\end{question}
\begin{question} What natural classes of surfaces satisfy \eqref{eq:betasgl}?
\end{question}
The bounds in Section~\ref{sec:sing-int-perturbed} suggest possible connections between the norm of $T^{\mathsf{R}}$ and the sum of the squares of the $\beta$--numbers in \eqref{eq:betasgl}; see Question~\ref{q:betasgl-q}.

Finally, we note that Theorem~\ref{mainthmintro} is related to the problem of geometrically characterizing removable sets for Lipschitz harmonic functions (RLH sets) in $\HH$. The definition of an RLH set in $\HH$ is completely analogous to its Euclidean counterpart, except that, in $\HH$, a function is called harmonic if it is a solution to the  sub-Laplacian equation $\Delta_{\HH}u=0$. RLH sets in Heisenberg groups were introduced in \cite{CM} and it was shown there that if $E \subset \HH$ is compact, then it is RLH if $\mathcal{H}^3(E)=0$, while it is not RLH if $\dim_H (E)>3$. Moreover, totally disconnected RLH sets with positive $3$-dimensional Hausdorff measure were produced in  \cite{CM, CMT}. On the other hand, it was proved in \cite{CFO2} that if $\mu$ is a non-trivial compactly supported Radon measure in $\HH$ with $3$-upper growth, such that $T^\mathsf{R}$ is bounded in $L_2(\mu)$ then $\spt \mu$ is not RLH. An analogous result holds in $\R^n$, see  \cite[Theorem 4.4]{MR1372240}, and combined with the $L_2$--boundedness of Riesz transforms on Lipschitz graphs implies that compact subsets of $1$-codimensional Lipschitz graphs with positive $(n-1)$-Hausdorff measure are not RLH. This can be used to show that if a compact set $E \subset \R^n$ with $H^{n-1}(E)<\infty$ is RLH then it is purely $(n-1)$--unrectifiable.  To our knowledge, this is the only known proof for this implication.

Theorem \ref{mainthmintro} shows that such a scheme cannot be used in the Heisenberg group, and naturally leads to the following fascinating question:
\begin{question}
\label{que:rem}Does there exist a compact subset of an intrinsic Lipschitz graph in $\HH$ with positive $3$--dimensional Hausdorff measure which is removable for Lipschitz harmonic functions?
\end{question}
If the answer to Question \ref{que:rem} is positive it will imply that the geometric characterization of RLH sets in $\HH$ varies significantly from the analogous characterization in $\R^n$. On the other hand, a negative answer to Question \ref{que:rem} would require a completely new proof method.  

\subsection{Roadmap}
In Section~\ref{sec:prelims}, we establish some definitions and notation for the Heisenberg group and for intrinsic Lipschitz graphs. Even if the reader has seen these notions before, we introduce some new notation for intrinsic Lipschitz graphs in Section~\ref{sec:ilgs}, so we suggest that readers look through this section.

After these preliminaries, the paper can be broken into three rough parts: constructing the family of functions $f=f_{i,A,\rho}$ and graphs $\Gamma=\Gamma_{i,A,\rho}$ that we will use in Theorems~\ref{mainthmintro} and \ref{thm:beta-theorem}, proving lower bounds on the $\beta$--numbers of these surfaces, and estimating the Riesz transform on these surfaces. In Section~\ref{sec:construction}, we construct a family of intrinsic Lipschitz graphs based on the construction in \cite{NY2}. These graphs have bumps at many different scales, and in Section~\ref{sec:beta-numbers}, we calculate the effect of these bumps on the $\beta$--numbers and prove Theorem~\ref{thm:beta-theorem}. 

In Section~\ref{sec:redvertplane}, we start to study the Riesz transform on $\Gamma$ and other intrinsic Lipschitz graphs. Specifically, for an intrinsic Lipschitz function $\phi$, we define $\eta_\phi$ as the pushforward of $\cL|_{V_0}$ as above and study the function $T\eta_\phi$. In general, $T\eta_\phi$ need not be defined everywhere on $\Gamma_\phi$, but in Section~\ref{sec:redvertplane}, we show that if $\phi$ is smooth, bounded, and has bounded derivatives, then $T\eta_\phi$ is defined everywhere on $\Gamma_\phi$.
We also introduce a singular integral operator $\tilde{T}_\phi$ which is defined as a singular integral on a vertical plane and satisfies $\tilde{T}_\phi\one = T \eta_\phi$. Let $F_\phi := \tilde{T}_\phi\one$.

Our main goal in these sections is to prove Proposition~\ref{prop:mainBounds-tilde-T}. We prove Proposition~\ref{prop:mainBounds-tilde-T} by considering the construction of $f_i=f_{i,A,\rho}$ as a sequence of perturbations, starting with $f_{0}=0$, so that for each $i\ge 0$, we obtain $f_{i + 1}$ by adding bumps of scale $r_i:=A^{-1}\rho^{-i}$ to $f_{i}$. Let $\nu_i=f_{i+1}-f_i$. Then we can prove Proposition~\ref{prop:mainBounds-tilde-T} by bounding the derivatives $\frac{\ud}{\ud t} [F_{f_i + t \nu_i}]$ and $\frac{\ud^2}{\ud t^2} [F_{f_i + t \nu_i}]$ and using Taylor's theorem.

We state bounds on the derivatives of $G_{f_i,\nu_i}(t) := F_{f_i + t \nu_i}$ in Section~\ref{sec:sing-int-perturbed}. Because of the scale-invariance of the Riesz transform, we can rescale $f_i$ and $\nu_i$ by a factor $r_i$ to obtain functions $\alpha$ and $\gamma$ such that $\alpha$ varies on scale roughly $\rho$ and $\gamma$ varies on scale roughly $1$ (Section~\ref{subsec:rescaling}). The derivatives of $\alpha$ and $\gamma$ are bounded (Lemma~\ref{lem:gamma-deriv-bounds} and Appendix~\ref{ap:A}), and in fact we prove bounds on derivatives of $G_{\zeta,\psi}(t)$ for any functions that satisfy the same bounds.

In the remaining sections, we prove the bounds in Section~\ref{sec:sing-int-perturbed}. First, in Section~\ref{sec:perturbations}, we write $G_{\zeta,\psi}'(0)$ as an integral in two ways, one which is easier to control for large scales and one  for small scales (Lemma~\ref{lem:grrab-large} and Lemma~\ref{lem:grrab-small}). In Euclidean space, these two formulas would be the same; the difference between them comes from the noncommutativity of the Heisenberg group. We use these formulas to prove an upper bound on $G'_{f_i,\nu_i}(0)$ (Lemma~\ref{lem:G'-formula-and-bound-rev}).

In Section~\ref{sec:perturbations}, we define translation-invariant approximations of $G'_{\zeta,\psi}$ by showing that when $\lambda$ is a linear function approximating $\zeta$ to first order at $p$, then $G'_{\zeta,\psi}(0)$ is close to $G'_{\lambda,\psi}(0)$ on a neighborhood of $p$. We use this approximation to prove lower bounds on $G'_{f_i,\nu_i}(0)$ in Section~\ref{sec:first-lower} and to bound inner products of the form $\langle G'_{f_i,\nu_i}(0), G'_{f_j,\nu_j}(0)\rangle$ in Section~\ref{sec:quasi-orthog}. 

In Section~\ref{sec:second-deriv}, we use the formulas from Section~\ref{sec:perturbations} again to bound $G''_{f_i,\nu_i}$.
By Taylor's theorem, 
$$F_{f_N} = \sum_{i=0}^{N-1} G'_{f_i,\nu_i}(0) + \sum_{i=0}^{N-1} O(\|G''_{f_i,\nu_i}\|_\infty).$$
Our bounds on $G'_{f_i,\nu_i}(0)$ and $\langle G'_{f_i,\nu_i}(0), G'_{f_j,\nu_j}(0)\rangle$ lead to a lower bound on the first term, and our bounds on $G''_{f_i,\nu_i}$ bound the error term. This proves Proposition~\ref{prop:mainBounds-tilde-T} (see Section~\ref{sec:sing-int-perturbed} for details).

Finally, in Section~\ref{sec:proofofmainthm}, we use Proposition~\ref{prop:mainBounds-tilde-T} to prove Theorem~\ref{mainthmintro}. We first show that when $\phi_A$ is as in Proposition~\ref{prop:mainBounds-tilde-T}, the $L_2$ norm of the Riesz transform on $L_2(\Gamma_{\phi_A})$ is large. We then combine scaled copies of the $\Gamma_{\phi_A}$'s to obtain a single compactly supported intrinsic Lipschitz graph $\Gamma$ such that the Riesz transform is unbounded on $L_2(\Gamma)$, as desired.

\section{Preliminaries}\label{sec:prelims}
Throughout this paper, we will use the notation $f\lesssim g$ to denote that there is a universal constant $C>0$ such that $f\le Cg$ and $f\lesssim_{a_1,a_2,\dots} g$ to denote that there is a function $C(a_1,a_2,\dots)>0$ such that $f\le C(a_1,a_2,\dots) g$. The notation $f\approx g$ is equivalent to $f\lesssim g$ and $g\lesssim f$. We will also use the big--$O$ notation $O(f)$ to denote an error term which is at most $Cf$ for some constant $C>0$ and $O_a(f)$ for an error term which is at most $C(a) f$.

\subsection{Heisenberg group}\label{sec:prelim-Heis}

The three dimensional Heisenberg group $\HH$ is the Lie group on $\R^{3}$ defined by the multiplication
\begin{align}\label{eq:heismult}
  (x,y,z) (x',y',z') = \left(x+x',y+y',z+z' + \frac{xy' - x'y}{2} \right)
\end{align}
The identity element in $\HH$ is $\zero:=(0,0,0)$ and the inverse of $v=(x,y,z) \in \HH$ is $v^{-1}:=(-x,-y,-z)$. 
We denote by $X=(1,0,0),Y=(0,1,0),Z=(0,0,1),$ the coordinate vectors of $\HH$ and we let $x,y,z \from \HH\to \R$ be the coordinate functions. The center of the group is $\langle Z \rangle = \{(0,0,z) : z \in \R\}$.  An element $v \in \HH$ is called a {\it horizontal vector} if $z(v) = 0$, and we denote by $A$ the set of horizontal vectors.

Since $\HH$ is a torsion-free nilpotent Lie group, the exponential map is a bijection between $\HH$ and the nilpotent Lie algebra $\mathfrak{h}=\langle X, Y, Z\mid [X,Y]=Z\rangle$; namely, $\exp(x X + y Y + z Z) = (x,y,z)$. Then \eqref{eq:heismult} is a consequence of the Baker--Campbell--Hausdorff formula
$$\exp(V)\exp(W) = \exp\left(V+W+\frac{[V,W]}{2} + \dots \right).$$
We will frequently identify $\HH$ and $\mathfrak{h}$ and use the same notation for generators of $\HH$ and of $\mathfrak{h}$. In particular, for $V_i\in \mathfrak{h}$, we write the linear span of the $V_i$ as $\langle V_1,V_2,\dots\rangle$, so that the set of horizontal vectors is
$$A=\langle X,Y\rangle = \{x X + yY \mid x,y\in \R\}.$$

Since \eqref{eq:heismult} is based on the Baker--Campbell--Hausdorff formula, for any $v\in \HH$, the span $\langle v\rangle$ is the one-parameter subgroup containing $v$. Since we typically write the group operation in $\HH$ as multiplication, we will often write $w^t = t w$ for $w\in \HH$ and $t\in \R$.

Given an open interval $I \subset \R$, we say that $\gamma: I \to \HH$ is a {\it horizontal curve} if the functions $x \circ \gamma, y \circ \gamma, z \circ \gamma : I \to \R$ are Lipschitz (hence $\gamma'$ is defined almost everywhere on $I$) and $$\frac{\ud}{ \ud s} \left[\gamma (t)^{-1} \gamma(s)\right]\big|_{s=t} \in A,$$
for almost every $t \in I$. Notice that left translations of horizontal curves are also horizontal. 

Given $(a,b) \in \R^2 \setminus \{(0,0)\}$ and $v \in \HH$ we will call the coset $L=v \langle aX+bY \rangle$ a {\it horizontal line}. We define the \emph{slope} of $L$ as $\slope L = \frac{b}{a}$ when $a\ne 0$ and $\slope L = \infty$ when $a=0$. This is the slope of the projection of $L$ to the $xy$--plane. Note that for $t\in \R$,  $(X+\sigma Y)^t$ is a point in the horizontal line through the origin with slope $\sigma$.

Let $\XL$, $\YL$ be the left-invariant vector fields
$$\XL(v)=\left(1,0,-\frac{y(v)}{2}\right) \mbox{ and } \YL(v)=\left(0,1,\frac{x(v)}{2}\right),$$
and let $\XR$, $\YR$ be the right-invariant vector fields
$$\XR(v)=\left(1,0,\frac{y(v)}{2}\right) \mbox{ and } \YR(v)=\left(0,1,-\frac{x(v)}{2}\right).$$
Note that $\XL$ and $\XR$ commute, as do $\YL$ and $\YR$. We let $\partial_x, \partial_y, \partial_z:=Z$ be the usual partial derivatives in $\R^3$. Given any vector field $\mathsf{V}=(\mathsf{V}_x,\mathsf{V}_y, \mathsf{V}_z)\from \R^3 \rightarrow \R^3$ and any smooth function $f\from \R^3 \rightarrow \R$ we let 
$$\mathsf{V}f (v)= \mathsf{V} \cdot \nabla f (v):= \mathsf{V}_x(v) \partial_x f (v)+ \mathsf{V}_y(v) \partial_y f (v)+\mathsf{V}_z(v) \partial_z f (v).$$ So for example,
$$\XL f(v) = \frac{\ud}{\ud t}f(vX^t)\big|_{t=0} = \partial_x f (v)-\frac{y(v)}{2} Z f(v), v \in \HH.$$
We also define the {\it horizontal gradient} of $f$ as $\nabla_{\HH} f= (\XL f, \YL f)$. For clarity, we will typically use square brackets for the object of a differential operator and use $\cdot$ as a low-precedence multiplication operator, so that $\mathsf{V}f\cdot \mathsf{W}g$ is equal to $\mathsf{V}[f] \mathsf{W}[g]$, not $\mathsf{V}[f\mathsf{W}g]$.

The \emph{Kor\'anyi metric} on $\HH$ is the left-invariant metric defined by
$$d_{\Kor}(v,v'):=\|v^{-1} v'\|_{\Kor},$$
where
$$\|(x,y,z)\|_{\Kor}:=\sqrt[4]{(x^2+y^2)^2+16 z^2}.$$
Note that $\|a X + b Y\|_{\Kor} = \sqrt{a^2 + b^2}$, so the Korányi length of a horizontal line segment is equal to the Euclidean length of its projection to the $xy$--plane.

We also define a family of automorphisms  
$s_t \from  \HH \to \HH, t \in \R,$ 
\begin{align*}
 s_t(x,y,z) = (t x, t y, t^2 z).
\end{align*}
The mappings $s_t$ dilate the metric; for $t \geq 0$ and $p, p' \in \HH$,
\begin{align*}
d_{\Kor}(s_t (p), s_t(p'))= t d_{\Kor} (p,p').
\end{align*}
When $w\in A$ is a horizontal vector, the one-parameter subgroup generated by $w$ can be written in terms of $s_t$, i.e., $s_t(w)=w^t$, but this is not true when $w$ is not horizontal.

We can also define the reflection through the $z$--axis $\theta\from \HH \to \HH$ by
$$\theta(x,y,z) = (-x,-y,z).$$
Note that $\theta=s_{-1}.$

A \emph{vertical plane} $V$ is a plane that is parallel to the $z$--axis. For any such plane, the intersection $V\cap A$ is a horizontal line $v \langle a X + b Y\rangle$, and we can write $V = v \langle a X + b Y, Z\rangle$. We define the slope of $V$ as $\slope V:=\slope (V \cap A)$.

We will frequently refer to the vertical plane $V_0 = \{y = 0\}$. We will also use the following projections. First, we define the natural (nonlinear) projection $\Pi \from \HH \to V_0$ along cosets of $\paramY$ by $\Pi(v)=v Y^{-y(v)}, v \in \HH$. Equivalently,
$$\Pi(x,y,z)=\left(x,0,z-\frac{1}{2} xy\right).$$
Note that $\Pi$ is not a homomorphism, but it commutes with scaling because $s_t$ sends cosets of $\paramY$ to cosets of $\paramY$, i.e., 
$$\Pi(s_t(v)) = s_t(v) Y^{-y(s_t(v))} = s_t(v) Y^{-ty(v)} = s_t(v Y^{-y(v)}) = s_t(\Pi(v))$$
for all $v\in \HH$ and $t\in \R$.

Moreover, if $V$ is a vertical plane which is not a coset of the $yz$--plane we define the projection $\Pi_{V}\from \HH \to V$ along cosets of $\paramY$ by setting $\Pi_{V}(v)$ to be the unique point of intersection of the coset $v\langle Y\rangle$ and $V$. In particular, given $p \in \HH$ the projection $\Pi_{p V_0}\from \HH\to p V_0$ is given by $\Pi_{p V_0}(v) = v Y^{y(p)-y(v)}, v \in \HH$. When $\zero\in V$, this likewise commutes with $s_t$.

\subsection{Kernels and symmetries}
\label{sec:ker}
In this paper, we will consider kernels on $\HH$ which are either $\R$-- or $\R^2$--valued continuous functions on $\HH \backslash \{0\}$. Given a kernel $K$, let $\widehat{K}$ denote the kernel $\widehat{K}(v)=K(v^{-1})$ for all $v\in \HH$. Given a Borel measure $\nu$ on $\HH$, we formally define the singular integral operator $T^K$ by letting $T^K\nu(p)$ be the principal value
$$T^K\nu (p):=\pv (p) \int \widehat{K} (p^{-1}w) \ud \nu (w),$$
where 
\begin{equation}\label{eq:def-pvp}
  \pv(p) \int g(w) \ud \nu (w):=\lim_{\substack{r\to 0 \\ R\to \infty}}\int_{B(p,R) \setminus B(p,r)} g(w) \ud \nu (w).
\end{equation}
For a Borel set $A \subset \HH$ we denote
$$\pv(p) \int_A g(w) \ud \nu (w):=\pv(p) \int g(w) \one_{A}(w) \ud \nu (w).$$

This definition gives rise to several operators. For $0<r<R$, we define truncated convolution operators $T^K_r$ and $T^K_{r,R}$ by
\begin{align*}
  T_r^K \nu(p) &:= \int_{\HH \setminus B(p,r)} K(w^{-1}p) \ud \nu (w), \\
  T_{r, R}^K \nu(p) &:= \int_{B(p,R) \setminus B(p,r)} K(w^{-1}p) \ud \nu (w),
\end{align*}
for any Borel measure $\nu$ on $\HH$ and any $p\in \HH$ such that these integrals are defined. Likewise we define operators $T^K_{\nu;r,R}f = T^K_{r,R} [f\ud \nu]$ and $T^K_{\nu}f = T^K [f\ud \nu]$. When $K$ is understood, we will write $T=T^K$.

 

For $\alpha \in \Z$, a kernel is said to be $\alpha$--{\it homogeneous} or {\it of degree $\alpha$} if
\begin{align*}
  K(s_t(p)) = t^\alpha K(p), \qquad \forall t \in \R, p \in \HH.
\end{align*}
A function $f\from \HH \to \R^n$ is \emph{$\HH$--odd} if $f(\theta(p))=-f(p)$ for all $p$; it is \emph{$\HH$--even} if $f(\theta(p))=f(p)$ for all $p$, and since $\theta=s_{-1}$, a homogeneous kernel is $\HH$--odd or $\HH$--even if it is homogeneous for an odd or even power, respectively.

\begin{lemma}\label{lem:deriv-homog}
  Let $W$ be a left-invariant vector field corresponding to a horizontal element of \(\HH\).
    If $K$ is $\alpha$--homogeneous, then $WK$ is $(\alpha-1)$--homogeneous.
\end{lemma}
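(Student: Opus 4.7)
The plan is to compute $WK(s_t(p))$ directly from the definition of $W$ as a left-invariant vector field and exploit the compatibility between the dilation $s_t$ and horizontal one-parameter subgroups. Recall that if $w \in A$ is the horizontal element such that $W_p = \frac{\ud}{\ud s}[p w^s]\big|_{s=0}$, then for any smooth $f$,
$$Wf(p) = \frac{\ud}{\ud s} f(p w^s)\Big|_{s=0}.$$

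The key identity is that for a horizontal vector $w$, the one-parameter subgroup $\{w^s\}_{s\in\R}$ lies in $A$ and coincides with $\{sw\}_{s \in \R}$, so the dilation acts by $s_t(w^s) = s_t(sw) = tsw = w^{ts}$. Combined with the fact that $s_t$ is a group homomorphism, this gives
$$s_t(p) \cdot w^s = s_t(p) \cdot s_t(w^{s/t}) = s_t(p w^{s/t})$$
for all $t\ne 0$. This is the only place where horizontality of $W$ enters: a nonhorizontal left-invariant vector field (such as $Z$) is generated by an element whose one-parameter subgroup is not compatible with $s_t$ in this way, and the lemma would fail.

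Applying $W$ at $s_t(p)$ and using $\alpha$--homogeneity of $K$ together with the chain rule,
\begin{align*}
WK(s_t(p)) &= \frac{\ud}{\ud s} K(s_t(p) w^s)\Big|_{s=0} = \frac{\ud}{\ud s} K(s_t(p w^{s/t}))\Big|_{s=0} \\
&= t^\alpha \cdot \frac{\ud}{\ud s} K(p w^{s/t})\Big|_{s=0} = t^{\alpha-1} WK(p),
\end{align*}
which is the claimed $(\alpha-1)$--homogeneity. The $t=0$ case is trivial (or follows by continuity), so I will not dwell on it.

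There is no real obstacle here; the only subtle point is recognizing that the argument genuinely uses horizontality via $s_t(w^s) = w^{ts}$, and that writing $W$ as a right-multiplication derivative (rather than, say, in coordinate form $\partial_x - \tfrac{y}{2}Z$) makes the interaction with $s_t$ transparent.
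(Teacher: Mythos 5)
Your proof is correct and takes essentially the same approach as the paper: both arguments hinge on the identity $s_t(p\, w^h) = s_t(p)\, w^{th}$ for horizontal $w$, combined with $\alpha$--homogeneity of $K$, to relate the derivative at $s_t(p)$ to the derivative at $p$. The only difference is cosmetic---you compute $WK(s_t(p))$ directly and pull out $t^{\alpha-1}$, while the paper starts from $WK(u)$ and arrives at $t^{1-\alpha}WK(s_t(u))$; the side remark about $t=0$ is moot since $s_0(p)=\zero$ is outside the domain of $K$, but this does not affect the argument.
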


\begin{proof}
  Suppose that $K$ is $\alpha$--homogeneous.  Let $u \in \HH$ be an element of norm 1 and let $t \in \R$.  Then
  \begin{align*}
    WK(u) &= \lim_{h \to 0} \frac{K(uW^h) - K(u)}{h} \\
    &= t^{-\alpha} \lim_{h \to 0} \frac{K(s_t(u) W^{th}) - K(s_t(u))}{h} \\
    &= t^{-\alpha+1} \lim_{h \to 0} \frac{K(s_t(u) W^{th}) - K(s_t(u))}{th} \\
    &= t^{-\alpha+1} WK(s_t(u)).
  \end{align*}
  We now get that $WK$ is $(\alpha - 1)$--homogeneous.
\end{proof}

Likewise, derivatives of $\HH$--odd kernels are $\HH$--even and vice versa.

Given an orthogonal matrix $M\in \mathrm{O}(2)$, $M$ acts on $\HH$ as an isometry
$$\tilde{M}(x,y,z) = (M(x,y), \det(M) z).$$
Given an $\R^2$--valued kernel, we say that it is {\it orthogonal} if
\begin{align*}
  K(\tilde{M}(p)) = M(K(p))
\end{align*}
for all $p\in \HH$ and all $M\in \mathrm{O}(2)$.

We now define a specific kernel that is the main object of our study.  Let $\Psi\from \HH\to \R$,
$\Psi(v)=\|v\|_\Kor^{-2}$. By a celebrated result of Folland, see \cite{fol} and \cite[Theorem  5.15]{cdpt}, we know that the fundamental solution of the sub-Laplacian equation
$$\XL^2+\YL^2=0,$$
is $(8 \pi)^{-1} \Psi$. Analogously to the Euclidean case,  the Riesz kernel $\mathsf{R}$ is defined as
\begin{align}
\label{eq:riesz-kernel}
  \mathsf{R}(v) & :=-\nabla_{\HH}\Psi =-\left(\XL \Psi, \YL \Psi\right)=
  \left( \frac{2x (x^2+y^2)-8yz}{\|v\|_\Kor^6}, \frac{2y(x^2+y^2)+8xz}{\|v\|_\Kor^6} \right).
\end{align}
Since $\Psi$ is symmetric around the origin and homogeneous of degree $-2$, its gradient $\mathsf{R}(v)$ is an $\HH$--odd orthogonal kernel of degree $-3$. The smoothness and the $-3$--homogeneity of $\mathsf{R}$ easily imply that it is a $3$--dimensional standard Calder\'on--Zygmund kernel, see e.g.\ \cite[Chapter 6]{christ}. Therefore, if $\nu$ is a Borel measure  on $\HH$ such that
$$\nu (B(x,r)) \leq C r^3, \forall x \in \HH, r>0,$$
 then 
$|T_r^{\mathsf{R}} (f \ud \nu)|<\infty$ for $f \in L_p(\nu), p \in [1,\infty)$ and  $|T_{r,R}^{\mathsf{R}} (f \ud \nu)|<\infty$ for $f \in L_p(\nu), p \in [1,\infty]$. In fact, truncated singular integrals (with respect to $\nu$) are finite for any Borel kernel which satisfies $|K(v)| \lesssim \|v\|^{-3}, v \in \HH \setminus \{0\}$.

\subsection{Intrinsic graphs and intrinsic Lipschitz graphs}\label{sec:ilgs}

In previous papers, intrinsic graphs have been defined as graphs of functions from the vertical plane $V_0=\langle X,Z\rangle$ to $\R$. In this paper, we introduce new notation that defines them in terms of functions from $\HH$ to $\R$ that are constant along cosets of $\paramY$. Any function from $V_0$ to $\R$ can be extended to a function that is constant along cosets of $\paramY$, so the two definitions give the same class of graphs, but this definition streamlines some notation. Di~Donato and Le~Donne have used similar techniques to define intrinsically Lipschitz sections in \cite{DDLD-sections}.

For any function $f\from \HH \to \R$ which is constant on cosets of $\paramY$, we define the \emph{intrinsic graph} of $f$ as
$$\Gamma_f=\{vY^{f(v)}\mid v\in V_0\}=\{p\in \HH\mid f(p)=y(p)\}.$$
We define $\Psi_f\from \HH\to \Gamma_f$ by $\Psi_f(p)=pY^{f(p)-y(p)}$ for all $p$.
This map projects $\HH$ to $\Gamma_f$ along cosets of $\paramY$. It is constant along cosets of $\paramY$ and satisfies 
\begin{equation}\label{eq:projection-identity}
  y(\Psi_f(p))=f(p) \text{\qquad for all $p\in \HH$}.
\end{equation}
Left-translations and scalings of intrinsic graphs are also intrinsic graphs, and we can use \eqref{eq:projection-identity} to determine the corresponding functions.

\begin{lemma}\label{lem:ilg-translations}
  Let $f\from \HH \to \R$ be a function which is constant on cosets of $\langle Y\rangle$ and let $g\in \HH$. Let $h\from \HH\to \R$,
  $$h(p)=y(g) + f(g^{-1}p).$$
  Then $h$ is constant on cosets of $\langle Y\rangle$ and satisfies $\Gamma_h = g\Gamma_f$ and $\Psi_h(p) = g \Psi_f(g^{-1}p)$
  for any $p\in \HH$.
\end{lemma}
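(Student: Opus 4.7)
The plan is straightforward verification that rests on two structural facts about $\HH$ that can be read directly off the multiplication formula \eqref{eq:heismult}: (a) the $y$-coordinate is a group homomorphism to $\R$, i.e. $y(ab)=y(a)+y(b)$ for all $a,b\in\HH$ (the $y$-slot of the product has no correction term); and (b) left multiplication commutes with right multiplication by $Y^{t}$, i.e. $g^{-1}(pY^{t})=(g^{-1}p)Y^{t}$ (by associativity). Fact (b) immediately yields that $h$ is constant on cosets of $\paramY$, since $y(g)$ does not depend on $p$ and $f$ is assumed constant on such cosets.

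For the identity $\Gamma_h=g\Gamma_f$, I would substitute the definition of $h$ into the defining equation $h(p)=y(p)$ of $\Gamma_h$ and then use fact (a) to rewrite $y(p)=y(g)+y(g^{-1}p)$. The $y(g)$ terms cancel and the equation becomes $f(g^{-1}p)=y(g^{-1}p)$, which is precisely $g^{-1}p\in\Gamma_f$. This gives both inclusions at once.

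For the projection identity, I would expand both sides using the definition $\Psi_f(q)=qY^{f(q)-y(q)}$. The left side is $\Psi_h(p)=pY^{h(p)-y(p)}$, while on the right, after cancelling $g\cdot g^{-1}p=p$, we get $g\Psi_f(g^{-1}p)=pY^{f(g^{-1}p)-y(g^{-1}p)}$. The two exponents agree by the same computation $h(p)-y(p)=y(g)+f(g^{-1}p)-y(g)-y(g^{-1}p)=f(g^{-1}p)-y(g^{-1}p)$ used in the previous step.

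There is no real obstacle here: once one notes that the $y$-coordinate is additive, the entire lemma is a matter of unwinding definitions. The point of the lemma is not the proof but the formula itself, which gives the explicit transformation of the defining function under a left translation of its intrinsic graph and will be invoked repeatedly in later sections.
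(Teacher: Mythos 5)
Your proof is correct. You verify the three claims (constancy on $\paramY$-cosets, $\Gamma_h=g\Gamma_f$, and the projection identity) by direct computation, resting on the additivity of the $y$-coordinate and associativity of the group law. The paper organizes the argument slightly differently: it introduces $\hat{\Psi}(p)=g\Psi_f(g^{-1}p)$, observes that $\hat{\Psi}$ maps $\HH$ onto $g\Gamma_f$ and satisfies $\hat{\Psi}(p)\paramY=p\paramY$, and then invokes the uniqueness of the projection-along-$\paramY$ map to conclude at once that $g\Gamma_f$ is the intrinsic graph of the function $h(p)=y(\hat{\Psi}(p))$ and that $\Psi_h=\hat{\Psi}$; the additivity of $y$ is used only in the final step to expand $y(g\Psi_f(g^{-1}p))$. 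Your version is a bit more computational and treats the three claims separately; the paper's is more structural and derives all three from the characterization of $\Psi_h$ in one stroke. Both are short and both rely on the same two elementary facts about $\HH$, so there is no meaningful gap either way—pick whichever you find clearer.
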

\begin{proof}
  Since $\Psi_f$ is the unique map from $\HH$ to $\Gamma_f$ that satisfies $\Psi_f(p)\paramY = p\paramY$ for all $p\in \HH$, the map $\hat{\Psi}(p) = g \Psi_f(g^{-1}p)$ sends $\HH$ to $g\Gamma_f$ and satisfies
  $$\hat{\Psi}(p)\langle Y \rangle = g \Psi_f(g^{-1}p) \langle Y \rangle = g g^{-1}p \langle Y \rangle = p\paramY$$
  for all $p\in \HH$. Therefore, $\hat{\Psi}=\Psi_h$ where
  $$h(p) = y(\hat{\Psi}(p)) = y(g\Psi_f(g^{-1}p)) = y(g) + f(g^{-1}p)$$
  and $g\Gamma_f =\Gamma_h$.
\end{proof}

\begin{lemma}\label{lem:ilg-scaling}
  Let $f\from \HH \to \R$ be a function which is constant on cosets of $\langle Y\rangle$. Let $t\ne 0$ and let $h\from \HH\to \R$,
  $$h(p)=t f(s_t^{-1}(p)).$$
  Then $\Gamma_h = s_t(\Gamma_f)$ and $\Psi_h(p) = s_t(\Psi_f(s_t^{-1}(p)))$
  for any $p\in \HH$.
\end{lemma}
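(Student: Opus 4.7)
The plan is to imitate the proof of Lemma~\ref{lem:ilg-translations}, replacing the left-translation by $g$ with the scaling automorphism $s_t$. The key properties I will use are that $s_t$ is a group automorphism of $\HH$, that $s_t$ preserves the subgroup $\paramY$ (since $s_t(\paramY) = \paramY$), and the coordinate identity $y(s_t(v)) = t\,y(v)$.

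First I would check that $h$ is well-defined as a function on cosets of $\paramY$: since $s_t$ is an automorphism fixing $\paramY$, the map $s_t^{-1}$ sends cosets of $\paramY$ to cosets of $\paramY$, so $f\circ s_t^{-1}$ is constant on cosets and so is $h = t\,(f\circ s_t^{-1})$. Next, define $\hat{\Psi}(p) := s_t(\Psi_f(s_t^{-1}(p)))$. Since $\Psi_f$ maps $\HH$ to $\Gamma_f$, $\hat{\Psi}$ maps $\HH$ to $s_t(\Gamma_f)$. Using that $s_t$ is a homomorphism and $\Psi_f(q)\paramY = q\paramY$ for all $q$, I compute
\begin{equation*}
  \hat{\Psi}(p)\paramY = s_t\bigl(\Psi_f(s_t^{-1}(p))\bigr)\,s_t(\paramY) = s_t\bigl(\Psi_f(s_t^{-1}(p))\paramY\bigr) = s_t\bigl(s_t^{-1}(p)\paramY\bigr) = p\paramY.
\end{equation*}

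By the same uniqueness argument used in Lemma~\ref{lem:ilg-translations} (the projection along cosets of $\paramY$ onto a subset transverse to them is unique), $\hat{\Psi}$ must equal $\Psi_h$ for the function
\begin{equation*}
 h(p) = y\bigl(\hat{\Psi}(p)\bigr) = y\bigl(s_t(\Psi_f(s_t^{-1}(p)))\bigr) = t\, y\bigl(\Psi_f(s_t^{-1}(p))\bigr) = t\, f(s_t^{-1}(p)),
\end{equation*}
where in the last step I used the defining identity \eqref{eq:projection-identity}. This identifies our candidate $h$ with the function appearing in the statement, and simultaneously yields $\Gamma_h = \hat{\Psi}(\HH) = s_t(\Gamma_f)$ and the desired formula for $\Psi_h$. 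There is no serious obstacle here; the only subtlety is remembering that $s_t$ is a homomorphism fixing $\paramY$, which is what makes the scaling interact cleanly with the coset structure (by contrast, a non-horizontal one-parameter subgroup would not be preserved by $s_t$, which is why the analogous statement for general one-parameter directions would be more delicate).
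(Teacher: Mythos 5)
Your argument is essentially identical to the paper's: define $\hat{\Psi}(p)=s_t(\Psi_f(s_t^{-1}(p)))$, verify $\hat{\Psi}(p)\paramY = p\paramY$ using that $s_t$ is an automorphism preserving $\paramY$, and then read off $h$ and $\Gamma_h$ from the uniqueness of the projection. The only difference is that you spell out explicitly why $h$ is constant on cosets of $\paramY$, which the paper leaves implicit.
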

\begin{proof}
  As above, $\hat{\Psi}(p) = s_t(\Psi_f(s_t^{-1}(p)))$ has image $s_t(\Gamma_f)$ and satisfies
  $$\hat{\Psi}(p)\langle Y \rangle = s_t(\Psi_f(s_t^{-1}(p))) \langle Y \rangle = s_t(s_t^{-1}(p) \langle Y \rangle) = p\paramY$$
  for all $p\in \HH$. Therefore, $\hat{\Psi}=\Psi_h$ where
  $$h(p) = y(\hat{\Psi}(p)) = t f(s_t^{-1}(p))$$
  and $s_t(\Gamma_f) = \Gamma_h$.
\end{proof}

Let $\XL=X$ and $\XR= \XL-y Z$ be the left-invariant and right-invariant vector fields defined in Section~\ref{sec:prelim-Heis}. For a smooth function $f\from \HH\to \R$, we have $\XL[f](v) = \frac{\ud}{\ud t}f(vX^t)|_{t=0}$ and $\XR[f](v) = \frac{\ud}{\ud t}f(X^tv)|_{t=0}$.
If $\phi\from \HH\to \R$ is constant on cosets of $\paramY$, we define the intrinsic gradient $\nabla_\phi$ as the vector field
\begin{equation}\label{eq:def-nabla-phi}
  \nabla_\phi(p) = \XR(p) - \phi(p) Z(p) = \XL(p) + (y(p) - \phi(p))Z.
\end{equation}
When $v\in V_0$, this agrees with the usual definition of the intrinsic gradient 
$\nabla_\phi(v) = \XL(v) - \phi(v)Z(v)$; equation \eqref{eq:def-nabla-phi} is the extension of $\nabla_\phi$ that is right-invariant with respect to the action of $\paramY$.
If $\phi$ and $\beta$ are smooth and constant on cosets of $\paramY$, then for all $p\in \HH$ and $t\in \R$,
$$\nabla_\phi \beta(pY^t) = \frac{\ud}{\ud u}\beta((X-\phi(pY^t)Z)^u pY^t)\big|_{u=0} = \frac{\ud}{\ud u}\beta((X-\phi(p)Z)^u p)\big|_{u=0} = \nabla_\phi \beta(p),$$
so $\nabla_\phi \beta$ is constant on cosets of $\paramY$. 

The intrinsic gradient $\nabla_\phi$ can also be interpreted in terms of the horizontal curves that foliate $\Gamma_\phi$. When $\phi$ is smooth, the restriction of $\nabla_\phi$ to $V_0$ is the smooth vector field $\nabla_\phi(v)=\XL - \phi(v) Z$. It follows that $V_0$ is foliated by integral curves of $\nabla_\phi$; we call these the \emph{characteristic curves} of $\Gamma_\phi$.  If $g\from \R\to V_0$ is such a curve then $\gamma=\Psi_\phi\circ g$ is a horizontal curve in $\Gamma_\phi$ with 
\begin{equation}\label{eq:horiz-veloc}
  \gamma'(t) = \XL + \nabla_\phi \phi(\gamma(t)) \YL,
\end{equation}
and the following lemma holds.
\begin{lemma}\label{lem:nabla-on-curves}
  Let $\phi, m\from \HH\to \R$ be smooth functions which are constant on cosets of $\paramY$, let $g\from \R\to V_0$ be a characteristic curve of $\Gamma_\phi$, and let $\gamma=\Psi_\phi\circ g$. For any $t\in \R$ and any $k\ge 1$,
  $$\nabla^k_\phi m(\gamma(t)) = \nabla^k_\phi m(g(t)) = \frac{\ud^k}{\ud t^k}[m\circ \gamma(t)] = \frac{\ud^k}{\ud t^k}[m\circ g(t)].$$
\end{lemma}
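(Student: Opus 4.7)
The plan is to prove the lemma by a short induction on $k$, reducing at each step to the identification of the characteristic curve $g$ as the integral curve of the vector field $\nabla_\phi$ restricted to $V_0$.

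First I would dispose of the two easy equalities. Since $m$ is constant on cosets of $\paramY$ and $\gamma(t)=\Psi_\phi(g(t))$ lies in the same coset as $g(t)$ (by the defining property of $\Psi_\phi$), the functions $m\circ \gamma$ and $m\circ g$ agree identically on $\R$, which gives $\frac{\ud^k}{\ud t^k}[m\circ\gamma]=\frac{\ud^k}{\ud t^k}[m\circ g]$. For the first equality in the chain, the same observation applies to $\nabla_\phi^k m$ provided this function is constant on cosets of $\paramY$. This last fact is already proved in the excerpt for $\nabla_\phi\beta$ when $\beta$ is constant on cosets of $\paramY$; iterating gives constancy of $\nabla_\phi^k m$ on cosets of $\paramY$ for every $k$, and so $\nabla_\phi^k m(\gamma(t))=\nabla_\phi^k m(g(t))$.

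The content of the lemma therefore reduces to the identity $\frac{\ud^k}{\ud t^k}[m\circ g(t)] = \nabla_\phi^k m(g(t))$. For $k=1$, this is the Euclidean chain rule: since $g$ is a characteristic curve, $g'(t)=\XL(g(t))-\phi(g(t))Z(g(t))$ as a tangent vector in $\R^3$, and unpacking the definition of $\nabla_\phi$ as a differential operator,
\begin{equation*}
\tfrac{\ud}{\ud t}[m\circ g(t)] = g'(t)\cdot\nabla m(g(t)) = (\XL - \phi(g(t))Z)\, m\,(g(t)) = \nabla_\phi m(g(t)).
\end{equation*}
For $k\ge 2$, apply the $k=1$ case to the function $\nabla_\phi^{k-1}m$, which is smooth and constant on cosets of $\paramY$ by the observation above, to get $\frac{\ud}{\ud t}[\nabla_\phi^{k-1}m\circ g(t)] = \nabla_\phi^k m(g(t))$, and then conclude by the inductive hypothesis applied to $m$.

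The only subtlety worth flagging is the inductive verification that $\nabla_\phi^k m$ remains constant on cosets of $\paramY$, but this is a direct iteration of the right-invariance property established in the paragraph preceding the lemma, so no real obstacle is present. Everything else is bookkeeping: the lemma is essentially the statement that $\nabla_\phi$ is the derivation along characteristic curves, combined with the invariance of all objects involved under the $\paramY$-action.
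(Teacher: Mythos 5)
Your proof is correct and takes essentially the same route as the paper: both reduce to the observation that $g$ is an integral curve of $\nabla_\phi$, so $\nabla_\phi^k m(g(t)) = \frac{\ud^k}{\ud t^k}[m\circ g(t)]$, and then transfer from $g$ to $\gamma$ using constancy of $m$ and $\nabla_\phi^k m$ on cosets of $\paramY$. The only difference is that you spell out the induction on $k$ that the paper leaves implicit; this is a matter of detail, not of method.
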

\begin{proof}
  Since $g$ is an integral curve of $\nabla_\phi$, we have
  $$\nabla^k_\phi m(g(t)) = \frac{\ud^k}{\ud t^k}[m\circ g(t)]$$
  for any $k\ge 1$. Since $m$ and $\nabla^k m$ are constant on cosets of $\paramY$, we have $\nabla^k_\phi m(\gamma(t)) = \nabla^k_\phi m(g(t))$ and $m\circ g = m\circ \gamma$, which implies the lemma.
\end{proof}
In particular, if $\gamma$ is as above, then $\nabla_\phi \phi(\gamma(0))$ is the slope of the tangent line to $\gamma$ at $\gamma(0)$. This implies that the intrinsic gradient is invariant under translations and scalings. That is, if $\Gamma_{\hat{\phi}} = gs_t(\Gamma_\phi)$, then  $\nabla_{\hat{\phi}} \hat{\phi}(gs_t(p)) =\nabla_\phi\phi(p)$.

For $0<\lambda<1$, we define the open double cone
$$\Cone_\lambda = \{p\in \HH \mid \lambda d_\Kor(\zero, p) < |y(p)|\}.$$
This is a scale-invariant cone, and when $\lambda$ is close to $1$, it is a small neighborhood of $\langle Y\rangle \setminus \{\zero\}$. An intrinsic graph $\Gamma_\phi$ is a \emph{$\lambda$--intrinsic Lipschitz graph} if $p\Cone_\lambda\cap \Gamma_\phi=\emptyset$ for all $p\in \Gamma_\phi$. Equivalently, $\Gamma_\phi$ is $\lambda$--intrinsic Lipschitz if and only if $\Lip(y|_{\Gamma_\phi})\le \lambda$. If $\phi\from \HH \to \R$ is constant on cosets of $\paramY$ and $\Gamma_\phi$ is a $\lambda$--intrinsic Lipschitz graph, we say that $\phi$ is a \emph{$\lambda$--intrinsic Lipschitz function}.

\begin{lemma}\label{lem:y-distance}
  Let $\lambda\in (0,1)$ and let $\Gamma_f$ be a $\lambda$--intrinsic Lipschitz graph. Then
  \begin{align*}
    |y(v) - f(v)| \approx_\lambda d_\Kor(v, \Gamma_f), \qquad \forall v \in \HH.
  \end{align*}
\end{lemma}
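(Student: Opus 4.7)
The plan is to prove the two sides of the equivalence separately. The upper bound is essentially free: the point $\Psi_f(v) = vY^{f(v)-y(v)}$ lies in $\Gamma_f$ by definition, and $v^{-1}\Psi_f(v) = Y^{f(v)-y(v)}$, so
$$d_\Kor(v, \Gamma_f) \le d_\Kor(v, \Psi_f(v)) = \|Y^{f(v)-y(v)}\|_\Kor = |y(v) - f(v)|,$$
since $\|(0,t,0)\|_\Kor = |t|$ directly from the Korányi formula. This direction does not use the Lipschitz hypothesis at all.

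For the lower bound, the two inputs are: (i) the intrinsic Lipschitz condition, which the paper has already noted is equivalent to $\Lip(y|_{\Gamma_f}) \le \lambda$, and (ii) the pointwise inequality $|y(p^{-1}q)| \le \|p^{-1}q\|_\Kor$, which again follows immediately from the definition of the Korányi norm and therefore gives $|y(p) - y(q)| \le d_\Kor(p,q)$ for all $p,q\in \HH$. Fix $v \in \HH$ and, for any $\varepsilon>0$, pick $q\in \Gamma_f$ with $d_\Kor(v,q) \le (1+\varepsilon) d_\Kor(v, \Gamma_f)$; no actual minimizer is needed.

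Then a single application of the triangle inequality in $\R$, followed by the triangle inequality in $\HH$, yields
\begin{align*}
|y(v) - f(v)|
&\le |y(v) - y(q)| + |y(q) - y(\Psi_f(v))| \\
&\le d_\Kor(v,q) + \lambda\, d_\Kor(q, \Psi_f(v)) \\
&\le d_\Kor(v,q) + \lambda\bigl(d_\Kor(q,v) + d_\Kor(v, \Psi_f(v))\bigr) \\
&= (1+\lambda)\, d_\Kor(v,q) + \lambda |y(v)-f(v)|,
\end{align*}
where in the second line I used the general $y$--bound on the first term and the $\lambda$--Lipschitz bound on $\Gamma_f$ on the second (both $q$ and $\Psi_f(v)$ lie on $\Gamma_f$, and $y(\Psi_f(v))=f(v)$), and in the last line the already-established upper bound $d_\Kor(v,\Psi_f(v)) = |y(v)-f(v)|$. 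Since $\lambda<1$, I can rearrange to
$$|y(v) - f(v)| \le \tfrac{1+\lambda}{1-\lambda}\, d_\Kor(v,q) \le \tfrac{1+\lambda}{1-\lambda}(1+\varepsilon)\, d_\Kor(v,\Gamma_f),$$
and letting $\varepsilon\to 0$ finishes the proof with an explicit constant.

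There is no real obstacle here: the only minor subtlety is making sure to feed the already-proved upper bound back into the argument for the lower bound in order to replace $d_\Kor(v,\Psi_f(v))$ by $|y(v)-f(v)|$ and then absorb that term to the left side, which is the step that requires $\lambda<1$.
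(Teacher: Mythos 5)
Your proof is correct, and it takes a genuinely different route from the paper. The paper argues geometrically via the cone condition: it reduces to showing $d_\Kor(pY^\alpha,\Gamma_f)\ge C|\alpha|$ for $p\in\Gamma_f$, then verifies that the ball $B(Y^\alpha,C|\alpha|)$ sits inside $\Cone_\lambda$ (via an explicit check that $q\in B(Y,C)$ implies $\lambda d_\Kor(\zero,q)<1-C<|y(q)|$, plus scale invariance), and invokes the disjointness $\Gamma_f\cap p\Cone_\lambda=\emptyset$. You instead use the equivalent reformulation $\Lip(y|_{\Gamma_f})\le\lambda$ together with the elementary bound $|y(p)-y(q)|\le d_\Kor(p,q)$ and two triangle inequalities, then bootstrap the already-established identity $d_\Kor(v,\Psi_f(v))=|y(v)-f(v)|$ back into the estimate and absorb the $\lambda$-term. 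Both arguments produce the identical constant $\frac{1+\lambda}{1-\lambda}$; yours avoids the explicit ball-in-cone computation and reads as a more purely metric/arithmetic argument, at the modest cost of relying on the $\Lip(y|_{\Gamma_f})\le\lambda$ characterization, which the paper states as an equivalence but does not itself use in this proof. One tiny stylistic point: the multiplicative approximation $d_\Kor(v,q)\le(1+\varepsilon)d_\Kor(v,\Gamma_f)$ is slightly awkward in the degenerate case $d_\Kor(v,\Gamma_f)=0$ (where it would force $q=v$); the additive form $d_\Kor(v,q)\le d_\Kor(v,\Gamma_f)+\varepsilon$ sidesteps this and makes the limiting argument cleaner, though since intrinsic Lipschitz graphs are closed the degenerate case is harmless either way.
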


\begin{proof}
  On one hand, $d_\Kor(v, \Gamma_f) \le d_\Kor(v, \Psi_f(v)) = |y(v) - f(v)|$, so it suffices to show that $d_\Kor(v, \Gamma_f) \gtrsim |y(v) - f(v)|$. It suffices to show that there is some $C>0$ depending only on $\lambda$ such that $d_\Kor(p Y^\alpha,\Gamma_f) \ge C |\alpha|$ for all $p\in \Gamma_f$ and $\alpha\in \R$; the lemma then follows by taking $p=\Psi_f(v)$ and $\alpha = y(v) - f(v)$.
  
  Let $C=\frac{1-\lambda}{1+\lambda}$, so that $\lambda = \frac{1-C}{1+C}$. Let $B(Y,C)$ be the open ball of radius $C$ around $Y$. If $q\in B(Y,C)$, then 
  $$\lambda d_\Kor(\zero,q) < \lambda(1+C) = 1-C < |y(q)|,$$
  so $q\in \Cone_\lambda$. Since $\Cone_\lambda$ is scale-invariant, this implies that $B(Y^\alpha, C|\alpha|)\subset \Cone_\lambda$.
  
  By the intrinsic Lipschitz condition, $\Gamma_f \cap p \Cone_\lambda = \emptyset$, so 
  $$\Gamma_f \cap p B(Y^\alpha, C|\alpha|) = \Gamma_f \cap B(p Y^\alpha, C|\alpha|) = \emptyset.$$ Therefore, $d_\Kor(p Y^\alpha, \Gamma_f) \ge C|\alpha|$, as desired.
\end{proof}

By \cite{CMPSC}, if $\Gamma_\phi$ is $\lambda$--intrinsic Lipschitz, then $\|\nabla_{\phi} \phi\|_\infty$ is bounded by a function of $\lambda$. Indeed,
\begin{equation}\label{eq:ilg-deriv-bound}
  \|\nabla_{\phi} \phi\|_\infty\le \frac{\lambda}{\sqrt{1-\lambda^2}},
\end{equation}
see \cite[Sec. 2.2]{NY2}. 
Conversely, if $\phi$ is defined on all of $\HH$ and $\nabla_\phi \phi$ is bounded, then $\phi$ is $\lambda$--intrinsic Lipschitz for some $0<\lambda<1$ depending on $\|\nabla_\phi \phi\|_\infty$ \cite{CMPSC}.

When $\phi$ is smooth and $p\in \Gamma_\phi$, we define the \emph{tangent plane} to $\Gamma_\phi$ at $p$ to be the vertical plane $P_p = p \langle X+\nabla_\phi \phi(p) Y, Z\rangle$ with slope $\nabla_\phi \phi(p)$. For $t>0$, $p s_t(p^{-1} \Gamma_\phi)$ is the scaling of $\Gamma_\phi$ centered at $p$, and as $t\to \infty$, $p s_t(p^{-1} \Gamma_\phi)$ converges to $P_p$. More generally, when $\phi$ is intrinsic Lipschitz, a Rademacher-type theorem holds for almost every $p\in \Gamma_\phi$, so the definition of $\nabla_\phi \phi(p)$ can be extended so that $p s_t(p^{-1} \Gamma_\phi)$ converges to $P_p$ for almost every $p\in \Gamma_\phi$ \cite{FSSCRectifiability}.

The following lemma, based on Lemma~2.3 of \cite{NY2}, is helpful for bounding intrinsic Lipschitz functions.
\begin{lemma}\label{lem:ilg-line-distance}
  Let $0\le \lambda\le 1$ and let $\psi\from \HH \to \R$ be a $\lambda$--intrinsic Lipschitz function. Let $\Gamma=\Gamma_\psi$. Let $g\in \Gamma$. For any $h\in \HH$,
  \begin{equation}\label{eq:ilg-line-distance}
    |\psi(g)-\psi(h)|\le \frac{2}{1-\lambda} d_{\Kor}(g,h\langle Y\rangle) \le \frac{2}{1-\lambda} d_{\Kor}(g,h).
  \end{equation}
  Furthermore, for any $t\in \R$ and any $p\in \HH$,
  $$|\psi(p)-\psi(pZ^t)|\le \frac{4\sqrt{|t|}}{1-\lambda}.$$
\end{lemma}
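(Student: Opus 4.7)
The plan is to reduce the first inequality to a comparison inside a single vertical line. I would introduce the point $q = \Psi_\psi(h)$, which lies in the coset $h\langle Y\rangle$ and also on $\Gamma$, so that $\psi(h) = \psi(q) = y(q)$, and let $h^\ast \in h\langle Y\rangle$ realize $d_\Kor(g, h\langle Y\rangle)$ (the infimum is attained because $h\langle Y\rangle$ is closed and the Korányi metric is proper). Then the triangle inequality gives
\begin{equation*}
  |\psi(g) - \psi(h)| = |y(g) - y(q)| \le |y(g) - y(h^\ast)| + |y(h^\ast) - y(q)|,
\end{equation*}
and I would estimate the two terms separately.

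For the first term, $y\from \HH \to \R$ is a group homomorphism, so $y(g) - y(h^\ast) = y(g^{-1}h^\ast)$; the formula for $\|\cdot\|_\Kor$ trivially gives $|y(v)| \le \|v\|_\Kor$ for all $v\in \HH$, so $|y(g)-y(h^\ast)| \le d_\Kor(g,h^\ast) = d_\Kor(g, h\langle Y\rangle)$. For the second term, $h^\ast$ and $q$ both lie on the vertical line $h\langle Y\rangle$, so $|y(h^\ast) - y(q)|$ equals their Korányi distance, which in turn equals $|y(h^\ast) - \psi(h^\ast)|$: indeed $\psi(h^\ast)=\psi(h)=y(q)$ because $\psi$ is constant on $\langle Y\rangle$--cosets. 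The cone computation in the proof of Lemma~\ref{lem:y-distance} yields the explicit bound $|y(v)-\psi(v)| \le \tfrac{1+\lambda}{1-\lambda}\, d_\Kor(v,\Gamma)$, and applying it at $v=h^\ast$ together with $d_\Kor(h^\ast,\Gamma) \le d_\Kor(h^\ast, g)$ (since $g\in \Gamma$) gives $|y(h^\ast)-y(q)| \le \tfrac{1+\lambda}{1-\lambda}\, d_\Kor(g, h\langle Y\rangle)$. Summing the two estimates produces the constant $1 + \tfrac{1+\lambda}{1-\lambda} = \tfrac{2}{1-\lambda}$; the second inequality in \eqref{eq:ilg-line-distance} is immediate because $h\in h\langle Y\rangle$.

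For the furthermore clause I would apply the first part with $g = \Psi_\psi(p) \in \Gamma$ (so that $\psi(g) = \psi(p)$) and $h = pZ^t$, reducing the task to the single estimate $d_\Kor(g, pZ^t\langle Y\rangle) \le 2\sqrt{|t|}$. Writing $g = pY^a$ with $a = \psi(p)-y(p)$, a direct calculation using \eqref{eq:heismult} gives
\begin{equation*}
  g^{-1}\, pZ^tY^s \;=\; Y^{-a}\, Z^t\, Y^s \;=\; (0,\, s-a,\, t),
\end{equation*}
whose Korányi norm $\sqrt[4]{(s-a)^4 + 16 t^2}$ is minimized at $s=a$, yielding $d_\Kor(g, pZ^t\langle Y\rangle) = 2\sqrt{|t|}$ and hence the asserted $\tfrac{4\sqrt{|t|}}{1-\lambda}$.

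There is no real obstacle: the heart of the argument is that passing from $g$ to the coset $h\langle Y\rangle$ costs only a factor $1$ in $y$-displacement (because the Korányi norm dominates $|y|$), while sliding along $h\langle Y\rangle$ from $h^\ast$ to $\Gamma$ costs the cone-condition factor $\tfrac{1+\lambda}{1-\lambda}$ packaged in Lemma~\ref{lem:y-distance}. The $\sqrt{|t|}$ in the furthermore clause reflects the parabolic scaling of the $Z$-direction encoded in $\|\cdot\|_\Kor$, and is produced entirely by the explicit multiplication $Y^{-a}Z^tY^a = Z^t$.
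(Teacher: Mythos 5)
Your proof is correct and follows the same skeleton as the paper's: both decompose $|\psi(g)-\psi(h)|$ by passing through the closest point $h^\ast$ (the paper's $c$) of $h\langle Y\rangle$ to $g$, bound the first leg by $d_\Kor(g,h^\ast)$ using $|y(v)|\le\|v\|_\Kor$, and bound the second leg by $\frac{1+\lambda}{1-\lambda}\,d_\Kor(g,h\langle Y\rangle)$. The one substantive difference is in how that second bound is obtained: you import the explicit constant $\frac{1+\lambda}{1-\lambda}$ from the cone-condition argument inside the proof of Lemma~\ref{lem:y-distance} (applied at $v=h^\ast$, then bounding $d_\Kor(h^\ast,\Gamma)\le d_\Kor(h^\ast,g)$), whereas the paper re-derives it from scratch via a self-improving triangle inequality that uses only $\Lip(y|_\Gamma)\le\lambda$: after reducing to $h\in\Gamma$, it writes $|y(h)-y(c)|\le\lambda d_\Kor(g,h)+m\le\lambda(m+|y(h)-y(c)|)+m$ and solves. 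Your route is a touch shorter once Lemma~\ref{lem:y-distance} (or at least the explicit-constant version hidden in its proof) is in hand, but it is logically circular-looking in the sense that Lemma~\ref{lem:y-distance} is stated only with the constant $\approx_\lambda$, so you are careful to cite the \emph{proof}, not the statement; the paper's inline derivation avoids that caveat. For the ``furthermore'' clause you compute $d_\Kor(g,pZ^t\langle Y\rangle)=2\sqrt{|t|}$ explicitly by minimizing $\sqrt[4]{(s-a)^4+16t^2}$ over $s$; the paper reaches the same number more quickly by choosing $h=\Psi_\psi(p)Z^t$, which lies in the same $\langle Y\rangle$-coset as $pZ^t$ and satisfies $d_\Kor(g,h)=\|Z^t\|_\Kor=2\sqrt{|t|}$ outright. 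Both are correct and the numerical outcome is identical.
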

\begin{proof}
  Since $\psi$ is constant on cosets of $\paramY$, it suffices to prove \eqref{eq:ilg-line-distance} when $h\in \Gamma$.
  Let $m=d_{\Kor}(g, h\langle Y\rangle)$. Let $c\in h\langle Y\rangle$ be such that $d_\Kor(g,c)=m$. By the intrinsic Lipschitz condition,
  $$|y(h)-y(c)|\le |y(h)-y(g)| + m \le \lambda d_\Kor(g,h) + m \le \lambda (m + |y(h)-y(c)|) + m.$$
  This simplifies to give
  $$|y(h)-y(c)|\le \frac{1+\lambda}{1-\lambda}m,$$
  and thus
  \begin{equation*}
    |\psi(g)-\psi(h)| = |y(g)-y(h)| \le |y(g)-y(c)|+|y(c)-y(h)|\le \frac{2m}{1-\lambda}.
  \end{equation*}
 
  For any $t\in \R$ and any $p\in \HH$,
  $$|\psi(p)-\psi(pZ^t)|=|\psi(\Psi_\psi(p))-\psi(\Psi_\psi(p)Z^t)|\le \frac{2}{1-\lambda} \|Z^t\|_\Kor = \frac{4\sqrt{|t|}}{1-\lambda}.$$
\end{proof}

This implies the following lemma, whose proof we omit; see also \cite{FS}.
\begin{lemma} \label{lem:proj-ball-contain}
  Let $\phi$ be a $\lambda$--intrinsic Lipschitz function, let $p\in \Gamma_\phi$, and let $r>0$. There is a $c>0$ depending on $\lambda$ such that
  $$\Pi(B(p,cr)) \subset \Pi(B(p,r)\cap \Gamma_\phi)\subset \Pi(B(p,r)).$$
\end{lemma}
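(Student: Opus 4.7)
The right-hand inclusion $\Pi(B(p,r)\cap \Gamma_\phi)\subset \Pi(B(p,r))$ is immediate since $B(p,r)\cap \Gamma_\phi\subset B(p,r)$, so only the left-hand inclusion requires an argument. The plan is to use the projection $\Psi_\phi$ itself: given $v\in B(p,cr)$, set $q:=\Psi_\phi(v)$. Since $\Psi_\phi$ moves $v$ only along the coset $v\paramY$ and $\Pi$ is also constant on such cosets, we automatically have $\Pi(q)=\Pi(v)$ and $q\in\Gamma_\phi$. Thus it suffices to choose $c=c(\lambda)>0$ so that $q\in B(p,r)$ for every $v\in B(p,cr)$.

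To estimate $d_\Kor(p,q)$ I would apply the triangle inequality
$$d_\Kor(p,q)\le d_\Kor(p,v)+d_\Kor(v,q).$$
The first term is bounded by $cr$ by assumption. For the second, note that $v^{-1}q=Y^{\phi(v)-y(v)}$ so $d_\Kor(v,q)=|\phi(v)-y(v)|$. Since $p\in \Gamma_\phi$, Lemma~\ref{lem:y-distance} gives
$$|\phi(v)-y(v)|\approx_\lambda d_\Kor(v,\Gamma_\phi)\le d_\Kor(v,p)<cr.$$
Combining these yields $d_\Kor(p,q)\le C_\lambda\,cr$ for a constant $C_\lambda$ depending only on $\lambda$. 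Choosing $c\le 1/C_\lambda$ then forces $q\in B(p,r)\cap\Gamma_\phi$, and the left-hand inclusion follows because $\Pi(v)=\Pi(q)\in\Pi(B(p,r)\cap\Gamma_\phi)$.

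There is essentially no obstacle here: the argument is a direct consequence of Lemma~\ref{lem:y-distance} and the fact that $\Psi_\phi$ preserves cosets of $\paramY$. The one point to keep in mind is to produce the preimage point on $\Gamma_\phi$ via $\Psi_\phi$ rather than through $\Pi$, so that the reduction to a distance estimate is automatic and the intrinsic Lipschitz geometry enters only through the comparison $|y-\phi|\approx_\lambda d_\Kor(\cdot,\Gamma_\phi)$.
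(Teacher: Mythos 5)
The paper explicitly omits the proof of this lemma (it cites \cite{FS} and notes the proof follows from Lemma~\ref{lem:ilg-line-distance}), so there is no internal proof to compare against. Your argument is correct and is the natural one: the key observation is that $\Psi_\phi$ and $\Pi$ project along the same cosets $v\paramY$, so $\Pi\circ\Psi_\phi=\Pi$, which reduces the inclusion to a distance estimate; the estimate $d_\Kor(v,\Psi_\phi(v))=|y(v)-\phi(v)|\lesssim_\lambda d_\Kor(v,\Gamma_\phi)\le d_\Kor(v,p)$ is exactly the content of Lemma~\ref{lem:y-distance}. The resulting chain $d_\Kor(p,\Psi_\phi(v))\le d_\Kor(p,v)+|y(v)-\phi(v)|\le (1+C_\lambda)cr$ then gives the required $c$. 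The only cosmetic point is that one wants the final bound strictly less than $r$, so the choice should be $c<1/(1+C_\lambda)$ rather than $c\le 1/C_\lambda$, but this does not affect the substance of the argument.
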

In particular, $\cH^3(\Pi(B(p,r)))\approx \cL(\Pi(B(p,r))) \approx r^3$, where $\cL$ is Lebesgue measure on $V_0$. 
\begin{lemma}\label{lem:def-dv}
  There is a left-invariant Borel measure $\mu$ on $\HH$ such that $\mu(S)=\cL(\Pi(S))$ for any intrinsic Lipschitz graph $\Gamma$ and any Borel set $S\subset \Gamma$.
  Further, if $m\from \HH \to \R$ is a Borel function which is constant on cosets of $\paramY$ and $S\subset \Gamma$ is Borel, then 
  \begin{equation}
  \label{eq:def-dv}
  \int_S m(v) \ud\mu(v) = \int_{\Pi(S)} m(v) \ud\cL(v)
  \end{equation}
  if the integrals exist. If $\lambda\in (0,1)$ and $\Gamma$ is $\lambda$--intrinsic Lipschitz, then $\cH^3(S) \approx_\lambda \mu(S)$. In particular, $\mu|_{\Gamma}$ is Ahlfors $3$--regular with constants only depending on $\lambda$.
\end{lemma}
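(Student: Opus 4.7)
The plan is to define $\mu$ via a fiber decomposition of $\HH$ over the vertical plane $V_0$ along cosets of $\paramY$. Concretely, I would set
$$\mu(E) := \int_{V_0} \#(E \cap v\paramY)\, \ud\cL(v),$$
with counting measure on each $Y$-coset fiber and $\cL$ the Lebesgue measure on the base $V_0$. The integrand is Borel-measurable for all Borel (indeed all analytic) $E$, so $\mu$ is a well-defined Borel measure on $\HH$. For any intrinsic Lipschitz graph $\Gamma_\phi$, each fiber $v\paramY$ meets $\Gamma_\phi$ in exactly the single point $\Psi_\phi(v)$, so the counting function restricts to $\mathbf{1}_{\Pi(S)}$ for Borel $S\subset\Gamma_\phi$, and $\mu|_{\Gamma_\phi}$ coincides with the pushforward $(\Psi_\phi)_\ast\cL|_{V_0}$.

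For left-invariance, the key observation is that left-translation by $g\in\HH$ permutes $Y$-cosets, so it induces a map on the quotient $\HH/\paramY$; under the identification $\HH/\paramY \cong V_0$ via $\Pi$, this induced map is $v\mapsto \Pi(g^{-1}v)$. Writing $g=(g_1,g_2,g_3)$ and using $\Pi(x,y,z)=(x,0,z-xy/2)$ together with the group law, a direct computation gives
$$\Pi(g^{-1}v) = \bigl(v_1-g_1,\; 0,\; v_3-g_3+v_1 g_2 - \tfrac{g_1 g_2}{2}\bigr)$$
for $v=(v_1,0,v_3)\in V_0$, an affine map of $V_0$ whose Jacobian determinant is $1$. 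The identity $\#(gE\cap v\paramY) = \#(E\cap \Pi(g^{-1}v)\paramY)$ combined with this measure-preserving change of variable then yields $\mu(gE)=\mu(E)$.

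With the definition and invariance in hand, the remaining claims follow quickly. The equation $\mu(S)=\cL(\Pi(S))$ for $S\subset\Gamma_\phi$ is immediate from the fiber count being $\mathbf{1}_{\Pi(S)}$. Formula~\eqref{eq:def-dv} follows by expressing $\mu|_{\Gamma_\phi}=(\Psi_\phi)_\ast\cL|_{V_0}$ and using that $m\circ\Psi_\phi = m$ on $V_0$ whenever $m$ is constant on cosets of $\paramY$, so the pushforward integration identity reduces both sides to $\int_{\Pi(S)} m\,\ud\cL$. For the Ahlfors comparison $\cH^3\approx_\lambda \mu$ on a $\lambda$--intrinsic Lipschitz graph $\Gamma$, Lemma~\ref{lem:proj-ball-contain} gives $\cL(\Pi(B(p,cr)))\le \mu(B(p,r)\cap\Gamma)\le \cL(\Pi(B(p,r)))$ for $p\in\Gamma$, and since $\cL(\Pi(B(p,r)))\approx r^3$, this yields $\mu(B(p,r)\cap\Gamma)\approx_\lambda r^3$; comparing with the known $3$--Ahlfors regularity of $\cH^3|_\Gamma$ for intrinsic Lipschitz graphs, a standard Vitali covering argument promotes the comparability of ball measures to comparability of the doubling measures $\mu|_\Gamma$ and $\cH^3|_\Gamma$ on all Borel subsets. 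The main technical point is the Jacobian computation for left-invariance; the rest of the argument is essentially unpacking the pushforward identity $\mu|_{\Gamma_\phi}=(\Psi_\phi)_\ast\cL|_{V_0}$.
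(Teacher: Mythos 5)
Your construction is a genuinely different route from the paper's. The paper builds $\mu$ as a Carathéodory/Hausdorff-type outer measure, $\mu(S) = \lim_{\epsilon\to 0}\inf_{C_\epsilon(S)}\sum_{U\in C_\epsilon(S)}\cL(\Pi(U))$, which is automatically a Borel (metric outer) measure by a standard theorem; the identity $\mu(S)=\cL(\Pi(S))$ on graphs and the left-invariance then require a short separate argument (the latter by showing $v\mapsto\Pi(gv)$ preserves $\cL|_{V_0}$, essentially the same Jacobian computation you carry out). Your disintegration $\mu(E)=\int_{V_0}\#(E\cap v\paramY)\,\ud\cL(v)$ makes $\mu(S)=\cL(\Pi(S))$ on graphs immediate, since each fiber meets a graph in exactly one point, and your Jacobian computation for $\Pi(g^{-1}\cdot)$ is correct. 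For the Ahlfors comparison, the paper goes directly through the area formula $\cH^3(S)\approx\int_{\Pi(S)}\sqrt{1+(\nabla_\phi\phi)^2}\,\ud\cL$ and the boundedness of $\nabla_\phi\phi$, whereas you compare ball measures via Lemma~\ref{lem:proj-ball-contain} and invoke a Vitali/differentiation argument; both are fine, though the area formula route is more direct.

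There is, however, a flaw in the measurability claim. You assert that $v\mapsto\#(E\cap v\paramY)$ is Borel measurable for every Borel (or analytic) $E$. This is false in general. Under the Borel isomorphism $p\mapsto(\Pi(p),y(p))$ the count becomes $v\mapsto\#E'_v$ for a Borel $E'\subset V_0\times\R$, and the level sets $\{v:\#E'_v\ge n\}$ are projections of Borel sets, hence analytic but not necessarily Borel; when sections can be uncountable the situation is worse still (the Luzin--Novikov theorem, which does give Borel measurability, applies only when all sections are countable). The correct statement is that the counting function is universally measurable, so the integral against $\cL|_{V_0}$ (a complete measure) is still well defined, and countable additivity follows from monotone convergence fiberwise. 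Thus your $\mu$ is a well-defined measure on the Borel $\sigma$-algebra, but the justification as written is wrong and should be replaced by the universal-measurability argument. The Carathéodory construction in the paper sidesteps this issue entirely, which is a concrete advantage of that route.
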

This will be our ``default'' measure on intrinsic Lipschitz graphs, and we will abbreviate $\ud\mu(v)$ by $\ud v$.
\begin{proof}
  For $S\subset \HH$, let
  \begin{equation}\label{eq:def-mu}
    \mu(S) = \lim_{\epsilon \to 0} \inf_{C_\epsilon(S)} \sum_{U\in C_\epsilon (S)} \cL(\Pi(U))
  \end{equation}
  where $C_\epsilon(S)$ is the set of covers of $S$ by sets of diameter at most $\epsilon$. This is a Borel measure on $\HH$ by \cite[Theorem 4.2]{Mabook}, and the restriction of $\mu$ to any intrinsic Lipschitz graph is the pullback of $\cL|_{V_0}$, i.e., $\mu(S)=\cL(\Pi(S))$ for any intrinsic Lipschitz graph $\Gamma$ and any Borel set $S\subset \Gamma$. Consequently, $\mu$ satisfies \eqref{eq:def-dv}. 
  
  By the area formula, \cite[Theorem 1.6]{CMPSC}, if $\phi$ is a $\lambda$--intrinsic Lipschitz function and $S \subset \Gamma_\phi$ is Borel, then
  \begin{equation*}
  \cH^3(S) \approx \int_{\Pi(S)} \sqrt{1+(\nabla_\phi \phi(v))^2} \ud v = \int_{S} \sqrt{1+(\nabla_\phi \phi(v))^2} \ud v \approx_\lambda \mu(S).
  \end{equation*}
  Since, by \cite[Theorem 3.9]{FS}, $\cH^{3}|_{\Gamma_\phi}$ is an Ahlfors 3--regular measure this implies that $\mu|_{\Gamma_\phi}$ is also Ahlfors $3$--regular with constants only depending on $\lambda$.

  Finally, we check that $\mu$ is left-invariant. It suffices to show that $\cL(\Pi(g U)) = \cL(\Pi(U))$ for any $g\in \HH$ and any Borel set $U\subset \HH$. First, for any $g, h\in \HH$,
  $$\Pi(gh)=gh Y^{-y(g)-y(h)}=g\left(h \cdot Y^{-y(h)}\right) Y^{-y(g)} = \Pi(g\Pi(h)).$$
  Let $\beta_g\from V_0\to V_0$, $\beta_g(v) = \Pi(g v)$, so that $\Pi(gU) = \beta_g(\Pi(U))$. Let $g=(x,y,z)\in \HH$ and $v=(x',0,z')\in V_0$. Then 
  $$\beta_g(v) = \Pi(g v) = g v Y^{-y} = \left(x + x', 0, z + z' - y x'-\frac{1}{2}xy\right).$$
  That is, $\beta_g$ is an affine transformation of $V_0$ with determinant $1$. Thus $\cL(\Pi(gU))= \cL(\beta_g(\Pi(U)))=\cL(\Pi(U))$. By \eqref{eq:def-mu}, $\mu$ is a left-invariant measure.
\end{proof}
  
\subsection{Taylor series estimates}
In this section, we prove a Taylor-type estimate for functions on intrinsic Lipschitz graphs, which we will use extensively in the rest of the paper. Let $a$ be a smooth intrinsic Lipschitz function and let $m$ be a smooth function which is constant on cosets of $\paramY$. (In particular, we can take $m=a$.) We will show that $m$ is close to a constant function or an affine function when the derivatives $\nabla_a m$, $\nabla_a^2 m$, and $Zm$ are small.

\begin{lemma}\label{lem:a-Taylor}
  Let $0<\lambda<1$ and let $a\from \HH\to \R$ be a smooth $\lambda$--intrinsic Lipschitz function. 
  Let $m\from \HH\to \R$ be a smooth function. Suppose that $a$ and $m$ are constant on cosets of $\paramY$. Let $p\in \Gamma_a$ and let $q\in \HH$. Let $r=d_\Kor(p,q)$, $L=\frac{\lambda}{\sqrt{1-\lambda^2}}$, and $B=B(p,2(L+1)r))$. Then
  $$m(q) = m(p) + O_\lambda\left(r \|\nabla_a m\|_{L_\infty(B)} + r^2 \|\partial_z m\|_{L_\infty(B)} \right)$$
  and
  $$m(q) = m(p) + (x(q)-x(p)) \nabla_a m(p) + O_\lambda\left(r^2\big[\|\nabla_a^2 m\|_{L_\infty(B)} + \|\partial_z m\|_{L_\infty(B)}\big]\right).$$
  In particular, if \(\zero\in \Gamma_a\) and $p=\zero$, then $a(\zero)=0$, so
  \begin{align*}
    |a(q) + a(\theta(q))|
    &= O_\lambda\left(r^2(\|\nabla_a^2 a\|_{L_\infty(B)} + \|\partial_z a\|_{L_\infty(B)})\right),
  \end{align*}
  where $\theta(x,y,z)=(-x,-y,z)$.
\end{lemma}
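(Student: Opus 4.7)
\medskip

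\noindent\textbf{Proof plan.} By left-invariance of the Korányi metric and Lemma~\ref{lem:ilg-translations}, I will first reduce to the case $p=\zero$ (so $a(\zero)=0$); replacing $m$ by $m\circ (p\cdot)$ does not change the hypotheses or the quantities being estimated. Since $m$ is constant on cosets of $\paramY$, I may replace $q$ by $q'=\Psi_a(q)\in \Gamma_a$; by Lemma~\ref{lem:y-distance} we have $d_\Kor(q,q')\lesssim_\lambda r$, so $d_\Kor(\zero,q')\lesssim_\lambda r$ as well. The point now is to connect $\zero$ and $q'$ by a path along which $m$ is easy to control.

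I will work in $V_0$ via the projection $\Pi$. Set $\bar q=\Pi(q)\in V_0$, so $m(q)=m(\bar q)$. Let $g\from \R\to V_0$ be the characteristic curve of $\Gamma_a$ with $g(0)=\zero$, so that $g'(s)=\XL-a(g(s))Z$ in $V_0$, i.e.\ $x(g(s))=s$ and $z(g(s))=-\int_0^s a(g(u))\,\ud u$. Choose $s_\ast=x(q)$ and set $v_1=g(s_\ast)$, which has the same $x$--coordinate as $\bar q$. Then go from $v_1$ to $\bar q$ along the vertical $Z$-segment in $V_0$. The decomposition
\[
  m(q)-m(p)=\bigl[m(v_1)-m(\zero)\bigr]+\bigl[m(\bar q)-m(v_1)\bigr]
\]
reduces the problem to two one-dimensional integrals: by Lemma~\ref{lem:nabla-on-curves}, the first leg contributes $\int_0^{s_\ast}\nabla_a m(g(s))\,\ud s$, and the second leg contributes $\int_{z(v_1)}^{z(\bar q)}\partial_z m(x(q),0,\zeta)\,\ud\zeta$.

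The key computation is estimating $|z(\bar q)-z(v_1)|$. Writing $q=(x_1,y_1,z_1)$ with $|x_1|,|y_1|\le r$ and $|z_1|\le r^2/4$ (from $\|q\|_\Kor\le r$), and using $\bar q=(x_1,0,z_1-\tfrac12 x_1 y_1)$, a direct expansion yields
\[
  z(v_1)-z(\bar q)=-z_1+\tfrac12 x_1 y_1-\int_0^{x_1} a(g(s))\,\ud s.
\]
The bound $\|\nabla_a a\|_\infty\le L$ from \eqref{eq:ilg-deriv-bound} combined with $a(\zero)=0$ gives $|a(g(s))|\le L|s|$, so the integral is $O(Lr^2)$, and altogether $|z(v_1)-z(\bar q)|\lesssim_\lambda r^2$. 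The same bound on $|a(g(s))|$ shows $\|g(s)\|_\Kor\le r(1+4L^2)^{1/4}\le 2(L+1)r$ for $s\in[0,s_\ast]$, and a similar computation gives $\|\bar q Z^\zeta\|_\Kor\le 2(L+1)r$ along the second leg, so both legs lie inside $B$.

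The first estimate in the lemma then follows from $|s_\ast|\le r$ and $|z(v_1)-z(\bar q)|\lesssim_\lambda r^2$. For the second estimate I expand along the characteristic curve to one further order: applying Lemma~\ref{lem:nabla-on-curves} twice,
\[
  \nabla_a m(g(s))=\nabla_a m(\zero)+\int_0^s \nabla_a^2 m(g(u))\,\ud u,
\]
so $\int_0^{s_\ast}\nabla_a m(g(s))\,\ud s=s_\ast\,\nabla_a m(\zero)+O(r^2\|\nabla_a^2 m\|_{L_\infty(B)})$, and $s_\ast=x(q)-x(p)$. The third claim is immediate from the second applied with $m=a$, using that $\theta=s_{-1}$ is a Korányi isometry fixing $\zero$, so $d_\Kor(\zero,\theta(q))=r$ and the same ball $B$ can be used; the two linear terms $x(q)\nabla_a a(\zero)$ and $x(\theta(q))\nabla_a a(\zero)=-x(q)\nabla_a a(\zero)$ cancel.

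The main obstacle is the $z$-coordinate bookkeeping: one must keep track of how the noncommutativity of $\HH$ enters through the formula for $\Pi(q)$ and the $Z$-drift of the characteristic curve, and verify that the resulting path lies within $B(p,2(L+1)r)$ (this is precisely why the enlargement factor $2(L+1)$ appears in the statement). The rest is standard one-variable Taylor estimation along the two legs.
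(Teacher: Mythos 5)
Your proof is correct and follows essentially the same strategy as the paper's: travel in the $x$--direction along the characteristic/horizontal curve through $p$ until the $x$--coordinate matches $q$, then travel in the $z$--direction along a vertical segment, and use the intrinsic Lipschitz bound $\|\nabla_a a\|_\infty\le L$ to control both the velocity of the first leg and the length of the second. The paper works directly on $\Gamma_a$ with the horizontal curve $\gamma$ and then projects both $\gamma(x(q))$ and $q$ to the plane $pV_0$; you work entirely in $V_0$ after projecting $q$ via $\Pi$ and following the characteristic curve $g=\Pi\circ\gamma$. By Lemma~\ref{lem:nabla-on-curves} the two viewpoints produce identical integrals for the first leg, so the difference is only bookkeeping. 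For the length of the vertical leg you write out the explicit identity
\[
z(v_1)-z(\bar q)=-z_1+\tfrac12 x_1y_1-\int_0^{x_1} a(g(s))\,\ud s
\]
and bound each term, whereas the paper observes more briefly that two points of $pV_0\cap B$ with the same $x$--coordinate differ by $Z^{z_0}$ with $|z_0|\lesssim_\lambda r^2$. Both routes give the same constant dependence, and your explicit version makes it transparent where the noncommutativity (the $\tfrac12 x_1y_1$ term from $\Pi$) enters. Two tiny cosmetic points: the introduction of $q'=\Psi_a(q)$ in your second sentence is never used (you immediately switch to $\bar q=\Pi(q)$, which suffices); and when you reduce to $p=\zero$ you should say a word about why the intrinsic gradient $\nabla_a$ is preserved under the translation (Lemma~\ref{lem:ilg-translations} plus the left-invariance of $\nabla_a$ noted after its definition), though this is routine.
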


\begin{proof}
  By \eqref{eq:ilg-deriv-bound}, we have $\|\nabla_a a\|_\infty\le L$.
  Let $\gamma\from \R\to \Gamma_a$ be a horizontal curve through $p$. We parametrize $\gamma$ so that $x(\gamma(t))=t$ for all $t\in \R$. In particular $\gamma(x(p)) = p$. By \eqref{eq:horiz-veloc}, $\|\gamma'(t)\| \le L+1$ for all $t$, so $\Lip(\gamma)\le L+1$. In particular, $\gamma(x(q)) \in B(p,(L+1)r)$.
  
  Recall that $\Pi_{p V_0}\from \HH\to p V_0$ is the projection $\Pi_{p V_0}(s) = s Y^{y(p)-y(s)}$; for any $s\in \HH$, we have 
  $$d_\Kor(p,\Pi_{p V_0}(s)) \le d_\Kor(p,s) + |y(p)-y(s)|\le 2 d_\Kor(p,s).$$
  Then $g':=\Pi_{p V_0}(\gamma(x(q)))$ and $q':=\Pi_{p V_0}(q)$ are two points in $B \cap p V_0$ with the same $x$--coordinate, so $g' = q' Z^{z_0}$ for some $z_0\in \R$ such that $|z_0|\lesssim_\lambda r^2$. Since $m(q)=m(q')$ and $m(\gamma(x(q))) = m(g')$,
  $$m(q) = m(\gamma(x(q))) + O_\lambda (r^2 \|\partial_z m\|_{L_\infty(B)}).$$
  
  Since $(m\circ \gamma)'(t) = \nabla_a m(\gamma(t))$ and $(m\circ \gamma)''(t) = \nabla^2_a m(\gamma(t))$, 
  the Mean Value Theorem implies that
  $$m(\gamma(x(q))) = m(p) + O(r \|\nabla_a m\|_{L_\infty(B)}),$$
  so
  $$m(q) = m(p) + O_\lambda\left(r \|\nabla_a m\|_{L_\infty(B)} + r^2 \|\partial_z m\|_{L_\infty(B)}\right).$$
  Taylor's theorem implies 
  $$m(\gamma(x(q))) = m(p) + (x(q) - x(p)) \nabla_a m(p) + O_\lambda(r^2 \|\nabla_a^2 m\|_{L_\infty(B)}),$$
  so
  \begin{equation*}
    m(q) 
    = m(p) + (x(q) - x(p)) \nabla_a m(p) + O_\lambda\left(r^2(\|\nabla_a^2 m\|_{L_\infty(B)} + \|\partial_z m\|_{L_\infty(B)})\right),
  \end{equation*}
  as desired.
\end{proof}

\section{Construction} \label{sec:construction}

In this section, we construct the family of graphs that we will study in the rest of this paper. Our construction is based on the construction in Section 3.2 of \cite{NY2}. The authors of \cite{NY2} introduced a process to construct an intrinsic graph $\Gamma_\psi$ that is far from a vertical plane at many scales (see Proposition 3.4 of \cite{NY2}). Unfortunately for our purposes, the intrinsic gradient $\nabla_\psi \psi$ is $L_2$--bounded but not bounded, so $\Gamma_\phi$ is not intrinsic Lipschitz. In this section, we will modify that construction via a stopping time argument so that it produces an intrinsic Lipschitz function with similar properties. To keep this paper self-contained, we will  reproduce the construction of \cite{NY2} in parallel with our modification.

The construction depends on three parameters: an integer aspect ratio $A>1$, an integer scale factor $\rho>1$, and a number of steps $i$. In \cite{NY2}, one starts with a function $\psi_0=0$ and constructs $\psi_{i+1}$ by perturbing $\psi_i$. The difference $\psi_{i+1}-\psi_i$ is a sum of bump functions supported on regions in $V_0$ with aspect ratio $A$, and the scale of the perturbations decreases by a factor of $\rho$ at each step. 

Recall that if $\psi \from V_0 \to \R$ is a smooth function, then it induces a smooth vector field $\nabla_\psi = \partial_x - \psi \partial_z$ on $V_0$, and we call integral curves of $\nabla_\psi$ characteristic curves. Since $\psi$ is smooth, there is a unique characteristic curve of $\Gamma_\psi$ through each point of $V_0$.
A \emph{pseudoquad} $Q\subset V_0$ for $\Gamma_\psi$ is a region of the form
$$Q=\left\{(x,0,z)\in V_0\mid x\in [a,b], z\in [g_1(x),g_2(x)]\right\}$$
where $g_1, g_2\from [a,b]\to \R$ are functions whose graphs are characteristic curves, i.e., $g_i'(x)=-\psi(x,0,g_i(x))$ for all $x$. In particular, $g_i\in C^1([a,b])$.  We define the \emph{width} of $Q$ to be $\delta_x(Q)=b-a$ and we define the \emph{height} to be $\delta_z(Q) = g_2(a)-g_1(a)$. Since the distance between the top and bottom boundary varies, there is no single canonical height, but this choice is enough for many applications. The \emph{aspect ratio} of $Q$ is the ratio $\frac{\delta_x(Q)}{\sqrt{\delta_z(Q)}}$; the square root in the denominator makes this ratio scale-invariant. 

We say that two pseudoquads are \emph{disjoint} if and only if their interiors are disjoint. We say that $U=Q_1\cup \dots\cup Q_n$ is a \emph{partition} of $U$ if the $Q_i$'s are disjoint. 

Let $U = [0,1] \times \{0\} \times [0,1]$ and let $\kappa \from [0,1]^2 \to \R$ be a nonnegative smooth function with $\supp \kappa\subset (0,1)^2$. We require that $\|\kappa\|_\infty \le 1$, $\kappa(s,t) > 0$ for $s,t\in [\frac{1}{5}, \frac{4}{5}]$, and that the partial derivatives of $\kappa$ of order at most 2 are all in the interval $[-1,1]$. (The  assumption on partial derivatives is used in \cite{NY2} to bound certain derivatives when $\rho\ge 8$; it can be dropped at the cost of changing some constants.)

We will use induction to construct functions $f_i$ and $\psi_i$ supported on $U$. We start with $f_0=0$ and $\psi_0=0$, and for each $i\ge 0$, we let $r_i:=A^{-1}\rho^{-i}$ and construct:
\begin{itemize}
\item a partition $U = Q_{i,1}\cup \dots \cup Q_{i,k_i}$ such that each $Q_{i,j}$ is a pseudoquad for $\Gamma_{\psi_i}$ with width $\delta_x(Q_{i,j}) = A r_i$, height $\delta_z(Q_{i,j}) = r_i^2$, and aspect ratio $A$,
\item a collection of bump functions $\kappa_{i,j}$ such that $\kappa_{i,j}$ is supported on $Q_{i,j}$ and $\|\kappa_{i,j}\|_\infty \approx A^{-1}r_i$,
\item a set $J_i\subset \{1,\dots, k_i\}$ such that $|\nabla_{f_i}f_i|\le \frac{1}{2}$ on $Q_{i,j}$ for every $j\in J_i$. Furthermore, we let $S_i := \bigcup_{j\in \{1,\dots, k_i\}\setminus J_i} Q_{i,j}$ and require that $S_{i}\supset S_{i-1}$ (where $S_{-1}=\emptyset$).
\end{itemize}
We then define $\kappa_i:=\sum_{j=1}^{k_i} \kappa_{i,j}$, 
$$\nu_i:=\sum_{j\in J_i} \kappa_{i,j}=\one_{S_i^c} \kappa_i,$$ $\psi_{i+1} := \psi_i + \kappa_i$ and $f_{i+1} := f_i + \nu_i$. The $\psi_i$'s are the functions constructed in \cite{NY2}, and the $f_i$'s are a ``stopped'' version of the $\psi_i$'s. That is, when $|\nabla_{f_i}f_i|$ gets too large on a pseudoquad, that pseudoquad is added to $S_i$, and the construction ensures that $f_{k}|_{S_i} = f_{i}|_{S_i}$ for all $k>i$.

We first construct the $Q_{i,j}$'s. Suppose that we have already defined $f_i$. Let
\begin{align*}
  G_i := \left\{\left(mAr_i, 0, nr_i^2\right) : m,n \in \Z\right\},
\end{align*}
let $k_i=A^{-1}r_i^3$, and let $v_{i,1},v_{i,2},\dots,v_{i,k_i}$ be an enumeration of $G_i \cap \big([0,1)\times \{0\} \times [0,1)\big)$. Let $\Phi(\psi_i)_s$ be the flow map of $\nabla_{\psi_i}$ on $V_0$; so that $\Phi(\psi_i)_0(v)=v$ for all $v\in V_0$ and the map $s \mapsto \Phi(\psi_i)_s(v)$, $s\in \R$ is a characteristic curve of $\Gamma_{\psi_i}$. In particular, $x(\Phi(\psi_i)_s(v)) = x(v) + s$. Let
\begin{align*}
  R_{i,j}(s,t) := \Phi(\psi_i)_s(v_{i,j}Z^t)
\end{align*}
and let
\begin{align*}
  Q_{i,j} := R_{i,j}([0,Ar_i] \times [0,r_i^2]).
\end{align*}
This is a pseudoquad of width $Ar_i$ and height $r_i^2$. Because the top and bottom edges of $U$ are characteristic curves of $\psi_i$, we have $Q_{i,j}\subset U$ for all $j$. Indeed, $U=Q_{i,1}\cup \dots \cup Q_{i,k_i}$ is a locally finite partition of $U$. (Local finiteness follows for instance from Lemma~\ref{l:cube-height}.) Let $\mathcal{Q}_i=\{Q_{i,1}, \dots, Q_{i,k_i}\}$.

For each $Q_{i,j}$, we define
\begin{align}\label{eq:def-kappa}
  \kappa_{i,j}(R_{i,j}(s,t)) := A^{-1}r_i \kappa(A^{-1} r_i^{-1} s, r_i^{-2} t),
\end{align}
and let $\kappa_i := \sum_j \kappa_{i,j}$. Note that $\kappa_i$ is smooth and that it is zero in a neighborhood of $\partial Q_{i,j}$ for each $j$.

To define $S_{i}$ and $\nu_i$, we will need some notation.
For every $k$, we say that a pseudoquad $Q'\in \mathcal{Q}_{k+1}$ is a \emph{child} of $Q\in \mathcal{Q}_k$ if $\interior(Q') \cap \interior(Q) \ne \emptyset$. 
Note that this does not necessarily mean that $Q'\subset Q$; the pseudoquads in $\cQ_{k+1}$ do not subdivide the pseudoquads in $\cQ_{k}$.
Nonetheless, by the local finiteness of $\mathcal{Q}_{k+1}$, every $Q\in \mathcal{Q}_k$ has only finitely many children.

Let $\cC(Q)$ be the set of children of $Q$ and define $\cC^n(Q)$ inductively so that $\cC^0(Q) = \{Q\}$ and $\cC^{n}(Q) = \bigcup_{Q'\in \cC^{n-1}(Q)} \cC(Q')$. For any set $\cM$ of pseudoquads, we let
$$\bigcup(\cM):=\bigcup_{Q\in \cM} Q.$$ 
For $Q\in \mathcal{Q}_k$ and $l>k$, let 
$Q^{(l)} = \bigcup(\cC^{l-k}(Q)).$
Let $\cD(Q)=\bigcup_{n=0}^\infty \cC^n(Q)$ be the set of \emph{descendants} of $Q$.

If $Q\in \mathcal{Q}_k$ and $v\in \interior(Q)$, then any neighborhood of $v$ intersects the interior of some child of $Q$. It follows that $v$ lies in the closure of $\bigcup(\cC(Q))$, and since $\cC(Q)$ is finite, $v\in \bigcup(\cC(Q))$. Since the closure of $\interior(Q)$ is $Q$, we have $Q\subset \bigcup(\cC(Q))$; in fact, $\bigcup(\cC^{n}(Q))\subset \bigcup(\cC^{n+1}(Q))$ for all $n$.

Let $\mathcal{S}_{i}\subset \mathcal{Q}_i$ be the set
\begin{align*}
  \mathcal{S}_{i} = \bigcup_{Q\in \mathcal{S}_{i-1}} \cC(Q) \cup \left\{Q \in \mathcal{Q}_i : \max_{x \in Q_{i,j}} |\nabla_{f_i} f_i(x)| \geq \frac{1}{2}\right\},
\end{align*}
where $\mathcal{S}_{-1}=\emptyset$. Let $S_{i}=\bigcup(\mathcal{S}_{i})$ and let $J_i=\{j:Q_{i,j}\not\in \mathcal{S}_i\}$.
Then $S_{i+1}\supset S_i$ for all $i$.
Let $\nu_i := \sum_{j\in J_i} \kappa_{i,j} = \one_{S_i^c}\kappa_i$
Since $\kappa_{i,j}$ is zero on a neighborhood of $\partial Q_{i,j}$, this is smooth. We define $f_{i+1} = f_i + \nu_i$ and $\psi_{i+1} = \psi_i + \kappa_i$.

Note that $S_i\subset S_j$ for all $i < j$, so $\nu_{i}$ is zero on a neighborhood of $S_i$. Therefore,
\begin{align}\label{eq:fi-equals-fj}
  f_j|_{S_i} &= f_{i}|_{S_i} &
  \nabla_{f_j}f_j|_{S_i} &= \nabla_{f_{i}}f_{i}|_{S_i}. 
\end{align}
Conversely, if $v\not\in S_{i-1}$, then for all $k\le i-1$, $v\not \in S_k$ and $\kappa_k(v) = \nu_k(v)$, so
\begin{equation}\label{eq:f-equals-psi}
  f_{i}|_{S_{i-1}^c} = \sum_{k=0}^{i-1} \nu_k|_{S_{i-1}^c} = \sum_{k=0}^{i-1} \kappa_k|_{S_{i-1}^c} = \psi_{i}|_{S_{i-1}^c}.
\end{equation}

The functions $\psi_i$ are exactly the same as those defined in Section 3.2 of \cite{NY2} and our $\kappa_i$ correspond to their $\nu_i$. We will show that if $\rho$ is sufficiently large, $\epsilon>0$ is sufficiently small, and $i<\epsilon N^4$, then $f_i$ is intrinsic Lipschitz and the set on which $f_i$ and $\psi_i$ differ is small.

\begin{prop} \label{p:stopping-time}
  Let $A > 1$ be sufficiently large. If $\rho$ is sufficiently large (depending on $A$), then for each $i$, $f_i$ is a smooth function supported in $U$ such that $\|\nabla_{f_i} f_i\|_\infty \le 1$. In particular, $f_i$ is intrinsic Lipschitz. Furthermore, $f_i|_{S_{i-1}^c} = \psi_i|_{S_{i-1}^c}$ and $\mu(S_{i}) \lesssim iA^{-4}$.
\end{prop}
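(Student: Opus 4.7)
I argue by induction on $i$. The base case $i=0$ is immediate: $f_0\equiv 0$ is smooth and supported in $U$ with vanishing intrinsic gradient, $\mathcal{S}_0=\emptyset$ so $\mu(S_0)=0$, and $f_0|_{S_{-1}^c}=0=\psi_0|_{S_{-1}^c}$ under the convention $\mathcal{S}_{-1}=\emptyset$. For the inductive step, smoothness and $U$-support of $f_{i+1}=f_i+\nu_i$ follow because each $\kappa_{i,j}$ is smooth and vanishes on a neighborhood of $\partial Q_{i,j}$, and the identity $f_{i+1}|_{S_i^c}=\psi_{i+1}|_{S_i^c}$ is recorded in \eqref{eq:f-equals-psi}.

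For the gradient bound I partition $V_0$ into $S_i$, $S_{i+1}^c$, and $S_{i+1}\setminus S_i$ (up to a negligible boundary). On $S_i$, the vanishing of $\nu_i$ near $\partial S_i$ gives $f_{i+1}\equiv f_i$ locally, so the inductive bound applies. On $S_{i+1}^c$, each $Q\in\mathcal{Q}_{i+1}\setminus\mathcal{S}_{i+1}$ satisfies $|\nabla_{f_{i+1}}f_{i+1}|<1/2$ by the very definition of $\mathcal{S}_{i+1}$. For $S_{i+1}\setminus S_i$, a point there lies in the interior of some $Q\in\mathcal{S}_{i+1}$ that is \emph{not} a child of $\mathcal{S}_i$; then $\interior(Q)\cap\interior(Q')=\emptyset$ for every $Q'\in\mathcal{S}_i$, and since $\mathcal{Q}_i$ partitions $U$, $\interior(Q)\subset S_i^c$. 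Using $f_j\equiv\psi_j$ on $S_{j-1}^c$ for $j=i,i+1$ on $\interior(Q)$, I expand
$$\nabla_{\psi_{i+1}}\psi_{i+1}-\nabla_{\psi_i}\psi_i = \nabla_{\psi_i}\kappa_i-\kappa_i\,\partial_z\psi_{i+1}.$$
The first term is $\partial_s$ of $\kappa_i$ in the characteristic coordinates of $R_{i,j}$, so \eqref{eq:def-kappa} gives $\|\nabla_{\psi_i}\kappa_i\|_\infty\lesssim A^{-2}$. For the second, $\|\kappa_i\|_\infty\lesssim A^{-1}r_i=A^{-2}\rho^{-i}$; and a Jacobian estimate for $R_{k,j}$ (valid for $\rho$ large since the inductive bound $\|\partial_z\psi_k\|_\infty\cdot Ar_k\lesssim 1$ is preserved from level to level) gives $\|\partial_z\kappa_k\|_\infty\lesssim \rho^k$, whence $\|\partial_z\psi_{i+1}\|_\infty\lesssim\sum_{k=0}^i\rho^k\lesssim \rho^i$ and $\|\kappa_i\partial_z\psi_{i+1}\|_\infty\lesssim A^{-2}$. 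Combined with the pointwise bound $|\nabla_{f_i}f_i|<1/2$ on $\interior(Q)\subset S_i^c$, this yields $|\nabla_{f_{i+1}}f_{i+1}|<1/2+CA^{-2}\le 1$ for $A$ sufficiently large.

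For the measure bound I decompose $S_{i+1}\setminus S_i$ into the overshoot of $\mathcal{S}_i$-children beyond $S_i$ (a perimeter-times-thickness contribution, negligible for $\rho$ large) and the newly bad pseudoquads $Q\in\mathcal{S}_{i+1}\setminus\bigcup_{Q'\in\mathcal{S}_i}\cC(Q')$. The pointwise analysis above shows any such $Q$ contains a point $v\in S_i^c$ at which $|\nabla_{\psi_i}\psi_i(v)|\ge 1/2-CA^{-2}$. Combining the $L^2$ estimate $\|\nabla_{\psi_{i+1}}\psi_{i+1}-\nabla_{\psi_i}\psi_i\|_2^2\lesssim A^{-4}$ (which follows from the $L^\infty$ bound above and $|\supp\kappa_i|\le|U|\lesssim 1$) with the orthogonality of the scale-wise gradient increments inherited from the NY2 construction, together with a Chebyshev-type discretization argument that converts the pointwise condition into a count of $\mathcal{Q}_{i+1}$-pseudoquads, yields $\mu(\text{newly bad})\lesssim A^{-4}$ per step; summing gives $\mu(S_i)\lesssim iA^{-4}$. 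The main obstacle is this last measure estimate: the pointwise $L^\infty$ perturbation bound $A^{-2}$ is not sharp enough by itself, and to recover the extra factor $A^{-2}$ one must exploit the finer $L^2$/orthogonality structure of the perturbations across scales furnished by the NY2 construction.
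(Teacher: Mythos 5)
Your account of the gradient bound is in the right spirit but both overcomplicated and subtly wrong. The claim that every point of $S_{i+1}\setminus S_i$ lies in the interior of a pseudoquad $Q\in\mathcal{S}_{i+1}$ that is \emph{not} a child of $\mathcal{S}_i$ is false: children do not sit inside their parents, so a point can lie in a child $Q$ of some $Q'\in\mathcal{S}_i$ while falling outside $S_i$ entirely. The paper's Lemma~\ref{l:stopping-time-Lip} avoids this trap with a two-case split: if $v\notin S_i$ then $|\nabla_{f_i}f_i(v)|<\tfrac12$ by the very definition of $\mathcal{S}_i$ and $\tilde{D}_i=\one_{S_i^c}D_i$ has $\|\tilde{D}_i\|_\infty\lesssim A^{-2}$, while if $v\in S_i$ then $f_{i+1}\equiv f_i$ near $v$ by \eqref{eq:fi-equals-fj}, so the inductive bound is carried across. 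Your algebraic identity
$\nabla_{\psi_{i+1}}\psi_{i+1}-\nabla_{\psi_i}\psi_i = \nabla_{\psi_i}\kappa_i-\kappa_i\,\partial_z\psi_{i+1}$
and the resulting $A^{-2}$ increment bound are correct and reproduce in a self-contained way what the paper cites from Lemma~3.9 of \cite{NY2}, but the three-set decomposition is both unnecessary and flawed.

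The genuine gap is in the measure estimate, and you correctly flag that you cannot close it, but the per-step strategy you sketch will not work even in principle. The claim $\mu(\text{newly bad at step }i)\lesssim A^{-4}$ does not follow from $\|\tilde{D}_i\|_2^2\lesssim A^{-4}$ by Chebyshev: ``newly bad'' means the cumulative gradient $\nabla_{f_{i+1}}f_{i+1}$ first exceeds $\tfrac12$ on a pseudoquad, and this can happen on a set of large measure even when the last increment $\tilde{D}_i$ is tiny — the gradient could have been hovering at $\tfrac12-\epsilon$ across many pseudoquads. The paper never attempts a per-step count. Instead it identifies the ``first-time bad'' family $\cB_i$, proves via Lemma~\ref{l:nabla-min-max} that $|\nabla_{f_i}f_i|\ge\tfrac14$ \emph{everywhere} on $\bigcup(\cB_i)$ (because the gradient, once large on a pseudoquad, is frozen there by \eqref{eq:fi-equals-fj} and varies little within it by Lemma~\ref{l:D-holder}), shows via Lemma~\ref{l:neighbor-expand} that $S_i$ is covered by descendants of $\cB_i$ of comparable measure, and then applies Chebyshev \emph{once} to the cumulative gradient: $\mu(\bigcup\cB_i)\le 16\|\nabla_{f_i}f_i\|_2^2\lesssim iA^{-4}$, where the last bound is Lemma~\ref{l:l2-fi} and is exactly where the quasi-orthogonality \eqref{eq:new-D-orthog} enters. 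You would need to reorganize your argument around a single cumulative Chebyshev bound rather than a telescoping per-step one, and you would also need the descendant-control estimate (your ``perimeter-times-thickness'' heuristic is not a substitute for Lemma~\ref{l:neighbor-expand}, which requires the geometric-series argument on pseudoquad heights).
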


By \eqref{eq:f-equals-psi}, it suffices to show that $\|\nabla_{f_i} f_i\|_\infty \le 1$ and $\mu(S_{i-1}) \lesssim i A^{-4}$.

We will need some bounds from \cite{NY2}.
As in \cite{NY2}, let
\begin{align*}
  D_i &= \nabla_{\psi_{i+1}} \psi_{i+1} - \nabla_{\psi_i}\psi_i &
  \tilde{D}_i &= \nabla_{f_{i+1}} f_{i+1} - \nabla_{f_i}f_i.
\end{align*}
By  \eqref{eq:fi-equals-fj} and \eqref{eq:f-equals-psi}, we have $f_{i+1}|_{S_{i}^c} = \psi_{i+1}|_{S_{i}^c}$ and $f_{i+1}|_{S_{i}} = f_i|_{S_{i}}$. Therefore,
$\tilde{D}_i = \one_{S_{i}^c} D_i$. In particular, $\|\tilde{D}_i\|_\infty\le\|D_i\|_\infty$.

The following bounds on $\tilde{D}_i$ are based on the bounds on $D_i$ proved in \cite{NY2}.
\begin{lemma} \label{l:D-holder}
  Let $j \leq i$ and $x,y \in Q_{i,k}$.  Then
  \begin{align*}
    |\tilde{D}_j(x) - \tilde{D}_j(y)| \lesssim A^{-2} \rho^{j-i}.
  \end{align*}
\end{lemma}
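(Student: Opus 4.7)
The plan is to express $\tilde D_j$ as a smooth function and then bound its variation through a product-rule estimate on each level-$j$ pseudoquad. Expanding $\nabla_{f_j+\nu_j}(f_j+\nu_j)-\nabla_{f_j}f_j$ gives $\tilde D_j = \nabla_{f_j}\nu_j - \nu_j\,\partial_z f_{j+1}$. Since every $\kappa_{j,l}$ vanishes in a neighborhood of $\partial Q_{j,l}$, the sum $\nu_j = \sum_{l\in J_j}\kappa_{j,l}$ is smooth and so is $\tilde D_j$; this is the right way to read the discontinuous-looking formula $\one_{S_j^c}D_j$. Moreover $\supp\nu_j\subset S_j^c\subset S_{j-1}^c$ forces $f_j=\psi_j$ and $f_{j+1}=\psi_{j+1}$ on $\supp\nu_j$, so on each pseudoquad $Q_{j,l}$ with $l\in J_j$,
\begin{equation*}
\tilde D_j = \nabla_{\psi_j}\kappa_{j,l} - \kappa_{j,l}\,\partial_z\psi_{j+1},
\end{equation*}
while $\tilde D_j\equiv 0$ on the other level-$j$ pseudoquads.

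Next I would collect scaling bounds in the local coordinates $R_{j,l}(s,t)$, using $\kappa_{j,l}(R_{j,l}(s,t)) = A^{-1}r_j\,\kappa(A^{-1}r_j^{-1}s, r_j^{-2}t)$. The chain rule gives $\|\nabla_{\psi_j}\kappa_{j,l}\|_\infty\lesssim A^{-2}$ with $\partial_s$-- and $\partial_t$--Lipschitz constants $\lesssim A^{-3}r_j^{-1}$ and $\lesssim A^{-2}r_j^{-2}$, and $\|\kappa_{j,l}\|_\infty\lesssim A^{-1}r_j$ with Lipschitz constants $\lesssim A^{-2}$ and $\lesssim A^{-1}r_j^{-1}$. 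For the factor $\partial_z\psi_{j+1}=\sum_{k\le j}\partial_z\kappa_k$, an analogous computation at each level $k$ gives $\|\partial_z\kappa_k\|_\infty\lesssim \rho^k$ with Lipschitz constants $\lesssim \rho^{2k}$ and $\lesssim A^2\rho^{3k}$; summing the geometric series in $k$ yields $\|\partial_z\psi_{j+1}\|_\infty\lesssim \rho^j$ with Lipschitz constants $\lesssim \rho^{2j}$ and $\lesssim A^2\rho^{3j}$, dominated by the $k=j$ term.

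To conclude for $x,y\in Q_{i,k}$, I would connect them by a two-segment path in level-$i$ coordinates (first varying $s_i$, then $t_i$). Both segments stay inside $Q_{i,k}$, and translating to level-$j$ coordinates using $|\psi_i-\psi_j|\lesssim A^{-1}r_j$ gives total $s_j$--displacement $\le Ar_i$ and total $t_j$--displacement $\lesssim r_ir_j$ (the drift term $A^{-1}r_j\cdot Ar_i$ dominates $r_i^2$ since $r_i\le r_j$). Integrating the Lipschitz bounds yields
\begin{equation*}
\mathrm{Var}(\nabla_{\psi_j}\kappa_{j,l}) \lesssim A^{-3}r_j^{-1}\cdot Ar_i + A^{-2}r_j^{-2}\cdot r_ir_j \lesssim A^{-2}\rho^{j-i},
\end{equation*}
and for the second term the product rule gives $\mathrm{Var}(\kappa_{j,l}\,\partial_z\psi_{j+1}) \lesssim \mathrm{Var}(\kappa_{j,l})\cdot\|\partial_z\psi_{j+1}\|_\infty + \|\kappa_{j,l}\|_\infty\cdot\mathrm{Var}(\partial_z\psi_{j+1})$, with $\mathrm{Var}(\kappa_{j,l})\lesssim A^{-1}r_i$ and $\mathrm{Var}(\partial_z\psi_{j+1})\lesssim \rho^{2j-i}$, so that both summands equal $A^{-2}\rho^{j-i}$.

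The main subtlety, where most of the bookkeeping lives, is that the top and bottom of $Q_{i,k}$ are $\psi_i$--characteristic curves and may cross $\psi_j$--characteristic curves inside $Q_{i,k}$; the column widths do align ($Ar_j/Ar_i\in\Z$) so the $s_j$--direction is unproblematic. These vertical crossings cause no loss because $\tilde D_j$ is smooth on all of $V_0$ and vanishes in a neighborhood of every $\partial Q_{j,l}$, so the variation bounds from adjacent level-$j$ pseudoquads glue continuously and any piece of the path lying in a pseudoquad $Q_{j,l}$ with $l\notin J_j$ contributes zero. Combining the two variation estimates then gives $|\tilde D_j(x)-\tilde D_j(y)|\lesssim A^{-2}\rho^{j-i}$, as desired.
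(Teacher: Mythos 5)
Your argument is correct and amounts to reproving Lemma~3.12 of \cite{NY2} from scratch. The paper's own proof is a one-liner: it cites that lemma for the $D_j$-version and observes that its proof uses only $L_\infty$ bounds on $D_j$ and its derivatives, which $\tilde D_j=\one_{S_j^c}D_j$ inherits because $D_j$ vanishes (with all derivatives) on a neighborhood of each $\partial Q_{j,l}$, so $\tilde D_j$ is smooth with $\partial^\alpha\tilde D_j=\one_{S_j^c}\partial^\alpha D_j$. What you gain by rederiving is self-containedness; what you lose is brevity. Your structure is right: the identity $\tilde D_j = \nabla_{f_j}\nu_j - \nu_j\,\partial_z f_{j+1}$, the $\kappa$-scaling in flow coordinates, and the drift $|\psi_i - \psi_j|\lesssim A^{-1}r_j$ converting the level-$i$ path into level-$j$ displacements $\lesssim Ar_i$ and $\lesssim r_ir_j$. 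You pass over some coordinate-change bookkeeping (that $\partial_{s_i}$ and $\partial_{s_j}$ differ by $(\psi_j-\psi_i)\partial_z$, and that $\partial_z\approx\partial_t$ by Lemma~\ref{l:R-vert-bound}), but those corrections are indeed lower-order, and you correctly identify the load-bearing subtlety that the paper leaves implicit: despite the indicator $\one_{S_j^c}$ in its definition, $\tilde D_j$ is smooth and glues continuously across level-$j$ pseudoquad boundaries because $\nu_j$ and its derivatives vanish there.
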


\begin{proof}
  The $D_j$--version of this inequality is Lemma~3.12 of \cite{NY2}.  The proof only uses the $L_\infty$ bounds of $D_j$ and derivatives of $D_j$.  As $\tilde{D}_j$ satisfy those same bounds, the proof also works for $\tilde{D}_j$.
\end{proof}

\begin{lemma} \label{l:NY-D-bounds}
  For every $\rho \geq 8$ and $A \geq 1$, we have
  \begin{align}
\notag    \|\tilde{D}_i\|_{\infty} &\lesssim A^{-2}, \qquad \forall i \geq 0, \\
\label{eq:new-D-orthog}    |\langle \tilde{D}_i, \tilde{D}_{j} \rangle| &\lesssim A^{-4} \rho^{j - i}, \qquad \forall 0 \leq i \leq j.
  \end{align}
\end{lemma}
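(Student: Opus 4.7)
The plan is to reduce both bounds to the corresponding estimates for $D_i$ proved in \cite{NY2} via the pointwise identity $\tilde{D}_i = \one_{S_i^c} D_i$.

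First I would verify this identity carefully. On $S_i^c$, the nesting $S_{i-1}\subset S_i$ (immediate from $\mathcal{S}_i \supset \bigcup_{Q\in \mathcal{S}_{i-1}}\cC(Q)$ and the fact that a pseudoquad is contained in the union of its children) gives $S_i^c \subset S_{i-1}^c$. Applying \eqref{eq:f-equals-psi} at levels $i$ and $i+1$, we get $f_i=\psi_i$ and $f_{i+1}=\psi_{i+1}$ on $S_i^c$, so $\tilde{D}_i = D_i$ there. On $S_i$ itself, the pseudoquads in $\mathcal{S}_i$ have interiors disjoint from those in $\mathcal{Q}_i\setminus \mathcal{S}_i$, and since each $\kappa_{i,j}$ vanishes on a neighborhood of $\partial Q_{i,j}$, the function $\nu_i=\one_{S_i^c}\kappa_i$ vanishes on all of $S_i$. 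Hence $f_{i+1}=f_i$ on $S_i$ and $\tilde{D}_i=0$ there.

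The $L_\infty$ bound is then immediate: $|\tilde{D}_i|\le|D_i|$ pointwise, so
$$\|\tilde{D}_i\|_\infty \le \|D_i\|_\infty \lesssim A^{-2}$$
by the corresponding bound in \cite{NY2}.

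For the inner product bound, the nesting $S_i \subset S_j$ for $i\le j$ gives $\one_{S_i^c}\one_{S_j^c}=\one_{S_j^c}$, so
$$\tilde{D}_i\tilde{D}_j = \one_{S_j^c} D_i D_j \quad \text{pointwise,}$$
and we decompose
$$\langle \tilde{D}_i,\tilde{D}_j\rangle = \langle D_i, D_j\rangle - \int_{S_j} D_i D_j \,\ud\mu.$$
The first term is controlled by $\lesssim A^{-4}\rho^{j-i}$ by the corresponding NY2 bound. For the error, the pointwise bounds $\|D_i\|_\infty,\|D_j\|_\infty \lesssim A^{-2}$ together with $\supp D_i, \supp D_j \subset U$ and $\mu(U)=1$ yield
$$\left|\int_{S_j} D_i D_j\,\ud\mu\right| \le \|D_i\|_\infty \|D_j\|_\infty\,\mu(U) \lesssim A^{-4} \le A^{-4}\rho^{j-i}.$$
Combining these two estimates gives the claim. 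The only delicate step is the identity $\tilde{D}_i = \one_{S_i^c} D_i$, which must be checked against the precise (and non-obvious) support properties of $\nu_i$ near $\partial S_i$; once it is in hand, the bounds follow directly from those for $D_i$.
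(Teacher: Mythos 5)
Your verification of the identity $\tilde D_i = \one_{S_i^c}D_i$ and the deduction of the $L_\infty$ bound are fine and match the paper. The problem is with the inner-product bound, and it stems from a sign error in the exponent that appears both in the statement \eqref{eq:new-D-orthog} and in the paper's own proof: the intended bound is almost certainly $|\langle\tilde D_i,\tilde D_j\rangle|\lesssim A^{-4}\rho^{i-j}$, a \emph{decay} in $j-i$, not $\rho^{j-i}$. Two things confirm this: the pseudoquad-level estimate quoted from \cite{NY2} reads $\left|\int_{Q_{n,k}}D_mD_n\right|\lesssim A^{-4}\rho^{m-n}\cH^3(Q_{n,k})$ with $m\le n$, and in Lemma~\ref{l:l2-fi} the lemma is applied as $\sum_{0\le i<j\le k-1}|\langle\tilde D_i,\tilde D_j\rangle|\lesssim\sum_i\sum_{m\ge 1}A^{-4}\rho^{-m}$, which only makes sense with the decaying exponent. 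Taken literally with $\rho^{j-i}\ge 1$, \eqref{eq:new-D-orthog} already follows trivially from the $L_\infty$ bound via $|\langle\tilde D_i,\tilde D_j\rangle|\le\|\tilde D_i\|_\infty\|\tilde D_j\|_\infty\mu(U)\lesssim A^{-4}$; the fact that your error term was absorbed only through the harmless inequality $A^{-4}\le A^{-4}\rho^{j-i}$ should have been a red flag that you were proving something vacuous.

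Once the exponent is read as $\rho^{i-j}$, the decomposition $\langle\tilde D_i,\tilde D_j\rangle=\langle D_i,D_j\rangle-\int_{S_j}D_iD_j\ud\mu$ is still a sensible plan, but the crude bound $\left|\int_{S_j}D_iD_j\ud\mu\right|\le\|D_i\|_\infty\|D_j\|_\infty\mu(S_j)\lesssim A^{-4}$ is no longer $\lesssim A^{-4}\rho^{i-j}$. The paper sidesteps this by not performing the subtraction at all: it sums the NY2 per-pseudoquad estimate over the tiling $\{Q_{j,k}\}$ of $U$, observing that on $Q_{j,k}\in\cS_j$ the integrand $\tilde D_i\tilde D_j$ vanishes because $\tilde D_j\equiv 0$, while on $Q_{j,k}\notin\cS_j$ the tildes come off and the local NY2 bound $\left|\int_{Q_{j,k}}D_iD_j\right|\lesssim A^{-4}\rho^{i-j}\cH^3(Q_{j,k})$ applies. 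Your approach can in fact be repaired along the same lines --- since $S_j$ is itself a union of level-$j$ pseudoquads, the local estimate gives $\left|\int_{S_j}D_iD_j\ud\mu\right|\lesssim A^{-4}\rho^{i-j}\mu(S_j)\le A^{-4}\rho^{i-j}$ --- but this requires invoking the stronger local NY2 estimate rather than the $L_\infty$ bound you used.
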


\begin{proof}
  The corresponding $D_i$ version of the inequalities is Lemma~3.9 from \cite{NY2}.  The first inequality now follows from the bound $\|\tilde{D}_i\|_\infty\le\|D_i\|_\infty$. The proof of the second bound in \cite{NY2} uses the bound
  \begin{equation*}
    \left|\int_{Q_{n,k}} D_m(w) D_n(w) \ud w\right|\lesssim A^{-4} \rho^{m-n} \cH^3(Q_{n,k})
  \end{equation*}
  for $n\ge m$. 
  
  Let $0\le i\le j$. For each $1\le k\le k_j$, we consider two cases. If $Q_{j,k}\in \cS_j$, then $\tilde{D}_j =  0$ on $Q_{j,k}$, so $\int_{Q_{j,k}} \tilde{D}_i \tilde{D}_j \ud w = 0$. Otherwise, if $Q_{j,k}\not \in \cS_j$, then $\interior Q_{j,k} \cap S_i = \interior Q_{j,k} \cap S_j = \emptyset$, so $\tilde{D}_i = D_i$ and $\tilde{D}_j = D_j$ on $Q_{j,k}$. Therefore,     
  $$\left|\int_{Q_{j,k}} \tilde{D}_i(w) \tilde{D}_j(w) \ud w\right|\lesssim A^{-4} \rho^{j-i} \cH^3(Q_{j,k}).$$
  Since the $Q_{j,k}$'s partition $U$, we sum this inequality over $k$ to obtain \eqref{eq:new-D-orthog}.
\end{proof}

Now we use these bounds to show that $\|\nabla_{f_i} f_i\|_\infty \le 1$. 
\begin{lemma}\label{l:stopping-time-Lip}
  If $A$ is sufficiently large, then for all $i$, $\|\nabla_{f_i} f_i\|_\infty \le 1$.
\end{lemma}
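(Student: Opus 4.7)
The plan is to induct on $i$. The base case $i=0$ is immediate since $f_0=0$, so $\nabla_{f_0}f_0\equiv 0$. For the inductive step, assume $\|\nabla_{f_i}f_i\|_\infty\le 1$ and fix an arbitrary point $x\in V_0$. Since the pseudoquads in $\mathcal{Q}_i$ partition $U$, and since $\nabla_{f_{i+1}}f_{i+1}$ is continuous (indeed smooth), it suffices to prove the pointwise bound $|\nabla_{f_{i+1}}f_{i+1}(x)|\le 1$ for $x$ lying in the interior of some $Q_{i,j}$.

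The key observation driving the argument is that each bump $\kappa_{i,j}$ is supported in the \emph{interior} of $Q_{i,j}$ (by definition \eqref{eq:def-kappa} and the support condition $\supp\kappa\subset (0,1)^2$), and the pseudoquads have disjoint interiors. Consequently, if $x\in \interior Q_{i,j}$, a small neighborhood of $x$ meets the support of at most one $\kappa_{i,j'}$, namely $\kappa_{i,j}$. I would then split into two cases.

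\textbf{Case 1 (stopped):} $Q_{i,j}\in \mathcal{S}_i$. Then $j\notin J_i$, so $\kappa_{i,j}$ is \emph{not} included in $\nu_i=\sum_{k\in J_i}\kappa_{i,k}$. By the support observation, $\nu_i\equiv 0$ on a neighborhood of $x$, hence $f_{i+1}=f_i$ there, so $\nabla_{f_{i+1}}f_{i+1}(x)=\nabla_{f_i}f_i(x)$, and the inductive hypothesis gives $|\nabla_{f_{i+1}}f_{i+1}(x)|\le 1$.

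\textbf{Case 2 (active):} $Q_{i,j}\notin \mathcal{S}_i$. By the definition of $\mathcal{S}_i$, this means $\max_{Q_{i,j}}|\nabla_{f_i}f_i|<\tfrac12$, so in particular $|\nabla_{f_i}f_i(x)|<\tfrac12$. Writing $\nabla_{f_{i+1}}f_{i+1}(x)=\nabla_{f_i}f_i(x)+\tilde{D}_i(x)$ and invoking Lemma~\ref{l:NY-D-bounds} to get $|\tilde{D}_i(x)|\le CA^{-2}$ for some absolute $C>0$, we obtain $|\nabla_{f_{i+1}}f_{i+1}(x)|<\tfrac12+CA^{-2}$. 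Choosing $A$ large enough that $CA^{-2}\le \tfrac12$ closes the induction.

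There is no real obstacle: the entire proof runs on the stopping condition and the a priori bound $\|\tilde{D}_i\|_\infty\lesssim A^{-2}$ from Lemma~\ref{l:NY-D-bounds}. The only delicate point worth double-checking is that the supports of the $\kappa_{i,j}$'s are truly disjoint from the pseudoquad boundaries, so that freezing on $\mathcal{S}_i$ propagates cleanly to a neighborhood of $x$; this is guaranteed by $\supp\kappa\subset(0,1)^2$ together with the fact that $\mathcal{Q}_i$ is a partition with disjoint interiors.
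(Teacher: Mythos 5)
Your proposal is correct and follows essentially the same argument as the paper: induct on $i$, split on whether the point lies in a stopped pseudoquad (where $f_{i+1}=f_i$ locally, by the support property of $\kappa_{i,j}$, which the paper has already packaged as equation~\eqref{eq:fi-equals-fj}) or an active one (where $|\nabla_{f_i}f_i|<\tfrac12$ and $\|\tilde{D}_i\|_\infty\lesssim A^{-2}$ close the gap). The only cosmetic difference is that the paper phrases the dichotomy in terms of $v\in S_i$ versus $v\notin S_i$ and cites the pre-established freezing identity, whereas you rederive that identity inline and work pseudoquad-by-pseudoquad with a continuity reduction.
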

\begin{proof}
  We suppose that $A$ is large enough that $\|\tilde{D}_i\|_{\infty} \le \frac{1}{4}$ for all $i$ and proceed by induction on $i$. Since $f_0=0$, the lemma is clear for $i=0$.
  
  Suppose that $i\ge 0$ and $\|\nabla_{f_i} f_i\|_\infty \le 1$. On one hand, if $v\not \in S_i$, then $|\nabla_{f_{i}}f_{i}(v)|\le \frac{1}{2}$, and 
  $$|\nabla_{f_{i+1}}f_{i+1}(v)|\le |\nabla_{f_{i}}f_{i}(v)| + \|\tilde{D}_i\|_{\infty} < 1.$$
  On the other hand, if $v\in S_i$, then $|\nabla_{f_{i+1}} f_{i+1}(v)| = |\nabla_{f_{i}} f_{i}(v)| \le 1$ by  \eqref{eq:fi-equals-fj}.
\end{proof}

It remains to bound $\mu(S_i)$. Let $i\ge 0$. Recall that $S_i=\bigcup(\cS_i)$ and that any pseudoquad $Q\in \cS_i$ either satisfies $\|\nabla_{f_i} f_i\|_{L_\infty(Q)}\ge \frac{1}{2}$ or is a child of some pseudoquad of $\cS_{i-1}$. Let
$$\cM_{i} = \mathcal{S}_{i} \setminus \bigcup_{Q\in \mathcal{S}_{i-1}} \cC(Q).$$
Then if $M\in \cM_{i}$, then $\|\nabla_{f_{i-1}} f_{i-1}(x)\|_{L_\infty(M)}\ge \frac{1}{2}$. If $Q\in \cS_i\setminus \cM_i$, then $Q$ is a child of an element of $\cS_{i-1}$. By induction, any $Q\in \cS_i$ is a descendant of an element of $\cS_j$ for some $j\le i$, i.e., $Q$ is a descendant of an element of $\mathcal{B}_{i} := \bigcup_{j=0}^{i} \cM_{j}$. Furthermore, if $M,M'\in \mathcal{B}_i$, $M\ne M'$, then neither is a descendant of the other, so $M$ and $M'$ are disjoint. 

We will thus bound $\mu(S_i)$ by bounding the size of $\cB_i$, then bounding the size of the set of descendants of pseudoquads in $\cB_i$. We bound $\cB_{i}$ by showing that $\nabla_{f_i}f_i$ is large on the pseudoquads in $\cB_{i}$. 
\begin{lemma} \label{l:nabla-min-max}
  Suppose $\rho$ is sufficiently large.
  Let $Q\in \cB_{i}$.  Then $|\nabla_{f_i}f_i(v)|\ge \frac{1}{4}$ for all $v\in Q$.
\end{lemma}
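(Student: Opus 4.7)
The plan is to use the triangle inequality to combine two ingredients: a point $v_0 \in Q$ where $|\nabla_{f_j}f_j(v_0)| \ge \tfrac12$, and a bound on the oscillation of $\nabla_{f_j}f_j$ on $Q$ coming from Lemma~\ref{l:D-holder}. First I would unpack the definitions. Since $Q \in \mathcal{B}_i = \bigcup_{j=0}^i \mathcal{M}_j$, there exists $j \le i$ with $Q \in \mathcal{M}_j$, so $Q \in \mathcal{S}_j$ and $Q \notin \bigcup_{Q'\in\mathcal{S}_{j-1}} \mathcal{C}(Q')$. By the very definition of $\mathcal{S}_j$ in the construction, this forces $Q \in \mathcal{Q}_j$ with
$$\max_{v\in Q} |\nabla_{f_j}f_j(v)| \ge \tfrac12.$$

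Next I would argue that $\nabla_{f_i}f_i = \nabla_{f_j}f_j$ on $Q$, so that it suffices to work with $\nabla_{f_j}f_j$. For any $k \ge j$, the function $\nu_k = \sum_{j'\in J_k}\kappa_{k,j'}$ is supported in $\bigcup_{j'\in J_k} Q_{k,j'}$, with each $\kappa_{k,j'}$ vanishing in a neighborhood of $\partial Q_{k,j'}$. Since $Q \subset S_j \subset S_k$ and $\mathcal{Q}_k$ partitions $U$ with disjoint interiors, every point of $Q$ has a neighborhood in $V_0$ on which $\nu_k$ vanishes identically (handling interior and boundary points of pseudoquads in $\mathcal{S}_k$ separately). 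Hence $f_i = f_j$ in an open neighborhood of $Q$, and the expression $\nabla_{f} f = \partial_x f - f \partial_z f$ shows the intrinsic gradients agree on $Q$.

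Now I would bound $\mathrm{osc}(\nabla_{f_j}f_j, Q)$. Since $f_0=0$, telescoping gives
$$\nabla_{f_j}f_j = \sum_{m=0}^{j-1} \tilde{D}_m.$$
Because $Q \in \mathcal{Q}_j$, Lemma~\ref{l:D-holder} (applied with its roles of indices being $m \le j$ and $x,y \in Q$) yields $|\tilde D_m(x) - \tilde D_m(y)| \lesssim A^{-2}\rho^{m-j}$ for each $m < j$. Summing the resulting geometric series,
$$|\nabla_{f_j}f_j(x) - \nabla_{f_j}f_j(y)| \lesssim A^{-2} \sum_{m=0}^{j-1}\rho^{m-j} \le \frac{CA^{-2}}{\rho-1}$$
for some absolute $C$. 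Choosing $\rho$ sufficiently large (depending on $A$, consistent with the standing hypothesis in Proposition~\ref{p:stopping-time}) makes this oscillation at most $\tfrac14$.

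Combining the three ingredients, for any $v\in Q$, picking $v_0\in Q$ attaining the maximum,
$$|\nabla_{f_i}f_i(v)| = |\nabla_{f_j}f_j(v)| \ge |\nabla_{f_j}f_j(v_0)| - \tfrac14 \ge \tfrac12 - \tfrac14 = \tfrac14,$$
which is the desired bound. The only genuinely delicate point is the verification that $f_i$ and $f_j$ have equal intrinsic gradient on $Q$: one has to use both that $\nu_k|_{S_k}\equiv 0$ and that each $\kappa_{k,j'}$ is supported strictly inside its pseudoquad, so that passing from a pointwise equality to an equality in a neighborhood (and thus of derivatives) is justified. Everything else is a routine application of Lemma~\ref{l:D-holder} together with a geometric-series bound.
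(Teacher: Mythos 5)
Your proof is correct and follows essentially the same approach as the paper: identify $j \le i$ with $Q \in \mathcal{M}_j$ to get a point where $|\nabla_{f_j}f_j| \ge \tfrac12$, observe that $\nabla_{f_i}f_i = \nabla_{f_j}f_j$ on $Q$, bound the oscillation on $Q$ via Lemma~\ref{l:D-holder} and a geometric series, then choose $\rho$ large. The only difference is cosmetic: where the paper simply invokes the already-established identity \eqref{eq:fi-equals-fj} to conclude $\nabla_{f_i}f_i = \nabla_{f_j}f_j$ on $Q$, you re-derive it from scratch by arguing $\nu_k$ vanishes on a neighborhood of $Q$ for $k \ge j$, which is the same reasoning the paper used to justify \eqref{eq:fi-equals-fj} in the first place.
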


\begin{proof}
  If $Q\in \cB_{i}$, then $Q\in \cM_{j}$ for some $j\le i$, so $\|\nabla_{f_j} f_j\|_{L_\infty(Q)}\ge \frac{1}{2}$. Furthermore, since $Q\subset S_{j}$, \eqref{eq:fi-equals-fj} implies that $\nabla_{f_j}f_j = \nabla_{f_i}f_i$ on $Q$.
  
  Let $y\in Q$ be such that $|\nabla_{f_j}f_j(y)| \geq 1/2$, and let $x\in Q$. By Lemma \ref{l:D-holder},
  \begin{multline*}
    |\nabla_{f_i}f_i(x) - \nabla_{f_i}f_i(y)| = |\nabla_{f_j}f_j(x) - \nabla_{f_j}f_j(y)| \\ \leq \sum_{k=0}^{j-1} |\tilde{D}_k(x) - \tilde{D}_k(y)| \lesssim \sum_{k=0}^{j-1} A^{-2} \rho^{k-j} \le 2 A^{-2}\rho^{-1}.
  \end{multline*}
  If $\rho$ is sufficiently large, this gives $|\nabla_{f_i}f_i(x) - \nabla_{f_i}f_i(y)|$ and thus $|\nabla_{f_i}f_i(x)|\ge \frac{1}{4}$, as desired.
\end{proof}

Thus, we can bound the size of $\cM_k$ using the following bound on $\nabla_{f_k}f_k$.
\begin{lemma}\label{l:l2-fi}
  For all $k$, 
  $$\|\nabla_{f_k}f_k\|_2 \lesssim A^{-2} \sqrt{k}.$$
\end{lemma}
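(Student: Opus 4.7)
The plan is to exploit the telescoping structure $\nabla_{f_k}f_k = \sum_{j=0}^{k-1} \tilde{D}_j$, which holds because $f_0 = 0$ and $\tilde{D}_j = \nabla_{f_{j+1}}f_{j+1} - \nabla_{f_j}f_j$ by definition. Expanding the $L_2$ norm squared then gives a double sum of inner products
\begin{equation*}
\|\nabla_{f_k}f_k\|_2^2 = \left\langle \sum_{i=0}^{k-1}\tilde{D}_i, \sum_{j=0}^{k-1}\tilde{D}_j\right\rangle = \sum_{i,j=0}^{k-1} \langle \tilde{D}_i, \tilde{D}_j\rangle,
\end{equation*}
and the whole question reduces to a standard Cotlar--Stein-style summation using the quasi-orthogonality estimate already recorded.

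Next I would apply Lemma~\ref{l:NY-D-bounds}, which (read in the geometrically meaningful direction, consistent with its proof using $\rho^{m-n}$ for $n\ge m$) says $|\langle \tilde{D}_i,\tilde{D}_j\rangle|\lesssim A^{-4}\rho^{-|i-j|}$. Plugging this in,
\begin{equation*}
\|\nabla_{f_k}f_k\|_2^2 \;\lesssim\; A^{-4}\sum_{i,j=0}^{k-1}\rho^{-|i-j|} \;\lesssim\; A^{-4}\, k \sum_{d\in\Z} \rho^{-|d|} \;\lesssim_\rho\; A^{-4}\,k,
\end{equation*}
where the geometric tail is summable because $\rho>1$, and the $\rho$--dependent constant may be absorbed (the constant in the conclusion only needs to depend on $\rho$, which is fixed once $A$ is large enough, per Proposition~\ref{p:stopping-time}). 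Taking square roots yields $\|\nabla_{f_k}f_k\|_2 \lesssim A^{-2}\sqrt{k}$, as claimed.

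There is essentially no obstacle here beyond correctly invoking quasi-orthogonality: the substantive estimate is Lemma~\ref{l:NY-D-bounds}, which was adapted from \cite{NY2} by checking that $\tilde{D}_j$ either vanishes on a pseudoquad of $\cS_j$ or coincides with $D_j$ there. The only point to be careful about is that the sum $\sum_d \rho^{-|d|}$ stays bounded uniformly in $k$, which is exactly why the single factor of $\sqrt{k}$ (rather than $k$) appears in the final estimate. Since $f_k$ is supported in the unit cube $U$, no issue of normalization or domain of integration arises.
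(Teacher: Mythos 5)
Your proof is correct and takes essentially the same route as the paper: telescoping $\nabla_{f_k}f_k = \sum_{j<k}\tilde{D}_j$, expanding $\|\cdot\|_2^2$ into a double sum of inner products, invoking the quasi-orthogonality bound $|\langle\tilde{D}_i,\tilde{D}_j\rangle|\lesssim A^{-4}\rho^{-|i-j|}$ from Lemma~\ref{l:NY-D-bounds}, and summing the geometric series. You also correctly read through the sign typo in the exponent of \eqref{eq:new-D-orthog} (the paper's own proof of that lemma makes clear the decay is $\rho^{-|i-j|}$, not $\rho^{|i-j|}$); one minor remark is that since $\rho\ge 2$ the geometric tail $\sum_d\rho^{-|d|}$ is bounded uniformly, so the implicit constant need not depend on $\rho$ at all.
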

\begin{proof}
  By Lemma~\ref{l:NY-D-bounds},
  \begin{multline*}
    \int_{V_0} |\nabla_{f_k}f_k(x)|^2 \ud x = \int_{V_0} \left( \sum_{j=0}^{k-1} \tilde{D}_j \right)^2 \ud x \\
    = \sum_{j=0}^{k-1} \|\tilde{D}_j\|_{L_2}^2 + 2 \sum_{0 \leq i < j \leq k-1} \langle \tilde{D}_i, \tilde{D}_j \rangle 
    \lesssim kA^{-4} + \sum_{i=0}^{k-1} \sum_{k=1}^\infty A^{-4} \rho^{-k} \lesssim kA^{-4},
  \end{multline*}
  so $\|\nabla_{f_k}f_k\|_2 \lesssim A^{-2} \sqrt{k}$.
\end{proof}

Now we bound the size of the set of descendants of a pseudoquad.
We will need the following lemma, which is part of Lemma 3.10 of \cite{NY2}.
\begin{lemma} \label{l:R-vert-bound}
  Let $R_z$ be the $z$-coordinate of any of the maps $R_{i,j}$. If $\rho>8$, then for all $(s,t)\in [0,Ar_i] \times [0,r_i^2]$,
  \begin{align*}
    \frac{3}{4} \leq \frac{\partial R_z}{\partial t}(s,t) \leq \frac{4}{3}.
  \end{align*}
\end{lemma}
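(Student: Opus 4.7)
The plan is to derive and solve a first-variation ODE for $\partial_t R_z$, and then bound the resulting exponential integral by controlling $\|\partial_z \psi_i\|_\infty$ via induction on $i$.

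Write $v_{i,j}=(x_0,0,z_0)$. Since $\nabla_{\psi_i}=\partial_x-\psi_i\partial_z$, the flow satisfies $x(R_{i,j}(s,t))=x_0+s$ and
\[
  \partial_s R_z(s,t)=-\psi_i\bigl(x_0+s,0,R_z(s,t)\bigr),\qquad R_z(0,t)=z_0+t.
\]
Differentiating in $t$, the function $w(s,t):=\partial_t R_z(s,t)$ satisfies the linear ODE
\[
  \partial_s w(s,t)=-\partial_z\psi_i\bigl(R_{i,j}(s,t)\bigr)\cdot w(s,t),\qquad w(0,t)=1,
\]
so
\[
  w(s,t)=\exp\!\left(-\int_0^s \partial_z\psi_i(R_{i,j}(u,t))\,\ud u\right).
\]
Since $\log(4/3)>1/4$, it suffices to show that $\bigl|\int_0^{Ar_i}\partial_z\psi_i(R_{i,j}(u,t))\,\ud u\bigr|<\log(4/3)$ whenever $\rho>8$.

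I will bound this by $Ar_i\cdot\|\partial_z\psi_i\|_\infty$ and control $\|\partial_z\psi_i\|_\infty=\bigl\|\sum_{k<i}\partial_z\kappa_k\bigr\|_\infty$ inductively. Fix $k<i$ and assume the lemma has been proven at level $k$, so $\partial_t R_z^{(k,j)}\in[3/4,4/3]$. On the pseudoquad $Q_{k,j}$, the map $R_{k,j}$ is a diffeomorphism from $[0,Ar_k]\times[0,r_k^2]$ to $Q_{k,j}$, and by the inverse function theorem the $t$-coordinate of $R_{k,j}^{-1}$ satisfies $\partial_z(R_{k,j}^{-1})_t=1/\partial_t R_z^{(k,j)}\le 4/3$. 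Applying the chain rule to the definition $\kappa_{k,j}(R_{k,j}(s,t))=A^{-1}r_k\kappa(A^{-1}r_k^{-1}s,\,r_k^{-2}t)$ and using that $\partial_2\kappa$ is bounded by $1$,
\[
  |\partial_z\kappa_{k,j}|\le A^{-1}r_k\cdot r_k^{-2}\cdot\tfrac{4}{3}=\tfrac{4}{3}A^{-1}r_k^{-1}.
\]
Because the pseudoquads of $\cQ_k$ are pairwise disjoint in interior, at most one $\kappa_{k,j}$ is nonzero at any point, so $\|\partial_z\kappa_k\|_\infty\le\tfrac{4}{3}A^{-1}r_k^{-1}=\tfrac{4}{3}\rho^k$.

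Summing the geometric series,
\[
  \|\partial_z\psi_i\|_\infty\le \tfrac{4}{3}\sum_{k=0}^{i-1}\rho^k\le\frac{4\rho^i}{3(\rho-1)},
\]
so, using $Ar_i=\rho^{-i}$,
\[
  \int_0^{Ar_i}|\partial_z\psi_i(R_{i,j}(u,t))|\,\ud u\le Ar_i\cdot\|\partial_z\psi_i\|_\infty\le\frac{4}{3(\rho-1)}.
\]
For $\rho>8$ this is less than $4/21<\log(4/3)$, yielding $w(s,t)\in[3/4,4/3]$ on the required rectangle and closing the induction (the base $i=0$ is trivial since $\psi_0=0$ gives $R_z(s,t)=z_0+t$ and $w\equiv 1$). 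The only delicate point is the self-referential nature of the bound on $\|\partial_z\kappa_k\|_\infty$, which is why the argument must be organized as an induction on $i$ rather than a single estimate.
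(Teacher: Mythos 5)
Your proof is correct. The paper itself gives no argument here, simply citing Lemma~3.10 of \cite{NY2}, so you have reconstructed the missing proof. Your route --- solving the first-variation ODE $\partial_s w = -\partial_z\psi_i(R_{i,j})\,w$, $w(0,t)=1$ to get $w(s,t)=\exp\bigl(-\int_0^s \partial_z\psi_i\bigr)$, and then closing an induction on $i$ via a bound $\|\partial_z\psi_i\|_\infty \lesssim \rho^{i}$ --- is exactly the mechanism the cited reference uses: the same exponential formula reappears as \eqref{eq:logarithmic integral} in Appendix~\ref{ap:A}, and the appendix invokes the bound $\|\partial_z\psi_i\|_\infty\le 2\rho^{i-1}$ from the same Lemma~3.10. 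Your constant $\frac{4}{3(\rho-1)}\rho^{i}$ is slightly different from but equivalent in spirit to the reference's $2\rho^{i-1}$, and both comfortably beat $\log(4/3)$ for $\rho>8$. The chain-rule computation $\partial_z\kappa_{k,j} = A^{-1}r_k^{-1}\,\partial_2\kappa\cdot(\partial_t R_z^{(k,j)})^{-1}$ (using that $\partial_z s=0$ because $x\circ R_{k,j}$ depends on $s$ alone) is correct, as is the observation that the pseudoquads of $\cQ_k$ have disjoint interiors so the $\kappa_{k,j}$ have disjoint supports.
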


The following bound on the heights of pseudoquads follows immediately.

\begin{lemma} \label{l:cube-height}
  Let $i\ge 0$ and let $1 \le j \le k_i$. Let $I\subset \R$ and $g_1,g_2\from I\to \R$ be such that
  $$Q_{i,j} = \{(x,0,z)\mid x\in I, z\in [g_1(x),g_2(x)]\}.$$
  Then 
  \begin{align}
    \frac{3}{4} r_i^2\le g_2(x)-g_1(x) \le \frac{4}{3} r_i^2, \qquad \forall x \in I. \label{eq:pseudoquad-height}
  \end{align}
\end{lemma}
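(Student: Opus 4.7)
The plan is to note that this lemma is essentially a direct integration of the derivative estimate from Lemma~\ref{l:R-vert-bound}. The pseudoquad $Q_{i,j}$ is defined as the image $R_{i,j}([0,Ar_i]\times[0,r_i^2])$, where $R_{i,j}(s,t)=\Phi(\psi_i)_s(v_{i,j}Z^t)$. Since $\Phi(\psi_i)_s$ is the flow along $\nabla_{\psi_i}=\partial_x-\psi_i\partial_z$, the $x$-coordinate satisfies $x(R_{i,j}(s,t))=x(v_{i,j})+s$, which is independent of $t$. Thus for each fixed $x\in I=[x(v_{i,j}),x(v_{i,j})+Ar_i]$, setting $s=x-x(v_{i,j})$, the slice $\{(x,0,z)\in Q_{i,j}\}$ is parametrized by $t\mapsto R_{i,j}(s,t)$ for $t\in[0,r_i^2]$.

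The top and bottom boundary curves of $Q_{i,j}$, whose graphs are $g_2$ and $g_1$ respectively, correspond to $t=r_i^2$ and $t=0$. Hence
\begin{equation*}
g_2(x)-g_1(x) = R_z(s,r_i^2) - R_z(s,0) = \int_{0}^{r_i^2} \frac{\partial R_z}{\partial t}(s,t)\,\ud t.
\end{equation*}

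Applying Lemma~\ref{l:R-vert-bound}, which gives $\tfrac{3}{4}\le \tfrac{\partial R_z}{\partial t}(s,t)\le \tfrac{4}{3}$ pointwise on $[0,Ar_i]\times[0,r_i^2]$, and integrating over $t\in[0,r_i^2]$ yields the desired two-sided bound $\tfrac{3}{4}r_i^2 \le g_2(x)-g_1(x)\le \tfrac{4}{3}r_i^2$. There is no real obstacle: the only subtlety is to check that the top/bottom edges are indeed the $t=0$ and $t=r_i^2$ flowlines (rather than the $s=0,Ar_i$ flowlines), which follows from the fact that $s$ only affects the $x$-coordinate while the variation in $z$ along the slice $x=\mathrm{const}$ comes entirely from the $t$-parameter of the starting point $v_{i,j}Z^t$.
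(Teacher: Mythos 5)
Your proof is correct and takes essentially the same route as the paper: identify the top and bottom boundary curves of $Q_{i,j}$ with the $t=r_i^2$ and $t=0$ slices of the parametrization $R_{i,j}$, and then apply Lemma~\ref{l:R-vert-bound} to bound $R_z(s,r_i^2)-R_z(s,0)$. The only cosmetic difference is that you integrate the derivative bound while the paper invokes the Mean Value Theorem, which is the same estimate.
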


\begin{proof}
  Let $x_0= \min(I)$. Then in fact, $Q_{i,j}$ is the image of a map $R_{i,j}\from [0,Ar_i]\times [0,r_i^2] \to V_0$ such that $R_{i,j}(s,t) = (x_0 + s, 0, R_z(s,t))$.  In particular, $g_1(x) = R_z(x-x_0,0)$ and $g_2(x) = R_z(x-x_0, r_i^2)$. The Mean Value Theorem along with Lemma \ref{l:R-vert-bound} then gives the desired bound.
\end{proof}

As each $Q_{i,j}$ has width $Ar_i$, we immediately get the following corollary.

\begin{cor} \label{c:cube-area}
  For any $i,j \geq 0$, we have that $\frac{3}{4} A r_i^3\le |Q_{i,j}| \le \frac{4}{3} Ar_i^3$.
\end{cor}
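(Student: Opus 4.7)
The plan is to compute the Lebesgue area of $Q_{i,j}\subset V_0$ by integration, using Lemma~\ref{l:cube-height} to control the vertical fiber lengths. Concretely, write $Q_{i,j}$ in the form
\[
  Q_{i,j} = \{(x,0,z)\mid x\in I,\ z\in [g_1(x),g_2(x)]\}
\]
from the definition of a pseudoquad, and recall that by construction the interval $I$ has length $\delta_x(Q_{i,j}) = A r_i$ (it is the image of $[0,Ar_i]$ under the first coordinate of the parametrization $R_{i,j}$).

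By Fubini applied to Lebesgue measure on $V_0\cong \R^2$,
\[
  |Q_{i,j}| = \int_I \bigl(g_2(x) - g_1(x)\bigr)\, \ud x.
\]
Lemma~\ref{l:cube-height} gives the pointwise bounds $\tfrac{3}{4} r_i^2 \le g_2(x)-g_1(x) \le \tfrac{4}{3} r_i^2$ for every $x\in I$, so plugging into the integral and using $|I|=Ar_i$ yields
\[
  \tfrac{3}{4} r_i^2 \cdot A r_i \le |Q_{i,j}| \le \tfrac{4}{3} r_i^2 \cdot A r_i,
\]
which is exactly the claimed inequality. There is no real obstacle here; the corollary is a one-line integration once Lemma~\ref{l:cube-height} is in hand, and the only thing to double-check is that the width of $I$ really is $Ar_i$, which follows from the fact that $R_{i,j}$ shifts the $x$--coordinate by $s$ (so the image of $[0,Ar_i]$ under $x\circ R_{i,j}$ is an interval of length $Ar_i$).
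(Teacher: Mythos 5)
Your proof is correct and matches the paper's approach: the paper likewise deduces the corollary directly from Lemma~\ref{l:cube-height} by noting that each $Q_{i,j}$ has width $Ar_i$, so integrating the fiber-length bounds over that interval gives the claimed area estimate.
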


For each pseudoquad $Q$, let $\tilde{Q}=\bigcup(\cD(Q))$ so that
\begin{equation}\label{eq:Si-desc}
  S_{i}\subset \bigcup_{Q\in \cB_{i}} \tilde{Q}.
\end{equation}
Our next lemma bounds $\mu(\tilde{Q})$.

\begin{lemma} \label{l:neighbor-expand}
  For any $i$ and any $Q\in \cQ_i$, $\mu(\tilde{Q}) \lesssim \mu(Q)$.
\end{lemma}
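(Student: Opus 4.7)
The plan is to show that $\tilde Q$ is contained in a thin vertical thickening of $Q$ of comparable area. I would proceed in three steps.

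\emph{Step 1: The $x$-range of $\tilde Q$ equals the $x$-range of $Q$.} Let $I=[x_0,x_0+Ar_i]$ be the $x$-range of $Q$. Since $\rho$ is a positive integer, $x_0 = m_0 Ar_i = m_0\rho^{l-i}Ar_l$ is on the $x$-grid at every level $l\ge i$, and so the $x$-ranges of level-$(l+1)$ pseudoquads refine those of level $l$. By induction on $l$, every level-$l$ descendant of $Q$ has $x$-range contained in $I$, and therefore $\tilde Q \subset I\times\{0\}\times\R$.

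\emph{Step 2: Per-level vertical control.} Set $z_2^{(l)}(x):=\sup\{z:(x,0,z)\in Q^{(l)}\}$ and $z_1^{(l)}(x):=\inf\{z:(x,0,z)\in Q^{(l)}\}$. I claim there is $C=C(A,\rho)$ so that for every $l\ge i$ and every $x\in I$,
\[
  z_2^{(l+1)}(x) \le z_2^{(l)}(x) + C r_l^2, \qquad z_1^{(l+1)}(x) \ge z_1^{(l)}(x) - C r_l^2.
\]
To prove this, let $P'$ be a level-$(l+1)$ pseudoquad in $Q^{(l+1)}$ whose $x$-range contains $x$. Then $P'\in\cC(P)$ for some level-$l$ descendant $P\subset Q^{(l)}$, so $\interior(P')\cap\interior(P)\ne\emptyset$ at some $(x^\ast,0,z^\ast)$ with $x^\ast$ in the common $x$-range. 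At $x^\ast$ the $z$-slice of $P'$ overlaps that of $P$, hence by Lemma~\ref{l:cube-height} the top of $P'$ at $x^\ast$ exceeds the top of $P$ at $x^\ast$ by at most $\tfrac{4}{3}r_{l+1}^2$. The top curves of $P$ and $P'$ are characteristic curves of $\psi_l$ and $\psi_{l+1}$ respectively, and $\|\psi_{l+1}-\psi_l\|_\infty=\|\kappa_l\|_\infty\lesssim A^{-1}r_l$. Transporting the overlap bound from $x^\ast$ to $x$ along the respective flows over an interval of length $\le A r_{l+1}$ (a short Gronwall argument, using that the flows and their difference are uniformly small once $A$ and $\rho$ are large) gives $z_2^{P'}(x)-z_2^{P}(x)\lesssim r_{l+1}^2 + r_l r_{l+1} \le Cr_l^2$, and taking the supremum over $P'$ yields the claim. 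The bottom estimate is symmetric.

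\emph{Step 3: Summation.} Iterating Step~2 and using the geometric series $\sum_{l\ge i} r_l^2 = r_i^2/(1-\rho^{-2})$,
\[
  z_2^{\tilde Q}(x) - z_2^Q(x) \le C\sum_{l\ge i}r_l^2 = O(r_i^2), \qquad z_1^{\tilde Q}(x)-z_1^Q(x) \ge -O(r_i^2).
\]
By Lemma~\ref{l:cube-height}, $z_2^Q(x)-z_1^Q(x)\le \tfrac{4}{3}r_i^2$, so the $z$-extent of $\tilde Q$ at each $x\in I$ is $O(r_i^2)$. Bounding the $\cH^1$-measure of each $x$-slice of $\tilde Q$ by its $z$-extent,
\[
  \mu(\tilde Q) = \int_I \cH^1\!\big(\tilde Q\cap\{x\text{-coord}=x\}\big)\,\ud x \le |I|\cdot O(r_i^2) = O(Ar_i^3) \lesssim \mu(Q),
\]
where the last inequality is Corollary~\ref{c:cube-area}.

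The main obstacle is the per-level control in Step~2: to compare the topmost level-$(l+1)$ pseudoquad meeting a given vertical line with the topmost level-$l$ descendant on that line, one must transport the overlap bound from the point of intersection to a different $x$ along two different characteristic flows. Once the $C^0$-smallness $\|\kappa_l\|_\infty\lesssim A^{-1}r_l$ baked into the construction is invoked, this reduces to a routine Gronwall estimate, but care is needed to ensure the induction closes with a single constant $C$ independent of $i$ and $l$.
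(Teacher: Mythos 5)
Your argument is correct in outline, but it takes a substantially harder route than the paper and, as sketched, it misses the one observation that makes the whole thing trivial.

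The key fact, stated in the construction (just after \eqref{eq:def-kappa}), is that $\kappa_l$ vanishes in a neighborhood of $\partial P$ for every level-$l$ pseudoquad $P$. Consequently, $\psi_l = \psi_{l+1}$ on $\partial P$, so the top and bottom curves of $P$ are \emph{also} characteristic curves of $\psi_{l+1}$. Distinct characteristic curves of the smooth field $\nabla_{\psi_{l+1}}$ cannot cross, so the boundary curves of a child $P'$ of $P$ cannot cross $\partial P$; since they overlap $P$ at one abscissa, they overlap $P$ at every abscissa in the $x$-range of $P'$, and Lemma~\ref{l:cube-height} then gives $[g_1'(x),g_2'(x)]\subset[g_1(x)-\tfrac43 r_{l+1}^2,\, g_2(x)+\tfrac43 r_{l+1}^2]$ directly. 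No flow-transport estimate is needed.

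Your Gronwall argument can be made to work, but notice two things. First, because $g(x)$ is the top curve of $P$ and $\kappa_l$ vanishes on $\partial P$, the forcing term in your differential inequality for $e=z_2^{P'}-z_2^{P}$ is $\kappa_l(x,0,g(x))=0$; invoking the global bound $\|\kappa_l\|_\infty\lesssim A^{-1}r_l$ is an unnecessary loss. Once you use the local vanishing, your Gronwall estimate collapses to the non-crossing argument in disguise. Second, the assertion that $C=C(A,\rho)$ is a red flag: the lemma is stated with $\lesssim$, i.e.\ a universal constant, and it must be, since it is used in the proof of Proposition~\ref{p:stopping-time} where $A$ and $\rho$ vary. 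In fact, if you track constants, both the Gronwall exponent $\|\partial_z\psi_{l+1}\|_\infty\cdot Ar_{l+1}\le 2\rho^{-1}$ and the forcing $r_l r_{l+1}=\rho^{-1}r_l^2$ are controlled for $\rho\ge 2$ by universal constants, so the dependence you flagged is spurious; but it is worth being explicit that the bound closes with an absolute constant. The paper's approach sidesteps both issues and yields the explicit bound $\mu(\tilde Q)\le 4\mu(Q)$.
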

\begin{proof}
  Let $I\subset \R$ and $g_1,g_2\from I\to \R$ be such that $I$ is an interval of length $Ar_i$ and 
  $$Q=\left\{(x,0,z)\in V_0\mid x\in I, z\in [g_1(x),g_2(x)]\right\}.$$
  We consider $\bigcup(\cC(Q))$. If $Q'$ is a child of $Q$, then 
  $$Q'=\left\{(x,0,z)\in V_0\mid x\in I', z\in [g'_1(x),g'_2(x)]\right\}$$
  for some $I',g_1'$, and $g_2'$ such that $I'\subset I$. 
  
  By our choice of $Q$ and $Q'$, the top and bottom curves of $Q$ are characteristic curves of $\psi_i$ and the top and bottom curves of $Q'$ are characteristic curves of $\psi_{i+1}$.  Since $\kappa_i$ is 0 on a neighborhood of $\partial Q$, we have $\psi_i=\psi_{i+1}$ on $\partial Q$, so the top and bottom curves of $Q$ are also characteristic curves of $\psi_{i+1}$. Since $\psi_{i+1}$ is smooth, its characteristic curves don't intersect, so the top and bottom edges of $Q'$ don't cross $\partial Q$. Thus, since there is some $x\in I'$ such that $[g_1(x),g_2(x)]$ intersects $[g_1'(x),g_2'(x)]$, it must be true that $[g_1(x),g_2(x)]$ intersects $[g_1'(x),g_2'(x)]$ for all $x\in I'$. By Lemma~\ref{l:cube-height}, this implies
  $[g_1'(x),g_2'(x)]\subset [g_1(x)-\frac{4}{3} r_{i+1}^2,g_2(x)+\frac{4}{3} r_{i+1}^2]$ and thus
  $$\bigcup(\cC(Q)) \subset \left\{(x,0,z)\in V_0: x\in I, z\in \left[g_1(x)-\frac{4}{3} r_{i+1}^2,g_2(x)+\frac{4}{3}r_{i+1}^2\right]\right\}.$$
  By induction, 
  $$\tilde{Q} \subset \left\{(x,0,z)\in V_0: x\in I, z\in \left[g_1(x)-\sum_{j=i}^\infty \frac{4}{3} r_{j+1}^2,g_2(x)+\sum_{j=i}^\infty \frac{4}{3} r_{j+1}^2\right]\right\}.$$
  The upper and lower bounds are geometric series, so by Corollary~\ref{c:cube-area},
  $$\mu(\tilde{Q}) \le \mu(Q) + \frac{16}{3} r_{i+1}^2\cdot Ar_i \le \mu(Q) + \frac{16}{3} \rho^{-2} Ar_{i}^3\le 4 \mu(Q).$$
\end{proof}

Finally, we prove the proposition.
\begin{proof}[Proof of Proposition \ref{p:stopping-time}]
  Let $i\ge 0$. By Lemma~\ref{l:stopping-time-Lip}, we have $\|\nabla_{f_i} f_i\|_\infty\le 1$. It remains to bound the measure of $S_{i}$.
  
  By \eqref{eq:Si-desc}, we have 
  $S_{i}\subset \bigcup_{Q\in \cB_{i}} \tilde{Q},$ where $\cB_i$ is a collection of disjoint pseudoquads. Furthermore, by Lemma~\ref{l:nabla-min-max}, we have $|\nabla_{f_i}f_i(v)|\ge \frac{1}{4}$ for all $v\in \bigcup(\cB_i)$. By Lemma~\ref{l:neighbor-expand},
  $$\mu(S_i)\le \sum_{Q \in \mathcal{B}_i} \mu(\tilde{Q}) \lesssim \sum_{Q \in \mathcal{B}_i} \mu(Q) = \mu\left(\bigcup(\cB_i)\right).$$
  By Chebyshev's Inequality and Lemma~\ref{l:l2-fi}, 
  \begin{align*}
    \mu\left(\bigcup(\cB_i)\right) \le 16 \|\nabla_{f_i}f_i\|_2^2 \lesssim iA^{-4},
  \end{align*}
  so $\mu(S_i)\lesssim iA^{-4}$, as desired.
\end{proof}

In addition to the intrinsic Lipschitz condition, $f_i$ satisfies a higher-order Sobolev condition. We state this condition in terms of a family of differential operators on smooth functions $V_0 \to \R$.  Let $Z$ be the operator $Z = \frac{\partial}{\partial z}$. The pseudoquads in $\cQ_i$ have width $Ar_i$ and height $r_i^2$, and we define rescaled operators
\begin{align*}
  \hat{Z}_i &= r_i^2 Z &
  \hat{\partial}_i &= Ar_i \nabla_{f_i}.
\end{align*}

For $i \geq 0$ and $n \geq 1$, we let $\{\hat{Z}_i, \hat{\partial}_i\}^n$ denote the differential operators $E$ that can be expressed as $E = E_1 \cdots E_n$
where $E_j \in \{\hat{Z}_i, \hat{\partial}_i\}$ for all $1 \leq j \leq n$. We call these \emph{words of length $n$} in the alphabet $\{\hat{Z}_i, \hat{\partial}_i\}$. As a special case, $\{\hat{Z}_i, \hat{\partial}_i\}^0=\{\id\}$.

The following lemma bounds $Ef_i$ when $E\in \{\hat{Z}_i, \hat{\partial}_i\}^k$.  This generalizes the bounds in \cite[Lemma 3.10]{NY2}.

\begin{lemma}\label{lem:word-sup-bound}
  Given $d \geq 2$, there exists $\rho_0 > 0$ so that if $\rho \geq \rho_0$, $i \geq 0$, $k \leq d$, and $E \in \{\hat{Z}_i, \hat{\partial}_i\}^k$, then
  \begin{align*}
    \| E \nu_i \|_\infty \lesssim_d A^{-1}r_i.
  \end{align*}
  Furthermore, if $E \not\in \{\id, \hat{\partial}_i\}$, then
  \begin{align*}
    \| Ef_i \|_\infty \lesssim_d A^{-1} r_i \rho^{-1}.
  \end{align*}
  In particular,
  \begin{align}\label{eq:flatness-fi}
    \| \nabla_{f_i}^2 f_i \|_\infty & \lesssim A^{-3} r_i^{-1} \rho^{-1} & \| Z f_i \|_\infty & \lesssim A^{-1} r_i^{-1} \rho^{-1}. 
  \end{align}
\end{lemma}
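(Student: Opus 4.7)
My plan is to prove the two bounds in turn: first the bound on $\|E\nu_i\|_\infty$ by direct computation in the natural rescaled coordinates adapted to the bumps $\kappa_{i,j}$, and then the bound on $\|Ef_i\|_\infty$ by induction on $i$, using the decomposition $f_i=f_{i-1}+\nu_{i-1}$ together with the identities that relate the operators $\hat\partial_i,\hat Z_i$ at scale $i$ to those at scale $i-1$.

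For the first bound, I would fix a pseudoquad $Q_{i,j}$ on which $\nu_i$ does not vanish. Since $Q_{i,j}\not\in\cS_i$ we have $\interior(Q_{i,j})\cap S_{i-1}=\emptyset$, so $f_i=\psi_i$ there by \eqref{eq:f-equals-psi}, and thus $\nabla_{f_i}=\nabla_{\psi_i}$ on $\interior(Q_{i,j})$. In the rescaled coordinates $(\sigma,\tau)=(A^{-1}r_i^{-1}s,\,r_i^{-2}t)$ on the parametrizing rectangle, the formula \eqref{eq:def-kappa} reads $\nu_i\circ R_{i,j}=A^{-1}r_i\,\kappa(\sigma,\tau)$. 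Because $R_{i,j}$ is the flow of $\nabla_{\psi_i}$, the operator $\hat\partial_i=Ar_i\nabla_{\psi_i}$ becomes $\partial_\sigma$; likewise $\hat Z_i=r_i^2Z$ becomes $(\partial_tR_z)^{-1}\partial_\tau$. A higher-order version of Lemma~\ref{l:R-vert-bound} (valid once $\rho$ is taken large depending on $d$, analogous to \cite[Lemma 3.10]{NY2}) shows that $(\partial_tR_z)^{-1}$ has $(\sigma,\tau)$-derivatives of order up to $d$ bounded by a constant depending only on $d$. Writing $E$ as a composition of these bounded first-order operators in $(\sigma,\tau)$ then exhibits $E\nu_i$ as a bounded $C^0$-combination of $(\sigma,\tau)$-derivatives of $\kappa$ of order at most $k$, each multiplied by the overall prefactor $A^{-1}r_i$. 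This proves $\|E\nu_i\|_\infty\lesssim_d A^{-1}r_i$.

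For the second bound, I would argue by induction on $i$; the case $i=0$ is trivial since $f_0=0$. In the inductive step, use the identities
$$\hat Z_i=\rho^{-2}\hat Z_{i-1},\qquad \hat\partial_i=\rho^{-1}\hat\partial_{i-1}+c_i\,\hat Z_{i-1},\qquad c_i:=-Ar_ir_{i-1}^{-2}\,\nu_{i-1},$$
the last of which follows from $\nabla_{f_i}=\nabla_{f_{i-1}}-\nu_{i-1}Z$ and the definitions of the rescaled operators. Stage~1 applied at scale $i-1$ gives $\|c_i\|_\infty\lesssim\rho^{-1}$ as well as $\|E'c_i\|_\infty\lesssim_d\rho^{-1}$ for any word $E'$ of length at most $d$ in $\{\hat Z_{i-1},\hat\partial_{i-1}\}$. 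Repeated substitution expresses $E$ as a sum of terms of the form (polynomial in $\rho^{-1}$)$\cdot$(derivatives of $c_i$)$\cdot E'$, where $E'$ is a word in $\{\hat Z_{i-1},\hat\partial_{i-1}\}$. Applying each such term to $f_i=f_{i-1}+\nu_{i-1}$, the contribution from $\nu_{i-1}$ is controlled by Stage~1 at scale $i-1$, while the contribution from $f_{i-1}$ is controlled by the induction hypothesis (when $E'\not\in\{\id,\hat\partial_{i-1}\}$). Book-keeping shows that whenever $E\not\in\{\id,\hat\partial_i\}$ each term of the expansion carries a net factor of at least $\rho^{-1}$ beyond the Stage~1 prefactor, yielding $\|Ef_i\|_\infty\lesssim_d A^{-1}r_i\rho^{-1}$. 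The specializations $E=\hat\partial_i^2$ and $E=\hat Z_i$, after dividing by $(Ar_i)^2$ and by $r_i^2$ respectively, give the two displays in \eqref{eq:flatness-fi}.

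The main obstacle is the accounting in the inductive step: iterated applications of $\hat\partial_i=\rho^{-1}\hat\partial_{i-1}+c_i\hat Z_{i-1}$ can produce terms in which the excluded operator $\hat\partial_{i-1}$ acts on $f_{i-1}$ and is therefore only bounded (via Proposition~\ref{p:stopping-time}) by the very large quantity $Ar_{i-1}$. One has to verify that every such occurrence is multiplied by coefficients carrying at least two powers of $\rho^{-1}$, which suffices to reduce $\rho^{-2}\cdot Ar_{i-1}$ below $A^{-1}r_i\rho^{-1}$ after absorbing $A^2$ into a geometric series in $\rho^{-1}$. Requiring $\rho$ sufficiently large as a function of $d$ is precisely what ensures that this absorption works uniformly for all words of length at most $d$.
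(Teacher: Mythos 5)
The overall shape of your argument — a direct bound on $\|E\nu_i\|_\infty$ and then an induction on $i$ for $\|Ef_i\|_\infty$ using the identities
$\hat Z_i=\rho^{-2}\hat Z_{i-1}$ and $\hat\partial_i=\rho^{-1}\hat\partial_{i-1}+c_i\hat Z_{i-1}$, with
$c_i=-\rho^{-1}\hat\nu_{i-1}$ — matches the paper's structure, and your
book-keeping of $\rho$-powers in the second stage is essentially what the paper's
Lemma~\ref{lem:induct-f} carries out. But there is a real gap in your Stage~1,
and it is not merely a detail to be filled in later.

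You invoke a ``higher-order version of Lemma~\ref{l:R-vert-bound}'' to say that
$(\partial_t R_z)^{-1}$ has all $(\sigma,\tau)$-derivatives up to order $d$
uniformly bounded (for $\rho$ large). This statement is not available as a black
box, and \cite{NY2} does not supply it: the cited Lemma 3.10 of \cite{NY2} gives
$\|\partial_z\psi_i\|_\infty\le 2\rho^{i-1}$ and the second-derivative formula
\eqref{eq:logarithmic integral2}, but not the $d$-th-order bounds. In the paper,
the bound on higher $\hat t$-derivatives of $\hat z_i$ (the condition
(P$_{i,d}$)) is \emph{derived} from the bound
$\|\hat Z_i^k\hat f_i\|_\infty\lesssim\rho^{-1}$ (condition (F$_i$)), and
(F$_i$) is itself established by induction on $i$ using the bumps
$\hat\nu_j$ for all $j<i$. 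So the derivative bound you treat as a standalone
input is in fact logically downstream of the very bounds on $f_i$ you are trying
to establish in Stage~2. The two stages form a \emph{joint} induction on $i$,
and your presentation hides that dependence. Worse, to handle arbitrary
words $E$ (not just $\hat Z_i^k\hat\partial_i^j$) by your
coordinate-change argument you need bounds not only on pure
$\hat t$-derivatives of $\partial_t R_z$ but on all mixed $(\hat s,\hat t)$-derivatives
of $(\partial_t R_z)^{-1}$. The paper avoids ever proving those mixed-derivative
bounds: it reduces arbitrary words to the ``standard form''
$\sum_{j,k} g_{j,k}\hat Z_i^k\hat\partial_i^j$ via the commutator identity
$\hat\partial_i\hat Z_i-\hat Z_i\hat\partial_i=\hat Z_i\hat f_i\cdot\hat Z_i$
(Lemma~\ref{l:D-reorder}), at the cost of again needing control of
$\hat Z_i\hat f_i$. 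Either route (your mixed-derivative bound, or the paper's
algebraic rewriting) forces the $\nu_i$-bound and the $f_i$-bound into a single
induction, and your Stage~1 as written does not acknowledge or close that loop.
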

The coefficients in this lemma are related to the dimensions of the pseudoquads in $\cQ_i$. As noted above, these pseudoquads have width and height corresponding to $\hat{\partial}_i$ and $\hat{Z}_i$. The coefficient $A^{-1}r_i$ comes from the fact that $\|\nu_i\|_\infty = A^{-1}r_i\|\kappa\|_\infty \approx A^{-1}r_i$. Thus, when $\rho$ is large, $f_i$ is close to affine on any of the pseudoquads in $\cQ_i$. 

The proof of Lemma \ref{lem:word-sup-bound} is rather technical, and we leave it to Appendix \ref{ap:A}.

\subsection{Rescaling}\label{subsec:rescaling}
Let $\Sigma_i = \Gamma_{f_i}$. Because the singular integrals we consider are scale-invariant and translation-invariant, it will be convenient to define rescaled and translated versions of $f_i$ and $\nu_i$. Let $i\ge 0$ and let $p_0\in \Sigma_i$. 

Let $s_i:=s_{r_i^{-1}}$. Let  $\alpha=\alpha_{p_0,i}\from \HH\to \R$,
$$\alpha(p) = r_i^{-1} \left(-y(p_0) + f_i(p_0 s^{-1}_{i}(p))\right).$$
By Lemmas~\ref{lem:ilg-translations} and \ref{lem:ilg-scaling}, we have $\Gamma_\alpha = s_i(p_0^{-1} \Sigma_i)$ and 
$$\Psi_\alpha(p) = s_i\left(p_0^{-1} \Psi_{f_i}(p_0 s^{-1}_i(p))\right).$$
In particular, we have $\zero\in \Gamma_\alpha$ and $\alpha(\zero)=0$.

Let $\gamma=\gamma_{p_0,i}\from \HH\to \R$,
$\gamma(p) = r_i^{-1} \nu_i(p_0 s^{-1}_{i}(p)).$
Then for any $t\in \R$, we have
$$\alpha(p) + t \gamma(p) = r_i^{-1}(-y(p_0) + (f_i+t \nu_i)(p_0 s^{-1}_{i}(p)))$$
and
$$\Gamma_{\alpha + t\gamma} = s_{i}(p_0^{-1}\Gamma_{f_i + t \nu_i}).$$

These functions satisfy the following consequence of Lemma \ref{lem:word-sup-bound}.

\begin{lemma}\label{lem:gamma-deriv-bounds}
  There exists $\rho_0 > 0$ and $c>0$ such that if $\rho \geq \rho_0$, $i \geq 0$, $k \leq 3$, and $\alpha$ and $\gamma$ are defined as above for some $p_0\in \Sigma_i$, then $\|\gamma\|_\infty\le c A^{-1}$, $\|\nabla_\alpha \alpha\|_\infty \le 1$, and
  \begin{equation}\label{eq:gamma-jk-bounds}
      \|F \gamma\|_{\infty} \le c A^{-\# \nabla_\alpha(F)-1}, \qquad \forall F \in \{\nabla_\alpha, Z\}^k,
  \end{equation}
  where \(\# \nabla_\alpha(F)\) is the number of occurrences of \(\nabla_\alpha\) in $F$.
  Moreover, if $F \notin \{\id, \nabla_\alpha\}$, then
  \begin{equation}\label{eq:alpha-jk-bounds} 
    \|F \alpha\|_{\infty} \le c A^{-\# \nabla_\alpha(F)-1} \rho^{-1}.
  \end{equation}
\end{lemma}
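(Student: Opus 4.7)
The plan is to reduce all four claims to Lemma~\ref{lem:word-sup-bound} by unwinding how $\alpha$ and $\gamma$ depend on $f_i$ and $\nu_i$ through the rescaling $p \mapsto p_0 s_{r_i}(p)$ and then carefully tracking scaling factors. Two of the claims are essentially immediate: the bound $\|\gamma\|_\infty \lesssim A^{-1}$ is just $r_i^{-1}\|\nu_i\|_\infty$, and Lemma~\ref{lem:word-sup-bound} applied to the empty word gives $\|\nu_i\|_\infty \lesssim A^{-1}r_i$. The bound $\|\nabla_\alpha \alpha\|_\infty \le 1$ follows from the invariance of the intrinsic gradient $\nabla_\phi \phi$ under left-translations and dilations of $\Gamma_\phi$, applied to the identity $\Gamma_\alpha = s_{r_i^{-1}}(p_0^{-1}\Gamma_{f_i})$, combined with $\|\nabla_{f_i}f_i\|_\infty \le 1$ from Proposition~\ref{p:stopping-time}.

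For the higher-order bounds, the key step is a transformation law for functions of the form $h(p) = H(p_0 s_{r_i}(p))$. Using the chain-rule identities $\XL[\eta\circ s_t] = t(\XL\eta)\circ s_t$ and $Z[\eta\circ s_t] = t^2(Z\eta)\circ s_t$ together with the observation $y(p) - \alpha(p) = r_i^{-1}(y(q) - f_i(q))$ for $q = p_0 s_{r_i}(p)$ (a one-line consequence of the definition of $\alpha$), the extended formula $\nabla_\alpha = \XL + (y-\alpha)Z$ from \eqref{eq:def-nabla-phi} yields $Zh = r_i^2(ZH)\circ (p_0 s_{r_i})$ and $\nabla_\alpha h = r_i (\nabla_{f_i} H)\circ (p_0 s_{r_i})$. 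Since both $\nabla_\alpha$ and $Z$ send functions of the form $G\circ (p_0 s_{r_i})$ to functions of the same form, induction on the word length gives
$$F h(p) = r_i^{j + 2(n-j)}(\tilde F H)(p_0 s_{r_i}(p))$$
for any $F \in \{\nabla_\alpha, Z\}^n$ with $j$ occurrences of $\nabla_\alpha$, where $\tilde F \in \{\nabla_{f_i}, Z\}^n$ is obtained by substituting $\nabla_{f_i}$ for every $\nabla_\alpha$. Rewriting $\tilde F$ via $\nabla_{f_i} = (Ar_i)^{-1}\hat{\partial}_i$ and $Z = r_i^{-2}\hat{Z}_i$ produces an overall factor $A^{-j}r_i^{j-2n}$, and the $r_i$ powers cancel precisely, leaving
$$F h(p) = A^{-j}(EH)(p_0 s_{r_i}(p))$$
for the corresponding word $E \in \{\hat{\partial}_i, \hat{Z}_i\}^n$.

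The remaining bounds follow mechanically. Taking $h = \gamma$ so that $H = r_i^{-1}\nu_i$, the transformation law and Lemma~\ref{lem:word-sup-bound} yield $\|F\gamma\|_\infty \le A^{-j}r_i^{-1}\|E\nu_i\|_\infty \lesssim A^{-j-1}$, which is \eqref{eq:gamma-jk-bounds}. For $\alpha$, I observe that any non-trivial word in $\{\nabla_\alpha, Z\}$ annihilates the additive constant $-r_i^{-1}y(p_0)$, so $F\alpha = F[r_i^{-1}f_i\circ (p_0 s_{r_i})]$; the transformation law then gives $\|F\alpha\|_\infty \le A^{-j}r_i^{-1}\|Ef_i\|_\infty$, and when $F \notin \{\id, \nabla_\alpha\}$ the corresponding $E$ lies outside $\{\id, \hat{\partial}_i\}$, so the second half of Lemma~\ref{lem:word-sup-bound} supplies the extra $\rho^{-1}$ needed for \eqref{eq:alpha-jk-bounds}. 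I do not foresee a serious obstacle: the only mildly delicate step is the inductive derivation of the transformation law, which hinges on the precise form of $\alpha$ ensuring that $\nabla_\alpha$ and $\nabla_{f_i}$ are linked by the clean identity $\nabla_\alpha h = r_i (\nabla_{f_i} H)\circ (p_0 s_{r_i})$, and otherwise everything is exact scale-tracking.
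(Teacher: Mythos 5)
Your proof is correct and follows essentially the same route as the paper: both reduce to Lemma~\ref{lem:word-sup-bound} via the identity that the map $L^{-1}(p)=p_0 s_{r_i}(p)$ intertwines $\nabla_\alpha, Z$ with $r_i\nabla_{f_i}, r_i^2 Z$. The paper states this as the pushforward identity $L_*\hat{\partial}_i = \hat{\partial}$, $L_*\hat{Z}_i = Z$ (justified by $L$ sending integral curves of $\nabla_{f_i}$ to integral curves of $\nabla_\alpha$), while you derive the same chain-rule law directly via the relation $y(p)-\alpha(p)=r_i^{-1}(y(q)-f_i(q))$; the scale-tracking and appeal to Lemma~\ref{lem:word-sup-bound} are then identical.
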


\begin{proof}
  In fact, we will show that for any $d\ge 2$, there is a $\rho_0$ such that if $\rho \geq \rho_0$, $i \geq 0$, then \eqref{eq:gamma-jk-bounds} and \eqref{eq:alpha-jk-bounds} hold for all $k \leq d$.
  Let $\hat{\partial}=A\nabla_\alpha$. It suffices to show that if $\rho$ is sufficiently large, then $\|F \gamma\|_{\infty} \lesssim_d A^{-1}$ for all $F \in \{\hat{\partial}, Z\}^k$ and, if $F\notin\{\id,\hat{\partial}\}$, $\|F \alpha\|_{\infty} \lesssim_d \rho^{-1}$.
  
  Let $r=r_i$ and $s=s_{r^{-1}}$. Let $L(g) = s(p_0^{-1}g)$ so that $\Gamma_\alpha = L(\Sigma_i)$
  and 
  \begin{align*}
    \alpha(p) &= r^{-1}\left(-y(p_0) + (f_i \circ L^{-1})(p)\right), \\
    \gamma(p) &= r^{-1} \nu_i \circ L^{-1}(p).
  \end{align*}  
  Since $L$ sends horizontal curves in $\Sigma_i$ to horizontal curves in $\Gamma_\alpha$, it sends integral curves of $\nabla_{f_i}$ to integral curves of $\nabla_\alpha$. Therefore, $L_* \nabla_{f_i} = r^{-1} \nabla_\alpha$, and
  \begin{align}
    L_* \hat{\partial}_i = L_*(A r \nabla_{f_i}) = A r r^{-1} \nabla_\alpha = \hat{\partial}. \label{e:pushforward}
  \end{align}
  Likewise, $L_* \hat{Z}_i = L_*(r^2 Z) = Z$.

  Let $F=F(\hat{\partial}, Z)$ be a word of length at most $d$ in the letters $\hat{\partial}$ and $Z$ and let $F'=F(\hat{\partial}_i, \hat{Z}_i)$ be $F$ with $\hat{\partial}$ replaced by $\hat{\partial}_i$ and $Z$ by $\hat{Z}_i$. Then $L_*(F')=F$, so
  $$F \gamma = F[r^{-1} (\nu_i \circ L^{-1})] = r^{-1} F'[\nu_i]\circ L^{-1}.$$
  Lemma~\ref{lem:word-sup-bound} implies that if $\rho$ is sufficiently large, then $\|F\gamma\|_\infty = r^{-1} \|F'[\nu_i]\|_\infty \lesssim_d A^{-1}$. This proves \eqref{eq:gamma-jk-bounds}.
  
  Similarly, if $F\ne \id$, then 
  $$F \alpha = r^{-1} F[-y(p_0) + f_i \circ L^{-1}] = r^{-1} F'[f_i]\circ L^{-1}.$$
  If $F=\hat{\partial}$, this implies that
  $$\nabla_\alpha \alpha = A^{-1} \hat{\partial}\alpha = A^{-1}r^{-1} \hat{\partial}_i f_i \circ L^{-1}= \nabla_{f_i}f_i \circ L^{-1},$$
  so $\|\nabla_\alpha \alpha\|_\infty = \|\nabla_{f_i} f_i\|_\infty \leq 1$.
  Otherwise, if $F\not\in\{\id,\hat{\partial}\}$, Lemma~\ref{lem:word-sup-bound} implies that $\|F \alpha\|_\infty\lesssim_d A^{-1} \rho^{-1}\le \rho^{-1}$. This proves \eqref{eq:alpha-jk-bounds}.
\end{proof}

\section{Lower bounds on $\beta$--numbers}\label{sec:beta-numbers}

In this section, we prove Theorem~\ref{thm:beta-theorem}.
In fact, Theorem~\ref{thm:beta-theorem} is an immediate consequence of  the following bound.
\begin{prop}\label{prop:beta-theorem}
  There is a $\delta_0 > 0$ with the following property. Let $0<\delta<\delta_0$, $A>1$, and $p>0$. If $\rho>1$ is sufficiently large, $N=\lfloor \delta A^4\rfloor$, $f_i$ is constructed as in Section~\ref{sec:construction}, $\Gamma = \Gamma_{f_N}$, and $U=[0,1]\times\{0\}\times [0,1]$, then 
  $$\int_{0}^R\int_{\Psi_{f_N}(U)}  \beta_{\Gamma}(v,r)^p \ud v \frac{\ud r}{r}\gtrsim N A^{-p} \gtrsim \delta A^{4-p}.$$
\end{prop}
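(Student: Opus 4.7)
The plan is to prove the bound scale by scale: for each $i\in\{0,\dots,N-1\}$, we identify a set $G_i \subset \Psi_{f_N}(U)$ with $\mu(G_i)\gtrsim 1$ on which $\beta_\Gamma(v,r)\gtrsim A^{-1}$ for every $r$ in a dyadic window around $r_i$. Since $\rho$ is large, these windows are disjoint; integrating $\beta_\Gamma^p$ over $r$ in each window and summing over $i$ will yield the desired lower bound of $NA^{-p}$.

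To define $G_i$: at step $i$, the perturbation $\nu_i$ is a sum of bumps $\kappa_{i,k}$ for $k\in J_i$, each supported on a pseudoquad $Q_{i,k}$ of width $Ar_i$ and height $\approx r_i^2$. Let $C_{i,k}\subset Q_{i,k}$ denote the central sub-region parameterized by $(s,t)\in[3Ar_i/10,7Ar_i/10]\times[3r_i^2/10,7r_i^2/10]$, so that $\kappa_{i,k}\gtrsim A^{-1}r_i$ on $C_{i,k}$ and, for $A\ge 5$, the Kor\'anyi ball $B(v,r_i/2)$ lies inside $Q_{i,k}$ for every $v\in C_{i,k}$. Set $G_i := \Psi_{f_N}\bigl(\bigcup_{k\in J_i}C_{i,k}\bigr)$. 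By Proposition~\ref{p:stopping-time}, $\mu(S_i)\lesssim iA^{-4}\le\delta$, and each central region occupies a constant fraction of its pseudoquad, so $\mu(G_i)\ge c - O(\delta)\gtrsim 1$ once $\delta_0$ is chosen small enough.

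The heart of the proof is the pointwise lower bound $\beta_\Gamma(v,r)\gtrsim A^{-1}$ for $v\in G_i$ and $r\in[r_i/4,r_i/2]$. It suffices to take the infimum in the definition of $\beta_\Gamma$ over vertical planes that are intrinsic graphs of affine functions $\lambda(q)=\sigma x(q)+c$ with $|\sigma|\lesssim 1$. By Lemma~\ref{lem:y-distance} the Kor\'anyi distance from a point near $\Gamma$ to such a plane is comparable to $|y-\lambda|$, and combined with Lemma~\ref{l:proj-ball-contain} and the area formula (Lemma~\ref{lem:def-dv}), the $\beta$-number reduces to an $L_1$-type approximation error
\begin{equation*}
  \beta_\Gamma(v,r)\gtrsim r^{-4}\inf_{\sigma,c\in\R}\int_{\Pi(B(v,r)\cap\Gamma)}\bigl|f_N(q)-\sigma x(q)-c\bigr|\,\ud q.
\end{equation*}
We split $f_N=f_i+\nu_i+g_i$ with $g_i:=\sum_{j>i}\nu_j$ and control each piece. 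The tail satisfies $\|g_i\|_\infty\lesssim\sum_{j>i}A^{-1}r_j\lesssim A^{-1}r_i\rho^{-1}$, contributing $O(A^{-1}r_i^4\rho^{-1})$ to the $L_1$ integral. Applying Lemma~\ref{lem:a-Taylor} with $a=m=f_i$ together with the flatness bounds \eqref{eq:flatness-fi} shows $f_i$ is approximable on $B(v,r)$ by an affine function of $x$ with error $O(r^2(\|\nabla_{f_i}^2 f_i\|_\infty+\|Z f_i\|_\infty))=O(A^{-1}r_i\rho^{-1})$, so this piece likewise contributes $O(A^{-1}r_i^4\rho^{-1})$ to the infimum. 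For the main term $\nu_i=\kappa_{i,k}$, the parameter coordinate $s$ varies over $B(v,r)$ on a length $\approx r\ll Ar_i$, so $A^{-1}r_i^{-1}s$ varies by only $O(A^{-1})$, while the coordinate $t$ varies on a length $\approx r^2$ matching the bump's $z$-scale $r_i^2$, so $r_i^{-2}t$ varies over an interval of length $\Omega(1)$. Choosing $\kappa(s,t)=\eta(s)\eta(t)$ with $\eta$ a nonnegative bump that is strictly concave at its maximum guarantees that the slice $\kappa(s_0,\cdot)$ has positive $L_1$-distance from every affine function on every subinterval of length $\Omega(1)$. Unraveling the scaling $\kappa_{i,k}(R_{i,k}(s,t))=A^{-1}r_i\kappa(A^{-1}r_i^{-1}s,r_i^{-2}t)$ and using that an affine function of $x$ restricts to an affine function of $s$ up to bounded distortion yields
\begin{equation*}
  \inf_{\sigma,c}\int_{\Pi(B(v,r)\cap\Gamma)}\bigl|\nu_i(q)-\sigma x(q)-c\bigr|\,\ud q\ \gtrsim\ A^{-1}r_i\cdot r^3\ \gtrsim\ A^{-1}r_i^4.
\end{equation*}
For $\rho$ large enough that the $\rho^{-1}$ error terms are absorbed, this gives $\beta_\Gamma(v,r)\gtrsim A^{-1}$.

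The final step is to integrate and sum. Integrating the pointwise bound over $r\in[r_i/4,r_i/2]$ produces a factor $\log 2$; integrating over $v\in G_i$ multiplies by $\mu(G_i)\gtrsim 1$; summing over $i=0,\dots,N-1$ gives the total $\gtrsim NA^{-p}\gtrsim\delta A^{4-p}$. The dyadic $r$-windows are disjoint once $\rho>2$, and they all lie in $[0,R]$ provided $A$ is large relative to $1/R$. The main obstacle is the simultaneous verification that $f_i$ and $g_i$ contribute only a $\rho^{-1}$-fraction of the ``main'' $L_1$-discrepancy coming from $\nu_i$: this requires coupling the choice of $\rho$ to the flatness estimates at every scale and carefully matching the Euclidean parameter coordinates on each $Q_{i,k}$ to the Kor\'anyi geometry of $B(v,r)$.
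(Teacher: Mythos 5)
Your scale-by-scale strategy, the decomposition $f_N=f_i+\nu_i+g_i$, the use of the reverse triangle inequality, and the absorption of the $f_i$-flatness and tail terms via $\rho^{-1}$ all match the paper's proof (which proceeds through the parametric quantity $\gamma_\psi$ of Lemma~\ref{l:parametric-beta} and the estimates of Lemma~\ref{l:gamma-number-bounds}). The gap is in the main lower bound, and it comes from the radius you chose. With $r\in[r_i/4,r_i/2]$, the Kor\'anyi ball $B(v,r)$ has vertical extent $\approx r^2\le r_i^2/4$, so the rescaled coordinate $\hat t=r_i^{-2}t$ is seen only over a subinterval of $[0,1]$ of length at most about $1/4$, centered somewhere in $(3/10,7/10)$. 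The construction in Section~\ref{sec:construction} only requires $\kappa$ to be nonnegative, smooth, supported in $(0,1)^2$, and positive on $[\tfrac15,\tfrac45]^2$; such a $\kappa$ is allowed to be \emph{constant} on, say, $[\tfrac14,\tfrac34]^2$, and then the slice $\kappa(\hat s_0,\cdot)$ has $L_1$-distance \emph{zero} from constants on the short $\hat t$-interval your ball exposes. So your pointwise bound $\beta_\Gamma(v,r)\gtrsim A^{-1}$ simply fails for such a $\kappa$. You notice this and respond by ``choosing $\kappa=\eta(s)\eta(t)$ with $\eta$ strictly concave at its maximum,'' but that changes the construction: Proposition~\ref{prop:beta-theorem} asserts the bound for $f_i$ \emph{as constructed in Section~\ref{sec:construction}}, where $\kappa$ is fixed once and for all (and is the same $\kappa$ used everywhere else in the paper), so you are not free to re-choose it here.

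The paper's proof of Lemma~\ref{l:gamma-number-bounds} avoids this by going in the opposite direction on the radius: it takes $r\approx 8r_i$, so that (after projecting) the ball covers the \emph{entire} $\hat t$-interval $[0,1]$ but only a thin $\hat s$-strip of width $\approx\tfrac16 r_i$, i.e.\ $\hat s$ varies by $O(A^{-1})$. On such a strip the competitor $\sigma x+c$ is an affine function of $s$ alone (constant in $t$), and for fixed $\hat s$ the quantity $M=\min_{c,\hat u}\int_0^1|\kappa(\hat u,\hat v)-c|\,\ud\hat v$ is strictly positive by compactness, using only that $\kappa$ vanishes on $\partial[0,1]^2$ and is positive on $[\tfrac15,\tfrac45]^2$ — no concavity or product structure needed. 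A correct version of your argument should therefore enlarge the radius to $\approx r_i$ (at the cost of $B(v,r)$ no longer fitting inside a single $Q_{i,k}$; you then restrict to a thin parameter strip inside $Q_{i,k}$ rather than the whole ball), or else prove the slice estimate with the full vertical window, at which point your concavity hypothesis on $\kappa$ becomes unnecessary. One smaller point: since the affine competitor $h(q)=\sigma x(q)+c$ is constant in $t$ on each $s=\text{const}$ slice, you only need $L_1$-distance from \emph{constants}, not from affine functions of $t$; and the step ``it suffices to take the infimum over planes with $|\sigma|\lesssim1$'' is exactly the content of Lemma~\ref{l:ilg-planes}, which you should invoke explicitly rather than leave implicit.
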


We prove this by introducing a parametric version of $\beta_\Gamma(v, r)$. For any measurable function $\psi\from \HH\to\R$ which is constant on cosets of $\paramY$, we define $V(v,r) =\Pi(B(v,r))$ and 
$$\gamma_\psi(v,r) = r^{-4} \inf_{h\in \Aff} \|\psi - h\|_{L_1(V(v,r))},$$
where $\Aff$ denotes the set of functions of the form $\alpha(v) = ax(v) + b$, $a,b\in \R$. Note that all vertical planes that are not parallel to the $yz$--plane are graphs of functions in $\Aff$.  

When $\psi$ is intrinsic Lipschitz, $\beta_{\Gamma_\psi}$ and $\gamma_\psi$ are comparable.
\begin{lemma}\label{l:parametric-beta}
  Let $\lambda\in (0,1)$. There is a $c>1$ such that for any $\lambda$--intrinsic Lipschitz function  $\psi\from \HH\to \R$, any $x\in \Gamma_\psi$, and any $r>0$,
  $$\beta_{\Gamma_\psi}(x,c^{-1} r) \lesssim_\lambda \gamma_\psi(x,r) \lesssim_\lambda \beta_{\Gamma_\psi}(x,c r).$$
\end{lemma}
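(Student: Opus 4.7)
The plan is to reduce both inequalities to the Kor\'anyi-affine identification $d_\Kor(y,\Gamma_h) \approx_\lambda |y(y)-h(y)|$ from Lemma~\ref{lem:y-distance} (valid when $\Gamma_h$ is intrinsic Lipschitz), combined with the change-of-variables formula of Lemma~\ref{lem:def-dv}, which converts integrals on $\Gamma_\psi$ with respect to $\cH^3$ into integrals on $V_0$ with respect to $\cL$ via the projection $\Pi$. As a preliminary observation, testing $\gamma_\psi(x,r)$ against the constant function $h_0\equiv\psi(x)$ and using Lemma~\ref{lem:ilg-line-distance} (to bound $|\psi-\psi(x)|\lesssim_\lambda r$ on $V(x,r)$) together with $|V(x,r)|\lesssim r^3$ from Lemma~\ref{lem:proj-ball-contain} yields the a priori bound $\gamma_\psi(x,r) \lesssim_\lambda 1$; the same test plane gives $\beta_{\Gamma_\psi}(x,r)\lesssim_\lambda 1$.

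For $\beta_{\Gamma_\psi}(x,c^{-1}r) \lesssim_\lambda \gamma_\psi(x,r)$, I would take $h\in\Aff$ nearly achieving $\gamma_\psi(x,r)$ and test the $\beta$-infimum against $L=\Gamma_h$. The inequality $d_\Kor(y,L) \le d_\Kor(y,\Psi_h(y)) = |\psi(y)-h(y)|$ holds for every $y\in\Gamma_\psi$ without any hypothesis on $h$. After applying Lemma~\ref{lem:def-dv} and the inclusion $\Pi(B(x,c^{-1}r)\cap\Gamma_\psi)\subset V(x,r)$ from Lemma~\ref{lem:proj-ball-contain} (valid for $c\ge 1$), the right-hand side is bounded by $\int_{V(x,r)}|\psi-h|\ud\cL = r^4\gamma_\psi(x,r)$, which after rescaling by $(c^{-1}r)^{-4}$ gives the claim.

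The reverse direction $\gamma_\psi(x,r)\lesssim_\lambda \beta_{\Gamma_\psi}(x,cr)$ is the main difficulty. Let $L$ nearly achieve the $\beta$-infimum. The central obstacle is that the optimizing $L$ is an arbitrary vertical plane---possibly parallel to the $yz$-plane or of arbitrarily large slope---and such $L$ are not graphs of affine functions with intrinsic Lipschitz constant controlled by $\lambda$, so the direction $|\psi(y)-h(y)|\lesssim_\lambda d_\Kor(y,L)$ of Lemma~\ref{lem:y-distance} is unavailable uniformly. I would resolve this with a slope dichotomy at a threshold $M=M(\lambda)$. If $\slope(L)\le M$, then $L=\Gamma_h$ with $h\in\Aff$ intrinsic Lipschitz with constant depending only on $M$, so Lemma~\ref{lem:y-distance} gives $|\psi(y)-h(y)|\lesssim_\lambda d_\Kor(y,L)$ on $\Gamma_\psi$; combined with the other inclusion $V(x,c^{-1}r)\subset \Pi(B(x,r)\cap\Gamma_\psi)$ from Lemma~\ref{lem:proj-ball-contain} and the change of variables of Lemma~\ref{lem:def-dv}, this finishes the subcase.

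If instead $\slope(L) > M$, I would show $\beta_{\Gamma_\psi}(x,cr)\gtrsim_\lambda 1$, so that the desired inequality follows from $\gamma_\psi(x,r)\lesssim_\lambda 1$. After left-translating $x$ to $\zero$, Lemma~\ref{lem:ilg-line-distance} confines $\Gamma_\psi\cap B(\zero,cr)$ to the slab $|y(\cdot)|\lesssim_\lambda r$. A direct Kor\'anyi-distance calculation---optimizing first over the $z$-coordinate reduces the distance from $y=(a,b,c)$ to a slope-$\sigma$ plane through a nearby point to the Euclidean distance from $(a,b)$ to the line of slope $\sigma$---yields $d_\Kor(y,L)\gtrsim \frac{|\sigma|-C_\lambda}{\sqrt{1+\sigma^2}}\,r$ for every $y\in\Gamma_\psi\cap B(\zero,cr)$ with $|x(y)|$ comparable to $r$, which is bounded below by a positive constant once $|\sigma|\ge M$ is chosen large enough. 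Ahlfors $3$-regularity of $\mu|_{\Gamma_\psi}$ (Lemma~\ref{lem:def-dv}) supplies a $\gtrsim r^3$-measure set of such $y$, forcing the lower bound on $\beta_{\Gamma_\psi}(x,cr)$. The slope dichotomy is the only non-routine step; the remaining work is careful bookkeeping of the constants $c$ and $M$.
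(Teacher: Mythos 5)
Your proof is correct and follows essentially the same strategy as the paper's. The first inequality is handled identically. For the reverse direction, the paper's dichotomy is on whether $\beta_{\Gamma_\psi}(x,cr)$ is small, using Lemma~\ref{l:ilg-planes} (``small $\beta$ forces bounded slope,'' proved by a two-point argument along a characteristic curve) to reduce to a bounded-slope plane; you dichotomize directly on $\slope(L)$ and prove the contrapositive (``large slope forces large $\beta$'') by a direct Kor\'anyi-distance computation. These are logically the same implication and yield the same proof, so I'd call this the same route with a cosmetic reorganization. One small point to tighten: in the $|\slope(L)|>M$ branch you implicitly assume $L$ passes near $x$; you should note that if $d_\Kor(x,L)\gtrsim r$ then $\beta\gtrsim 1$ trivially (since $\Gamma_\psi\cap B(x,cr)$ sits in $B(x,cr)$), and also take test points with $x$-coordinate on both sides of $x$ to rule out a nearly-vertical projected line passing through the favored side.
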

The proof of this lemma uses the fact that if $h(v) = a x(v) + b$ is affine and $P=\Gamma_h$ is the corresponding vertical plane, then $d_\Kor(w,P) \approx_a |y(w)-h(w)|$ for all $w\in \HH$. Since the constant in this inequality depends on $a$, we will need the following lemma.

\begin{lemma} \label{l:ilg-planes}
  Let $\lambda\in (0,1)$. There exist $m > 0$ and $\epsilon > 0$ such that for any  $\lambda$\nobreakdash--intrinsic Lipschitz graph $\Gamma_\psi$, any $u\in \Gamma_\psi$ and any vertical plane $P$, if \begin{equation}\label{eq:ilg-planes}
    \int_{B(u,r) \cap \Gamma_\psi} d_\Kor(w,P) \ud \mu(w) < \epsilon r^4,
  \end{equation}
  then $|\slope P|<m$.
\end{lemma}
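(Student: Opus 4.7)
The plan is to prove the contrapositive: for every $\lambda$--intrinsic Lipschitz graph $\Gamma_\psi$, every $u\in \Gamma_\psi$, every $r>0$, and every vertical plane $P$ with $|\slope P|$ sufficiently large (interpreting $|\slope P|=\infty$ when $P$ is parallel to the $yz$--plane), the left-hand side of \eqref{eq:ilg-planes} is at least $c_\lambda r^4$ for some $c_\lambda>0$; choosing $\epsilon<c_\lambda$ then proves the lemma. Left translations preserve $d_\Kor$, preserve the slope of a vertical plane, and preserve $\mu$ by Lemma~\ref{lem:def-dv}, so Lemma~\ref{lem:ilg-translations} reduces us to $u=\zero$, $\psi(\zero)=0$. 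The scaling $s_{1/r}$ scales $d_\Kor$ by $r^{-1}$ and $\mu|_{\Gamma_\psi}$ by $r^{-3}$ while preserving slopes and the intrinsic Lipschitz constant, so by Lemma~\ref{lem:ilg-scaling} we may further assume $r=1$.

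A direct computation---minimizing $\|w^{-1}p\|_\Kor$ over $p\in P$, first over the free $z$--component (to kill the third coordinate) and then over the in-plane parameter---gives the distance formulas
\[ d_\Kor(w,P)=\frac{|y(w)-\sigma x(w)-c|}{\sqrt{1+\sigma^2}}\quad\text{if } P=\{y=\sigma x+c\}, \]
and $d_\Kor(w,P)=|x(w)-c|$ if $P=\{x=c\}$. Combining the first formula with Lemma~\ref{lem:def-dv} and the identity $y|_{\Gamma_\psi}=\psi\circ\Pi$ converts the integral to
\[ \frac{1}{\sqrt{1+\sigma^2}}\int_{\Pi(B(\zero,1)\cap\Gamma_\psi)}|\psi(v)-\sigma x(v)-c|\,\ud\cL(v). \]
By Lemma~\ref{lem:proj-ball-contain} the domain contains a box $B_\lambda=[-a,a]\times\{0\}\times[-b,b]$ of size depending only on $\lambda$, and by Lemma~\ref{lem:ilg-line-distance} we have $|\psi(v)|\le C_\lambda$ on $B_\lambda$.

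The proof concludes with the elementary one-variable bound $\int_{-a}^a|\sigma x+c'|\,\ud x\ge |\sigma|a^2$ (minimum at $c'=0$ by symmetry). Integrating over $z\in[-b,b]$ and subtracting the $\psi$--perturbation, whose contribution is at most $4abC_\lambda$ independently of $\sigma$, yields a lower bound of the form $2a^2 b|\sigma|-4abC_\lambda$ for the $V_0$--integral. Dividing by $\sqrt{1+\sigma^2}$ gives an expression tending to $2a^2 b>0$ as $|\sigma|\to\infty$, producing constants $m,c'_\lambda>0$ depending only on $\lambda$ such that $|\slope P|\ge m$ forces the integral to exceed $c'_\lambda$. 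The parallel-plane case $P=\{x=c\}$ is analogous but simpler: $\int_{B_\lambda}|x-c|\,\ud\cL\ge 2a^2 b$ directly, with no $(1+\sigma^2)^{1/2}$ denominator. Setting $\epsilon$ below both thresholds finishes the argument. There is no substantive obstacle beyond the reductions; the noncommutativity of $\HH$ plays no role, because the key distance formula to a vertical plane turns out to depend only on the horizontal projection.
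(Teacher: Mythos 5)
Your proof is correct, and it takes a genuinely different route than the paper's. The paper's argument is metric: it applies Ahlfors $3$--regularity and a Chebyshev-style contradiction to conclude that every $v\in \Gamma_\psi\cap B(u,r/2)$ must satisfy $d_\Kor(v,P)<2\delta r$, then uses Lemma~\ref{lem:ilg-line-distance} to pick out a point $v = \Psi_\psi(uX^{8\delta r})$ on the graph with controlled $x$-- and $y$--separation from $u$, projects both $u$ and $v$ to $P$, and reads off a slope bound from the ratio $|y(v')-y(u')|/|x(v')-x(u')|$. Your argument is coordinate-based: you observe that for a vertical plane $P=\{y=\sigma x+c\}$, the free $z$--parameter kills the third coordinate of $p^{-1}w$, so $d_\Kor(w,P)$ is literally the Euclidean distance in $\R^2$ from $(x(w),y(w))$ to the line $y=\sigma x+c$, namely $|y(w)-\sigma x(w)-c|/\sqrt{1+\sigma^2}$. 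Pushing the integral to $V_0$ via $\mu(S)=\cL(\Pi(S))$ and the identity $y=\psi$ on $\Gamma_\psi$, you reduce everything to a one-variable bound $\int_{-a}^a|\sigma x+c'|\,\ud x\ge |\sigma|a^2$, and the $\sqrt{1+\sigma^2}$ denominator saturates to give a nonzero lower bound on the $\beta$--integral once $|\sigma|$ exceeds a threshold depending only on $\lambda$.

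What each approach buys: the paper's proof slots directly into the uniform-rectifiability toolkit and makes no use of the coordinate description of $P$; it works mutatis mutandis in higher Heisenberg groups, where there is no single ``off-plane'' coordinate to parametrize by. Your proof is more elementary --- it needs only Lemma~\ref{lem:proj-ball-contain}, Lemma~\ref{lem:ilg-line-distance}, and \eqref{eq:def-dv}, and it never invokes Ahlfors regularity --- and it makes the constants explicit. It also isolates the structural reason the lemma is true: vertical planes contain $\langle Z\rangle$, so the distance is entirely horizontal, and the scaling of a tilted line by $(1+\sigma^2)^{-1/2}$ exactly cancels the growth of the raw residual $|\sigma|a^2$ in the limit $|\sigma|\to\infty$, leaving a strictly positive bound. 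One minor detail worth recording if you write this up formally: after the left translation and dilation reductions, you should observe explicitly that $\Pi$ commutes with dilations (noted in Section~\ref{sec:prelim-Heis}) so that $\cL|_{V_0}$ scales by $r^{-3}$ under $s_{1/r}$, giving the claimed $r^{-4}$ scaling of the whole $\beta$--integral.
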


\begin{proof}
  Since $\Gamma_\psi$ is Ahlfors $3$--regular, there is a $c=c(\lambda)>0$ such that $\mu(\Gamma_\psi\cap B(w,s)) \ge c s^3$ for all $w\in \Gamma_\psi$ and $s>0$. 
 
  Let $\epsilon = c\delta^4$, let $\delta = \frac{1-\lambda}{100}$, let $m=(4\delta)^{-1}$, and let $P$ be a vertical plane satisfying \eqref{eq:ilg-planes}. 
  We claim that if $v\in \Gamma_\psi \cap B(u,\frac{r}{2})$, then $d_\Kor(v, P) < 2 \delta r$. Suppose not. Then $B(v,\delta r) \subset B(u,r)$ and 
  $$d_\Kor(\Gamma_\psi\cap B(v,\delta r),P)\ge \delta r,$$ so
  $$\int_{B(u,r) \cap \Gamma_\psi} d_\Kor(w,P) \ud \mu(w) \ge  \mu(\Gamma_\psi\cap B(v,\delta r)) \delta r \ge c\delta^4 r^4 = \epsilon r^4.$$
  This is a contradiction, so $d_\Kor(v, P) < 2 \delta r$. 
  
  In particular, this implies that $d_\Kor(u, P) < 2 \delta r$.   
  By Lemma \ref{lem:ilg-line-distance}, $|\psi(u X^{x}) - \psi(u)| \le \frac{2}{1-\lambda} |x|$ for any $x\in \R$, so if $v:= \Psi_\psi(u X^{8 \delta r})$, then 
  $$d_\Kor(u,v) \le 8 \delta r + |\psi(u X^{8 \delta r}) - \psi(u)| \le \frac{8}{100} r + \frac{16}{100} r \le \frac{r}{4}.$$
  That is, $v\in \Gamma_\psi \cap B(u, \frac{r}{2})$, so $d_\Kor(v, P) < 2 \delta r$.
  
  Let $u'\in P\cap B(u,2 \delta r)$ and let $v'\in P\cap B(v,2 \delta r)$. Then 
  $$|x(v') - x(u')| \ge |x(v)-x(u)| - 4\delta r = 4\delta r$$
  and
  $$|y(v') - y(u')| \le |y(v)-y(u)| + 4\delta r \le \frac{r}{4} + 4\delta r \le r.$$
  Thus
  $$|\slope(P)| = \frac{|y(v') - y(u')|}{|x(v') - x(u')|} \le (4\delta)^{-1} = m,$$
  as desired.
\end{proof}

We now prove Lemma \ref{l:parametric-beta}.
\begin{proof}[Proof of Lemma \ref{l:parametric-beta}]
  Let $\Gamma=\Gamma_\psi$, let $x\in \Gamma$, and $r>0$.
  Let $c=c(\lambda)>0$ be as in  Lemma~\ref{lem:proj-ball-contain}, so that $V(p,s) \subset \Pi(B(p,cs)\cap \Gamma)$ for all $p\in \Gamma$ and $s>0$.
  Note that by the area formula, we have
  $$\cH^3(S) \approx \int_{\Pi(S)} \sqrt{1+\nabla_\psi \psi(v)} \ud v = \int_{S} \sqrt{1+\nabla_\psi \psi(v)} \ud v \approx_\lambda \mu(S)$$
  for any Borel set $S\subset \Gamma$.

  We first prove that $\beta_{\Gamma}(x, c^{-1} r) \lesssim_\lambda \gamma_\psi(x,r)$.
  Let $h \from V_0 \to \R$ be an affine function so that $r^{-4} \|\psi - h\|_{L_1(V(x,r))} \leq 2\gamma_\psi(x,r)$ and let $P=\Gamma_h$. Then $d_\Kor(v,P)\le |\psi(v)-h(v)|$ for all $y\in \Gamma$ and $\Pi(B(x,c^{-1}r) \cap \Gamma)\subset V(x,r)$, so
  \begin{align*}
    \beta(x,c^{-1}r) & \lesssim_\lambda r^{-4} \int_{B(x,c^{-1}r) \cap \Gamma} d_\Kor(v,P) \ud \mu(v) \\
    &\leq r^{-4} \int_{V(x,r)} |\psi(v) - h(v)| \ud \mu(v) \\
    &\leq 2 \gamma_\psi(x,r).
  \end{align*}
  
  Next, we show that $\gamma_\psi(x,r)\lesssim_\lambda \beta_{\Gamma}(x,cr)$. Let $m=m(\lambda),\epsilon=\epsilon(\lambda)$ be as in Lemma~\ref{l:ilg-planes}.
  Suppose first that $\beta_\Gamma(x, cr) < \frac{\epsilon}{2}$. Then there is a vertical plane $P$ that satisfies \eqref{eq:ilg-planes} and thus $|\slope(P)| < m$. Let $g$ be the affine function such that $\Gamma_g=P$; then $d_\Kor(v,P)\approx_m |g(v)-\psi(v)|$ for all $v\in \Gamma$ (Lemma~\ref{lem:y-distance}). Therefore, since $V(x,r)\subset \Pi(B(p,cr)\cap \Gamma)$,
  \begin{align*}
    \gamma_\psi(x,r) &\leq r^{-4}  \int_{V(x,r)} |g(v) - \psi(v)| \ud \mu(v) \\
    &\lesssim_\lambda r^{-4} \int_{B(x,c r) \cap \Gamma} d_\Kor(v, P) \ud \mu(v) \\
    &\lesssim_\lambda \beta_\Gamma(x, c r).
  \end{align*}

  Now suppose $\beta_\Gamma(x, c r) \ge \frac{\epsilon}{2}\gtrsim_\lambda 1$. Let $h$ be the constant (affine) function $h(v) = \psi(x)$. By Lemma~\ref{lem:ilg-line-distance}, for $y\in B(x,r)$, we have 
  $$|h(\Pi(y)) - \psi(\Pi(y))| = |\psi(x) - \psi(y)| \le \frac{2}{1-\lambda} r,$$
  so $|h(v) - \psi(v)|\lesssim_\lambda r$ for all $v\in V(x,r)$. Therefore,
  $$\gamma_\psi(x,r) \le r^{-4} \|f - h\|_{L_1(V(x,r))} \lesssim_\lambda r^{-4} \cdot r \mu(V(x,r)) \lesssim 1 \lesssim_\lambda \beta_\Gamma(x,r),$$
  as desired.
\end{proof}

We can thus prove Proposition~\ref{prop:beta-theorem} by bounding $\gamma_{\nu_i}$ and $\gamma_{f_i}$. We will prove the following.
\begin{lemma}\label{l:gamma-number-bounds}
  For any $A>1$, the following properties hold for all sufficiently large $\rho$. 
  Let $i < k$. Let $v\in \Gamma_{f_k}$ and let $b>0$. Then
  \begin{equation}\label{eq:gamma-upper}
    \gamma_{f_i}(v, b r_i) \lesssim_b A^{-1} \rho^{-1}.
  \end{equation}
  
  Let $J_i$ be as in Section~\ref{sec:construction}, let $j \in J_i$, and let $s_0\in [\frac{1}{3} Ar_i,\frac{2}{3} A r_i]$, $t_0\in [0,r_i^2]$, and $w=\Psi_{f_k}(R_{i,j}(s_0,t_0))$. Then
  \begin{equation}\label{eq:gamma-lower}
    \gamma_{\nu_i}(w, 8 r_i) \gtrsim A^{-1}.
  \end{equation}
\end{lemma}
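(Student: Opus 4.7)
The plan is to prove the two bounds separately: the upper bound by an affine Taylor approximation centered at a point on $\Gamma_{f_i}$ close to $v$, and the lower bound by restricting the $L_1$-integral to a sub-region of $V(w,8r_i)\cap Q_{i,j}$ and using the non-affineness of the bump profile $\kappa$.

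For \eqref{eq:gamma-upper}, I would take as affine candidate the first-order Taylor polynomial of $f_i$ at $p:=\Psi_{f_i}(v)\in \Gamma_{f_i}$. Since $v\in \Gamma_{f_k}$ with $k>i$, we have $|y(v)-f_i(v)|=|f_k(v)-f_i(v)|\leq \sum_{j\geq i}\|\nu_j\|_\infty \lesssim A^{-1}r_i$, so $p$ is within Koranyi distance $\lesssim A^{-1}r_i$ of $v$ and therefore within $O_b(r_i)$ of every point of $V(v,br_i)$. Define
$$
h(q):=f_i(p)+(x(q)-x(p))\nabla_{f_i}f_i(p),
$$
which is affine in $x$. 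By Lemma~\ref{lem:a-Taylor} (applied with $a=m=f_i$, which is $\lambda$-intrinsic Lipschitz for some $\lambda<1$ depending only on $A$ by \cite{CMPSC}) combined with the flatness estimates \eqref{eq:flatness-fi}, namely $\|\nabla_{f_i}^2 f_i\|_\infty\lesssim A^{-3}r_i^{-1}\rho^{-1}$ and $\|Zf_i\|_\infty\lesssim A^{-1}r_i^{-1}\rho^{-1}$, one obtains
$$
|f_i(q)-h(q)|\lesssim_b r_i^2\bigl(\|\nabla_{f_i}^2f_i\|_\infty+\|Zf_i\|_\infty\bigr)\lesssim_b A^{-1}r_i\rho^{-1}
$$
uniformly for $q\in V(v,br_i)$. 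Since $\cL(V(v,br_i))\lesssim_b r_i^3$ by Lemma~\ref{lem:proj-ball-contain}, dividing by $(br_i)^4$ yields $\gamma_{f_i}(v,br_i)\lesssim_b A^{-1}\rho^{-1}$.

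For \eqref{eq:gamma-lower}, the key is that on $Q_{i,j}$ the bump reads $\nu_i\circ R_{i,j}(s,t)=A^{-1}r_i\kappa(\hat s,\hat t)$ in the coordinates $(\hat s,\hat t)=(s/(Ar_i),t/r_i^2)$, and $\kappa(\hat s_0,\cdot)$ at any fixed $\hat s_0\in[1/3,2/3]$ is continuous, vanishes near $\hat t=0,1$, and is strictly positive on $[1/5,4/5]$; in particular it is non-constant on $[c',1-c']$ for any $c'<1/5$, so by compactness
$$
m_0:=\inf_{\hat s_0\in[1/4,3/4]}\inf_c\|\kappa(\hat s_0,\cdot)-c\|_{L_1([c',1-c'])}>0.
$$
Using the restriction principle $\inf_h\|\nu_i-h\|_{L_1(V(w,8r_i))}\geq\inf_h\|\nu_i-h\|_{L_1(\Omega^*)}$, I would choose $\Omega^*:=R_{i,j}([s_0-cr_i,s_0+cr_i]\times[c'r_i^2,(1-c')r_i^2])$ for small absolute constants $c,c'>0$ and verify that $\Omega^*\subset V(w,8r_i)\cap Q_{i,j}$. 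An affine $h(x)=ax+b$ pulls back under $R_{i,j}$ to a function affine in $s$ and independent of $t$, so after a change of variables (Jacobian $\partial_tR_z\in[\tfrac34,\tfrac43]$ by Lemma~\ref{l:R-vert-bound}) followed by rescaling to $(\hat s,\hat t)$ the problem reduces to lower bounding
$$
r_i^4\int_{\hat s_0-c/A}^{\hat s_0+c/A}\int_{c'}^{1-c'}|\kappa(\hat s,\hat t)-\bar h(\hat s)|\,d\hat t\,d\hat s
$$
uniformly in $\bar h$ affine in $\hat s$. At each fixed $\hat s$ in the outer interval, which lies in $[1/4,3/4]$ once $A$ is large, the inner integral is at least $m_0$; integrating over an outer interval of length $2c/A$ yields $\gtrsim r_i^4/A$, and dividing by $(8r_i)^4$ gives $\gamma_{\nu_i}(w,8r_i)\gtrsim A^{-1}$.

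The main technical obstacle is the geometric verification $\Omega^*\subset V(w,8r_i)$. Because $w=\Psi_{f_k}(v_0)$ has $y(w)=f_k(v_0)\ne 0$, the projection $V(w,8r_i)=\Pi(B(w,8r_i))$ is not a simple rectangle: at fixed $x$ its $z$-fibre is an interval of length $\approx 32r_i^2$ centered at $z_0-y_w(x-x_0)$, while $Q_{i,j}$'s $z$-fibre has length $\approx r_i^2$ with center drifting at rate bounded by $\|\psi_i\|_\infty$. The crucial inputs are the uniform bounds $|y_w|\leq\|f_k\|_\infty\lesssim A^{-2}$ and $\|\psi_i\|_\infty\lesssim A^{-2}$, which together with Lemma~\ref{l:R-vert-bound} and Lemma~\ref{lem:proj-ball-contain} control the relative displacement of the two families of fibres over the small $x$-interval $[x_0-cr_i,x_0+cr_i]$ used to build $\Omega^*$, ensuring that $Q_{i,j}$'s fibres sit inside $V(w,8r_i)$'s fibres on this $x$-range and so $\Omega^*$ is indeed contained in the ball, as required for the change of variables above.
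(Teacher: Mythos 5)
Your proof follows essentially the same two-part strategy as the paper's. For \eqref{eq:gamma-upper} the argument is identical: Taylor-approximate $f_i$ to first order around $\Psi_{f_i}(v)$ using Lemma~\ref{lem:a-Taylor} with the flatness bounds \eqref{eq:flatness-fi}, then use $\cL(V(v,br_i))\approx_b r_i^3$. For \eqref{eq:gamma-lower} you also run the same core argument---restrict the $L_1$ infimum to a thin vertical strip of $Q_{i,j}$ around the fibre through $\Pi(w)$, change to flow coordinates via Lemma~\ref{l:R-vert-bound}, observe that any $h\in\Aff$ becomes independent of $t$ in those coordinates, and invoke compactness of $\kappa$---but you diverge in how you establish that the strip sits inside the projected ball, and your sketch there has a gap.

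The paper side-steps the tilt of $\Pi(B(w,\cdot))$ entirely: it first replaces $w\in\Gamma_{f_k}$ with $w'=\Psi_{f_i}(w)\in\Gamma_{f_i}$, noting $d_\Kor(w,w')=|f_k(w)-f_i(w)|\le 2r_i$ and hence $V(w',6r_i)\subset V(w,8r_i)$, and then proves the strip $D=R_{i,j}([s_0-\frac1{12}r_i,s_0+\frac1{12}r_i]\times[0,r_i^2])$ lies in $V(w',6r_i)$ by following horizontal curves of $\Gamma_{f_i}$ through $w'$, using \eqref{eq:flatness-fi} and Lemma~\ref{l:R-vert-bound} to control how far $\Psi_{f_i}(R_{i,j}(s_0,t))$ can drift from $w'$. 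Because $w'\in\Gamma_{f_i}$, the entire verification happens on the graph $\Gamma_{f_i}$, where the geometry is clean and there is no $y$-tilt to track.

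Your direct approach, by contrast, must track the $z$-fibres of $\Pi(B(w,8r_i))$ for $y(w)\ne 0$, and the bounds you cite are not strong enough to do this uniformly in $i$. You state that $|y_w|\lesssim A^{-2}$ and $\|\psi_i\|_\infty\lesssim A^{-2}$ are the crucial inputs controlling the relative drift of the two families of fibres, but those bounds only give relative drift $\lesssim A^{-2}\cdot cr_i$ over the $x$-interval of half-width $cr_i$, and this is $\gtrsim r_i^2 = A^{-1}\rho^{-i}\cdot r_i$ once $\rho^i\gtrsim A/c$; the containment can therefore fail for large $i$. What actually saves the argument is the much tighter cancellation $|y_w - \psi_i(\Pi(w))| = |f_k(\Pi(w)) - f_i(\Pi(w))|\lesssim A^{-1}r_i$, which holds because $f_i=\psi_i$ on $Q_{i,j}$ for $j\in J_i$ and $|f_k-f_i|\le\sum_{m\ge i}\|\nu_m\|_\infty\lesssim A^{-1}r_i$; this gives relative drift $\lesssim cA^{-1}r_i^2\ll r_i^2$. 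This is precisely the cancellation the paper exploits by passing to $w'$ at the outset. With that correction your approach goes through, but the paper's reduction to $w'$ is the cleaner way to present it.
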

\begin{proof}
  First, we prove \eqref{eq:gamma-upper}. Let $L\from \HH\to \R$ be the affine function $L(p) = f_i(v) + (x(p) - x(v)) \nabla_{f_i} f_i(v)$ and let $v'=\Psi_{f_i}(v)$. Lemma~\ref{lem:a-Taylor} and Lemma~\ref{lem:word-sup-bound} applied to $f_i$ imply that for all $u \in \HH$
  \begin{align*} 
    |f_i(u) - L(u)| \lesssim_\lambda d_\Kor(v',u)^2 (\|\nabla_{f_i}^2 f_i\|_\infty + \|Zf_i\|_\infty) \overset{\eqref{eq:flatness-fi}}{\lesssim} d_\Kor(v',u)^2 A^{-1} \rho^{-1} r_i^{-1}.
  \end{align*}
  Since
  $$d_\Kor(v',v)\le |f_i(v)-f_k(v)| \le \sum_{m=i}^{k-1} \|\nu_m\|_\infty \le 2 A^{-1} r_i \|\kappa\|_\infty \le 2r_i,$$
  if $u\in B(v,br_i)$, then $d_\Kor(u,v')\le (b+2)r_i$ and 
  $$|f_i(u) - L(u)| \le (b+2)^2 A^{-1} \rho^{-1} r_i\lesssim_b A^{-1} \rho^{-1} r_i.$$
  Therefore,
  \begin{align*}
    \gamma_{f_i}(v,br_i) \le (br_i)^{-4} \|f_i - L\|_{L_1(V(v,br_i))} \lesssim_b r_i^{-1} \|f_i - L\|_{L_\infty(B(v,br_i))} \lesssim_b A^{-1} \rho^{-1},
  \end{align*}
  as desired.
  
  Now we prove \eqref{eq:gamma-lower}. First, let $w'=\Psi_{f_i}(w)$. Then, as above, $d_\Kor(w,w')\le 2r_i$, so $V(w',6r_i)\subset V(w,8r_i)$. Then $\gamma_{\nu_i}(w,8r_i)\gtrsim \gamma_{\nu_i}(w',6r_i)$, so it suffices to prove that $\gamma_{\nu_i}(w', 6 r_i) \gtrsim A^{-1}$.
  
  We first apply a change of coordinates. Let $j\in J_i$ and let $R=R_{i,j}$, so that $w'=\Psi_{f_i}(R(s_0,t_0))$. Let $D= R([s_0 - \frac{1}{12} r_i, s_0 + \frac{1}{12} r_i]\times [0,r_i^2])$. We claim that if $\rho$ is large enough, then $D\subset V(w',6r_i)$.
  
  Let $t \in [0,r_i^2]$. Lemma~\ref{l:R-vert-bound}
  implies that $|z(R(s_0,t))-z(R(s_0,t_0))|\le\frac{4}{3}r_i^2$, so by \eqref{eq:flatness-fi},
  $$|f_i(R(s_0,t)) - f_i(w')| \le \|Zf_i\|_\infty |z(R(s_0,t)) - z(R(s_0,t_0))| \lesssim 
  A^{-1} \rho^{-1} r_i.$$
  We suppose that $\rho$ is large enough that $|f_i(R(s_0,t)) - f_i(w')| \le r_i$; then
  \begin{align*}
    d_\Kor& (\Psi_{f_i}(R(s_0,t)), w') \\
    & \le 2\sqrt{|z(R(s_0,t))-z(R(s_0,t_0))|} + |f_i(R(s_0,t)) - f_i(w')| \\
    & \le 2\sqrt{\frac{4}{3}r_i^2} + r_i \le 4 r_i.
  \end{align*}
  
  For $t \in [0,r_i^2]$, the curve $\lambda_t(s) = \Psi_{f_i}(R(s,t))$ is a horizontal curve on $\Gamma_{f_i}$ with velocity $\lambda_t'(s) = X+\nabla_{f_i} f_i(\lambda_t(s)) Y$. Since $\|\nabla_{f_i} f_i\|_\infty\le 1$, we have 
  $$d_\Kor(\lambda_t(s),\lambda_t(s')) \le \sqrt{2}|s-s'|.$$
  If $|s - s_0|\le r_i$, then
  $$d_\Kor(w', \Psi_{f_i}(R(s,t))) \le d_\Kor(w', \Psi_{f_i}(R(s_0,t))) + d_\Kor(\lambda_t(s_0), \lambda_t(s)) \le 6 r_i,$$
  so $R(s,t)\in \Pi(B(w',6r_i))$. Thus $D\subset V(w',6r_i)$.
  
  For any $h\in \Aff$, 
  \begin{align*}
    \|\nu_i - h\|_{L_1(V(w', 6r_i))} & \ge 
  \int_{D} |\nu_i(v) - h(v)| \ud v\\
  & \ge \frac{3}{4} \int_{s_0 - \frac{1}{12}r_i}^{s_0 + \frac{1}{12}r_i} \int_0^{r_i^2} |\nu_i(R(s,t)) - h(R(s,t))| \ud t \ud s,
  \end{align*}
  where we used Lemma~\ref{l:R-vert-bound} to bound the Jacobian of $R$.
  
  Since $h\in \Aff$ is constant on vertical lines, there is an affine function $h_0\from \R\to \R$ such that $h(R(s,t)) = h_0(s)$. Since $j\in J_i$,
  $$\nu_i(R(s,t)) = \kappa_{i,j}(R(s,t)) \stackrel{\eqref{eq:def-kappa}}{=} A^{-1} r_i \kappa(\hat{s}, \hat{t}),$$
  where $\hat{s} = A^{-1}r_i^{-1} s$ and $\hat{t} = r_i^{-2} t$. 
  
  Let
  $$M=\min_{\substack{c\in \R \\ \hat{u} \in [\frac{1}{4},\frac{3}{4}]}} \int_0^1 
  |\kappa(\hat{u}, \hat{v}) - c| \ud \hat{v}.$$
  We chose $\kappa$ so that $\kappa$ is zero on $\partial [0,1]^2$ and positive on $[\frac{1}{5},\frac{4}{5}]^2$, so $M>0$ by compactness. 
  
  Since $s_0\in [\frac{1}{3},\frac{2}{3}]$, if $s \in [s_0 - \frac{1}{12} r_i, s_0 + \frac{1}{12} r_i]$, then
  $\hat{s}\in [\frac{1}{4},\frac{3}{4}]$.
  Therefore, substituting $\hat{t} = r_i^{-2} t$, we find
  \begin{align*}
    \|\nu_i - h\|_{L_1(V(w', 6r_i))} & \gtrsim r_i^2 \int_{s_0 - \frac{1}{12}r_i}^{s_0 + \frac{1}{12}r_i} \int_0^{1} |A^{-1} r_i \kappa(\hat{s},\hat{t}) - h_0(s)| \ud \hat{t} \ud s \\
    & \ge A^{-1} r_i^3 \int_{s_0 - \frac{1}{12}r_i}^{s_0 + \frac{1}{12}r_i} M \ud s \ge \frac{1}{6} A^{-1} r_i^4 M.
  \end{align*}
  This holds for all $h\in \Aff$, so $\gamma_{\nu_i}(w',6r_i)\gtrsim A^{-1}$.
\end{proof}

Finally, we prove Proposition~\ref{prop:beta-theorem} and Theorem~\ref{thm:beta-theorem}.
\begin{proof}[Proof of Proposition~\ref{prop:beta-theorem} and Theorem~\ref{thm:beta-theorem}]
  By Proposition \ref{p:stopping-time}, there is a $\delta>0$ such that if $0 \le i \le \delta A^4$, then $\sum_{j\in J_i} |Q_{i,j}| > \frac{1}{2}$. Let $N=\lfloor\delta A^4\rfloor$, let $f=f_N$, and let $\Gamma=\Gamma_f$. 
  
  Let $c$ be as in Lemma~\ref{l:parametric-beta}. Let $0\le i < N$ and $j\in J_i$. Let $s\in [\frac{1}{3}A r_i, \frac{2}{3}A r_i]$ and $t\in [0,r_i^2]$ and let $w=\Psi_f(R_{i,j}(s,t))$.
  For any $\psi,\phi\from \HH\to \R$, $\gamma$ satisfies the reverse triangle inequality
  $$\gamma_{\psi + \phi}(w,r) \ge \gamma_{\psi}(w,r) - \gamma_{\phi}(w,r),$$ 
  so since $f = f_{i} + \nu_i + \sum_{m=i+1}^{N-1} \nu_m$, 
  \begin{equation}\label{eq:beta-decomp}
    \gamma_{f}(w, 8 r_i) \ge \gamma_{\nu_i}(w, 8 r_i) - \gamma_{f_i}(w, 8 r_i) - \sum_{m=i+1}^\infty \gamma_{\nu_m}(w, 8 r_i).
  \end{equation}
  Lemma~\ref{l:gamma-number-bounds} implies that when $\rho$ is sufficiently large, $\gamma_{\nu_i}(w, 8 r_i) \gtrsim A^{-1}$ and $\gamma_{f_i}(w, 8 r_i)\lesssim A^{-1}\rho^{-1}$. Furthermore, for $m \ge 1$, $\|\nu_m\|_\infty \lesssim A^{-1} r_m$, so
  $$\gamma_{\nu_m}(w, 8 r_i) \le (8 r_i)^{-4} |V(w,8 r_i)| \cdot \|\nu_m\|_\infty \lesssim r_i^{-1} A^{-1} r_m \lesssim A^{-1}\rho^{i-m}.$$
  Therefore,
  $$\gamma_{f_i}(w, 8 r_i) + \sum_{m=i+1}^\infty \gamma_{\nu_m}(w, 8 r_i) \lesssim A^{-1}\rho^{-1}.$$
  When $\rho$ is large, this is small compared to $\gamma_{\nu_i}(w, 8 r_i)$, so
  $$\gamma_f(w, 8 r_i) \ge \frac{1}{2}\gamma_{\nu_i}(w, 8 r_i) \gtrsim A^{-1}$$
  and $\beta_{\Gamma_{f}}(w, 8 c r_i) \gtrsim \gamma_{f}(w, 8 r_i) \gtrsim A^{-1}$. In fact, for $r\in [8cr_i, 16 cr_i]$,
  $$\beta_\Gamma(w, r) \gtrsim \beta_\Gamma(w, 8 c r_i) \gtrsim A^{-1}.$$ 
  
  Therefore, by Lemma~\ref{l:R-vert-bound}, 
  \begin{align*}
    \int_{Q_{i,j}} \beta_\Gamma(\Psi_{f}(v), r)^p \ud v & \ge
    \frac{3}{4} \int_{\frac{1}{3}A r_i}^{\frac{2}{3}A r_i} \int_{0}^{r_i^2} \beta_\Gamma(\Psi_{f}(R_{i,j}(s,t)), r)^p \ud t \ud s\\
    & \gtrsim A r_i^3 \cdot A^{-p} \gtrsim |Q_{i,j}| A^{-p},
  \end{align*}
  and
  $$\int_{\Psi_f(U)} \beta_\Gamma(v, r)^p \ud v \\
    \ge \sum_{i\in J_i} \int_{\Psi_f(Q_{i,j})} \beta_\Gamma(v, r)^p \ud v \gtrsim \sum_{i\in J_i} A^{-p} |Q_{i,j}| \gtrsim A^{-p}.$$
  
  We suppose that $\rho>4$ so that the intervals $[8cr_i,16cr_i]$ are disjoint and let $R > 16c$, so that $R>16cr_0$ and $\Psi_f(U)\subset B(\zero,R)$. Then
  \begin{multline*}
    \int_0^R \int_{\Gamma \cap B(\zero,R)} \beta_\Gamma(v,r)^p \ud v \frac{\ud r}{r} \ge \sum_{i=0}^{N-1} \int_{8cr_i}^{16cr_i} \int_{\Psi_f(U)} \beta_\Gamma(v, r)^p \ud v \frac{\ud r}{r} \\
    \gtrsim \sum_{i=0}^{N-1} \int_{8cr_i}^{16cr_i} A^{-p} \frac{\ud r}{r} \ge N\log 2\cdot A^{-p} \gtrsim \delta A^{4-p}.
  \end{multline*}
  This proves Proposition~\ref{prop:beta-theorem}. By Lemma~\ref{lem:def-dv}, $\ud v\approx \ud \cH^3(v)$, so
  $$\int_0^R \int_{\Gamma \cap B(\zero,R)} \beta_\Gamma(v,r)^p \ud \cH^3(v) \frac{\ud r}{r} \approx \int_0^R \int_{\Gamma \cap B(\zero,R)} \beta_\Gamma(v,r)^p \ud v \frac{\ud r}{r} \gtrsim \delta A^{4-p}.$$
  When $p < 4$, this integral goes to infinity as $A\to \infty$, proving Theorem~\ref{thm:beta-theorem}. 
\end{proof}

\section{Reduction to vertical planes}
\label{sec:redvertplane} 
Now we begin the proof of Theorem~\ref{mainthmintro}, which will take up the rest of this paper. In this section and the rest of the paper, $K\from \HH \setminus \{\zero\} \to \R^2$ will denote a smooth orthogonal kernel which is homogeneous of degree $-3$ and $\widehat{K}$ will denote the (also orthogonal) kernel $\widehat{K}(v)=K(v^{-1})$. Many of our bounds will depend on $K$, so we omit $K$ in subscripts like $\lesssim_K$.

Let $\phi\from \HH \to \R$ be an intrinsic Lipschitz function. We define $\eta_\phi = \mu|_{\Gamma_\phi}$, where $\mu$ is the measure defined in Section~\ref{sec:ilgs}. Then $T\eta_\phi(p) = T^K\eta_\phi(p)$ is given by
$$T\eta_\phi(p) := \pv(p) \int_{\Gamma_\phi} \widehat{K}(p^{-1}w)\ud w$$
for all $p\in \HH$ such that the principal value on the right exists, i.e., for all $p\in \HH$ such that
\begin{equation}\label{eq:T-eta-phi}
  \lim_{\substack{r\to 0 \\ R\to \infty}} \int_{\Gamma_\phi\cap (B(p,R)\setminus B(p,r))} \widehat{K}(p^{-1}w) \ud w
\end{equation}
converges. Let $\one$ be the function equal to $1$ on all of $\HH$; then, using the operator notation in Section~\ref{sec:ker}, we can write $T\eta_\phi = T_{\eta_\phi}\one$. 


In this section, we will show that when $\phi$ is a bounded smooth function and $p\in \Gamma_\phi$, then $T\eta_\phi(p)$ is the principal value of a singular integral on a vertical plane.
For any $0<r<s$, any $p\in \HH$, and any vertical plane $Q$ through $p$, let 
$$A_{r,s}^Q(p) = Q \cap (B(p,s)\setminus B(p,r))\subset Q$$
and let $A_{r,s}^Q = A_{r,s}^Q(\zero)$.  When $Q=V_0$ we will suppress the superscripts.

For a point $p\in \HH$, a vertical plane $Q$ through $\zero$ with finite slope, a function $f\from \HH\to \R$ which is constant on cosets of $\paramY$, an intrinsic Lipschitz function $\phi$, and  $0<r<R$ we let
\begin{equation}\label{eq:def-tilde-T}
  \tilde{T}^{K;Q}_{\phi;r,R} f(p) = \tilde{T}^Q_{\phi;r,R} f(p) := \int_{\Psi_\phi(p) A_{r,R}^Q} \widehat{K}(\Psi_\phi(p)^{-1}\Psi_{\phi}(v)) f(v)\ud v,
\end{equation}
and
$$\tilde{T}^Q_{\phi} f(p) := \lim_{\substack{r\to 0 \\ R\to \infty}} \tilde{T}^Q_{\phi;r,R} f(p),$$
if this limit exists. Note that $\tilde{T}^Q_\phi f$ and $\tilde{T}^Q_{\phi;r,R} f$ are constant on cosets of $\paramY$. 

When $p\in \Gamma_\phi$ and $f=\one$ is a constant function, we can substitute $w=\Psi_\phi(v)$ to write $\tilde{T}^Q_\phi \one$ like the right side of \eqref{eq:T-eta-phi}:
\begin{equation} \label{eq:tilde-T-eta-phi}
  \tilde{T}^Q_\phi \one(p) = \lim_{\substack{r\to 0 \\ R\to \infty}} \int_{p A^Q_{r,R}} \widehat{K}(p^{-1}\Psi_{\phi}(v)) \ud v = \lim_{\substack{r\to 0 \\ R\to \infty}} \int_{\Psi_\phi\big(p A^Q_{r,R}\big)} \widehat{K}(p^{-1}w) \ud w.
\end{equation}

In this section, we will compare the integrals in \eqref{eq:T-eta-phi} and \eqref{eq:tilde-T-eta-phi} and prove the following proposition.

\begin{prop}\label{prop:annulus-change}
  Let $\phi\from \HH \to \R$ be a  smooth function which is constant on cosets of $\paramY$ and let $Q$ be a vertical plane through $\zero$. Let  
  $$C=\max \big\{\|\phi\|_\infty, \|\nabla_\phi \phi\|_\infty, \|\nabla^2_\phi \phi\|_\infty, \|\partial_z \phi\|_\infty,|\slope Q|\big\}$$
  and suppose that $C<\infty$.
  Then for any $p\in \Gamma_\phi$, the limits $T\eta_\phi(p)$ and $\tilde{T}^Q_\phi \one(p)$ exist and
  \begin{equation}\label{eq:annulus-T}
    T\eta_\phi(p)
    = \tilde{T}^Q_\phi \one(p).
  \end{equation}
  In fact, 
  $$\left| T\eta_\phi(p) - \tilde{T}^Q_{\phi;r,R}\one(p) \right|\lesssim_{C} r + R^{-1}.$$
\end{prop}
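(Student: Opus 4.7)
The plan is, first, to reduce by left translation to the case $p=\zero$, $\phi(\zero)=0$. By Lemma~\ref{lem:ilg-translations}, replacing $\phi$ by $p^{-1}\cdot\phi$ produces an intrinsic Lipschitz function vanishing at $\zero$ and with the same bounds on $\nabla_\phi\phi$, $\nabla_\phi^2\phi$, and $\partial_z\phi$ (these are left-invariant or depend only on left-invariant data of $\phi$). I can likewise replace $Q$ by a vertical plane through $\zero$ of the same slope, preserving the constant $C$.

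Next, I would express both truncated integrals as integrals on $V_0$ with a common integrand. By Lemma~\ref{lem:def-dv},
\begin{equation*}
T_{r,R}\eta_\phi(\zero)=\int_{D_1(r,R)} F(v)\,\ud v,\qquad F(v):=\widehat K(\Psi_\phi(v)),
\end{equation*}
with $D_1(r,R)=\{v\in V_0:r<\|\Psi_\phi(v)\|_\Kor<R\}$; a direct computation in coordinates shows that $\Pi|_Q\colon Q\to V_0$ has unit Jacobian for any vertical plane $Q$ of finite slope, so
\begin{equation*}
\tilde T^Q_{\phi;r,R}\one(\zero)=\int_{D_2(r,R)} F(v)\,\ud v,\qquad D_2(r,R)=\Pi(A^Q_{r,R}).
\end{equation*}
Since $Q$ and the Koranyi norm are $\theta$-invariant and $\Pi$ commutes with $\theta$, the set $D_2(r,R)$ is exactly $\theta$-symmetric.

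The key estimate is an antipodal-cancellation bound for the integrand $F$. Because $\widehat K$ is orthogonal and $(-3)$--homogeneous, it is $\HH$--odd. A direct computation in coordinates yields $\Psi_\phi(\theta v)=\theta(\Psi_\phi(v))\cdot Y^{\phi(v)+\phi(\theta v)}$. Applying Lemma~\ref{lem:a-Taylor} to $m=\phi$ at $p=\zero$ gives $|\phi(v)+\phi(\theta v)|\lesssim_C \|v\|_\Kor^2$ for small $v$, while $\|\phi\|_\infty\le C$ gives the global bound $|\phi(v)+\phi(\theta v)|\le 2C$. Combining with Lemma~\ref{lem:deriv-homog} (so that $|\partial_Y\widehat K(w)|\lesssim\|w\|_\Kor^{-4}$) and the comparability $\|\Psi_\phi(v)\|_\Kor\approx_C \|v\|_\Kor$ (from intrinsic Lipschitz regularity and Lemma~\ref{lem:y-distance}), a one-variable mean value estimate produces the pointwise paired bound
\begin{equation*}
|F(v)+F(\theta v)|\lesssim_C \|v\|_\Kor^{-4}\min(\|v\|_\Kor^2,1).
\end{equation*}
The right-hand side is integrable on $V_0$, and its integral over $\{\|v\|<r\}\cup\{\|v\|>R\}$ is $O_C(r+R^{-1})$.

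This bound drives the conclusion. For $\tilde T^Q_{\phi;r,R}\one(\zero)$, the $\theta$-symmetry of $D_2$ lets me rewrite it as $\tfrac12\int_{D_2(r,R)}[F(v)+F(\theta v)]\,\ud v$, which, by dominated convergence with the above bound, converges to $J:=\tfrac12\int_{V_0\setminus\{\zero\}}[F+F\circ\theta]\,\ud v$ with error $O_C(r+R^{-1})$. For $T_{r,R}\eta_\phi(\zero)$ I would split $D_1=D_1^s\sqcup D_1^{as}$ with $D_1^s=D_1\cap\theta(D_1)$; since $\|\Psi_\phi(\theta v)\|_\Kor-\|\Psi_\phi(v)\|_\Kor=O(\min(\|v\|^2,1))$, the asymmetric part $D_1^{as}$ sits in a boundary layer near $\|\Psi_\phi(v)\|\in\{r,R\}$ of that thickness, and a direct measure calculation against the pointwise bound $|F|\lesssim\|v\|_\Kor^{-3}$ gives $|\int_{D_1^{as}}F|\lesssim_C r+R^{-1}$. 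On the symmetric part $D_1^s$, using $\theta$-symmetry once more, the difference $\int_{D_1^s}F\,\ud v-\int_{D_2}F\,\ud v$ becomes $\tfrac12\int_{D_1^s\symdiff D_2}[F+F\circ\theta]\,\ud v$, which the paired bound controls by $O_C(r+R^{-1})$. Assembling, both $T_{r,R}\eta_\phi(\zero)$ and $\tilde T^Q_{\phi;r,R}\one(\zero)$ converge to the common value $J$ with the claimed error.

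The main obstacle is the last comparison: although $D_1^s$ and $D_2$ are both $\theta$-symmetric ``annuli'' in $V_0$, they are defined by two genuinely different Koranyi-like norms (the Koranyi norm of $\Psi_\phi(v)$ vs.\ of $\pi_Q(v)$), so their symmetric difference has nontrivial measure at every scale. All of the smallness must therefore come from the paired integrand estimate, and the rate $r+R^{-1}$ is precisely what that estimate delivers after integrating against the $\cH^3$-like measure of the symmetric difference at the inner and outer scales.
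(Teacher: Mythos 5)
Your plan shares the core mechanism with the paper's proof: reduce to $p=\zero$ by left translation, exploit the $\theta$-symmetry of the domains and the $\HH$-oddness of $\widehat K$ to replace $|F|\lesssim\|v\|^{-3}$ by the paired bound $|F+F\circ\theta|\lesssim_C\|v\|_\Kor^{-4}\min(\|v\|_\Kor^2,1)$, and feed this into Lemma~\ref{lem:polar}. The paper does exactly this, both in its Lemma~\ref{lem:riesz-converge} and in the final step of the proposition. Your treatment of the $\theta$-symmetric symmetric difference $D_1^s\symdiff D_2$ is fine.

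The gap is in the ``asymmetric'' piece $D_1^{as}=D_1\setminus\theta(D_1)$. You want $\bigl|\int_{D_1^{as}}F\bigr|\lesssim r+R^{-1}$ using only $|F|\lesssim\|v\|_\Kor^{-3}$, which forces you to prove a measure estimate: near the inner cut, $D_1^{as}\subset\{v\in V_0:\ r<\|\Psi_\phi(v)\|_\Kor\le r+cr^2\}$ must have $\mu$-measure $\lesssim r^4$. That set is, under the measure-preserving map $\Psi_\phi$, precisely $\Gamma_\phi\cap(B_{r+cr^2}\setminus B_r)$, i.e.\ a thin \emph{spherical shell of the graph}, not a Korányi annulus of a plane where Lemma~\ref{lem:polar} applies. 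Ahlfors $3$-regularity of $\Gamma_\phi$ gives $\mu(\Gamma_\phi\cap B_s)\approx s^3$ with multiplicative constants, but that alone only yields $O(r^3)$ for the shell, not $O(r^4)$; the paired bound cannot rescue this piece because $D_1^{as}$ is, by construction, the part of $D_1$ with \emph{no} $\theta$-partner. The estimate $\mu(\Gamma_\phi\cap(B_{r+cr^2}\setminus B_r))\lesssim r^4$ is true, but its proof is exactly the content of the paper's Lemma~\ref{lem:annular-approx}: one projects $\Gamma_\phi\cap B_s$ along $\langle Y\rangle$ to the \emph{tangent plane} $W$ at $\zero$ and uses the second-order Taylor approximation of $\phi$ to trap the projection between $W\cap B_{s-cs^2}$ and $W\cap B_{s+cs^2}$, where the annular measure is then read off from Lemma~\ref{lem:polar}. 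That is the one genuinely nonelementary step, and the paper isolates it in Lemma~\ref{lem:pv-independence-G} by routing the comparison through the intermediate set $E_{r,R}=(V_0\cap B_R)\setminus\Pi(W\cap B_r)$, with the tangent plane on the inside and $V_0$ on the outside. Your ``direct measure calculation'' is doing the same work implicitly and should be made explicit; without Lemma~\ref{lem:annular-approx} or an equivalent codimension-one Minkowski-content bound for $\Gamma_\phi$-shells, the inner part of $\int_{D_1^{as}}F$ is not controlled.

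A smaller point: your claim that $\Pi|_Q$ has unit Jacobian and your reduction to a common integrand on $V_0$ are both fine (cf.\ Lemma~\ref{lem:def-dv}), and the paired bound $|\phi(v)+\phi(\theta v)|\lesssim_C\min(\|v\|_\Kor^2,1)$ correctly invokes both Lemma~\ref{lem:a-Taylor} and the global bound $\|\phi\|_\infty\le C$. So the architecture of your argument is right; it is just that the asymmetric boundary-layer term quietly reintroduces the tangent-plane approximation that you were trying to bypass.
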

In particular, under these conditions, $\tilde{T}^Q_\phi \one(p)$ is independent of $Q$, so we write $\tilde{T}_\phi = \tilde{T}^{V_0}_\phi$.

The implicit constants in the proofs in this section almost all depend on $C$, so we will omit the dependence on $C$ from our notation.

We first establish bounds on $\tilde{T}^Q_{\phi;s,t} \one(p)$.

\begin{lemma} \label{lem:riesz-converge}
  Let $\phi$, $Q$, and $C$ be as in Proposition~\ref{prop:annulus-change}. Then for any $0 < r' < r < 1 < R < R'$ and $p \in \Gamma_\phi$,
  $$\left|\tilde{T}^Q_{\phi;r',r} \one(p)\right| \lesssim r$$
  and
  $$\left|\tilde{T}^Q_{\phi;R,R'} \one(p)\right| \lesssim R^{-1}.$$
  In particular, the principal value $\tilde{T}^Q_\phi\one(p)$ exists for all $p\in \Gamma_\phi.$
  
  Furthermore, for any bounded function $f\from \HH \to \R$ which is constant on cosets of $\paramY$ and any $0<s<t$, 
  $$\left|\tilde{T}^Q_{\phi;s,t} f(p) \right| \lesssim \|f\|_\infty \log\frac{t}{s}.$$
\end{lemma}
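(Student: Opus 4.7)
The plan is to rewrite
\[
\tilde T^Q_{\phi;s,t}f(p)=\int_{A^Q_{s,t}}\widehat K(uY^{\tau(u)})f(pu)\,du,\qquad \tau(u):=\phi(pu)-y(pu),
\]
after the change of variables $u=p^{-1}v$, noting that $\tau(\zero)=0$ since $p\in\Gamma_\phi$. The technical heart of the lemma is a uniform lower bound $\|uY^s\|_\Kor\gtrsim_C\|u\|_\Kor$ on $A^Q_{s,t}$ valid for $|s|\le C'\|u\|$. Lemma~\ref{lem:ilg-line-distance} supplies $|\tau(u)|\lesssim_C\|u\|$, and writing $u=(x,\sigma x,z)\in Q$ with $\sigma=\slope Q$ one verifies this bound by a short case analysis: if $\|u\|\approx|x|$, the horizontal component already gives $\|uY^s\|\ge|x|$; if $|x|\ll\sqrt{|z|}$, the cross term $sx/2$ is negligible compared with $z$ and $\|uY^s\|^4\ge 16(z+sx/2)^2\gtrsim z^2$. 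Combined with the $(-3)$-homogeneity of $\widehat K$, this gives $|\widehat K(uY^{\tau(u)})|\lesssim_C\|u\|^{-3}$, and integration in Kor\'anyi polar coordinates on $Q$ (in which $\{\|u\|=r\}$ has one-dimensional measure $\approx r^2$) yields the third bound $|\tilde T^Q_{\phi;s,t}f(p)|\lesssim_C\|f\|_\infty\log(t/s)$.

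For the small-scale estimate, the strategy is to compare $\widehat K(uY^{\tau(u)})$ to a model kernel whose annular integral vanishes. Let $a:=\nabla_\phi\phi(p)-\sigma$ and let $L(q):=\phi(p)+(x(q)-x(p))\nabla_\phi\phi(p)$ be the tangent-plane linearization of $\phi$ at $p$. A direct computation gives $p^{-1}\Psi_L(pu)=uY^{ax(u)}$; since $\theta=s_{-1}$ is a group automorphism with $\theta(Y^t)=Y^{-t}$, we have $\theta(uY^{ax(u)})=\theta(u)Y^{ax(\theta u)}$. The $\theta$-invariance of $A^Q_{r',r}$ together with the $\HH$-oddness of $\widehat K$ force $\int_{A^Q_{r',r}}\widehat K(uY^{ax(u)})\,du=0$. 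Subtracting this from $\tilde T^Q_{\phi;r',r}\one(p)$, Lemma~\ref{lem:a-Taylor} applied to $\phi$ yields $|\tau(u)-ax(u)|=|\phi(pu)-L(pu)|\lesssim_C\|u\|^2$. Since $\YL\widehat K$ is $(-4)$-homogeneous by Lemma~\ref{lem:deriv-homog}, the Mean Value Theorem combined with the lower bound above gives
\[
\bigl|\widehat K(uY^{\tau(u)})-\widehat K(uY^{ax(u)})\bigr|\lesssim_C|\tau(u)-ax(u)|\cdot\|u\|^{-4}\lesssim_C\|u\|^{-2},
\]
so $\int_{A^Q_{r',r}}\|u\|^{-2}\,du\approx r-r'\le r$, as desired.

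For the large-scale estimate I will instead compare with the constant linearization $L'(q):=\phi(p)$. A short calculation gives $p^{-1}\Psi_{L'}(pu)=uY^{-y(u)}=\Pi(u)\in V_0$, and the identity $\Pi(\theta u)=\theta(\Pi(u))$ together with the $\HH$-oddness of $\widehat K$ shows $\int_{A^Q_{R,R'}}\widehat K(\Pi(u))\,du=0$. Factoring $uY^{\tau(u)}=\Pi(u)Y^{\tilde\tau(u)}$ with $\tilde\tau(u):=\phi(pu)-\phi(p)$, the boundedness hypothesis gives $|\tilde\tau(u)|\le 2\|\phi\|_\infty\le 2C$, a uniform $O(1)$ bound. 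Since $\Pi|_Q$ has inverse $w\mapsto wY^{\sigma x(w)}$, the Kor\'anyi triangle inequality yields $\|u\|_\Kor\le(1+C)\|\Pi(u)\|_\Kor$, so for $\|u\|\ge M_C$ large enough we have $\|\Pi(u)Y^s\|\ge\|\Pi(u)\|/2$ uniformly for $|s|\le 2C$, and hence
\[
\bigl|\widehat K(uY^{\tau(u)})-\widehat K(\Pi(u))\bigr|\lesssim_C|\tilde\tau(u)|\cdot\|\Pi(u)\|^{-4}\lesssim_C\|u\|^{-4}.
\]
Integration contributes $\lesssim_C R^{-1}$ on $\{\|u\|\ge M_C\}$, while the core region $\{u\in A^Q_{R,R'}:\|u\|<M_C\}$ is controlled directly by the $\|u\|^{-3}$ bound from the first paragraph and contributes $O_C(1)\lesssim_C R^{-1}$ because $R\ge 1$. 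The principal value then exists on $\Gamma_\phi$ by the Cauchy criterion applied to both one-sided bounds.

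The main obstacle will be the two uniform lower bounds $\|uY^s\|\gtrsim_C\|u\|$ (for $|s|\le C'\|u\|$) and $\|\Pi(u)\|\gtrsim_C\|u\|$: the naive estimate $\|uY^s\|\ge\|u\|-|s|$ can be trivial once $|s|$ is comparable to $\|u\|$, so the bounds must be extracted from the vertical-plane structure of $Q$ via the coordinate case analysis above.
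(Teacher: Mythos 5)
Your proposal is correct, and it takes a genuinely different route from the paper's. The paper proves all three estimates by symmetrizing the integrand over the reflection $\theta$: it writes the integral as half the integral of $\widehat{K}(\Psi_\phi(v))+\widehat{K}(\Psi_\phi(\theta(v)))$, then exploits the $\HH$-oddness of $\widehat{K}$ to turn this into a single mean-value estimate governed by the ``even defect'' $\phi(v)+\phi(\theta(v))$, for which Lemma~\ref{lem:a-Taylor} gives $\lesssim\min\{1,\|v\|_\Kor^2\}$; this one bound simultaneously handles both the small- and large-scale annuli. You instead subtract, from the actual integrand, a concrete comparison integrand whose annular integral vanishes by $\theta$-equivariance and $\HH$-oddness: for small scales the tangent-plane kernel $\widehat{K}(uY^{ax(u)})$ (Taylor error $O_C(\|u\|^2)$, giving $\|u\|^{-2}$ locally), and for large scales the plane-projection kernel $\widehat{K}(\Pi(u))$ ($O(\|\phi\|_\infty)$ error, giving $\|u\|^{-4}$ far out). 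The two strategies rest on the same cancellation mechanism but package it differently: your version is more explicit and modular (you never need to locate the MVT midpoint $m(v)$ in $\theta(v)\paramY$, whereas the paper must argue $\|m(v)\|_\Kor\approx\|v\|_\Kor$), at the cost of two separate comparison kernels and the need to hide a compact ``core'' region $\{\|u\|<M_C\}$ in the constants for the large-scale bound. Your concluding remark about the two lower bounds $\|uY^s\|_\Kor\gtrsim_C\|u\|_\Kor$ and $\|\Pi(u)\|_\Kor\gtrsim_C\|u\|_\Kor$ is exactly the technical hinge, and your case analysis on $Q$ does establish both; note only that your parenthetical justification for absorbing the core-region constant --- ``because $R\ge 1$'' --- is not quite the right reason; the correct observation is that the core is nonempty only when $R<M_C$, so $R^{-1}>M_C^{-1}$ and the $O_C(1)$ term is indeed $O_C(R^{-1})$.
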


Let $Q$ be a vertical plane through $\zero$ with finite slope. Then, by Lemma~\ref{lem:def-dv}, $\mu|_Q$ is a $3$--regular left-invariant measure on $Q$. The uniqueness (up to scaling) of the Haar measure on $Q$ implies that $\mu|_{Q}$ is a constant multiple of $\cL|_{Q}$, i.e. $\mu|_{Q}$ is a $3$--uniform measure. Hence, the following useful lemma follows easily, see e.g.\ \cite{merlo} for the details.
\begin{lemma}\label{lem:polar}
  Let $Q$ be a vertical plane with finite slope. There exists a $c > 0$, depending on the slope of $Q$, so that for any $v\in Q$, $0\le r_1\le r_2\le \infty$, and any Borel integrable function $f\from \R\to \R$, 
  $$\int_{A^Q_{r_1,r_2}(v)}
  f(d_\Kor(v,w)) \ud w = c \int_{r_1}^{r_2} f(r) r^2 \ud r.$$
\end{lemma}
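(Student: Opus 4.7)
The plan is to reduce the identity to a standard polar integration on $\R^2$ by identifying an explicit parametrization of $Q$ under which both the measure $\mu|_Q$ and the Korányi norm take a tractable form. First, the left-invariance of $d_\Kor$ and of $\mu$ (from Lemma~\ref{lem:def-dv}) allows me to replace $v$ by $\zero$ at no cost; after this reduction $Q$ is a vertical plane through $\zero$ of some finite slope $\sigma$, so $Q = \langle X+\sigma Y, Z\rangle$, and I parametrize it by the abelian-subgroup isomorphism $\Phi\from \R^2 \to Q$, $\Phi(s,t) := (X+\sigma Y)^s Z^t = (s,\sigma s, t)$.

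Next I would show that $\Phi^{\ast}(\mu|_Q) = \ud s\,\ud t$. The point is that $\Pi\circ\Phi\from \R^2\to V_0$ has the explicit form $(s,t)\mapsto (s,0, t-\tfrac{1}{2}\sigma s^2)$, which is a diffeomorphism of unit Jacobian, so Lemma~\ref{lem:def-dv} gives $\mu(\Phi(E)) = \cL(\Pi(\Phi(E))) = |E|$ for every Borel $E\subset \R^2$. In these coordinates, the Korányi distance to the origin becomes the quasi-norm
$$N(s,t) := \|\Phi(s,t)\|_\Kor = \bigl((1+\sigma^2)^2 s^4 + 16 t^2\bigr)^{1/4},$$
which is $1$-homogeneous with respect to the anisotropic dilation $\delta_\lambda(s,t) := (\lambda s, \lambda^2 t)$ of Jacobian $\lambda^3$.

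With these identifications the conclusion follows from a routine polar decomposition on $\R^2$. The unit ball $B_1:=\{N<1\}$ has some finite Lebesgue measure $|B_1|>0$, the scaling relation $\{N<r\}=\delta_r(B_1)$ gives $|\{N<r\}|=|B_1|\,r^3$, and hence the coarea / layer-cake identity yields
$$\int_{A^Q_{r_1,r_2}(\zero)} f(d_\Kor(\zero,w))\,\ud w = \int_{\{r_1<N<r_2\}} f(N(s,t))\,\ud s\,\ud t = 3|B_1|\int_{r_1}^{r_2} f(r)\,r^2\,\ud r,$$
with constant $c(\sigma):=3|B_1|$ depending only on $\sigma$. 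No step poses a real obstacle; the only point requiring a moment's care is the identification $\Phi^{\ast}\mu = \ud s\,\ud t$, which is immediate from Lemma~\ref{lem:def-dv} and the unit Jacobian of $\Pi|_Q$, while the rest is the Korányi analogue of the usual polar integration formula in $\R^2$.
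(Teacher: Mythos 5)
Your proof is correct, and it is a genuinely more explicit and self-contained version of what the paper only sketches. The paper's argument is short and abstract: it observes via Lemma~\ref{lem:def-dv} that $\mu|_Q$ is a left-invariant, $3$-regular Borel measure on the abelian subgroup $Q$, invokes uniqueness of Haar measure to conclude $\mu|_Q$ is a constant multiple of Lebesgue measure on $Q$ (so $\mu|_Q$ is $3$-uniform), and then defers the polar-decomposition formula itself to a citation. You reach the same conclusion without appealing to Haar uniqueness: you exhibit the group isomorphism $\Phi(s,t)=(X+\sigma Y)^s Z^t = (s,\sigma s,t)$, check that $\Pi\circ\Phi(s,t)=(s,0,t-\tfrac12\sigma s^2)$ is unimodular so that Lemma~\ref{lem:def-dv} directly gives $\Phi^*\mu=\ud s\,\ud t$, and then carry out the polar decomposition by hand via the anisotropic dilation $\delta_\lambda(s,t)=(\lambda s,\lambda^2 t)$ of Jacobian $\lambda^3$, under which $N(s,t)=\|\Phi(s,t)\|_\Kor$ is $1$-homogeneous. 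The reduction to $v=\zero$ by left invariance of $d_\Kor$ and $\mu$, and the layer-cake step giving $c=3|B_1|$, are both routine and correctly carried out. What your route buys is a proof that is fully contained in the paper's own toolbox (Lemma~\ref{lem:def-dv} plus elementary change of variables), at the cost of a concrete computation that the paper avoids by citing uniqueness of Haar measure and an external reference.
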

  
For $r>0$, let $B_r=B(\zero,r)\subset \HH$.

\begin{proof}[Proof of Lemma~\ref{lem:riesz-converge}]
  By translation, we may assume without loss of generality that $\zero\in \Gamma_\phi$ and $p = \zero$.
  For arbitrary $s < t$, we define 
  $$I_{s,t} = \tilde{T}^Q_{\phi;s,t}\one(p) = \int_{A_{s,t}^Q} \widehat{K}(\Psi_\phi(v)) \ud v.$$
  We will bound $|I_{r',r}| \lesssim r$ and $|I_{R,R'}| \lesssim R^{-1}$.
  
  We will use the following symmetrization argument.  Let $\theta(x,y,z)=(-x,-y,z)$. Then $\theta(A_{s,t}^Q)=A_{s,t}^Q$ and $K$ is $\HH$--odd, so
  \begin{align*}
    |I_{s,t}| \overset{\eqref{eq:int-even-odd}}{=} \frac{1}{2} \left| \int_{A_{s,t}^Q} \widehat{K}(\Psi_\phi(v)) + \widehat{K}(\Psi_\phi(\theta(v))) \ud v \right| = \frac{1}{2} \left| \int_{A_{s,t}^Q} - \widehat{K}(\theta(\Psi_\phi(v))) + \widehat{K}(\Psi_\phi(\theta(v))) \ud v \right|.
  \end{align*}
  Since $\Psi_\phi(v) = v Y^{\phi(v)-y(v)}$ and 
  $$\Psi_\phi(\theta(v)) = \theta(v) Y^{\phi(\theta(v))-y(\theta(v))} = \theta(v) Y^{\phi(\theta(v)) + y(v)},$$
  we have
  \begin{align*}
    \theta(\Psi_\phi(v)) = \theta(v)Y^{-\phi(v)+y(v)} = \Psi_\phi(\theta(v)) Y^{-\phi(v) - \phi(\theta(v))},
  \end{align*}
  and by the mean value theorem, 
  \begin{align*}
    \left| \widehat{K}(\Psi_\phi(\theta(v))) - \widehat{K}(\theta(\Psi_\phi(v))) \right| = \left|\YL\widehat{K}(m(v))\right|\cdot |\phi(v) + \phi(\theta(v))|,
  \end{align*}
  where $m(v)$ is a point on the horizontal line segment between $\theta(\Psi_\phi(v))$ and $\Psi_\phi(\theta(v))$. That is,
  \begin{align}
    |I_{s,t}| \le \frac{1}{2} \int_{A^Q_{s,t}} \left|\YL\widehat{K}(m(v))\right|\cdot|\phi(v) + \phi(\theta(v))| \ud v.
    \label{e:mirror-int}
  \end{align}
  Since $m(v)\in \theta(v)\paramY$ and because $Q$ has bounded slope, we have $\|m(v)\|_\Kor\gtrsim \|v\|_\Kor$ for all $v\in Q$. 
  Since $m(v)$ is between $\theta(\Psi_\phi(v))$ and $\Psi_\phi(\theta(v))$, 
  $$\|m(v)\|_\Kor\lesssim \max \{\|\Psi_\phi(v)\|_\Kor,\|\Psi_\phi(\theta(v))\|_\Kor\} \lesssim \|v\|_\Kor.$$ 
  That is, for all $v\in Q$,
  \begin{equation}\label{eq:m-dist}
    \|m(v)\|_\Kor\approx \|v\|_\Kor.
  \end{equation}
  
  Furthermore, the bounds on $\phi$ and its derivatives give bounds on $\phi(v) + \phi(\theta(v))$. On one hand, 
  $$|\phi(v) + \phi(\theta(v))| \le 2 \|\phi\|_\infty \le 2C\lesssim 1.$$
  On the other hand, by Lemma~\ref{lem:a-Taylor}, $|\phi(v) + \phi(\theta(v))| \lesssim \|v\|_\Kor^2$, so for all $v\in Q$,
  \begin{equation}\label{eq:even-phi-bounds}
    |\phi(v) + \phi(\theta(v))| \lesssim \min \{1,\|v\|_\Kor^2\}.
  \end{equation}
  
  Therefore, by these bounds, the $(-4)$--homogeneity of $\YL\widehat{K}$ and Lemma~\ref{lem:polar},
  \begin{equation}
  \label{eq:istbd}
  \begin{split}
    |I_{s,t}| &\lesssim \int_{A_{s,t}^Q} \left|\YL\widehat{K}(m(v))\right| \min \{1,\|v\|_\Kor^2\} \ud v\\ &\lesssim \int_{A_{s,t}^Q} \min \{\|v\|_\Kor^{-4},\|v\|_\Kor^{-2}\} \ud v \\ &\lesssim \int_{s}^{t} \min \{\rho^{-4},\rho^{-2}\} \cdot \rho^2 \ud \rho \lesssim \min \{s^{-1}-t^{-1}, t-s\}.
  \end{split}
  \end{equation}
  In particular, for any $0<r'<r<1<R<R'<\infty$, $|I_{r',r}| \lesssim r$ and $|I_{R,R'}|\lesssim R^{-1}$. Thus
  $$\left|\tilde{T}^Q_{\phi;r',R'} \one(p) - \tilde{T}^Q_{\phi;r,R}\one(p)\right| \le |I_{r',r}| + |I_{R,R'}| \lesssim r + R^{-1}.$$
  That is, $\tilde{T}^Q_{\phi;r,R} \one(p)$ converges as $r\to 0$ and $R\to \infty$, so the principal value $\tilde{T}^Q_{\phi} \one(p)$ exists.
  
  Finally, if $f$ is constant on cosets of $\paramY$ and $0<s<t$, 
  \begin{multline*}
    \left|\tilde{T}^Q_{\phi;s,t} f(p) \right| \le \int_{A^Q_{s,t}} \left|\widehat{K}(\Psi_\phi(v)) f(v)\right|\ud v \lesssim \int_{A^Q_{s,t}} \|v\|^{-3}_\Kor\|f\|_\infty \ud v \\ 
    \lesssim \|f\|_\infty \int_s^t \rho^{-3}\cdot \rho^2\ud \rho = \|f\|_\infty \log\frac{t}{s},
  \end{multline*}
  as desired.
\end{proof}

The next lemma lets us compare $\Gamma_\phi \cap B(p,r)$ and $\Psi_\phi(V\cap B(p,r))$ when $V$ is a vertical plane. Let $\Pi_V\from \HH\to V$ be the projection from $\HH$ to $V$ along cosets of $\paramY$, as in Section~\ref{sec:prelim-Heis}. Let $B_r=B(\zero,r)$.
\begin{lemma}\label{lem:annular-approx}
  Let $\phi$ and $C$ be as in Proposition~\ref{prop:annulus-change}. Let $\Gamma=\Gamma_\phi$ and $p\in \Gamma$. 
  Let $W$ be the vertical tangent plane to $\Gamma$ at $p$, so that $\slope W = \nabla_\phi \phi(p)$. Then there is a $c>0$ depending only on $C$ such that for $r>0$,
  $$W\cap B(p,r - cr^2)\subset \Pi_W(\Gamma\cap B(p,r)) \subset W\cap B(p,r + c r^2)$$
  and for $R>0$, 
  $$pV_0 \cap B(p,R - c)\subset \Pi_{pV_0}(\Gamma\cap B(p,R)) \subset pV_0 \cap B(p,R + c).$$
\end{lemma}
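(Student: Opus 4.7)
The plan is to realize the tangent plane $W$ as the intrinsic graph of the first-order Taylor polynomial of $\phi$ at $p$ and then convert second-order closeness of $\phi$ to that polynomial into the desired containments. For the plane $pV_0$, the argument is cruder and only requires that $|\phi|$ is bounded.

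First, I would show $W = \Gamma_{\phi_W}$, where $\phi_W \from \HH \to \R$ is the affine function constant on cosets of $\paramY$ defined by $\phi_W(v) := \phi(p) + (x(v) - x(p))\nabla_\phi \phi(p)$. This follows directly from the group-law computation $p(X+\sigma Y)^t Z^s = (x(p)+t,\, y(p)+t\sigma,\, \cdot)$ with $\sigma = \nabla_\phi\phi(p)$, together with $y(p) = \phi(p)$. Then I would apply Lemma~\ref{lem:a-Taylor} with $a = m = \phi$ at the base point $p \in \Gamma_\phi$ to obtain
\[
    |\phi(v) - \phi_W(v)| \le c_0\, d_\Kor(p,v)^2 \qquad \text{for all } v\in \HH,
\]
for some $c_0$ depending only on $C$, using $\|\nabla_\phi^2\phi\|_\infty + \|\partial_z\phi\|_\infty \le 2C$ and $|\slope W| = |\nabla_\phi\phi(p)| \le C$.

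Next, for any $v\in \HH$ the projection along $\paramY$ satisfies $\Pi_W(v) = vY^{\phi_W(v) - y(v)}$, so $d_\Kor(v, \Pi_W(v)) = |\phi_W(v) - y(v)|$. For the upper containment, if $q \in \Gamma \cap B(p,r)$ then $y(q) = \phi(q)$, so $d_\Kor(q, \Pi_W(q)) = |\phi_W(q) - \phi(q)| \le c_0 r^2$; the triangle inequality gives $\Pi_W(q) \in W \cap B(p, r + c_0 r^2)$. For the lower containment, given $w \in W \cap B(p, r - c_0 r^2)$, set $q = \Psi_\phi(w) \in \Gamma$. Since $q\paramY = w\paramY$ and $w\in W$, we have $\Pi_W(q) = w$, and $d_\Kor(q,w) = |\phi(w) - y(w)| = |\phi(w)-\phi_W(w)| \le c_0 d_\Kor(p,w)^2 \le c_0 r^2$, whence $d_\Kor(q,p) \le r$.

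For the second pair of inclusions, since $w \in pV_0$ satisfies $y(w) = y(p) = \phi(p)$, one only needs the crude bound $|\phi(v) - \phi(p)| \le 2\|\phi\|_\infty \le 2C$. For the upper bound, if $q \in \Gamma \cap B(p,R)$ then $\Pi_{pV_0}(q) = qY^{\phi(p)-\phi(q)}$, which lies within distance $2C$ of $q$ and hence within distance $R + 2C$ of $p$. For the lower bound, given $w \in pV_0 \cap B(p, R - 2C)$, set $q = \Psi_\phi(w)$; then $\Pi_{pV_0}(q) = w$ and $d_\Kor(q,w) = |\phi(w) - \phi(p)| \le 2C$, so $d_\Kor(q,p) \le R$. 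Taking $c = \max(c_0, 2C)$ yields both pairs of containments simultaneously. No step is particularly delicate; the only thing requiring care is the verification that $W = \Gamma_{\phi_W}$ and the careful bookkeeping of where $\phi$ versus $\phi_W$ appears when switching between $q \in \Gamma$ and $w \in W$.
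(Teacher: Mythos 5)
Your proof is correct and follows essentially the same route as the paper's: identify the tangent plane $W$ as the intrinsic graph of the first-order Taylor polynomial $\phi_W$ of $\phi$ at $p$, use Lemma~\ref{lem:a-Taylor} to obtain the pointwise second-order bound $|\phi(v)-\phi_W(v)|\lesssim_C d_\Kor(p,v)^2$, observe that the projections $\Psi_\phi$, $\Pi_W$, $\Pi_{pV_0}$ move a point only within a coset of $\paramY$ so that the relevant distances are exactly the differences of the graphing functions, and then conclude by the triangle inequality; the crude $2\|\phi\|_\infty$ bound handles the $pV_0$ case. The only cosmetic difference is that the paper normalizes $p=\zero$ and bounds $d_\Kor(\Psi_\phi(q),\Pi_W(q))$ once for all $q$ to obtain both inclusions in one stroke, whereas you treat $q\in\Gamma$ and $w\in W$ symmetrically; the content is identical.
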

Though the inclusions hold for all $r$ and $R$, they are most useful when $r$ is small and $R$ is large.
\begin{proof}
  Without loss of generality, we may suppose that $p=\zero$ so that $\Pi_{pV_0}=\Pi$. 
  Let $\sigma=\nabla_\phi \phi(p)$. By Lemma~\ref{lem:a-Taylor}, we have $\phi(q) = \sigma x(q) + O(\|q\|_\Kor^2)$ for any $q\in \HH$. Then there is a $c>C$ such that
  $$d_\Kor(\Psi_\phi(q), \Pi_W(q)) = d_\Kor(q Y^{\phi(q)-y(q)}, q Y^{\sigma x(q)-y(q)}) = |\phi(q)-\sigma x(q)| \le c\|q\|_\Kor^2.$$
  
  In particular, for $r>0$, if $q\in W\cap B_{r-cr^2}$, then $d_\Kor(\Psi_\phi(q), q) \le c\|q\|_\Kor^2 \le c r^2$. Therefore, $\Psi_\phi(q)\in B_r$ and
  $$q = \Pi_W(\Psi_\phi(q))\in \Pi_W(\Gamma\cap B_{r}),$$
  so $W\cap B_{r-cr^2} \subset \Pi_W(\Gamma\cap B_r)$.  
  
  Conversely, if $q'\in \Gamma\cap B_r$, then $\Pi_W(q') \in W\cap B_{r + cr^2}$, so $\Pi_W(\Gamma\cap B_{r}) \subset W\cap B_{r + cr^2}$.
  This proves the first part of the lemma.
  
  Similarly, since $\|\phi\|_\infty \le C$, we have $$d_\Kor(\Psi_\phi(q), \Pi(q)) =d_\Kor(qY^{\phi(q)-y(q)}, q Y^{-y(q)})= |\phi(q)| \le C$$ for all $q$. Therefore,
  $$V_0 \cap B_{R - c}\subset \Pi(\Gamma\cap B_R) \subset V_0 \cap B_{R + c},$$
  as desired.
\end{proof}

This lets us write $T\eta_\phi(p)$ in terms of an integral on $pV_0$.
\begin{lemma}\label{lem:pv-independence-G}
  Let $\Gamma=\Gamma_\phi$, $p\in \Gamma$, and $W$ be as in Lemma~\ref{lem:annular-approx}. For $0<r<R$, let
  $$E_{r,R} = (p V_0\cap B(p,R))\setminus \Pi_{pV_0}(W \cap B(p,r)).$$ 
  Then $T\eta_\phi(\zero)$ exists and 
  \begin{equation}\label{eq:pv-independence}
    T\eta_\phi(\zero) = \lim_{\substack{r\to 0 \\ R\to\infty}} \int_{E_{r,R}} \widehat{K}(\Psi_{\phi}(v))\ud v.
  \end{equation}
\end{lemma}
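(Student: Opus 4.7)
The plan is to reduce to the case $p=\zero$ by the left-invariance of $d_\Kor$, $\mu$, and the principal-value construction, so that $\phi(\zero)=0$ and $W$ is the vertical plane through $\zero$ with slope $\sigma=\nabla_\phi\phi(\zero)$. I will push the defining integral of $T\eta_\phi(\zero)$ forward by $\Pi$ to an integral on $V_0$, show its integration domain agrees with $E_{r,R}$ up to error $O(r+R^{-1})$, and conclude by establishing a Cauchy criterion for the right-hand side of \eqref{eq:pv-independence} through a symmetrization argument modelled on the proof of Lemma~\ref{lem:riesz-converge}.

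By Lemma~\ref{lem:def-dv}, $\mu|_{\Gamma_\phi}$ is the pushforward of $\cL|_{V_0}$ under $\Psi_\phi|_{V_0}$, hence
\[
  \int_{\Gamma_\phi\cap (B_R\setminus B_r)} \widehat{K}(w)\,d\mu(w) = \int_{\Pi(\Gamma_\phi\cap(B_R\setminus B_r))} \widehat{K}(\Psi_\phi(v))\,dv.
\]
The symmetric difference $\Pi(\Gamma_\phi\cap(B_R\setminus B_r)) \symdiff E_{r,R}$ splits into an outer piece $(V_0\cap B_R)\symdiff \Pi(\Gamma_\phi\cap B_R)$ and an inner piece $\Pi(W\cap B_r) \symdiff \Pi(\Gamma_\phi\cap B_r)$. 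Lemma~\ref{lem:annular-approx} gives $(V_0\cap B_R)\symdiff \Pi(\Gamma_\phi\cap B_R)\subset V_0\cap(B_{R+c}\setminus B_{R-c})$, of $\cL$-measure $\lesssim R^2$ by Lemma~\ref{lem:polar}. For the inner piece, using $\Pi=\Pi\circ \Pi_W$, the inclusion $W\cap B_{r-cr^2}\subset \Pi_W(\Gamma_\phi\cap B_r)\subset W\cap B_{r+cr^2}$ from Lemma~\ref{lem:annular-approx}, and the injectivity of $\Pi|_W$, I obtain
\[
  \Pi(\Gamma_\phi\cap B_r)\symdiff \Pi(W\cap B_r) \subset \Pi\bigl(W\cap(B_{r+cr^2}\setminus B_{r-cr^2})\bigr),
\]
whose $\cL$-measure is $\lesssim r^4$ by Lemma~\ref{lem:def-dv} and Lemma~\ref{lem:polar}. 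On both shells, the intrinsic Lipschitz cone condition together with $\phi(\zero)=0$ yields $\|\Psi_\phi(v)\|_\Kor\approx\|v\|_\Kor$, so $|\widehat{K}(\Psi_\phi(v))|\lesssim \|v\|_\Kor^{-3}$. Multiplying these kernel bounds by the respective shell measures gives contributions $\lesssim R^{-1}$ and $\lesssim r$, producing
\[
  \int_{\Gamma_\phi\cap(B_R\setminus B_r)} \widehat{K}(w)\,d\mu(w) = \int_{E_{r,R}} \widehat{K}(\Psi_\phi(v))\,dv + O(r+R^{-1}).
\]

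To finish, I show that $\int_{E_{r,R}} \widehat{K}(\Psi_\phi(v))\,dv$ is Cauchy as $r\to 0$ and $R\to \infty$, which, by the displayed identity, forces the limit defining $T\eta_\phi(\zero)$ to exist and coincide with it. Since $\theta$ preserves $V_0$, $B_R$, and $W$ (the slope $\sigma$ is fixed by $\theta$) and commutes with $\Pi$, the set $E_{r,R}$ is $\theta$-invariant, and so are the outer and inner annular pieces of any $E_{r,R}\symdiff E_{r',R'}$. The symmetrization $v\mapsto\theta(v)$ used in Lemma~\ref{lem:riesz-converge} then gives
\[
  \int_{E_{r,R}} \widehat{K}(\Psi_\phi(v))\,dv = \tfrac{1}{2}\int_{E_{r,R}} \bigl[\widehat{K}(\Psi_\phi(v)) + \widehat{K}(\Psi_\phi(\theta(v)))\bigr]\,dv,
\]
and the $\HH$-oddness of $\widehat{K}$ together with the mean value theorem and Lemma~\ref{lem:a-Taylor}'s bound $|\phi(v)+\phi(\theta(v))|\lesssim \min(1,\|v\|_\Kor^2)$ yields a pointwise estimate of order $\|v\|_\Kor^{-4}\min(1,\|v\|_\Kor^2)$ for the integrand. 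Integrating this estimate on each annular component of $E_{r,R}\symdiff E_{r',R'}$ reproduces the Lemma~\ref{lem:riesz-converge} bound $\lesssim(r+r')+(R^{-1}+R'^{-1})$, which is the Cauchy criterion. The principal obstacle is the inner-shell reduction, where the projection $\Pi$ that defines $E_{r,R}$ and the tangent-plane projection $\Pi_W$ used in Lemma~\ref{lem:annular-approx} must be reconciled via the identity $\Pi=\Pi\circ \Pi_W$.
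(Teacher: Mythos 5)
Your proof is correct and follows essentially the same route as the paper's: reduce to $p=\zero$, use Lemma~\ref{lem:def-dv} to rewrite the truncated integral over $\Gamma_\phi$ as an integral over $V_0$, apply Lemma~\ref{lem:annular-approx} to show the symmetric difference between $\Pi(\Gamma_\phi\cap(B_R\setminus B_r))$ and $E_{r,R}$ is contained in two thin shells, use $\Pi=\Pi\circ\Pi_W$ and injectivity of $\Pi|_W$ to move the inner-shell comparison from $W$ to $V_0$, and finish with the $\theta$-symmetrization argument of Lemma~\ref{lem:riesz-converge} to establish the Cauchy criterion. The only presentational differences are that you bound the shell contributions by multiplying the shell measure by the pointwise kernel bound rather than computing the polar-coordinate integral, and you establish the identity up to $O(r+R^{-1})$ before the Cauchy estimate rather than after; both give identical bounds and logic.
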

\begin{proof}
  Again, we suppose that $p=\zero$, so that $E_{r,R} = (V_0\cap B_R)\setminus \Pi(W\cap B_r)$. 
  
  We first note that the limit on the right side of \eqref{eq:pv-independence} exists. If $r$ is sufficiently small and $R$ is sufficiently large, then $\Pi(W\cap B_r)\subset V_0\cap B_R$. If in addition $0<r'<r<R<R'$, then $E_{r,R}\subset E_{r',R'}$ and
  $E_{r',R'}\setminus E_{r,R} = \Pi(A^W_{r',r}) \cup A_{R,R'},$
  so by Lemma~\ref{lem:riesz-converge},
  $$\left|\int_{E_{r',R'}} \widehat{K}(\Psi_{\phi}(v)) \ud v - \int_{E_{r,R}} \widehat{K}(\Psi_{\phi}(v)) \ud v\right| \lesssim r + R^{-1}.$$
  As $r\to 0$ and $R\to \infty$, this goes to zero, so the limit in \eqref{eq:pv-independence} exists.
  
  Now we compare this limit with $T\eta_\phi$.  
  For any $r>0$ and any vertical plane $P$ through $\zero$, let 
  $$F^P_r = (P\cap B_r) \symdiff \Pi_P(\Gamma\cap B_r)$$
  where $A \symdiff B$ is the symmetric difference $(A\setminus B) \cup (B\setminus A)$.
  Comparing $E_{r,R}$ to $\Pi(\Gamma \cap (B_R \setminus B_r))$, we find that
  $$(V_0\cap B_R) \symdiff \Pi(\Gamma \cap B_R) = F^{V_0}_R,$$
  and 
  $$\Pi(W\cap B_r) \symdiff \Pi(\Gamma \cap B_r) = \Pi((W\cap B_r) \symdiff \Pi_W(\Gamma \cap B_r)) = \Pi(F^W_r),$$
  so
  $$E_{r,R} \symdiff \Pi(\Gamma \cap (B_R \setminus B_r)) \subset F^{V_0}_R \cup \Pi(F^W_r).$$
  
  Therefore, as in \eqref{eq:T-eta-phi} and \eqref{eq:tilde-T-eta-phi},
  \begin{align*}
    \left|T_{r,R} \eta_{\phi}(p) - \int_{E_{r,R}} \widehat{K}(\Psi_{\phi}(v))\ud v\right| & =  \left|\int_{\Gamma \cap (B_R \setminus B_r)} \widehat{K}(w) \ud w - \int_{E_{r,R}} \widehat{K}(\Psi_\phi(w)) \ud w\right| \\
    & \le \int_{F^{V_0}_{R}\cup \Pi(F^W_{r})} \left|\widehat{K}(\Psi_\phi(w))\right|\ud w.
  \end{align*}
  Let $c$ be as in Lemma~\ref{lem:annular-approx}. Then for $0<r<R<\infty$,
  $$F^{W}_r \subset W \cap (B_{r+cr^2} \setminus B_{r-cr^2}) = A^W_{r-cr^2, r+cr^2},$$
  and
  $$F_R^{V_0}\subset A_{R-c,R+c}.$$
  By Lemma~\ref{lem:polar}, and using that $\Psi_{\phi}$ is constant on cosets of $\paramY$, we get
  \begin{align*}
    \left|T_{r,R} \eta_{\phi}(p) - \int_{E_{r,R}} \widehat{K}(\Psi_{\phi}(v))\ud v\right| & \le \int_{A_{R-c,R+c}} \left|\widehat{K}(\Psi_\phi(w))\right|\ud w + \int_{\Pi(A^W_{r-cr^2,r+cr^2})} \left|\widehat{K}(\Psi_\phi(w))\right|\ud w \\ 
    &\overset{\eqref{eq:def-dv}}{=}\int_{A_{R-c,R+c}} \left|\widehat{K}(\Psi_\phi(w))\right|\ud w + \int_{A^W_{r-cr^2,r+cr^2}} \left|\widehat{K}(\Psi_\phi(w))\right|\ud w \\
    & \lesssim \int_{R-c}^{R+c} \rho^{-3} \cdot \rho^2 \ud \rho + \int_{r-cr^2}^{r+c r^2} \rho^{-3} \cdot \rho^2 \ud \rho \\
    & = \log \frac{R+c}{R-c} + \log \frac{r+cr^2}{r-cr^2}.
  \end{align*}
  This goes to zero as $r\to 0$ and $R\to \infty$, so it implies \eqref{eq:pv-independence}, as desired.  
\end{proof}

Finally, we prove Proposition~\ref{prop:annulus-change}.

\begin{proof}[Proof of Proposition \ref{prop:annulus-change}]
  Again, we suppose that $p=\zero$.
  Let $W$ and $E_{r,R} = (V_0\cap B_R)\setminus \Pi(W\cap B_r)$ be as in Lemma~\ref{lem:pv-independence-G}. We may suppose that $R$ is large enough and $r$ is small enough that $\Pi(W\cap B_r)\subset V_0\cap B_R$. 
  Let $P$ be a vertical plane through $\zero$ with $|\slope P|<C$ and let
  $$J_{r,R} := \left|\int_{E_{r,R}} \widehat{K}(\Psi_{\phi}(v))\ud v - \tilde{T}^P_{\phi;r,R} \one(\zero) \right| = \left|\int_{E_{r,R}} \widehat{K}(\Psi_{\phi}(v))\ud v - \int_{A^P_{r,R}} \widehat{K}(\Psi_{\phi}(v))\ud v \right|.$$
  We claim that $J_{r,R} \lesssim r + R^{-1}$.

  Since $\Pi\circ\theta=\theta\circ \Pi$, it follows that $E_{r,R}$ and $A^P_{r,R}$ are symmetric around the $z$--axis, i.e., $\theta(E_{r,R}) = E_{r,R}$ and $\theta(A^P_{r,R}) = A^P_{r,R}$. Let $D\subset V_0$ be a Borel set such that $D\subset A_{\epsilon,\epsilon^{-1}}$ for some $\epsilon>0$.
  We claim that if $\theta(D)=D$, then for any $t>0$,
  $$\int_{s_t(D)}  \widehat{K}(\Psi_\phi(v)) \ud v  \lesssim_\epsilon \min \{t,t^{-1}\}.$$
  
  Let $I_t=\int_{s_t(D)}  \widehat{K}(\Psi_\phi(v)) \ud v$. As in the proof of Lemma~\ref{lem:riesz-converge}, the $\HH$--oddness of $K$ and the mean value theorem imply that
  \begin{multline*}
    |I_t| = \frac{1}{2} \left| \int_{s_t(D)} - \widehat{K}(\theta(\Psi_\phi(v))) + \widehat{K}(\Psi_\phi(\theta(v))) \ud v \right| \\
    \le \frac{1}{2} \int_{A_{t\epsilon,t\epsilon^{-1}}} |\YL\widehat{K}(m(v))|\cdot|\phi(v) + \phi(\theta(v))| \ud v,
  \end{multline*}
  where for every $v$, $m(v)$ is a point on the horizontal line segment between $\theta(\Psi_\phi(v))$ and $\Psi_\phi(\theta(v))$ with $\|m(v)\|_\Kor\approx \|v\|_\Kor$. As in \eqref{eq:istbd}, by \eqref{eq:even-phi-bounds}, the $(-4)$--homogeneity of $\YL\widehat{K}$, and Lemma~\ref{lem:polar},
  \begin{multline*}
    |I_t| \lesssim \int_{A_{t\epsilon, t\epsilon^{-1}}} \min \{\|v\|_\Kor^{-4},\|v\|_\Kor^{-2}\} \ud v \lesssim \int_{t\epsilon}^{t\epsilon^{-1}} \min \{\rho^{-4},\rho^{-2}\} \cdot \rho^2 \ud \rho \\
    \lesssim \min \{t^{-1}\epsilon^{-1}, t\epsilon^{-1}\} \lesssim_\epsilon \min \{t^{-1},t\}.
  \end{multline*}
  
  It follows that if $g$ is an $\HH$--even bounded Borel function supported on $A_{\epsilon,\epsilon^{-1}}$ and $t>0$, then 
  \begin{equation} \label{eq:symmetric-function}
   \left| \int_{V_0} \widehat{K}(\Psi_{\phi}(v)) g(s_t(v)) \ud v\right| \lesssim \|g\|_\infty \min \{t^{-1},t\}.
  \end{equation}
  
  Now we apply this to $J_{r,R}$. The supports of $\one_{E_{r,R}}$ and $\one_{\Pi(A^P_{r,R})}$ are too large to apply \eqref{eq:symmetric-function} directly, but we can write
  $$\one_{E_{r,R}} = \one_{V_0\cap B_R} - \one_{\Pi(W\cap B_r)}$$
  $$\one_{\Pi(A^P_{r,R})} = \one_{\Pi(P\cap B_R)} - \one_{\Pi(P\cap B_r)}.$$
  Let $g=\one_{V_0\cap B_1} - \one_{\Pi(P\cap B_1)}$ and $h = \one_{\Pi(P\cap B_1)} - \one_{\Pi(W\cap B_1)}$ so that
  $$\one_{E_{r,R}} - \one_{\Pi(A^P_{r,R})} = (\one_{V_0\cap B_R} - \one_{\Pi(P\cap B_R)}) + (\one_{\Pi(P\cap B_r)} - \one_{\Pi(W\cap B_r)}) = g\circ s_{R^{-1}} + h\circ s_{r^{-1}}.$$
  Then $g$ and $h$ are $\HH$--even, and there is an $\epsilon>0$ such that both are supported in $A_{\epsilon,\epsilon^{-1}}$. Therefore,
  \begin{align*}
    J_{r,R} & = \left|\int_{V_0} \widehat{K}(\Psi_{\phi}(v)) \one_{E_{r,R}}\ud v - \int_{P} \widehat{K}(\Psi_{\phi}(v)) \one_{A^P_{r,R}}\ud v\right| \\
    &\overset{\eqref{eq:def-dv}}{=} \left|\int_{V_0} \widehat{K}(\Psi_{\phi}(v)) (\one_{E_{r,R}} - \one_{\Pi(A^P_{r,R})})\ud v\right| \\
    & = \left|\int_{V_0} \widehat{K}(\Psi_{\phi}(v)) (g(s_{R^{-1}}(v)) + h(s_{r^{-1}}(v))) \ud v\right| \\
    &\overset{\eqref{eq:symmetric-function}}{\lesssim} r + R^{-1}.
  \end{align*}
  
  This implies
  $$\tilde{T}^P_{\phi}\one(\zero) = \lim_{\substack{r\to 0 \\ R\to\infty}} \int_{A^P_{r,R}} \widehat{K}(\Psi_{\phi}(v))\ud v = \lim_{\substack{r\to 0 \\ R\to\infty}} \int_{E_{r,R}} \widehat{K}(\Psi_{\phi}(v))\ud v,$$
  so by Lemma~\ref{lem:pv-independence-G}, $\tilde{T}^P_{\phi}\one(\zero) = T\eta_{\phi}(\zero)$, as desired.
\end{proof}

\section{Singular integrals on perturbed surfaces and the proof of Proposition~\ref{prop:mainBounds-tilde-T}}\label{sec:sing-int-perturbed}

In Section~\ref{sec:construction}, we constructed intrinsic Lipschitz functions $f_i$ that depend on parameters $A$, $\rho$, and $i$. In that section, $A$ and $\rho$ were fixed while $i$ varies; in this section, we will need to vary $A$ and $\rho$, so we will write $f_i$ as $f_{i,A,\rho}$ when we need to specify $A$ and $\rho$.

Each surface $\Gamma_{f_{n,A,\rho}}$ can be constructed by starting with the vertical plane $V_0$, then repeatedly perturbing it at smaller and smaller scales. In this section, we will state bounds on the change in the singular integral $\tilde{T}_\zeta \one$ when $\zeta$ is perturbed and use these bounds to prove Proposition~\ref{prop:mainBounds-tilde-T}. 

For any intrinsic Lipschitz function $\zeta \from \HH\to \R$, we let $F_\zeta\from \HH \to \R$,
\begin{equation}\label{eq:def-F}
  F_\zeta(p) := \tilde{T}_{\zeta} \one(p) = \pv(\Psi_\zeta(p))\int_{\Psi_\zeta(p) V_0} \widehat{K}(\Psi_\zeta(p)^{-1}\Psi_{\zeta}(v)) \ud v.
\end{equation}
For any $\psi\from \HH\to \R$ which is constant on cosets of $\paramY$ and $t\in \R$, let $G_{\zeta,\psi}(t) := F_{\zeta+t \psi}$. We can then bound $F_{\zeta + \psi} - F_{\zeta}=G_{\zeta,\psi}(1)-G_{\zeta,\psi}(0)$ by bounding the derivatives of $G_{\zeta,\psi}$.
In our applications, $\zeta$ and $\psi$ will satisfy bounds like those in Lemma~\ref{lem:gamma-deriv-bounds}, so that the length scale of $\psi$ is much smaller than the length scale of $\zeta$.

We denote $G'_{\zeta,\psi}(t)(p) = \partial_t[G_{\zeta,\psi}(t)(p)]$ and $G''_{\zeta,\psi}(t)(p) = \partial^2_t[G_{\zeta,\psi}(t)(p)]$. (This is a slight abuse of notation because the limits in the partial derivatives may only converge pointwise and not uniformly.) 
For $r\le R$, we define truncations
\begin{equation}\label{eq:def-F-trunc}
  F^{r,R}_\zeta(p) := \tilde{T}^{r,R}_{\zeta} \one(p) = \int_{\Psi_\zeta(p) A_{r,R}} \widehat{K}(\Psi_\zeta(p)^{-1}\Psi_{\zeta}(v)) \ud v
\end{equation}
and $G^{r,R}_{\zeta,\psi}(t) := F^{r,R}_{\zeta + t \psi}$. We will prove the following formula for $G_{\zeta,\psi}'$.

\begin{prop}\label{prop:perturb-formula}
  Let $\zeta, \psi\from \HH \to \R$ be smooth functions that are constant on cosets of $\paramY$. Suppose that  $\|\psi\|_\infty <\infty$ and that $\zeta$ is intrinsic Lipschitz. Then, for any $p\in \Gamma_\zeta$,
  $$\lim_{\substack{r\to 0 \\ R\to \infty}} (G^{r,R}_{\zeta,\psi})'(0)(p) = G'_{\zeta,\psi}(0)(p).$$

  Furthermore, there is a Sobolev-type norm $\|\psi\|_{W_\zeta}$ depending on $\psi$ and its derivatives of order at most $2$ such that
  $$|G_{\zeta,\psi}'(0)(\zero)| \lesssim_{\zeta} \|\psi\|_{W_\zeta}.$$
  If $\alpha$ and $\gamma$ satisfy the bounds in Lemma~\ref{lem:gamma-deriv-bounds} for some $c>0$, then
  $$\|G_{\alpha,\gamma}'(0)\|_\infty \lesssim_c A^{-1}.$$
\end{prop}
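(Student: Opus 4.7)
The plan is to reparametrize so that the integration domain does not move with $t$, differentiate under the integral on bounded annuli, and then establish convergence of the truncations by symmetrization $u\mapsto -u$. For $p\in \Gamma_\zeta$ we have $\Psi_\zeta(p)=p$, so set $q_t:=\Psi_{\zeta+t\psi}(p)=pY^{t\psi(p)}$ and substitute $v=q_t u$ with $u\in V_0$. Since $d_\Kor(q_t,q_t u)=\|u\|_\Kor$ and $y(q_t u)=(\zeta+t\psi)(p)$, the principal-value annulus in $\HH$ pulls back to the fixed annulus $A_{r,R}^{V_0}$ about $\zero$, and
\[
  G_{\zeta,\psi}(t)(p)=\lim_{\substack{r\to 0\\ R\to\infty}}\int_{A_{r,R}^{V_0}} \widehat K(u Y^{h_t(u)})\,\ud u,\qquad h_t(u):=(\zeta+t\psi)(q_t u)-(\zeta+t\psi)(p).
\]
On each fixed $A_{r,R}^{V_0}$ the integrand is smooth in $t$, so differentiation under the integral gives
\[
  (G^{r,R}_{\zeta,\psi})'(0)(p)=\int_{A_{r,R}^{V_0}} \YL\widehat K(uY^{h_0(u)})\,\dot h_0(u)\,\ud u.
\]

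To compute $\dot h_0$, I would apply the chain rule to $t\mapsto \phi_t(q_t u)-\phi_t(p)$. Writing $q_t=q_0 Y^{t\psi(p)}$ and computing the tangent vector to $t\mapsto q_t u$ in the left-invariant frame at $q_0 u$ (a short Baker--Campbell--Hausdorff calculation gives $\psi(p)\,[\YL-x(u)Z](q_0u)$), and using that $\zeta$ and $\psi$ are constant on cosets of $\paramY$ so that $\YL\zeta\equiv 0$, one arrives at the closed-form expression
\[
  \dot h_0(u)=\psi(q_0 u)-\psi(p)-\psi(p)\,x(u)\,Z\zeta(q_0 u).
\]
Because $\psi(q_0)=\psi(p)$, $\dot h_0(\zero)=0$, so the integrand has only an $\|u\|_\Kor^{-3}$ singularity at the origin.

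Convergence as $r\to 0$ and $R\to\infty$ proceeds by symmetrization, parallel to the proof of Lemma~\ref{lem:riesz-converge}. Using $\theta(uY^s)=(-u)Y^{-s}$ and that $\YL\widehat K$ is $\HH$--even of degree $-4$ (Lemma~\ref{lem:deriv-homog}), the symmetrized integrand splits as $\YL\widehat K(uY^{h_0(u)})[\dot h_0(u)+\dot h_0(-u)]$ plus a mean-value error bounded by $\|\YL^2\widehat K(m_u)\|\cdot|h_0(u)+h_0(-u)|\cdot|\dot h_0(-u)|$. Lemma~\ref{lem:a-Taylor} applied to $\zeta$, $\psi$, and $Z\zeta$ about $q_0\in\Gamma_\zeta$ gives $|h_0(u)+h_0(-u)|\lesssim \|u\|_\Kor^2$ and $|\dot h_0(u)+\dot h_0(-u)|\lesssim \|u\|_\Kor^2$ at small $u$, each with implicit constants controlled by $\|\nabla_\zeta^2\zeta\|_\infty,\|Z\zeta\|_\infty,\|\nabla_\zeta^2\psi\|_\infty$, and $\|Z\psi\|_\infty$, making the symmetrized integrand integrable near $\zero$. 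At large $u$ the contribution $\psi(q_0 u)-\psi(p)$ is bounded by $2\|\psi\|_\infty$, producing an $\|u\|_\Kor^{-4}$ tail that is absolutely integrable, while the $x(u)Z\zeta$--piece is controlled by principal-value cancellation from the $\HH$--parity of $\YL\widehat K$ against the pointwise oddness of $x$, after further Taylor-expanding $Z\zeta$ about $q_0$. These bounds are uniform in $t$ in a neighborhood of $0$, so dominated convergence lets us interchange $\lim_{r,R}$ with $\partial_t$, establishing $\lim_{r,R}(G^{r,R}_{\zeta,\psi})'(0)(p)=G'_{\zeta,\psi}(0)(p)$ and yielding $|G'_{\zeta,\psi}(0)(\zero)|\lesssim_\zeta \|\psi\|_{W_\zeta}$ for a second-order Sobolev-type norm built from $\|\psi\|_\infty,\|\nabla_\zeta\psi\|_\infty,\|\nabla_\zeta^2\psi\|_\infty$, and $\|Z\psi\|_\infty$.

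For the final $L_\infty$ bound, Lemma~\ref{lem:ilg-translations} reduces the pointwise bound at arbitrary $p\in\Gamma_\alpha$ to the bound at $\zero$; the left-translated functions satisfy the same derivative estimates of Lemma~\ref{lem:gamma-deriv-bounds}, since those bounds involve only left-invariant operators. Each constituent norm of $\|\gamma\|_{W_\alpha}$ is then at most $cA^{-1}$ by Lemma~\ref{lem:gamma-deriv-bounds}, while the $\alpha$--dependent implicit constant remains uniform in $A$ because $\|\nabla_\alpha\alpha\|_\infty\le 1$ and $\|Z\alpha\|_\infty\le c$. This yields $\|G'_{\alpha,\gamma}(0)\|_\infty\lesssim_c A^{-1}$. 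I expect the main obstacle to be the large-$u$ cancellation for the $-\psi(p)\,x(u)\,Z\zeta(q_0 u)$ term in $\dot h_0$: the integrand decays only like $\|u\|_\Kor^{-3}$ there and is not absolutely integrable, so convergence at infinity requires extracting principal-value cancellation from the $\HH$--parity of $\YL\widehat K$ combined with the Taylor expansion of $Z\zeta$, a subtle noncommutative counterpart to the Euclidean fact that first-order perturbations of a graph produce integrable tails.
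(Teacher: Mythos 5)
Your reparametrization and the resulting derivative formula are exactly the paper's Lemma~\ref{lem:grrab-small}: writing $q_t=pY^{t\psi(p)}$, fixing the domain at $A_{r,R}$, and differentiating gives precisely $\dot h_0(u)=\psi(q_0u)-\psi(p)-\psi(p)x(u)\partial_z\zeta(q_0u)$, which matches $\partial_\tau[\zeta_\tau]$ in \eqref{eq:atau-cons}--\eqref{eq:grrab-small}. Your small-scale convergence argument (symmetrize, use Lemma~\ref{lem:a-Taylor} to show the even part of $\dot h_0$ is $O(\|u\|^2)$ and the odd part of $u\mapsto\YL\widehat K(\Psi_\zeta(u))$ is $O(\|u\|^{-3})$ via Lemma~\ref{lem:kernel-sym-bound}) is also the paper's, and it does deliver the $O(s)$ Cauchy bound as $r\to 0$.

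The gap is exactly the one you flag as ``the main obstacle,'' and the mechanism you propose does not close it. At large scales the term $\psi(p)x(u)\partial_z\zeta(q_0u)\,\YL\widehat K(\Psi_\zeta(u))$ decays only like $\|u\|_\Kor^{-3}\|\partial_z\zeta\|_\infty$, and symmetrization does not improve this: the even part is $\tfrac{1}{2}\psi(p)x(u)\bigl[\partial_z\zeta(q_0u)-\partial_z\zeta(q_0\theta(u))\bigr]$, which is still of size $\|u\|_\Kor\,\|\partial_z\zeta\|_\infty$ away from the origin, so the product with $\YL\widehat K$ is still $\|u\|_\Kor^{-3}$; the dyadic annulus $A_{R,2R}$ then contributes $O(1)$ rather than $O(R^{-\epsilon})$, and the truncations are not Cauchy. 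Taylor-expanding $\partial_z\zeta$ about $q_0$ improves the exponent only on a bounded neighborhood, not at infinity, and the hypotheses of the proposition impose no bound on $\partial_z\zeta$ at all (only smoothness and intrinsic Lipschitzness of $\zeta$), so $\|\partial_z\zeta\|_\infty$ may even be infinite. What actually makes the tail converge is a different identity: $x(w)Z\widehat K + x(w)\partial_z\zeta(w)\YL\widehat K(\overline w) = \partial_z[\widehat K(\overline w)]$, and $x Z=\YL-\YR$, so the nonintegrable piece is an exact $z$--derivative plus $\psi(\zero)\bigl(\YL\widehat K-\YR\widehat K\bigr)(\overline w)$, both of which do integrate. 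The paper gets this for free by differentiating the \emph{untranslated} expression \eqref{eq:grr-no-translate}, whose moving domain produces the boundary terms explicitly and whose bulk integrand is $\psi(w)\YL\widehat K(\overline w)-\psi(\zero)\YR\widehat K(\overline w)$, a genuine $O(\|w\|_\Kor^{-4}\|\psi\|_\infty)$ tail; this is Lemma~\ref{lem:grrab-large}. You need both expressions: your fixed-domain derivative for $r\to 0$, and the moving-domain derivative (with boundary terms) for $R\to\infty$. Without the second formula the interchange of $\partial_t$ with the principal value at infinity is not justified.
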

We refer the reader to Lemma~\ref{lem:G'-formula-and-bound-rev} for the details of the bound on $|G_{\zeta,\psi}'(0)(\zero)|$.

To use this to bound $F_{\zeta+\psi}=G_{\zeta,\psi}(1)$, we need the following proposition, which likewise bounds $G''_{\zeta,\psi}$ in terms of a Sobolev-type norm on $\zeta$ and $\psi$. Let $A>1$ and let $\hat{\partial} = A\nabla_\zeta$. As in Section~\ref{sec:construction}, we let $\{ Z, \hat{\partial}\}^n$ denote the set of differential operators that can be written as words of length $n$. Let $\{ Z, \hat{\partial}\}^*$ denote the set of all words.

\begin{prop}\label{prop:second-bounds-v2}
  For any $A>1$ and any $C>0$, if $\rho$ is sufficiently large, then the following bounds hold. Let $\zeta, \psi\from \HH\to \R$ be constant on cosets of $\paramY$. Suppose that for any word $E\in \{ Z, \hat{\partial}\}^*$ of length at most 3, 
  \begin{equation}\label{eq:2der-gamma-bound}
    \|E \psi\|_\infty \le C A^{-1}
  \end{equation}
  and if $E\not \in \{\id, \hat{\partial}\}$,
  \begin{equation}\label{eq:2der-alpha-bound}
    \|E \zeta\|_\infty \le C \rho^{-1}.
  \end{equation}
  Then, for any $p\in \HH$, the function $t\mapsto G_{\zeta,\psi}(t)(p)$ is $C^2$ and satisfies
  $$\left\|G''_{\zeta,\psi}(t) \right\|_\infty \lesssim_C A^{-3}$$
  for all $t\in [0,1]$.
  If $\alpha$ and $\gamma$ satisfy the bounds in Lemma~\ref{lem:gamma-deriv-bounds} for some $c>0$, then
  $$\|G_{\alpha,\gamma}''(0)\|_\infty \lesssim_c A^{-3}.$$
\end{prop}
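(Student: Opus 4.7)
My plan is to derive a formula for $G''_{\zeta,\psi}(t)(p)$ by differentiating twice the representations of $G'_{\zeta,\psi}(t)(p)$ developed in Section~\ref{sec:perturbations} (in particular the large-scale and small-scale formulas Lemma~\ref{lem:grrab-large} and Lemma~\ref{lem:grrab-small}), and then bound this second derivative by splitting the integral into small and large scales, exactly as the roadmap for Section~\ref{sec:second-deriv} suggests.

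After parametrizing $v \in \Psi_{\phi_t}(p) V_0$ by $u \in V_0$ via the left translation $v = \Psi_{\phi_t}(p) u$, one can rewrite (for $\phi_t := \zeta + t\psi$ and $q_t := \Psi_{\phi_t}(p)$)
\[
F_{\phi_t}(p) = \pv \int_{V_0} \widehat{K}(u Y^{h_t(u)}) \ud u, \qquad h_t(u) := \phi_t(q_t u) - \phi_t(p).
\]
Because the $Y$-shift in the argument of $\widehat{K}$ corresponds to the left-invariant field $\YL$, two $t$-derivatives under the principal value produce
\[
G''_{\zeta,\psi}(t)(p) = \pv \int_{V_0} \Bigl[ \YL^2\widehat{K}(u Y^{h_t(u)})\, (\partial_t h_t(u))^2 + \YL\widehat{K}(u Y^{h_t(u)})\, \partial_t^2 h_t(u) \Bigr] \ud u,
\]
where, by the chain rule, $\partial_t h_t$ and $\partial_t^2 h_t$ are polynomials in $\psi$, $x(u)$, and derivatives of $\phi_t$ (in particular, $\partial_t h_t$ contains $\psi(q_t u) - \psi(p)$ plus a term of the form $-\psi(p) x(u) \partial_z\phi_t(q_t u)$). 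I would first verify pointwise convergence of the truncations of this integral to $G''_{\zeta,\psi}(t)(p)$, following the argument used for $G'$ in Proposition~\ref{prop:perturb-formula}.

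To prove the $A^{-3}$ bound, I would split the $u$-integration into small scales ($\|u\|_\Kor \le 1$) and large scales. On small scales, I would apply the intrinsic Taylor expansion (Lemma~\ref{lem:a-Taylor}) to $\psi$ around $p$ relative to $\zeta$: using $\|\nabla_\zeta\psi\|_\infty \lesssim_C A^{-2}$, $\|\nabla_\zeta^2\psi\|_\infty \lesssim_C A^{-3}$, and $\|Z\psi\|_\infty \le CA^{-1}$ (all consequences of \eqref{eq:2der-gamma-bound} since $\hat{\partial} = A\nabla_\zeta$), one obtains
\[
\psi(q_t u) - \psi(p) = x(u)\, \nabla_\zeta\psi(p) + O_C\bigl(\|u\|_\Kor^2\, A^{-1}\bigr).
\]
Substituting into $(\partial_t h_t)^2$ and pairing with $|\YL^2\widehat{K}|\lesssim \|\cdot\|_\Kor^{-5}$ gives an integrable singularity whose leading $\HH$-odd contribution is killed by the symmetrization $u \leftrightarrow \theta(u)$; the surviving even remainder produces a term of size $A^{-3}$. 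The piece $\YL\widehat{K}\cdot \partial_t^2 h_t$ is handled analogously, using $\|\partial_z\psi\|_\infty \le CA^{-1}$ and $\|\partial_z^2\phi_t\|_\infty\lesssim_C \rho^{-1} + A^{-1}$ (from \eqref{eq:2der-alpha-bound} and \eqref{eq:2der-gamma-bound}), and again exploiting the $\theta$-symmetry. On large scales, I would adapt the symmetrization argument from Lemma~\ref{lem:riesz-converge} and Proposition~\ref{prop:annulus-change}: pairing $u$ with $\theta(u)$, the $\HH$-oddness of $\YL\widehat{K}$ (respectively $\HH$-evenness of $\YL^2\widehat{K}$) cancels the leading part of the integrand, and the remainder decays quadratically in $\|u\|_\Kor^{-1}$ at infinity by Lemma~\ref{lem:a-Taylor} applied to $\phi_t$ and $\psi$, yielding a convergent integral of the desired size.

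The main obstacle is obtaining $A^{-3}$ rather than the naive $A^{-2}$. A direct pointwise bound $(\partial_t h_t)^2 \lesssim \|\psi\|_\infty^2 \lesssim A^{-2}$ is too crude; the extra factor of $A^{-1}$ must be extracted by using $\|\nabla_\zeta\psi\|_\infty \lesssim_C A^{-2}$ and the first-order Taylor expansion of $\psi$ at $p$, so that the leading contribution of $\psi(q_t u)-\psi(p)$ is an odd-in-$u$ multiple of $x(u)\nabla_\zeta\psi(p)$. Symmetrization against $\theta$ then eliminates that leading odd term, and the $O(\|u\|_\Kor^2 A^{-1})$ even remainder gives the correct order. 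The hypothesis $\|\partial_z\zeta\|_\infty\le C\rho^{-1}$, guaranteed for large $\rho$ by \eqref{eq:2der-alpha-bound}, is essential for controlling the $\psi(p)\, x(u)\, \partial_z\phi_t$ contributions in $\partial_t h_t$ at large scales. Since all estimates are uniform in $t\in[0,1]$, we obtain $\|G''_{\zeta,\psi}(t)\|_\infty\lesssim_C A^{-3}$; the specific bound $\|G''_{\alpha,\gamma}(0)\|_\infty \lesssim_c A^{-3}$ then follows by substituting the bounds on derivatives of $\alpha$ and $\gamma$ from Lemma~\ref{lem:gamma-deriv-bounds}.
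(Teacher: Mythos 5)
Your proposal has a genuine gap at large scales.

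You correctly identify the translated representation
$$F_{\phi_t}(p) = \pv \int_{V_0} \widehat{K}(u Y^{h_t(u)}) \ud u, \qquad h_t(u) = \phi_t(q_t u) - \phi_t(p),$$
which is the analogue of \eqref{eq:grr-translate} and Lemma~\ref{lem:grrab-small}. Differentiating twice gives an integrand built from $(\partial_t h_t)^2 \, \YL^2\widehat{K}$ and $(\partial_t^2 h_t)\, \YL\widehat{K}$, where, as you note, $\partial_t h_t(u)$ contains the term $-\psi(p)\, x(u)\, \partial_z\phi_t(q_t u)$, and correspondingly $\partial_t^2 h_t(u)$ contains $\psi(p)^2 x(u)^2 \partial_z^2 \phi_t$ and $\psi(p) x(u) \partial_z\psi$. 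These factors grow linearly (resp.\ quadratically) in $x(u)$. That growth is not damped by the hypothesis $\|\partial_z\zeta\|_\infty\le C\rho^{-1}$: the symmetrization argument gives you bounds like $|q_1^\sE(u)|, |q_1^\sO(u)| \lesssim A^{-2}\|u\|_\Kor$ and $|q_2^\sE(u)| \lesssim A^{-3}\|u\|_\Kor^2$ for $\|u\|_\Kor$ large, while Lemma~\ref{lem:ker-on-graph} gives $|N^{\sE}_{\YL^2\widehat{K}}|\lesssim A^{-1}\|u\|_\Kor^{-4}$, $|N^{\sO}_{\YL^2\widehat{K}}|\lesssim \|u\|_\Kor^{-5}$. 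Pairing and using Lemma~\ref{lem:polar}, the even part of the integrand is of size $A^{-4}\|u\|_\Kor^{-3}$ at infinity, and $\int^\infty A^{-4}\kappa^{-3}\cdot\kappa^2\,\ud\kappa$ diverges logarithmically. So the single translated formula, even after $\theta$-symmetrization, does not yield a convergent principal value for $G''$; your claimed ``quadratic decay in $\|u\|_\Kor^{-1}$'' is too optimistic by a full power of $\kappa$.

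The paper's resolution is precisely to \emph{not} use the translated formula at large scales. It writes $G = \Gsm_\tau + \Glg_\tau$, using the translated formula (fixed domain, changing function $a_{\tau+t}$) only for $\|u\|_\Kor \le 1$, and the \emph{non}-translated formula \eqref{eq:grr-no-translate} (fixed graph $\Psi_a$, but sheared domain $A^t_{1,R}$ and conjugating factors $Y^{-t b(\zero)}$, $Y^{t b(w)}$) for $\|u\|_\Kor \ge 1$. Two $t$-derivatives of the latter produce, in the interior term, only the bounded coefficients $b(\zero)^2$, $b(\zero)b(w)$, $b(w)^2$ — all $\lesssim A^{-2}$, with no $x$-growth — at the cost of boundary terms from the moving region. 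That is exactly what gives a convergent, $O(A^{-3})$ large-scale contribution. Your roadmap mentions Lemma~\ref{lem:grrab-large}, but your actual derivation collapses to the one (translated) representation, and the large-scale bound it produces cannot close; you need to carry out the second differentiation of the non-translated formula separately, including the boundary terms, as in Lemma~\ref{lem:unif-sec-lg}.
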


We will use Proposition~\ref{prop:perturb-formula} and Proposition~\ref{prop:second-bounds-v2} to prove the following bounds on $F_{f_i}$.
\begin{lemma}\label{lem:mainBounds}
  Let $A>0$ and suppose that $\rho>0$ is sufficiently large, depending on $A$.  
  Let $i\ge 0$ and let $f_i$ and $\nu_i$ be as in Section~\ref{sec:construction}. Then there is an $\epsilon>0$ such that:
  \begin{enumerate}
  \item \label{it:first-upper}
    $\|G_{f_i,\nu_i}'(0)\|_\infty \lesssim A^{-1}$.
  \item \label{it:second-upper}
    For each $v\in \HH$, the function $t\mapsto G_{f_i,\nu_i}(t)(v)$ is $C^2$, and for all $t\in [0,1]$, 
    $\|G_{f_i,\nu_i}''(t)\|_\infty\lesssim A^{-3}$.
  \item \label{it:quasi-ortho}
    For all $0\le i < j$,
    \begin{equation}\label{eq:quasi-ortho}
    |\langle G_{f_i,\nu_i}'(0), G_{f_j,\nu_j}'(0)\rangle| \lesssim \rho^{-\epsilon}.
    \end{equation}
  \item \label{it:first-lower}
    If $K$ is the Riesz kernel $\mathsf{R}$ and $i<\epsilon A^4$, then $\|G_{f_i,\nu_i}'(0)\|_{L_2(U)} \gtrsim A^{-1}$.
  \end{enumerate}
\end{lemma}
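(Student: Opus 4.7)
The plan is to reduce all four statements to Propositions~\ref{prop:perturb-formula} and~\ref{prop:second-bounds-v2} via the rescaling of Section~\ref{subsec:rescaling}, supplemented by translation-invariant approximations for the cross-scale parts~\eqref{it:quasi-ortho} and~\eqref{it:first-lower}.

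For the upper bounds~\eqref{it:first-upper} and~\eqref{it:second-upper}, since $F_\zeta$ is constant on cosets of $\paramY$, so are $G'_{f_i,\nu_i}(0)$ and $G''_{f_i,\nu_i}(t)$, and it suffices to evaluate them at points $p_0\in \Sigma_i = \Gamma_{f_i}$. For each such $p_0$, let $\alpha=\alpha_{p_0,i}$ and $\gamma=\gamma_{p_0,i}$ be the rescaled functions from Section~\ref{subsec:rescaling}. A short computation shows that the rescaling sends $f_i + s\nu_i$ to $\alpha + s\gamma$, and the scale- and translation-invariance of $\widehat K$ (using its $(-3)$--homogeneity) gives
\[
G_{f_i,\nu_i}(s+t)(p_0) = G_{\alpha + s\gamma,\,\gamma}(t)(\zero)
\]
for all $s,t$. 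Differentiating once or twice in $t$ at $t=0$ and invoking the final clauses of Propositions~\ref{prop:perturb-formula} and~\ref{prop:second-bounds-v2} together with Lemma~\ref{lem:gamma-deriv-bounds} then yields $|G'_{f_i,\nu_i}(0)(p_0)|\lesssim A^{-1}$ and $|G''_{f_i,\nu_i}(t)(p_0)|\lesssim A^{-3}$. For part~\eqref{it:second-upper} one must additionally check that $\alpha + s\gamma$ and $\gamma$ still satisfy the hypotheses of Proposition~\ref{prop:second-bounds-v2} uniformly in $s\in[0,1]$; this follows from the additivity of the derivative bounds in Lemma~\ref{lem:gamma-deriv-bounds} and the identity $\nabla_{\alpha+s\gamma}=\nabla_\alpha - s\gamma Z$, which shows that passing from $\nabla_\alpha$ to $\nabla_{\alpha+s\gamma}$ introduces only terms that are small when $A$ and $\rho$ are large.

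For the quasi-orthogonality~\eqref{it:quasi-ortho}, following the roadmap's description of Section~\ref{sec:quasi-orthog}, the plan is to replace $f_i$ locally around each $p$ by its first-order affine approximation $\lambda_p$; the flatness estimates~\eqref{eq:flatness-fi} ensure that this substitution produces only $O(\rho^{-1})$ errors on scales $\le r_i$. On the vertical plane $\Gamma_{\lambda_p}$ the operator $\tilde T_{\lambda_p}$ is translation-invariant, so inner products between the linearized perturbations at scales $r_j\ll r_i$ decouple via the $(-3)$--homogeneity of $\widehat K$ and produce geometric decay of the form $\rho^{-\epsilon(j-i)}$. The approximation errors are controlled by the second-derivative bound in Proposition~\ref{prop:second-bounds-v2} and are dominated by the main term when $\rho$ is large.

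The main obstacle is the lower bound~\eqref{it:first-lower}. By Proposition~\ref{p:stopping-time}, $\mu(S_{i-1})\lesssim iA^{-4}$, so when $i<\epsilon A^4$ with $\epsilon$ sufficiently small one has $\mu\big(\bigcup_{j\in J_i}Q_{i,j}\big)\gtrsim 1$. On each such pseudoquad $Q_{i,j}$ the bump $\kappa_{i,j}$ is a scaled translate of the fixed $\kappa$ and $f_i$ is close to an affine function by~\eqref{eq:flatness-fi}. Replacing $f_i$ by its tangent affine function reduces the restriction of $G'_{f_i,\nu_i}(0)$ to $Q_{i,j}$ to the action of a translation-invariant integral operator on $\kappa_{i,j}$; a direct computation with the Riesz kernel $\mathsf R$ shows that the resulting model has $L_2(Q_{i,j})$--norm $\gtrsim A^{-1}\sqrt{|Q_{i,j}|}$. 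To sum the contributions over $j\in J_i$ one needs a same-scale almost-orthogonality statement showing that bumps on distinct pseudoquads contribute nearly orthogonally in $L_2$; this is a single-scale variant of the argument for~\eqref{it:quasi-ortho}. The hardest step is verifying the non-degeneracy of the translation-invariant model for the specific kernel $\mathsf R$ (here the explicit form of the Riesz kernel is essential) and controlling the linearization errors uniformly across the $\sim A^4$ scales that may enter the construction.
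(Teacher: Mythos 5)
Your treatment of parts (\ref{it:first-upper}) and (\ref{it:second-upper}) is correct and is essentially the paper's argument: identify $G_{f_i,\nu_i}(s+t)(p_0)$ with $G_{\alpha+s\gamma,\gamma}(t)(\zero)$ via the left-translation and dilation of Section~\ref{subsec:rescaling}, then invoke Propositions~\ref{prop:perturb-formula} and \ref{prop:second-bounds-v2} together with the uniform derivative bounds of Lemma~\ref{lem:gamma-deriv-bounds}. Your observation that $\alpha+s\gamma$ satisfies the same hypotheses uniformly in $s\in[0,1]$ is the right point to check.

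For part (\ref{it:quasi-ortho}) your plan is directionally reasonable but misses the central mechanism. You assert that on the tangent plane the inner products ``decouple via the $(-3)$--homogeneity,'' but homogeneity alone gives no cancellation and no power of $\rho$. What actually makes the argument work in the paper is a genuine \emph{sign cancellation}: the averages $\int_E F_j'(0)$ over rectangles $E$ of scale $r_j\rho^\delta$ are anomalously small, and this comes from the fact that the symmetrized kernel $M(p)=\YL\widehat{K}(p^{-1})-\YL\widehat{K}(p)$ satisfies $M(W^wZ^z)=-M(W^wZ^{-z})$ (Lemma~\ref{lem:quadrupole}), so the dangerous near-diagonal contribution $\int_{E\times E}(\gamma(w)-\gamma(v))\YL\widehat{K}(v^{-1}w)$ collapses to a boundary-layer term. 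This is paired not with decoupling but with the Hölder continuity of $F_i'(0)$ at scale $r_i$ (Corollary~\ref{cor:Fprime-Holder}), so that $F_i'(0)$ is essentially constant over each rectangle. The resulting bound is $\rho^{-\epsilon}$ with $\epsilon$ fixed, not the geometric decay $\rho^{-\epsilon(j-i)}$ you claim; your stronger statement would be convenient but you have not indicated a mechanism that produces it, and it is not needed.

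For part (\ref{it:first-lower}) you introduce a ``same-scale almost-orthogonality'' step that is both unnecessary and, as stated, problematic. Since $G'_{f_i,\nu_i}(0)$ is a single fixed function and the $Q_{i,j}$ partition $U$, one simply has
\[
\|G'_{f_i,\nu_i}(0)\|_{L_2(U)}^2=\sum_{j}\|G'_{f_i,\nu_i}(0)\|_{L_2(Q_{i,j})}^2,
\]
with no cross terms and no orthogonality issue; what is actually needed is a pointwise lower bound on a subset of each good pseudoquad of proportional measure. The paper obtains this (Lemmas~\ref{lem:first-z}, \ref{lem:first-z-convert}, \ref{lem:point-lower}) by reducing $G'_{\alpha,\gamma}(0)(p)$ modulo $O(A^{-2})$ errors to a one-dimensional integral
\[
L_a(1)\cdot \pv\int_\R (\nu_i(pZ^z)-\nu_i(p))\,|z|^{-3/2}\,\ud z,
\]
and then using two sign facts: the explicit nonvanishing of $L_0(1)$ for the Riesz kernel (the place where the specific kernel enters), and the nonnegativity of $\nu_i$ with $\nu_i(p)=0$ on the bottom edge $t=0$ of the pseudoquad, which makes the integral manifestly nonnegative and bounded below by the contribution of $Q_{i,j}$ alone. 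Your worry about ``controlling linearization errors uniformly across the $\sim A^4$ scales'' is also misplaced: the lower bound is established for a fixed $i$, and the flatness estimates \eqref{eq:flatness-fi} already hold uniformly in $i$, so no multi-scale bookkeeping is required here. The essential ideas you would still need to supply are the one-dimensional reduction, the positivity argument, and the explicit kernel computation; without them the ``direct computation with the Riesz kernel'' remains a gap rather than a proof.
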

This lemma implies Proposition~\ref{prop:mainBounds-tilde-T}.
\begin{proof}[Proof of Proposition~\ref{prop:mainBounds-tilde-T}]
  In this proof, we use $\|\cdot\|_U$ to denote $\|\cdot \|_{L_2(U)}$. Let $g_i=G_{f_i,\nu_i}$.
  By Taylor's theorem,
  $$\|F_{f_{i+1}}-(F_{f_i}+g_i'(0))\|_U\lesssim \sup_{0\le t\le 1} \|g_i''(t)\|_U.$$
  Therefore, for any $n$,
  \begin{equation}\label{eq:sum-taylor-est}
    \left\|F_{f_n}-\sum_{i=0}^{n-1} g'_i(0)\right\|_U\lesssim \sum_{i=0}^{n-1} \sup_{0\le t\le 1} \|g_i''(t)\|_U.
  \end{equation}
  Furthermore,
  \begin{align*}
    \left\|\sum_{i=0}^{n-1} g_i'(0)\right\|_U^2 & = \sum_{\substack{i=0,\dots,n-1\\j=0,\dots,n-1}} \langle g_i'(0), g_j'(0)\rangle \\
    &= \sum_{i=0}^{n-1} \|g_i'(0)\|_U^2 + 2 \sum_{0\le i< j< n} \langle g_i'(0), g_j'(0)\rangle.
  \end{align*}
  
  Let $\epsilon$ be as in Lemma~\ref{lem:mainBounds} and suppose that $n<\epsilon A^4$. Then on one hand,
  $$\sum_{i=0}^{n-1} \left\|g_i'(0)\right\|_U^2 \approx n A^{-2}.$$
  On the other hand, 
  $$\sum_{0\le i< j< n} |\langle g_i'(0),g_j'(0)\rangle|\lesssim n^2\rho^{-\epsilon},$$
  so if $\rho$ is sufficiently large, then 
  \begin{equation} \label{eq:sum-of-first}
    \left\|\sum_{i=0}^{n-1} g_i'(0)\right\|_U \gtrsim \sqrt{n} A^{-1},
  \end{equation}
  while 
  \begin{equation} \label{eq:sum-of-second}
    \sum_{i=0}^{n-1} \sup_{0\le t\le 1} \|g_i''(t)\|_U \lesssim n A^{-3}.
  \end{equation}
  Combining these estimates with \eqref{eq:sum-taylor-est}, we see that there is some $c>1$ such that 
  $$\|F_{f_n}\|_U \ge c^{-1} \sqrt{n} A^{-1} - cn A^{-3}.$$
  
  Let $\delta=\min\{\epsilon, c^{-4}/16\}$ and take $N=\lfloor \delta A^4\rfloor$. When $A$ is sufficiently large, 
  $$\|F_{f_N}\|_U \ge \frac{c^{-3}}{5} A - \frac{c^{-3}}{15} A \gtrsim A,$$
  as desired.
\end{proof}

These bounds point to a possible link between the norm of $F_{f_n}$ and the $\beta$--numbers studied in Section~\ref{sec:beta-numbers}. The bounds in Section~\ref{sec:beta-numbers} show that there is a $\delta>0$ such that if $\rho$ is sufficiently large and $n<\delta A^4$, then
\begin{equation}\label{eq:betasgl-2}\int_{0}^R\int_{\Psi_{f_n}(U)}  \beta_{\Gamma_{f_n}}(v,r)^2 \ud v \frac{\ud r}{r}\gtrsim n A^{-2}.
\end{equation}
Each layer of bumps with aspect ratio $A$ contributes roughly $A^{-2}$ to the integral.

Similarly, the proof of Proposition~\ref{prop:mainBounds-tilde-T} shows that if $\rho$ is sufficiently large and $n<\delta A^4$, then 
$\|F_{f_n}\|_U^2\approx n A^{-2}.$
Indeed, the proof shows that
$\|F_{f_n}\|_U^2\approx \sum_{i=1}^n \|g_i'(0)\|^2_U$ when $\frac{n}{A^4}$ is small. Since $\|g_i'(0)\|^2_U\approx A^{-2}$ when $i<\epsilon A^4$, each step in the construction of $f_n$ contributes roughly $A^{-2}$ to $\|F_{f_n}\|_U^2$. 

This suggests the following question.
\begin{question}\label{q:betasgl-q}
How is the integral \eqref{eq:betasgl-2} for an intrinsic Lipschitz graph $\Gamma$ related to the $L_2$--norm of the Riesz transform for functions on $\Gamma$?
\end{question}

In the rest of this paper, we will prove Propositions~\ref{prop:perturb-formula} and \ref{prop:second-bounds-v2} and Lemma~\ref{lem:mainBounds}. 

We prove Proposition~\ref{prop:perturb-formula} in Section~\ref{sec:perturbations}. The key step is to write  $G^{r,R}_{\zeta,\psi}(t)$ in two ways, \eqref{eq:grrab-large} and \eqref{eq:grrab-small}. 
The Euclidean analogues of these expressions are identical, but since $\HH$ is noncommutative, they differ in $\HH$. In practice, \eqref{eq:grrab-large} is easier to bound when $r$ and $R$ are large and \eqref{eq:grrab-small} is easier when $r$ and $R$ are small, so the two expressions together let us bound $G^{r,R}_{\zeta,\psi}$ and its derivatives at all scales. 

By Section~\ref{subsec:rescaling}, we can rescale $f_i$ and $\nu_i$ to obtain functions $\alpha$ and $\gamma$ that satisfy the bounds in Lemma~\ref{lem:gamma-deriv-bounds}. By the scale-invariance of the Riesz transform, $\|G_{f_i,\nu_i}'(0)\|_\infty = \|G_{\alpha,\gamma}'(0)\|_\infty$, so part (\ref{it:first-upper}) of Lemma~\ref{lem:mainBounds} follows from Proposition~\ref{prop:perturb-formula}.

Similarly, in Section~\ref{sec:second-deriv}, we use \eqref{eq:grrab-large} and \eqref{eq:grrab-small} again to prove Proposition~\ref{prop:second-bounds-v2}. As before, $\|G_{f_i,\nu_i}''(0)\|_\infty = \|G_{\alpha,\gamma}''(0)\|_\infty$, so part (\ref{it:second-upper}) of Lemma~\ref{lem:mainBounds} follows from Proposition~\ref{prop:second-bounds-v2}.

To prove parts (\ref{it:quasi-ortho}) and (\ref{it:first-lower}) of Lemma~\ref{lem:mainBounds}, we approximate $G_{\alpha,\gamma}'(0)$ by a translation-invariant singular integral operator on a plane. For any vertical plane $P\subset \HH$, let $\lambda_P\from \HH\to \R$ be the affine function such that $\Gamma_{\lambda_P}=P$, and let $H_{P,\gamma}\from P\to \R$ be the function
$$H_{P,\gamma}(q) := G'_{\lambda_P,\gamma}(0)(q)$$
for all $q\in P$. The map $\gamma\mapsto H_{P,\gamma}$ is then a translation-invariant operator from functions on $P$ to functions on $P$. 

Given $v\in \Gamma_\alpha$, we let $P_v$ be the vertical tangent plane to $\Gamma_\alpha$ at $v$.
By Lemma~\ref{lem:a-Taylor} and Lemma~\ref{lem:gamma-deriv-bounds}, $P_v$ is close to $\Gamma_\alpha$ on a ball around $v$ whose radius grows with $\rho$. In Section~\ref{sec:approx-by-linear}, we show that $H_{P_v,\gamma}$ approximates $G'_{\alpha,\gamma}(0)$ on a ball around $v$ whose radius also grows with $\rho$. 
We use this approximation to prove the lower bound $\|G_{f_i,\nu_i}'(0)\|_{L_2(U)}\gtrsim A^{-1}$ (Section~\ref{sec:first-lower}), to prove that $G_{f_i,\nu_i}'(0)$ is continuous as a function from $V_0$ to $\R$ (Lemma~\ref{lem:Fprime-Lipschitz-dupe}), and to prove the orthogonality bound \eqref{eq:quasi-ortho} (Section~\ref{sec:quasi-orthog}). This completes the proof of Lemma~\ref{lem:mainBounds}.

\section{First-order estimates for $\tilde{T}_{\zeta + \psi}$} \label{sec:perturbations}
Let $\zeta, \psi \from \HH \to \R$ be smooth functions that are constant on cosets of $\paramY$ and suppose that $\zeta(\zero)=0$. Let $G_{\zeta,\psi}(t)=F_{\zeta+t\psi}=\tilde{T}_{\zeta+t\psi} \one$ and $G^{r,R}_{\zeta,\psi}=F^{r,R}_{\zeta+t\psi}$ be as in Section~\ref{sec:sing-int-perturbed}.

In this section, we will derive expressions for $(G^{r,R}_{\zeta,\psi})'(t)$ and prove the following lemma, which is a quantitative version of Proposition~\ref{prop:perturb-formula}. 
\begin{lemma}\label{lem:G'-formula-and-bound-rev}
  Let $\zeta, \psi\from \HH \to \R$ be smooth functions that are constant on cosets of $\paramY$. Suppose that  $\|\psi\|_\infty <\infty$ and that $\zeta$ is intrinsic Lipschitz. Then, for any $p\in \Gamma_\zeta$,
  $$\lim_{\substack{r\to 0 \\ R\to \infty}} (G^{r,R}_{\zeta,\psi})'(0)(p) = G'_{\zeta,\psi}(0)(p).$$

  Furthermore, let $0<r<1$. Let $L=\|\nabla_\zeta \zeta\|_\infty$ and let $B= B(p, (2L+1) r)$. For a smooth function $g\from \HH\to \R$, define
  $$\|g\|_{W_\zeta(B)} := \max_{\substack{E\in \{\partial_z, \nabla_\zeta\}^*\\ \ell(E)\le 2}} \|E\psi\|_{L_\infty(B)},$$
  and
  $$\|g\|_{W'_\zeta(B)} := \max_{\substack{E\in \{\partial_z, \nabla_\zeta\}^*\setminus \{\id,\nabla_\zeta\} \\ \ell(E)\le 2}} \|E\psi\|_{L_\infty(B)}.$$
  
  Then for any $C>0$ and any $s$ and $S$ such that $0<s\le r\le S$, if $\|\zeta\|_{W'_\zeta(B)}\le C$, then 
  $$\left|G_{\zeta,\psi}'(0)(p) - \big(G^{s,S}_{\zeta,\psi}\big)'(0)(p)\right| \lesssim_{L,C} \|\psi\|_{W_\zeta(B)} s + \|\psi\|_\infty S^{-1}.$$
  In particular, since $G^{r,r}_{\zeta,\psi}(t)(p)=0$ for all $t$ and $p$,
  $$|G_{\zeta,\psi}'(0)(p)| = |G_{\zeta,\psi}'(0)(p) - (G^{r,r}_{\zeta,\psi})'(0)(p)| \lesssim_{L,C}  (r+r^{-1})\|\psi\|_{W_\zeta(B)}.$$
\end{lemma}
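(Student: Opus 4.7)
The plan is to pass to the limit in the two explicit representations \eqref{eq:grrab-large} and \eqref{eq:grrab-small} for $(G^{s,S}_{\zeta,\psi})'(0)(p)$ provided by Lemmas~\ref{lem:grrab-large} and \ref{lem:grrab-small}, using \eqref{eq:grrab-large} to control the large-scale tail $[S,S')$ and \eqref{eq:grrab-small} to control the small-scale tail $(s',s]$. By left-invariance I may assume $p=\zero\in\Gamma_\zeta$, so $\zeta(\zero)=0$; then differentiation under the integral sign on any compact annulus is standard, and existence of the limit together with the stated estimate will follow from the Cauchy criterion applied to the decomposition
$$G_{\zeta,\psi}'(0)(\zero)-(G^{s,S}_{\zeta,\psi})'(0)(\zero)\;=\;\lim_{s'\to 0}(G^{s',s}_{\zeta,\psi})'(0)(\zero)+\lim_{S'\to\infty}(G^{S,S'}_{\zeta,\psi})'(0)(\zero).$$

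For the small-scale piece on $A_{s',s}$ with $s'<s\le r$, I would use \eqref{eq:grrab-small}, in which $\psi$ and its derivatives along $\nabla_\zeta$ and $\partial_z$ appear explicitly, multiplied by $\widehat{K}$ or a first derivative of $\widehat{K}$. Taylor-expanding $\psi$ to first order about $\zero$ via Lemma~\ref{lem:a-Taylor} (applicable because $\Psi_\zeta(A_{s',s})\subset B$ and because $\|\zeta\|_{W'_\zeta(B)}\le C$ controls the derivatives of $\zeta$ required by Lemma~\ref{lem:a-Taylor}), the constant term $\psi(\zero)$ cancels after $\theta$-symmetrization, since $A_{s',s}$ is $\theta$-invariant and $\widehat{K}$ is $\HH$-odd. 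What remains is bounded pointwise by $\|\psi\|_{W_\zeta(B)}\,\|v\|_\Kor^{-2}$, and the polar-coordinate identity (Lemma~\ref{lem:polar}) on $V_0$ yields
$$\left|(G^{s',s}_{\zeta,\psi})'(0)(\zero)\right|\;\lesssim_{L,C}\;\|\psi\|_{W_\zeta(B)}\int_{s'}^s\rho^{-2}\,\rho^2\,\ud\rho\;\lesssim\;\|\psi\|_{W_\zeta(B)}\,s.$$

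For the large-scale piece on $A_{S,S'}$ with $r\le S<S'$, I would use \eqref{eq:grrab-large}, where the $t$-derivative of the integrand at $t=0$ is $\psi(v)$ times $\YL\widehat{K}$ evaluated along $\Psi_\zeta$, a degree-$(-4)$, $\HH$-even quantity. The $\theta$-symmetrization decomposes the integral into a principal term $[\psi(v)+\psi(\theta v)]\,\YL\widehat{K}(\Psi_\zeta(v))$, bounded by $2\|\psi\|_\infty\,\|v\|_\Kor^{-4}$, plus a remainder of size $\|\psi\|_\infty\,|\zeta(v)+\zeta(\theta v)|\,\|v\|_\Kor^{-5}$ arising from the mean-value comparison of $\YL\widehat{K}(\Psi_\zeta(\theta v))$ with $\YL\widehat{K}(\theta\Psi_\zeta(v))$, exactly as in Lemma~\ref{lem:riesz-converge}. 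Since $\zeta(\zero)=0$ and $\zeta$ is intrinsic Lipschitz, Lemma~\ref{lem:ilg-line-distance} gives the global linear bound $|\zeta(v)+\zeta(\theta v)|\lesssim_L\|v\|_\Kor$, so the remainder is also $\lesssim\|\psi\|_\infty\|v\|_\Kor^{-4}$. Polar integration yields
$$\left|(G^{S,S'}_{\zeta,\psi})'(0)(\zero)\right|\;\lesssim_{L,C}\;\|\psi\|_\infty\int_S^{S'}\rho^{-4}\,\rho^2\,\ud\rho\;\lesssim\;\|\psi\|_\infty\,S^{-1}.$$

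Combining these bounds and taking $s'\to 0$, $S'\to\infty$ gives the quantitative estimate; choosing $s=S=r$ and using $G^{r,r}_{\zeta,\psi}\equiv 0$ together with $\|\psi\|_\infty\le\|\psi\|_{W_\zeta(B)}$ yields $|G_{\zeta,\psi}'(0)(p)|\lesssim_{L,C}(r+r^{-1})\|\psi\|_{W_\zeta(B)}$ as stated. The main obstacle is justifying that $\partial_t$ at $t=0$ commutes with the annular limit $s'\to 0$, $S'\to\infty$ defining $F_{\zeta+t\psi}(\zero)$; this is handled by noting that the preceding bounds hold uniformly in $t$ on a small neighborhood of $0$ (with $\zeta$ replaced by $\zeta+t\psi$ on $B$ and with $L$, $C$ adjusted slightly), so $G^{s',S'}_{\zeta,\psi}(t)(\zero)$ converges to $G_{\zeta,\psi}(t)(\zero)$ uniformly in $t$, permitting the interchange.
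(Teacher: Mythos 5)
Your overall strategy matches the paper's: decompose the difference $G'_{\zeta,\psi}(0)-(G^{s,S}_{\zeta,\psi})'(0)$ into a small-scale tail and a large-scale tail and bound each using the two formulas from Lemmas~\ref{lem:grrab-large} and \ref{lem:grrab-small}. (Your sketch of how the large-scale bound is obtained does not actually match the paper's Lemma~\ref{lem:grrab-large}, which bounds $I_1$ directly from the $(-4)$--homogeneity of $\YL\widehat{K}$ and $\YR\widehat{K}$ together with two boundary integrals, without any $\theta$--symmetrization; but since you are invoking those lemmas anyway, this discrepancy is harmless.)

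The genuine gap is in your final paragraph justifying the interchange of $\partial_t\big|_{t=0}$ with the annular limit $s'\to 0$, $S'\to\infty$, and there are two problems. First, you conclude from uniform-in-$t$ tail bounds that $G^{s',S'}_{\zeta,\psi}(t)(\zero)$ converges uniformly in $t$ and that this ``permits the interchange''; but uniform convergence of the \emph{functions} $G^{s',S'}_{\zeta,\psi}(t)(\zero)$ does not allow you to swap a $t$--derivative with the limit. What the standard theorem requires is uniform (in $t$) Cauchy convergence of the \emph{derivatives} $(G^{s',S'}_{\zeta,\psi})'(t)(\zero)$ together with pointwise convergence of $G^{s',S'}_{\zeta,\psi}(t)(\zero)$ (the latter is Lemma~\ref{lem:riesz-converge}). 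Second, and more substantively: to get the derivative tail bounds at $t=\tau\neq 0$ you propose to apply \eqref{eq:grrab-large} and \eqref{eq:grrab-small} ``with $\zeta$ replaced by $\zeta+t\psi$ on $B$ and with $L$, $C$ adjusted slightly.'' But both formulas are derived under the standing assumption $\zeta(\zero)=0$, and $(\zeta+\tau\psi)(\zero)=\tau\psi(\zero)$ is generically nonzero, so $\zero\notin\Gamma_{\zeta+\tau\psi}$ and the derivations of both formulas break down. Because $\HH$ is noncommutative, $\Gamma_{\zeta+\tau\psi}$ is a right-translate, not a left-translate, of a graph through $\zero$, so this cannot be repaired simply by enlarging $L$ and $C$. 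The paper's proof circumvents this by introducing the translated pair $\zeta_\tau,\psi_\tau$ from \eqref{eq:def-wt}--\eqref{eq:def-zt}, for which $\zeta_\tau(\zero)=0$ and $(G^{s,S}_{\zeta,\psi})'(\tau)(\zero)=(G^{s,S}_{\zeta_\tau,\psi_\tau})'(0)(\zero)$, and then checks that for small $\tau$ the pair $(\zeta_\tau,\psi_\tau)$ satisfies the hypotheses of Lemmas~\ref{lem:grrab-small} and \ref{lem:grrab-large} with slightly enlarged constants. That translation device is the missing ingredient here.
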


Proposition~\ref{prop:perturb-formula} follows immediately.
\begin{proof}[Proof of Proposition~\ref{prop:perturb-formula}]
  Suppose that $\alpha$ and $\gamma$ are as in Lemma~\ref{lem:gamma-deriv-bounds}.  Then $\|\nabla_\alpha\alpha\|_\infty\le 1$, $\|\alpha\|_{W'_\alpha(\HH)}\lesssim A^{-1}\rho^{-1}$, and $\|\gamma\|_{W_\alpha(\HH)}\lesssim A^{-1}$ when $\rho$ is sufficiently large. Therefore, letting $r=1$, 
  $$|G_{\alpha,\gamma}'(0)(p)| \lesssim A^{-1}.$$
\end{proof}

We prove Lemma~\ref{lem:G'-formula-and-bound-rev} by calculating $G^{r,R}_{\zeta,\psi}(t)$ in two ways. By left-invariance, it suffices to consider the case that $\zeta(\zero)=0$ and $p=\zero$. Then on one hand, by \eqref{eq:def-F-trunc},
$$G^{r,R}_{\zeta,\psi}(t)(\zero) = \int_{Y^{t\psi(\zero)} A_{r,R}} \widehat{K}(\Psi_{\zeta + t\psi}(\zero)^{-1}\Psi_{\zeta + t\psi}(v)) \ud v.$$
The domain of integration depends on $t$, but since the integrand is constant on cosets of $\paramY$, we can replace $Y^{t\psi(\zero)} A_{r,R}$ by
$$A^t_{r,R} := \Pi(Y^{t\psi(\zero)} A_{r,R}) = Y^{t\psi(\zero)} A_{r,R}Y^{-t\psi(\zero)}.$$
This is a copy of $A_{r,R}$, sheared in the $z$--direction, and 
\begin{equation}\label{eq:grr-no-translate}
  G^{r,R}_{\zeta,\psi}(t)(\zero) = \int_{A^{t}_{r,R}} \widehat{K}(Y^{-t\psi(\zero)}\Psi_{\zeta}(v)Y^{t\psi(v)}) \ud v.
\end{equation}
Differentiating \eqref{eq:grr-no-translate} gives an expression for $(G^{r,R}_{\zeta,\psi})'$ which is found in Lemma~\ref{lem:grrab-large} below. The changing boundary will lead to boundary terms in the derivative, but we will see that when $r$ and $R$ are large, this derivative is small.

On the other hand, just as we translated $\zeta$ so that the graph of $\zeta$ goes through $\zero$, we can translate $\zeta + \tau \psi$ so that its graph goes through $\zero$. By Lemma~\ref{lem:ilg-translations}, there is a function $\zeta_\tau$ such that $\Gamma_{\zeta_\tau} = Y^{-\tau\psi(\zero)} \Gamma_{\zeta + \tau \psi},$ which can be written as follows. For any $\tau \in \R$ and $w\in V_0$, let
\begin{equation}\label{eq:def-wt}
  w_\tau := Y^{\tau \psi(\zero)} w Y^{-\tau \psi(\zero)} = w - \tau \psi(\zero) x(w) Z
\end{equation}
and
\begin{equation}\label{eq:def-zt}
  \zeta_\tau(w) := (\zeta + \tau \psi)(w_\tau) - (\zeta + \tau \psi)(\zero).
\end{equation}
Then $\zeta_0=\zeta$, $\zeta_\tau(\zero)=0$, and 
$\Gamma_{\zeta_\tau} = Y^{-\tau\psi(\zero)} \Gamma_{\zeta + \tau \psi}.$
By the left-invariance of $\tilde{T}$, for any $w\in V_0$ and $\tau\in \R$, we have
$$G^{r,R}_{\zeta,\psi}(\tau)(w) = F^{r,R}_{\zeta+\tau\psi}(w) = F^{r,R}_{\zeta_\tau}(Y^{-\tau\psi(\zero)}w) = F^{r,R}_{\zeta_\tau}(w_\tau),$$
where the last equality uses the fact that $F^{r,R}_{\zeta_\tau}$ is constant on cosets of $\paramY$.
In particular, 
\begin{equation}\label{eq:grr-translate}
  G^{r,R}_{\zeta,\psi}(\tau)(\zero) = F^{r,R}_{\zeta_\tau}(\zero) =
  \int_{A_{r,R}} \widehat{K}(\Psi_{\zeta_\tau}(\zero)^{-1}\Psi_{\zeta_\tau}(v)) \ud v = \int_{A_{r,R}} \widehat{K}(\Psi_{\zeta_\tau}(v)) \ud v,
\end{equation}
so we can compute $(G^{r,R}_{\zeta,\psi})'$ by differentiating \eqref{eq:grr-translate} (see Lemma~\ref{lem:grrab-small} below). This avoids the boundary terms in Lemma~\ref{lem:grrab-large}. We will see that when $r$ and $R$ are small, the derivative of \eqref{eq:grr-translate} is small.


We first consider the derivative of \eqref{eq:grr-no-translate}. For any $R>0$, let $M_R(x)=\frac{1}{4}\sqrt{R^4-x^4}$ so that $B(\zero,R)\cap V_0 = \{(x,0,z)\mid |x|\le R, |z| \le M_R(x)\}$. Recall that $\YL$ is the left-invariant vector field $\YL(x,y,z):=(0,1,\frac{x}{2})$ and $\YR$ is the right-invariant vector field $\YR(x,y,z):=(0,1,-\frac{x}{2})$. 

For the rest of this section, we suppose that $\zeta$ and $\psi$ are as in Lemma~\ref{lem:G'-formula-and-bound-rev} and that $\zeta(\zero)=0$ so that $\zero\in \Gamma_\zeta$. We let $0<r<1$, $L=\|\nabla_\zeta \zeta\|_\infty$, and $B=B(\zero,(2L + 1)r)$, and we suppose that $\|\zeta\|_{W'_\zeta(B)}\le C$. For $q\in \HH$, we let  $\overline{q}=\Psi_{\zeta}(q)$. 
\begin{lemma}\label{lem:grrab-large}
  Let $R' > R > 0$. Then
  \begin{multline}\label{eq:grrab-large}
    (G^{R,R'}_{\zeta,\psi})'(0)(\zero) = \int_{A_{R,R'}} \psi(w)\YL\widehat{K}(\overline{w}) - \psi(\zero)\YR\widehat{K}(\overline{w}) \ud q \\ - \psi(\zero) \int_{-R}^R x\cdot \widehat{K}(\Psi_\zeta(x,0,z)) \bigg|_{z=-M_R(x)}^{M_R(x)} \ud x + \psi(\zero) \int_{-r}^r x\cdot \widehat{K}(\Psi_\zeta(x,0,z)) \bigg|_{z=-M_r(x)}^{M_r(x)} \ud x.
  \end{multline}
  where $f(z)\big|_{z=a}^b$ denotes $f(b)-f(a)$.
  Further,
  $$\left|(G^{R,R'}_{\zeta,\psi})'(0)(\zero)\right| \lesssim \|\psi\|_\infty R^{-1}.$$
\end{lemma}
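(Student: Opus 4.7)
The plan is to differentiate \eqref{eq:grr-no-translate} under the integral sign after absorbing the $t$--dependence of the domain. Since the discussion preceding the lemma gives $A^{t}_{R,R'}=\{(x,0,z-xt\psi(\zero)) : (x,0,z)\in A_{R,R'}\}$, substituting $z\mapsto z+xt\psi(\zero)$ slice-by-slice in $x$ rewrites \eqref{eq:grr-no-translate} as
\[
G^{R,R'}_{\zeta,\psi}(t)(\zero)=\int_{A_{R,R'}} f\bigl(t,x,z-xt\psi(\zero)\bigr)\,\ud x\,\ud z,
\]
where $f(t,x,z):=\widehat K\bigl(Y^{-t\psi(\zero)}\Psi_\zeta(x,0,z)\,Y^{t\psi(x,0,z)}\bigr)$. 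The domain is now independent of $t$, so differentiating at $t=0$ yields
\[
\bigl(G^{R,R'}_{\zeta,\psi}\bigr)'(0)(\zero)=\int_{A_{R,R'}}\partial_t f(0,x,z)\,\ud x\,\ud z\;-\;\psi(\zero)\int_{A_{R,R'}} x\,\partial_z f(0,x,z)\,\ud x\,\ud z.
\]

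For the first contribution, fix $v=(x,0,z)\in V_0$ and let $\overline v=\Psi_\zeta(v)$. A direct application of the Heisenberg product law \eqref{eq:heismult} to $\alpha(t):=Y^{-t\psi(\zero)}\overline v\,Y^{t\psi(v)}$ gives
\[
\alpha(t)=\Bigl(x,\ \zeta(v)+t\bigl(\psi(v)-\psi(\zero)\bigr),\ z+\tfrac{x}{2}\zeta(v)+\tfrac{xt}{2}\bigl(\psi(\zero)+\psi(v)\bigr)\Bigr),
\]
so $\alpha'(0)=\bigl(0,\,\psi(v)-\psi(\zero),\,\tfrac{x}{2}(\psi(\zero)+\psi(v))\bigr)$ as a tangent vector at $\overline v$. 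Using $\YL(\overline v)=(0,1,x/2)$ and the identity $\YR=\YL-xZ$, one verifies $\alpha'(0)=\psi(v)\YL(\overline v)-\psi(\zero)\YR(\overline v)$, and hence $\partial_t f(0,x,z)=\psi(v)\YL\widehat K(\overline v)-\psi(\zero)\YR\widehat K(\overline v)$. For the second contribution, $f(0,x,z)=\widehat K(\Psi_\zeta(x,0,z))$, so for each fixed $x$ the inner $z$--integral collapses via the fundamental theorem of calculus to evaluation at the endpoints of the slice $A_{R,R'}\cap(\{x\}\times\{0\}\times\R)$. This slice has boundary on the inner sphere (at $z=\pm M_R(x)$) only when $|x|\le R$ and on the outer sphere (at $z=\pm M_{R'}(x)$) only when $|x|\le R'$, so reassembling in $x$ produces the two boundary integrals in the statement.

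For the bound, I will use that $\zeta$ is intrinsic Lipschitz with $\zeta(\zero)=0$. Combining Lemma~\ref{lem:y-distance} with the cone condition gives $\|\overline w\|_\Kor\approx_\lambda\|w\|_\Kor$ for every $w\in V_0$, where $\lambda$ is the intrinsic Lipschitz constant of $\zeta$. By Lemma~\ref{lem:deriv-homog}, $\YL\widehat K$ and $\YR\widehat K$ are $(-4)$--homogeneous, so $|\YL\widehat K(\overline w)|+|\YR\widehat K(\overline w)|\lesssim\|w\|_\Kor^{-4}$, and polar coordinates on $V_0$ (Lemma~\ref{lem:polar}) bound the main integral by $\|\psi\|_\infty\int_R^{R'}\rho^{-4}\rho^{2}\,\ud\rho\lesssim\|\psi\|_\infty R^{-1}$. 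On each boundary sphere $|\widehat K(\Psi_\zeta(x,0,z))|\lesssim\|w\|_\Kor^{-3}$, and the factor $|x|$ is at most the radius of the sphere; hence the inner boundary term is $\lesssim\|\psi\|_\infty\cdot R\cdot R\cdot R^{-3}=\|\psi\|_\infty R^{-1}$, and the outer one is $\lesssim\|\psi\|_\infty R'^{-1}\le\|\psi\|_\infty R^{-1}$.

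The main subtlety is the tangent-vector computation in the second paragraph: in a Euclidean setting the two translations by $-t\psi(\zero)$ and $t\psi(v)$ would commute and produce only $(\psi(v)-\psi(\zero))\YL\widehat K(\overline v)$; the additional contribution $\psi(\zero)\,x\,Z\widehat K(\overline v)$ that promotes $\YL$ to $\YR$ arises from the Heisenberg commutator $[X,Y]=Z$ via the product-law calculation above, and must be tracked carefully. Once this identity is in hand, the remainder of the proof consists of routine polar-coordinate estimates.
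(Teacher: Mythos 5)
Your proof is correct and follows essentially the same route as the paper's: the change of variables $z\mapsto z+xt\psi(\zero)$ is the Leibniz rule in disguise, your explicit group-law derivation of $\alpha'(0)=\psi(v)\YL\widehat K(\overline v)-\psi(\zero)\YR\widehat K(\overline v)$ simply makes visible the identity the paper writes down without computation, and the polar-coordinate estimates on the bulk and boundary terms match the paper's. One remark: your computation (like the paper's own proof) produces boundary integrals over $[-R',R']$ with $M_{R'}$ and over $[-R,R]$ with $M_R$, which confirms that the displayed lemma statement contains a typo, with an undefined $r$ and $M_r$ appearing where $R'$ and $M_{R'}$ should be.
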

\begin{proof}
  We proceed by differentiating \eqref{eq:grr-no-translate}. By the definition of $M_R$, we have 
  $$A_{R,R'} = \{(x,0,z) \mid z\in [-M_{R'}(x), M_{R'}(x)] \setminus (-M_R(x), M_R(x))\}.$$
  Let $A_{R,R'}(x,t) := \{z\mid (x,0,z)\in A^t_{R,R'}\}$. Since $Y^{y} (x,0,z) Y^{-y} = (x, 0, z - yx)$, we have
  \begin{multline*} 
    A_{R,R'}(x,t) = [-M_{R'}(x) - t \psi(\zero) x, M_{R'}(x) - t \psi(\zero) x] \\ \setminus (-M_R(x) - t \psi(\zero) x, M_R(x) - t \psi(\zero) x).
  \end{multline*}
  Let $w=(x,0,z)$ and $\lambda_t(w)=Y^{-t \psi(\zero)} \overline{w}Y^{t \psi(w)}$. Then by \eqref{eq:grr-no-translate},
  \begin{align*}
    G^{R,R'}_{\zeta, \psi}(t) & = \int_{-R'}^{R'} \int_{A_{R,R'}(x,t)} \widehat{K}(\lambda_t(x,0,z))\ud z \ud x \\
    (G^{R,R'}_{\zeta, \psi})'(t) & = \int_{A^t_{R,R'}}  \frac{\ud}{\ud t} [\widehat{K}(\lambda_t(q))] \ud q - \int_{-R'}^{R'} \psi(\zero) x\cdot \widehat{K}(\lambda_t(x,0,z)) \bigg|_{z=-M_{R'}(x) - t \psi(\zero) x}^{M_{R'}(x) - t \psi(\zero) x} \ud x \\
                                & \qquad + \int_{-R}^R \psi(\zero) x\cdot \widehat{K}(\lambda_t(x,0,z)) \bigg|_{z=-M_R(x)- t \psi(\zero) x}^{M_R(x)- t \psi(\zero) x} \ud x.
  \end{align*}

  When $t=0$,
  \begin{align*}
    (G^{R,R'}_{\zeta,\psi})'(0)(\zero)
    & = \int_{A_{R,R'}} \psi(q)\YL\widehat{K}(\overline{q}) - \psi(\zero)\YR\widehat{K}(\overline{q}) \ud q \\
    & \qquad - \psi(\zero) \int_{-R'}^{R'} x \widehat{K}(\Psi_\zeta(x,0,z)) \bigg|_{z=-M_{R'}(x)}^{M_{R'}(x)} \ud x + \psi(\zero) \int_{-R}^R x \widehat{K}(\Psi_\zeta(x,0,z)) \bigg|_{z=-M_R(x)}^{M_R(x)} \ud x \\
    & =: I_1 - I_2^{R'} + I_2^R.
  \end{align*}
  This proves \eqref{eq:grrab-large}.

  We thus consider $I_1$, $I_2^{R'}$, and $I_2^R$. Since $\YL \widehat{K}$ and $\YR \widehat{K}$ are homogeneous of degree $-4$,
  \begin{align*}
    \left|\psi(w)\YL\widehat{K}(\overline{w}) - \psi(\zero)\YR\widehat{K}(\overline{w})\right| \lesssim \|\psi\|_\infty \|w\|_{\Kor}^{-4}.
  \end{align*}
  By Lemma~\ref{lem:polar},
  $$|I_1|\lesssim \int_{A_{R,R'}} \|\psi\|_\infty \|w\|_{\Kor}^{-4}\ud w \lesssim \int_R^{R'} \|\psi\|_\infty \kappa^{-4}\cdot \kappa^2 \ud \kappa \le \|\psi\|_\infty R^{-1}.$$

  Let $s\in [R,R']$. 
  Since $\|(x,0,M_s(x))\|_{\Kor}=s$, we have $|\widehat{K}(\Psi_{\zeta}(x,0, M_s(x)))| \lesssim s^{-3}$ and
  $$|I^s_2(t)|\lesssim \psi(\zero) \int_{-s}^s |x| s^{-3}\ud x \lesssim \|\psi\|_\infty s^{-1}.$$
  Putting these bounds together,
  $$|(G^{R,R'}_{\zeta,\psi})'(0)(\zero)|\lesssim \|\psi\|_\infty R^{-1} + \|\psi\|_\infty (R')^{-1} + \|\psi\|_\infty R^{-1} \lesssim \|\psi\|_\infty R^{-1},$$
  as desired.
\end{proof}

Now we differentiate \eqref{eq:grr-translate}.
\begin{lemma}\label{lem:grrab-small}
  Let $0 < s'< s \le r$.
  Then
  \begin{equation}\label{eq:grrab-small}
    (G^{s',s}_{\zeta,\psi})'(0)(\zero) = \int_{A_{s',s}} \left(\psi(w)-\psi(\zero)-\psi(\zero)x\partial_z \zeta(w)\right) \YL\widehat{K}(\overline{w})\ud w.
  \end{equation}
  Furthermore, 
  \begin{equation}\label{eq:grrab-small-ineq}
    |(G^{s',s}_{\zeta,\psi})'(0)(\zero)| \lesssim_C \|\psi\|_{W_\zeta(B)} s.
  \end{equation}
\end{lemma}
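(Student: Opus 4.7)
The plan is to derive \eqref{eq:grrab-small} by differentiating \eqref{eq:grr-translate} in $\tau$ at $\tau=0$, and to prove \eqref{eq:grrab-small-ineq} via a symmetrization argument based on the involution $\theta$.

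For the formula, since the domain $A_{s',s}$ in \eqref{eq:grr-translate} is independent of $\tau$ and the integrand is smooth on this compact annulus, I would differentiate under the integral sign. For $w\in V_0$, $\Psi_{\zeta_\tau}(w)=wY^{\zeta_\tau(w)}$, so the chain rule yields $\frac{\ud}{\ud\tau}\widehat{K}(wY^{\zeta_\tau(w)})|_{\tau=0}=\YL\widehat{K}(\overline{w})\cdot\partial_\tau\zeta_\tau(w)|_{\tau=0}$. Combining the definition \eqref{eq:def-zt} of $\zeta_\tau$ with $\partial_\tau w_\tau|_{\tau=0}=-\psi(\zero)x(w)Z$, a direct chain-rule computation gives $\partial_\tau\zeta_\tau(w)|_{\tau=0}=\psi(w)-\psi(\zero)-\psi(\zero)x(w)\partial_z\zeta(w)$, which is \eqref{eq:grrab-small}.

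For the bound \eqref{eq:grrab-small-ineq}, set $f(w):=\psi(w)-\psi(\zero)-\psi(\zero)x(w)\partial_z\zeta(w)$ and symmetrize via $w\mapsto\theta(w)$, which preserves $A_{s',s}$ with unit Jacobian, to write $2(G^{s',s}_{\zeta,\psi})'(0)(\zero)$ as
$$\int_{A_{s',s}}\![f(w)+f(\theta w)]\YL\widehat{K}(\overline{w})\,\ud w + \int_{A_{s',s}}\!f(\theta w)[\YL\widehat{K}(\overline{\theta w})-\YL\widehat{K}(\overline{w})]\,\ud w.$$
For the first integral, Lemma~\ref{lem:a-Taylor} applied to $\psi$ at $\zero$ causes the linear terms $\pm x(w)\nabla_\zeta\psi(\zero)$ to cancel, giving $|\psi(w)+\psi(\theta w)-2\psi(\zero)|\lesssim_L\|w\|_\Kor^2\|\psi\|_{W_\zeta(B)}$; the first-order form of the same lemma applied to $\partial_z\zeta$ gives $|\partial_z\zeta(w)-\partial_z\zeta(\theta w)|\lesssim_{L,C}\|w\|_\Kor$, so $|f(w)+f(\theta w)|\lesssim_{L,C}\|w\|_\Kor^2\|\psi\|_{W_\zeta(B)}$. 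Combined with $|\YL\widehat{K}(\overline{w})|\lesssim_L\|w\|_\Kor^{-4}$ (using the intrinsic Lipschitz comparison $\|\overline{w}\|_\Kor\approx_L\|w\|_\Kor$), Lemma~\ref{lem:polar} bounds this contribution by a constant times $\|\psi\|_{W_\zeta(B)} s$.

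The main obstacle is the second integral, where the noncommutativity of $\HH$ breaks the naive symmetry $\overline{\theta w}=\theta(\overline{w})$: instead, these two points lie on the same horizontal $Y$-line through $\theta(w)$ but differ by $Y^{\zeta(w)+\zeta(\theta w)}$. I would bound $|\zeta(w)+\zeta(\theta w)|\lesssim_{L,C}\|w\|_\Kor^2$ via the final assertion of Lemma~\ref{lem:a-Taylor} applied to $\zeta$, which uses precisely the $W'_\zeta$-bounds provided by hypothesis. Combining this with the mean value theorem, the $(-5)$-homogeneity of $\YL^2\widehat{K}$, the fact that the relevant $Y$-segment stays at Kor\'anyi distance $\approx_L\|w\|_\Kor$ from $\zero$ (a short convexity check on $V_0$), and the $\HH$-evenness identity $\YL\widehat{K}(\theta(\overline{w}))=\YL\widehat{K}(\overline{w})$, I obtain $|\YL\widehat{K}(\overline{w})-\YL\widehat{K}(\overline{\theta w})|\lesssim_{L,C}\|w\|_\Kor^{-3}$. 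Paired with the crude bound $|f(\theta w)|\lesssim_{L,C}\|w\|_\Kor\|\psi\|_{W_\zeta(B)}$ from Lemma~\ref{lem:a-Taylor} and integration via Lemma~\ref{lem:polar}, this gives another contribution of order $\|\psi\|_{W_\zeta(B)} s$, completing the proof.
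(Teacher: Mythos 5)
Your proof is correct and follows essentially the same two-step strategy as the paper's: differentiate \eqref{eq:grr-translate} under the integral sign to get \eqref{eq:grrab-small}, then bound via symmetrization under $\theta$ together with the Taylor estimates from Lemma~\ref{lem:a-Taylor}. Your algebraic rearrangement of $2I$ into the pair of integrals $\int[f+f\circ\theta]N$ and $\int(f\circ\theta)[N\circ\theta-N]$ is equivalent to the paper's even-odd decomposition $I=\int f^\sE N^\sE+f^\sO N^\sO$, and your inline argument for $|\YL\widehat{K}(\overline{w})-\YL\widehat{K}(\overline{\theta w})|\lesssim_{L,C}\|w\|_\Kor^{-3}$ is precisely the paper's Lemma~\ref{lem:kernel-sym-bound}, which you could have cited directly.
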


To prove this lemma, we will need the following bound, which will also be used in Section~\ref{sec:second-deriv}. Recall that $\theta(x,y,z)=(-x,-y,z)$ is rotation around the $z$--axis.

\begin{lemma} \label{lem:kernel-sym-bound}
  Let $j\ge 0$ and let $M$ be a smooth $(-j)$--homogeneous kernel. Then
  \begin{align*}
    |M(\Psi_a(w)) - (-1)^j M(\Psi_a(\theta(w)))| \lesssim_{M, L} \|\zeta\|_{W'_\zeta(B)} \|w\|_\Kor^{1-j}, \qquad \forall w \in B(\zero,r).
  \end{align*}
\end{lemma}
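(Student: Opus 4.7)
The plan is to exploit the identity $M\circ\theta = (-1)^j M$, which follows from $M$ being $(-j)$--homogeneous and $\theta = s_{-1}$, in order to rewrite the difference as $M(p) - M(q)$ with $p,q$ lying on a common $\paramY$--coset, and then to apply a one-dimensional mean value argument in the $\YL$--direction controlled by the Taylor estimate of Lemma~\ref{lem:a-Taylor}.

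First I would compute, using that $\theta$ is a group homomorphism of $\HH$, that
\[
  \theta(\Psi_\zeta(\theta(w))) = \Psi_\zeta(w) \cdot Y^{-(\zeta(w)+\zeta(\theta(w)))},
\]
so the quantity to bound equals $|M(p) - M(pY^{-t})|$ with $p = \Psi_\zeta(w)$ and $t = \zeta(w)+\zeta(\theta(w))$. Applying Lemma~\ref{lem:a-Taylor} to $\zeta$ at $p_0 = \zero$ (using $\zero\in\Gamma_\zeta$ and $\zeta(\zero)=0$) gives $|t|\lesssim_L \|w\|_\Kor^2\|\zeta\|_{W'_\zeta(B)}$, since both $\|\nabla_\zeta^2\zeta\|_{L_\infty(B)}$ and $\|\partial_z\zeta\|_{L_\infty(B)}$ are dominated by $\|\zeta\|_{W'_\zeta(B)}$. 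By the fundamental theorem of calculus along the $\YL$--orbit through $p$,
\[
  |M(p)-M(pY^{-t})| \le |t| \sup_{s\in[-|t|,|t|]} |\YL M(pY^s)|,
\]
and Lemma~\ref{lem:deriv-homog} together with the smoothness of $M$ yields $|\YL M(v)| \lesssim_M \|v\|_\Kor^{-j-1}$.

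The key remaining step is a uniform lower bound $\|pY^s\|_\Kor \gtrsim \|w\|_\Kor$ on the segment. The estimate is invariant under replacing $w$ by $\Pi(w)$ (because $\Psi_\zeta$ is constant on $\paramY$--cosets and $\Pi\circ\theta = \theta\circ\Pi$), so one can reduce to $w \in V_0$, say $w=(x,0,z)$, and write $pY^s = wY^\tau = (x,\tau,z+x\tau/2)$ for $\tau = \zeta(w)+s$. A direct expansion gives
\[
  \|wY^\tau\|_\Kor^4 = \|w\|_\Kor^4 + 6x^2\tau^2 + \tau^4 + 16xz\tau,
\]
and Young's inequality with optimal split $|16xz\tau| \le 6x^2\tau^2 + \tfrac{32}{3}z^2$ produces $\|wY^\tau\|_\Kor^4 \ge \tfrac{1}{3}\|w\|_\Kor^4$ uniformly in $\tau \in \R$. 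Combining the three estimates yields $|M(p)-M(q)| \lesssim_{M,L} |t|\,\|w\|_\Kor^{-j-1} \lesssim \|\zeta\|_{W'_\zeta(B)}\|w\|_\Kor^{1-j}$, as required.

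The main obstacle I anticipated was the need to control how close the $\YL$--segment between $p$ and $q$ can come to the singularity of $\YL M$ at $\zero$: unlike the Euclidean case, where translation invariance trivially guarantees that horizontal shifts preserve the distance to the origin, in $\HH$ the Koranyi norm along a $\paramY$--coset fluctuates through the twisting cross term $16xz\tau$. The substantive content of the proof is then the elementary but crucial Young-type algebraic inequality, which sidesteps any compactness or case analysis and delivers a universal constant.
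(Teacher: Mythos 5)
Your proof is correct and follows essentially the same route as the paper: use the $(-j)$--homogeneity to write the discrepancy as a difference of $M$ at two points on a common $\paramY$--coset, bound the separation $|\zeta(w)+\zeta(\theta(w))|$ via Lemma~\ref{lem:a-Taylor} (the odd part of $\zeta$ cancels to first order), and then apply the Mean Value Theorem in the $\YL$--direction together with the $(-j-1)$--homogeneity of $\YL M$.

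The one place you genuinely diverge is the lower bound on the Kor\'anyi norm along the horizontal segment. The paper gets $\|k(w)\|_\Kor \approx_L \|w\|_\Kor$ by appealing (tersely) to the intrinsic Lipschitz condition, as it did in \eqref{eq:m-dist}; your argument instead proves the clean, $\zeta$--independent algebraic identity
\[
\|wY^\tau\|_\Kor^4 = \|w\|_\Kor^4 + 6x^2\tau^2 + \tau^4 + 16xz\tau
\]
and then a Young inequality gives $\|wY^\tau\|_\Kor^4 \ge \tfrac{1}{3}\|w\|_\Kor^4$ for \emph{every} $\tau\in\R$, with a universal constant. This is sharper and more self-contained than what the paper writes (it dispenses with the bounded-slope/Lipschitz reasoning entirely for the lower bound), though of course it delivers the same final estimate since the $L$--dependence re-enters through Lemma~\ref{lem:a-Taylor}. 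Your conjugation identity $\theta(\Psi_\zeta(\theta(w))) = \Psi_\zeta(w)\,Y^{-(\zeta(w)+\zeta(\theta(w)))}$ is also correct (it is exactly the image under $\theta$ of the pair the paper compares, and $\theta=s_{-1}$ is a group automorphism), and your reduction to $w\in V_0$ by $\Pi$--invariance is legitimate since both $M\circ\Psi_\zeta$ and $M\circ\Psi_\zeta\circ\theta$ are constant on $\paramY$--cosets.
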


\begin{proof}
  
  Since $M$ is $(-j)$--homogeneous, we note that $M(\theta(p))=M(s_{-1}(p))=(-1)^jM(p)$ for all $p$.
  As in the proof of Lemma~\ref{lem:riesz-converge}, for all $w\in V_0$, the points $\theta(\Psi_\zeta(w))=\theta(w) Y^{-\zeta(w)}$ and $\Psi_\zeta(\theta(w))=\theta(w)Y^{\zeta(\theta(w))}$ lie in the same coset of $\paramY$. Furthermore, by Lemma~\ref{lem:a-Taylor}, if $w\in B(\zero, r)$, then
  $$\zeta(w) = x(w) \nabla_\zeta \zeta(\zero) + O_L(C \|w\|_{\Kor}^2),$$
  so the distance between these points satisfies
  \begin{align}
    |\zeta(w) + \zeta(\theta(w))| = (x(w) - x(w)) \nabla_\zeta \zeta(\zero) + O_L(C \|w\|_{\Kor}^2) \lesssim_L C \|w\|_{\Kor}^2. \label{eq:ab-sym}
  \end{align}
  
  By the Mean Value Theorem, there is a point $k(w)$ lying on the horizontal line between $\theta(\Psi_\zeta(w))$ and $\Psi_\zeta(\theta(w))$ such that 
  \begin{multline*}
    |M(\Psi_\zeta(w)) - (-1)^j M(\Psi_\zeta(\theta(w)))| = |M(\theta(\Psi_\zeta(w))) - M(\Psi_\zeta(\theta(w)))| \\
    = |\YL M(k(w))| |\zeta(w) + \zeta(\theta(w))| \lesssim_L C \|w\|_{\Kor}^2 |\YL M(k(w))|.
  \end{multline*}
  Since $\zeta$ is intrinsic Lipschitz with constant depending on $L$, we have  
  $$\|k(w)\|_\Kor \approx_L \|\Pi(w)\|_\Kor\le 2\|w\|_\Kor.$$
  By Lemma~\ref{lem:deriv-homog}, $\YL M$ is $(-j-1)$--homogeneous, so
  \begin{align*}
    |M(\Psi_a(w)) - (-1)^j M(\Psi_a(\theta(w)))| \lesssim_{L,M} C \|w\|_{\Kor}^2 \|k(w)\|_{\Kor}^{-j-1} \lesssim_L C \|w\|_\Kor^{1-j}
  \end{align*}
  as desired.
\end{proof}

Now we prove Lemma~\ref{lem:grrab-small}. We take advantage of the symmetry of $A_{s',s}$ by decomposing functions into odd and even parts. For any function $f \from \HH \to \R$, we have the following even-odd decomposition:
\begin{align}
  f(w) = \frac{1}{2} (f(w) + f(\theta(w))) + \frac{1}{2} (f(w) - f(\theta(w))) =: f^\sE(w) + f^\sO(w). \label{eq:pointwise-even-odd}
\end{align}
Let $E \subseteq \HH$ be a subset for which $\theta(E) = E$.  As $\int_E f(w) \ud w = \int_E f(\theta(w)) \ud w$, we get that $\int_E f^\sO(w) \ud w = 0$ and so if $f$ is integrable on $E$, then
\begin{align}
  \int_E f(w) \ud w = \int_E f^\sE(w) \ud w. \label{eq:int-even-odd}
\end{align}
Moreover, if $g\from \HH \to \R$ and $fg$ is integrable on $E$, then
\begin{equation}\label{eq:even-odd-product}
  \int_E f g\ud w = \int_E (f^\sE + f^\sO)(g^\sE + g^\sO) \ud w = \int_E f^\sE g^\sE + f^\sO g^\sO \ud w.
\end{equation}

\begin{proof}[Proof of Lemma~\ref{lem:grrab-small}]
  By \eqref{eq:grr-translate},
  \begin{equation}\label{eq:atau-cons}
    (G^{s',s}_{\zeta,\psi})'(0)(\zero) = \frac{\ud}{\ud \tau}\left[\int_{A_{s',s}} \widehat{K}(\Psi_{\zeta_\tau}(w))\ud w\right]_{\tau=0} = \int_{A_{s',s}} \partial_\tau[\zeta_\tau(w)](0) \cdot \YL\widehat{K}(\overline{w})\ud w.
  \end{equation}
  We differentiate \eqref{eq:def-zt} to get 
  \begin{align*}
    \partial_\tau[\zeta_\tau(w)]
    &= - \psi(\zero) x(w) \partial_z [\zeta+\tau \psi](w_\tau) + \psi(w_\tau) - \psi(\zero)
  \end{align*}
  where $w_\tau=w - \tau \psi(\zero) x(w) Z$ is as in \eqref{eq:def-wt}. Let 
  $$m(w) = \partial_\tau[\zeta_\tau(w)](0) = \psi(w) - \psi(\zero) - \psi(\zero) x(w) \partial_z \zeta(w),$$ so that
  $$(G^{s',s}_{\zeta,\psi})'(0)(\zero) = \int_{A_{s',s}} m(w) \YL\widehat{K}(\overline{w})\ud w;$$
  this is \eqref{eq:grrab-small}.

  For $w\in V_0$, let $N_{\YL \widehat{K}}(w) :=\YL \widehat{K}(\overline{w})$. We will estimate $I:=(G^{s',s}_{\zeta,\psi})'(0)(\zero)$ by decomposing $m$ and $N_{\YL \widehat{K}}$ into odd and even parts.
  
  By \eqref{eq:even-odd-product},
  $$I = \int_{A_{s',s}} m^\sE(w) N_{\YL \widehat{K}}^{\sE}(w) + m^\sO(w) N_{\YL \widehat{K}}^{\sO}(w)\ud w.$$
  Let $w\in B(\zero,r)$ and let $\kappa=\|w\|_\Kor$. Note that $0\le \kappa\le r\le 1$.
  By Lemma~\ref{lem:a-Taylor},
  $$\psi(w) - \psi(\zero) = x(w) \nabla_{\zeta} \psi(\zero) + O(\|\psi\|_{W_\zeta(B)} \kappa^2),$$
  and
  $$\partial_z \zeta(w) = \partial_z \zeta(\zero) + O\left(\|\partial_z^2 \zeta\|_{L_\infty(B)}\kappa^2 + \|\nabla_\zeta \partial_z \zeta\|_{L_\infty(B)}\kappa\right) = \partial_z \zeta(\zero) + O(C\kappa).$$
  Therefore,
  \begin{multline}\label{eq:m-affine}
    m(w) = x(w) \nabla_{\zeta} \psi(\zero) - \psi(\zero) x(w) \partial_z \zeta(\zero) + O(\|\psi\|_{W_\zeta(B)} \kappa^2) + O(\psi(\zero) x(w) C \kappa) \\
    = x(w) \nabla_{\zeta} \psi(\zero) - \psi(\zero) x(w) \partial_z \zeta(\zero) + O\big(\|\psi\|_{W_\zeta(B)}(1 + C) \kappa^2\big).
  \end{multline}
  Thus $|m^\sE(w)|\lesssim_C \|\psi\|_{W_\zeta(B)} \kappa^2$.
  
  Similarly, $\psi(w) - \psi(\zero) = O(\|\psi\|_{W_\zeta(B)} \kappa)$ and $\partial_z\zeta = O(\|\zeta\|_{W'_\zeta(B)})$, so
  \begin{equation}\label{eq:m-zero}
    m(w) = O(\|\psi\|_{W_\zeta(B)} \kappa) + O(\psi(0) \kappa \|\zeta\|_{W'_\zeta(B)}) = O(\|\psi\|_{W_\zeta(B)}(1 + C) \kappa),
  \end{equation}
  and $|m^\sO(w)|\lesssim_C \|\psi\|_{W_\zeta(B)} \kappa$. Since $\YL \widehat{K}$ is $-4$--homogeneous, $|N_{\YL \widehat{K}}^{\sE}(w)| \lesssim \kappa^{-4}$. By Lemma~\ref{lem:kernel-sym-bound}, $|N_{\YL \widehat{K}}^{\sO}(w)| \lesssim_C \kappa^{-3}$.

  Therefore,
  \begin{equation*}
    |m^\sE(w) N_{\YL \widehat{K}}^{\sE}(w) + m^\sO(w) N_{\YL \widehat{K}}^{\sO}(w)|\lesssim_C 
    \|\psi\|_{W_\zeta(B)} \kappa^{-2}.
  \end{equation*}
  By Lemma~\ref{lem:polar},
  $$|I|\lesssim_C \int_{s'}^s \|\psi\|_{W_\zeta(B)} \kappa^{-2}\cdot \kappa^2 \ud \kappa \le \|\psi\|_{W_\zeta(B)} s,$$
  as desired.
\end{proof}

Finally, we prove Lemma~\ref{lem:G'-formula-and-bound-rev}.
\begin{proof}[Proof of Lemma~\ref{lem:G'-formula-and-bound-rev}]
  For $w\in V_0$ and $\tau\in \R$, let $\psi_\tau(w):= \psi(w_\tau)$, where $w_\tau$ is as in \eqref{eq:def-wt}. Let $t\in \R$. Then by \eqref{eq:def-zt},
  $$\zeta_{\tau + t}(w) = 
  (\zeta + \tau \psi + t\psi)(w_{\tau + t}) - (\zeta + \tau \psi + t\psi)(\zero) = (\zeta_\tau + t \psi_\tau)(Y^{t\psi(\zero)} w) - t \psi(\zero),$$
  so by Lemma~\ref{lem:ilg-translations},
  $$\Gamma_{\zeta_{\tau+t}} = Y^{-t\psi(\zero)} \Gamma_{\zeta_\tau + t \psi_\tau}.$$
  Therefore, for $s<S$, 
  $$G^{s,S}_{\zeta, \psi}(\tau + t)(\zero)=G^{s,S}_{\zeta_\tau,\psi_\tau}(t)(\zero).$$
  Differentiating with respect to $t$ gives
  $$(G^{s,S}_{\zeta,\psi})'(\tau)(\zero) = (G^{s,S}_{\zeta_\tau,\psi_\tau})'(0)(\zero).$$

  Let $0<s'<s \le r$ and let $r\le S <S'$. Let $B'=B(\zero,(2L+3) r)$.
  If $\tau$ is sufficiently small, then $\|\nabla_{\zeta_\tau}\zeta_\tau\|_\infty < L+1$, $\|\zeta_\tau\|_{W'_{\zeta_\tau}(B')} \le C+1$, and $\|\psi_\tau\|_{W_{\zeta_\tau}(B')} \le 2 \|\psi\|_{W_{\zeta}(B)}$.
  Then Lemma~\ref{lem:grrab-large} and Lemma~\ref{lem:grrab-small} imply that
  \begin{equation}\label{eq:G'-unif-Cauchy}
    \left|(G^{s',S'}_{\zeta_\tau, \psi_\tau})'(0)(\zero) - (G^{s,S}_{\zeta_\tau, \psi_\tau})'(0)(\zero)\right| = \left|(G^{s',s}_{\zeta_\tau, \psi_\tau})'(0)(\zero) + (G^{S, S'}_{\zeta_\tau, \psi_\tau})'(0)(\zero)\right| \lesssim_{L,C} \|\psi\|_\infty s + \|\psi\|_{W_\zeta(B)} S^{-1}.
  \end{equation}
  That is, $(G^{s,S}_{\zeta, \psi})'(\tau)(\zero) = (G^{s,S}_{\zeta_\tau, \psi_\tau})'(0)(\zero)$ is Cauchy as $s\to 0$ and $S\to \infty$, with bounds independent of $\tau$.
  Thus, $(G^{s,S}_{\zeta,\psi})'(\tau)$ converges uniformly as $s\to 0$ and $S\to \infty$. This lets us pass the derivative under the limit, so by Lemma~\ref{lem:riesz-converge},
  $$G_{\zeta, \psi}'(\tau) = \frac{\ud}{\ud \tau} \lim_{\substack{s\to 0 \\ S\to \infty}} G^{s,S}_{\zeta, \psi}(\tau) = \lim_{\substack{s\to 0 \\ S\to \infty}} (G^{s,S}_{\zeta, \psi})'(\tau).$$
  Finally, \eqref{eq:G'-unif-Cauchy} implies that
  $$|G_{\zeta,\psi}'(0)(p) - (G^{s,S}_{\zeta,\psi})'(0)(p)| \lesssim_{L,C} \|\psi\|_\infty s + \|\psi\|_{W_\zeta(B)},$$
  as desired.
\end{proof}

This implies Proposition~\ref{prop:perturb-formula} and thus part (\ref{it:first-upper}) of Lemma~\ref{lem:mainBounds}.

\section{Approximating by a planar singular integral}\label{sec:approx-by-linear}

For any vertical plane $P\subset \HH$ with nonzero slope, let $\lambda_P\from \HH\to \R$ be the affine function such that $\Gamma_{\lambda_P}=P$. For any bounded smooth function $\phi\from \HH \to \R$ which is constant on cosets of $\paramY$, let $H_{P,\phi}\from P\to \R$ be the function
$$H_{P,\phi}(p) := G'_{\lambda_P,\phi}(0)(p)$$
for any $p\in P$. By Lemma~\ref{lem:G'-formula-and-bound-rev} and Lemma~\ref{lem:grrab-small}, $H_{P,\phi}(p)$ exists and
$$H_{P,\phi}(p) = \lim_{\substack{r\to 0 \\ R\to \infty}} (G^{r,R}_{\lambda_P,\phi})'(0)(p) = \lim_{\substack{r\to 0 \\ R\to \infty}} \int_{p A_{r,R}} (\phi(q)-\phi(p)) \YL \widehat{K}(p^{-1} \Pi_P(q)) \ud q;$$
recall that we denote this limit by
\begin{equation}\label{eq:H-as-pv-v0}
  H_{P,\phi}(p) = \pv(p) \int_{p V_0} (\phi(q)-\phi(p))\YL \widehat{K}(p^{-1} \Pi_P(q)) \ud q.
\end{equation}
The functions which are constant on cosets of $\paramY$ are naturally identified with functions on $P$, so we can view $\phi\mapsto H_{P,\phi}$ as a singular integral operator acting on functions from $P$ to $\R$.
It is translation-invariant in the sense that if $P_0$ goes through $\zero$, $v_0\in P_0$, and $\hat{\phi}(v)=\phi(v + v_0)$ for all $v\in P_0$, then 
$$H_{P_0,\hat{\phi}}(v) = H_{P_0, \phi}(v + v_0)$$
for all $v\in P_0$.

In this section, we will show that when $P$ is tangent to $\Gamma_{f_i}$ at $p$, then $H_{P,\nu_i}$ approximates  $G_{f_i,\nu_i}'(0)$ in a neighborhood of $p$. We will use this to bound how quickly $G_{f_i,\nu_i}'(0)$ can vary, and in the next section, we will use this approximation to bound the correlation between $G_{f_i,\nu_i}'(0)$ and $G_{f_j,\nu_j}'(0)$ when $i\ne j$.

After rescaling $f_i$ and $\nu_i$ as in Section~\ref{subsec:rescaling}, it suffices to consider functions $\alpha$ and $\gamma$ that satisfy the conclusion of Lemma~\ref{lem:gamma-deriv-bounds}, i.e., satisfy \eqref{eq:gamma-jk-bounds} and \eqref{eq:alpha-jk-bounds} for some $c>0$. Many of the constants in the following bounds will depend on the value of $c$, so we omit $c$ from the subscripts for the rest of this section. We will prove the following lemmas.

\begin{lemma}[$H$ approximates $G'$]\label{lem:tildeH-approx-dupe}
  Let $\epsilon=\frac{1}{10}$. Let $\alpha$ and $\gamma$ satisfy Lemma~\ref{lem:gamma-deriv-bounds} for some sufficiently large $\rho$. Let $p\in \Gamma_\alpha$ and let $P$ be the tangent plane to $\Gamma_\alpha$ at $p$. For any $q\in P$ such that $d_\Kor(p,q)\le \rho^\epsilon$, 
  $$\left|G_{\alpha, \gamma}'(0)(q) - H_{P, \gamma}(q)\right| \lesssim \rho^{-\epsilon}.$$
  Furthermore, for any $0<r\le 1\le R$,
  \begin{equation}\label{eq:H-unif}
    \left|H_{P,\gamma}(q) - (G_{\lambda_P, \gamma}^{r,R})'(0)(q)\right| \lesssim A^{-1}(R^{-1} + r).
  \end{equation}
\end{lemma}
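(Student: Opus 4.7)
The plan is to compare $G'_{\alpha,\gamma}(0)(q)$ and $H_{P,\gamma}(q) = G'_{\lambda_P,\gamma}(0)(q)$ through their truncated versions at intermediate scales $s = \rho^{-\epsilon}$ and $S = \rho^\epsilon$, exploiting that Lemma~\ref{lem:a-Taylor} together with the derivative bounds of Lemma~\ref{lem:gamma-deriv-bounds} gives the Taylor estimate $|\alpha(v) - \lambda_P(v)| \lesssim A^{-1}\rho^{-1}d_\Kor(p,v)^2$ on the ball of radius $\rho^\epsilon$ around $p$, i.e.\ on the range of $v$'s that actually matter.

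For the bound \eqref{eq:H-unif}, I would simply apply Lemma~\ref{lem:G'-formula-and-bound-rev} with $\zeta = \lambda_P$ and $\psi = \gamma$. The function $\lambda_P$ is affine with $\nabla_{\lambda_P}\lambda_P \equiv \nabla_\alpha \alpha(p)$ of absolute value at most $1$ and $\partial_z\lambda_P = 0$, so the Lipschitz constant $L$ is at most $1$ and $\|\lambda_P\|_{W'_{\lambda_P}} = 0$. The only nontrivial check is $\|\gamma\|_{W_{\lambda_P}(B)}\lesssim A^{-1}$; using the identity $\nabla_{\lambda_P} = \nabla_\alpha + (\alpha - \lambda_P)\partial_z$ and the Taylor bound on $|\alpha - \lambda_P|$, this reduces to the bounds of Lemma~\ref{lem:gamma-deriv-bounds} for derivatives of $\gamma$ in $\{\nabla_\alpha, \partial_z\}$ of length at most two.

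For the first inequality I use the triangle decomposition
\begin{align*}
|G'_{\alpha,\gamma}(0)(q) - H_{P,\gamma}(q)|
&\le |G'_{\alpha,\gamma}(0)(q) - (G^{s,S}_{\alpha,\gamma})'(0)(q)| \\
&\quad + |(G^{s,S}_{\alpha,\gamma})'(0)(q) - (G^{s,S}_{\lambda_P,\gamma})'(0)(q)| \\
&\quad + |(G^{s,S}_{\lambda_P,\gamma})'(0)(q) - H_{P,\gamma}(q)|.
\end{align*}
The first and third terms are each $\lesssim A^{-1}\rho^{-\epsilon}$: for the first, I exploit that $G_{\alpha,\gamma}(t)$ is constant on cosets of $\paramY$ to move the base point from $q$ to $\Psi_\alpha(q) \in \Gamma_\alpha$ and then apply Lemma~\ref{lem:G'-formula-and-bound-rev}; for the third I use \eqref{eq:H-unif} proved above with $r := s$ and $R := S$. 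For the middle term I translate by $q^{-1}$ so that the base point becomes $\zero$, and use Lemma~\ref{lem:grrab-small} on the part of the annulus $A_{s,S}$ inside $B(\zero, 1)$ and Lemma~\ref{lem:grrab-large} on the part outside. The difference of integrands is controlled pointwise: $|m_{\tilde\alpha} - m_{\tilde\lambda_P}| \lesssim A^{-2}\rho^{-1}\|v\|_\Kor$, since $\partial_z\tilde\lambda_P = 0$ and $\|\partial_z\tilde\alpha\|_\infty \lesssim A^{-1}\rho^{-1}$; and $|\YL\widehat{K}(\Psi_{\tilde\alpha}(v)) - \YL\widehat{K}(\Psi_{\tilde\lambda_P}(v))| \lesssim A^{-1}\rho^{-1}\|v\|_\Kor^{-3}$, by the Mean Value Theorem applied to the $-4$--homogeneous kernel $\YL\widehat{K}$ (so that $\YL^2\widehat{K}$ is $-5$--homogeneous) combined with the Taylor bound $|\tilde\alpha - \tilde\lambda_P|\lesssim A^{-1}\rho^{-1}\|v\|_\Kor^2$. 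Polar integration via Lemma~\ref{lem:polar} then yields a middle-term bound of order $A^{-2}\rho^{-1}(|\log s| + S)\lesssim \rho^{-1+\epsilon}\log\rho$, which for $\epsilon = 1/10$ and $\rho$ large is comfortably dominated by $\rho^{-\epsilon}$.

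The main technical obstacle will be the coordination of translations and symmetries. Because $q\in P$ but $q\notin \Gamma_\alpha$, translating by $q^{-1}$ sends $\lambda_P$ to a function vanishing at $\zero$ but leaves $\tilde\alpha(\zero)$ equal to the small offset $\alpha(q) - \lambda_P(q)$; one must either absorb this into an additional vertical translation (which shifts the base along a coset of $\paramY$, to which $G'$ is insensitive) or carry through the offset, and verify that Lemma~\ref{lem:grrab-small} still applies in the form used. The noncommutativity of $\HH$ also forces a careful even/odd cancellation argument in the bound on the integrand difference, analogous to the one in Lemma~\ref{lem:grrab-small} and Lemma~\ref{lem:kernel-sym-bound}, to ensure that the naive $\|v\|_\Kor^{-3}$ upper bound is improved enough to beat the logarithmic divergence of the annular integral over $A_{s,S}$.
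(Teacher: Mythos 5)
Your argument for \eqref{eq:H-unif} is correct and matches the paper's: apply Lemma~\ref{lem:G'-formula-and-bound-rev} with $\zeta=\lambda_P$, $\psi=\gamma$, and verify $\|\gamma\|_{W_{\lambda_P}(B)}\lesssim A^{-1}$ via $\nabla_{\lambda_P}=\nabla_\alpha+(\alpha-\lambda_P)\partial_z$; the paper packages exactly this check into Lemma~\ref{lem:horiz-gamma-bounds}.

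For the first inequality, the obstacle you flag at the end is a genuine gap that breaks your middle-term estimate as written. The truncated derivative $(G^{s,S}_{\zeta,\gamma})'(0)(q)$ is an integral over the annulus $\Psi_\zeta(q)A_{s,S}$, which is $\overline{q}\,A_{s,S}$ for $\zeta=\alpha$ (with $\overline{q}=\Psi_\alpha(q)$) but $q\,A_{s,S}$ for $\zeta=\lambda_P$. Since $\overline{q}=qY^t$ with $t=\alpha(q)-\lambda_P(q)=O(\rho^{-1}\kappa^2)\ne 0$, there is no left-translation normalizing both simultaneously: translating by $\overline{q}^{-1}$ gives $\tilde\alpha(\zero)=0$ but $\tilde\lambda_P(\zero)=-t$, and translating by $q^{-1}$ gives the reverse. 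Because Lemma~\ref{lem:grrab-small} requires $\zeta(\zero)=0$, you cannot simply subtract its formula for the two choices of $\zeta$ over a common annulus as your pointwise estimate assumes; the domains differ by a shear and one of the two integrands has no representation of that form at $\zero$. Moreover the Taylor bound $|\tilde\alpha(v)-\tilde\lambda_P(v)|\lesssim\rho^{-1}\|v\|_\Kor^2$ driving your kernel comparison is only valid when the comparison plane is the tangent at $\overline{q}$; against the tangent at $p$ one only has $\rho^{-1}(\kappa+\|v\|_\Kor)^2$, so a $\kappa^2$ term must be tracked. The paper sidesteps all of this by proving your ``middle term'' at the basepoint $\overline{q}\in\Gamma_\alpha$ and against the tangent plane $P_{\overline{q}}$ there (Lemma~\ref{lem:flat-pv-formula}, which is precisely the small-scale/large-scale truncated comparison you sketch, executed where both functions vanish at $\zero$ so Lemma~\ref{lem:grrab-small} applies to each), and then transporting in two cheap steps: Lemma~\ref{lem:moving-basepoint} moves the basepoint from $\overline{q}$ to $q$ along parallel planes at cost $d_\Kor(q,\overline{q})^{1/5}\lesssim(\rho^{-1}\kappa^2)^{1/5}$, and Lemma~\ref{lem:changing-planes} changes slope from $\nabla_\alpha\alpha(\overline{q})$ to $\nabla_\alpha\alpha(p)$ at cost $\lesssim\rho^{-1}\kappa$. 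To complete your argument along your lines, compare against $P_{\overline{q}}$ rather than $P_p$ and then supply the two transport steps explicitly; at that point you will have essentially re-derived the paper's three lemmas.
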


\begin{lemma}[Hölder bounds on $G'_{\alpha,\gamma}$]\label{lem:Fprime-Lipschitz-dupe}
  Let $\epsilon=\frac{1}{10}$. For all $p,q\in \Gamma_\alpha$,
  \begin{equation}\label{eq:Gprime-Holder}
    \left|G_{\alpha, \gamma}'(0)(p) - G_{\alpha, \gamma}'(0)(q)\right| \lesssim d_{\Kor}(p,q)^{\epsilon} + \rho^{-\frac{1}{2}}.
  \end{equation}
\end{lemma}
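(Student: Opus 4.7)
The plan is to split on $d := d_\Kor(p,q)$ and reduce, in the main regime, to a translation-invariant Hölder bound for the planar operator $H_{P,\gamma}$ supplied by Lemma~\ref{lem:tildeH-approx-dupe}, handling the very-small-$d$ regime by a direct truncation argument.

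First I would dispose of the large regime $d \gtrsim 1$: the right-hand side of \eqref{eq:Gprime-Holder} is bounded below by a constant, while Proposition~\ref{prop:perturb-formula} gives $\|G'_{\alpha,\gamma}(0)\|_\infty \lesssim A^{-1} \le 1$, so the triangle inequality already yields the claim. It therefore remains to treat $d \ll 1$, which I would further subdivide into the main regime $\rho^{-1} \lesssim d \lesssim 1$ and the very small regime $d < \rho^{-1}$.

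In the main regime, let $P := P_p$ be the vertical tangent plane to $\Gamma_\alpha$ at $p$ and set $q' := \Pi_P(q)$. Since $G'_{\alpha,\gamma}(0)$ and $H_{P,\gamma}$ (extended by $\paramY$-invariance) are constant on cosets of $\paramY$, we may replace $q$ by $q'$ in the estimate. Lemma~\ref{lem:a-Taylor} applied to $\alpha$, together with the derivative bounds of Lemma~\ref{lem:gamma-deriv-bounds}, gives $d_\Kor(q,q') \lesssim \rho^{-1} d^2$, so $d_\Kor(p,q') \approx d \le \rho^{\epsilon}$ and Lemma~\ref{lem:tildeH-approx-dupe} applies at both points, yielding
\begin{equation*}
\left|G'_{\alpha,\gamma}(0)(p) - G'_{\alpha,\gamma}(0)(q)\right| \le \left|H_{P,\gamma}(p) - H_{P,\gamma}(q')\right| + O(\rho^{-\epsilon}),
\end{equation*}
and the error $\rho^{-\epsilon} \lesssim d^{\epsilon}$ is acceptable when $d \gtrsim \rho^{-1}$. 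The remaining task is to show $|H_{P,\gamma}(p) - H_{P,\gamma}(q')| \lesssim d^{\epsilon}$. By the translation invariance of $\gamma \mapsto H_{P,\gamma}$ on $P$ and the PV formula \eqref{eq:H-as-pv-v0}, after left-translating this difference becomes a singular integral against the $(-4)$-homogeneous kernel $\YL \widehat{K}$ and an $O(d)$ translate of $\gamma$. I would split the domain at scale $d$: on the inner ball of radius $d$, a first-order Taylor expansion of $\gamma$ (from Lemma~\ref{lem:gamma-deriv-bounds}) combined with the symmetrization trick of Lemma~\ref{lem:kernel-sym-bound} kills the leading singularity; on the far tail the smoothness of $\gamma$ and \eqref{eq:H-unif} give the required decay; balancing contributes $\lesssim A^{-1} d^{\epsilon}$.

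In the very small regime $d < \rho^{-1}$, the approximation above is insufficient because $\rho^{-\epsilon} > \rho^{-1/2}$. Here I would split $G'$ at the truncation radius $r := \rho^{-1/2}$. The inner truncation $(G^{0,r}_{\alpha,\gamma})'(0)$ is uniformly $\lesssim A^{-1} r \le \rho^{-1/2}$ by Lemma~\ref{lem:G'-formula-and-bound-rev}, so its variation between $p$ and $q$ is automatically $\lesssim \rho^{-1/2}$. For the outer piece $(G^{r,\infty}_{\alpha,\gamma})'(0)$, after left-translating so that each base point sits at $\zero$ and writing it via the explicit formula \eqref{eq:grrab-large}, the integrands for $p$ and $q$ differ only through an $O(d)$ perturbation of the translated data $(\alpha_p,\gamma_p)$; since the kernel and all relevant derivatives are evaluated at radius $\ge r$, this perturbation contributes at most $O(d/r) = O(d\rho^{1/2}) < \rho^{-1/2}$.

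The main obstacle is the Hölder estimate for $H_{P,\gamma}$ with the precise exponent $\epsilon = 1/10$: although translation invariance reduces it to a classical-looking singular-integral bound with smooth input, the bookkeeping --- balancing the derivative bounds on $\gamma$ from Lemma~\ref{lem:gamma-deriv-bounds} against the kernel singularity, the $\HH$-odd symmetrization cancellation, and the tail bound \eqref{eq:H-unif} --- must be done carefully so that the exponent matches exactly the $\epsilon$ used in Lemma~\ref{lem:tildeH-approx-dupe}.
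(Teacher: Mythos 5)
Your high-level plan — reduce to a Hölder estimate for the planar singular operator $H_{P,\gamma}$ after approximating $G'_{\alpha,\gamma}(0)$ by it, handling the trivial case $d\gtrsim 1$ via the $L_\infty$ bound — is the right idea, but the execution diverges from the paper in two places, one of which contains a real gap.

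First, for the main regime, notice that once you project $q$ to $q'=\Pi_{P_p}(q)\in P_p$, the estimate $|H_{P_p,\gamma}(p)-H_{P_p,\gamma}(q')|\lesssim d_\Kor(p,q')^{1/5}$ is literally Lemma~\ref{lem:moving-basepoint}: writing $P_0 := p^{-1}P_p$, which is a plane through $\zero$ with slope $\nabla_\alpha\alpha(p)$ (hence $|\slope P_0|\le 1$), we have $pP_0=P_p=q'P_0$ because $q'\in P_p$ and $P_0$ is a subgroup. So there is no need to re-derive this via a direct kernel splitting; and if you do re-derive it, beware that your ``split at scale $d$'' does not obviously balance the way you'd want — the paper's proof of Lemma~\ref{lem:moving-basepoint} splits at radius $d^{-1/5}$, not $d$, precisely to trade off the derivative bounds on $\nu(v)=\gamma(\zero)-\gamma(v)-\gamma(q')+\gamma(q'v)$ (via Lemma~\ref{l:delta-1-bound}) against the $(-4)$-homogeneity of $\YL\widehat{K}$.

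Second, and more seriously, the very-small-$d$ regime is both unnecessary and, as sketched, incorrect. It is unnecessary because your split originates in using Lemma~\ref{lem:tildeH-approx-dupe}, which bundles a plane/basepoint change into the approximation and pays a $\rho^{-\epsilon}=\rho^{-1/10}$ error. The paper instead applies Lemma~\ref{lem:flat-pv-formula} \emph{at each of $p$ and $q$ with their own tangent planes $P_p$ and $P_q$}, paying only $\rho^{-1/2}$ at each — which is always absorbed by the right-hand side — and then compares $H_{P_p,\gamma}(p)$ and $H_{P_q,\gamma}(q)$ by inserting the parallel translate $Q$ of $P_p$ through $q$: Lemma~\ref{lem:moving-basepoint} gives $|H_{P_p,\gamma}(p)-H_{Q,\gamma}(q)|\lesssim d^{1/5}$, and Lemma~\ref{lem:changing-planes} combined with Lemma~\ref{lem:a-Taylor} gives $|H_{Q,\gamma}(q)-H_{P_q,\gamma}(q)|\lesssim|\nabla_\alpha\alpha(p)-\nabla_\alpha\alpha(q)|\lesssim d$. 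No regime split is needed. As for the argument you give in the small regime, the key claim — that after left-translating $p$ and $q$ to the origin the data $(\alpha_p,\gamma_p)$ and $(\alpha_q,\gamma_q)$ differ by $O(d)$ so that the outer integral changes by $O(d/r)$ — is false. Writing $q=ph$ with $\|h\|_\Kor=d$, one has $qw=pw\cdot(w^{-1}hw)$, and conjugation by $w$ at Korányi distance $\kappa$ from the origin produces $\|w^{-1}hw\|_\Kor\approx\sqrt{\kappa d}$, which grows without bound in $\kappa$. Thus $|\gamma_p(w)-\gamma_q(w)|$ and $|\alpha_p(w)-\alpha_q(w)|$ are not $O(d)$ uniformly, and the naive estimate for the contribution of the radial tail does not converge without a cut-off and a much more careful accounting — this is exactly the noncommutativity phenomenon the paper flags in the introduction, and the reason the two-formula framework of Section~\ref{sec:perturbations} exists. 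You would essentially be rebuilding that machinery; citing Lemma~\ref{lem:flat-pv-formula} and Lemma~\ref{lem:moving-basepoint} directly, as the paper does, is the correct and far shorter route.
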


We apply Lemma~\ref{lem:Fprime-Lipschitz-dupe} to $G_{f_i,\nu_i}'$ by rescaling.
\begin{cor}[Hölder bounds on $G_{f_i,\nu_i}'$]\label{cor:Fprime-Holder}
  Let $\epsilon>0$ be as above. Let $i\ge 0$, let $r_i=A^{-1}\rho^{-i}$, and let $p,q\in \Gamma_{f_i}$. Then
  \begin{equation}\label{eq:Fprime-Holder}
    \left|G_{f_i,\nu_i}'(0)(p) - G_{f_i,\nu_i}'(0)(q)\right| \lesssim (r_i^{-1}d_{\Kor}(p,q))^{\epsilon} + \rho^{-\frac{1}{2}}.
  \end{equation}
\end{cor}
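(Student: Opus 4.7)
The proof plan is to deduce the corollary from Lemma~\ref{lem:Fprime-Lipschitz-dupe} by a direct rescaling argument, using the scale and translation invariance of the singular integral together with the reduction to normalized functions set up in Section~\ref{subsec:rescaling}. Without loss of generality fix $p \in \Gamma_{f_i}$ as the base point, set $p_0 = p$, and let $\alpha = \alpha_{p_0,i}$ and $\gamma = \gamma_{p_0,i}$ be the rescaled functions defined there. By Lemma~\ref{lem:gamma-deriv-bounds}, if $\rho$ is sufficiently large, then $\alpha$ and $\gamma$ satisfy the hypotheses required for Lemma~\ref{lem:Fprime-Lipschitz-dupe}.

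Next, the crucial observation is that the operator $\zeta \mapsto F_\zeta$ is invariant under the combined left-translation and scaling $L(g) := s_i(p^{-1}g)$ where $s_i = s_{r_i^{-1}}$. Indeed, by Lemmas~\ref{lem:ilg-translations} and~\ref{lem:ilg-scaling}, for every $t \in \R$,
\begin{equation*}
\Gamma_{\alpha + t\gamma} = L(\Gamma_{f_i + t\nu_i}),
\end{equation*}
and the integral defining $F$ in \eqref{eq:def-F} is preserved under $L$ because $\widehat K$ is $(-3)$--homogeneous while the measure $\mu$ of Lemma~\ref{lem:def-dv} is $3$--dimensional Haar. (A routine change of variables in the truncated integrals \eqref{eq:def-F-trunc}, which converge by Lemma~\ref{lem:G'-formula-and-bound-rev}, confirms this at the level of principal values.) Hence $G_{\alpha,\gamma}(t)(L(v)) = G_{f_i,\nu_i}(t)(v)$ for all $v$ and $t$, and differentiating at $t=0$ yields
\begin{equation*}
G'_{\alpha,\gamma}(0)(L(v)) = G'_{f_i,\nu_i}(0)(v), \qquad \forall v \in \HH.
\end{equation*}

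Finally, I apply Lemma~\ref{lem:Fprime-Lipschitz-dupe} to the points $L(p) = \zero \in \Gamma_\alpha$ and $L(q) = s_i(p^{-1}q) \in \Gamma_\alpha$. Since $s_i$ scales the Korányi distance by $r_i^{-1}$,
\begin{equation*}
d_\Kor(L(p), L(q)) = d_\Kor(\zero, s_i(p^{-1}q)) = r_i^{-1} d_\Kor(p, q),
\end{equation*}
and Lemma~\ref{lem:Fprime-Lipschitz-dupe} gives
\begin{equation*}
\bigl|G'_{f_i,\nu_i}(0)(p) - G'_{f_i,\nu_i}(0)(q)\bigr| = \bigl|G'_{\alpha,\gamma}(0)(L(p)) - G'_{\alpha,\gamma}(0)(L(q))\bigr| \lesssim (r_i^{-1} d_\Kor(p,q))^\epsilon + \rho^{-\frac{1}{2}},
\end{equation*}
as required. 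The only non-cosmetic step is verifying the scale-invariance of $G'_{\zeta,\psi}(0)$ under $L$, but since the truncations $G^{r,R}_{\zeta,\psi}$ transform transparently and converge uniformly in the relevant parameter regime (Lemma~\ref{lem:G'-formula-and-bound-rev}), this presents no real obstacle. Both functions appearing in the inequality are constant on cosets of $\paramY$, so it is immaterial whether one regards $p,q$ as points on $\Gamma_{f_i}$ or as elements of $V_0$ via $\Pi$.
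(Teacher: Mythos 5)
Your proposal is correct and follows essentially the same route as the paper: rescale $f_i,\nu_i$ by a factor $r_i^{-1}$, invoke the scale- (and translation-) invariance of $\zeta\mapsto G'_{\zeta,\psi}(0)$, and then apply Lemma~\ref{lem:Fprime-Lipschitz-dupe} to the rescaled pair. The one cosmetic difference is that you choose the base point $p_0 = p$ in the normalization of Section~\ref{subsec:rescaling}, so that your $\alpha,\gamma$ involve both a left-translation and a dilation (and $L(p)=\zero$), whereas the paper's proof takes $\alpha(v) = r_i^{-1} f_i(s_i^{-1}(v))$, $\gamma(v)=r_i^{-1}\nu_i(s_i^{-1}(v))$ with no translation (equivalently $p_0=\zero$, which lies on $\Gamma_{f_i}$ since $f_i$ vanishes on $\partial U$); this avoids the need to track the translation but is otherwise the same. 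Your reliance on the asserted invariance $G'_{\alpha,\gamma}(0)\circ L = G'_{f_i,\nu_i}(0)$ is justified in the same way the paper justifies it elsewhere (left-invariance and $(-3)$-homogeneity of the kernel against the $3$-Haar measure $\mu$, with uniform convergence of truncations from Lemma~\ref{lem:G'-formula-and-bound-rev} allowing one to pass the change of variables through the principal value), so no gap there.
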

\begin{proof}
  Let $g=G_{f_i,\nu_i}'(0)$.
  Let $s_i=s^{-1}_{r_i}$. Let $\alpha(p) = r_i^{-1} f_i(s_i^{-1}(p))$ and $\gamma(p)=r_i^{-1} \nu_i(s_i^{-1}(p))$. These satisfy Lemma~\ref{lem:gamma-deriv-bounds} and $g(p)=G_{\alpha,\gamma}'(0)(s_i(p))$. If $p,q\in \Gamma_{f_i}$, then $s_i(p),s_i(q)\in \Gamma_\alpha$, so, by Lemma~\ref{lem:Fprime-Lipschitz-dupe},
  $$\left|g(p) - g(q)\right| = \left|G_{\alpha,\gamma}'(0)(s_i(p)) - G_{\alpha,\gamma}'(0)(s_i(q))\right| \lesssim (r_i^{-1}d_{\Kor}(p,q))^{\epsilon} + \rho^{-\frac{1}{2}}.$$
\end{proof}

The proofs of Lemmas~\ref{lem:tildeH-approx-dupe} and \ref{lem:Fprime-Lipschitz-dupe} are based on the following bounds.

\begin{lemma}\label{lem:horiz-gamma-bounds}
  Let $\alpha$ and $\gamma$ satisfy Lemma~\ref{lem:gamma-deriv-bounds}. Let $C>0$ and let $P$ be a vertical plane with $|\slope(P)|\le C$. Let $W=X+\slope(P) Y$.
  Let $\nabla_P = \nabla_{\lambda_P}$. Let $i,j\ge 0$ and $i+j\le 3$. Then for any $p \in P$,
  \begin{equation}\label{eq:horiz-gamma}
    |W^iZ^j \gamma(p)| = |\nabla_P^iZ^j \gamma(p)| \lesssim_C A^{-1}(1 + d_\Kor(p,\Gamma_\alpha))^{i}.
  \end{equation}
\end{lemma}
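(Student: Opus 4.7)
The plan is to treat the equality $W^i Z^j\gamma = \nabla_P^i Z^j\gamma$ on $P$ and the pointwise bound separately. For the equality, I would invoke Lemma~\ref{lem:nabla-on-curves} with $\phi=\lambda_P$. Since $\lambda_P(x,y,z)=\sigma x+c$ is affine (with $\sigma=\slope(P)$), a direct computation gives $\nabla_{\lambda_P}\lambda_P=\sigma$, so by \eqref{eq:horiz-veloc} the characteristic curves on $P$ are horizontal curves with velocity $\XL+\sigma\YL=W$. The function $Z^j\gamma$ is constant on cosets of $\paramY$ (because $Z$ commutes with $\YL$ and $\YL\gamma=0$), so for $p\in P$, Lemma~\ref{lem:nabla-on-curves} applied to $\phi=\lambda_P$ and $m=Z^j\gamma$ along the characteristic curve $\bar{\gamma}$ through $p$ identifies $\nabla_P^i(Z^j\gamma)(p)$ with the $i$-th $t$-derivative of $Z^j\gamma\circ\bar{\gamma}$ at $t=0$, which in turn equals $W^iZ^j\gamma(p)$ since $\bar{\gamma}'=W$ and $[W,Z]=0$.

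For the bound I would rewrite $W$ in a basis adapted to $\alpha$. Setting $h:=y-\alpha$, equation \eqref{eq:def-nabla-phi} gives $\XL=\nabla_\alpha-hZ$, so
$$W=\nabla_\alpha-hZ+\sigma\YL.$$
Three algebraic facts make the expansion manageable: (i) $[\YL,\nabla_\alpha]=[\YL,Z]=0$ (directly from $[\YL,\XL]=-Z$, $[\YL,Z]=0$, and $\YL\alpha=0$); (ii) $\YL$ annihilates every pure $\{\nabla_\alpha,Z\}$-derivative of $\alpha$ and of $\gamma$; and (iii) $\YL h=1$. Iterated application of $W$, together with Leibniz and these commutation relations, then shows that $W^iZ^j\gamma$ is a finite $\R$-linear combination of terms of the form
$$\sigma^s\, h^k\, P(D\alpha)\cdot \nabla_\alpha^a Z^b\gamma,$$
with $k+a\le i$ and $P(D\alpha)$ a polynomial of bounded degree in pure $\{\nabla_\alpha,Z\}$-derivatives of $\alpha$ of orders at most $i$. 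To finish, Lemma~\ref{lem:gamma-deriv-bounds} gives $|\nabla_\alpha^aZ^b\gamma|\lesssim A^{-a-1}$ and bounds $|P(D\alpha)|$ by $O(1)$ (using $\|\nabla_\alpha\alpha\|_\infty\le 1$ for one factor and $\lesssim\rho^{-1}$ for the others via \eqref{eq:alpha-jk-bounds}), while $|\sigma|\le C$ and Lemma~\ref{lem:y-distance} applied to $\alpha$ (which is intrinsic Lipschitz because $\|\nabla_\alpha\alpha\|_\infty\le 1$) gives $|h(p)|\approx d_\Kor(p,\Gamma_\alpha)$. Using $A\ge 1$ and $k+a\le i$, each term is $\lesssim_C A^{-1}(1+d_\Kor(p,\Gamma_\alpha))^{k+a}\le A^{-1}(1+d_\Kor(p,\Gamma_\alpha))^i$, and summing the finitely many terms yields the claim.

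The main obstacle is the algebraic bookkeeping in this expansion. The naive hope that the $\sigma\YL$ component of $W$ simply drops out because $\YL$ annihilates $\gamma$ is incorrect: since $\YL h=1$, each application of $\sigma\YL$ can consume a factor of $h$ in a previously produced term, replacing it by a bounded factor of $\sigma$ and spawning new summands that must be tracked. Fortunately each such consumption strictly lowers the $h$-degree (and injects only bounded factors), so the worst power of $d_\Kor(p,\Gamma_\alpha)$ that survives is still at most $i$, exactly matching the claimed bound. Because the lemma only needs $i+j\le 3$, the number of terms to track is small enough that a direct case-by-case verification is a reasonable substitute for a clean general induction.
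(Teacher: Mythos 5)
Your proof of the equality $W^iZ^j\gamma=\nabla_P^iZ^j\gamma$ on $P$ is essentially the paper's: both identify $W$-derivatives along horizontal curves in $P$ with $\nabla_P$-derivatives along the corresponding characteristic curves, using that $Z^j\gamma$ is constant on cosets of $\paramY$. Where you genuinely diverge is in the bound. The paper never touches $W$ again after the equality step; it works directly with $\nabla_P$ and the two-term identity $\nabla_P=\nabla_\alpha+(\alpha-\lambda_P)Z$, which contains no $\YL$ at all, and runs a clean induction on a notion of ``$(c,d,n)$-derivative growth'' (the integer $n$ tracks the power of $\Lambda(q)=1+|\lambda_P(q)-\alpha(q)|$). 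You instead expand the left-invariant field $W$ itself as $\nabla_\alpha-hZ+\sigma\YL$ with $h=y-\alpha$ and do the combinatorics by hand. Your approach is correct --- in particular you rightly notice the subtlety that $\sigma\YL$ does \emph{not} vanish on intermediate terms (since $\YL h=1$), and that applying it consumes a factor of $h$ while strictly lowering its degree, so the worst power of $d_\Kor(p,\Gamma_\alpha)$ is still $i$. The paper's route avoids this entirely: because $\nabla_P\beta$ stays constant on cosets of $\paramY$ whenever $\beta$ is, no $\YL$-interaction ever arises, the commutator bookkeeping reduces to a single identity, and the induction is uniform in $n$ rather than requiring the case-by-case tracking you fall back on. Both buy the same bound (using, as you do, that $\|\nabla_\alpha\alpha\|_\infty\le 1$ forces $\alpha$ intrinsic Lipschitz and hence $|h(p)|\approx d_\Kor(p,\Gamma_\alpha)$); yours is a more hands-on vector-field computation, the paper's a more structured induction that would extend painlessly to higher $i+j$.
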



\begin{lemma}\label{lem:flat-pv-formula}
  Let $\alpha$ and $\gamma$ satisfy Lemma~\ref{lem:gamma-deriv-bounds}. Let $p\in \Gamma_\alpha$ and let $P$ be the tangent plane to $\Gamma_\alpha$ at $p$. When $\rho$ is sufficiently large,
  $$G_{\alpha,\gamma}'(0)(p) = H_{P,\gamma}(p) + O(\rho^{-\frac{1}{2}}).$$
\end{lemma}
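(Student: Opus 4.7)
By left-invariance of both $G'_{\alpha,\gamma}(0)$ and $H_{P,\gamma}$, I will reduce to $p=\zero$, so that $\alpha(\zero)=0$ and $P$ is the vertical plane through $\zero$ with slope $\sigma := \nabla_\alpha\alpha(\zero)$, i.e., $\lambda_P(x,y,z) = \sigma x$, which satisfies $\partial_z\lambda_P\equiv 0$ and $\lambda_P(\zero)=0$. Both $\alpha$ and $\lambda_P$ are intrinsic Lipschitz, so Lemma~\ref{lem:G'-formula-and-bound-rev}, applied with the uniform Sobolev bounds supplied by Lemma~\ref{lem:gamma-deriv-bounds}, shows that $G'_{\alpha,\gamma}(0)(\zero)$ and $H_{P,\gamma}(\zero)=G'_{\lambda_P,\gamma}(0)(\zero)$ exist and that the truncation error on any annulus $[r,R]$ is bounded by $\lesssim A^{-1}(r+R^{-1})$ for both. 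Fixing the intermediate scales $r_0 := \rho^{-1/2}$ and $R_0 := \rho^{1/2}$, both tails contribute $O(A^{-1}\rho^{-1/2}) \le O(\rho^{-1/2})$, and it remains to estimate the middle difference
\[
D := (G^{r_0,R_0}_{\alpha,\gamma})'(0)(\zero) - (G^{r_0,R_0}_{\lambda_P,\gamma})'(0)(\zero).
\]

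Next, I would apply the formula \eqref{eq:grrab-small} of Lemma~\ref{lem:grrab-small} to both truncations; using $\partial_z\lambda_P\equiv 0$ this splits $D$ as
\begin{align*}
D &= \int_{A_{r_0,R_0}} \bigl(\gamma(w) - \gamma(\zero)\bigr)\bigl[\YL\widehat{K}(\Psi_\alpha(w)) - \YL\widehat{K}(\Psi_{\lambda_P}(w))\bigr]\ud w\\
&\qquad - \int_{A_{r_0,R_0}} \gamma(\zero)\,x(w)\,\partial_z\alpha(w)\,\YL\widehat{K}(\Psi_\alpha(w))\ud w.
\end{align*}
Lemma~\ref{lem:a-Taylor} applied to $\alpha$ at $\zero$, together with the bounds \eqref{eq:alpha-jk-bounds} on $\|\nabla_\alpha^2\alpha\|_\infty$ and $\|\partial_z\alpha\|_\infty$, yields $|\alpha(w)-\lambda_P(w)|\lesssim A^{-1}\rho^{-1}\|w\|_\Kor^2$. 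Since $\Psi_\alpha(w)$ and $\Psi_{\lambda_P}(w)$ both lie on the coset $w\paramY$, and one checks that for $\|w\|_\Kor \le R_0 \ll \sqrt{A\rho}$ the entire $\paramY$-segment between them stays at Koranyi distance $\approx \|w\|_\Kor$ from $\zero$, the mean value theorem along $\paramY$ combined with the $(-5)$-homogeneity of $\YL^2\widehat{K}$ from Lemma~\ref{lem:deriv-homog} gives
\[
\bigl|\YL\widehat{K}(\Psi_\alpha(w)) - \YL\widehat{K}(\Psi_{\lambda_P}(w))\bigr| \lesssim A^{-1}\rho^{-1}\|w\|_\Kor^{-3}.
\]
Combined with the Taylor bound $|\gamma(w)-\gamma(\zero)|\lesssim A^{-1}\min(1,\|w\|_\Kor)$ (Lemma~\ref{lem:a-Taylor} together with \eqref{eq:gamma-jk-bounds}) and the polar-coordinates formula Lemma~\ref{lem:polar}, the first integral is bounded by $O(A^{-2}\rho^{-1}\log\rho)$. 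For the second integral, the bounds $|\partial_z\alpha|\lesssim A^{-1}\rho^{-1}$ from \eqref{eq:alpha-jk-bounds} and $|\YL\widehat{K}(\Psi_\alpha(w))|\lesssim \|w\|_\Kor^{-4}$ similarly produce $O(A^{-2}\rho^{-1}\log\rho)$.

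Combining everything, $D = O(A^{-2}\rho^{-1}\log\rho) = O(\rho^{-1/2})$ for sufficiently large $\rho$, which together with the tail contributions gives $G'_{\alpha,\gamma}(0)(\zero) - H_{P,\gamma}(\zero) = O(\rho^{-1/2})$, as claimed. The main technical obstacle is the middle-scale control of $|\YL\widehat{K}(\Psi_\alpha(w)) - \YL\widehat{K}(\Psi_{\lambda_P}(w))|$: invoking the mean value theorem along the coset $w\paramY$ requires the auxiliary point to remain comparable in Koranyi norm to $\|w\|_\Kor$, and a direct calculation of $\frac{d}{ds}\|wY^s\|_\Kor^4$ shows that this holds only on shells with $\|w\|_\Kor \lesssim \sqrt{A\rho}$. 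It is this constraint (rather than the kernel or Sobolev estimates themselves) that dictates the choice $R_0 \approx \rho^{1/2}$ and hence the $\rho^{-1/2}$ exponent in the conclusion.
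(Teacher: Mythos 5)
Your proposal is correct and follows the paper's overall strategy: split at an intermediate scale near $\sqrt{\rho}$, compare $\YL\widehat{K}(\Psi_\alpha(w))$ to the kernel evaluated on the tangent plane via the mean value theorem along $\paramY$, and treat the residual $\partial_z\alpha$-term separately. The genuine difference from the paper's proof is in the inner annulus: the paper takes $r\to 0$, which forces the constant-coefficient $\partial_z\alpha(\zero)$ term (the paper's $I_3$ in the proof of Lemma~\ref{lem:flat-pv-formula}) to be cancelled by a $\theta$-antisymmetry argument, since a naive bound on it diverges like $\log(1/r)$. You instead truncate the inner radius at $r_0=\rho^{-1/2}$ and absorb the truncation error into the generic tail estimate from Lemma~\ref{lem:G'-formula-and-bound-rev}. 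Under this truncation the crude bound $O(A^{-2}\rho^{-1}\log\rho)$ on the $\partial_z\alpha$-term suffices without any symmetry, which is a valid and arguably cleaner simplification. (You are implicitly using the formula \eqref{eq:grrab-small} on an annulus with outer radius $R_0>1$, outside the stated range of Lemma~\ref{lem:grrab-small}; that is fine since the formula itself is just differentiation under the integral sign and is used the same way in the paper.)

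Your closing technical remark, however, is wrong, and it misdiagnoses the origin of the exponent. You claim the mean value theorem along $w\paramY$ forces $\|w\|_\Kor\lesssim\sqrt{A\rho}$, and that this dictates $R_0\approx\rho^{1/2}$ and hence the $\rho^{-1/2}$ exponent. Neither claim holds. For $w=(x,0,z)\in V_0$, a direct case analysis on $\|wY^t\|_\Kor^4 = (x^2+t^2)^2 + 16(z+xt/2)^2$ shows $\|wY^t\|_\Kor^4 \ge \tfrac{1}{8}\|w\|_\Kor^4$ for \emph{all} $t\in\R$; combined with the upper bound from the intrinsic Lipschitz bounds $|\alpha(w)|, |\lambda_P(w)|\lesssim\|w\|_\Kor$, the $\paramY$-segment between $\Psi_\alpha(w)$ and $\Psi_{\lambda_P}(w)$ stays at distance $\approx\|w\|_\Kor$ for every $w$, with no shell restriction. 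The kernel comparison $A^{-1}\rho^{-1}\kappa^{-3}$ actually remains sharper than the trivial $\kappa^{-4}$ out to $\kappa\lesssim A\rho$, far beyond $\sqrt{\rho}$. The $\rho^{-1/2}$ in the conclusion is dictated by the tail bounds $\lesssim A^{-1}(r_0+R_0^{-1})$ from Lemma~\ref{lem:G'-formula-and-bound-rev}; the choice $r_0=R_0^{-1}=\rho^{-1/2}$ is a convenient balance point, not a hard constraint, and one could take $R_0$ larger and obtain an even better error term. This error does not affect the validity of your proof, since the choice $R_0=\rho^{1/2}$ does work.
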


\begin{lemma}\label{lem:changing-planes}
  Let $\alpha$ and $\gamma$ satisfy Lemma~\ref{lem:gamma-deriv-bounds}. Let $p\in \Gamma_\alpha$ and let $C>0$.  Then for any two planes $P$ and $Q$ through $p$ with slopes at most $C$,
  \begin{align*}
    |H_{P,\gamma}(p)-H_{Q,\gamma}(p)| \lesssim_C |\slope P-\slope Q|
  \end{align*}
\end{lemma}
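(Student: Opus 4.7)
The plan is to reduce to the case $p=\zero\in \Gamma_\alpha$ and then repeat the symmetrization trick from the proof of Lemma~\ref{lem:grrab-small}. By the left-invariance of the map $\gamma\mapsto H_{P,\gamma}(p)$, translating by $p^{-1}$ replaces $P$, $Q$, $\alpha$, $\gamma$ by $P^p=p^{-1}P$, $Q^p=p^{-1}Q$ (vertical planes through $\zero$ with the same slopes), $\alpha^p(w):=\alpha(pw)-y(p)$, and $\gamma^p(w):=\gamma(pw)$. Since $p\in \Gamma_\alpha$, we have $\alpha^p(\zero)=0$, and since the intrinsic gradient and $Z$ are translation-invariant, $\alpha^p,\gamma^p$ still satisfy the bounds of Lemma~\ref{lem:gamma-deriv-bounds}. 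So I may assume $p=\zero\in \Gamma_\alpha$, and both $P$ and $Q$ pass through $\zero$.

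Next, $\lambda_P(x,y,z)=\sigma_P x$ satisfies $\partial_z\lambda_P=0$, so Lemma~\ref{lem:grrab-small} together with Lemma~\ref{lem:G'-formula-and-bound-rev} gives
\begin{equation*}
H_{P,\gamma}(\zero)-H_{Q,\gamma}(\zero)=\pv(\zero)\int_{V_0}(\gamma(w)-\gamma(\zero))\,K_{PQ}(w)\,\ud w,
\end{equation*}
where $K_{PQ}(w):=\YL\widehat{K}(\Psi_{\lambda_P}(w))-\YL\widehat{K}(\Psi_{\lambda_Q}(w))$. A direct calculation in the group law gives $\Psi_{\lambda_P}(w)=\Psi_{\lambda_Q}(w)\cdot Y^{-\Delta x(w)}$ with $\Delta:=\slope P-\slope Q$, so the mean value theorem along the horizontal field $\YL$, combined with the $(-5)$-homogeneity of $\YL\YL\widehat K$ (Lemma~\ref{lem:deriv-homog}) and the elementary bound $\|\Psi_{\lambda_P}(w)\|_\Kor\approx_C \|w\|_\Kor$ for $|\slope P|\le C$, yields
\begin{equation*}
|K_{PQ}(w)|\lesssim_C |\Delta|\cdot |x(w)|\cdot \|w\|_\Kor^{-5}\le |\Delta|\cdot \|w\|_\Kor^{-4}.
\end{equation*}

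The main obstacle is that integrating this crude pointwise bound against $|\gamma(w)-\gamma(\zero)|\lesssim A^{-1}$ is logarithmically divergent both near $\zero$ and near infinity. I resolve this by the same symmetrization used in Lemma~\ref{lem:grrab-small}. Since $\lambda_P$ is linear with $\lambda_P(\zero)=0$, a short computation shows $\Psi_{\lambda_P}(\theta w)=\theta(\Psi_{\lambda_P}(w))$; combined with the fact that $\YL\widehat K$ is $\HH$-even (as a horizontal derivative of the $\HH$-odd kernel $\widehat K$, by the remark following Lemma~\ref{lem:deriv-homog}), this shows $K_{PQ}$ is $\HH$-even. Hence by \eqref{eq:int-even-odd}--\eqref{eq:even-odd-product} only the $\HH$-even part of $\gamma(\cdot)-\gamma(\zero)$ contributes. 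Applying Lemma~\ref{lem:a-Taylor} with $a=\alpha$, $m=\gamma$, $p=\zero$ gives the expansion $\gamma(w)-\gamma(\zero)=x(w)\nabla_\alpha\gamma(\zero)+O_{L}(\|w\|_\Kor^{2}(\|\nabla_\alpha^2\gamma\|_\infty+\|\partial_z\gamma\|_\infty))$; the linear term is $\HH$-odd and drops out, so by the derivative bounds in Lemma~\ref{lem:gamma-deriv-bounds} we obtain $|(\gamma-\gamma(\zero))^{\sE}(w)|\lesssim \|w\|_\Kor^2 A^{-1}$, together with the trivial bound $\lesssim A^{-1}$ coming from $\|\gamma\|_\infty\lesssim A^{-1}$.

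Putting these estimates together, I split the domain into $\{\|w\|_\Kor\le 1\}$ and $\{\|w\|_\Kor\ge 1\}$ and integrate in polar coordinates via Lemma~\ref{lem:polar}: the first region contributes $\lesssim_C |\Delta| A^{-1}\int_0^1 \rho^2\cdot \rho^{-4}\cdot \rho^2\,\ud\rho \lesssim_C |\Delta|A^{-1}$, while the second contributes $\lesssim_C |\Delta| A^{-1}\int_1^\infty \rho^{-4}\cdot \rho^2\,\ud\rho\lesssim_C |\Delta|A^{-1}$. Both are $\lesssim_C |\Delta|$, which is stronger than the claimed bound; if desired one may simply absorb the factor $A^{-1}$ into the implicit constant.
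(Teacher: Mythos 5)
Your proof is correct and follows essentially the same route as the paper's: reduce to $p=\zero$, write the difference as a principal-value integral of $\gamma(w)-\gamma(\zero)$ against the $\HH$-even kernel $K_{PQ}$, use the Taylor expansion from Lemma~\ref{lem:a-Taylor} to gain $\|w\|_\Kor^2$ on the even part near the origin, and integrate in polar coordinates split at radius $1$. One minor slip: a direct computation gives $\Psi_{\lambda_P}(w)=\Psi_{\lambda_Q}(w)\,Y^{+\Delta x(w)}$ rather than $Y^{-\Delta x(w)}$, but this sign is immaterial after applying the mean value theorem and taking absolute values.
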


\begin{lemma}\label{lem:moving-basepoint}
  Let $\alpha$ and $\gamma$ satisfy Lemma~\ref{lem:gamma-deriv-bounds}. Let $p\in \Gamma_\alpha$ and $q\in \HH$. Let $P$ be a plane through $\zero$ with $|\slope P| \le 1$, and suppose that $d_{\Kor}(p,q)\le 1$. Then
  $$|H_{p P,\gamma}(p) - H_{q P,\gamma}(q)| \lesssim d_\Kor(p,q)^{\frac{1}{5}}.$$
\end{lemma}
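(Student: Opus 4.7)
I would begin by parametrizing both $H_{pP,\gamma}(p)$ and $H_{qP,\gamma}(q)$ as integrals over the common plane $V_0$ via a left-translation change of variables. Using \eqref{eq:H-as-pv-v0}, substituting $v=pw$ with $w\in V_0$ (noting $\Pi_{pP}(pw)=p\,\Pi_P(w)$ by left-invariance of the $\paramY$-projection, and that $\mu$ is left-invariant),
\begin{equation*}
H_{pP,\gamma}(p)=\pv(\zero)\int_{V_0}\bigl(\gamma(pw)-\gamma(p)\bigr)\YL\widehat{K}(\Pi_P(w))\,\ud w,
\end{equation*}
and analogously for $q$. Setting $\delta:=d_\Kor(p,q)\le 1$, $r:=\delta^{1/5}$, and $R:=\delta^{-1/5}$, I would then truncate via Lemma~\ref{lem:G'-formula-and-bound-rev} applied to $\zeta=\lambda_{pP}$ (which is affine, so $\|\lambda_{pP}\|_{W'_{\lambda_{pP}}(B)}=0$). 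A short calculation using $\XL=\nabla_\alpha-(y-\alpha)Z$, the boundedness of $|y-\alpha|$ on $B=B(p,3r)$ via Lemma~\ref{lem:y-distance}, and the derivative bounds of Lemma~\ref{lem:gamma-deriv-bounds} gives $\|\gamma\|_{W_{\lambda_{pP}}(B)}\lesssim A^{-1}$, so each truncation incurs error $\lesssim A^{-1}(r+R^{-1})\lesssim\delta^{1/5}$. This reduces the problem to bounding
\begin{equation*}
\mathcal{I}:=\int_{A_{r,R}}D(w)\YL\widehat{K}(\Pi_P(w))\,\ud w,\qquad D(w):=\gamma(pw)-\gamma(qw)-\gamma(p)+\gamma(q).
\end{equation*}

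To estimate this mixed second difference, I would fix a horizontal curve $\sigma\from[0,\delta]\to\HH$ from $p$ to $q$ with $\sigma'(t)=a(t)\XL(\sigma(t))+b(t)\YL(\sigma(t))$ and $a^2+b^2=1$. A direct computation of the Euclidean velocity of $\psi(t):=\sigma(t)w$ yields $\psi'(t)=a(t)\XL(\psi(t))+b(t)\YL(\psi(t))-b(t)x(w)Z$, so using $\YL\gamma\equiv 0$ (by the $\paramY$-invariance of $\gamma$) the Fundamental Theorem of Calculus gives
\begin{equation*}
D(w)=\int_0^\delta\Bigl\{-a(t)\bigl[\XL\gamma(\sigma(t)w)-\XL\gamma(\sigma(t))\bigr]+b(t)x(w)Z\gamma(\sigma(t)w)\Bigr\}\,\ud t.
\end{equation*}
Since $d_\Kor(\sigma(t)w,\sigma(t))=\|w\|_\Kor$ by Kor\'anyi left-invariance, the first integrand is bounded by $\|w\|_\Kor\cdot\|\nabla_\HH\XL\gamma\|_{L_\infty(B(p,C\|w\|_\Kor))}$, while the second is bounded by $\|w\|_\Kor\|Z\gamma\|_\infty\lesssim A^{-1}\|w\|_\Kor$.

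The crucial step will be proving the Lipschitz-type bound
\begin{equation*}
\|\nabla_\HH\XL\gamma\|_{L_\infty(B(p,R))}\lesssim A^{-1}(1+R^2),
\end{equation*}
which I would establish by expanding $\XL=\nabla_\alpha-(y-\alpha)Z$, using the commutator $[\XL,\nabla_\alpha]=-(\XL\alpha)Z$, applying Lemma~\ref{lem:y-distance} to control $|y-\alpha|\lesssim R$ on $B(p,R)$, and invoking the derivative bounds of Lemma~\ref{lem:gamma-deriv-bounds}. Granting this, $|D(w)|\lesssim A^{-1}\delta\|w\|_\Kor(1+\|w\|_\Kor^2)$, so combined with $|\YL\widehat{K}(\Pi_P(w))|\lesssim\|w\|_\Kor^{-4}$ and Lemma~\ref{lem:polar},
\begin{equation*}
|\mathcal{I}|\lesssim A^{-1}\delta\int_r^R(\kappa^{-1}+\kappa)\,\ud\kappa\lesssim A^{-1}\delta R^2=A^{-1}\delta^{3/5},
\end{equation*}
and together with the truncation errors this yields $|H_{pP,\gamma}(p)-H_{qP,\gamma}(q)|\lesssim\delta^{1/5}$. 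The main obstacle is the growth-in-$R$ in the Lipschitz bound above: it arises because $\XL$ is not bounded globally in terms of $\nabla_\alpha$ and $Z$ (the coefficient $(y-\alpha)$ grows with distance to $\Gamma_\alpha$), so careful bookkeeping of the commutators between $\XL$, $\nabla_\alpha$, and $Z$ is needed to ensure this growth is at most quadratic in $R$ and does not spoil the integration at large $R$.
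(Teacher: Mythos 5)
Your proposal is correct and, while it starts from the same mixed second difference that the paper calls $\nu(v)=\gamma(\zero)-\gamma(v)-\gamma(q)+\gamma(qv)$ (your $D(w)$, up to sign), the estimation step is genuinely different. The paper works in coordinates: it writes $q = X^{x_0}Z^{z_0}Y^t$, computes the partial derivatives $\nu_z$ and $\nu_{xx}$ of the second difference explicitly, bounds them on $V_0\cap B(\zero,r)$ with $r=\kappa^{-1/5}$ using the bound $\|\partial_x^i\partial_z^j\gamma\|\lesssim 1+r^i$ (their \eqref{eq:h-gamma-app}, a consequence of Lemma~\ref{lem:horiz-gamma-bounds}), and then feeds this into Lemma~\ref{l:delta-1-bound}, which handles the principal value. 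You instead bound the second difference $D(w)$ itself pointwise by integrating $\XL\gamma$ along a horizontal curve from $p$ to $q$ — the correction term $-b(t)x(w)Z$ in your velocity formula is precisely the noncommutative twist that makes $D$ non-trivial — and then handle the principal value by truncation via Lemma~\ref{lem:G'-formula-and-bound-rev} rather than via Lemma~\ref{l:delta-1-bound}. Both routes choose the same scales $r=\delta^{1/5}$, $R=\delta^{-1/5}$ and both rest crucially on the quadratic-in-distance growth of second $\XL$-derivatives of $\gamma$, which is exactly the content of Lemma~\ref{lem:horiz-gamma-bounds}; you correctly identify that establishing $\|\XL^2\gamma\|_{L_\infty(B(p,R))}\lesssim A^{-1}(1+R^2)$ is the heart of the matter. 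Your route has the advantage of being coordinate-free and yields the slightly sharper constant $A^{-1}d_\Kor(p,q)^{1/5}$; its cost is a small technical point you glossed over: the identity $(G^{r,R}_{\lambda_P,\gamma})'(0)(\zero)=\int_{A_{r,R}}(\gamma(w)-\gamma(\zero))\YL\widehat{K}(\Pi_P(w))\ud w$ is only proved in the text for small truncation scales ($s<s'\le 1$, via \eqref{eq:grrab-small}), whereas you need it with $R>1$. It does hold in the full range — differentiating \eqref{eq:grr-translate} under the integral sign is justified on any bounded annulus, and for affine $\zeta$ the $\partial_z\zeta$ term drops — but you should note this extension explicitly rather than treating it as an immediate consequence of the quoted lemma.
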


Given these lemmas, we prove Lemmas~\ref{lem:tildeH-approx-dupe} and \ref{lem:Fprime-Lipschitz-dupe} as follows.
\begin{proof}[Proof of Lemma~\ref{lem:tildeH-approx-dupe}]
  Let $p\in \Gamma_\alpha$ and let $P_p$ be the tangent plane to $\Gamma_\alpha$ at $p$. Let $\lambda$ be the affine function such that $\Gamma_\lambda = P_p$, and let $q\in P_p$ be such that $d_\Kor(p,q)\le \rho^{\epsilon}$.
  
  Let $\kappa=d_{\Kor}(p,q)$. By Lemma~\ref{lem:a-Taylor} and Lemma~\ref{lem:gamma-deriv-bounds}, $\overline{q} = q Y^t$, where $t = \alpha(q) - \lambda(q) = O(\rho^{-1} \kappa^2)$. We choose $\rho$ large enough that $|t|<1$. Let $P_{\overline{q}}$ be the tangent plane to $\Gamma_\alpha$ at $\overline{q}$ and let $Q=Y^{-t} P_{\overline{q}}$ be the plane through $q$ parallel to $P_{\overline{q}}$.
  Then by the triangle inequality,
  \begin{multline*}
    \big|G_{\alpha, \gamma}'(0)(q) - H_{P_p, \gamma}(q)\big| \le
    \big|G_{\alpha, \gamma}'(0)(q) - H_{P_{\overline{q}},\gamma}(\overline{q})\big| \\
    + \big|H_{P_{\overline{q}},\gamma}(\overline{q}) - H_{Q, \gamma}(q)\big| + \big|H_{Q,\gamma}(q) - H_{P_p,\gamma}(q)\big|.
  \end{multline*}
  By Lemma~\ref{lem:flat-pv-formula},
  $$\big|G_{\alpha, \gamma}'(0)(q) - H_{P_{\overline{q}},\gamma}(\overline{q})\big|\lesssim \rho^{-\frac{1}{2}}.$$
  Since $P_{\overline{q}}$ and $Q$ are parallel and $d_\Kor(q, \overline{q})\le t < 1$, Lemma~\ref{lem:moving-basepoint} implies that
  $$\big|H_{P_{\overline{q}},\gamma}(\overline{q}) - H_{Q, \gamma}(q)\big| \lesssim d_\Kor(q,\overline{q})^{\frac{1}{5}} \lesssim (\rho^{-1} \kappa^2)^{\frac{1}{5}}.$$
  Finally, by Lemma~\ref{lem:changing-planes} and Lemma~\ref{lem:a-Taylor},
  $$\big|H_{Q,\gamma}(q) - H_{P_p,\gamma}(q)\big| \lesssim |\nabla_\alpha \alpha(q) - \nabla_\alpha \alpha(p)| \lesssim \kappa \rho^{-1} + \kappa^2 \rho^{-1}.$$

  Since $\kappa \le \rho^{\frac{1}{10}}$, these bounds imply that
  $$\big|G_{\alpha, \gamma}'(0)(q) - H_{P_p, \gamma}(q)\big| \lesssim \rho^{-\frac{1}{2}} + (\rho^{-1} \kappa^2)^{\frac{1}{5}} + \kappa \rho^{-1} + \kappa^2 \rho^{-1} \lesssim \rho^{-\frac{1}{10}},$$
  as desired.
  
  To prove \eqref{eq:H-unif}, we apply Lemma~\ref{lem:G'-formula-and-bound-rev} with $\zeta=\lambda$, $\psi=\gamma$, and $r=1$. Let $q\in P_p$ such that $\kappa=d_\Kor(p,q)\le \rho^\epsilon$ as above. Note that 
  $$d_\Kor(q,\Gamma_\alpha)\le d_\Kor(q,\overline{q})\lesssim \rho^{-1}\kappa^2\lesssim 1.$$
  
  Since $|\slope(P_p)|\le 1$, we take $B=B(q,3)$. Since $\lambda$ is affine, $\|\lambda\|_{W'_{\lambda}(B)} = 0$. For any $v\in B$, we have $d_\Kor(v,\Gamma_\alpha)\lesssim 3 + d_\Kor(q,\Gamma_\alpha) \lesssim 1$, so Lemma~\ref{lem:horiz-gamma-bounds} implies that  $\|\gamma\|_{W_{\lambda}(B)} \lesssim A^{-1}$. By Lemma~\ref{lem:G'-formula-and-bound-rev} and Lemma~\ref{lem:grrab-small},
  $$\big|H_{P_p,\gamma}(q) - (G^{r,R}_{\lambda,\gamma})'(0)(q)\big|\lesssim \|\gamma\|_{W_{\lambda}(B)} r + \|\gamma\|_{\infty} R^{-1}\lesssim A^{-1}(r+R^{-1}),$$
  as desired.
\end{proof}

\begin{proof}[Proof of Lemma~\ref{lem:Fprime-Lipschitz-dupe}]
  We claim that there is an $\epsilon>0$ such that for all $p, q\in \Gamma_\alpha$,
  $$\big|G_{\alpha, \gamma}'(0)(p) - G_{\alpha, \gamma}'(0)(q)\big| \lesssim d_{\Kor}(p,q)^{\epsilon} + \rho^{-\frac{1}{2}}.$$
 
  Let $r=d_\Kor(p,q)$. By Proposition~\ref{prop:perturb-formula}, $\|G_{\alpha,\gamma}'(0)\|_\infty\lesssim 1$, so it suffices to consider the case that $r\le 1$. 
  
  By Lemma~\ref{lem:flat-pv-formula}, we have
  \begin{equation}\label{eq:Fprime-eq-1}
  \big|G_{\alpha, \gamma}'(0)(p) - G_{\alpha, \gamma}'(0)(q)\big| \lesssim \big|H_{P_p, \gamma}(0)(p) - H_{P_q, \gamma}(0)(q)\big| + \rho^{-\frac{1}{2}}.
  \end{equation}
  Let $Q$ be the plane parallel to $P_p$ that goes through $q$. Lemma~\ref{lem:changing-planes} and Lemma~\ref{lem:moving-basepoint} imply that there is an $\epsilon>0$ such that 
  \begin{align}
\notag    \big|H_{P_p, \gamma}(0)(p) - H_{P_q, \gamma}(0)(q)\big|   
    &\le \big|H_{P_p, \gamma}(0)(p) - H_{Q, \gamma}(0)(q)\big| + \big|H_{Q, \gamma}(0)(q) - H_{P_q, \gamma}(0)(q)\big| 
    \\ 
\notag    &\lesssim r^{\epsilon} + |\slope Q-\slope P_q| \\ 
\label{eq:Fprime-eq-2}    &= r^{\epsilon} + |\nabla_\alpha\alpha(p) - \nabla_\alpha\alpha(q)|.
  \end{align}
  By Lemma~\ref{lem:a-Taylor} with $a=\alpha$ and $m=\nabla_\alpha$ and by Lemma~\ref{lem:gamma-deriv-bounds},
  $$|\nabla_\alpha\alpha(p) - \nabla_\alpha\alpha(q)| \lesssim r \|\nabla^2_\alpha \alpha\|_{\infty} + r^2 \|\partial_z \nabla_\alpha \alpha\|_{\infty} \lesssim r \rho^{-1}\lesssim r.$$
  Combining this with \eqref{eq:Fprime-eq-1} and \eqref{eq:Fprime-eq-2}, we see that
  $$\big|G_{\alpha, \gamma}'(0)(p) - G_{\alpha, \gamma}'(0)(q)\big| \lesssim r^{\epsilon} + r + \rho^{-\frac{1}{2}}\lesssim r^\epsilon + \rho^{-\frac{1}{2}},$$
  as desired.
\end{proof}

\subsection{Proofs of Lemmas~\ref{lem:horiz-gamma-bounds}--\ref{lem:moving-basepoint}}

Now we prove the lemmas that we used in the proofs of Lemmas~\ref{lem:tildeH-approx-dupe} and \ref{lem:Fprime-Lipschitz-dupe}. First, we prove Lemma~\ref{lem:horiz-gamma-bounds}, which bounds derivatives of $\gamma$ near $\Gamma_\alpha$.

\begin{proof}[Proof of Lemma~\ref{lem:horiz-gamma-bounds}]
  Recall that $\Pi_P = \Pi_{\lambda_P}$ is the projection to $P$ and that $\nabla_P(v) = \nabla_{\lambda_P}(v) = X(v) + (y(v) - \lambda_P(v)) Z(v)$. Since $\nabla_P$ is constant on vertical lines, we have $[\nabla_P,Z]=0$. 
  
  Since $W=X+\slope(P) Y$ is horizontal, for any $u\in P$, the curve $g(w) = uW^w$ is a horizontal curve in $P$, so its projection $\Pi\circ g$ is an integral curve of $\nabla_P$. For any function $a$ which is constant on cosets of $\paramY$,
  $$W^i a(u) = (a\circ g)^{(i)}(0) = (a\circ \Pi \circ g)^{(i)}(0) = \nabla_P^ia(\Pi(u))=\nabla_P^i a(u).$$
  Therefore, for any $i$ and $j$ and any $p\in P$,
  $$W^i Z^j[\gamma](p) = \nabla_{P}^i Z^j[\gamma](p).$$
  This proves the first equality in \eqref{eq:horiz-gamma}.
  
  We claim that for any $p\in \HH$ and any $i\ge 0$, $j\ge 0$ with $i+j\le 3$, we have
  \begin{equation}\label{eq:Lambda-bound}
    |\nabla_P^i Z^j \gamma(p)|  \lesssim (1 + |\lambda_P(p)-\alpha(p)|)^i.
  \end{equation}
  Since $\alpha$ is intrinsic Lipschitz, $|\lambda_P(p)-\alpha(p)|\lesssim d_\Kor(p,\Gamma_\alpha)$ for all $p\in P$, so this will imply the lemma.


  
  
  Let $h\from \HH\to \R$ be a smooth function that is constant on cosets of $\paramY$. Let $\Lambda(v)=1+ |\lambda_P(v)-\alpha(v)|$. For $c>0$, $d\ge 0$ and $n\ge 0$, we say that $h$ has \emph{$(c, d,n)$--derivative growth} if for any word $E\in \{Z,\nabla_\alpha\}^*$ of length at most $d$ and any $q\in \HH$, we have
  $$|E h(q)| \le c A^{-1} \Lambda(q)^n.$$
  In particular, $|h(q)| \lesssim A^{-1} \Lambda(q)^n$. We claim that $\nabla_P^j Z^i\gamma$ has $(c_{j}, 3-i-j, j)$--derivative growth when $i+j\le 3$. This will imply \eqref{eq:Lambda-bound}.
  
  When $j=0$, Lemma~\ref{lem:gamma-deriv-bounds} implies that $\gamma$ has $(c_0, 3, 0)$--derivative growth and $Z^i\gamma$ has $(c_0, 3-i, 0)$--derivative growth for some $c_0\lesssim A^{-1}$. 
  
  We thus proceed by induction. Suppose that $h$ has $(c_n, d, n)$--derivative growth for some $d\le 3$ and consider $\nabla_P h$. Note that 
  $$\nabla_P h = \nabla_\alpha h + (\alpha - \lambda_P) Z h.$$
  For any $0\le l\le d-1$, any $E=E_1\dots E_l\in \{Z,\nabla_\alpha\}^*$, and any $q\in \HH$,
  \begin{multline*}
    |E \nabla_P h(q)| \le |E \nabla_\alpha h(q)| + |E[(\alpha - \lambda_P) Z h](q)|\\
    \le c_n \Lambda(q)^n + \sum_{S\subset \{1,\dots, l\}} |E_S[\alpha - \lambda_P](q) \cdot E_{S^c} Z h(q)|,
  \end{multline*}
  where $E_S=\prod_{i\in S} E_i$. By Lemma~\ref{lem:gamma-deriv-bounds}, $|E_S[\alpha-\lambda_P](v)|\lesssim_C 1$ unless $S=\emptyset$ and $E_S=\id$. Furthermore, $E_{S^c} Z$ is a word of length at most $d$, so $|E_{S^c} Z h(q)|\le c_n \Lambda(q)^n$. Therefore,
  \begin{equation*}
    |E \nabla_P h(q)| \lesssim_C c_n \Lambda(q)^n + 2^l c_n \Lambda(q)^n + |\lambda_P(q) - \alpha(q)| \cdot c_n \Lambda(q)^n
    \lesssim_{C,d} c_n \Lambda(q)^{n+1}.
  \end{equation*}
  That is, $\nabla_P h$ has $(c_{n+1}, d-1,n+1)$--derivative growth for some $c_{n+1}\lesssim_{C,d} c_n$. 
  
  For $0\le i\le 3$, $Z^i\gamma$ has $(c_0, 3-i, 0)$--derivative growth for some $c_0\lesssim A^{-1}$, so for $0\le j\le 3-i$, there are $c_j\lesssim_{C} A^{-1}$ such that $\nabla_P^j Z^i \gamma$ has $(c_j, 3-i-j,j)$--derivative growth. In particular, for all $p\in P$,
  $$\big|\nabla_P^j Z^i \gamma(p)\big| \le c_j \Lambda(p)^{j} \lesssim_C A^{-1}(1 + |\lambda_P(p)-\alpha(p)|)^{j},$$
  as desired.
\end{proof}

\begin{proof}[Proof of Lemma~\ref{lem:flat-pv-formula}]
  Let $p\in \Gamma_\alpha$ and let $P$ be the vertical tangent plane to $\Gamma_\alpha$ at $p$. 
  We claim that
  $$G_{\alpha,\gamma}'(0)(p) = \frac{\ud}{\ud t} F_{\alpha + t\gamma}(p) \big|_{t=0} = H_{P,\gamma}(p) + O(\rho^{-\frac{1}{2}}).$$
  After translating, we may suppose that $p=\zero$. Let $\gamma_0(w)=\gamma(w)-\gamma(\zero)$ for all $w\in \HH$. By Proposition~\ref{prop:perturb-formula} and \eqref{eq:H-as-pv-v0}, we can write
  \begin{multline}\label{eq:flat-pv-decomp}
    \left|G_{\alpha,\gamma}'(0)(\zero) - H_{P,\gamma}(\zero)\right|\le \limsup_{r\to 0} \left|\left(G^{r,\sqrt{\rho}}_{\alpha,\gamma}\right)'(0)(\zero) - \int_{A_{r,\sqrt{\rho}}} \gamma_0(w) \YL \widehat{K}(\Pi_P(w)) \ud w \right| \\
    + \limsup_{R\to \infty} \left|\left(G^{\sqrt{\rho},R}_{\alpha,\gamma}\right)'(0)(\zero) - \int_{A_{\sqrt{\rho},R}} \gamma_0(w) \YL \widehat{K}(\Pi_P(w)) \ud w \right|.
  \end{multline}

  We start by bounding the $R\to \infty$ term.
  Since $P$ has bounded slope, we have $\|\Pi_P(w)\|_\Kor \approx \|w\|_\Kor$.
  By the homogeneity of $\YL \widehat{K}$ and the boundedness of $\gamma$,
  $$\left|\int_{A_{\sqrt{\rho},R}} \gamma_0(w) \YL \widehat{K}(\Pi_P(w)) \ud w\right|\lesssim \left|\int_{A_{\sqrt{\rho},R}} \|w\|_\Kor^{-4}\ud w \right| \approx \int_{\sqrt{\rho}}^\infty \kappa^{-4}\kappa^2\ud \kappa = \rho^{-\frac{1}{2}},$$
  using Lemma~\ref{lem:polar} to change variables from $w$ to $\kappa$.
  By Lemma~\ref{lem:grrab-large}, for all $R>\sqrt{\rho}$,
  \begin{equation}\label{eq:flat-pv-decomp-1}
    \left|(G^{\sqrt{\rho},R}_{\alpha,\gamma})'(0)(\zero) - \int_{A_{\sqrt{\rho},R}} \gamma_0(w) \YL \widehat{K}(\Pi_P(w)) \ud w \right| \lesssim \rho^{-\frac{1}{2}} + \rho^{-\frac{1}{2}} \approx \rho^{-\frac{1}{2}}.
  \end{equation}

  Now we consider the $r\to 0$ term.
  By \eqref{eq:grrab-small}, letting $w=(x,0,z)$, for any $0<r<\sqrt{\rho}$,
  \begin{multline}\label{eq:flat-pv-approx}
    (G^{r,\sqrt{\rho}}_{\alpha,\gamma})'(0)(\zero) - \int_{A_{r,\sqrt{\rho}}} \gamma_0(w) \YL \widehat{K}(\Pi_P(w)) \ud w \\
    = \int_{A_{r,\sqrt{\rho}}} \gamma_0(w)\left(\YL \widehat{K}(\Psi_\alpha(w)) - \YL \widehat{K}(\Pi_P(w))\right) - \gamma(\zero) x \partial_z \alpha(w) \YL\widehat{K}(\Psi_\alpha(w))\ud w,
  \end{multline}
  and we will bound the terms in the integrand separately.

  We start with the first term in \eqref{eq:flat-pv-approx}. Let $w\in V_0$ and let $\kappa=\|w\|_\Kor$.
  Let $\lambda=\lambda_P$, so that $\Gamma_{\lambda}=P$.
  By Lemma~\ref{lem:a-Taylor},
  \begin{align}
    |\alpha(w) - \lambda(w)|\lesssim \rho^{-1} \kappa^2
    \label{e:small-bound}
  \end{align}
  By the Mean Value Theorem, there is some $t$ between $\lambda_P(w)$ and $\alpha(w)$ such that 
  \begin{align*}
    \left| \YL \widehat{K}(w Y^{\alpha(w)}) - \YL \widehat{K}(w Y^{\lambda_P(w)}) \right| = |\alpha(w) - \lambda_P(w)|\cdot |\YL^2 \widehat{K}(w Y^{t})|.
  \end{align*} 
  Furthermore, $\|w Y^t\|_\Kor \approx \|w\|_\Kor$, so by the $(-5)$--homogeneity of $\YL^2 \widehat{K}$,
  \begin{equation}\label{e:YK-linearization}
    \left| \YL \widehat{K}(w Y^{\alpha(w)}) - \YL \widehat{K}(w Y^{\lambda_P(w)}) \right| \overset{\eqref{e:small-bound}}{\lesssim} \rho^{-1}\kappa^2\cdot \kappa^{-5} = \rho^{-1}\kappa^{-3}.
  \end{equation}

  We apply Lemma~\ref{lem:a-Taylor} to $\gamma_0$ to get
  $$|\gamma_0(w)|\lesssim \kappa \|\nabla_\alpha \gamma_0\|_{\infty} + \kappa^2 \|\partial_z \gamma_0\|_{\infty}\lesssim \kappa + \kappa^2$$
  by Lemma~\ref{lem:gamma-deriv-bounds}. Since $\|\gamma_0\|_\infty\lesssim 1$, we have $|\gamma_0(w)| \lesssim \min\{\kappa+\kappa^2,1\}\lesssim \kappa$.
  Therefore,
  \begin{multline}\label{eq:flat-pv-large}
    \left|\int_{A_{r,\sqrt{\rho}}} \gamma_0(w)(\YL \widehat{K}(\Psi_\alpha(w)) - \YL \widehat{K}(\Pi_P(w))) \ud w\right| \\ \lesssim \int_{A_{r,\sqrt{\rho}}} \|w\|_\Kor \cdot \rho^{-1} \|w\|_\Kor^{-3}\ud w \lesssim \int_r^{\sqrt{\rho}}\kappa^{-2} \rho^{-1} \kappa^2 \ud \kappa 
    \le \rho^{-\frac{1}{2}},
  \end{multline}
  using Lemma~\ref{lem:polar} in the penultimate inequality.

  It remains to bound the second term in \eqref{eq:flat-pv-approx}.
  We write
  \begin{align*}
    \int_{A_{r,\sqrt{\rho}}} x \partial_z \alpha(w) \YL\widehat{K}(\Psi_\alpha(w))\ud w 
    &= \int_{A_{r,\sqrt{\rho}}} x (\partial_z \alpha(w)- \partial_z\alpha(\zero)) \YL\widehat{K}(\Psi_\alpha(w))\ud w \\
    & \qquad + \int_{A_{r,\sqrt{\rho}}} x \partial_z\alpha(\zero) (\YL\widehat{K}(\Psi_\alpha(w)) - \YL\widehat{K}(\Pi_P(w))) \ud w\\
    & \qquad + \int_{A_{r,\sqrt{\rho}}} x \partial_z\alpha(\zero) \cdot \YL\widehat{K}(\Pi_P(w)) \ud w \\
    &=: I_1 + I_2 + I_3.
  \end{align*}

  To bound $I_1$, let $m = \partial_z \alpha$. By Lemma~\ref{lem:gamma-deriv-bounds}, we have $\|m\|_\infty \lesssim \rho^{-1}$, $\|\nabla_\alpha m\|_\infty \lesssim \rho^{-1}$, and $\|\partial_z m\|_\infty \lesssim \rho^{-1}$, so by Lemma~\ref{lem:a-Taylor}, letting $\kappa=\|w\|_\Kor$ as above,
  \begin{align*}
    |m(w) - m(\zero)|\lesssim \min\{\rho^{-1} , \rho^{-1}(\kappa + \kappa^2)\} \lesssim \rho^{-1} \kappa.
  \end{align*}
  Therefore, using Lemma~\ref{lem:polar},
  \begin{multline*}
    |I_1| \lesssim \int_{A_{r,\sqrt{\rho}}}\left| x \rho^{-1}\|w\|_\Kor \YL\widehat{K}(\Psi_\alpha(w))\right|\ud w \\ \lesssim \rho^{-1} \int_{A_{r,\sqrt{\rho}}}\|w\|_\Kor^{-2} \ud w \lesssim \rho^{-1} \int_r^{\sqrt{\rho}}\kappa^{-2}\kappa^2\ud \kappa 
    \le \rho^{-\frac{1}{2}}.
  \end{multline*}
  
  By \eqref{e:YK-linearization}
  $$|I_2| \lesssim \int_{A_{r,\sqrt{\rho}}} \left|x \partial_z \alpha(\zero) \rho^{-1} \|w\|_\Kor^{-3}\right| \lesssim \int_{A_{r,\sqrt{\rho}}} \rho^{-2} \|w\|_\Kor^{-2}\ud w \lesssim \rho^{-2} \int_r^{\sqrt{\rho}}\ud \kappa \le \rho^{-\frac{3}{2}}.$$

  Finally, recall that $\theta(x,y,z)=(-x,-y,z)$ and let $h(w) = x(w) \partial_z\alpha(\zero) \cdot  \YL\widehat{K}(\Pi_P(w))$. The symmetry of $\YL\widehat{K}$ implies that
  $$h(\theta(w)) = x(\theta(w)) \partial_z\alpha(\zero)\cdot  \YL\widehat{K}(\Pi_P(\theta(w))) = -x(w) \partial_z\alpha(\zero)  \cdot \YL\widehat{K}(\Pi_P(w)) = -h(w).$$
  Since $\theta(A_{r,\sqrt{\rho}}) =A_{r,\sqrt{\rho}}$,
  $$I_3 = \int_{A_{r,\sqrt{\rho}}} x \partial_z\alpha(\zero)  \YL\widehat{K}(\Pi_P(w)) \ud w = 0,$$
  and
  \begin{equation}\label{eq:flat-pv-small}
    \int_{A_{r,\sqrt{\rho}}} x \partial_z \alpha(w) \YL\widehat{K}(\Psi_\alpha(w))\ud w = I_1 + I_2 + I_3 = O(\rho^{-\frac{1}{2}}).
  \end{equation}

  Combining these inequalities, we find that
  $$\left|G_{\alpha,\gamma}'(0)(\zero) - H_{P,\gamma}(\zero)\right|\lesssim \rho^{-\frac{1}{2}},$$
  as desired.
\end{proof}

\begin{proof}[Proof of Lemma~\ref{lem:changing-planes}]

  Without loss of generality, we may suppose that $p=\zero$. We claim  that
  $$\left|\pv \int_{V_0} (\gamma(q) - \gamma(\zero)) \left(\YL\widehat{K}(\Pi_P(q)) - \YL\widehat{K}(\Pi_Q(q))\right)\ud q\right| \lesssim_C |\slope P-\slope Q|.$$
  Let $M(q)=\YL\widehat{K}(\Pi_P(q)) - \YL\widehat{K}(\Pi_Q(q))$.
  By the smoothness and homogeneity of $\YL\widehat{K}$, for all $q\in V_0$, we have
  $$|M(q)| \lesssim_C |\slope P-\slope Q| \|q\|_\Kor^{-4}$$
  and $M(\theta(q)) = M(q)$. Let $\gamma_0(q)=\gamma(q) - \gamma(\zero)$, and let $\gamma_0^{\sE}(q) = \frac{1}{2}(\gamma_0(q) + \gamma_0(\theta(q)))$ so that
  $$\int_{A_{r,R}} (\gamma(q) - \gamma(\zero)) M(q) \ud q = \int_{A_{r,R}} \gamma_0^\sE(q) M(q) \ud q.$$

  On one hand, since $\|\gamma\|_\infty\lesssim 1$,  Lemma~\ref{lem:polar} implies that for any $R>1$,
  $$\left|\int_{A_{1,R}} \gamma_0^\sE(q) M(q) \ud q\right| \lesssim |\slope P-\slope Q| \|\gamma\|_\infty \int_1^R \kappa^{-4}\kappa^2\ud \kappa \lesssim |\slope P-\slope Q|.$$
  
  On the other hand, by Lemma~\ref{lem:gamma-deriv-bounds} and Lemma~\ref{lem:a-Taylor},
  $$\gamma_0(q) = x(q) \nabla_\alpha \gamma(q) + O(\|q\|_\Kor^2),$$
  so $|\gamma_0^\sE(q)|\lesssim \|q\|_\Kor^2$. Therefore, for any $r<1$, 
  $$\left|\int_{A_{r,1}} \gamma_0^\sE(q) M(q) \ud q\right| \lesssim |\slope P-\slope Q| \int_r^1 \kappa^{-2}\kappa^2\ud \kappa \lesssim |\slope P-\slope Q|.$$
  
  Combining these two inequalities, we have
  \begin{multline*}
    \left|\pv \int_{V_0} \gamma_0^\sE(q) M(q) \ud q\right| \le \lim_{r\to 0} \left|\int_{A_{r,1}} \gamma_0^\sE(q) M(q) \ud q\right| + \lim_{R\to \infty} \left|\int_{A_{1,R}} \gamma_0^\sE(q) M(q) \ud q\right| \\
    \lesssim_C |\slope P-\slope Q|.
  \end{multline*}
\end{proof}

Finally, we prove Lemma~\ref{lem:moving-basepoint}. We will need the following bound.
\begin{lemma} \label{l:delta-1-bound} 
  Let $C>0$, let $P$ be a plane through the origin with $|\slope(P)|\le C$ and let $\psi \from \HH\to \R$ be a bounded smooth function such that $\psi(\zero)=0$ and $\psi$ is constant on cosets of $\paramY$. Let $r>0$, and let
  $$\epsilon=\|\partial_z \psi\|_{L_\infty(B(\zero,r))} + \|X^2 \psi\|_{L_\infty(V_0\cap B(\zero,r))}.$$
  Then 
  \begin{align*}
    \left| \pv \int_{V_0} \psi(v) \YL \widehat{K}(\Pi_P(v)) \ud v \right| \lesssim_C \epsilon r + \|\psi\|_\infty r^{-1}.
  \end{align*}
\end{lemma}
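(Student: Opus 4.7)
The plan is to decompose the integration domain into the ``near'' part $V_0\cap B(\zero,r)$ and the ``far'' part $V_0\setminus B(\zero,r)$, handle the far part by sheer size estimates, and exploit symmetry to obtain cancellation on the near part.

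For the far part, note that since $|\slope(P)|\le C$, the projection $\Pi_P$ restricts to a map $V_0\to P$ that commutes with dilations and is a homeomorphism, hence $\|\Pi_P(v)\|_\Kor\approx_C \|v\|_\Kor$ for all $v\in V_0$ (one checks this by direct computation $\Pi_P(x,0,z)=(x,\sigma x, z+\tfrac{\sigma}{2}x^2)$ with $\sigma=\slope P$). Since $\YL\widehat{K}$ is $(-4)$--homogeneous by Lemma~\ref{lem:deriv-homog}, this yields $|\YL\widehat{K}(\Pi_P(v))|\lesssim_C \|v\|_\Kor^{-4}$, and Lemma~\ref{lem:polar} gives
$$\left|\int_{V_0\setminus B(\zero,r)} \psi(v)\YL\widehat{K}(\Pi_P(v))\ud v\right|\lesssim_C \|\psi\|_\infty \int_r^\infty \kappa^{-4}\kappa^2\ud\kappa\approx \|\psi\|_\infty r^{-1}.$$

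For the near part, the key observation is that $v\mapsto \YL\widehat{K}(\Pi_P(v))$ is $\HH$--even: on $V_0$ one checks directly that $\Pi_P\circ\theta=\theta\circ\Pi_P$, and $\YL\widehat{K}$ is $\HH$--even because it is $(-4)$--homogeneous (degree $-4$ is even, so $\YL\widehat{K}\circ s_{-1}=\YL\widehat{K}$). Thus, writing $\psi=\psi^\sE+\psi^\sO$ with respect to $\theta$ as in \eqref{eq:pointwise-even-odd}, the symmetry of the annuli used in the principal value together with \eqref{eq:int-even-odd} kills the odd part, and we are reduced to bounding $\psi^\sE$. Now I would use Taylor's theorem twice along $V_0$: first expand in $x$ at fixed $z$, using that $\XL^2\psi|_{V_0}=\partial_x^2\psi|_{V_0}$ (since $y=0$ cancels the lower-order terms), to get
$$\psi^\sE(x,0,z)=\psi(0,0,z)+O\big(x^2\|X^2\psi\|_{L_\infty(V_0\cap B(\zero,r))}\big);$$
then use $\psi(\zero)=0$ and integrate $\partial_z\psi$ along the segment from $\zero$ to $(0,0,z)$ (which stays in $B(\zero,r)$ by convexity in $z$ of the slice $V_0\cap B(\zero,r)$) to get $|\psi(0,0,z)|\le |z|\,\|\partial_z\psi\|_{L_\infty(B(\zero,r))}$. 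Since $x^2\le \|v\|_\Kor^2$ and $|z|\le \tfrac14\|v\|_\Kor^2$ for $v=(x,0,z)$, these combine to give $|\psi^\sE(v)|\lesssim \epsilon \|v\|_\Kor^2$ on $V_0\cap B(\zero,r)$. Absolute convergence of the principal value near $\zero$ now follows, and Lemma~\ref{lem:polar} yields
$$\left|\pv\int_{V_0\cap B(\zero,r)} \psi^\sE(v)\YL\widehat{K}(\Pi_P(v))\ud v\right|\lesssim_C \epsilon \int_0^r \kappa^{-2}\kappa^2\ud\kappa=\epsilon r.$$

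The main technical point, and the only step requiring real care, is the Taylor argument on $V_0$: one must verify that the relevant segments lie in the region where the derivative bounds are assumed and that the higher-order derivative appearing in the error is precisely $X^2\psi$ restricted to $V_0$ (rather than some combination of $\XL,\YL,Z$ derivatives that would force us to bound $\psi$ off of $V_0$). Once these identifications are made, summing the near and far estimates gives the stated bound.
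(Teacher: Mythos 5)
Your proof is correct and follows essentially the same route as the paper's: split into a near annulus and a far region, kill the odd/linear contribution by symmetry, and bound what remains by a Taylor estimate giving $|\psi^\sE(v)|\lesssim \epsilon\|v\|_\Kor^2$ on the near part. The only cosmetic difference is the order of operations — you decompose $\psi$ into even and odd parts up front, whereas the paper subtracts the linear term $X\psi(\zero)x(v)$ first and then observes that its integral against the even kernel vanishes by symmetry; these are the same cancellation.
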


\begin{proof}
  Let $v\in V_0\cap B(\zero,r)$ and let $\kappa=\|v\|_\Kor$.
  By Taylor approximation, there is a $c>0$ such that 
  \begin{align*}
    |\psi(v) - X \psi(\zero) x(v)|\le c \epsilon \kappa^2.
  \end{align*}
  Let $D=V_0\cap B(\zero, r)$. Then
  \begin{align*}
    &\left| \pv \int_{D} \psi(v) \YL \widehat{K}(\Pi_P(v)) \ud v \right| \\
    &\qquad \leq \left| \pv \int_{D} X \psi(\zero) x(v) \YL \widehat{K}(\Pi_P(v)) \ud v \right| + \pv \int_{D} \left| c \epsilon  \|v\|_\Kor^2 \YL \widehat{K}(\Pi_P(v)) \right| \ud v.
  \end{align*}
  The first term is $0$ by symmetry, and since $\|v\|_\Kor^2 \YL \widehat{K}(v)$ is $(-2)$--homogeneous, Lemma~\ref{lem:polar} implies that
  $$\left| \pv \int_{D} \psi(v) \YL \widehat{K}(\Pi_P(v)) \ud v \right|\lesssim_C c\epsilon \int_0^{r} \kappa^{-2} \kappa^2 \ud \kappa \lesssim \epsilon r.$$

  Additionally, by the homogeneity of $\YR K$ and Lemma~\ref{lem:polar},
  \begin{align*}
    \left| \pv \int_{V_0\setminus D} \psi(v) \YL \widehat{K}(\Pi_P(v)) \ud v \right| \lesssim \int_{r}^\infty \|\psi\|_\infty \kappa^{-4} \kappa^2\ud \kappa \lesssim \|\psi\|_\infty r^{-1}.
  \end{align*}
  Summing these two inequalities proves the lemma.
\end{proof}

We now prove Lemma~\ref{lem:moving-basepoint}.

\begin{proof}[Proof of Lemma~\ref{lem:moving-basepoint}]
  Recall that $p\in \Gamma_\alpha$, $q\in \HH$, and $d_\Kor(p,q)\le 1$. After a translation, we may suppose that $p=\zero$.  
  Then for any $h\in \HH$,
  \begin{align*}
    H_{h P,\gamma}(h) 
    & = \pv(\zero) \int_{V_0} (\gamma(h)-\gamma(hv))\YL \widehat{K}(\Pi_P(v))\ud \mu,
  \end{align*}
  so, letting 
  $$\nu(v) = \gamma(\zero) - \gamma(v) - \gamma(q) + \gamma(qv),$$
  \begin{align*}
    H_{P,\gamma}(\zero) - H_{q P, \gamma}(q) 
    & = \pv(\zero) \int_{V_0} \nu(v) \YL \widehat{K}(\Pi_P(v))\ud \mu.
  \end{align*}
  Then $\|\nu\|_{\infty}\le 4\|\gamma\|_{\infty}\lesssim 1$.
  We abbreviate partial derivatives of $\nu(x,y,z)$ and $\gamma(x,y,z)$ as $\gamma_x=\partial_x \gamma$, $\gamma_{xz}=\partial_x\partial_z \gamma$, etc. 
  
  Let $\kappa=d_\Kor(\zero,q)\le 1$ and let $r=\kappa^{-\frac{1}{5}}$. We claim that
  $|\nu_z(v)|\lesssim \kappa^{\frac{4}{5}}$ and $|\nu_{xx}(v)|\lesssim \kappa^{\frac{2}{5}}$ for all $v\in V_0\cap B(\zero,r)$. This will let us apply Lemma~\ref{l:delta-1-bound} to $\nu$.
  
  We write $q=X^{x_0}Z^{z_0}Y^t$; note that $|x_0|\le \kappa$, $|t| \le \kappa$, and $|z_0|\le \kappa^2$.
  Since $\gamma$ is constant on cosets of $\paramY$,
  \begin{multline*}
    \nu(x,0,z) = \nu(X^x Z^z) = \gamma(\zero) - \gamma(X^x Z^z) - \gamma(q Y^{-t}) + \gamma(q Y^{-t} \cdot Y^{t} X^{x} Y^{-t} Z^z)\\
    = \gamma(0, 0, 0) - \gamma(x, 0, z) - \gamma(x_0, 0, z_0) + \gamma(x_0 + x, 0, z_0 - t x + z).
  \end{multline*}
    
  Suppose that $v=(x,0,z)\in V_0\cap B(\zero,r)$. Then
  \begin{align*}
    \nu_z(v) & = \gamma_z(v') - \gamma_z(v) \\
    \nu_{xx}(v) &= \gamma_{xx}(v') - 2 t \gamma_{xz}(v') + t^2 \gamma_{zz}(v') - \gamma_{xx}(v),
  \end{align*}
  where $v'=(x_0 + x, 0, z_0 - t x + z)$. 
  
  Note that $|x(v')| \le r + \kappa \lesssim r$ and $|z(v')| \le \kappa^2 + r^2 + \kappa r \lesssim r^2$, so there is a $c>0$ such that $v, v'\in B(\zero, cr)$. Let $S=V_0\cap B(\zero,cr)$. By Lemma~\ref{lem:horiz-gamma-bounds}, for $i+j\le 3$,
  \begin{equation}\label{eq:h-gamma-app}
    \|\partial_x^i \partial_z^j \gamma\|_{L_\infty(S)} \lesssim 1+r^i.
  \end{equation}
  In particular, $|t \gamma_{xz}(v')|\lesssim \kappa(1 + r)\lesssim \kappa^{\frac{4}{5}}$ and $|t^2 \gamma_{zz}(v')| \lesssim \kappa^2$.
  
  It remains to bound $|\gamma_{xx}(v') - \gamma_{xx}(v)|$ and $|\gamma_{z}(v') - \gamma_{z}(v)|$. Since $v^{-1}v'=(x_0,0,z_0-tx)$, the Mean Value Theorem and \eqref{eq:h-gamma-app} imply that
  \begin{multline*}
    |\gamma_{xx}(v') - \gamma_{xx}(v)| \le |x_0| \|\gamma_{xxx}\|_{L_\infty(S)} + |z_0-tx|\|\gamma_{xxz}\|_{L_\infty(S)}\\ 
    \lesssim \kappa(1+r^3) + (\kappa^2 + \kappa r)(1+r^2) \lesssim \kappa r^3 = \kappa^{\frac{2}{5}}.
  \end{multline*}
  Likewise,
  \begin{multline*}
    |\gamma_{z}(v') - \gamma_{z}(v)| \le |x_0| \|\gamma_{xz}\|_{L_\infty(S)} + |z_0-tx|\|\gamma_{zz}\|_{L_\infty(S)}\\ 
    \lesssim \kappa(1+r) + (\kappa^2 + \kappa r) \lesssim \kappa r = \kappa^{\frac{4}{5}}.
  \end{multline*}
  
  Combining these inequalities, we obtain $|\nu_z(v)|\lesssim \kappa^{\frac{4}{5}}$ and $|\nu_{xx}(v)|\lesssim \kappa^{\frac{2}{5}}$ for all $v\in V_0\cap B(\zero,r)$. By Lemma~\ref{l:delta-1-bound}, this implies
  \begin{multline*}
    |H_{P,\gamma}(\zero) - H_{q P, \gamma}(q)| 
    = \left|\pv(\zero) \int_{P} \nu(v) \YL \widehat{K}(\Pi_P(v))\ud \mu\right| \\
    \lesssim_C r(\kappa^{\frac{4}{5}} + \kappa^{\frac{2}{5}}) + r^{-1}\|\nu\|_\infty
    \lesssim \kappa^{\frac{1}{5}} = d_{\Kor}(\zero,q)^{\frac{1}{5}},
  \end{multline*}
  as desired.
\end{proof}

\section{Lower bounds on the first derivative}\label{sec:first-lower}

Now we use the approximations in the previous section to prove lower bounds on $G'_{\alpha,\gamma}(0)$. Our main estimate is the following lemma, which shows that we can estimate $G'_{\alpha,\gamma}(0)(p)$ in terms of the restriction of $\gamma$ to the vertical line $p\langle Z\rangle$. 

Specifically, let $P$ be the vertical tangent plane to $\Gamma_\alpha$ at $p$, i.e., $P = p \langle W, Z\rangle$, where $W=X+\nabla_\alpha \alpha(p) Y$. Let $\Pi_P\from \HH\to P$, $\Pi_P(p W^wZ^zY^y)=pW^wZ^z$ be the projection to $P$ along cosets of $Y$. Let $\pi_p\from \HH\to p\langle Z\rangle$, $\pi_p(p W^wZ^zY^y)=pZ^z$. This map is constant along cosets of $\paramY$ and projects $P$ to $p\langle Z\rangle$ along cosets of $\langle W\rangle$. We will show the following bound.

\begin{lemma}\label{lem:first-z}
  Let $A>1$. When $\rho$ is sufficiently large (depending only on $A$), the following holds.
  Let $\alpha$ and $\gamma$ be functions satisfying Lemma~\ref{lem:gamma-deriv-bounds}.
  Let $p\in \Gamma_\alpha$ and let $P$ be the vertical tangent plane to $\Gamma_\alpha$ at $p$. Let $\pi_p$ be as above. Then
  \begin{multline}\label{eq:first-z} 
    G'_{\alpha,\gamma}(0)(p) = H_{P,\gamma\circ \pi_p}(p) + O(A^{-2}) \\ 
    =\pv(p) \int_{pV_0} (\gamma(\pi_p(q)) - \gamma(p)) \YL \widehat{K}(p^{-1}\Pi_P(q)) \ud q + O(A^{-2}).
  \end{multline}
\end{lemma}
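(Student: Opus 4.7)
The plan is first to invoke Lemma~\ref{lem:tildeH-approx-dupe}, which gives $G'_{\alpha,\gamma}(0)(p) = H_{P,\gamma}(p) + O(\rho^{-\epsilon})$; choosing $\rho$ large enough in terms of $A$, this error is absorbed into $O(A^{-2})$, so it remains to estimate $H_{P,\gamma}(p) - H_{P,\gamma\circ\pi_p}(p)$. After left-translating so that $p=\zero$ and using the defining formula \eqref{eq:H-as-pv-v0} together with $\pi_p(p)=p$ (so the constant pieces $\gamma(p)$ cancel), this difference is
$$E := \pv(\zero) \int_{V_0} \bigl(\gamma(q) - \gamma(\pi(q))\bigr)\YL\widehat{K}(\Pi_P(q))\,\ud q,$$
and the task is to show $|E|\lesssim A^{-2}$. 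The second equality in the statement is then immediate from the definition of $H_{P,\gamma\circ\pi_p}$.

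Next I would exploit $\theta$-symmetry. In coordinates one checks that $\pi(x,0,z)=(0,0,z-\sigma x^2/2)$ with $\sigma = \nabla_\alpha\alpha(\zero)$, so $\pi\circ\theta=\pi$ on $V_0$, $\Pi_P\circ\theta=\theta\circ\Pi_P$, and $\YL\widehat{K}$ is $\HH$-even by Lemma~\ref{lem:deriv-homog}. Decomposing $\gamma=\gamma^{\sE}+\gamma^{\sO}$ under $\theta$, the odd part makes no contribution to the $\theta$-symmetric principal value, leaving
$$E = \pv(\zero)\int_{V_0}\bigl(\gamma^{\sE}(q)-\gamma(\pi(q))\bigr)\YL\widehat{K}(\Pi_P(q))\,\ud q.$$
I would then expand the integrand using the factorization $\Pi_P(q)=\pi(q)W^xZ^{\sigma x^2}$:
$$\gamma(q)-\gamma(\pi(q)) = \int_0^x W\gamma(\pi(q)W^s)\,\ud s + \int_0^{\sigma x^2} Z\gamma(\pi(q)W^xZ^s)\,\ud s.$$
The crucial quantitative input from Lemma~\ref{lem:gamma-deriv-bounds} is that $W\gamma(\zero)=\nabla_\alpha\gamma(\zero)$ (using $Y\gamma=0$ and $W=\nabla_\alpha$ at $\zero\in\Gamma_\alpha$) has size $O(A^{-2})$, rather than the generic $O(A^{-1})$ that $Z\gamma$ satisfies.

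To convert this into the claimed $O(A^{-2})$ bound on $E$, I would subtract off the two leading-order pieces $xW\gamma(\zero)$ and $\tfrac{\sigma x^2}{2}Z\gamma(\zero)$: the first is $\theta$-odd and therefore pairs to zero against the $\theta$-even kernel, while the second is $\theta$-even but absorbs into a finite contribution through the $x\partial_z\alpha$ correction appearing in the alternative expression for $G'$ from Lemma~\ref{lem:grrab-small}, paralleling the cancellation argument in the proof of Lemma~\ref{lem:flat-pv-formula}. The residual $\psi=\gamma-\gamma\circ\pi-xW\gamma(\zero)-\tfrac{\sigma x^2}{2}Z\gamma(\zero)$ is then amenable to Lemma~\ref{l:delta-1-bound}, whose hypothetical quantities $\|\partial_z\psi\|_{L_\infty(B(\zero,r))}$ and $\|X^2\psi\|_{L_\infty(V_0\cap B(\zero,r))}$ are now controlled by the improved bounds $\|\nabla_\alpha^2\gamma\|_\infty\lesssim A^{-3}$ and $\|\nabla_\alpha\partial_z\gamma\|_\infty\lesssim A^{-2}$ from Lemma~\ref{lem:gamma-deriv-bounds}.

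The main obstacle is precisely this last gain of one power of $A^{-1}$: a naive Taylor expansion of $\gamma^{\sE}-\gamma\circ\pi$ only gives the pointwise bound $\lesssim \|q\|_\Kor^2 A^{-1}$, which integrates against the degree $-4$ kernel to produce only $|E|\lesssim A^{-1}$. The missing factor of $A^{-1}$ must come from the specific form of $\pi_p$, which is designed to kill variation in the $W$-direction, and $W=\nabla_\alpha$-derivatives are exactly where Lemma~\ref{lem:gamma-deriv-bounds} supplies the extra smoothness. Carefully tracking this cancellation through the principal value, and matching the residual leading pieces against the alternative formulas for $G'$ from Lemmas~\ref{lem:grrab-large} and \ref{lem:grrab-small}, is where the bulk of the argument lies.
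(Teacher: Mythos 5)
Your skeleton is right: reduce via Lemma~\ref{lem:flat-pv-formula} (or Lemma~\ref{lem:tildeH-approx-dupe} specialized to $q=p$) to bounding $H_{P,\gamma-\gamma\circ\pi_p}(\zero)$, translate to $p=\zero$, kill the odd part of the integrand by $\theta$-symmetry, and exploit the extra power of $A^{-1}$ on $\nabla_\alpha$-derivatives of $\gamma$. That is the paper's strategy. But there is a genuine error in how you carry it out, rooted in a sign mistake in $\pi_p$.

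From $\pi_p(pW^wZ^zY^y)=pZ^z$ with $W=X+\sigma Y$, $\sigma=\nabla_\alpha\alpha(\zero)$, $p=\zero$: solving $W^wZ^{z'}Y^y=(x,0,z)$ gives $w=x$, $y=-\sigma x$, $z'=z+\tfrac{\sigma x^2}{2}$. So $\pi_p(x,0,z)=(0,0,z+\tfrac{\sigma x^2}{2})$, and the factorization is exactly $\Pi_P(q)=\pi_p(q)W^{x(q)}$ with \emph{no} $Z^{\sigma x^2}$ factor. Your formula $\pi(x,0,z)=(0,0,z-\tfrac{\sigma x^2}{2})$ has the wrong sign, and the spurious $Z^{\sigma x^2}$ in your factorization is a consequence of this. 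This is not a typo one can wave away: $\gamma\circ\pi$ and $\gamma\circ\pi_p$ differ by roughly $\sigma x^2 Z\gamma$, which is $\theta$-even and pairs nontrivially with the $\theta$-even kernel; integrating $\sigma x^2 Z\gamma\cdot\YL\widehat{K}(\Pi_P(q))$ over $A_{r,A}$ gives $\sim\sigma\|Z\gamma\|_\infty\!\int_r^A\kappa^2\cdot\kappa^{-4}\cdot\kappa^2\,\ud\kappa\sim\sigma A^{-1}\cdot A\sim\sigma$, which can be $O(1)$. So you would be estimating the wrong quantity, off by two powers of $A$. Your proposal to ``absorb'' this through the $x\partial_z\alpha$ term in Lemma~\ref{lem:grrab-small} cannot rescue this: Lemmas~\ref{lem:grrab-large} and \ref{lem:grrab-small} give two expressions for \emph{the same} quantity $G'_{\alpha,\gamma}(0)(\zero)$, so cross-matching them produces no new cancellation.

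With the correct $\pi_p$, everything collapses to the paper's argument: on $P\cap B(\zero,3A)$, write $v=W^wZ^z$ and Taylor-expand $\gamma(v)=\gamma(Z^z)+wW\gamma(Z^z)+O(\|W^2\gamma\|w^2)$. Since $W\gamma=\nabla_\alpha\gamma+o_\rho(1)$ and $W^2\gamma=\nabla_\alpha^2\gamma+o_\rho(1)$ on that neighborhood (this identification has to be done at all points, not only at $\zero$ as you indicate, and is where ``$\rho$ large depending on $A$'' is actually used), the coefficient of $w^2$ is $O(A^{-3})$. The linear term is $\theta$-odd and drops from $\gamma^\sE$, leaving $\gamma^\sE(q)-\gamma(\pi_p(q))=O(A^{-3}x(q)^2)$; this integrates against the $(-4)$-homogeneous kernel over $B(\zero,A)$ to $O(A^{-2})$, and the region $\|q\|>A$ contributes $O(A^{-2})$ directly from $\|\gamma\|_\infty\lesssim A^{-1}$. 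Also note that after you subtract $xW\gamma(\zero)$, the residual $\psi$ is unbounded and Lemma~\ref{l:delta-1-bound} (which requires $\psi$ bounded and uses $\|\psi\|_\infty$ globally) no longer applies; that lemma already handles the linear-in-$x$ piece internally by symmetry, so the subtraction is unnecessary and counterproductive.
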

\begin{proof}
  The second equality in \eqref{eq:first-z} is \eqref{eq:H-as-pv-v0}, so it suffices to prove the first equality. After a left-translation, we may suppose that $p=\zero$. 

  By Lemma~\ref{lem:flat-pv-formula}, 
  \begin{multline}\label{eq:first-z-goal}
    |G'_{\alpha,\gamma}(0)(\zero)-H_{P,\gamma\circ \pi_p}(\zero)| \le |G'_{\alpha,\gamma}(0)(\zero)-H_{P,\gamma}(\zero)| + |H_{P,\gamma}(\zero)-H_{P,\gamma\circ \pi_p}(\zero)| \\
    \lesssim \rho^{-\frac{1}{2}} + |H_{P,\gamma-\gamma\circ \pi_p}(\zero)|.
  \end{multline}
  We thus consider $H_{P,\gamma-\gamma\circ \pi_p}(\zero)$.
  Recall that by \eqref{eq:H-as-pv-v0},
  $$H_{P,\gamma-\gamma\circ \pi_p}(\zero) = \pv(\zero)\int_{V_0} (\gamma(q)-\gamma\circ \pi_p(q))\YL\widehat{K}(\Pi_P(q)) \ud q.$$

  Since $\|\gamma\|_\infty \lesssim A^{-1}$, Lemma~\ref{lem:polar} implies that
  \begin{equation}\label{eq:first-z-large}
    \int_{V_0 \setminus B(\zero,A)} |(\gamma(q)-\gamma\circ \pi_p(q))\YL \widehat{K}(\Pi_P(q))| \ud q \lesssim \int_A^\infty A^{-1} \kappa^{-4} \kappa^2\ud \kappa \lesssim A^{-2}.
  \end{equation}

  Let $D=P\cap B(\zero,3A)$. We claim that if $v=W^wZ^z\in D$, then
  $$\gamma(v) = \gamma(Z^z) + w \cdot W \gamma(Z^z) + O(A^{-3}w^2).$$

  Let $\sigma=\nabla_\alpha \alpha(\zero)$ and let $\lambda\from \HH\to \R$, $\lambda(x,y,z)=\sigma x$ so that $\Gamma_\lambda=P$.
  Recall that for all $q\in \HH$,  we have $(\nabla_\alpha)_q = X_q + (y(q) - \alpha(q)) Z_q$. If $v\in P$, then $y(v)=\lambda(v)$, so
  $$W_v = (\nabla_\alpha)_v -(\lambda(v) - \alpha(v)) Z_v + \sigma Y_v.$$
  Let $m\from \HH\to \R$ be a smooth function which is constant on cosets of $\paramY$. Then $Ym=0$, so for $v\in P$, 
  $$Wm(v) = Xm(v) = \nabla_\alpha m(v) - (\lambda(v) - \alpha(v)) Zm(v) =(\nabla_\alpha - (\lambda - \alpha) Z)[m](v).$$
  We can apply this identity to $\alpha$, $\gamma$, and their derivatives with respect to $\nabla_\alpha$ and $Z$, which are all constant on cosets of $\paramY$.
  

  Note that by Lemma~\ref{lem:gamma-deriv-bounds} and Lemma~\ref{lem:a-Taylor},
  \begin{equation}\label{eq:misc-bounds}
    \lim_{\rho\to \infty} \max\{\|\lambda - \alpha\|_{L_\infty(D)}, \|\nabla_\alpha \alpha - \nabla_\alpha \alpha(\zero)\|_{L_\infty(D)}, \|Z\alpha\|_{\infty}\} = 0.
  \end{equation}
  One consequence of \eqref{eq:misc-bounds} is that for all $v\in D$,
  $$W\gamma(v) = \nabla_\alpha \gamma(v) + O\left(\|\lambda-\alpha\|_{L_\infty(D)} \|Z \gamma\|_\infty\right) \overset{\eqref{eq:misc-bounds}}{=} \nabla_\alpha \gamma(v) + o_\rho(1),$$
  where $o_\rho(1)$ is little--$o$ notation denoting an error term bounded by a function of $\rho$ that goes to zero as $\rho\to \infty$.

  We can bound the second derivative similarly. Evaluating all functions at $v\in D$,
  \begin{align*}
    W^2 \gamma & = W(\nabla_\alpha - (\lambda - \alpha) Z)[\gamma] = \nabla^2_\alpha \gamma - (\lambda - \alpha) Z \nabla_\alpha \gamma - W[\lambda - \alpha]\cdot Z\gamma - (\lambda - \alpha) W Z\gamma.
  \end{align*}
  By Lemma~\ref{lem:gamma-deriv-bounds}, $\|Z \nabla_\alpha \gamma\|_\infty \lesssim A^{-2}$, so by \eqref{eq:misc-bounds}, $(\lambda - \alpha) Z \nabla_\alpha \gamma=o_\rho(1)$. Likewise, $\|Z\gamma\|_\infty \lesssim A^{-1}$. Since $\nabla_\alpha \lambda(v) = \nabla_\alpha \alpha(\zero),$
  $$W[\lambda- \alpha](v) = \nabla_\alpha \alpha(\zero) - \nabla_\alpha \alpha(v) - (\lambda(v) - \alpha(v)) \cdot Z \alpha \overset{\eqref{eq:misc-bounds}}{=} o_\rho(1).$$
  Finally, 
  $$(\lambda(v)-\alpha(v)) W Z \gamma(v) = (\lambda(v)-\alpha(v)) \nabla_\alpha Z \gamma(v) - (\lambda(v)-\alpha(v))^2 Z^2\gamma(v) \overset{\eqref{eq:misc-bounds}}{=} o_\rho(1),$$
  so $W^2 \gamma(v) = \nabla_\alpha^2\gamma(v)+o_\rho(1)$. Thus, by Lemma~\ref{lem:gamma-deriv-bounds}, we can choose $\rho$ large enough that $|W \gamma(v)|\lesssim A^{-2}$ and $|W^2 \gamma(v)|\lesssim A^{-3}$ for all $v\in D$.
  
  By Taylor's theorem, if $v=W^wZ^z\in D$, then
  $$\gamma(v) = \gamma(Z^z) + w W\gamma(Z^z) + O(A^{-3}w^2).$$
  Let $\theta(x,y,z)=(-x,-y,z)$ and let $\gamma^\sE(v) = \frac{1}{2}(\gamma(v) + \gamma(\theta(v)))$. Then
  $$\gamma^\sE(v) = \gamma(Z^z) + O(A^{-3} w^2).$$
  If $q\in B(\zero,A)$, then $\Pi_P(q) = \pi_p(q)W^{x(q)}$ and $\Pi_P(q)\in D$, so $\gamma^\sE(q) = \gamma(\pi_p(q)) + O(A^{-3} x(q)^2).$
  By \eqref{eq:first-z-large},
  $$H_{P,\gamma-\gamma\circ \pi_p}(\zero) = \pv(\zero)\int_{V_0\cap B(\zero,A)} (\gamma(q)-\gamma(\pi_p(q)))\YL\widehat{K}(\Pi_P(q))\ud q + O(A^{-2}).$$
  Let $0<r<A$ and let $U=V_0\cap (B(\zero,A)\setminus B(\zero,r))$. Then by symmetry and the $(-4)$--homogeneity of $\YL\widehat{K}$,
  \begin{multline*}
    \left|\int_{U} (\gamma(q)-\gamma(\pi_p(q)))\YL\widehat{K}(\Pi_P(q))\ud q\right| = \left|\int_{U} (\gamma^\sE(q)-\gamma(\pi_p(q))) \YL\widehat{K}(\Pi_P(q))\ud q\right| \\
     \lesssim  \int_{U} A^{-3}x(q)^2\|q\|_\Kor^{-4}\ud q 
     \lesssim  A^{-3} \int_r^A \kappa^2 \kappa^{-4} \kappa^2\ud \kappa \le A^{-2},
  \end{multline*}
  where we use Lemma~\ref{lem:polar} to change variables from $q$ to $\kappa$.

  This holds for any $r$, so 
  $|H_{P,\gamma-\gamma\circ \pi_p}(\zero)|\lesssim A^{-2}$. The lemma then follows from \eqref{eq:first-z-goal}.
\end{proof}

Furthermore, we can write $H_{P,\gamma\circ \pi_p}(p)$ as a one-dimensional singular integral.

\begin{lemma}\label{lem:first-z-convert}
  With notation as above, for $z\ne 0$ and $a\in \R$, let
  \begin{equation}\label{eq:def-L}
    L(z) = L_{a}(z) := \int_{-\infty}^\infty \YL \widehat{K}((X+aY)^{x} Z^{z}) \ud x.
  \end{equation}
  For $p\in \Gamma_\alpha$ and $a=\nabla_\alpha\alpha(p)$,
  \begin{equation}\label{eq:first-z-convert-goal}
    H_{P,\gamma\circ \pi_p}(p) = \lim_{r\to 0} \int_{\R\setminus (-r,r)} (\gamma(p Z^{z}) - \gamma(\zero)) L_a(z) \ud z.
  \end{equation}
\end{lemma}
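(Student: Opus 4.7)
The strategy is to reduce the two-dimensional principal value defining $H_{P,\gamma\circ\pi_p}(p)$ to a one-dimensional integral by integrating out the coordinate along $\langle W\rangle\subset P$ via Fubini. First, by the left-invariance of the operator $\phi\mapsto G'_{\lambda_P,\phi}(0)$, I would reduce to the case $p=\zero$, so that $P=\langle W,Z\rangle$ with $W=X+aY$. A direct Baker--Campbell--Hausdorff computation then gives, for $q=X^xZ^{z_0}\in V_0$,
\[
  \Pi_P(q)=W^xZ^{z_0+ax^2/2},\qquad \pi_\zero(q)=Z^{z_0+ax^2/2},
\]
so that the change of variables $(x,z_0)\mapsto (x,z):=(x,z_0+ax^2/2)$ on $V_0$ has Jacobian $1$, and in the new coordinates the integrand $(\gamma(\pi_\zero(q))-\gamma(\zero))\YL\widehat K(\Pi_P(q))$ becomes the function $(\gamma(Z^z)-\gamma(\zero))\YL\widehat K(W^xZ^z)$ of $(x,z)\in\R^2$.

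The main technical step is to find an $L^1(\R^2)$ dominator so that both dominated convergence and Fubini can be applied. Lemma~\ref{lem:a-Taylor} with $m=\gamma$ at $p=\zero$, $q=Z^z$ (noting $x(Z^z)=0$, so the linear term vanishes), combined with the derivative bounds of Lemma~\ref{lem:gamma-deriv-bounds}, gives $|\gamma(Z^z)-\gamma(\zero)|\lesssim A^{-1}\min(1,|z|)$. Since $|z|\lesssim\|W^xZ^z\|_\Kor^2$ and $\YL\widehat K$ is $(-4)$--homogeneous, this yields
\[
  \bigl|(\gamma(Z^z)-\gamma(\zero))\YL\widehat K(W^xZ^z)\bigr|\lesssim A^{-1}\min\bigl(\|v\|_\Kor^{-4},\|v\|_\Kor^{-2}\bigr),\qquad v=W^xZ^z,
\]
which is integrable on $P$ by Lemma~\ref{lem:polar} since $P$ has slope $a$ bounded by Lemma~\ref{lem:gamma-deriv-bounds}. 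Under the substitution the Kor\'anyi annulus $A_{r,R}\subset V_0$ is mapped to a sheared annulus in $(x,z)$--coordinates that exhausts $\R^2\setminus\{0\}$ as $r\to 0$ and $R\to\infty$, so dominated convergence and Fubini together identify $H_{P,\gamma\circ\pi_p}(\zero)$ with the iterated integral $\int_{\R}(\gamma(Z^z)-\gamma(\zero))L_a(z)\,dz$, with $L_a$ as defined in \eqref{eq:def-L}. The scaling $L_a(z)=|z|^{-3/2}L_a(\sign z)$ (from the homogeneity of $\YL\widehat K$ via $x\mapsto |z|^{1/2}x$) combined with the bound on $\gamma(Z^z)-\gamma(\zero)$ then gives $|(\gamma(Z^z)-\gamma(\zero))L_a(z)|\lesssim A^{-1}\min(|z|^{-1/2},|z|^{-3/2})$, so the one-dimensional integral is absolutely convergent and coincides with its principal value.

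The one subtle point is that, once $p$ has been translated to $\zero$, the constant subtracted in $H_{P,\gamma\circ\pi_p}(p)$ naturally becomes the value of the translated $\gamma$ at $\zero$, corresponding to $\gamma(p)$ on the original graph rather than $\gamma(\zero)$ as stated. Reconciling the two forms requires $\pv\int L_a(z)\,dz=0$, equivalently $L_a(1)=-L_a(-1)$; I would verify this from the $\HH$--even symmetry of $\YL\widehat K$ together with the fact that the reflection $\phi(x,y,z)=(x,y,-z)$ is an anti-automorphism of $\HH$ sending $W^xZ^z$ to $W^xZ^{-z}$, so that the contribution of the constant to the truncated integral cancels symmetrically in $z$. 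Apart from this symmetry check, the argument is essentially a carefully dominated Fubini theorem, and the main difficulty is the integrability bookkeeping needed to track $\YL\widehat K$ through the projection from $V_0$ to $P$ in the noncommutative setting.
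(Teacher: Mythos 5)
Your main route---showing the integrand on $P$ is absolutely integrable and then applying dominated convergence plus Fubini directly, rather than comparing two families of truncations---is a legitimate and somewhat cleaner alternative to the paper's argument. The paper fixes $r$, applies Fubini to identify $I_r$ (the $|z|\ge r^2$ truncation) and $J_r$ (the Kor\'anyi truncation) as integrals over nested subsets $E_r\subset F_r$ of $P$, and then estimates $|J_r-I_r|\lesssim r$ by a dyadic decomposition of $F_r\setminus E_r$. You instead observe that $|\gamma(Z^z)-\gamma(\zero)|\lesssim A^{-1}\min(1,|z|)$ makes the full integrand dominated by $A^{-1}\min(\|v\|_\Kor^{-2},\|v\|_\Kor^{-4})\in L^1(P)$, so the principal value is an honest Lebesgue integral and the two truncations trivially agree in the limit. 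Both arguments rest on the same quadratic (in $d_\Kor$) vanishing of $\gamma\circ\pi_p-\gamma(p)$ at $p$.

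However, your resolution of the ``subtle point'' is wrong. You are right that the left-invariance reduction turns $\gamma(\zero)$ into $\gamma(p)$, so for the identity to be translation-consistent with $\gamma(\zero)$ one would need $\pv\int_\R L_a(z)\,dz=0$, equivalently $L_a(1)=-L_a(-1)$. But the paper explicitly computes $L_0(1)=L_0(-1)=(0,\Gamma(\tfrac34)\Gamma(\tfrac74)/(3\sqrt\pi))\neq 0$, and by the orthogonal symmetry the same holds for all $a$: $L_a$ is \emph{even} in $z$, so $\int_{\R\setminus(-r,r)}L_a(z)\,dz=4r^{-1/2}L_a(1)$ diverges. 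The $\HH$-evenness of $\YL\widehat K$ does not help here, since $\theta(W^xZ^z)=W^{-x}Z^z$ fixes $z$; it only gives evenness of the $x$-integrand and says nothing about the reflection $z\mapsto -z$. The actual resolution is that the lemma is proved, and used (see \eqref{eq:G'-single}), only after the basepoint has been translated to $\zero$, where $\gamma(p)=\gamma(\zero)$ automatically; equivalently, for general $p$ the subtracted constant should be read as $\gamma(p)$, matching Lemma~\ref{lem:first-z}. Under that reading your Fubini argument closes cleanly and no cancellation of $L_a$ is needed.
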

\begin{proof}
  After a left translation, we may suppose that $p=\zero$.
  Let $W=X+aY$ so that $P=\langle W, Z\rangle$. Since $\YL \widehat{K}$ is $(-4)$--homogeneous, the integral in \eqref{eq:def-L} converges absolutely. Note that for any $z\in \R$ and any $t>0$, 
  \begin{multline}\label{eq:L-homog}
    L(t^2z) = \int_{-\infty}^\infty \YL \widehat{K}(W^{w} Z^{t^2 z}) \ud w = \int_{-\infty}^\infty t \YL \widehat{K}(W^{tw} Z^{t^2 z}) \ud w\\ = \int_{-\infty}^\infty t^{-3} \YL \widehat{K}(W^{w} Z^z) \ud w
    = t^{-3} L(z).
  \end{multline}
  
  We first write both sides of \eqref{eq:first-z-convert-goal} in terms of integrals over subsets of $P$. 
  On one hand, for $r>0$, let
  $$I_r :=\int_{\R\setminus (-r^2,r^2)} (\gamma(Z^{z}) - \gamma(\zero)) L(z) \ud z.$$
  By \eqref{eq:L-homog}, since $\gamma$ is bounded, this integral converges absolutely. The right side of \eqref{eq:first-z-convert-goal} is equal to $\lim_{r\to 0} I_r$. For $q=W^wZ^z\in P$, we have $\pi_p(q)=Z^z=Z^{z(q)}$, and we define $\gamma_0\from P\to \R$, 
  $$\gamma_0(q) = \gamma(\pi_p(q)) - \gamma(\zero) = \gamma(Z^{z(q)}) - \gamma(\zero).$$
  Then by Fubini's Theorem, 
  $$I_r = \int_{\R\setminus (-r^2,r^2)} \gamma_0(Z^{z}) \int_{-\infty}^{\infty} \YL \widehat{K}(W^w Z^z) \ud w \ud z = \int_{E_r} \gamma_0(q) \YL \widehat{K}(q) \ud q,$$
  where $E_r = \{W^wZ^z\in P\mid |z|\ge r^2\}$.
  
  On the other hand,  let
  \begin{multline*}
    J_r:=\int_{V_0\setminus B(\zero,r)} (\gamma(\pi_p(q)) -\gamma(\zero)) \YL \widehat{K}(\Pi_P(q)) \ud q \\= \int_{V_0\setminus B(\zero,r)} \gamma_0(\Pi_P(q)) \YL \widehat{K}(\Pi_P(q)) \ud q 
    = \int_{\Pi_P(V_0\setminus B(\zero,r))} \gamma_0(q) \YL \widehat{K}(q) \ud q.
  \end{multline*}
  This integral likewise converges absolutely, and $\lim_{r\to 0} J_r$ is equal to the left side of \eqref{eq:first-z-convert-goal}.
  Let $F_r = \Pi_P(V_0\setminus B(\zero,r))\subset P$.
  
  Since $|\slope P|\le 1$, we have $\|v\|_\Kor \approx \|\Pi_P(v)\|_\Kor$ for all $v\in V_0$. In particular, if $v\in V_0\cap B(\zero,r)$, then $\|\Pi_P(v)\|_\Kor \le 2\|v\|_\Kor\le 2r$ and $|z(\Pi_P(v))| \le r^2$. Therefore, $E_r\subset F_r$.
  We thus consider the difference
  $$J_r - I_r  = \int_{F_r\setminus E_r}\gamma_0(q) \YL \widehat{K}(q) \ud q.$$
  
  For $0<s<S$, let $A_{s,S} = V_0\cap (B(\zero,S)\setminus B(\zero,s))$, and for $i\ge 0$, let 
  $$D_i = \Pi_P(A_{2^i r,2^{i+1} r})\setminus E_r,$$ so that up to a measure-zero set, $F_r\setminus E_r = \bigcup_{i=0}^\infty D_i$. For $q\in D_i$, we have $|z(q)|\le r^2$, $|x(q)|\le 2^{i+1}r$, and $|\YL\widehat{K}(q)|\lesssim (2^i r)^{-4}$. Furthermore, by the Mean Value Theorem 
  $$|\gamma_0(q)| = |\gamma(Z^{z(q)}) - \gamma(\zero)| \le \|\partial_z\gamma\|_\infty r^2\lesssim r^2.$$
  Therefore,
  $$|J_r-I_r| \lesssim \sum_{i=0}^\infty \mu(D_i)\cdot r^2\cdot (2^i r)^{-4} \lesssim \sum_{i=0}^\infty 2^i r^5 (2^i r)^{-4} = \sum_{i=0}^\infty 2^{-3i} r \le 2r.$$
  It follows that $\lim_{r\to 0} J_r = \lim_{r\to 0} I_r$, which implies \eqref{eq:first-z-convert-goal}.
\end{proof}
For $g\from \R\to \R$, we write
$$\pv \int_{\R} g(t) \ud t = \lim_{r\to 0} \int_{\R\setminus (-r,r)} g(t) \ud t,$$
as long as the limit on the right exists.

For the rest of this section, we restrict to the special case that $K$ is the Riesz kernel
\begin{align*}
  K(x,y,z) & :=\left( \frac{2 x (x^2+y^2) - 8 yz}{((x^2+y^2)^2+16 z^2)^{3/2}}, \frac{2 y(x^2+y^2)+ 8xz}{((x^2+y^2)^2+16 z^2)^{3/2}} \right)\\
  &= r^{-6} \left(2x (x^2+y^2)-8yz, 2 y(x^2+y^2) + 8 xz \right),
\end{align*}
where $r=\|(x,y,z)\|_{\Kor}$. One can calculate that
$$\YL \widehat{K}(X^x Z^z) = r^{-10}\left(64 z^3-20 x^4 z,80 x^2 z^2-x^6\right).$$

Integrating this using Mathematica, we find that
\begin{align*}
  L_{0}(1) = \int_{-\infty}^\infty \frac{\left(64 - 20 x^4,80 x^2-x^6\right)}{(x^4+16)^{\frac{5}{2}}}\ud x 
  = \left(0,  \frac{\Gamma(\frac{3}{4})\Gamma(\frac{7}{4})}{3\sqrt{\pi}}\right) \approx (0, 0.212\dots),
\end{align*}
where $\Gamma$ is the Euler gamma function. By the symmetry of $\widehat{K}$, we have $L_{0}(1) = L_{0}(-1)$. Let $\xi=y(L_{0}(1))$. By \eqref{eq:L-homog}, $L_{0}(z)=(0,|z|^{-\frac{3}{2}}\xi)$ for all $z\ge 0$.

A similar calculation shows
$$\XL \widehat{K}(X^x Z^z) = r^{-10}\left(48 x^2 z^2-3 x^6,20 x^4 z-64 z^3\right)$$
$$M_{0}(\pm 1) := \int_{-\infty}^\infty \XL \widehat{K}(X^x Z^{\pm 1}) \ud x = \int_{-\infty}^\infty \frac{\left(48 x^2 - 3x^6,\pm(20 x^4 - 64)\right)}{(x^4+16)^{\frac{5}{2}}}\ud x = (0,0).$$

These calculations imply that for any left-invariant horizontal vector field $F = a\XL + b\YL$ and any $z\ne 0$, the integral
$$\int_{-\infty}^\infty F \widehat{K}(X^x Z^{z}) \ud x = aM_0(z) + bL_0(z)$$
is normal to $V_0$ and is zero only if $F$ is a multiple of $X$. The orthogonal symmetry of $K$ implies that
$$\int_{-\infty}^\infty F \widehat{K}(W^w Z^{z}) \ud w$$
is likewise normal to $P$ and is zero only if $F$ is a multiple of $W$. In particular, $L_a(z) = |z|^{-\frac{3}{2}} L_a(1)$ is nonzero and normal to $X+aY$.


We use this formula to prove a lower bound for $G_{f_i,\nu_i}'(0)$. Let $\kappa\from V_0\to \R$ be as in Section~\ref{sec:construction}. That is, $\kappa$ is a bump function supported on $U=[0,1]\times \{0\}\times [0,1]$. Let $m>0$ be such that $\kappa(x,0,z)\ge m$ whenever $x,z\in [\frac{1}{4},\frac{3}{4}]$.
Recall that $r_i=A^{-1}\rho^{-i}$ and that we defined a set of pseudoquads $\mathcal{Q}_i=\{Q_{i,1}, \dots, Q_{i,k_i}\}$ that partition $U$, parametrizations $R_{i,j}\from [0,Ar_i] \times [0,r_i^2] \to Q_{i,j}$, bump functions $\kappa_{i,j}$
$$\kappa_{i,j}(R_{i,j}(s,t)) = A^{-1} r_i \kappa(A^{-1}r_i^{-1} s, r_i^{-2} t),$$
and a set $J_i\subset \{1,\dots, k_i\}$ such that $\nu_i=\sum_{j\in J_i} \kappa_{i,j}$. 

By Lemma~\ref{l:R-vert-bound}, there are functions $g_{i,j}$ such that
\begin{equation}\label{eq:def-gij}
  R_{i,j}(s,t) = R_{i,j}(0,0) + (s, g_{i,j}(s,t)),
\end{equation}
where $\partial_tg_{i,j}(s,t) \in [\frac{3}{4},\frac{4}{3}]$ for all $s$ and $t$. 

As in Section~\ref{subsec:rescaling}, we can rescale $f_i$ and $\nu_i$ by a factor of $r_i^{-1}$ to get functions $\alpha$ and $\gamma$ that satisfy Lemma~\ref{lem:gamma-deriv-bounds}. By Lemmas~\ref{lem:first-z} and \ref{lem:first-z-convert} and the scale invariance of the Riesz transform, for any $p\in \Gamma_{f_i}$,
\begin{multline}\label{eq:G'-single}
  G'_{f_i,\nu_i}(0)(p) = \pv \int_\R (\nu_i(pZ^z) - \nu_i(p)) L_a(z)\ud z + O(A^{-2}) \\ = L_a(1)\cdot \pv  \int_\R (\nu_i(pZ^z) - \nu_i(p)) |z|^{-\frac{3}{2}}\ud z + O(A^{-2}),
\end{multline}
where $a=\nabla_{f_i}f_i(p)$. 

This lets us prove the following bound.
\begin{lemma}\label{lem:point-lower}
  Suppose that $j\in J_i$ and $s\in [\frac{1}{4}Ar_i,\frac{3}{4}Ar_i]$ and let $p=R_{i,j}(s,0)$. There is a $c>0$ such that if $A$ is sufficiently large, then $|G'_{f_i,\nu_i}(0)(p)| \ge cA^{-1}$.
\end{lemma}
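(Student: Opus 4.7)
The plan is to apply the formula \eqref{eq:G'-single} and reduce the estimate to a direct computation of a one-dimensional integral of $\nu_i$ along the vertical line $pZ^{\bullet}$. First I would observe that $p=R_{i,j}(s,0)$ lies on the bottom boundary of $Q_{i,j}$; since $\supp\kappa\subset(0,1)^2$, every bump $\kappa_{i,k}$ vanishes on a neighborhood of $\partial Q_{i,k}$, so $\nu_i(p)=0$. Hence \eqref{eq:G'-single} collapses to
\begin{equation*}
G_{f_i,\nu_i}'(0)(p)=L_a(1)\cdot\pv\int_{\R}\nu_i(pZ^z)|z|^{-\frac{3}{2}}\ud z+O(A^{-2}),
\end{equation*}
where $a=\nabla_{f_i}f_i(p)$. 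The crucial feature is that $\nu_i=\sum_{k\in J_i}\kappa_{i,k}$ is a sum of nonnegative bumps, so the integrand is nonnegative and I may lower bound the integral by keeping only the contribution of $\kappa_{i,j}$; no cancellation analysis between pseudoquads is needed.

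Second, I would compute that contribution directly. Using the parametrization $R_{i,j}(s,t)=R_{i,j}(0,0)+(s,0,g_{i,j}(s,t))$ from \eqref{eq:def-gij} and Lemma~\ref{l:R-vert-bound}, the vertical translate $pZ^z$ meets $Q_{i,j}$ at $R_{i,j}(s,t(z))$ where $t(\cdot)$ is the smooth inverse of $t\mapsto g_{i,j}(s,t)-g_{i,j}(s,0)$, so $t(0)=0$ and $t'(z)\in[\frac{3}{4},\frac{4}{3}]$. Consequently $r_i^{-2}t(z)\in[\frac{1}{4},\frac{3}{4}]$ for every $z$ in a fixed interval $[c_1 r_i^2,c_2 r_i^2]$ with $0<c_1<c_2$ absolute (for instance $c_1=\frac{1}{3}$, $c_2=\frac{9}{16}$). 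Since also $A^{-1}r_i^{-1}s\in[\frac{1}{4},\frac{3}{4}]$ by hypothesis, both arguments of $\kappa$ lie in the region where $\kappa\ge m$, so $\kappa_{i,j}(pZ^z)\ge mA^{-1}r_i$ on this interval. Integrating gives
\begin{equation*}
\int_{c_1 r_i^2}^{c_2 r_i^2}\kappa_{i,j}(pZ^z)|z|^{-\frac{3}{2}}\ud z\ge mA^{-1}r_i\cdot 2(c_1^{-1/2}-c_2^{-1/2})r_i^{-1}=c_0 A^{-1}
\end{equation*}
for an absolute constant $c_0>0$.

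Finally, I would verify that $|L_a(1)|$ is bounded below uniformly in $a$. Since $\|\nabla_{f_i}f_i\|_\infty\le 1$ by Lemma~\ref{l:stopping-time-Lip}, we have $a\in[-1,1]$. The discussion preceding the lemma shows that $L_a(1)$ is nonzero for every $a\in\R$ (being normal to the horizontal direction $X+aY$), and $a\mapsto L_a(1)$ is continuous, so $|L_a(1)|\ge c_L$ on $[-1,1]$ for some $c_L>0$. Combining,
\begin{equation*}
|G_{f_i,\nu_i}'(0)(p)|\ge c_L c_0 A^{-1}-O(A^{-2})\ge cA^{-1}
\end{equation*}
once $A$ is sufficiently large, with $c=\frac{1}{2}c_L c_0$. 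I do not foresee any serious obstacle: the estimate is essentially a one-variable integration against the $|z|^{-3/2}$ weight, and the positivity of $\nu_i$ removes any need to track contributions from neighboring pseudoquads. The only slightly delicate point, namely the nondegeneracy of $L_a(1)$ across the full range of admissible slopes, is already handled by the orthogonality and continuity observations recorded in the text preceding the lemma.
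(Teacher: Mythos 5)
Your proposal is correct and follows essentially the same approach as the paper: both apply \eqref{eq:G'-single} with $\nu_i(p)=0$, exploit the nonnegativity of $\nu_i$ to reduce to the single bump $\kappa_{i,j}$, change variables from $z$ to the flow coordinate $t$ using the Jacobian bounds of Lemma~\ref{l:R-vert-bound}, and finish with the compactness argument that $|L_a(1)|$ is bounded below for $a\in[-1,1]$. The only cosmetic difference is that you work with the explicit inverse $t(z)$ of $t\mapsto g_{i,j}(s,t)-g_{i,j}(s,0)$ with explicit constants $c_1,c_2$, whereas the paper substitutes forward and tracks the Jacobian implicitly.
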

\begin{proof}
  Since $j\in J_i$, we have 
  $$\nu_i(R_{i,j}(s,t)) = \kappa_{i,j}(R_{i,j}(s,t)) = A^{-1} r_i \kappa(A^{-1}r_i^{-1} s, r_i^{-2} t)$$
  for all $t$. In particular, $\nu_i(p)=0$. Since $\nu_i$ is nonnegative,
  $$\pv \int_\R (\nu_i(pZ^z) - \nu_i(p)) |z|^{-\frac{3}{2}}\ud z = \int_\R \nu_i(pZ^z) |z|^{-\frac{3}{2}}\ud z \ge 0.$$
  
  Let $z_0 = z(p) = g_{i,j}(s,0)$. Then for $t\in [0,r_i^2]$, we have $R_{i,j}(s,t) = pZ^{g_{i,j}(s,t) - z_0}$. We thus substitute $z = g_{i,j}(s,t) - z_0$. Since $\partial_z g_{i,j}\in [\frac{3}{4},\frac{4}{3}]$, we have $\ud z \approx \ud t$ and $z \approx t$, so
  \begin{multline*}
    \int_\R \nu_i(pZ^z) |z|^{-\frac{3}{2}}\ud z \ge \int_{0}^{g_{i,j}(s,r_i^2) - z_0} \nu_i(pZ^z) |z|^{-\frac{3}{2}}\ud z \\
     \approx \int_{0}^{r_i^2} \nu_i(R_{i,j}(s,t))  |t|^{-\frac{3}{2}}\ud t
     \approx \int_{0}^{r_i^2} A^{-1}r_i \kappa(A^{-1}r_i^{-1} s, r_i^2 t) t^{-\frac{3}{2}}\ud t.
  \end{multline*}
  Let $\hat{s}=A^{-1}r_i^{-1}s$ and note that $\hat{s}\in [\frac{1}{4},\frac{3}{4}]$. We substitute $u=r_i^2 t$ and use the fact that $\kappa(\hat{s},u)\ge m$ for all $u\in [\frac{1}{4},\frac{3}{4}]$ to obtain
  \begin{equation*}
    \int_\R \nu_i(pZ^z) |z|^{-\frac{3}{2}}\ud z \gtrsim A^{-1} \int_{0}^{1} r_i \kappa(\hat{s}, u) (r_i^{-2} u)^{-\frac{3}{2}} r_i^{-2} \ud u \ge A^{-1} \int_{\frac{1}{4}}^{\frac{3}{4}} m u^{-\frac{3}{2}} \ud u \gtrsim \frac{A^{-1}}{2}.
  \end{equation*}
  By \eqref{eq:G'-single}, there is a $c_0>0$ such that $|G'_{f_i,\nu_i}(0)(p)| \gtrsim c_0 A^{-1} |L_a(1)| + O(A^{-2})$. Let $c=\frac{c_0}{2}\min_{a\in [-1,1]} |L_a(1)|$. When $A$ is sufficiently large, $|G'_{f_i,\nu_i}(0)(p)| \ge cA^{-1}$, as desired.
\end{proof}

Now we use Lemma~\ref{lem:Fprime-Lipschitz-dupe} to prove part (\ref{it:first-lower}) of Lemma~\ref{lem:mainBounds}.

\begin{proof}[{Proof of part (\ref{it:first-lower}) of Lemma~\ref{lem:mainBounds}}]
  Recall that $S_i=\bigcup_{j\not \in J_i} Q_{i,j}$. By Proposition~\ref{p:stopping-time}, there is an $\epsilon>0$ such that if $i\le \epsilon A^4$, then $|S_i|\le \frac{1}{2}$ and thus
  \begin{equation}\label{eq:Ji-lower}
    \sum_{j\in J_i}|Q_{i,j}| = 1-|S_i|\ge \frac{1}{2}.
  \end{equation}
  
  Let $c$ be as in Lemma~\ref{lem:point-lower}, so that 
  $$|G'_{f_i,\nu_i}(0)(R_{i,j}(s,0))| \ge cA^{-1}$$
  for all $j\in J_i$ and $s\in [\frac{1}{4}Ar_i,\frac{3}{4}Ar_i]$.
  Let $t\in [0,r_i^2]$, let $p=R_{i,j}(s,0)$, and let $q=R_{i,j}(s,t)$. 
  Since $\partial_t g_{i,j}(s,t) \in [\frac{3}{4},\frac{4}{3}]$ for all $s$ and $t$, we have 
  $$d_\Kor(p,q)\approx \sqrt{g_{i,j}(s,t)-g_{i,j}(s,0)} \approx \sqrt{t}.$$
  By Corollary~\ref{cor:Fprime-Holder}, there is an $a>0$ such that
  $$\left|G_{f_i,\nu_i}'(0)(p) - G_{f_i,\nu_i}'(0)(q)\right| \lesssim (r_i^{-1} \sqrt{t})^a + \rho^{-\frac{1}{2}}.$$
  We can thus choose $\rho_0, \delta>0$ depending only on $\kappa$ such that if $\rho>\rho_0$ and $t\in [0,\delta r_i^2]$, then 
  $$|G_{f_i,\nu_i}'(0)(R_{i,j}(s,t))|\ge \frac{c}{2}A^{-1}.$$

  Then
  \begin{multline*}
    \int_{Q_{i,j}} |G_{f_i,\nu_i}'(0)(q)|\ud q = 
    \int_0^{r_i^2} \int_0^{Ar_i} |G_{f_i,\nu_i}'(0)(R_{i,j}(s,t))| \partial_z g_{i,j}(s,t) \ud s \ud t \\
    \gtrsim \int_0^{\delta r_i^2} \int_{\frac{1}{4}Ar_i}^{\frac{3}{4}Ar_i} |G_{f_i,\nu_i}'(0)(R_{i,j}(s,t))| \ud s \ud t \gtrsim \delta A^{-1} |Q_{i,j}| \gtrsim A^{-1}|Q_{i,j}|.
  \end{multline*}
  By \eqref{eq:Ji-lower},
  $$\int_{U} |G_{f_i,\nu_i}'(0)(q)|\ud q \gtrsim \sum_{j\in J_i} \int_{Q_{i,j}} |G_{f_i,\nu_i}'(0)(q)|\ud q \gtrsim \sum_{j\in J_i} A^{-1}|Q_{i,j}| \ge \frac{1}{2}A^{-1},$$
  as desired.
\end{proof}

\section{Quasi-orthogonality}\label{sec:quasi-orthog}
In this section, we prove part (\ref{it:quasi-ortho}) of Lemma~\ref{lem:mainBounds}, which claims that there is an $\epsilon>0$ such that $|\langle F_i'(0), F_j'(0)\rangle| \lesssim \rho^{-\epsilon}$ for all $0\le i < j$.

Recall that $\nu_i$ oscillates with wavelength roughly $r_i = A^{-1}\rho^{-i}$, so we expect that $F_i'(0)$ also oscillates with wavelength roughly $r_i$. Since $r_j < \rho^{-1} r_i$, $F_j'(0)$ has higher frequency than $F_i'(0)$. We thus bound $\langle F_i'(0), F_j'(0)\rangle$ by partitioning $\Psi_{f_j}(U)\subset \Gamma_{f_j}$ into sets of diameter on the order of $r_j \rho^{\delta}$ for some small $\delta>0$. Let $Q$ be such a set. Since $j>i$, Lemma~\ref{lem:Fprime-Lipschitz-dupe} implies that $F_i'(0)$ is nearly constant on $Q$. We claim that the average of $F_j'(0)$ on $Q$ is small and thus $\int_Q F_i'(0)(q) F_j'(0)(q)\ud q$ is small.

We start by bounding the average of $F_j'(0)$ on rectangles (Section~\ref{sec:rectangle-averages}). We will then bound the average of $F_j'(0)$ on pseudoquads (Section~\ref{sec:quad-averages}) and complete the proof of Lemma~\ref{lem:mainBounds}.(\ref{it:quasi-ortho}) by tiling $U$ by pseudoquads (Section~\ref{sec:quasi-orthog-proof}).



\subsection{Averaging over rectangles}\label{sec:rectangle-averages}
We begin the proof of Lemma~\ref{lem:mainBounds}.(\ref{it:quasi-ortho}) by bounding the average of $F_j'(0)$ on rectangles of scale roughly $r_i\rho^\epsilon$. 

Let $P\subset \HH$ be a vertical plane of slope $a$ and let $W=X+aY$ so that $P=\langle W,Z\rangle$. For $v\in P$ and $r>0$, we define
$$E(v,r;P):=\{v W^w Z^z \mid |w|\le r, |z|\le r^2\}.$$
We call $E(v,r;P)$ a \emph{rectangle} in $P$.
In this section, we prove the following lemma.
\begin{lemma}\label{lem:rectangle-averages-rescaled}
  There is an $\epsilon>0$ such that
  when $\rho$ is sufficiently large, the following property holds. 
  Let $f_i$, $\nu_i$, and $\Sigma_i=\Gamma_{f_i}$ be as in Section~\ref{sec:construction}. Let $i\ge 0$,  let $p_0 \in \Sigma_i$, and let $P$ be the tangent plane to $\Sigma_i$ at $p_0$. Let $p\in P\cap B(p_0, \rho^\epsilon r_i)$ and $2r_i<R<\rho^{\epsilon} r_i$. Then
  \begin{equation}\label{eq:rect-avg-fi}
    \frac{1}{R^3}\left|\int_{E(p,R;P)} F_i'(0)(q) \ud q \right| \lesssim \rho^{-\epsilon} +  \frac{\log (R r_i^{-1})}{R r_i^{-1}}.
  \end{equation}
\end{lemma}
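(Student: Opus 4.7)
My plan is to rescale around $p_0$ using Section~\ref{subsec:rescaling}, replace $F'_i(0)$ by the translation-invariant singular integral $H_{P,\gamma}$ via Lemma~\ref{lem:tildeH-approx-dupe}, and then estimate $\int_{E'}H_{P,\gamma}$ by a Fubini plus cancellation argument exploiting the structure of the kernel $\YL\widehat K\circ\Pi_P$.

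After rescaling by $r_i^{-1}$, the bound becomes
\[
\frac{1}{(R')^{3}}\Big|\int_{E'}G'_{\alpha,\gamma}(0)(q)\,\ud q\Big|\lesssim \rho^{-\epsilon}+\frac{\log R'}{R'},
\]
where $R'=R/r_i\in(2,\rho^{\epsilon})$, $\alpha,\gamma$ satisfy Lemma~\ref{lem:gamma-deriv-bounds}, $P$ is the tangent plane to $\Gamma_\alpha$ at $\zero$, and $E'\subset P\cap B(\zero,\rho^{\epsilon})$ is a rectangle of scale $R'$. Taking $\epsilon$ small enough that $E'\subset P\cap B(\zero,2\rho^{\epsilon})$, Lemma~\ref{lem:tildeH-approx-dupe} gives $|G'_{\alpha,\gamma}(0)(q)-H_{P,\gamma}(q)|\lesssim \rho^{-\epsilon}$ uniformly on $E'$, absorbing the $\rho^{-\epsilon}$ term. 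It then suffices to show $(R')^{-3}|\int_{E'}H_{P,\gamma}(q)\,\ud q|\lesssim (\log R')/R'$.

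For this main estimate I would truncate $H_{P,\gamma}$ at radius $R'$: by Lemma~\ref{lem:grrab-large} applied with $\zeta=\lambda_P$ (affine, so $\partial_z\zeta\equiv 0$ and the formula simplifies), the tail beyond $R'$ is pointwise $\lesssim \|\gamma\|_\infty/R'\lesssim A^{-1}/R'$, which already lies within the claim. For the localized piece I would use Lemma~\ref{lem:grrab-small} (again simplified since $\partial_z\lambda_P=0$) together with the identity $\Pi_P(qv)=q\Pi_P(v)$ for $q\in P$ and $v\in V_0$, swap Fubini to obtain
\[
\int_{E'}(G^{0,R'}_{\lambda_P,\gamma})'(0)(q)\,\ud q=\pv\int_{V_0\cap B(\zero,R')}\YL\widehat K(\Pi_P(v))\,J(v)\,\ud v,
\]
where $J(v)=\int_{E'}(\gamma(qv)-\gamma(q))\,\ud q$. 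For $|v|_\Kor$ of order $R'$, the change of variables $q\mapsto qv^{-1}$ gives $|J(v)|\lesssim \|\gamma\|_{\infty}|E'v^{-1}\triangle E'|\lesssim A^{-1}|v|_\Kor(R')^{2}$, and combined with $|\YL\widehat K\circ\Pi_P(v)|\lesssim |v|_\Kor^{-4}$ and Lemma~\ref{lem:polar}, a dyadic sum over $|v|_\Kor\in(1,R')$ yields $\int|\YL\widehat K\circ\Pi_P||J|\lesssim A^{-1}(R')^{2}\log R'$. For $|v|_\Kor$ small, the $\HH$-even/odd symmetrization $v\mapsto \theta v$ (using that $\Pi_P$ and $\theta$ commute in coordinates adapted to $P$ and that $\YL\widehat K$ is $\HH$-even) kills the first-order Taylor term of $\gamma(qv)-\gamma(q)$; the residual $\gamma(qv)+\gamma(q\theta v)-2\gamma(q)=2z(v)\partial_z\gamma(q)+O(|v|_\Kor^{2}\|W^{2}\gamma\|_{L_\infty})$ is controlled using integration by parts in the $z$-direction of $E'$ (reducing $\int_{E'}\partial_z\gamma$ to a boundary term of order $A^{-1}(R')^{2}$) together with the derivative bounds from Lemma~\ref{lem:horiz-gamma-bounds}. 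Combining these contributions and dividing by $(R')^{3}$ produces the claimed $(\log R')/R'$ bound.

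The main obstacle is the small-$|v|_\Kor$ analysis. Since $E'\subset P$ and not $\Gamma_\alpha$, the derivative bound $\|W^{2}\gamma\|_{L_\infty(B)}\lesssim A^{-1}(1+d_\Kor(\cdot,\Gamma_\alpha))^{2}$ from Lemma~\ref{lem:horiz-gamma-bounds} incurs a factor as large as $\rho^{2\epsilon}$ on $E'$; the $\HH$-symmetrization cancels only the odd part of $\gamma(qv)-\gamma(q)$ and the residual even part $\sim 2z(v)\partial_z\gamma$ must be controlled using cancellation of $\int_{E'}\partial_z\gamma$ (the integration-by-parts boundary term) rather than pointwise bounds. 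Balancing these $\rho^{O(\epsilon)}$ factors against the $\rho^{-\epsilon}$ and $(\log R')/R'$ budgets forces a specific choice of $\epsilon>0$ (compatible with the exponent in Lemma~\ref{lem:tildeH-approx-dupe}); once this bookkeeping is arranged, the proof concludes by routine dyadic summation.
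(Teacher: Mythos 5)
Your proposal takes a genuinely different route from the paper's. The paper (Lemma~\ref{lem:to-singular-P}) rewrites $\int_E H_{P,\gamma}$ as a principal-value integral over $E\times P$, splits into $E\times(P\setminus E)$ (handled by a layer-cake estimate giving $R^2\log R$) and $E\times E$, and then uses the vertical antisymmetry of $M(p)=\YL\widehat K(p^{-1})-\YL\widehat K(p)$ (Lemma~\ref{lem:quadrupole}, a quadrupole cancellation coming from the orthogonal symmetry of $K$) to bound the diagonal piece by $R^2$ with no smoothness of $\gamma$ beyond $\|\gamma\|_\infty$. You instead Fubini in the ``vertical-plane'' variable $v\in V_0$ and exploit the $\theta$-evenness of $\YL\widehat K\circ\Pi_P$ plus Taylor expansion of $\gamma$. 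Your large-$\kappa$ symmetric-difference estimate $|J(v)|\lesssim A^{-1}\kappa(R')^2$ is correct and yields $A^{-1}(R')^2\log R'$ after the dyadic sum, so that piece is fine.

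The gap is in the small-$\kappa$ analysis, and it is not where you locate it. Your worry about a $\rho^{2\epsilon}$ factor in $\|W^2\gamma\|_{L_\infty(E')}$ is a red herring: since $P$ is tangent to $\Gamma_\alpha$ at $p_0$, Lemma~\ref{lem:a-Taylor} and \eqref{eq:alpha-jk-bounds} give $|\lambda_P(q)-\alpha(q)|\lesssim\rho^{-1}\rho^{2\epsilon}=\rho^{2\epsilon-1}\lesssim 1$ for $q\in P\cap B(\zero,c\rho^\epsilon)$ and $\epsilon<\tfrac12$, so the factor $(1+d_\Kor(\cdot,\Gamma_\alpha))^2$ in Lemma~\ref{lem:horiz-gamma-bounds} is $O(1)$. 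The real problem is the remainder term itself. After symmetrization you write the integrand as $2z(v)\partial_z\gamma(q)+O(|v|_\Kor^2\|W^2\gamma\|_\infty)$, integrate by parts to control $\int_{E'}\partial_z\gamma\lesssim A^{-1}R'$, and propose to control the $O(|v|_\Kor^2\|W^2\gamma\|_\infty)$ term by the pointwise bound. But integrating the pointwise bound over $E'$ gives $|v|_\Kor^2 A^{-1}(R')^3$, and the dyadic sum $\int_0^1\kappa^{-4}\cdot\kappa^2 A^{-1}(R')^3\cdot\kappa^2\,\ud\kappa\approx A^{-1}(R')^3$, which after dividing by $(R')^3$ produces the constant $A^{-1}$ rather than $A^{-1}\log R'/R'$. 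The $x^2W^2\gamma$ term needs the same boundary-term cancellation you applied to $\partial_z\gamma$: for example, one can avoid Taylor expansion altogether by changing variables in $J(v)+J(\theta v)=\int_{E'+(x,z')}\tilde\gamma+\int_{E'+(-x,z')}\tilde\gamma-2\int_{E'}\tilde\gamma$, reducing to boundary strips of width $\approx|v|_\Kor$, on which the first-order bound $\|\nabla_P\gamma\|_\infty\lesssim A^{-1}$ from Lemma~\ref{lem:horiz-gamma-bounds} gives $|J(v)+J(\theta v)|\lesssim A^{-1}\kappa^2(R')^2$, which is the estimate your dyadic sum actually needs. As written, your argument does not close; with the corrected small-$\kappa$ bound it does, and it remains a viable alternative to the paper's $J_1/J_2$ decomposition, though it relies more heavily on the derivative bounds for $\gamma$ where the paper relies on the orthogonal symmetry of the kernel.
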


After a rescaling and translation, it suffices to consider the case that $\alpha$ and $\gamma$ satisfy Lemma~\ref{lem:gamma-deriv-bounds} and $p_0=\zero\in \Gamma_\alpha$.
Let $P$ be the tangent plane to $\Gamma_\alpha$ at $p_0$ and $W=X+\nabla_\alpha \alpha(p_0)Y$. Let $\epsilon>0$ be as in Lemma~\ref{lem:tildeH-approx-dupe}. It suffices to show that for $p\in B(p_0,\rho^{\frac{\epsilon}{2}};P)$ and $2<R<\rho^{\frac{\epsilon}{2}}$,
\begin{equation}\label{eq:rect-avg-gamma}
  \frac{1}{R^3}\left|\int_{E(p,R;P)} G_{\alpha,\gamma}'(0)(q) \ud q \right| \lesssim \rho^{-\epsilon} +  \frac{\log(R)}{R}.
\end{equation}  
By Lemma~\ref{lem:tildeH-approx-dupe}, when $\rho$ is sufficiently large,
\begin{equation}\label{eq:rect-avg-h-gamma}
  \frac{1}{R^3}\left|\int_{E(p,R;P)} G_{\alpha,\gamma}'(0)(q) \ud q - \int_{E(p,R;P)} H_{P,\gamma}(q) \ud q \right| \lesssim \rho^{-\epsilon}.
\end{equation}
Then Lemma~\ref{lem:rectangle-averages-rescaled} is a consequence of the following bound.
\begin{lemma}\label{lem:rectangle-H-average}
  Let $P$ and $\gamma$ be as above. Let $p\in P\cap B(p_0,\sqrt{\rho})$ and let $2 < R < \sqrt{\rho}$. Then
  $$\frac{1}{R^3} \left|\int_{E(p,R;P)} H_{P,\gamma}(q) \ud q \right| \lesssim \frac{\log(R)}{R}.$$
\end{lemma}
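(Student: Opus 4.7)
}
After a left-translation I assume $p_0=\zero$, so $P=\langle W,Z\rangle$ with $W=X+aY$ and $|a|\le 1$. The plan is a Fubini + symmetrization argument, reducing to a translation-difference estimate on $P$.

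First, using the bound \eqref{eq:H-unif}, I replace $H_{P,\gamma}(q)$ by the truncated quantity $(G^{r,R}_{\lambda_P,\gamma})'(0)(q)$, sending $r\to 0$ and fixing the outer truncation at Korányi scale $R$. The per-point error is $O(A^{-1}R^{-1})$, so after integrating over $E$ and dividing by $R^3$ this contributes $O(1/R)$. By Lemma~\ref{lem:grrab-small} (applied to $\zeta=\lambda_P$, which is affine so $\partial_z\lambda_P=0$),
\[
  (G^{r,R}_{\lambda_P,\gamma})'(0)(q)=\int_{qA^{V_0}_{r,R}}(\gamma(v)-\gamma(q))\YL\widehat K(q^{-1}\Pi_P(v))\,dv.
\]
The key point is that for $q\in P$ and $u\in V_0$ we have $\Pi_P(qu)=q\Pi_P(u)$, because $P$ is a subgroup and $\Pi_P$ is projection along cosets of $\paramY$. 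Changing variables $v=qu$, applying Fubini, and using that $\gamma$ is constant on cosets of $\paramY$, I obtain
\[
  \int_E H^{r,R}_{P,\gamma}(q)\,dq=\int_{A^{V_0}_{r,R}}\YL\widehat K(\Pi_P(u))\,D(u)\,du,\qquad D(u)=\int_{E\cdot\Pi_P(u)}\!\!\gamma-\int_E\gamma.
\]
Writing $\Pi_P(u)=W^\omega Z^\eta$ in $(W,Z)$-coordinates and using $[W,Z]=0$, the set $E\cdot\Pi_P(u)$ is simply $E$ translated by $(\omega,\eta)$.

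Next I split the $u$-integration at scale $R/4$. For $\|u\|_\Kor\ge R/4$ the crude estimate $|D(u)|\le 2\|\gamma\|_\infty|E|\lesssim A^{-1}R^3$ combined with Lemma~\ref{lem:polar} gives $\lesssim A^{-1}R^3\int_{R/4}^R\kappa^{-2}\,d\kappa\lesssim A^{-1}R^2$, i.e.\ $O(1/R)$ after normalization. For $\|u\|_\Kor<R/4$ I symmetrize using $\theta\colon(x,0,z)\mapsto(-x,0,z)$: this preserves the Korányi annulus, and $\YL\widehat K(\Pi_P(\cdot))$ is $\theta$-invariant by $\HH$-evenness together with $\Pi_P\circ\theta=\theta\circ\Pi_P$. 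Setting $\Phi_\eta(\omega):=\int_E\gamma(qW^\omega Z^\eta)\,dq$, the symmetrized inner integral splits as
\[
  \tfrac12(D(u)+D(\theta u))=\underbrace{\tfrac12(\Phi_\eta(\omega)+\Phi_\eta(-\omega)-2\Phi_\eta(0))}_{\text{2nd difference in }\omega}+\underbrace{(\Phi_\eta(0)-\Phi_0(0))}_{\text{depends only on }\eta}.
\]
The second-difference piece is $O(\omega^2)$ times $\int_E W^2\gamma$, which I bound by the boundary-integral identity $\int_E W^2\gamma=\int_{-R^2}^{R^2}[W\gamma(R,z)-W\gamma(-R,z)]\,dz$ together with $\|W\gamma\|_\infty\lesssim A^{-1}\sqrt\rho$ from Lemma~\ref{lem:horiz-gamma-bounds}; integrating against $\kappa^{-4}$ on the annulus yields a term of size $\lesssim \log R$ after normalization. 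The $\eta$-only piece, after carrying out the $\omega$-integration first, becomes a 1D singular integral against the kernel $L_a(\eta)$. Taylor-expanding $\Phi_\eta(0)-\Phi_0(0)=\eta\int_E Z\gamma+O(\eta^2 R^3\|Z^2\gamma\|_\infty)$, the first-order term is odd in $\eta$ while $L_a$ is even for the Riesz kernel (as verified in the text from $L_0(1)=L_0(-1)=(0,\xi)$ and extended to general $a$ by orthogonal symmetry), so its contribution vanishes; the $\eta^2$-remainder, combined with $\|Z^2\gamma\|_\infty\lesssim A^{-1}$ and $|L_a(\eta)|\lesssim|\eta|^{-3/2}$, integrates to $O(R^3/R)$ after truncation at the Korányi scale $R^2$ in $\eta$, yielding the $\log R/R$ bound.

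The main obstacle I expect is the dependence on $\rho$ in the second-difference piece: the naive bound $\|W^2\gamma\|_\infty\lesssim A^{-1}\rho$ at distance $\sqrt\rho$ is too large, so the boundary-integral trick reducing one derivative of $W$ to an integral along the $W$-faces of $E$ is essential to regain the right scaling. A second, more delicate point is verifying that the linear-in-$\eta$ term truly cancels under the Korányi truncation (not just the product truncation); this requires that $L_a$ be even in $\eta$, which holds for the Riesz kernel by the explicit computation in the text but would need a separate argument for a general orthogonal $(-3)$-homogeneous kernel.
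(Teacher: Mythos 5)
Your approach is genuinely different from the paper's. The paper applies Lemma~\ref{lem:to-singular-P} to write $\int_E H_{P,\gamma}$ as a principal-value double integral over $E\times P$, splits into a near-far piece $J_1$ over $E\times(P\setminus E)$ (bounded by a logarithmic estimate in the distance to $\partial E$) and a local piece $J_2$ over $E\times E$ (bounded by symmetrizing $v\leftrightarrow w$ and exploiting the antisymmetry $M(W^wZ^z)=-M(W^wZ^{-z})$ of $M(p)=\YL\widehat K(p^{-1})-\YL\widehat K(p)$). You instead change variables in the inner integral, Fubini down to a single $u$-integral against $D(u)=\int_{E\cdot\Pi_P(u)}\gamma-\int_E\gamma$, and symmetrize in $\omega$. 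Both arguments ultimately rest on a $z$-reflection cancellation; the reduction you describe is a legitimate alternative in principle. However, the estimates as you wrote them do not close, for several reasons.

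First, $\|W\gamma\|_\infty\lesssim A^{-1}\sqrt\rho$ is wrong. Lemma~\ref{lem:horiz-gamma-bounds} gives $|W\gamma(q)|\lesssim A^{-1}(1+d_\Kor(q,\Gamma_\alpha))$, and Lemma~\ref{lem:a-Taylor} together with $\|\nabla_\alpha^2\alpha\|_\infty\lesssim A^{-1}\rho^{-1}$ shows that the tangent plane stays close to $\Gamma_\alpha$: $d_\Kor(q,\Gamma_\alpha)\lesssim\rho^{-1}d_\Kor(q,p_0)^2\lesssim 1$ on $P\cap B(p_0,3\sqrt\rho)$. So $\|W\gamma\|_{L_\infty}\lesssim A^{-1}$ on the relevant rectangles, with no $\sqrt\rho$ factor, and likewise $\|Z\gamma\|_{L_\infty}\lesssim A^{-1}$.

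Second, even with the corrected $A^{-1}$, your second-difference estimate is too lossy. The bound $|\Phi_\eta(\omega)+\Phi_\eta(-\omega)-2\Phi_\eta(0)|\lesssim\omega^2 R^2 A^{-1}$, integrated against the $(-4)$-homogeneous kernel over $A_{r,R/4}$, gives
$A^{-1}R^2\int_r^{R/4}\kappa^{2}\cdot\kappa^{-4}\cdot\kappa^2\,d\kappa\approx A^{-1}R^3$,
i.e.\ $O(A^{-1})$ after normalization, not $O(\log R/R)$; since $R$ may be as large as $\sqrt\rho\gg A$, this is not absorbable. To close this piece you must supplement the second-derivative bound with the crude first-difference bound
$|\Phi_\eta(\omega)+\Phi_\eta(-\omega)-2\Phi_\eta(0)|\le 2\|\gamma\|_\infty\,|E\symdiff EW^\omega|\lesssim A^{-1}R^2|\omega|$,
so that using $\min(\omega^2,|\omega|)$ the $\kappa$-integral picks up the logarithmic divergence you want: the contribution is then $\lesssim A^{-1}R^2\log R$, as in the statement.

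Third, the $\eta$-remainder has the same flaw. With your stated bound $O(\eta^2 R^3\|Z^2\gamma\|_\infty)$ and $|L_a(\eta)|\approx|\eta|^{-3/2}$, the integral is $\int_0^{R^2}\eta^{-3/2}\eta^2R^3A^{-1}\,d\eta\approx A^{-1}R^6$, not $R^2$. You need the fundamental-theorem-of-calculus trick in $z$ exactly as you used it in $\omega$, which gives $|\int_E Z\gamma|\lesssim R\|\gamma\|_\infty\lesssim A^{-1}R$ and $|\int_E Z^2\gamma(\cdot\,Z^s)\,dq|\lesssim R\|Z\gamma\|_\infty\lesssim A^{-1}R$, hence $|\Phi_\eta(0)-\Phi_0(0)-\eta\int_E Z\gamma|\lesssim\eta^2 A^{-1}R$; combined with the crude bound $|\Phi_\eta(0)-\Phi_0(0)|\lesssim\|\gamma\|_\infty|E\symdiff EZ^\eta|\lesssim A^{-1}R|\eta|$, the $\eta$-integral closes to $O(A^{-1}R^2)$.

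Finally, you are right to flag the evenness of $L_a$ in $\eta$: it is not a consequence of homogeneity alone, and for a general orthogonal kernel requires the orthogonal equivariance (it follows from normality of $L_a$ to $W$ together with the $(x,y,z)\mapsto(x,-y,-z)$ symmetry), which is exactly the role that Lemma~\ref{lem:quadrupole} plays in the paper's proof of this lemma. One must also verify carefully that the Korányi truncation at the inner radius, which prevents the $\omega$-slice from being all of $\R$ for small $\eta$, does not spoil the cancellation; this is routine but not free.
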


We first reduce Lemma~\ref{lem:rectangle-H-average} to a question about a singular integral on $P\times P$. For $r>0$, let
$$\Delta_r=\{(v,w)\in P\times P \mid d_\Kor(v,w)<r\}$$
and for $U\subset P\times P$, define
$$\pv \int_{U} M(v,w) \ud (v ,w) := \lim_{r\to 0}  \int_{U\setminus \Delta_r} M(v,w) \ud (v, w).$$
Recall that for $v\in P$, we defined
$$H_{P,\phi}(v) = \pv(v) \int_{p V_0} (\phi(w)-\phi(v))\YL \widehat{K}(v^{-1} \Pi_P(w)) \ud w.$$

\begin{lemma}\label{lem:to-singular-P}
  Let $\lambda$ be a bounded smooth function which is constant on cosets of $\paramY$. Let $p\in \HH$ and let $P$ be a vertical plane through $p$ with finite slope. Then 
  \begin{equation} \label{eq:to-singular-P-1}
    H_{P,\lambda}(p) = \pv(p) \int_{P}(\lambda(q)-\lambda(p)) \YL \widehat{K}(p^{-1} q) \ud q
  \end{equation}
  and 
  \begin{equation} \label{eq:to-singular-P-2}
    \int_{E(p,R;P)} H_{P,\lambda}(q) \ud q = \pv \int_{E(p,R;P) \times P}(\lambda(w)-\lambda(v)) \YL \widehat{K} (v^{-1}w) \ud(v,w).
    \end{equation}
\end{lemma}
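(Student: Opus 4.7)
The plan is to prove \eqref{eq:to-singular-P-1} via a change of variables $w\mapsto q = \Pi_P(w)$ from $pV_0$ to $P$, and then deduce \eqref{eq:to-singular-P-2} from \eqref{eq:to-singular-P-1} by Fubini and dominated convergence. By left-invariance we may assume $p=\zero$, in which case \eqref{eq:H-as-pv-v0} reads
$$H_{P,\lambda}(\zero) = \pv(\zero) \int_{V_0} (\lambda(w)-\lambda(\zero))\YL \widehat{K}(\Pi_P(w)) \ud w.$$

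For \eqref{eq:to-singular-P-1}, the key observation is that $\Pi_P$ is constant on cosets of $\paramY$, so $\Pi\circ\Pi_P = \Pi$, which combined with Lemma~\ref{lem:def-dv} implies $\Pi_P|_{V_0}\from V_0\to P$ is a $\mu$-measure-preserving homeomorphism (since $\mu(\Pi_P(S)) = \cL(\Pi(\Pi_P(S))) = \cL(\Pi(S)) = \mu(S)$ for Borel $S\subset V_0$). Since $\lambda$ is also constant on $\paramY$-cosets, $\lambda(w) = \lambda(\Pi_P(w))$, so the substitution $q=\Pi_P(w)$ yields
$$\int_{V_0\cap(B(\zero,R)\setminus B(\zero,r))} (\lambda(w)-\lambda(\zero))\YL\widehat{K}(\Pi_P(w)) \ud w = \int_{\Sigma_{r,R}} (\lambda(q)-\lambda(\zero))\YL\widehat{K}(q) \ud q,$$
where $\Sigma_{r,R} := \Pi_P(V_0\cap(B(\zero,R)\setminus B(\zero,r)))$. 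A direct Korányi computation shows $\|\Pi_P(w)\|_\Kor\approx_{\slope P}\|w\|_\Kor$ for $w\in V_0$, so $\Sigma_{r,R}\symdiff A^P_{r,R}$ is contained in $A^P_{r/c,cr}\cup A^P_{R/c,cR}$ for some $c=c(\slope P)$. To control the integrals over these annular pieces, we use the symmetry trick from the proof of Proposition~\ref{prop:annulus-change}: $\Pi_P$ commutes with $\theta$, so both $\Pi_P(V_0\cap B(\zero,s))$ and $P\cap B(\zero,s)$ are $\theta$-symmetric, hence so is their symmetric difference. Since $\YL\widehat{K}$ is $\HH$-even (because $\widehat{K}$ is $\HH$-odd), \eqref{eq:int-even-odd} implies that only $(\lambda-\lambda(\zero))^\sE$ contributes, and Lemma~\ref{lem:a-Taylor} applied with $a=\lambda_P$ and $m=\lambda$ yields $|(\lambda-\lambda(\zero))^\sE(q)|\lesssim d_\Kor(\zero,q)^2$. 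Combined with Lemma~\ref{lem:polar}, the annular contributions are $\lesssim r$ and $\lesssim R^{-1}$ respectively, both vanishing in the principal-value limit.

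For \eqref{eq:to-singular-P-2}, set $g_r(v) := \int_{P\setminus B(v,r)} (\lambda(q)-\lambda(v))\YL\widehat{K}(v^{-1}q) \ud q$, which converges absolutely for each $r>0$. By \eqref{eq:to-singular-P-1} applied at $v$ in place of $p$, we have $g_r(v)\to H_{P,\lambda}(v)$ as $r\to 0$, and the same symmetry argument yields the uniform rate $|g_r(v)-H_{P,\lambda}(v)|\lesssim r$ for $v\in E(p,R;P)$. For each fixed $r>0$, Lemma~\ref{lem:polar} gives $\int_{P\setminus B(v,r)}\|v^{-1}q\|_\Kor^{-4}\ud q\lesssim r^{-1}$, so $|(\lambda(q)-\lambda(v))\YL\widehat{K}(v^{-1}q)|\lesssim \|\lambda\|_\infty \|v^{-1}q\|_\Kor^{-4}$ is absolutely integrable on $(E(p,R;P)\times P)\setminus\Delta_r$, and Fubini's theorem gives
$$\int_{(E(p,R;P)\times P)\setminus\Delta_r} (\lambda(q)-\lambda(v))\YL\widehat{K}(v^{-1}q) \ud(v,q) = \int_{E(p,R;P)} g_r(v)\ud v.$$
Dominated convergence as $r\to 0$, justified by the uniform rate and the finite measure of $E(p,R;P)$, then produces \eqref{eq:to-singular-P-2}.

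The hard part will be the small-scale error in \eqref{eq:to-singular-P-1}: on the annular symmetric difference inside $A^P_{r/c,cr}$, which has $\mu$-measure $\approx r^3$, the naive bound $|\lambda-\lambda(\zero)|\lesssim r$ against the $(-4)$-homogeneous kernel $\YL\widehat{K}$ yields only an $O(1)$ estimate, which is insufficient to vanish as $r\to 0$. The simultaneous $\theta$-symmetry of both truncation regions together with the $\HH$-evenness of $\YL\widehat{K}$ is precisely what upgrades this estimate to $O(r)$ via the quadratic Taylor bound on $(\lambda-\lambda(\zero))^\sE$, and this is the mechanism that makes the two principal values agree.
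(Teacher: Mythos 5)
Your proof is correct and follows essentially the same route as the paper: reduce to $p=\zero$, observe that the two truncation geometries ($V_0$-annuli pushed to $P$ via the measure-preserving map $\Pi_P$ versus $P$-annuli) differ only on $\theta$-symmetric annular shells, kill the odd part by symmetry, and control the even part by the quadratic Taylor estimate at small scales and by $\|\lambda\|_\infty$ at large scales, then pass to \eqref{eq:to-singular-P-2} via Fubini with the uniform-in-$v$ rate. The only cosmetic difference is that you change variables to integrate over $P$ directly, whereas the paper pulls everything back to $V_0$ via $\Pi$; the mechanism is identical.
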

\begin{proof}
  Without loss of generality, we suppose that $p=\zero$. Let $D_r:=V_0\cap B(\zero,r)$ and $D^P_r := P \cap B(\zero, r)$, and let $A_{r,R} := D_R\setminus D_r$ and $A^P_{r,R} := D^P_R\setminus D^P_r$.
  Let $\lambda_0(q)=\lambda(q)-\lambda(\zero)$ so that $H_{P,\lambda}(\zero) = \lim_{\substack{r\to 0\\R\to \infty}} L_{r,R}$, where
  $$L_{r,R} := \int_{D_R\setminus D_r} \lambda_0(q) \YL \widehat{K}(\Pi_{P}(q)) \ud q.$$
  Likewise, since $\lambda$ is constant on cosets of $\paramY$, we can write the right side of \eqref{eq:to-singular-P-1} as $\lim_{\substack{r\to 0\\R\to \infty}} M_{r,R}$, where
  $$M_{r,R} := \int_{D^P_R\setminus D^P_r} \lambda_0(q) \YL \widehat{K}(q) \ud q = \int_{\Pi(D^P_R)\setminus \Pi(D^P_r)} \lambda_0(q) \YL \widehat{K}(\Pi_{P}(q)) \ud q.$$
  
  Then
  \begin{align*}
    L_{r,R} - M_{r,R} & = \int_{V_0} \left(\one_{D^P_R} - \one_{D^P_r} - \one_{\Pi(D^P_R)} + \one_{\Pi(D^P_r)}\right)(q) \lambda_0(q) \YL \widehat{K}(\Pi_{P}(q)) \ud q \\
     & = \int_{V_0} (\one_{D_{R}} - \one_{\Pi(D^P_R)})(q) \lambda_0(q) \YL \widehat{K}(\Pi_P(q)) \ud q \\
    & \qquad - \int_{V_0} (\one_{D_r} - \one_{\Pi(D^P_r)})(q) \lambda_0(q) \YL \widehat{K}(\Pi_P(q)) \ud q \\
    & =: I_R - I_r.
  \end{align*}
  Note that there is a $c>1$ depending on the slope of $P$ such that 
  $$\supp(\one_{D_s} - \one_{\Pi(D^P_s)}) \subset A_{c^{-1}s,cs}$$ 
  for all $s>0$.
  
  Since $\lambda$ is bounded and $\YL \widehat{K}$ is $(-4)$--homogeneous,
  $$|I_R| \lesssim |A_{c^{-1}R,cR}| \cdot \|\lambda\|_\infty (c^{-1} R)^{-4} \lesssim_{\lambda, P} R^{-1},$$
  so $|I_R|\to 0$ as $R\to \infty$. 
  
  Let $\theta(x,y,z)=(-x,-y,z)$ and let $\lambda_0^\sE(q) = \frac{1}{2}(\lambda_0(q) + \lambda_0(\theta(q)))$ be the even part of $\lambda_0$. Then $\one_{D_r} - \one_{\Pi(D^P_r)}$ is even, so
  $$I_r = \int_{V_0} (\one_{D_{r}}(q) - \one_{\Pi(D^V_r)}(q)) \lambda_0^\sE(q) \YL \widehat{K}(\Pi_P(q)) \ud q.$$
  Since $\lambda_0$ is smooth and $\lambda_0(\zero)=0$, we have $|\lambda_0^\sE(q)|\lesssim_{\lambda} \|q\|_\Kor^2$ when $\|q\|_\Kor$ is sufficiently small, so when $r$ is sufficiently small,
  $$|I_r| \lesssim_\lambda |A_{c^{-1}r,cr}| \cdot r^2 r^{-4} \lesssim_{\lambda,P} r.$$
  Therefore, $|L_{r,R} - M_{r,R}|\lesssim_{\lambda,P} r + R^{-1}$, which implies \eqref{eq:to-singular-P-1}.
  
  Now let $E=E(p,R;P)$. Since $\YL \widehat{K}$ is $(-4)$--homogeneous and $\lambda$ is bounded, for any $r>0$ and $v\in P$,
  $$\int_{v A^P_{r,\infty}} (\lambda(w)-\lambda(v)) \YL \widehat{K}(v^{-1}w) \ud w$$
  converges absolutely. Furthermore, by the bounds above and Lemma~\ref{lem:G'-formula-and-bound-rev}, there is a $c>0$ depending only on $\lambda$ and $P$ such that for $v\in E$, 
  $$\left|H_{P,\lambda}(v) - \int_{v A^P_{r,\infty}} (\lambda(w) - \lambda(v)) \YL \widehat{K}(v^{-1}w) \ud w\right| \lesssim cr.$$
  Therefore, using uniform convergence to exchange the integral and the limit,
  \begin{align*}
    \int_{E}  H_{P,\lambda}(q) \ud q
      & = \int_{E} \lim_{r\to 0} \int_{v A^P_{r,\infty}} (\lambda(w) - \lambda(v)) \YL \widehat{K}(v^{-1}w) \ud w \ud v \\ 
      & = \lim_{r\to 0} \int_{E} \int_{v A^P_{r,\infty}} (\lambda(w)-\lambda(v)) \YL \widehat{K}(v^{-1}w) \ud w \ud v \\ & = \pv \int_{E\times P} (\lambda(w)-\lambda(v)) \YL \widehat{K}(v^{-1}w) \ud(v,w).
  \end{align*}
  This proves \eqref{eq:to-singular-P-2}.
\end{proof}

Now we prove Lemma~\ref{lem:rectangle-H-average}.
\begin{proof}[Proof of Lemma~\ref{lem:rectangle-H-average}]
  Let $E=E(p,R;P)$. By Lemma~\ref{lem:to-singular-P}
  \begin{multline*}
    \int_{E} H_{P,\gamma}(v)\ud v 
    = \pv \int_{E\times P\setminus E}(\gamma(w)-\gamma(v)) \YL \widehat{K}(v^{-1}w) \ud(v,w) \\
    + \pv \int_{E\times E}(\gamma(w)-\gamma(v)) \YL \widehat{K}(v^{-1}w) \ud(v,w) =: J_1 + J_2.
  \end{multline*}
  We claim that $|J_1|\lesssim R^2 \log R$ and $|J_2|\lesssim R^2$.
  
  We first consider $J_1$.
  First, we claim that $|\gamma(u)-\gamma(v)|\lesssim A^{-1} d_\Kor(u,v)$ for all $v\in E$ and $u\in P$. Let $\kappa:=d_\Kor(u,v)$.
  On one hand, if $\kappa \ge 1$, then
  $$|\gamma(u)-\gamma(v)|\lesssim \|\gamma\|_\infty \lesssim A^{-1}\le A^{-1} \kappa,$$
  so we consider the case that $\kappa\le 1$. Then $d_\Kor(u,p_0) \le d_\Kor(u,v) + d_\Kor(v,p_0) \le 2 R$. Let $W=X+\slope(P) Y$ and write $u=vW^wZ^z$ for some $|w|\le \kappa$ and $|z|\le \kappa^2$. Since $P$ is tangent to $\Gamma_\alpha$ at $p_0$, Lemma~\ref{lem:a-Taylor} and Lemma~\ref{lem:gamma-deriv-bounds} imply that for all $q\in P\cap B(p_0,2R)$, 
  $$d_{\Kor}(q, \Gamma_\alpha) \lesssim \rho^{-1} d_{\Kor}(q, p_0)^{2} \lesssim 1.$$
  Lemma~\ref{lem:horiz-gamma-bounds} implies that $|W\gamma(q)|\lesssim A^{-1}$ and $|Z\gamma(q)|\lesssim A^{-1}$. By the Mean Value Theorem, $|\gamma(u)-\gamma(v)|\lesssim (\kappa+\kappa^2)A^{-1}\lesssim \kappa A^{-1}$, as desired.
  
  Now let $v\in E$ and $\epsilon=d_\Kor(v,\partial E)$; suppose $\epsilon>0$. 
  Let 
  $$j_1(v)=\int_{P\setminus E}\left|(\gamma(w)-\gamma(v))\YL \widehat{K}(v^{-1}w)\right| \ud w.$$
  Since $\YL \widehat{K}$ is $(-4)$--homogeneous, by Lemma~\ref{lem:polar},
  \begin{multline*}
    j_1(v) \lesssim \int_{P\setminus B(v,\epsilon)} |\gamma(v)-\gamma(w)|d_\Kor(v,w)^{-4} \ud w \\
      \lesssim \int_{P\setminus B(v,\epsilon)} \min\{\|\gamma\|_\infty , A^{-1} d_\Kor(v,w)\} d_\Kor(v,w)^{-4} \ud w 
      \lesssim \int_\epsilon^\infty \min\{r^{-4},r^{-3}\}\cdot r^2 \ud r,
  \end{multline*}
  so there is a $C>0$ such that
  $$j_1(v) \le \begin{cases} 
    C |\log \epsilon| + C & 0<\epsilon<1 \\
    C \epsilon^{-1} & \epsilon \ge 1.
  \end{cases}$$
  For any $\epsilon_0>0$,
  $$\bigl|\{v\in E\mid d_\Kor(v,\partial E) < \epsilon_0\}\bigr| \lesssim \min\{R^3, R^2 \epsilon_0\},$$
  so for any $t>0$,
  $$\bigl|\{v\in E\mid j_1(v)>t\}\bigr| \lesssim \begin{cases}
      R^3 & 0\le t < \frac{C}{R} \\
      \frac{C}{t}R^2 & \frac{C}{R} \le t< C \\
      R^2\exp\big(\frac{C-t}{C}\big) & C \le t.
    \end{cases}$$
  Therefore,
  \begin{multline*}
    \int_{E} j_1(v) \ud v \lesssim \frac{C}{R} R^3 + \int_{CR^{-1}}^{C} \frac{C}{t} R^2 \ud t + \int_{C}^\infty R^2\exp\big(1-\frac{t}{C}\big) \ud t \\
    \le CR^2 + C R^2 \log R + C R^2 \lesssim R^2 \log R.
  \end{multline*}
  By Fubini's Theorem and dominated convergence,
  $$|J_1|=\left|\lim_{r\to 0} \int_{E} \int_{P\setminus (E\cup B(v,r))}(\gamma(w)-\gamma(v)) \YL \widehat{K}(v^{-1}w) \ud(v,w)\right| \le \int_{E} j_1(v)\ud v \lesssim R^2\log R.$$
  
  Now we consider $J_2$. We have
  \begin{align*}
    J_2&=\pv \int_{E\times E}(\gamma(w)-\gamma(v)) \YL \widehat{K}(v^{-1}w) \ud(v,w) \\
    & = \pv \int_{E\times E}\gamma(w) \YL \widehat{K}(v^{-1}w) \ud(v,w)
    - \pv \int_{E\times E}\gamma(v) \YL \widehat{K}(v^{-1}w) \ud(v,w).
  \end{align*}
  Exchanging $v$ and $w$ in the first term, we get
  $$J_2=\pv \int_{E\times E} \gamma(v)(\YL \widehat{K}(w^{-1}v)-\YL \widehat{K}(v^{-1}w))\ud(v,w) = \pv \int_{E\times E} \gamma(v) M(v^{-1}w) \ud(v,w),$$
  where $M(p) = \YL \widehat{K}(p^{-1})-\YL \widehat{K}(p)$. 
  We use the following lemma to show that $M$ is vertically antisymmetric, i.e., $M(W^wZ^z) = - M(W^wZ^{-z})$ for all $w,z\in \R$.
  \begin{lemma}\label{lem:quadrupole}
   Let $N\from \HH\to \R^2$ be an orthogonal kernel. For any horizontal vector $W$, let $\WR$ be the corresponding left-invariant vector field. Let $\theta(x,y,z)=(-x,-y,z)$ be the homomorphism that rotates $\HH$ around the $z$--axis by $\pi$.
    Then for any $g\in \HH$ and any horizontal vector $W$,
    $$q_W(g) := \WR N(g) - \WR N(g^{-1}) - \WR N(\theta(g)) + \WR N(\theta(g^{-1})) = 0.$$
  \end{lemma}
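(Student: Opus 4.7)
The plan is to deduce the lemma from a single symmetry: orthogonality of $N$, combined with the fact that $\theta$ is the automorphism $\tilde M$ for $M = -I \in \mathrm{O}(2)$, forces $\WR N$ itself to be $\theta$--invariant. Once this is established, the four terms defining $q_W(g)$ cancel in pairs and the lemma follows immediately.

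First I would note that $\det(-I) = 1$ in dimension two, so the definition $\tilde M(x,y,z) = (M(x,y), \det(M) z)$ gives $\widetilde{(-I)} = \theta$; in particular, orthogonality of $N$ yields $N \circ \theta = -N$. Next, $\theta$ is a group automorphism of $\HH$ whose derivative at $\zero$ sends the horizontal vector $W$ to $-W$, so it maps the one-parameter subgroup generated by $W$ to the one generated by $-W$, i.e., $\theta(W^t) = (-W)^t$ for all $t \in \R$. Because $\theta$ is a homomorphism, this upgrades to the identity $\theta(p)\,W^t = \theta\!\big(p\,(-W)^t\big)$ for every $p\in\HH$.

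Combining these two ingredients, I would compute
\begin{align*}
(\WR N)(\theta(p))
&= \frac{d}{dt}\Big|_{t=0} N\!\big(\theta(p)\,W^t\big)
 = \frac{d}{dt}\Big|_{t=0} N\!\big(\theta(p\,(-W)^t)\big) \\
&= -\frac{d}{dt}\Big|_{t=0} N(p\,(-W)^t)
 = -(-W)_{\mathsf{L}} N(p) = \WR N(p),
\end{align*}
where the third equality uses $N\circ\theta = -N$ and the last uses $(-W)_{\mathsf{L}} = -\WR$ by linearity of the left-invariant extension. Hence $(\WR N)\circ \theta = \WR N$ on all of $\HH$. Applying this identity with $p=g$ and then with $p=g^{-1}$ gives $\WR N(\theta(g)) = \WR N(g)$ and $\WR N(\theta(g^{-1})) = \WR N(g^{-1})$, and substitution into the definition yields
\[ q_W(g) = \WR N(g) - \WR N(g^{-1}) - \WR N(g) + \WR N(g^{-1}) = 0. \]
There is no genuine obstacle here; the only point requiring care is tracking the signs arising from $\theta_* W = -W$, which is immediate once the definition of $\tilde M$ for $M = -I$ is unpacked.
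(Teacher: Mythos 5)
Your proof is correct, and it takes a genuinely different and cleaner route than the paper's. You isolate the single symmetry $\WR N \circ \theta = \WR N$, deduced directly from orthogonality applied to $M = -I$ (note $\det(-I) = 1$ in dimension $2$, so $\widetilde{(-I)} = \theta$) together with the automorphism identity $\theta(p)\,W^t = \theta(p\,(-W)^t)$; the four terms of $q_W$ then cancel pairwise. The paper's proof instead reduces to $g \in V_0$ and $W\in\{X,Y\}$, uses three involutory isometries $\phi(x,y,z)=(x,-y,-z)$, $\psi(x,y,z)=(-x,y,-z)$, and $\theta$ (to rewrite $g^{-1}$ and $\theta(g^{-1})$ as $\psi(g)$ and $\phi(g)$ for $g\in V_0$), applies the chain-rule identity $\WR N(I(g)) = I(I_*(\WR)N(g))$ for each $I$, shows the four resulting vectors in $\R^2$ sum to zero, and finally extends to general $g$ and $W$ by rotation equivariance. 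Your argument is shorter, avoids the reduction to $V_0$ and the closing rotation step entirely, uses only one of the three isometries, and moreover proves the strictly stronger pointwise statement that $\WR N$ is $\theta$--invariant (which can be verified on the explicit formulas for $\XL\widehat K$ and $\YL\widehat K$ computed later in the paper, confirming the claim). The only cost of the paper's route is that it establishes, along the way, how $\WR N$ transforms under a larger symmetry group, but that extra information is not used for this lemma.
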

  We defer the proof until after the proof of Lemma~\ref{lem:rectangle-H-average}. Let $h\from \HH\to \HH$, $h(x,y,z) = (x,y,-z)$, so that $h(q)=\theta(q^{-1})$ for any $q\in \HH$. Then  $\widehat{K}$ is an orthogonal kernel, so for $q=W^wZ^z\in P$, 
  $$M(W^wZ^z) + M(W^wZ^{-z}) = \YL \widehat{K}(q^{-1}) - \YL \widehat{K}(q) + \YL \widehat{K}(\theta(q)) - \YL \widehat{K}(\theta(q^{-1})) = 0.$$
  
  By Fubini's theorem, 
  \begin{align*}
    J_2 & = \lim_{r\to 0} \int_{E\times E \setminus \Delta_r} \gamma(v) M(v^{-1}w) \ud(v,w) \\
    & = \lim_{r\to 0} \int_E \gamma(v) \int_{E \setminus B(v,r)} M(v^{-1}w) \ud w \ud v \\
    & = \lim_{r\to 0} \int_E \gamma(v) \int_{v^{-1} E \setminus B(\zero,r)} M(w) \ud w \ud v.
  \end{align*}
  For $S\subset P$ and $r>0$, let $k_r(S) = \int_{S \setminus B(\zero,r)} M(w) \ud w$. Then 
  $$S = (S\setminus h(S)) \sqcup (S\cap h(S)).$$
  The symmetry of $M$ implies that $k_r(S\cap h(S)) = 0$ and thus $k_r(S) = k_r(S\setminus h(S))$.
  
  Let $w_0\in (-R,R)$ and $z_0\in [0, R^2)$, so that $v = W^{w_0}Z^{z_0}$ lies in the top half of $E$. Let $\delta = R^2-z_0\in (0, R^2)$. Then 
  $$v^{-1} E = \{W^w Z^z : |w + w_0| \le R, z \in (\delta - 2R^2, \delta) \}$$
  and
  $$v^{-1} E \setminus h(v^{-1}E) = \{W^w Z^z : |w + w_0| \le R, z\in (\delta-2R^2, -\delta]\}.$$
  That is, $v^{-1} E \setminus h(v^{-1}E)\subset A^P_{2\sqrt{\delta},\infty}$. Therefore, for all $r > 0$, we can use Lemma~\ref{lem:polar} and $(-4)$--homogeneity of $M$ to show that
  $$|k_r(v^{-1} E)| = |k_r(v^{-1} E \setminus h(v^{-1}E)| \le \int_{A^P_{2\sqrt{\delta},\infty}} |M(w)| \ud w \lesssim \int_{2\sqrt{\delta}}^\infty \rho^{-2}\ud \rho \lesssim \delta^{-\frac{1}{2}}.$$
  
  More generally, letting $\delta(v)=\bigl|R^2 - |z(v)|\bigr|$, we have $|k_r(v^{-1}E)| \lesssim \delta(v)^{-\frac{1}{2}}$ for all $r>0$ and all $v\in E$. By dominated convergence,
  \begin{multline*}
    |J_2| = \left|\lim_{r\to 0} \int_E \gamma(v) k_r(v^{-1}E)\ud v\right| 
     \lesssim \|\gamma\|_\infty \int_E \delta(v)^{-\frac{1}{2}} \ud v \\
     \lesssim 2R \int_{-R^2}^{R^2} \bigl|R^2 - |z|\bigr|^{-\frac{1}{2}} \ud z 
     = 4R \int_{0}^{R^2} z^{-\frac{1}{2}} \ud z = 8 R^2.
  \end{multline*}
    
  Therefore, 
  $$\left|\int_{E} H_{P,\gamma}(v)\ud v\right|\le |J_1| + |J_2| \lesssim R^2\log R,$$
  as desired.
\end{proof}
We used Lemma~\ref{lem:quadrupole} in the proof of Lemma~\ref{lem:rectangle-H-average}, and we prove it now.
\begin{proof}[Proof of Lemma~\ref{lem:quadrupole}]
  We first consider the case that $g=(x,0,z)\in V_0$. Any left-invariant horizontal field can be written as a linear combination of $\XL$ and $\YL$, so it suffices to consider $W=X$ or $W=Y$.

  Let $I\from \HH\to \HH$ be an involutory linear isometry of $\HH$ that fixes $\zero$. Then $N(I(h)) = I(N(h))$ for all $h\in \HH$, so by the chain rule,
  $$\WR N(I(g)) = I_*(\WR)[N\circ I](g) = I_*(\WR)[I\circ N](g) = I(I_*(\WR)N(g)).$$
  Let $\phi(x,y,z)=(x,-y,-z)$ and $\psi(x,y,z)=(-x,y,-z)$ so that
  $$q_W(g) = \WR N(g) - \WR N(\psi(g)) - \WR N(\theta(g)) + \WR N(\phi(g)).$$
  Let $\XL N(g)=(a,b)$. Since $\phi$, $\psi$, and $\theta$ are involutory isometries of $\HH$,
  \begin{align*}
    q_X(g) & = \XL Ng) - \psi(\psi_*(\XL) N(g)) - \theta(\theta_*(\XL) N(g)) + \phi(\phi_*(\XL) N(g)) \\
    & = \XL N(g) + \psi(\XL N(g)) + \theta(\XL N(g)) + \phi(\XL N(g))\\
    & = (a,b)+(-a,b)+(-a,-b)+(a,-b)=0.
  \end{align*}
  Let $\YL N(g)=(c,d)$. Then
  \begin{align*}
    q_Y(g) & = \YL N(g) - \psi(\psi_*(\YL) N(g)) - \theta(\theta_*(\YL) N(g)) + \phi(\phi_*(\YL) N(g)) \\
  & = \YL N(g) - \psi(\YL N(g)) + \theta(\YL N(g)) - \phi(\YL N(g)) \\
  & = (c,d)-(-c,d)+(-c,-d)-(c,-d)=0.
\end{align*}
Thus the lemma holds for $g\in V_0$.

Let $g\in V_0$ and let $R\from \HH\to\HH$ be a rotation around the $z$--axis. Let $W'=R(W)$ so that $R_*(\WR)=\WR'$. Then, as above, $\WR N(R(g)) = R(R_*(\WR)N(g)),$ and since $R$ commutes with $\theta$,
\begin{align*}
  q_W(R(g))&=\WR N(R(g)) - \WR N(R(g^{-1})) - \WR N(R(\theta(g))) + \WR N(R(\theta(g^{-1})))\\
  &= R\left( \WR' N(g) - \WR' N(g^{-1}) - \WR' N(\theta(g)) + \WR' N(\theta(g^{-1}))\right)\\
  &= R(q_{W'}(g))= 0.
\end{align*}
Any point in $\HH$ can be written as $R(g)$ for some rotation $R$ and some $g\in V_0$, so $q_W(h)=0$ for all $h\in \HH$.
\end{proof}

\subsection{Averaging over pseudoquads}\label{sec:quad-averages}

In the previous section, we bounded the average of $F_i'(0)$ on rectangles of the form $E(p,r;P)$, where $P$ is tangent to $\Sigma_i$ at $p_0$ and $d_\Kor(p_0,p)\le r_i \rho^{\epsilon}$. The projections of these rectangles do not tile $V_0$, because $P$ depends on $p_0$, so in this section, we will bound the average of $F_i'(0)$ on pseudoquads for $\Sigma_i$.

We will need the following bound on the size of a pseudoquad of given height and width.
\begin{lemma}\label{lem:pq-variation}
Let $\psi$ be a $\lambda$--intrinsic Lipschitz function for some $\lambda\in (0,1)$. Let $\delta_x, \delta_z\ge 0$ and let $g_1,g_2\in C^1(\R)$ be functions such that for all $x$, $g_1(x)\le g_2(x)$, $g_i'(x)=-\psi(x,0,g_i(x))$, and $\delta_z=g_2(0)-g_1(0)$. 
Then for any $x\in \R$,
$$|g_1(x)-g_2(x)| \lesssim_\lambda \delta_z + x^2.$$
Let 
$$Q=\{(x,0,z)\mid x\in [0,\delta_x], z\in [g_1(x),g_2(x)]\}.$$
Then 
$$\diam \Psi_{\psi}(Q) \lesssim_\lambda \delta_x + \sqrt{\delta_z}$$
and
$$|Q|\lesssim_\lambda \delta_x \delta_z + \delta_x^3.$$
\end{lemma}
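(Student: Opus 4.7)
The plan is to prove the three claims in order, each leveraging the previous one. The key ingredient throughout is a Hölder-$\frac{1}{2}$ control on $\psi$ in the $z$-direction that follows directly from the intrinsic Lipschitz condition.

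First I would establish the height bound $|g_2(x) - g_1(x)| \lesssim_\lambda \delta_z + x^2$ via a differential inequality. For each $x$, set $\gamma_i(x) := \Psi_\psi(x,0,g_i(x)) \in \Gamma_\psi$, $\psi_i(x) := \psi(x,0,g_i(x))$, and $h(x) := g_2(x) - g_1(x) \ge 0$. A direct Heisenberg calculation gives $\gamma_1(x)^{-1}\gamma_2(x) = (0, \psi_2(x) - \psi_1(x), h(x))$, so the intrinsic Lipschitz condition $|y(\gamma_1^{-1}\gamma_2)| \le \lambda\|\gamma_1^{-1}\gamma_2\|_{\Kor}$ reads
$$(\psi_2-\psi_1)^4 \le \lambda^4\bigl((\psi_2-\psi_1)^4 + 16 h^2\bigr),$$
which rearranges to $|\psi_2(x) - \psi_1(x)| \lesssim_\lambda \sqrt{h(x)}$. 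Since $h'(x) = -(\psi_2(x) - \psi_1(x))$ by the ODE defining $g_i$, this yields $|h'| \lesssim_\lambda \sqrt{h}$. A standard argument (the derivative of $\sqrt{h}$ is bounded by a $\lambda$-dependent constant wherever $h>0$, and one patches across the zero set of $h$ by continuity) shows that $\sqrt{h}$ is $\lambda$-uniformly Lipschitz, so $\sqrt{h(x)} \le \sqrt{\delta_z} + C_\lambda|x|$ and hence $h(x) \lesssim_\lambda \delta_z + x^2$.

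Second, for the diameter of $\Psi_\psi(Q)$ I would connect any two points $\Psi_\psi(p_1), \Psi_\psi(p_2)$ with $p_i = (x_i, 0, z_i) \in Q$ through the horizontal curve $\gamma_1$. The same type of Heisenberg computation gives
$$\Psi_\psi(x_i,0,z_i)^{-1}\Psi_\psi(x_i,0,g_1(x_i)) = (0,\, \psi_1(x_i) - \psi(p_i),\, g_1(x_i) - z_i),$$
and applying the intrinsic Lipschitz condition as in the previous step yields $d_{\Kor}(\Psi_\psi(p_i), \gamma_1(x_i)) \lesssim_\lambda \sqrt{|z_i - g_1(x_i)|} \le \sqrt{h(x_i)} \lesssim_\lambda \delta_x + \sqrt{\delta_z}$. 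For the horizontal leg from $\gamma_1(x_1)$ to $\gamma_1(x_2)$, I use that $\gamma_1$ is the lift to $\Gamma_\psi$ of a characteristic curve of $\nabla_\psi$ on $V_0$, so by \eqref{eq:horiz-veloc} it is a horizontal curve with velocity $\XL + \nabla_\psi\psi(\gamma_1)\YL$. Combined with the bound $\|\nabla_\psi\psi\|_\infty \le \lambda/\sqrt{1-\lambda^2}$ from \eqref{eq:ilg-deriv-bound}, this forces the Korányi length of $\gamma_1|_{[x_1,x_2]}$ to be $\lesssim_\lambda \delta_x$. Two applications of the triangle inequality then deliver $\diam \Psi_\psi(Q) \lesssim_\lambda \delta_x + \sqrt{\delta_z}$.

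Finally, the measure bound follows immediately from the height bound by Fubini:
$$|Q| = \int_0^{\delta_x} h(x)\, dx \;\lesssim_\lambda\; \int_0^{\delta_x}(\delta_z + x^2)\, dx \;\approx\; \delta_x\delta_z + \delta_x^3.$$
The main technical point is the Korányi length bound on $\gamma_1$: this is immediate when $\psi$ is smooth (which is the setting of the construction of Section~\ref{sec:construction}, where this lemma will actually be applied), since then $\gamma_1$ is classically differentiable and \eqref{eq:ilg-deriv-bound} applies pointwise. For merely $C^1$ characteristic curves of a non-smooth intrinsic Lipschitz $\psi$, one argues either by smooth approximation of $\psi$ or by replacing the horizontal length estimate with the bilipschitz comparison between $d_{\Kor}$ on $\Gamma_\psi$ and the parabolic pseudodistance on $V_0$ that is standard for intrinsic Lipschitz graphs.
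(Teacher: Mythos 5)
Your proof is correct. The route for the height bound is genuinely different from the paper's: the paper first uses Lemma~\ref{lem:ilg-line-distance} to get $|g_1'(0)-g_2'(0)|\lesssim_\lambda\sqrt{\delta_z}$ and \eqref{eq:ilg-deriv-bound} to get $|g_i'(x)-g_i'(x')|\le L|x-x'|$, combines these to obtain the explicit linear-in-$x$ bound $|g_1'(x)-g_2'(x)|\lesssim_\lambda\sqrt{\delta_z}+|x|$, and then integrates once; you instead observe the self-improving differential inequality $|h'|\lesssim_\lambda\sqrt{h}$ for $h=g_2-g_1$ (via the neat Heisenberg computation $\gamma_1^{-1}\gamma_2=(0,\psi_2-\psi_1,h)$ and the cone condition) and invoke ODE comparison. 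Your derivation is arguably more elegant because the Hölder-$\tfrac12$ estimate is applied pointwise rather than only at $x=0$, but it carries the cost of the standard-but-nonautomatic technicality of patching across the zero set of $h$ (the paper's integration sidesteps this entirely). Your diameter argument follows the same essential decomposition as the paper's --- a vertical leg of Korányi length $\lesssim_\lambda\sqrt{h}$ plus a horizontal leg of length $\lesssim_\lambda\delta_x$ --- differing only in that the paper routes through the left edge $E$ and you route through the bottom characteristic curve $\gamma_1$. Your flag about the regularity of characteristic curves for merely intrinsic Lipschitz $\psi$ is apt; the paper's proof relies on the same implicit fact (characteristic curves exist and their horizontal velocity is controlled by $\|\nabla_\psi\psi\|_\infty$), and both treatments are fine in the intended smooth application from Section~\ref{sec:construction}. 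The measure bound by Fubini is identical.
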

\begin{proof}
  By \eqref{eq:ilg-deriv-bound}, since $\psi$ is $\lambda$--intrinsic Lipschitz, the $g_i$ satisfy
  \begin{equation}\label{eq:conv-Lip}
    |g_i'(x) - g_i'(x')| = |\psi(x, 0, g_i(x)) - \psi(x', 0, g_i(x'))| \le \|\nabla_\psi \psi\|_\infty |x-x'| \le L |x-x'|,
  \end{equation}
  for all $x,x'\in \R$, where $L=\lambda(1-\lambda^2)^{-\frac{1}{2}}$. By Lemma~\ref{lem:ilg-line-distance}, 
  \begin{equation}\label{eq:left-edge}
    |\psi(0, 0, z_1) - \psi(0, 0, z_2)| \le \frac{2}{1-\lambda} d_\Kor((0,0,z_1), (0, 0, z_2)) = \frac{4}{1-\lambda} \sqrt{|z_2-z_1|}.
  \end{equation}
  Then $|g_1'(0)-g_2'(0)| = |\psi(0, 0, g_1(0)) - \psi(0, 0, g_2(0))| \le \frac{4}{1-\lambda} \sqrt{\delta_z}$, and
  $$|g_1'(x)-g_2'(x)| \le \frac{4}{1-\lambda} \sqrt{\delta_z} + 2L |x|$$
  for all $x$. Integrating this inequality and using the definition of $\delta_z$, we find
  $$|g_1(x)-g_2(x)|\le \delta_z + \frac{4}{1-\lambda} |x| \sqrt{\delta_z} + L x^2\lesssim_\lambda \delta_z + x^2.$$
  In particular, 
  $$|Q|=\int_0^{\delta_x} |g_2(x)-g_1(x)| \ud x \le \delta_x \cdot \left(\delta_z + \frac{4}{1-\lambda} \delta_x \sqrt{\delta_z} + L \delta_x^2\right) \lesssim_\lambda \delta_x \delta_z + \delta_x^3.$$
  
  Finally, let
  $$E=\{\Psi_\psi(0,0,z)\mid z\in [g_1(0),g_2(0)]\}$$
  be the left edge of $\Psi_\psi(Q)$. By 
  \eqref{eq:left-edge}, $\diam(E)\lesssim \sqrt{\delta_z}$.
  Every point $p\in \Psi_\psi(Q)$ lies on a horizontal curve in $\Psi_\psi(Q)$ that intersects $E$, and we can parametrize this curve as $\gamma=(\gamma_x,\gamma_y,\gamma_z)\from [0,\delta_x]\to Q$ where $x(\gamma(t))=t$. By \eqref{eq:conv-Lip}, $|\gamma_y'(t)|\le L$, so
  $$\ell(\gamma)=\int_0^{\delta_x} \sqrt{1+\gamma_y'(t)^2}\ud t \le \delta_x \sqrt{1+L^2},$$
  and 
  $$d_\Kor(\Psi_\psi(0, 0, g_1(0)), p)\le \diam E + \ell(\gamma)\lesssim_\lambda \sqrt{\delta_z}+\delta_x,$$
  as desired.
\end{proof}

Now we bound the integral of $F_i'(0)$ on a pseudoquad.
\begin{lemma}\label{lem:quad-averages}
  Let $r_i=A^{-1}\rho^{-i}$ and let $f_i$, $\nu_i$, and $\Sigma_i=\Gamma_{f_i}$ be as in Section~\ref{sec:construction}. There is a $\delta>0$ such that if $\rho$ is sufficiently large, then for any pseudoquad $Q$ for $\Sigma_i$ with $\delta_x(Q) \le r_i \rho^{\delta}$ and $\delta_z(Q) \le r_i^{2} \rho^{2\delta}$, we have
  $$\left|\int_{Q} F_i'(0)(q) \ud q\right| \lesssim
  r_i^3 \rho^{3 \delta - \frac{\delta}{3}}.$$
\end{lemma}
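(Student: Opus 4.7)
The plan is to reduce the integral over $Q\subset V_0$ to a sum of integrals over rectangles lying on a single vertical plane, so that Lemma~\ref{lem:rectangle-averages-rescaled} applies. Fix an interior point $p_0\in \Psi_{f_i}(Q)$, set $\sigma = \nabla_{f_i}f_i(p_0)$, $W = X+\sigma Y$, and let $P = p_0\langle W,Z\rangle$ be the vertical tangent plane to $\Sigma_i$ at $p_0$. The projection $\Xi\colon V_0\to P$ along cosets of $\paramY$ satisfies $\Pi\circ\Xi = \id_{V_0}$, and since $F_i'(0)$ is constant on cosets of $\paramY$, Lemma~\ref{lem:def-dv} (applied with $\Gamma=P$) gives
\[
\int_Q F_i'(0)\,\ud v \;=\; \int_{\Xi(Q)}F_i'(0)\,\ud\mu.
\]
In the $(w,z')$ coordinates $p_0 W^w Z^{z'}$ on $P$, a direct computation shows $\Xi$ is a shear with unit Jacobian, and $\Xi(Q)$ is bounded by two ``vertical'' segments (at the two endpoints of the $x$-interval) and two graph-type curves. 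By Lemma~\ref{lem:pq-variation} we have $\diam_\Kor \Xi(Q)\lesssim r_i\rho^{\delta}$, and the $z'$-height of $\Xi(Q)$ over any fixed $w$ is $\lesssim r_i^2\rho^{2\delta}$.

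Choose the intermediate scale $r:=r_i\rho^{\delta/2}$ and tile $P$ by the lattice of rectangles $E(p_0W^{2jr}Z^{2kr^2},r;P)$, $j,k\in\Z$. Let $\mathcal{E}_{\mathrm{in}}$ be the rectangles entirely contained in $\Xi(Q)$ and $\mathcal{E}_{\mathrm{bdry}}$ the ones meeting both $\Xi(Q)$ and its complement. Assuming $\delta<\epsilon$ (where $\epsilon$ is the constant from Lemma~\ref{lem:rectangle-averages-rescaled}), every rectangle meeting $\Xi(Q)$ has center in $P\cap B(p_0,\rho^\epsilon r_i)$ and $r\in(2r_i,\rho^\epsilon r_i)$ for $\rho$ large, so Lemma~\ref{lem:rectangle-averages-rescaled} applies to every $E\in\mathcal{E}_{\mathrm{in}}$ and yields $\left|\int_E F_i'(0)\right|\lesssim r^3\bigl(\rho^{-\epsilon}+\tfrac{\log(r/r_i)}{r/r_i}\bigr)$. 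Since $|\Xi(Q)|=|Q|\lesssim r_i^3\rho^{3\delta}$ by Lemma~\ref{lem:pq-variation}, we have $|\mathcal{E}_{\mathrm{in}}|\lesssim r_i^3\rho^{3\delta}/r^3$, and summing gives
\[
\Bigl|\sum_{E\in\mathcal{E}_{\mathrm{in}}}\int_E F_i'(0)\Bigr|\lesssim r_i^3\rho^{3\delta}\Bigl(\rho^{-\epsilon}+\frac{\log\rho}{\rho^{\delta/2}}\Bigr)\lesssim r_i^3\rho^{3\delta-\delta/3}
\]
provided $\delta<\epsilon$ and $\rho$ is large enough that $\log\rho\le\rho^{\delta/6}$.

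The boundary contribution is controlled via $\|F_i'(0)\|_\infty\lesssim 1$ (Proposition~\ref{prop:perturb-formula}) times the $\mu$-measure of the $r$-Koranyi-neighborhood of $\partial\Xi(Q)$ in $P$. Using that the Koranyi distance on $P$ is comparable to $|\Delta w|+\sqrt{|\Delta z'|}$, the two ``vertical'' segments of $\partial\Xi(Q)$ (of $z'$-length $\lesssim r_i^2\rho^{2\delta}$) have $r$-neighborhoods of area $\lesssim r\cdot r_i^2\rho^{2\delta}$, while the two graph-type curves (of $w$-length $\lesssim r_i\rho^{\delta}$) have $r$-neighborhoods of area $\lesssim r^2\cdot r_i\rho^{\delta}$. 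With $r=r_i\rho^{\delta/2}$ both are $\lesssim r_i^3\rho^{5\delta/2}\le r_i^3\rho^{3\delta-\delta/3}$, since $5/2\le 8/3$. Combining the two estimates yields the desired bound. The main obstacle is the balancing act: choosing $r$ too small weakens the factor $\log(r/r_i)/(r/r_i)$ in the main term, while $r$ too large inflates the boundary area, and $r=r_i\rho^{\delta/2}$ (together with picking $\delta$ small, e.g.\ $\delta\le\epsilon/3$) strikes the right balance.
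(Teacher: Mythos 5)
Your proof follows essentially the same strategy as the paper's: project to the tangent plane $P$ at an interior point, tile $P$ by Koranyi rectangles of scale $r = r_i\rho^{\delta/2}$, apply Lemma~\ref{lem:rectangle-averages-rescaled} to the fully interior rectangles, and separately control the boundary rectangles using the sup bound on $F_i'(0)$. The interior estimate, the choice $r = r_i\rho^{\delta/2}$, and the constraint $\delta<\epsilon$ are exactly as in the paper. The one genuine difference is that you bound the boundary contribution by the $\mu$-measure of the $r$-Koranyi-neighborhood of $\partial\Xi(Q)$, whereas the paper counts boundary rectangles directly (using the derivative bound $|\bar g_j'|\lesssim r_i\rho^\delta$ to count crossings strip by strip).

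There is a slip in your boundary estimate that you should be aware of, even though it is non-fatal here. You assert that the $r$-neighborhood of a graph-type boundary curve of $w$-length $L\lesssim r_i\rho^\delta$ has $\mu$-measure $\lesssim r^2 L$. This formula holds only for curves that are ``flat at Koranyi scale $r$,'' meaning they rise by $O(r^2)$ over a $w$-interval of length $r$. The curves here have derivative $|\bar g_j'|\lesssim r_i\rho^\delta$ and hence rise by $\approx r\cdot r_i\rho^\delta = r_i^2\rho^{3\delta/2}$ over such an interval, which is \emph{larger} than $r^2 = r_i^2\rho^{\delta}$; so the curve is steep at scale $r$, the cross-sectional $z$-extent of the $r$-neighborhood is $\approx r\cdot |\bar g_j'|$ rather than $r^2$, and the correct area is $\approx r L |\bar g_j'| \approx r_i^3\rho^{5\delta/2}$, not $r^2 L \approx r_i^3\rho^{2\delta}$. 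Your written chain ``$\lesssim r^2 L \lesssim r_i^3\rho^{5\delta/2} \le r_i^3\rho^{3\delta-\delta/3}$'' is internally consistent, but the first inequality is a false upper bound on the neighborhood measure; you get away with it only because the true measure $r_i^3\rho^{5\delta/2}$ still satisfies $\rho^{5\delta/2}\le\rho^{3\delta-\delta/3}$ (indeed $5/2\le 8/3$). The paper sidesteps this by counting rectangles against $\int|\bar g_j'|$, which is more robust; if you prefer the direct neighborhood-area computation, the correct formula is $\approx L(r^2 + r\|\bar g_j'\|_\infty)$, and this should be stated rather than $r^2 L$.
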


\begin{proof}
  Let $\epsilon>0$ be as in Lemma~\ref{lem:rectangle-averages-rescaled} and let $\delta=\frac{\epsilon}{2}$.
  After a left-translation, we may suppose that $f_i(\zero)=0$ and that the lower left corner of $\Psi_{f_i}(Q)$ is $\zero$. That is,
  $$Q=\{(x,0,z)\mid x\in [0,\delta_x(Q)], z\in [g_1(x),g_2(x)]\}$$
  where $g_1,g_2\from [0, \delta_x(Q)]\to \R$ are functions with characteristic graphs such that $g_1(0)=0$, $g_2(0)=\delta_z(Q)$. 
  By Lemma~\ref{lem:pq-variation}, $|g_2(x)-g_1(x)|\lesssim r_i^2\rho^{2\delta}$ for all $x\in [0,\delta_x(Q)]$,
  $\diam(\Psi_{f_i}(Q))\lesssim r_i \rho^\delta$,
  and $|Q| \lesssim r_i^3 \rho^{3\delta}$.
  In particular, for any $q\in Q$, the intrinsic Lipschitz condition implies 
  \begin{equation}\label{eq:fiQ-bound}
    |f_i(q)|\lesssim |f_i(\zero)| + \diam(Q) \lesssim r_i \rho^\delta.
  \end{equation}
  
  Let $P$ be the tangent plane to $\Sigma_i$ at $\zero$, and let $\sigma=\nabla_{f_i}f_i(\zero)$ be the slope of $P$. Note that $|\sigma|\le 1$.
  Let $W=X+\sigma Y$ so that $P=\langle W,Z\rangle$.
  We will cover $\Pi_P(Q)$ by rectangles. Recall that 
  $$\Pi_P(x,0,z)=(x,0,z)\cdot Y^{\sigma x} = \left(x,\sigma x, z+\frac{\sigma}{2}x^2\right)=W^xZ^{z+\frac{\sigma}{2}x^2}.$$
  Let $\bar{g}_j(x)=g_j(x)+\frac{\sigma}{2}x^2$, so that 
  $$\Pi_P(X^x Z^{g_j(x)}) = X^x Z^{g_j(x)} Y^{\sigma x} = W^x Z^{\bar{g}_j(x)}$$
  and
  $$\Pi_P(Q)=\left\{W^w Z^z \mid w\in [0,\delta_x(Q)], z\in [\bar{g}_1(w),\bar{g}_2(w)]\right\}.$$
  Note that $d_\Kor(q,\Pi_P(q)) \le |x(q)|$, so $\diam(\Pi_P(Q))\lesssim \diam(Q) \lesssim r_i\rho^{\delta}$.
  
  Since $g_j$ has a characteristic graph, it satisfies 
  $$g'_j(x) = -f_i(x,0,g_j(x)) = -f_i(X^x Z^{g_j(x)});$$
  it follows that $\bar{g}_j$ satisfies 
  \begin{equation}\label{eq:barg-characteristic}
    \bar{g}'_j(x) = -f_i(X^x Z^{g_j(x)}) + \sigma x = \sigma x - f_i(W^x Z^{\bar{g}_j(x)}).
  \end{equation}
  In particular, for $x\in [0,\delta_x(Q)]$,
  \begin{equation}\label{eq:barg-deriv}
    |\bar{g}_j'(x)| \le |\sigma| \delta_x + \left|f_i(W^x Z^{\bar{g}_j'(x)})\right| \lesssim r_i \rho^{\delta}.
  \end{equation}
  
  Let $R=r_i \rho^{\frac{\delta}{2}}$ and let 
  $$D=\{W^wZ^z\mid w\in [0, 2R], z\in [0, 2R^2]\}$$
  This is a translate of $E(\zero, R;P)$. Let
  $$\cT=\{W^{2jR} Z^{2kR^2}D\mid j,k\in \Z\}$$
  be a tiling of $P$ by translates of $D$, let 
  $$\cS_0=\{E\in \cT\mid \interior E \subset \Pi_P(Q)\},$$
  and let 
  $$\cS_1=\{E\in \cT\mid \interior E \cap \partial \Pi_P(Q)\ne \emptyset\}.$$
  The rectangles in $\cS_0$ and $\cS_1$ cover $\Pi_P(Q)$, and
  \begin{equation}\label{eq:avg-break}
    \left|\int_Q F'_i(0)(q)\ud q\right| \le \sum_{E\in \cS_0} \left|\int_E F'_i(0)(q)\ud q\right| + \sum_{E\in \cS_1} \int_E |F'_i(0)(q)|\ud q.
  \end{equation}
  
  Since each rectangle in $\cS_0$ has measure $4R^3$, we have $4R^3 \cdot \#\cS_0 \le |Q|\lesssim r_i^3\rho^{3\delta}$. When $\rho$ is sufficiently large, we have $E \subset B(\zero, r_i \rho^\epsilon)$ for every $E\in \cS_0\cup \cS_1$, so Lemma~\ref{lem:rectangle-averages-rescaled} implies that
  $$\left|\int_{E} F'_i(0)(q)\ud q\right| \lesssim R^3 \rho^{-\epsilon} + R^3 \frac{\log (R r_i^{-1})}{R r_i^{-1}} = R^3 \rho^{-\epsilon} + R^3 \frac{\log \rho^{\frac{\delta}{2}}}{\rho^{\frac{\delta}{2}}} \lesssim R^3 \rho^{-\frac{\delta}{3}}$$
  for any $E\in \cS_0$. Then
  \begin{equation}\label{eq:s0-bound}
    \sum_{E\in \cS_0} \left|\int_E F'_i(0)(q)\ud q\right| \lesssim \#\cS_0 \cdot R^3 \rho^{-\frac{\delta}{3}} \lesssim r_i^3\rho^{3\delta - \frac{\delta}{3}}. 
  \end{equation}
  
  Now we consider the $\cS_1$ term. We first bound the number of elements of $\cS_1$. 
  If $E\in \cS_1$, then $E$ intersects one of the edges of $\Pi_P(Q)$. Let $\cS^{\text{lr}}_1\subset \cS_1$ be the set of rectangles that intersect the left or right edge and let $\cS^{\text{tb}}_1\subset \cS_1$ be the set that intersect the top or bottom edge.

  By Lemma~\ref{lem:pq-variation}, there is a $C>1$ such that the left and right edges of $\Pi_P(Q)$ are vertical segments of height at most $C r_i^2 \rho^{2\delta}$. Since each $E\in \cT$ is a rectangle of height $2R^2$,
  $$\#\cS_1^{\text{lr}} \le \frac{C r_i^2 \rho^{2\delta}}{R^2} + 2 \lesssim C \rho^\delta.$$

  The top and bottom edges of $\Pi_P(Q)$ are the curves 
  $$\gamma_j=\{W^w Z^{\bar{g}_j(w)}\mid w\in [0, \delta_x(Q)]\}.$$
  We can partition $\cT$ into strips of rectangles with the same $x$--coordinates, i.e.
  $$\cT_k=\{E\in \cT\mid x(E)=[2kR,2(k+1)R]\}.$$
  Then for each $0\le k \le \frac{\delta_x(Q)}{2R}$, 
  $$|\{E\in \cT_k \mid E\cap \gamma_j\ne \emptyset\}| \le \frac{1}{2R^2}\int_{2kR}^{2(k+1)R} |\bar{g}_j'(x)|\ud x + 2 \stackrel{\eqref{eq:barg-deriv}}{\lesssim} \frac{1}{R} r_i \rho^{\delta} + 2 \le \rho^{\frac{\delta}{2}}$$
  and
  $$\# \cS_1^{\text{tb}} \lesssim \frac{\delta_x(Q)}{R} \rho^{\frac{\delta}{2}}\le\rho^\delta.$$

  Therefore, $\#\cS_1\lesssim \rho^\delta$. By part (\ref{it:first-upper}) of Lemma~\ref{lem:mainBounds}, $\|F_i'(0)\|_\infty \lesssim A^{-1}$, so
  \begin{equation}\label{eq:s1-bound}
    \sum_{E\in \cS_1} \int_E |F'_i(0)(q)|\ud q \lesssim \rho^\delta R^3 A^{-1} \le r_i^3\rho^{\frac{5}{2}\delta}. 
  \end{equation}
  By \eqref{eq:avg-break}, \eqref{eq:s0-bound}, and \eqref{eq:s1-bound},
  $$\left|\int_Q F'_i(0)(q)\ud q\right| \lesssim r_i^3\rho^{3\delta - \frac{\delta}{3}} + r_i^3\rho^{3\delta - \frac{\delta}{2}}\lesssim r_i^3\rho^{3\delta - \frac{\delta}{3}},$$
  as desired.
\end{proof}

\subsection{Proof of Lemma~\ref{lem:mainBounds}.(\ref{it:quasi-ortho})}\label{sec:quasi-orthog-proof} 
Let $0\le i < j \le N$.
Let $\delta>0$ be as in Lemma~\ref{lem:quad-averages}; note that we can take $\delta<\frac{1}{2}$. Let $\epsilon>0$ be as in Lemma~\ref{lem:Fprime-Lipschitz-dupe}; we take $\epsilon<1$. We claim that 
$$|\langle F_i'(0), F_j'(0)\rangle|\lesssim \rho^{-\min\{\frac{\epsilon}{4}, \frac{\delta}{3}\}}.$$

Recall that $f_j$ is supported on the unit square $U=[0,1]\times \{0\} \times [0,1]$, so that the top and bottom boundaries of $U$ are  characteristic curves of $\Sigma_j$. 

Let
$w\in [\frac{1}{2}r_j \rho^{\delta},r_j \rho^\delta]$ and $h\in [\frac{1}{2}r_j^2 \rho^{2\delta},r_j^2\rho^{2\delta}]$ be such that $N_x := w^{-1}$ and $N_z := h^{-1}$ are integers. For $m=0, \dots, N_x$ and $k=0,\dots, N_z$, let $v_{m,k} = (m w, 0, k h) \in V_0$ and let $g_{m,k}\from [m w, (m+1)w] \to \R$ be the function such that the graph $z=g_{m,k}(x)$ is a segment of the characteristic curve of $\Sigma_j$ through $v_{m,k}$. For $m=0, \dots, N_x-1$ and $k=0,\dots, N_z-1$, let $Q_{m,k}$ be the pseudoquad
$$Q_{m,k}:= \{(x,0,z)\mid x\in [m w, (m+1)w], z\in [g_{m,k}(x),g_{m,k+1}(x)]\};$$
this is the pseudoquad of $\Sigma_j$ with lower-left corner $v_{m,k}$, $\delta_x(Q_{m,k})=w$, and $\delta_z(Q_{m,k})=h$.
The pseudoquads $Q_{m,k}$ then have disjoint interiors and cover $U$. 

By Lemma~\ref{lem:quad-averages}, for every $m$ and $k$,
\begin{equation}\label{eq:quad-avg-bds}
  \left|\int_{Q_{m,k}} F_j'(0)(q) \ud q\right| \lesssim r_j^3 \rho^{3\delta - \frac{\delta}{3}}.
\end{equation}

Suppose that $p,q\in Q_{m,k}$. We claim that $\Psi_{f_i}(p)$ is close to $\Psi_{f_i}(q)$ and thus $|F_i'(0)(p)-F_i'(0)(q)|$ is small.
Let $p_n=\Psi_{f_n}(p)$ and $q_n=\Psi_{f_n}(q)$.
By Lemma~\ref{lem:pq-variation}, 
$$d_\Kor(p_j, q_j) \le \diam \Psi_{f_j}(Q_{m,k})\lesssim r_j\rho^\delta.$$
Since $i<j$,
$$\|f_j - f_i\|_\infty \le \sum_{n=i}^{j-1} \|\nu_n\|_\infty \lesssim \sum_{n=i}^{j-1} A^{-1}r_n \lesssim r_{i}.$$
Let $a=f_i(p)-f_j(p)$ and let $b=x(q)-x(p)$. Then $p_i=p_jY^a$, and
\begin{multline*}
  d_\Kor(p_i, q_i\paramY)\le d_\Kor(p_i, q_j Y^a) = \|Y^{-a} p_j^{-1} q_j Y^a\|_\Kor \\
  = \|p_j^{-1} q_j Z^{ab}\|_\Kor \lesssim d_\Kor(p_j,q_j) + \sqrt{ab} \lesssim r_j\rho^\delta + \sqrt{r_i r_j \rho^\delta}.
\end{multline*}
Since $j>i$, we have $r_j \le \rho^{-1}r_i$, so 
$$d_\Kor(p_i, q_i\paramY) \lesssim r_i \rho^{-1+\delta} + r_i \rho^{\frac{-1+\delta}{2}} \lesssim r_i \rho^{-\frac{1}{4}}.$$

Let $m=d_\Kor(p_i, q_i\paramY)$ and let $c\in q_i\paramY$ satisfy $d_\Kor(p_i,c)=m$. Note that $p_i = \Psi_{f_i}(p_i)$ and so $y(p_i) = f_i(p_i)$.  As the $y$ function is 1-Lipschitz we get that $|f_i(p_i) - y(c)| \lesssim m$. By Lemma~\ref{lem:ilg-line-distance}, $|f_i(p_i)-f_i(q_i)| \lesssim d_\Kor(p_i, q_i\paramY)$.  Thus,
$$d_\Kor(p_i, q_i) \le d_\Kor(p_i, c) + |y(c)-f_i(p_i)| + |f_i(p_i)-f_i(q_i)| \lesssim m \lesssim r_i \rho^{-\frac{1}{4}}.$$
Therefore, by Corollary~\ref{cor:Fprime-Holder}, 
\begin{equation}\label{eq:quad-centers-diff}
  |F_i'(0)(p)-F_i'(0)(q)| = |F_i'(0)(p_i)-F_i'(0)(q_i)| \lesssim \rho^{-\frac{\epsilon}{4}}.
\end{equation}

Then
\begin{align*}
  \Bigg|\int_U &F_i'(0)(q)F_j'(0)(q) \ud q\Bigg| 
  \le \sum_{m,k} \left|\int_{Q_{m,k}} F_i'(0)(q)F_j'(0)(q) \ud q\right| \\
  & \le \sum_{m,k} \left|\int_{Q_{m,k}} F_j'(0)(q)\left[\left(F_i'(0)(q)-F_i'(0)(v_{m,k})\right) + F_i'(0)(v_{m,k})\right] \ud q\right| \\
  & \le \sum_{m,k} \int_{Q_{m,k}} \left|F_i'(0)(q)-F_i'(0)(v_{m,k})\right| |F_j'(0)(q)| \ud q \\
  &\qquad + \sum_{m,k}|F_i'(0)(v_{m,k})| \left|\int_{Q_{m,k}} F_j'(0)(q) \ud q \right|,
\end{align*}
where the sums are all taken over $0\le m < N_x$ and $0\le k < N_z$.
Lemma~\ref{lem:mainBounds}.(\ref{it:first-upper}) implies that $\|F_n'(0)\|_\infty \lesssim A^{-1}\lesssim 1$, so by \eqref{eq:quad-centers-diff},
$$\sum_{m,k}\int_{Q_{m,k}} \left|F_i'(0)(q)-F_i'(0)(v_{m,k})\right| |F_j'(0)(q)| \ud q \lesssim |U| \rho^{-\frac{\epsilon}{4}} \|F_j'(0)\|_\infty \lesssim \rho^{-\frac{\epsilon}{4}}.$$
Likewise, by \eqref{eq:quad-avg-bds},
$$\sum_{m,k} |F_i'(0)(v_{m,k})| \left|\int_{Q_{m,k}} F_j'(0)(q) \ud q \right| \lesssim N_x N_z r_j^3 \rho^{3\delta-\frac{\delta}{3}}
\lesssim \rho^{-\frac{\delta}{3}}.$$
Therefore,
$$\big|\langle F_i'(0), F_j'(0)\rangle\big|\lesssim \rho^{-\frac{\delta}{3}} + \rho^{-\frac{\epsilon}{4}},$$
as desired.

\section{Second derivative bounds}\label{sec:second-deriv}

In this section, we will prove the following lemma.
\begin{lemma}\label{lem:second-bounds}
  For any $A>1$ and any $C>0$, if $\rho$ is sufficiently large, then the following bounds hold. Let $\alpha, \gamma\from \HH\to \R$ be functions that satisfy Lemma~\ref{lem:gamma-deriv-bounds}.
  Then
  $$\left\|G_{\alpha,\gamma}''(t)\right\|_\infty \lesssim_C A^{-3}$$
  for all $t\in [0,1]$.
\end{lemma}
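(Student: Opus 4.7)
The plan is to deduce Lemma~\ref{lem:second-bounds} from Proposition~\ref{prop:second-bounds-v2} by verifying its hypotheses, and then to sketch how that proposition itself is established by differentiating the first-order formulas of Section~\ref{sec:perturbations} in~$\tau$. Writing $\hat\partial = A\nabla_\alpha$, any word $E\in \{Z,\hat\partial\}^*$ of length at most $3$ has the form $E = A^{\#\hat\partial(E)} F$ for some $F\in\{Z,\nabla_\alpha\}^*$ of the same length and same $\nabla_\alpha$--count. The bound $\|F\gamma\|_\infty \le cA^{-\#\nabla_\alpha(F)-1}$ from Lemma~\ref{lem:gamma-deriv-bounds} then gives $\|E\gamma\|_\infty \le cA^{-1}$, and similarly $\|E\alpha\|_\infty \le cA^{-1}\rho^{-1}\le c\rho^{-1}$ whenever $E\not\in\{\id,\hat\partial\}$. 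Thus the hypotheses \eqref{eq:2der-gamma-bound} and \eqref{eq:2der-alpha-bound} of Proposition~\ref{prop:second-bounds-v2} hold with $C=c$, yielding $\|G''_{\alpha,\gamma}(t)\|_\infty \lesssim_c A^{-3}$ once the proposition is proved.

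To prove Proposition~\ref{prop:second-bounds-v2}, I would fix $p\in\HH$, left-translate so that $p=\zero\in\Gamma_\zeta$, and use the translation identity from Section~\ref{sec:perturbations}: differentiating $(G^{s,S}_{\zeta,\psi})'(\tau)(\zero) = (G^{s,S}_{\zeta_\tau,\psi_\tau})'(0)(\zero)$ in~$\tau$ reduces the task to bounding $\partial_\tau[(G^{s,S}_{\zeta_\tau,\psi_\tau})'(0)(\zero)]|_{\tau=0}$ uniformly in $s,S$. On the small-scale range $s\le \kappa\le 1$, substituting the formula \eqref{eq:grrab-small} and computing the $\tau$--derivative produces
\begin{equation*}
  \int_{A_{s,1}}\bigl[\dot M(w)\,\YL\widehat K(\overline w) + m(w)^2\,\YL^2\widehat K(\overline w)\bigr]\ud w,
\end{equation*}
where $m = \partial_\tau \zeta_\tau|_{\tau=0}$ is as in Lemma~\ref{lem:grrab-small} and $\dot M = \partial_\tau M_\tau|_{\tau=0}$ is an explicit polynomial combination of $\psi(\zero)$, $x$, $\partial_z\psi$, and $\partial_z^2\zeta$. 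I would decompose each integrand into its $\HH$--even and $\HH$--odd parts under $\theta$; since $A_{s,1}$ is $\theta$--invariant, only even$\times$even and odd$\times$odd pairings survive. Combining Lemma~\ref{lem:a-Taylor}, the refined bound $\|\nabla_\alpha\gamma\|_\infty \lesssim A^{-2}$, and the cancellation estimates of Lemma~\ref{lem:kernel-sym-bound} (which yield $(\YL\widehat K)^\sO(\Psi_\zeta\cdot) \lesssim A^{-1}\rho^{-1}\kappa^{-3}$ and $(\YL^2\widehat K)^\sE(\Psi_\zeta\cdot) \lesssim A^{-1}\rho^{-1}\kappa^{-4}$), and integrating via Lemma~\ref{lem:polar}, each piece is bounded by $O(A^{-3})$. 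On the large-scale range $1\le \kappa\le S$, I would instead differentiate \eqref{eq:grrab-large} in~$\tau$, producing an interior integrand of the form $\dot\psi\,\YL\widehat K + \psi m\,\YL^2\widehat K - \psi(\zero) m\,\YL\YR\widehat K$ plus boundary terms at $\kappa=R,R'$; the same symmetrization together with the $(-4)$--homogeneous decay yields $O(A^{-3})$. Uniform bounds on the truncations let one pass $\partial_\tau$ under the limits $s\to 0$, $S\to\infty$, exactly as in the proof of Lemma~\ref{lem:G'-formula-and-bound-rev}.

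The principal obstacle is the term $m(w)^2\YL^2\widehat K(\overline w)$: the naive pointwise bounds $|m|\lesssim A^{-1}\kappa$ and $|\YL^2\widehat K|\lesssim \kappa^{-5}$ give only $|m^2\YL^2\widehat K|\lesssim A^{-2}\kappa^{-3}$, producing at best an $A^{-2}$ estimate. To extract the extra factor of $A^{-1}$ one must simultaneously exploit (i) the sharpened bound $\|\nabla_\alpha\psi\|_\infty \lesssim A^{-2}$, which improves the odd part $m^\sO$ to $A^{-2}\kappa$, (ii) the $\rho^{-1}$ cancellation in $(\YL^2\widehat K)^\sE$ coming from Lemma~\ref{lem:kernel-sym-bound}, and (iii) the parity of the $x$--factor in $m^\sE \lesssim A^{-1}\kappa^2$, so that the surviving even$\times$even and odd$\times$odd pairings each contribute $O(A^{-3}\rho^{-1})$ or $O(A^{-3})$ per annular shell. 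Analogous care is required for $\dot M\YL\widehat K$ on both scale ranges and for the boundary terms arising from \eqref{eq:grrab-large}, where the identity $\partial_\tau \overline w_\tau|_{\tau=0} = m(w)\cdot Y$ is needed to handle the $\tau$--dependence of the edge integrals.
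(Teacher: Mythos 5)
Your proposal follows essentially the same route as the paper's proof in Section~\ref{sec:second-deriv}: reduce to the origin, split the defining integral into a small-scale piece (where the translated representation from \eqref{eq:grr-translate} and hence \eqref{eq:grrab-small} is useful) and a large-scale piece (where the non-translated representation from \eqref{eq:grr-no-translate} / \eqref{eq:grrab-large} is useful), differentiate in $\tau$ using the translation identity $G^{s,S}_{\zeta,\psi}(\tau) = G^{s,S}_{\zeta_\tau,\psi_\tau}(0)$, and exploit the $\HH$--even/odd decomposition of each factor. Your small-scale integrand $\dot M\,\YL\widehat K(\overline w)+m^2\,\YL^2\widehat K(\overline w)$ is exactly the paper's $q_2\,N_{\YL\widehat K}+q_1^2\,N_{\YL^2\widehat K}$ in Lemma~\ref{lem:unif-sec-sm}, and your identification of $m^2\YL^2\widehat K$ as the delicate term, together with items (i) and (iii) (namely $|m^\sO|\lesssim A^{-2}\kappa$ from $\|\nabla_\alpha\gamma\|_\infty\lesssim A^{-2}$, and $|m^\sE|\lesssim A^{-1}\kappa^2$ from the $x$--linear leading term) matches the paper's treatment precisely.

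There is one inaccuracy in item (ii), which does not break the argument but is worth flagging. You invoke Lemma~\ref{lem:kernel-sym-bound} to get cancellations of size $A^{-1}\rho^{-1}$ in $(\YL\widehat K)^\sO(\Psi_\zeta\cdot)$ and $(\YL^2\widehat K)^\sE(\Psi_\zeta\cdot)$ from the bound $\|\alpha\|_{W'_\alpha}\lesssim A^{-1}\rho^{-1}$. That bound is available only at $\tau=0$, i.e.\ when the surface is $\Gamma_\alpha$. The lemma must hold for all $t\in[0,1]$, and at a general $\tau$ one works with the translated function $a_\tau$ (the paper's $a$ and $b$ in Section~\ref{sec:second-deriv}), which incorporates $\tau\gamma$: by Lemma~\ref{lem:ab-deriv-bounds} its higher derivatives are only $\lesssim A^{-1}$, the $\rho^{-1}$ factor having been washed out by the $\gamma$--contribution. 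Consequently the kernel cancellation at general $\tau$ is $\lesssim A^{-1}\kappa^{1-j}$ (as in Lemma~\ref{lem:ker-on-graph}), not $\lesssim A^{-1}\rho^{-1}\kappa^{1-j}$. Your accounting already shows, however, that the weaker $A^{-1}$ cancellation suffices: for instance $(m^2)^\sE(\YL^2\widehat K)^\sE \lesssim A^{-2}\kappa^2\cdot A^{-1}\kappa^{-4}=A^{-3}\kappa^{-2}$ integrates to $O(A^{-3})$. So the $A^{-3}$ conclusion survives; you should simply replace the $\rho^{-1}$ improvement by the $A^{-1}$ one when $\tau\neq 0$.
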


We first set some notation that we will use in the rest of this section. Similarly to Section~\ref{sec:perturbations}, given functions $\alpha$ and $\gamma$ that satisfy Lemma~\ref{lem:gamma-deriv-bounds}, for $\tau\in \R$, we define
$$a_\tau(w) := (\alpha+\tau\gamma)(Y^{(\alpha+\tau\gamma)(\zero)} w) - (\alpha+\tau \gamma)(\zero)$$
(as in \eqref{eq:def-zt}) and
$$b_\tau(w) := \gamma(Y^{(\alpha+\tau\gamma)(\zero)} w).$$
By Lemma \ref{lem:ilg-translations}, these are translates of $\alpha$ and $\gamma$ in the sense that $a_\tau(\zero) = 0$,
$$\Gamma_{a_\tau} = Y^{-(\alpha+\tau\gamma)(\zero)} \Gamma_{\alpha + \tau \gamma},$$
and
$$\Gamma_{a_\tau + t b_\tau} = Y^{-(\alpha+\tau\gamma)(\zero)} \Gamma_{\alpha + (\tau + t) \gamma}.$$
By the left-invariance of the Riesz transform, for any $\tau, t\in \R$,
\begin{equation}\label{eq:G-forms}
  G_{\alpha,\gamma}(\tau+t)(\zero) = G_{a_\tau,b_\tau}(t)(\zero) = F_{a_{\tau+t}}(\zero) = \pv(\zero)\int_{V_0} \widehat{K}(\Psi_{a_{\tau+t}}(v))\ud v.
\end{equation}

We will use \eqref{eq:G-forms} to decompose $G_{\alpha,\gamma}(\tau)(\zero)$ and differentiate the decomposition.
We fix some $\tau\in [0,1]$ and abbreviate $a=a_\tau$ and $b=b_\tau$. For $w\in \HH$, let $\overline{w} = \Psi_a(w)$ and $w_t = Y^{t b(\zero)} w Y^{- t b(\zero)} = w Z^{-b(\zero)x(w)}$. Then 
\begin{equation} \label{eq:a-tau-t}
  \Psi_{a_{\tau + t}}(w) = Y^{- t b(\zero)} \Psi_{a + t b}(w_t) = Y^{- t b(\zero)} \overline{w_t} Y^{t b(w_t)}.
\end{equation}
For $0<r<R$, let $D_r=B(\zero,r)\cap V_0$ and $A_{r,R}=D_R\setminus D_r$. Then we can decompose $G_{\alpha,\gamma}(\tau + t)(\zero) = G_{a,b}(t)(\zero)$ as follows:
\begin{align*}
  G_{a,b}(t)(\zero) & = \pv(\zero)\int_{D_1} \widehat{K}(\Psi_{a_{\tau + t}}(v)) \ud v + \pv(\zero)\int_{V_0\setminus D_1} \widehat{K}(\Psi_{a_{\tau + t}}(v)) \ud v \\
  & = \pv(\zero)\int_{D_1} \widehat{K}(\Psi_{a_{\tau + t}}(v)) \ud v + \pv(\zero)\int_{V_0\setminus D_1} \widehat{K}(Y^{- t b(\zero)} \overline{w_t} Y^{t b(w_t)}) \ud w \\
  & = \pv(\zero)\int_{D_1} \widehat{K}(\Psi_{a_{\tau + t}}(v)) \ud v + \lim_{R\to \infty} \int_{A_{1,R}^{t}} \widehat{K}(Y^{- t b(\zero)} \overline{w} Y^{t b(w)}) \ud w \\
  & =: \Gsm_{\tau}(t) + \Glg_{\tau}(t),
\end{align*}
where 
$$A_{1,R}^{t} := Y^{tb(\zero)} A_{1,R} Y^{-tb(\zero)}.$$
For $0<r<1<R$, let
$$\Gsm_{\tau,r}(t) := \int_{A_{r,1}} \widehat{K}(\Psi_{a_{\tau + t}}(v)) \ud v$$
$$\Glg_{\tau,R}(t) := \int_{A_{1,R}^{t}} \widehat{K}(Y^{- t b(\zero)} \overline{w} Y^{t b(w)}) \ud w.$$
We will show the following bounds.
\begin{lemma}\label{lem:unif-sec-lg}
  For any $\tau\in [0,1]$ and any $1\le R<R'$,
  \begin{equation}\label{eq:unif-sec-lg}
    \left|(\Glg_{\tau,R'})''(0) - (\Glg_{\tau,R})''(0)\right| \lesssim A^{-3}R^{-1}.
  \end{equation}
\end{lemma}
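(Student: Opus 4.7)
The plan is to express $(\Glg_{\tau,R'})''(0) - (\Glg_{\tau,R})''(0)$ as an integral over the $\theta$-invariant annulus $A_{R,R'}$, then estimate it by combining $\theta$-symmetrization with the Taylor-expansion bounds of Section~\ref{sec:prelims} and the kernel estimates of Lemma~\ref{lem:kernel-sym-bound}. The first step is to remove the $t$-dependence of the domain. Substituting $w \mapsto wZ^{-tb(\zero)x(w)}$ is volume-preserving and sends $A_{R,R'}$ onto $A_{R,R'}^t$, so that
$$\Glg_{\tau,R'}(t) - \Glg_{\tau,R}(t) = \int_{A_{R,R'}} \widehat{K}\bigl(wY^{m(t,w)}\bigr) \ud w,$$
where $m(t,w) := a(wZ^{-tb(\zero)x(w)}) + t\bigl(b(wZ^{-tb(\zero)x(w)}) - b(\zero)\bigr)$. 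Since the domain no longer depends on $t$, differentiating twice at $t=0$ under the integral gives
$$(\Glg_{\tau,R'})''(0) - (\Glg_{\tau,R})''(0) = \int_{A_{R,R'}} \Bigl[m_{tt}(0,w)Y_L\widehat{K}(\overline{w}) + m_t(0,w)^2 Y_L^2\widehat{K}(\overline{w})\Bigr] \ud w,$$
and a routine calculation expresses $m_t(0,w) = -b(\zero)x(w)Za(w) + b(w) - b(\zero)$ and $m_{tt}(0,w) = b(\zero)^2 x(w)^2 Z^2 a(w) - 2b(\zero)x(w)Zb(w)$.

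Next, I will expand the integrand and bound each of its constituent terms. Applying Lemma~\ref{lem:gamma-deriv-bounds} gives the scalar bounds $|b(\zero)|, |b(w)|, |Zb(w)| \lesssim A^{-1}$ and $|Za(w)|, |Z^2 a(w)|\lesssim A^{-1}\rho^{-1}$, together with $|\nabla_a b|, |\nabla_a Zb| \lesssim A^{-2}$. Combined with the homogeneities of $Y_L^k\widehat{K}$, the term $(b(w)-b(\zero))^2 Y_L^2\widehat{K}(\overline{w})$ is readily dominated by $A^{-2}\|w\|_\Kor^{-5}$, which integrates (by Lemma~\ref{lem:polar}) to $\lesssim A^{-2}R^{-2} \le A^{-3}R^{-1}$ once $\rho$ is taken large enough. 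The delicate terms are those carrying an odd factor of $x(w)$ together with a single $Y_L$-derivative of $\widehat{K}$ --- namely $-2b(\zero)x(w)Zb(w)Y_L\widehat{K}(\overline{w})$ and $b(\zero)^2 x(w)^2 Z^2 a(w) Y_L\widehat{K}(\overline{w})$ --- whose naive size estimates produce $A^{-2}\log(R'/R)$ or worse and therefore fail.

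To handle these delicate terms I will symmetrize under $\theta$: since $A_{R,R'}$ is $\theta$-invariant, each integrand $f(w)$ may be replaced by $\tfrac{1}{2}(f(w) + f(\theta(w)))$. The kernel $Y_L\widehat{K}$ is $\HH$-even by Lemma~\ref{lem:deriv-homog}, so a Mean-Value-Theorem argument as in Lemma~\ref{lem:kernel-sym-bound}, using Lemma~\ref{lem:a-Taylor} and $|a(w) + a(\theta(w))| \lesssim \rho^{-1}\|w\|_\Kor^2$, gives $|Y_L\widehat{K}(\overline{w}) - Y_L\widehat{K}(\overline{\theta(w)})| \lesssim \rho^{-1}\|w\|_\Kor^{-3}$. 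Simultaneously, Lemma~\ref{lem:a-Taylor} applied to $Zb$ and $Z^2 a$ yields
$$|Zb(w) - Zb(\theta(w))| \lesssim A^{-2}|x(w)| + A^{-1}\|w\|_\Kor^2, \quad |Z^2 a(w) + Z^2 a(\theta(w))| \lesssim A^{-1}\rho^{-1},$$
so that after symmetrization each problematic term contributes either a product of the form $A^{-2}\cdot \rho^{-1}\|w\|_\Kor^{-4}$ (absorbing the residual kernel error) or a product of the form $A^{-3}\|w\|_\Kor^{-4}$ (absorbing the Taylor-derivative smallness of $\nabla_a Zb$ or $Z^2a$). Integrating these bounds against $\rho^2\,\ud\rho$ via Lemma~\ref{lem:polar} produces the desired decay
$$\Big|(\Glg_{\tau,R'})''(0) - (\Glg_{\tau,R})''(0)\Big| \lesssim A^{-3}\int_R^{R'}\rho^{-4}\cdot\rho^2\,\ud\rho \lesssim A^{-3}R^{-1}.$$

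The main obstacle is the symmetrization bookkeeping for the term $-2b(\zero)x(w)Zb(w)Y_L\widehat{K}(\overline{w})$: writing its $\theta$-average in the form $b(\zero)x(w)\bigl[(Zb(w)-Zb(\theta(w)))Y_L\widehat{K}(\overline{w}) + Zb(\theta(w))\bigl(Y_L\widehat{K}(\overline{w}) - Y_L\widehat{K}(\overline{\theta(w)})\bigr)\bigr]$, one must carefully track which parity of $Zb$ interacts with the $\HH$-even principal part of $Y_L\widehat{K}(\overline{w})$ and which with its $O(\rho^{-1}\|w\|_\Kor^{-3})$ odd residual, and analogously for the $x(w)^2 Z^2 a(w)$ term. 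Getting the precise $A^{-3}R^{-1}$ decay depends essentially on the refined derivative bounds $|\nabla_a Zb(\zero)|\lesssim A^{-2}$ and $\|Z^2\alpha\|_\infty \lesssim A^{-1}\rho^{-1}$ supplied by Lemma~\ref{lem:gamma-deriv-bounds}, together with choosing $\rho$ large enough (depending on $A$ and $C$) to absorb the residual $\rho^{-1}$ factors into the target estimate.
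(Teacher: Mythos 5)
The approach you take — pulling the $t$-dependence out of the domain via the substitution $w\mapsto wZ^{-tb(\zero)x(w)}$ and then differentiating twice — is exactly the "small-scale" representation that the paper uses for $\Gsm_{\tau,r}$, not the one it uses for $\Glg_{\tau,R}$. The paper explicitly splits the second derivative into $\Gsm$ and $\Glg$ precisely because this representation fails at large scales, and your estimates do not avoid that failure. Concretely:

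First, the bound $|Za(w)|,|Z^2a(w)|\lesssim A^{-1}\rho^{-1}$ that you invoke is not available for $a=a_\tau$ when $\tau>0$. Lemma~\ref{lem:gamma-deriv-bounds} gives the $\rho^{-1}$ gain for $\alpha$ alone, but $a_\tau$ is a left-translate of $\alpha+\tau\gamma$, and $\|Z^k\gamma\|_\infty\lesssim A^{-1}$ carries no $\rho^{-1}$. The correct bound, from Lemma~\ref{lem:ab-deriv-bounds}, is only $\|Z^ka\|_\infty\lesssim A^{-1}$. This matters because for $\tau\in[0,1]$ you cannot make the leading terms small by taking $\rho$ large.

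Second, and more seriously, the term $b(\zero)^2x(w)^2\partial_z^2a(w)\YL\widehat{K}(\overline{w})$ does not integrate to $O(A^{-3}R^{-1})$ even after $\theta$-symmetrization. The factor $x(w)^2$ is even in $\theta$, so the even part of this term is controlled by $b(\zero)^2x^2(Z^2a)^\sE N_{\YL\widehat{K}}^\sE$. The only available bound on $(Z^2a)^\sE$ is the sup norm $\lesssim A^{-1}$, and $|N_{\YL\widehat{K}}^\sE|\lesssim\kappa^{-4}$ with $\kappa=\|w\|_\Kor$, giving an integrand of size $A^{-2}\kappa^2\cdot A^{-1}\cdot\kappa^{-4}=A^{-3}\kappa^{-2}$. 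Integrating against $\kappa^2\,\ud\kappa$ over $[R,R']$ gives $A^{-3}(R'-R)$, which is unbounded as $R'\to\infty$ — the opposite of what Lemma~\ref{lem:unif-sec-lg} requires. Similarly, the term $b(\zero)x(w)\partial_zb(w)\YL\widehat{K}(\overline{w})$ produces an $A^{-3}(\min(A,R')-R)$ contribution after symmetrization, which exceeds $A^{-3}R^{-1}$ for $R$ near $1$. Even the term you call "routine," $(b(w)-b(\zero))^2\YL^2\widehat{K}(\overline{w})$, is handled with the false inequality $A^{-2}R^{-2}\le A^{-3}R^{-1}$, which only holds for $R\ge A$ (this one actually can be fixed by symmetrization, unlike the others).

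The structural problem is that moving the $t$-dependence from the domain into the integrand via $w_t=wZ^{-tb(\zero)x(w)}$ injects polynomial factors of $x(w)$, which grow linearly at large scales and overwhelm the kernel decay; the $\theta$-symmetrization gain, designed for small scales where $\kappa\le1$ lets one trade $\kappa^2$ for $\kappa$, does not resolve this. The paper's proof instead leaves $\Glg_{\tau,R}(t)$ as an integral over the moving domain $A^t_{1,R}$ of $\widehat{K}(Y^{-tb(\zero)}\overline{w}Y^{tb(w)})$ with $\overline{w}$ frozen, differentiates twice, and handles the resulting boundary terms $I_2^R,I_3^R$ along the circles of radius $R$ and $R'$. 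In that formulation the bulk integrand is a sum of products $b_i(w)N_{T_i}(w)$ with $|b_i|\lesssim A^{-2}$ bounded independently of $\kappa$ and $T_i$ of degree $-5$, which after symmetrization yields $A^{-3}\kappa^{-4}$ and hence $A^{-3}R^{-1}$ over $[R,R']$. To make your approach work you would need a genuinely new cancellation in the terms carrying powers of $x(w)$, and none is indicated.
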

\begin{lemma}\label{lem:unif-sec-sm}
  For any $\tau\in [0,1]$ and any $0<r'<r \le 1$
  $$\left|(\Gsm_{\tau,r})''(0) - (\Gsm_{\tau,r'})''(0)\right| \lesssim A^{-3}r.$$
\end{lemma}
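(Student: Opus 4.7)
\medskip

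The plan is to mirror the strategy used for $(G^{s',s}_{\zeta,\psi})'(0)$ in Lemma~\ref{lem:grrab-small}, but now for the second derivative at $t=0$. Since the integration domain in $\Gsm_{\tau,r}(t)$ does not depend on $t$, differentiation passes through the integral, and the difference collapses to
\[
  (\Gsm_{\tau,r})''(0) - (\Gsm_{\tau,r'})''(0) = -\int_{A_{r',r}} \partial_t^2\bigl[\widehat K(\Psi_{a_{\tau+t}}(v))\bigr]_{t=0}\,\ud v.
\]
For $v\in V_0$ we have $\Psi_{a_{\tau+t}}(v) = vY^{A_t(v)}$ with
$A_t(v) = a(v_t) + t(b(v_t)-b(\zero))$ and $v_t = vZ^{-b(\zero)x(v)t}$, as follows from \eqref{eq:a-tau-t}. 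The chain rule then gives
\[
  \partial_t^2\bigl[\widehat K(vY^{A_t(v)})\bigr]_{t=0} = A_0''(v)\,\YL \widehat K(\overline v) + \bigl(A_0'(v)\bigr)^2\YL^2\widehat K(\overline v),
\]
where (as in the proof of Lemma~\ref{lem:grrab-small}) $A_0'(v) = b(v)-b(\zero)-b(\zero)x(v)\partial_z a(v)$ and a direct computation shows $A_0''(v)=b(\zero)^2x(v)^2\partial_z^2 a(v)-2b(\zero)x(v)\partial_z b(v)$.

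\medskip

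Next, I would split the integrand into its $\HH$--even and $\HH$--odd parts and exploit $\theta(A_{r',r})=A_{r',r}$ via \eqref{eq:even-odd-product}, as in the proof of Lemma~\ref{lem:grrab-small}. The kernel parts are controlled by Lemma~\ref{lem:kernel-sym-bound}: since $\YL\widehat K$ is $\HH$--even and $(-4)$--homogeneous, and $\YL^2\widehat K$ is $\HH$--odd and $(-5)$--homogeneous, the relevant bounds are
\[
  |N^{\sE}_{\YL\widehat K}(v)|\lesssim \|v\|_\Kor^{-4},\quad |N^{\sO}_{\YL\widehat K}(v)|\lesssim \rho^{-1}\|v\|_\Kor^{-3},
\]
\[
 |N^{\sO}_{\YL^2\widehat K}(v)|\lesssim \|v\|_\Kor^{-5},\quad |N^{\sE}_{\YL^2\widehat K}(v)|\lesssim \rho^{-1}\|v\|_\Kor^{-4},
\]
using $\|a\|_{W'_a}\lesssim \rho^{-1}$ from Lemma~\ref{lem:gamma-deriv-bounds}. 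For the coefficients, I would apply Lemma~\ref{lem:a-Taylor} to $b$, $\partial_z a$, and $\partial_z b$, combined with Lemma~\ref{lem:gamma-deriv-bounds} (recalling $|b(\zero)|=|\gamma(\zero)|\lesssim A^{-1}$), to extract explicit leading terms and isolate the even/odd parts. For instance,
\[
  A_0'(v) = x(v)\bigl(\nabla_a b(\zero)-b(\zero)\partial_z a(\zero)\bigr) + R_1(v),\quad |R_1(v)|\lesssim A^{-1}\|v\|_\Kor^2,
\]
with leading coefficient $\mu$ satisfying $|\mu|\lesssim A^{-2}$; and $A_0''(v)$ decomposes into an odd leading term $-2b(\zero)x\partial_z b(\zero)$ of size $A^{-2}\|v\|_\Kor$ plus an even leading term of size $A^{-3}\|v\|_\Kor^2$ plus a higher order remainder.

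\medskip

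Each resulting bilinear contribution is then integrated with Lemma~\ref{lem:polar}. The dominant pieces to check are:
(i) $(A_0'')^{\sE}\cdot N^{\sE}_{\YL\widehat K}$, which contributes at most $\int A^{-3}\|v\|_\Kor^{-2}\,\ud v\lesssim A^{-3}r$;
(ii) $(A_0'')^{\sO}\cdot N^{\sO}_{\YL\widehat K}$, which contributes at most $\int A^{-2}\rho^{-1}\|v\|_\Kor^{-2}\,\ud v\lesssim A^{-2}\rho^{-1}r\lesssim A^{-3}r$ once $\rho\gtrsim A$;
(iii) $\mu^2 x^2\cdot N^{\sE}_{\YL^2\widehat K}$, which contributes at most $\int A^{-4}\rho^{-1}\|v\|_\Kor^{-2}\,\ud v\lesssim A^{-4}\rho^{-1}r$; and
(iv) cross terms between the leading odd $x\mu$ of $A_0'$ and the higher-order even piece of $A_0'$, paired with $N^{\sO}_{\YL^2\widehat K}$.

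\medskip

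The main obstacle is the last family of terms (iv) and the corresponding error contributions of $A_0''$: a naive bound $|A_0'(v)|^2\lesssim A^{-2}\|v\|_\Kor^4$ or $|A_0''(v)|\lesssim A^{-2}\|v\|_\Kor$ paired with the $(-5)$-- or $(-4)$--homogeneous kernel only yields $A^{-2}r^2$, which is not good enough. The key observation to extract the extra factor $A^{-1}$ is that the Taylor remainders for $\partial_z b$ and $\partial_z a$ are, to leading order, \emph{even} in $v$ (coming from $z\partial_z^2 b(\zero)$ and $\frac{x^2}{2}\nabla_a^2\partial_z b(\zero)$ under $\theta(x,0,z)=(-x,0,z)$); the \emph{odd} part of the remainder only appears from genuine cross-derivatives such as $\nabla_a\partial_z^2 b$ and is of size $A^{-2}\|v\|_\Kor^3$ by Lemma~\ref{lem:gamma-deriv-bounds}. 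Multiplying by the prefactor $-2b(\zero)x$ of size $A^{-1}\|v\|_\Kor$ and pairing with $N^{\sE}_{\YL\widehat K}\lesssim \|v\|_\Kor^{-4}$ gives $A^{-3}\|v\|_\Kor^0$, which integrates to $A^{-3}r^3\le A^{-3}r$. Carrying out this parity bookkeeping for each cross term, and absorbing the surviving $\rho^{-1}$ factors by assuming $\rho$ large, yields the claimed bound $|(\Gsm_{\tau,r})''(0)-(\Gsm_{\tau,r'})''(0)|\lesssim A^{-3}r$.
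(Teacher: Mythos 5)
Your overall strategy matches the paper's: since the domain $A_{r',r}$ is fixed, you differentiate under the integral, compute the second $t$--derivative of $\widehat K(\Psi_{a_{\tau+t}}(v))$ at $t=0$ (your $A_0'$, $A_0''$ are exactly the paper's $q_1$, $q_2$), split everything into $\HH$--even and $\HH$--odd parts, and exploit the symmetry of the annulus. The final bound $A^{-3}r$ also agrees. But there are two gaps in the execution.

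First, the kernel parity bounds. You claim $|N^{\sO}_{\YL\widehat K}(v)|\lesssim \rho^{-1}\|v\|_\Kor^{-3}$ and $|N^{\sE}_{\YL^2\widehat K}(v)|\lesssim \rho^{-1}\|v\|_\Kor^{-4}$, ``using $\|a\|_{W'_a}\lesssim\rho^{-1}$ from Lemma~\ref{lem:gamma-deriv-bounds}.'' But $a=a_\tau$ is a translate of $\alpha+\tau\gamma$, not of $\alpha$: the perturbation $\tau\gamma$ only satisfies $A^{-1}$ bounds, not $A^{-1}\rho^{-1}$. The correct bound is $\|a\|_{W'_a}\lesssim A^{-1}$, which comes from Lemma~\ref{lem:ab-deriv-bounds}, and the correct kernel parity estimates are those in Lemma~\ref{lem:ker-on-graph}: $|N_T(\theta(v))-(-1)^kN_T(v)|\lesssim A^{-1}\|v\|_\Kor^{k+1}$. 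You do not actually need $\rho\gtrsim A$ to close these particular terms once you replace $\rho^{-1}$ by $A^{-1}$ — the $A^{-1}$ from the graph geometry already supplies the missing factor.

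Second, and more substantively, your ``key observation'' paragraph — that the Taylor remainders of $\partial_z b$ and $\partial_z^2 a$ have leading even part and an odd part of size $A^{-2}\|v\|_\Kor^3$ — relies on a second/third--order Taylor expansion along characteristic curves plus $Z$, with control on fourth derivatives to bound the remainder's parity. The hypotheses of Lemma~\ref{lem:gamma-deriv-bounds} and Lemma~\ref{lem:ab-deriv-bounds} only give you words of length $\le 3$ in $\{\nabla_a,Z\}$, so the odd part of the remainder (e.g.\ the $xz\,\nabla_a Z^2 b$ term) is at the very edge of what's available and the $O(\kappa^4)$ error is not controlled. The paper sidesteps this entirely with Lemma~\ref{lem:horizontality}, which applies the Mean Value Theorem along a horizontal line in the tangent plane and directly yields $|m^\sO(v)|\lesssim A^{-2}\|v\|_\Kor$ for $m\in\{Za,b,Zb\}$ using only second derivatives. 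From there the paper gets $|q_1^\sE|\lesssim A^{-1}\kappa^2$, $|q_1^\sO|\lesssim A^{-2}\kappa$, $|q_2^\sE|\lesssim A^{-3}\kappa^2$, $|q_2^\sO|\lesssim A^{-2}\kappa$, and each of the four pairings (even coefficient with even kernel, odd with odd) is $\lesssim A^{-3}\kappa^{-2}$, which integrates to $A^{-3}r$ via Lemma~\ref{lem:polar}. I recommend replacing your third--order expansion argument by a direct invocation of Lemma~\ref{lem:horizontality}; it is both shorter and stays within the available derivative bounds.
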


\begin{proof}[Proof of Lemma \ref{lem:second-bounds}]
These lemmas show that the functions $\tau\mapsto (\Gsm_{\tau,r})''(0)$ and $\tau\mapsto (\Glg_{\tau,R})''(0)$ are uniformly Cauchy on the interval $\tau\in [0,1]$ as $r \to 0$ and $R \to \infty$. Let $f^\sml(\tau) = \lim_{r\to 0} (\Gsm_{\tau,r})''(0)$ and $f^\lrg(\tau) = \lim_{R\to \infty} (\Glg_{\tau,R})''(0)$. Then $(\Gsm_{\tau})''(0) = f^\sml(\tau)$ and $(\Glg_\tau)''(0) = f^\lrg(\tau)$.
Moreover, as $(\Gsm_{\tau,1})''(0) = 0 = (\Glg_{\tau,1})''(0)$ for all $\tau$,
\begin{align*}
  |(\Gsm_\tau)''(0)| &= \lim_{r \to 0} |(\Gsm_{\tau,r})''(0) - (\Gsm_{\tau,1})''(0)| \lesssim A^{-3}, \\
  |(\Glg_\tau)''(0)| &= \lim_{R \to \infty} |(\Glg_{\tau,R})''(0) - (\Glg_{\tau,1})''(0)| \lesssim A^{-3}.
\end{align*}
Thus
\begin{align*}
  \left|G_{a,b}''(\tau)(\zero)\right| = \left| (\Glg_\tau)''(0) + (\Gsm_\tau)''(0) \right| \lesssim A^{-3},
\end{align*}
as desired.
\end{proof}

Before we prove Lemmas~\ref{lem:unif-sec-lg} and \ref{lem:unif-sec-sm}, we will need some lemmas. The first proves bounds on the derivatives of $a$ and $b$ which follow from Lemma~\ref{lem:gamma-deriv-bounds}.
\begin{lemma}\label{lem:ab-deriv-bounds}
  There is a $c>0$ such that for any $k\le 3$ and any word $F \in \{\nabla_a, Z\}^k$
  \begin{equation}\label{eq:b-jk-bounds}
      \|F b\|_{\infty} \le c A^{-\# \nabla_a(F)-1},
  \end{equation}
  If $F \notin \{\id, \nabla_\alpha\}$, then
  \begin{equation}\label{eq:a-jk-bounds} 
    \|F a\|_{\infty} \le c A^{-\# \nabla_a(F)-1}.
  \end{equation}
\end{lemma}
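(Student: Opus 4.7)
My plan is to transport the bounds of Lemma~\ref{lem:gamma-deriv-bounds} through a left translation, then expand $\nabla_{\alpha+\tau\gamma}$ in terms of $\nabla_\alpha$.

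Setting $c := \tau\gamma(\zero)$ (using $\alpha(\zero) = 0$), $g := Y^c$, and $L_g(p) := gp$, I rewrite the definitions as $b = \gamma\circ L_g$ and $a = (\alpha+\tau\gamma)\circ L_g - c$. Using left-invariance of $X$ and $Z$ together with the identity $(y-a)(w) = (y - \alpha - \tau\gamma)(gw)$, a direct computation shows $\nabla_a(h\circ L_g) = (\nabla_{\alpha+\tau\gamma}h)\circ L_g$ for every smooth $h$ constant on cosets of $\paramY$. Because $\nabla_{\alpha+\tau\gamma}h$ is again constant on cosets of $\paramY$, induction on length gives $F(h\circ L_g) = (\tilde F h)\circ L_g$ for every word $F \in \{\nabla_a, Z\}^*$, where $\tilde F$ is $F$ with each $\nabla_a$ replaced by $\nabla_{\alpha+\tau\gamma}$. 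Hence $\|Fb\|_\infty = \|\tilde F\gamma\|_\infty$ and, for $F \ne \id$, $\|Fa\|_\infty \le \|\tilde F\alpha\|_\infty + \tau\|\tilde F\gamma\|_\infty$.

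Writing $\eta := \tau\gamma$ and substituting $\nabla_{\alpha+\tau\gamma} = \nabla_\alpha - \eta Z$, the iterated Leibniz rule expresses $\tilde F\psi$ as a finite sum of terms of the form $\bigl(\prod_i E_i\eta\bigr)\,G\psi$ with $G, E_i \in \{\nabla_\alpha, Z\}^*$. Applying Lemma~\ref{lem:gamma-deriv-bounds} factor by factor, namely $\|E_i\eta\|_\infty \lesssim A^{-\#\nabla_\alpha(E_i)-1}$ and $\|G\gamma\|_\infty \lesssim A^{-\#\nabla_\alpha(G)-1}$, and summing the exponents---using that the Leibniz rule preserves the total count of $\nabla_\alpha$-letters---yields the product bound $A^{-\#\nabla_a(F)-1}$ on each term, hence $\|\tilde F\gamma\|_\infty \lesssim A^{-\#\nabla_a(F)-1}$, which is \eqref{eq:b-jk-bounds}.

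For \eqref{eq:a-jk-bounds}, the same scheme applied to $\psi = \alpha$ requires the stronger estimate $\|G\alpha\|_\infty \lesssim A^{-\#\nabla_\alpha(G)-1}\rho^{-1}$ from Lemma~\ref{lem:gamma-deriv-bounds}, which is valid only when $G \notin \{\id, \nabla_\alpha\}$. I would verify this for every term by a short structural argument on $\tilde F$: in each term, the innermost letter of $\tilde F$ applied to $\alpha$ produces (coefficient)$\cdot V_1\alpha$ with $V_1 \in \{\nabla_\alpha, Z\}$; each subsequent letter of $\tilde F$ either acts on the coefficient---leaving $V_1$ at the right end of $G$---or on the $\alpha$-branch, strictly lengthening $G$. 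In the branch where the initial coefficient is the constant $1$ (which happens precisely when $V_1 = \nabla_\alpha$ comes from the innermost $\nabla_{\alpha+\tau\gamma}$), no $\eta$-factor is yet available, so the next letter must lengthen $G$. Thus when $\tilde F$ has length $\ge 2$, every $G$ in the expansion has length $\ge 2$; the only admissible length-$1$ case is $\tilde F = Z$, for which $G = Z$ directly. Exponent counting then yields $\|\tilde F\alpha\|_\infty \lesssim A^{-\#\nabla_a(F)-1}\rho^{-1}$, absorbed into $A^{-\#\nabla_a(F)-1}$ once $\rho$ is sufficiently large compared to $A$, as permitted by Lemma~\ref{lem:second-bounds}. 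The main obstacle I anticipate is this structural step; once it is in hand, the rest is routine exponent bookkeeping anchored by Lemma~\ref{lem:gamma-deriv-bounds}.
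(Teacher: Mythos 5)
Your approach is essentially the one the paper uses: transport by a left translation so that $\nabla_a$ becomes $\nabla_{\alpha+\tau\gamma}$, substitute $\nabla_{\alpha+\tau\gamma}=\nabla_\alpha-\eta Z$ with $\eta=\tau\gamma$, distribute by Leibniz, and apply Lemma~\ref{lem:gamma-deriv-bounds} to each factor. The exponent bookkeeping is correct, though not quite for the stated reason: each time an $\eta Z$ branch is chosen one $\nabla_\alpha$-letter is lost, but the newly created $\eta$-factor contributes an extra $A^{-1}$ via Lemma~\ref{lem:gamma-deriv-bounds}, so the total power of $A$ across $\prod_i\|E_i\eta\|_\infty\cdot\|G\gamma\|_\infty$ is still $A^{-\#\nabla_a(F)-1}$.

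The step that fails as written is the assertion ``when $\tilde F$ has length $\ge 2$, every $G$ in the expansion has length $\ge 2$.'' This is false: if the innermost letter of $\tilde F$ is $\nabla_{\alpha+\tau\gamma}$ and the $-\eta Z$ branch is chosen, every subsequent letter may differentiate only the accumulated coefficients, leaving $G=Z$ of length one. For instance, the expansion of $\tilde F=\nabla_{\alpha+\tau\gamma}^2$ applied to $\alpha$ contains the term $\eta\cdot(Z\eta)\cdot Z\alpha$. (Relatedly, ``the initial coefficient is $1$ precisely when $V_1=\nabla_\alpha$'' overlooks the case where the innermost letter of $\tilde F$ is a plain $Z$.) The conclusion you actually need, $G\notin\{\id,\nabla_\alpha\}$, nevertheless holds, and your own observation that $G$ always ends in $V_1$ already delivers it: if $V_1=Z$ then $G$ ends in $Z$, which excludes $\id$ and $\nabla_\alpha$ outright, while in the branch $V_1=\nabla_\alpha$ (coefficient $1$) the next letter forces $\ell(G)\ge 2$. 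The paper reaches the same exclusion more directly by noting that any $E\notin\{\id,\hat{\nabla}\}$ ends in one of $Z$, $\hat{\nabla}^2$, $Z\hat{\nabla}$, and that $\id$ and $\nabla_\alpha$ are not monomials of any of these. Finally, the closing appeal to taking $\rho$ large relative to $A$ via Lemma~\ref{lem:second-bounds} is unnecessary (the bound $\rho^{-1}\le 1$ suffices, and in any case the term $\tau\|\tilde F\gamma\|_\infty$ in your estimate for $\|Fa\|_\infty$ already carries no $\rho^{-1}$), and would be circular since Lemma~\ref{lem:second-bounds} ultimately depends on the present lemma. With the repaired structural argument the proof goes through.
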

\begin{proof}
  Let $m = \alpha(\zero) + \tau \gamma(\zero)$ and let $\lambda\from \HH \to \HH$, $\lambda(p) = Y^{m}p$. Then $\nabla_a = \lambda^*(\nabla_{\alpha + \tau \gamma})$, and by the Chain Rule, if $F \in \{\nabla_a, Z\}^k$, then 
  $$F a(p) = F'[\alpha+\tau \gamma - m](\lambda(p)),$$
  where $F'$ is obtained from $F$ by replacing $\nabla_a$ by $\nabla_{\alpha + \tau \gamma}$. 
  
  Let $\hat{\nabla} = A \nabla_{\alpha + \tau \gamma}$. It suffices to prove that for any $k\le 3$ and any $E \in \{\hat{\nabla}, Z\}^k$,
  $$\|E \gamma\|_{\infty} \le c A^{-1}$$
  and that if $E \notin \{\id, \hat{\nabla}\}$, 
  $$\|E \alpha\|_{\infty} \le c A^{-1}.$$
  
  Let $g = - A \tau \gamma$ and $\nabla = A\nabla_\alpha$ so that $\hat{\nabla} = \nabla + g Z$.  
  Suppose by induction that for any $k\le d$, we can write any $E\in \{\hat{\nabla}, Z\}^k$ as
  \begin{equation}\label{eq:E-decomp}
    E = \sum_i D_{i,1}[g]\dots D_{i,k_i}[g] \cdot C_i
  \end{equation}
  where $C_i, D_{i,j}\in \{\nabla, Z\}^*$ and $\ell(C_i) + \sum_j \ell(D_{i,j}) = k$. We call the $C_i$'s the \emph{monomials} of $E$. For instance, $Z$ is trivially of the form \eqref{eq:E-decomp}, we can write $\hat{\nabla} = \nabla + g Z$, and
  $$\hat{\nabla}^2 = \nabla^2 + \nabla g \cdot Z + g \cdot \nabla Z + g \cdot Z \nabla + g\cdot Zg \cdot Z + g^2\cdot Z^2.$$
  By the product rule, if $E$ can be written in this form, then so can $ZE$ and $\hat{\nabla} E$, and each monomial of $ZE$ or $\hat{\nabla}E$ is a monomial of $E$ or a monomial of $E$ with one additional letter. 
  
  If $E\not \in \{\id, \hat{\nabla}\}$, then $E$ ends in either $Z$, $\hat{\nabla}^2$, or $Z\hat{\nabla}$. Since $\id$ and $\nabla$ are not monomials of $Z$, $\hat{\nabla}^2$, or $Z\hat{\nabla}$, they cannot be monomials of $E$.
  
  By Lemma~\ref{lem:gamma-deriv-bounds}, if $k\le 3$ and $C\in \{\nabla, Z\}^k$, then $\|C\gamma\|_\infty \lesssim A^{-1}$, and if $C\not \in \{\id, \nabla\}$, then $\|C\alpha\|_\infty \lesssim A^{-1}$. If $E\in \{\hat{\nabla}, Z\}^k$ is as in \eqref{eq:E-decomp}, then
  $$\|E \gamma\|_{\infty} \lesssim \sum_i \|D_{i,1}[A\gamma]\|_\infty \cdots \|D_{i,k_i}[A\gamma]\|_\infty \|C_i \gamma\|_\infty \lesssim A^{-1}.$$
  Moreover, if $E\not \in \{\id, \hat{\nabla}\}$, then $C\not \in \{\id, \nabla\}$ for all $i$, so
  $$\|E \alpha\|_{\infty} \lesssim \sum_i \|D_{i,1}[A\gamma]\|_\infty \cdots \|D_{i,k_i}[A\gamma]\|_\infty \|C_i \alpha\|_\infty \lesssim A^{-1}$$
  as well.
\end{proof}

As a consequence, $Z a$, $b$, and $Z b$ are close to even. Recall that for a function $f\from \HH\to \R$, we define the even and odd parts of $f$ by
\begin{align*}
f^\sE(v) & = \frac{f(v) + f(\theta(v))}{2}
& f^\sO(v) & = \frac{f(v) - f(\theta(v))}{2}
\end{align*}
so that $f=f^\sE+f^\sO$. Furthermore, if $g\from \HH\to \R$, then
\begin{align}\label{eq:eo-product}
  (fg)^\sE & = [(f^\sE + f^\sO)(g^\sE + g^\sO)]^\sE = f^\sE g^\sE + f^\sO g^\sO\\
\notag  (fg)^\sO & = f^\sE g^\sO + f^\sO g^\sE.
\end{align}

\begin{lemma}\label{lem:horizontality}
  Let $a$ be as above and let $m\from \HH \to \R$ be a smooth function that is constant on cosets of $\paramY$. Let $v\in B(\zero, \sqrt{\rho})$. Then
  $$|m(v) - m(\theta(v))| \lesssim \|v\|_\Kor \|\nabla_a m\|_\infty + A^{-1} \|v\|_\Kor \|Z m\|_\infty.$$
  
  If $\rho>A^2$ and $m=Z a$, $m=b$, or $m=Z b$, then for any $v\in \HH$, we have $|m^\sE(v)|\le A^{-1}$ and 
  $$|m^\sO(v)| = \frac{1}{2}|m(v) - m(\theta(v))| \lesssim A^{-2} \|v\|_\Kor.$$
\end{lemma}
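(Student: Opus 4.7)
The plan is first to reduce to $v\in V_0$---which is possible because $m$ is constant on cosets of $\paramY$ and $\Pi$ commutes with $\theta$, so $m(v)=m(\Pi(v))$ and $m(\theta(v))=m(\theta(\Pi(v)))$---and then connect $\theta(v)$ to $v$ by the horizontal segment $\sigma(t) = X^t\theta(v) = (t - x(v), 0, z(v))$ for $t\in[0, 2x(v)]$ inside $V_0$ (we may assume $x(v)>0$; the other case is symmetric). Since $\sigma$ lies in $V_0$, its velocity is $\XL$, and the identity $\nabla_a = \XL + (y - a)Z$ reduces to $\XL = \nabla_a + aZ$ on $V_0$. Integrating $(m\circ\sigma)' = \XL m(\sigma)$ along $\sigma$ therefore gives
\begin{equation*}
m(v) - m(\theta(v)) = \int_0^{2x(v)} \nabla_a m(\sigma(t))\ud t + \int_0^{2x(v)} a(\sigma(t))\,Zm(\sigma(t))\ud t.
\end{equation*}
The first integral is dominated in absolute value by $2|x(v)|\,\|\nabla_a m\|_\infty \le 2\|v\|_\Kor\|\nabla_a m\|_\infty$.

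For the second integral I will use the second-order Taylor expansion from Lemma~\ref{lem:a-Taylor} applied to $a$ at $\zero$ (where $a(\zero)=0$), writing $a(\sigma(t)) = (t - x(v))\nabla_a a(\zero) + O_\lambda(\|\sigma(t)\|_\Kor^2(\|\nabla_a^2 a\|_\infty + \|Za\|_\infty))$. Lemma~\ref{lem:ab-deriv-bounds} bounds both second-derivative norms by $O(A^{-1})$, so the remainder is $O(A^{-1}\|\sigma(t)\|_\Kor^2)$. After the substitution $s = t - x(v)$ the interval becomes $[-x(v), x(v)]$, and the linear piece $s\nabla_a a(\zero)$ is odd in $s$, so it annihilates the even-in-$s$ part of $Zm(\sigma(\cdot))$ under symmetric integration; pairing the leftover piece against the $A^{-1}$ remainder (using $v\in B(\zero,\sqrt{\rho})$ to keep $\|\sigma(t)\|_\Kor \le \|v\|_\Kor$) will yield a net contribution of $A^{-1}\|v\|_\Kor\|Zm\|_\infty$, completing Part 1.

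For the second part, the even bound $|m^\sE(v)| \le \|m\|_\infty \le cA^{-1}$ is immediate from Lemma~\ref{lem:ab-deriv-bounds} in each of the three cases $m\in\{Za, b, Zb\}$. For the odd bound I will split into two regimes according to the hypothesis $\rho > A^2$ (which gives $\sqrt{\rho}>A$). When $v \in B(\zero,\sqrt{\rho})$, Part 1 applies: Lemma~\ref{lem:ab-deriv-bounds} supplies $\|\nabla_a m\|_\infty \lesssim A^{-2}$ (each of $Za$, $b$, $Zb$ picks up an extra $\nabla_a$ factor under $\nabla_a m$) and $\|Zm\|_\infty \lesssim A^{-1}$, so Part 1 yields $|m^\sO(v)| \lesssim \|v\|_\Kor\cdot A^{-2} + A^{-1}\|v\|_\Kor\cdot A^{-1} \lesssim A^{-2}\|v\|_\Kor$. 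When $v \notin B(\zero,\sqrt{\rho})$, we have $\|v\|_\Kor > \sqrt{\rho} > A$, hence $|m^\sO(v)| \le \|m\|_\infty \le cA^{-1} \le cA^{-2}\|v\|_\Kor$ trivially.

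The main obstacle I anticipate is the second integral in Part 1: the naive bound $\int_0^{2x(v)} |a(\sigma(t))|\ud t \cdot \|Zm\|_\infty$ gives only $\|v\|_\Kor^2\|Zm\|_\infty$ without any $A^{-1}$ factor (since $|a|$ is only controlled by intrinsic Lipschitz, giving $|a(\sigma(t))|\lesssim \|v\|_\Kor$), so extracting the $A^{-1}$ weight requires carefully combining the cancellation of the odd-in-$s$ linear part of $a$ against the symmetric structure of the integrand with the $A^{-1}$ second-derivative bounds from Lemma~\ref{lem:ab-deriv-bounds}.
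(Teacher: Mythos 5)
Your reduction to $V_0$ and the integral identity
\[
m(v) - m(\theta(v)) = \int_0^{2x(v)} \nabla_a m(\sigma(t))\,\ud t + \int_0^{2x(v)} a(\sigma(t))\,Zm(\sigma(t))\,\ud t
\]
are correct, and the first integral is bounded as you say. The gap is in the second integral, and it is exactly the one you flag at the end. After substituting $s=t-x(v)$ and writing $a(\sigma(s)) = \sigma s + O(A^{-1})$ with $\sigma = \nabla_a a(\zero)$, oddness of $s$ kills only the pairing of $\sigma s$ with the \emph{even}-in-$s$ part of $Zm(\sigma(\cdot))$. The surviving term is
\[
\sigma\int_{-x(v)}^{x(v)} s\,(Zm)^{\sO}(\sigma(s))\,\ud s,
\]
and the odd-in-$s$ part $(Zm)^{\sO}$ of $Zm$ along this line is only controlled by $\|Zm\|_\infty$: the hypotheses of the first display give you nothing about $\nabla_a Zm$ or $X Zm$. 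Since $|\sigma|\lesssim 1$ in general (not $O(A^{-1})$), this term is of size $\|v\|_\Kor^2\|Zm\|_\infty$, which is not dominated by $A^{-1}\|v\|_\Kor\|Zm\|_\infty$ over the full range $\|v\|_\Kor \le \sqrt{\rho}$. So the claimed cancellation does not close the estimate, and the first display is not proved by this route.

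The paper avoids the problem by choosing a better horizontal segment. Rather than joining $\Pi(v)$ to $\theta(\Pi(v))$ inside $V_0$, it projects $v$ and $\theta(v)$ to the vertical plane $P$ tangent to $\Gamma_a$ at $\zero$ (slope $\sigma$, direction $W = X+\sigma Y$) and joins them by a $W$-segment in $P$. On $P$ the decomposition of the horizontal direction reads $Wm = \nabla_a m - (\sigma x - a)Zm$, and the crucial point is that the Taylor expansion of $\alpha$ around $p=\Psi_\alpha(\zero)$, together with the extra factor $\rho^{-1}$ from Lemma~\ref{lem:gamma-deriv-bounds}, gives the \emph{uniform} bound $|\sigma x(u) - a(u)|\lesssim A^{-1}$ for $u\in P\cap B(\zero,3\sqrt{\rho})$ (this is \eqref{eq:plane-dist}), with no linear leading term to cancel. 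The coefficient of $Zm$ is then genuinely $O(A^{-1})$ pointwise along the segment, and the Mean Value Theorem gives the first display directly. (As a secondary remark: your remainder estimate for $a$ should be obtained by applying Lemma~\ref{lem:a-Taylor} to $\alpha$ and using the $\rho^{-1}$ gain from Lemma~\ref{lem:gamma-deriv-bounds}, as the paper does; applying it to $a=a_\tau$ with the bounds from Lemma~\ref{lem:ab-deriv-bounds} gives only $O(A^{-1}\kappa^2)$, which is not uniformly $O(A^{-1})$ when $\kappa$ is close to $\sqrt{\rho}$.) Your Part~2 is fine once Part~1 is established.
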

\begin{proof}
  Let $p = \Psi_\alpha(\zero)$ and let $\sigma = \nabla_\alpha \alpha(\zero)$.   Let $W=X+\sigma Y$ and $P=\langle W,Z\rangle$. We first bound the distance from $\Gamma_a$ to $P$. By Lemma~\ref{lem:a-Taylor} and Lemma~\ref{lem:gamma-deriv-bounds}, for $q\in \HH$,
  $$|\alpha(q) - \alpha(\zero) - \sigma x(q)| \lesssim A^{-1} \rho^{-1} d_\Kor(p,q)^2.$$
  
  Let $u \in B(\zero,3\sqrt{\rho})$ and let $u' = Y^{(\alpha + \tau\gamma)(\zero)} u$. Then 
  $$d_\Kor(p,u') = d_\Kor(Y^{\alpha(\zero)}, Y^{(\alpha + \tau\gamma)(\zero)} u) \le |\tau\gamma(\zero)| + \|u\|_\Kor \lesssim A^{-1} + \sqrt{\rho}\lesssim \sqrt{\rho}.$$
  We have
  $$a(u) = \alpha(u') - \alpha(\zero) + \tau(\gamma(u') - \gamma(\zero)),$$ 
  so
  \begin{multline}\label{eq:plane-dist}
    |a(u) - \sigma x(u)| = |\alpha(u') - \alpha(\zero) - \sigma x(u)| + 2|\tau|\|\gamma\|_\infty \\ \lesssim A^{-1} \rho^{-1} d_\Kor(p, u')^2 + A^{-1} \lesssim A^{-1}.
  \end{multline}
  
  Recall that for all $q\in \HH$, we have $(\nabla_a)_q = X_q + (y(q) - a(q)) Z_q$. If $u\in P$, then $y(u)=\sigma x(u)$, so
  $$W_u = (\nabla_a)_u - (\sigma x(u) - a(u)) Z_u + \sigma Y_u.$$
  Let $m\from \HH\to \R$ be a smooth function which is constant on cosets of $\paramY$. Then $Y m=0$, so for $u\in P$,
  $$Wm(u) = \nabla_a m(u) - (\sigma x(u) - a(u)) Z m(u).$$
  By \eqref{eq:plane-dist},
  \begin{equation} \label{eq:wmu}
    |W m(u)| \lesssim \|\nabla_a m\|_\infty + A^{-1} \|Z m\|_\infty.
  \end{equation}
  
  Let $w,z$ be such that $\Pi_P(v)= W^w Z^z$; note that $|w|\le \|v\|_\Kor \le \sqrt{\rho}$, so
  $$\|\Pi_P(v)\|_\Kor \le \|v\|_\Kor +  |y(v) - \sigma w| \le 3 \sqrt{\rho}.$$
  Then, by the Mean Value Theorem and \eqref{eq:wmu}, 
  $$|m(v) - m(\theta(v))| = |m(W^w Z^z) - m(W^{-w}Z^z)| \lesssim \|v\|_\Kor (\|\nabla_a m\|_\infty + A^{-1} \|Z m\|_\infty),$$
  as desired.
  
  Finally, if $m=Z a$, $m=b$, or $m=Z b$ and $\rho > A^2$ then $\|m\|_\infty\lesssim A^{-1}$ and $\|\nabla_a m\|_\infty + A^{-1} \|Z m\|_\infty\lesssim A^{-2}$. Therefore, 
  $$|m(v) - m(\theta(v))| \lesssim A^{-2} \|v\|_\Kor$$
  for all $v\in B(\zero,A)$ and 
  $$|m(v) - m(\theta(v))| \lesssim \|m\|_\infty \lesssim A^{-2} \|v\|_\Kor$$
  for all $v\not\in B(\zero,A)$.
\end{proof}

Finally, we bound functions of the form $N_{T}(v) := T(\overline{v})$ when $T$ is a homogeneous kernel.
\begin{lemma}\label{lem:ker-on-graph}
  Let $v\in \HH$, $v\ne \zero$. Let $T$ be a smooth $k$--homogeneous kernel on $\HH$. Then $|N_T(v)|\lesssim_T \|v\|_\Kor^k$ and 
  $$|N_T(\theta(v)) - (-1)^{k} N_T(v)|\lesssim_T A^{-1} \|v\|_\Kor^{k+1}.$$
\end{lemma}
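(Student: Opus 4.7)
The plan is to mirror the approach of Lemma~\ref{lem:kernel-sym-bound}, but using the sharper derivative bounds on $a = a_\tau$ supplied by Lemma~\ref{lem:ab-deriv-bounds} in place of the generic bounds $\|\zeta\|_{W'_\zeta(B)} \le C$. Since $N_T$ is constant on cosets of $\paramY$, I can reduce to the case $v\in V_0$ without loss of generality. For the first bound, I first observe that $\overline{v} = \Psi_a(v)$ lies on $\Gamma_a$. Because $a(\zero)=0$, the point $\zero$ also lies on $\Gamma_a$, and since $\|\nabla_a a\|_\infty$ is small (by Lemma~\ref{lem:ab-deriv-bounds}), $a$ is $\lambda$--intrinsic Lipschitz for some $\lambda < 1$. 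The bound $|a(v)| \lesssim \|v\|_\Kor$ from Lemma~\ref{lem:ilg-line-distance} then gives $\|\overline{v}\|_\Kor \approx \|v\|_\Kor$, and by the $k$--homogeneity and smoothness of $T$, $|N_T(v)|=|T(\overline{v})|\lesssim_T \|\overline{v}\|_\Kor^k \approx \|v\|_\Kor^k$.

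For the second bound, I write $\theta(\overline{v}) = \theta(v)Y^{-a(v)}$ and $\Psi_a(\theta(v)) = \theta(v) Y^{a(\theta(v))}$, so these two points lie in the same coset of $\paramY$ at $Y$--distance $|a(v)+a(\theta(v))|$. Using $T\circ\theta = (-1)^k T$, I rewrite
\begin{equation*}
  N_T(\theta(v)) - (-1)^k N_T(v) = T(\Psi_a(\theta(v))) - T(\theta(\Psi_a(v)))
\end{equation*}
and apply the Mean Value Theorem along the horizontal $Y$--segment between these two points to get
\begin{equation*}
  |N_T(\theta(v)) - (-1)^k N_T(v)| = |\YL T(m(v))|\cdot|a(v)+a(\theta(v))|
\end{equation*}
for some $m(v)$ in the horizontal segment. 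By Lemma~\ref{lem:deriv-homog}, $\YL T$ is $(k-1)$--homogeneous; since $m(v)\in \theta(v)\paramY$ and both endpoints have Korányi norm $\approx \|v\|_\Kor$, we have $\|m(v)\|_\Kor\approx \|v\|_\Kor$, and hence $|\YL T(m(v))|\lesssim_T \|v\|_\Kor^{k-1}$.

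It remains to bound $|a(v)+a(\theta(v))|$, which is the step where the $A^{-1}$ factor appears. Applying Lemma~\ref{lem:a-Taylor} with $p=\zero\in\Gamma_a$ and $q=v$ (noting that $a(\zero) = 0$) yields
\begin{equation*}
  |a(v)+a(\theta(v))| \lesssim_L \|v\|_\Kor^2\bigl(\|\nabla_a^2 a\|_{\infty} + \|\partial_z a\|_{\infty}\bigr).
\end{equation*}
Lemma~\ref{lem:ab-deriv-bounds} gives $\|\nabla_a^2 a\|_\infty \lesssim A^{-3}$ (take $F=\nabla_a^2$, for which $\#\nabla_a(F)=2$) and $\|\partial_z a\|_\infty \lesssim A^{-1}$ (take $F=Z$, for which $\#\nabla_a(F)=0$), so the $Z$--derivative dominates and $|a(v)+a(\theta(v))|\lesssim A^{-1}\|v\|_\Kor^2$. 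Combining with the MVT estimate yields
\begin{equation*}
  |N_T(\theta(v))-(-1)^k N_T(v)| \lesssim_T \|v\|_\Kor^{k-1}\cdot A^{-1}\|v\|_\Kor^2 = A^{-1}\|v\|_\Kor^{k+1},
\end{equation*}
as required. There is no genuine obstacle here; the only subtlety is making sure to invoke Lemma~\ref{lem:ab-deriv-bounds} rather than the coarser bounds available for generic intrinsic Lipschitz functions, so that the factor $A^{-1}$ comes out cleanly.
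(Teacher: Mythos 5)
Your proof is correct and follows essentially the same route as the paper's: both write $\theta(\overline{v})$ and $\overline{\theta(v)}$ as points on the same $\paramY$--coset differing by $|a(v)+a(\theta(v))|$, apply the Mean Value Theorem with the $(k-1)$--homogeneity of $\YL T$, and bound $|a(v)+a(\theta(v))|\lesssim A^{-1}\|v\|_\Kor^2$ via Lemmas~\ref{lem:a-Taylor} and \ref{lem:ab-deriv-bounds}. The only difference is that you spell out the first (easy) bound $|N_T(v)|\lesssim_T\|v\|_\Kor^k$, which the paper treats as immediate.
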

\begin{proof}
  Let $\lambda(v) = \nabla_a a(\zero) \cdot x(v)$ be the function whose graph is the vertical plane tangent to $\Gamma_a$ at $\zero$. By Lemma~\ref{lem:ab-deriv-bounds} and Lemma~\ref{lem:a-Taylor}, 
  $a(v) = \lambda(v) + O(A^{-1}\|v\|_\Kor^2)$, and $|a(v) + a(\theta(v))|\lesssim A^{-1}\|v\|_\Kor^2$. 
  
  As in the proof of Lemma~\ref{lem:riesz-converge},
  $$\theta(\overline{v}) = \overline{\theta(v)} Y^{- a(v) - a(\theta(v))},$$
  and any point $w$ on the segment from $\theta(\overline{v})$ to $\overline{\theta(v)}$ satisfies $\|w\|_\Kor\approx \|v\|_\Kor$. The mean value theorem and the $(k-1)$--homogeneity of $\YL T$ imply that
  \begin{multline*}
    |N_T(\theta(v)) - (-1)^{k} N_T(v)| = \big|T(\overline{\theta(v)}) - T(\theta(\overline{v}))\big| \\ \lesssim_T |a(v) + a(\theta(v))| \|v\|_\Kor^{k-1} \lesssim A^{-1}\|v\|_\Kor^{k+1},
  \end{multline*}
  as desired.
\end{proof}

Now we prove Lemma~\ref{lem:unif-sec-lg}.
\begin{proof}[Proof of Lemma \ref{lem:unif-sec-lg}]
  As above, we let $a = a_\tau$ and $b = b_\tau$. Let
  $$\phi_t(w) = Y^{-tb(\zero)} \overline{w} Y^{tb(w)},$$
  so that  $\Glg_{\tau,R}(t) = \int_{A^t_{1,R}} \widehat{K}(\phi_t(w)) \ud w.$
  
  As in the proof of Lemma~\ref{lem:grrab-large}, we define $M_r(x) = \frac{1}{4} \sqrt{r^4 - x^4}$ and 
  \begin{multline*} 
    A_{r,R}(x,t) = [-M_{R}(x) - t b(\zero) x, M_{R}(x) - t b(\zero) x] \\ \setminus (-M_r(x) - t b(\zero) x, M_r(x) - t b(\zero) x)
  \end{multline*}
  so that $A_{r,R}^{t} = \{(x,0,z) : z\in A_{r,R}(x,t)\}$ and \begin{align*}
    \Glg_{\tau,R}(t) = \int_{-R}^R \int_{A_{1,R}(x,0,t)} \widehat{K}(\phi_t(x,0,z)) \ud z \ud x.
  \end{align*}

  Taking the derivative with respect to $t$ gives
  \begin{align*}
    \left(\Glg_{\tau,R}\right)'(t) &= \int_{-R}^R \int_{A_{1,R}(x,t)} \partial_t \left[\widehat{K}(\phi_t(x,0,z))\right] \ud z \ud x \\
    &\qquad - \int_{-R}^R b(\zero) x  \widehat{K}(\phi_t(x,0,u)) \big|_{u=-M_R(x) - b(\zero) tx}^{M_R(x) - b(\zero) tx} \ud x \\
    &\qquad + \int_{-1}^1 b(\zero) x  \widehat{K}(\phi_t(x,0,u)) \big|_{u=-M_1(x) - b(\zero) tx}^{M_1(x) - b(\zero) tx} \ud x =: J_1^R - J_2^R + J_2^1,
   \end{align*} 
  where $f(u)\big|_{u=a}^b$ or $[f(u)]_{u=a}^b$ denotes $f(b)-f(a)$. We have
  \begin{multline*}
    \frac{\partial J_1^R}{\partial t}\bigg|_{t=0}
     = \int_{A_{1,R}} \partial_t^2 \left[\widehat{K}(\phi_t(w))\right]_{t = 0} \ud w \\
     - \left[\int_{-r}^r b(\zero)x \partial_t \left[\widehat{K}(\phi_t(x,0,u))\right]_{t=0} \big|_{u=-M_r(x)}^{M_r(x)} \ud x\right]_{r=1}^{R} =:
    I_1^R - I^R_2 + I^1_2.
  \end{multline*}
  By \eqref{eq:a-tau-t}, if $w=(x, 0, M)\in V_0$, then
  $$\phi_t(x, 0, M - b(\zero) t x) = \phi_t(w_t) = \Psi_{a_{\tau+t}}(w),$$
  so 
  $$J_2^r = \int_{-r}^r b(\zero) x \widehat{K}(\Psi_{a_{\tau+t}} (x,0,z))\big|_{z=-M_r(x)}^{M_r(x)} \ud x,$$
  and
  $$\frac{\partial J_2^r}{\partial t} \bigg|_{t=0} =  \int_{-r}^r b(\zero) x \partial_t \left[\widehat{K}(\Psi_{a_{\tau+t}} (x,0,z))\right]_{t=0} \big|_{z=-M_r(x)}^{M_r(x)} \ud x =: I_3^r.$$

  Then
  $$\left(\Glg_{\tau,R}\right)''(0) = I_1^R - I^R_2 + I^1_2 + I_3^R - I_3^1.$$
  To prove \eqref{eq:unif-sec-lg}, it suffices to show that $|I_j^R - I_j^{R'}| \lesssim A^{-3} R^{-1}$ for all $1\le R < R'$.
  
  The following calculations will be helpful. Let $w\in V_0$. We have $\phi_t(w) = Y^{-tb(\zero)} w Y^{a(w) + tb(w)}$ and $\phi_0(w)=\overline{w}$, so  
  \begin{align}
    \partial_t[\widehat{K}(\phi_t(w))] = -b(\zero) \YR \widehat{K}(\phi_t(w)) + b(w) \YL \widehat{K}(\phi_t(w)). \label{eq:one-deriv}
  \end{align}
  Taking a second derivative gives
  \begin{multline}
    \partial_t^2 [\widehat{K}(\phi_t(w))]_{t=0} = b(\zero)^2 \YR^2 \widehat{K}(\overline{w}) - b(\zero)b(w) (\YR \YL + \YL \YR) \widehat{K}(\overline{w}) \\
    \qquad + b(w)^2 \YL^2 \widehat{K}(\overline{w}). \label{eq:two-deriv}
  \end{multline}
  
  By \eqref{eq:a-tau-t},
  $$a_{\tau + t}(w) = a(w_t) + tb(w_t) - tb(\zero),$$
  so
  \begin{equation}\label{eq:pt-a-tau-t}
    \partial_t [a_{\tau + t}(w)] = -b(\zero) x \partial_z[a + t b](w_t) + b(w_t) - b(\zero)
  \end{equation}
  and 
  \begin{equation}
    \partial_t [\widehat{K}(\Psi_{a_{\tau + t}}(w)]]_{t=0} 
    =\YL \widehat{K}(\overline{w})(b(w) - b(\zero) - b(\zero) x  \partial_z a(w)).    
    \label{e:boundary-deriv}
  \end{equation}

  \smallskip

  \textbf{Bounding $I_1^R$}: By \eqref{eq:two-deriv}, we have that
  \begin{align*}
    I_{1}^R &= \int_{A_{1,R}} b(\zero)^2 \YR^2 \widehat{K}(\overline{w}) \ud w  + \int_{A_{1,R}} b(w)^2 \YL^2 \widehat{K}(\overline{w}) \ud w \\
    &\qquad- \int_{A_{1,R}} b(\zero)b(w) (\YR \YL + \YL \YR) \widehat{K}(\overline{w}) \ud w \\
    &= \sum_i \int_{A_{1,R}} b_i(w) N_{T_i}(w) \ud w,
  \end{align*}
  where $b_1(w) = b(\zero)^2$, $b_2(w) =b(w)^2$, $b_3(w)=b(\zero)b(w)$, and the $T_i$'s are smooth $(-5)$--homogeneous kernels. By \eqref{eq:eo-product} and Lemma~\ref{lem:horizontality}, for any $w\in \HH$, we have
  \begin{align*}
    |b^\sE(w)| & \le \|b\|_\infty \lesssim A^{-1} & |b^\sO(w)| & \lesssim A^{-2} \|w\|_\Kor\\
    |b_i^\sE(w)| & \le \|b\|_\infty^2 \lesssim A^{-2} & |b_i^\sO(w)| & \lesssim \|b\|_\infty |b^\sO(w)| \lesssim A^{-3} \|w\|_\Kor.
  \end{align*}
  By Lemma~\ref{lem:ker-on-graph},
  \begin{align*}
    |N_{T_i}^\sE(w)| & \le A^{-1} \|w\|_\Kor^{-4} & |N_{T_i}^\sO(w)| & \lesssim \|w\|_\Kor^{-5}.
  \end{align*}
  Therefore, by the symmetry of $A_{1,R}$ and Lemma~\ref{lem:polar},
  \begin{multline*}
    |I_{1,R}|\le \sum_i \left|\int_{A_{1,R}} (b_i N_{T_i})^\sE \ud w\right| = \sum_i \left|\int_{A_{1,R}} b_i^\sE N_{T_i}^\sE + b_i^\sO N_{T_i}^\sO \ud w\right| \\
    \lesssim \int_{A_{1,R}} A^{-3} \|w\|_\Kor^{-4}\ud w \lesssim \int_1^R A^{-3} \kappa^{-2} \ud \kappa \lesssim A^{-3}R^{-1}.
  \end{multline*}

  \smallskip
  
  \textbf{Bounding $I_2^r$}: Let $w_+(x) = (x,0,M_r(x))$ and $w_-(x) =  (x,0,M_r(x))$. By \eqref{eq:one-deriv}, we have $I_2^r = \int_{-r}^r h_+(x) - h_-(x)\ud x$, where
  $$h_\pm(x) = b(\zero) x \big(b N_{\YL \widehat{K}} - b(\zero) N_{\YR \widehat{K}}\big)(w_\pm(x))$$
  $$h_\pm(x) = b(\zero) x \big(b(w_\pm(x)) N_{\YL \widehat{K}}(w_\pm(x)) - b(\zero) N_{\YR \widehat{K}}(w_\pm(x))\big)$$
  Let $h^\sE_\pm(x) = \frac{1}{2} (h_\pm(x) + h_\pm(-x))$; then $I_2^r = \int_{-r}^r h^\sE(x)\ud x$. Since $\|w_\pm(x)\|_\Kor = r$,
  \begin{align*}
    |h_\pm^\sE(x)| & = \left|b(\zero) x \big(b N_{\YL \widehat{K}} - b(\zero) N_{\YR \widehat{K}}\big)^\sO(w_\pm(x))\right| \\
    & = \left|b(\zero) x \big(b^\sE N^\sO_{\YL \widehat{K}} + b^\sO N^\sE_{\YL \widehat{K}} - b(\zero) N^\sO_{\YR \widehat{K}}\big)(w_\pm(x))\right|\\
    & \lesssim A^{-1}r \left(A^{-1} \cdot A^{-1} r^{-3} + A^{-2} r \cdot r^{-4} + A^{-1} \cdot A^{-1} r^{-3}\right) \\
    & \lesssim A^{-3} r^{-2},
  \end{align*}
  and $|I_2^r|\lesssim A^{-3} r^{-1}$.
  
  \smallskip
  
  \textbf{Bounding $I_3^r$}:
  By \eqref{e:boundary-deriv}, $I_3^r=\int_{-r}^r  k_+(x)-k_-(x) + l_+(x) - l_-(x)\ud x$, where
  \begin{align*}
    k_{\pm}(x) & = b(\zero) x N_{\YL \widehat{K}}(w_\pm(x)) \big(b(w_\pm(x)) - b(\zero)\big) \\
    l_{\pm}(x) & = b(\zero)^2 x^2 \partial_z a(w_\pm(x)) N_{\YL \widehat{K}}(w_\pm(x))
  \end{align*}
  
  Since $\|w_\pm(x)\|_\Kor = r\ge 1$ and $\YL \widehat{K}$ is $(-4)$--homogeneous, 
  $$|l_{\pm}(x)| \lesssim A^{-2} r^2 A^{-1} r^{-4} = A^{-3} r^{-2}.$$
  By Lemma~\ref{lem:horizontality}, for any $w\in \HH$, we have $|(\partial_z a)^\sE(w)| \lesssim A^{-1}$ and $|\partial_z a^\sO(w)| \lesssim A^{-2} \|w\|_\Kor$. 
  Therefore,
  $$k^\sE_{\pm}(x) = b(\zero) x\left(N^\sE_{\YL \widehat{K}}(w_\pm(x)) b^\sO(w_\pm(x)) + N^\sO_{\YL \widehat{K}}(w_\pm(x)) \big(b^\sE(w_\pm(x)) - b(\zero)\big)\right)$$
  and
  $$|k^\sO_{\pm}(x)|\lesssim A^{-1} r \cdot r^{-4} \cdot A^{-2} r + A^{-1} r \cdot A^{-1} r^{-3} \cdot A^{-1} \lesssim A^{-3}r^{-2}.$$
  Therefore, 
  $$|I_3^r|= \left|\int_{-r}^r k^\sE_+(x)-k^\sE_-(x) + l_+(x) - l_-(x)\ud x\right| \lesssim A^{-3} r^{-1}.$$
  
  Thus, for any $R'>R\ge 1$, we have 
  $$\left|\left(\Glg_{\tau,R'}\right)''(0) - \left(\Glg_{\tau,R}\right)''(0)\right| \le \left|I_1^{R'} - I_1^R\right| + \left|I_2^{R'} - I_2^R\right| + \left|I_3^{R'} - I_3^R\right| \lesssim A^{-3}R^{-1}.$$
  This proves the lemma.
\end{proof}

\begin{proof}[Proof of Lemma~\ref{lem:unif-sec-sm}]
  Let $0<r<1$ and recall that
  $$\Gsm_{\tau,r}(t) := \int_{A_{r,1}} \widehat{K}(\Psi_{a_{\tau + t}}(v)) \ud v.$$
  We claim that 
  $$\left|(\Gsm_{\tau,r})''(0) - (\Gsm_{\tau,r'})''(0)\right| \lesssim A^{-3}r$$
  for all $0<r'<r<1$.
  
  As above, we abbreviate $a=a_\tau$ and $b=b_\tau$. Recall that $a(\zero)=0$.
  For $v=(x,y,z)$, let
  $$q_1(v) := \partial_t [a_{\tau + t}(v)]_{t=0} \stackrel{\eqref{eq:pt-a-tau-t}}{=} -b(\zero) x \partial_z a (v) + b(v) - b(\zero)$$
  and
  $$q_2(v) := \partial^2_t [a_{\tau + t}(v)]_{t=0} = b(\zero)^2x^2\partial^2_z a(v)-2 b(\zero) x \partial_z b(v).$$
  Then
  \begin{multline*}
    \frac{\ud^2}{\ud t^2} \widehat{K}(vY^{a_{\tau + t}(v)})\Big|_{t=0} = \frac{\ud}{\ud t} \partial_t [a_{\tau + t}(v)] \YL \widehat{K}(vY^{a_{\tau + t}(v)})\Big|_{t=0} \\
    = \partial^2_t [a_{\tau + t}(v)]_{t=0} \YL \widehat{K}(\overline{v}) + \big(\partial_t [a_{\tau + t}(v)]_{t=0}\big)^2 \YL^2 \widehat{K}(\overline{v}),
  \end{multline*}
  and
  $$(\Gsm_{\tau,r})''(0) = \int_{A_{r,1}} q_2(v) N_{\YL \widehat{K}}(v) + q_1^2(v) N_{\YL^2\widehat{K}}(v) \ud v.$$

  As above, we decompose these into odd and even terms. Let $b_0(v) = b(v) - b(\zero)$ and let $\kappa=\|v\|_\Kor$. By Lemma~\ref{lem:ab-deriv-bounds} and Lemma~\ref{lem:a-Taylor},
  \begin{equation}\label{eq:b0-approx-small}
    b_0(v) = x(v)\nabla_a b(\zero) + O(A^{-1}\kappa^2 ) = O(A^{-2}\kappa  + A^{-1}\kappa^2 ) = O(A^{-1}\kappa ),
  \end{equation}
  so $|b_0^\sE(v)|\lesssim A^{-1}\kappa^2 $ and $|b_0^\sO(v)|\lesssim A^{-1}\kappa $. Therefore, by Lemma~\ref{lem:horizontality},
  $$|q_1^\sE(v)|\le b(\zero) x |\partial_z a^\sO(v)| + |b_0^\sE(v)| \lesssim A^{-1} \kappa \cdot A^{-2}\kappa + A^{-1}\kappa^2\lesssim A^{-1}\kappa^2$$
  and
  $$|q_1^\sO(v)|\le b(\zero) x |\partial_z a^\sE(v)| + |b_0^\sO(v)| \lesssim A^{-1} \kappa \cdot A^{-1} + A^{-2}\kappa\lesssim A^{-2}\kappa.$$
  Therefore, $|(q_1^2)^\sO(v)| = 2|q_1^\sE(v)q_1^\sO(v)|\lesssim A^{-3}\kappa^3$ and
  $$|(q_1^2)^\sE(v)| = |q_1^\sE(v)^2+ q_1^\sO(v)^2|\lesssim A^{-2}\kappa^2.$$
  
  For $q_2$, on one hand,
  $$|q_2(v)| \le b(\zero)^2 x^2\cdot\|\partial^2_z a\|_\infty + 2|b(\zero)x| \cdot \|\partial_zb\|_\infty \lesssim A^{-2} \kappa^2 \cdot A^{-1} + A^{-1} \kappa \cdot A^{-1} \lesssim A^{-2}\kappa,$$
  so $|q_2^\sO(v)| \lesssim A^{-2}\kappa$. On the other hand, by Lemma~\ref{lem:horizontality},
  $$|q_2^\sE(v)|\le b(\zero)^2 x^2\|\partial^2_z a\|_\infty + |2 b(\zero) x (\partial_z b)^\sO(v)|\lesssim A^{-2} \kappa^2\cdot A^{-1} + A^{-1}\kappa \cdot A^{-2}\kappa\lesssim A^{-3}\kappa^2.$$
  
  Since $\YL\widehat{K}$ is $(-4)$--homogeneous and $\YL^2\widehat{K}$ is $(-5)$--homogeneous, we can use Lemma~\ref{lem:ker-on-graph} to bound $N_{\YL\widehat{K}}$ and $N_{\YL^2\widehat{K}}$. Thus
  \begin{align*}
    |(\Gsm_{\tau,r})''(0)&-(\Gsm_{\tau,r'})''(0)| = \left|\int_{A_{r',r}} (q_2 N_{\YL \widehat{K}})^\sE + (q_1^2 N_{\YL^2\widehat{K}})^\sE \ud v\right| \\
    & \le \left|\int_{A_{r',r}} q_2^\sE N_{\YL \widehat{K}}^\sE + q_2^\sO N_{\YL \widehat{K}}^\sO + (q_1^2)^\sE N_{\YL^2\widehat{K}}^\sE + (q_1^2)^\sO N_{\YL^2\widehat{K}}^\sO \ud v\right| \\
    & \le \int_{A_{r',r}} A^{-3}\kappa^2\cdot \kappa^{-4} + A^{-2} \kappa\cdot A^{-1}\kappa^{-3} + A^{-2}\kappa^{2} \cdot A^{-1}\kappa^{-4} + A^{-3}\kappa^{3}\cdot \kappa^{-5}\ud v\\
    & \approx \int_{r'}^r A^{-3}\kappa^{-2} \cdot \kappa^2 \ud \kappa \le A^{-3} r,
  \end{align*}
  where we used Lemma~\ref{lem:polar} to replace $\ud v$ by $\kappa^2\ud \kappa$. This proves the lemma.
\end{proof}

\section{Proof of Theorem \ref{mainthmintro}}
\label{sec:proofofmainthm}
In this section we will finally finish the proof of Theorem~\ref{mainthmintro}. First, recall that for any intrinsic Lipschitz function $\phi\from \HH\to \R$ and any function $g\from \HH\to \R$ which is constant on cosets of $\paramY$, we have  defined a parametric version of the Riesz transform by
$$\Rz_{\phi} g(p) := \tilde{T}^\mathsf{R}_\phi g(p) = \pv(\Psi_\phi(p)) \int_{\Psi_\phi(p) V_0} \widehat{\mathsf{R}}(\Psi_\phi(p)^{-1}\Psi_{\phi}(v)) g(v)\ud v.$$
Note that when $\phi$ is smooth and bounded and $p\in \Gamma_\phi$, Proposition~\ref{prop:annulus-change} implies that
$$\Rz_{\phi} g = T^\mathsf{R}[g\ud \eta_\phi].$$

In order to bound the $L_2$ norm of $\Rz$ on $\Gamma_{\phi_A}$ (where $\phi_A$ is as in Proposition \ref{prop:mainBounds-tilde-T}) we will need the following lemma, which allows us to replace $\one$ by an $L_2$ function.

\begin{lemma} \label{l:set-pv}
  Let $\phi$ satisfy the hypotheses of Proposition~\ref{prop:annulus-change}. Let $E \subset F$ be two bounded subsets of $V_0$ so that $d(E,F^c) > 0$. Then there is a $C>0$ depending only on $\phi$, $E$, and $F$ such that for every $p \in \Psi_\phi(E)$, the principal value $\tilde{T}_\phi \one_{F\paramY}(p)$ exists and satisfies
  $$\left| \tilde{T}_\phi\one_{F\paramY} (p) - \tilde{T}_\phi \one (p) \right| < C.$$
\end{lemma}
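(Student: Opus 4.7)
My plan is to control the difference by showing that
$$\tilde{T}_\phi\one(p) - \tilde{T}_\phi\one_{F\paramY}(p) = \tilde{T}_\phi\one_{(F\paramY)^c}(p)$$
exists as a principal value and is bounded by a constant depending only on $\phi, E, F$. The guiding observation is that on the plane $\Psi_\phi(p)V_0 = pV_0$ the function $\one_{(F\paramY)^c}$ vanishes in a neighborhood of $p$ (since $\Pi(p) = e\in E$ lies well inside $F$), so the integrand has no singularity there and the only issue is convergence at infinity. Concretely, set $e := \Pi(p) \in E$ and $\delta := d(E, F^c) > 0$. By continuity of $\Pi|_{pV_0}$, I would first choose $r_0 > 0$, depending only on $\phi$ and $\delta$, so that $\Pi(pV_0 \cap B(p, r_0)) \subset B(e, \delta) \cap V_0 \subset F$, i.e.\ $pV_0 \cap B(p, r_0) \subset F\paramY$. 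Since $F$ is bounded and $\Pi|_{pV_0}$ is an affine (unit-Jacobian) bijection onto $V_0$, the set $pV_0 \cap F\paramY$ is bounded, contained in some ball $B(p, R_1)$.

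Next, for $0<r<r_0$ and $R>R_1$, using $pV_0 \cap B(p,r)\subset F\paramY$ and $pV_0 \setminus B(p, R_1) \subset (F\paramY)^c$, the truncated integral splits as
$$\tilde{T}_{\phi;r,R}[\one - \one_{F\paramY}](p) = \int_{A} \widehat{K}(p^{-1}\Psi_\phi(v))\ud v + \tilde{T}_{\phi;R_1, R}\one(p),$$
where $A := pV_0 \cap B(p, R_1)\cap (F\paramY)^c$. The tail $\tilde{T}_{\phi;R_1,R}\one(p)$ converges as $R\to\infty$ to a value of magnitude $O(R_1^{-1})$ by Lemma~\ref{lem:riesz-converge}. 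This simultaneously shows existence of $\tilde{T}_\phi \one_{F\paramY}(p)$ (as the difference of two convergent limits) and reduces the lemma to bounding the integral over $A$.

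For that integral, I would change variables to $\Gamma_\phi$: the unit-Jacobian projection $\Pi$ maps $pV_0$ to $V_0$, and then $\Psi_\phi$ pushes $\cL|_{V_0}$ to $\eta_\phi \approx \mathcal{H}^3|_{\Gamma_\phi}$ (Lemma~\ref{lem:def-dv}), so the integral becomes $\int_S \widehat{K}(p^{-1}u)\ud\eta_\phi(u)$ with $S := \Psi_\phi(A) \subset \Gamma_\phi$. Since $d_\Kor(v, \Psi_\phi(v)) = |\phi(v) - y(p)| \le 2\|\phi\|_\infty$ for $v \in pV_0$, we have $S \subset \Gamma_\phi \cap B(p, R_1 + 2\|\phi\|_\infty)$. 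Continuity of $\Psi_\phi$ and compactness of $\overline{A}$ (which does not contain $p$) then show $\overline{S}$ is compact and $p \notin \overline{S}$, giving $d_\Kor(p, S) \ge r_2 > 0$ for some $r_2$ depending on $\phi, E, F$. Combined with the Ahlfors $3$-regularity of $\Gamma_\phi$ and the bound $|\widehat{K}(u)| \lesssim \|u\|_\Kor^{-3}$, an integration in polar coordinates on $\Gamma_\phi$ bounds the integral by $\lesssim \log((R_1 + 2\|\phi\|_\infty)/r_2)$, finishing the proof.

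The main obstacle I expect is the lower bound $d_\Kor(p, S) \ge r_2 > 0$: because $\Psi_\phi|_{pV_0}$ is not bi-Lipschitz, a priori $\Psi_\phi(v)$ could come much closer to $p$ than $v$ is, which would make the integrand fail to be integrable on $A$. A qualitative compactness/continuity argument rules this out, but if a quantitative $r_2$ in terms of $\phi, E, F$ is needed, one would use Lemma~\ref{lem:proj-ball-contain} together with Lemma~\ref{lem:y-distance}.
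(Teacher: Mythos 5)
Your decomposition is the same as the paper's: split $\tilde{T}_\phi\one_{(F\paramY)^c}(p)$ into a far-field tail $\lim_R \tilde T_{\phi;R_1,R}\one(p)$, which Lemma~\ref{lem:riesz-converge} already controls, and a ``middle'' piece supported in the annulus $pA_{r_0,R_1}$ where $\one_{(F\paramY)^c}$ is not identically $0$ or $1$. Where you diverge is in bounding that middle piece: you push the integral forward to $\Gamma_\phi$ via $\Psi_\phi$ and invoke Ahlfors regularity of $\mu|_{\Gamma_\phi}$, whereas the paper never leaves $pV_0$ --- it simply applies the last estimate of Lemma~\ref{lem:riesz-converge}, namely $|\tilde{T}_{\phi;s,t}f(p)|\lesssim \|f\|_\infty\log(t/s)$, with $f=\chi=\one_{(F\paramY)^c}$, $s=r_0$, $t=R_1$. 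Your route works, but it reproves a special case of a bound that is already available; it also introduces the extra issue of lower-bounding $d_\Kor(p,\Psi_\phi(v))$ by $d_\Kor(p,v)$, which is a red herring in the parametric formulation.

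On the concern you flag at the end: it is not a genuine obstacle, and the qualitative compactness argument is not the cleanest resolution (and as you phrase it, it doesn't obviously give a constant $r_2$ uniform over $p\in\Psi_\phi(E)$, since $A$ depends on $p$). The clean fix is elementary: if $W=(x,0,z)\in V_0$ and $t\in\R$, then $\|WY^t\|_\Kor^4=(x^2+t^2)^2+16(z-\tfrac{1}{2}xt)^2\ge \tfrac{1}{5}(x^4+16z^2)=\tfrac{1}{5}\|W\|_\Kor^4$ (consider the cases $|z|\ge |xt|$ and $|z|<|xt|$). Taking $W=p^{-1}v\in V_0$ and $t=\phi(v)-y(v)$ gives $d_\Kor(p,\Psi_\phi(v))\ge 5^{-1/4}d_\Kor(p,v)\ge 5^{-1/4}r_0$ directly, with no compactness and with the constant manifestly depending only on $r_0$. (Lemma~\ref{lem:y-distance} and Lemma~\ref{lem:proj-ball-contain} are not the right tools here: they control $d_\Kor(v,\Gamma_\phi)$ and projections of balls, not the distance from a fixed $p\in\Gamma_\phi$ to $\Psi_\phi(v)$.) With that fix your change-of-variables argument does go through; it is just longer than quoting Lemma~\ref{lem:riesz-converge} as the paper does.
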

\begin{proof}
  Since we know $\tilde{T}_\phi \one (p)$ exists by Lemma~\ref{lem:riesz-converge}, it suffices to show that there is a $C>0$ such that for all $p\in E$,  $\tilde{T}_\phi[\one_{F\paramY}-\one](p)$ exists and 
  $$\left|\tilde{T}_\phi\left[\one - \one_{F\paramY}\right](p)\right| = \left|\tilde{T}_\phi \one_{\HH\setminus F\paramY} (p)\right| < C.$$
  
  Let $\chi = \one_{\HH\setminus F\paramY}$.
  By compactness and the boundedness of $F$, there is an $0<\epsilon<1$ such that for all $p\in E$ we have 
  $$\Pi(B_\epsilon(p) \cap pV_0) \subset \Pi(F) \subset \Pi(B_{\epsilon^{-1}}(p) \cap p V_0).$$
  Therefore, for $r<\epsilon$ and $R>\epsilon^{-1}$, we have $\chi=0$ on $A_{r,\epsilon}(p)$ and $\chi=1$ on $A_{\epsilon^{-1},R}(p)$, so   
  $$\tilde{T}_{\phi}[\chi](p) = \lim_{\substack{r\to 0\\R\to \infty}}\tilde{T}_{\phi;r,R}[\chi](p)  = \tilde{T}_{\phi;\epsilon,\epsilon^{-1}}[\chi](p) + \lim_{R\to \infty} \tilde{T}_{\phi;\epsilon^{-1},R} [\one](p).$$

  By Lemma~\ref{lem:riesz-converge}, this limit exists and satisfies $|\tilde{T}_{\phi}[\chi](p)| \lesssim \log \epsilon + \epsilon$ for all $p\in E$. This proves the lemma.
\end{proof}

Given a set $E \subseteq \HH$, we define the $L_2$ norm $\|\cdot\|_E := \|\cdot\|_{L_2(\mu|_{E})}$.

Let $W = [-1,2] \times \{0\} \times [-1,2] \subset V_0$ and let $\overline{W}=W\paramY$. Then $U$ and $W$ satisfy Lemma~\ref{l:set-pv}, so Proposition~\ref{prop:mainBounds-tilde-T} implies that
\begin{equation}\label{eq:set-pv-bound}
  \|\Rz_{\phi_A}{\bf 1}_{\overline{W}}\|_U \geq \|\Rz_{\phi_A} {\bf 1}\|_U - \|\Rz_{\phi_A} {\bf 1} - \Rz_{\phi_A}{\bf 1}_{\overline{W}}\|_U \geq cA - C \gtrsim A
\end{equation}
when $A$ is sufficiently large, and thus that
$$\|\Rz_{\phi_A} {\bf 1}_{\overline{W}}\|_{L_2(\eta_{\phi_A})}  \ge \|\Rz_{\phi_A}{\bf 1}_{\overline{W}}\|_U \gtrsim A.$$
Since ${\bf 1}_{\overline{W}}\in L_2(\eta_{\phi_A})$, the operator norm of $\Rz_{\phi_A}$ on $L_2(\eta_\phi)$ goes to infinity with $A$. By gluing together graphs with different values of $A$, we can construct a single intrinsic Lipschitz function $\phi$ such that $\Rz_{\phi}$ is unbounded on $L_2(\eta_\phi)$.

\begin{proof}[Proof of Theorem~\ref{mainthmintro}]
  For $x\in V_0$, $r>0$, let $\tau_{x,r}\from \HH\to \HH$ be the affine transform $\tau_{x,r}(v) = x s_r(v)$. Note that $\tau_{x,r}(V_0)=V_0$.
  Let $x_1,x_2,\dots \in V_0$ and let $r_1,r_2,\dots>0$ so that the subsets $W_n=\tau_{x_n,r_n}(W)$ are disjoint subsets of $W$. Let $U_n = \tau_{x_n,r_n}(U)$ and let $\overline{W}_n=W_n\paramY$.
  
  Let $\phi_n$ be as in Proposition~\ref{prop:mainBounds-tilde-T} and let 
  $$\hat{\phi}_n(u) = r_n \phi_n(\tau_{x_n,r_n}^{-1}(u))$$
  so that $\Gamma_{\hat{\phi}_n} = \tau_{x_n,r_n}(\Gamma_{\phi_n})$. Note that $\hat{\phi}_n(v)=0$ for $v\not \in U_n \paramY$.
  Let $f \from V_0 \to \R$ be the function 
  $$f(v)=\begin{cases}
    \hat{\phi}_n(v) & v \in U_n\paramY \\
    0 & \text{otherwise.}
    \end{cases}
  $$
  Then $f$ is an intrinsic Lipschitz function supported in $\overline{W}$. 
  
  Since $f|_{\overline{W}_n} = \hat{\phi}_n|_{\overline{W}_n}$, we have $\Rz_f {\bf 1}_{\overline{W}_n}(v) = \Rz_{\hat{\phi}_n} {\bf 1}_{\overline{W}_n}(v)$ for all $v\in \overline{W}_n$. By the translation- and scale-invariance of the Riesz kernel, for all $v\in V_0$,
  \begin{align}
    \Rz_{\hat{\phi}_n} \one_{\overline{W}_n}(v) = \Rz_{\phi_n} \one_{\overline{W}}(\tau_{x_n,r_n}^{-1}(v)). \label{e:riesz-affine-inv}
  \end{align}
  Since $(\tau_{x,r})_*(\mu)=r_n^{-3}\mu$, this implies
  \begin{equation*}
      \begin{split}
            \|\Rz_f {\bf 1}_{\overline{W}_n}(v)\|_{L_2(\eta_f)} &\ge \|\Rz_f {\bf 1}_{\overline{W}_n}(v)\|_{U_n} = r_n^{-\frac{3}{2}} \|\Rz_{\phi_n} {\bf 1}_{\overline{W}}(v)\|_U \\ &\stackrel{\eqref{eq:set-pv-bound}}{\gtrsim} r_n^{-\frac{3}{2}} n\approx n\|{\bf 1}_{\overline{W}_n}(v)\|_{L_2(\eta_f)}
      \end{split}
  \end{equation*}
  for all sufficiently large $n$. Thus, $\Rz_{f}$ is unbounded on $L_2(\eta_f)$. 
\end{proof}

\clearpage
\appendix

\section{Intrinsic Sobolev spaces and derivative bounds on $\Gamma_{f_i}$}
\label{ap:A}

In this section, we bound the derivatives of the functions constructed in Section~\ref{sec:construction}. 
We first introduce some Sobolev spaces $W_{i,d}$ and $W'_{i,d}$ that consist of functions on $\HH$.

Recall that for vector fields $V_1,\dots, V_k$, we let $\{V_1,\dots,V_k\}^l$ denote the set of words of length $l$ and let $\{V_1,\dots,V_k\}^*$ denote the set of words of any length. We let $\#V_i(D)$ denote the number of occurrences of $V_i$ in $D$. For $i\ge 0$, let $r_i = A^{-1} \rho^{-i}$, and let $f_i$ and $\eta_i$ be as in Section~\ref{sec:construction}. Let $\partial_i = \nabla_{f_i}$ and let
\begin{align}
  \hat{Z}_i &= r_i^{2} Z &
  \hat{\partial}_i &= A r_i \partial_i &
  \hat{\nu}_i &= A r_i^{-1} \nu_i &
  \hat{f}_i &= A r_i^{-1} f_i.\label{eq:def-rescaled}
\end{align}
The first two scaling factors correspond to the width and height of the pseudoquads in the construction of $\nu_i$; the third and fourth normalize $\nu_i$ so that $\|\hat{\nu}_i\|_\infty\approx 1$.

For any smooth function $g\from \HH\to \R$ which is constant on cosets of $\paramY$, let
$$\|g\|_{W_{i,d}} = \max_{\substack{D\in \{\hat{\partial}_i, \hat{Z}\}^* \\ \ell(D)\le d}} \|D g\|_\infty$$
and
$$\|g\|_{W'_{i,d}} = \max_{\substack{D\in \{\hat{\partial}_i, \hat{Z}\}^* \\ \ell(D)\le d \\ D\not \in \{\id, \hat{\partial}_i\}}} \|D g\|_\infty.$$

In this section, we will prove the following proposition, which is equivalent to Lemma~\ref{lem:word-sup-bound}.
\begin{prop}\label{prop:deriv-bounds}
  For any $d>0$, if $\rho>1$ is sufficiently large, then for all $i$, 
  $$\|\nu_i\|_{W_{i,d}} \lesssim_d A^{-1} r_i$$
  and
  $$\|f_i\|_{W_{i,d}'} \lesssim_d A^{-1} \rho^{-1} r_i.$$
\end{prop}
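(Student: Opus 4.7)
The plan is to prove the two inequalities simultaneously by strong induction on $|D|$, uniformly in $i$. For the base case $|D|\le 1$ I will work pseudoquad-by-pseudoquad: the parametrization $R_{i,j}$ takes canonical coordinates $(s,t)\in[0,Ar_i]\times[0,r_i^2]$ to $Q_{i,j}$, with $\kappa_{i,j}\circ R_{i,j}(s,t)=A^{-1}r_i\,\kappa(\hat{s},\hat{t})$ where $\hat{s},\hat{t}\in[0,1]$. For $j\in J_i$, equation~\eqref{eq:f-equals-psi} gives $\nabla_{f_i}=\nabla_{\psi_i}$ on $Q_{i,j}$, so the $s$-flow of $R_{i,j}$ is an integral curve of $\nabla_{f_i}$ and $\hat{\partial}_i\nu_i|_{Q_{i,j}}=A^{-1}r_i\,\partial_{\hat{s}}\kappa$; for $\hat{Z}\nu_i$, Lemma~\ref{l:R-vert-bound} bounds the Jacobian of $R_{i,j}$ in $t$ so that $\hat{Z}\nu_i|_{Q_{i,j}} = O(A^{-1}r_i\,\partial_{\hat{t}}\kappa)$. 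Bound (b) at length one reduces to $\|\hat{Z}f_i\|_\infty$, controlled by $|\partial_z\nu_k|\lesssim A^{-1}r_k^{-1}$ and the geometric sum $\sum_{k<i}r_i^2\cdot A^{-1}r_k^{-1} = A^{-1}r_i\sum_{k<i}\rho^{-(i-k)}\lesssim A^{-1}\rho^{-1}r_i$.

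For the inductive step I will use two complementary tools. First, to rearrange a word $D\in\{\hat{\partial}_i,\hat{Z}\}^d$ I will apply the commutator identity
$$[\hat{\partial}_i,\hat{Z}]=(\hat{Z}\hat{f}_i)\,\hat{Z},\qquad \hat{f}_i=Ar_i^{-1}f_i,$$
whose coefficient is small by the inductive hypothesis (b): $\|\hat{Z}\hat{f}_i\|_\infty=Ar_i^{-1}\|\hat{Z}f_i\|_\infty\lesssim\rho^{-1}$. Iterating to normal-order all $\hat{\partial}_i$'s to the right expresses $D$ as a sum of monomials $M\cdot\hat{\partial}_i^p\hat{Z}^q$ with $p+q\le d$; every coefficient $M$ is a product of expressions $E\hat{f}_i$ where each word $E$ contains at least one $\hat{Z}$ (inherited from the commutator), so $E\notin\{\mathrm{id},\hat{\partial}_i\}$ and $\|E\hat{f}_i\|_\infty\lesssim\rho^{-1}$ by inductive (b). Second, to handle $Df_i=\sum_{k<i}D\nu_k$ I will convert between scales via
$$\hat{\partial}_i-\frac{r_i}{r_k}\hat{\partial}_k=-Ar_i(f_i-f_k)\partial_z,\qquad |f_i-f_k|\lesssim A^{-1}r_k,$$
which replaces each $\hat{\partial}_i$ acting on $\nu_k$ by $(r_i/r_k)\hat{\partial}_k$ plus a controlled error of the same order; since every $\hat{Z}$ in $D$ contributes a factor of $(r_i/r_k)^2=\rho^{-2(i-k)}$, once $D\notin\{\mathrm{id},\hat{\partial}_i\}$ the contribution of $\nu_k$ is at most $A^{-1}r_i\rho^{-(i-k)}$, and summing in $k$ yields (b).

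The hard part will be the pure-$\hat{\partial}_i$ estimate: $\|\hat{\partial}_i^n f_i\|_\infty$ for $n\ge 2$ cannot be obtained by the above scale-change alone, since $\sum_{k<i}\|\hat{\partial}_k^n\nu_k\|_\infty$ misses the required $\rho^{-1}$ factor. To recover it I will use that $\nabla_{f_i}f_i=\sum_{k<i}\tilde{D}_k$, reducing $\nabla_{f_i}^n f_i$ to $\nabla_{f_i}$-derivatives of the telescoping differences $\tilde{D}_k$ from Lemma~\ref{l:D-holder}. That lemma asserts $\tilde{D}_k$ varies by at most $A^{-2}\rho^{k-i}$ across a pseudoquad $Q_{i,j}$ of Heisenberg diameter $\lesssim Ar_i$, hence $|\nabla_{f_i}\tilde{D}_k|\lesssim A^{-3}r_i^{-1}\rho^{k-i}$, and geometric summation in $k$ recovers the sharp bound $\|\nabla_{f_i}^2 f_i\|_\infty\lesssim A^{-3}r_i^{-1}\rho^{-1}$ appearing in \eqref{eq:flatness-fi}, i.e., $\|\hat{\partial}_i^2 f_i\|_\infty\lesssim A^{-1}\rho^{-1}r_i$. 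Higher derivatives combine this Hölder mechanism with the commutator expansion above, where the bookkeeping reduces to the dichotomy that a word not in $\{\mathrm{id},\hat{\partial}_i\}$ either contains a $\hat{Z}$ (engaging the $(r_i/r_k)^2$ scale-change decay) or consists of at least two $\hat{\partial}_i$'s (engaging the Hölder $\rho^{k-i}$ decay). Choosing $\rho_0(d)$ large enough absorbs the combinatorial constants from iterated commutators and closes the induction.
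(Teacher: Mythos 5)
There is a genuine gap in the proposed treatment of the pure-$\hat{\partial}_i$ estimate. You write ``That lemma asserts $\tilde{D}_k$ varies by at most $A^{-2}\rho^{k-i}$ across a pseudoquad $Q_{i,j}$ \ldots\ hence $|\nabla_{f_i}\tilde{D}_k|\lesssim A^{-3}r_i^{-1}\rho^{k-i}$,'' but an oscillation bound across a region does not control the pointwise derivative: a function can satisfy $|g(x)-g(y)|\le C$ on an interval of length $L$ while $|g'|$ is arbitrarily large (e.g.\ $g(x)=C\sin(Nx)$). Worse, the inference is circular: the proof of Lemma~\ref{l:D-holder} given in the paper explicitly says it ``only uses the $L_\infty$ bounds of $D_j$ \emph{and derivatives} of $D_j$'' — so the Hölder estimate is a \emph{consequence} of the derivative bounds you are trying to prove, not a source for them. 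You would need to bound $\nabla_{f_i}\tilde{D}_k$ directly, which is essentially the content of the proposition itself.

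Your stated reason for resorting to Lemma~\ref{l:D-holder} is itself based on a miscount. You claim ``$\sum_{k<i}\|\hat{\partial}_k^n\nu_k\|_\infty$ misses the required $\rho^{-1}$ factor,'' but the scale-change main term actually contributes $\sum_{k<i}(r_i/r_k)^n\|\hat{\partial}_k^n\nu_k\|_\infty\lesssim A^{-1}r_i\sum_{k<i}\rho^{-(n-1)(i-k)}$, and for $n\ge 2$ this geometric sum already has the desired $\rho^{-1}$ decay. So the main term never needed the Hölder argument. The real obstruction is in the \emph{error} terms that you dismiss as ``bookkeeping.'' Writing $\hat{\partial}_i=(r_i/r_k)\bigl[\hat{\partial}_k-\hat{g}_k\hat{Z}_k\bigr]$ with $\hat{g}_k=Ar_k^{-1}(f_i-f_k)$, iterating this $n\ge 2$ times produces factors $\hat{\partial}_k^a\hat{g}_k$ for $a\ge 1$. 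These are \emph{not} bounded: $f_i-f_k=\sum_{m=k}^{i-1}\nu_m$ contains modes at every intermediate scale, and $\hat{\partial}_k\nu_m\sim A^{-1}r_k\,(r_k/r_m)$ for $m>k$, so $\|\hat{\partial}_k\hat{g}_k\|_\infty$ can be as large as $\rho^{i-1-k}$. After absorbing the prefactor $(r_i/r_k)^n$ the resulting contribution barely lands at $A^{-1}r_i\rho^{-1}$ with \emph{no} geometric decay in $k$, and summing over the $i$ values of $k$ introduces an extra factor of $i$, destroying uniformity. This is precisely why the paper's Lemma~\ref{lem:induct-f} changes scale \emph{one step at a time}: the correction coefficient is then $\hat{\nu}_i$, a single-scale bump controlled in $W_{i,d}$, rather than the multiscale sum $f_i-f_k$ whose scale-$k$ derivatives blow up. You would also need the paper's Lemma~\ref{lem:standard-form-bounds}, which controls $\hat{Z}_i^k\hat{\partial}_i^j\nu_i$ via the exponential formula $\partial z/\partial t=\exp\bigl(-\int_0^s\partial_z f_i\,d\sigma\bigr)$ for the flow Jacobian; your base case via Lemma~\ref{l:R-vert-bound} covers only the first $t$-derivative, and higher $\hat{Z}_i$-derivatives of the flow do not follow from the induction on word length alone.
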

Equivalently, $\|\hat{\nu}_i\|_{W_{i,d}} \lesssim_d 1$ and $\|\hat{f}_i\|_{W_{i,d}} \lesssim_d \rho^{-1}$.

The proof of Proposition~\ref{prop:deriv-bounds} can be broken into two parts. In the first part, we bound $D\nu_i$ in the case that $D=\hat{Z}_i^k\hat{\partial}_i^j$.
\begin{lemma}\label{lem:standard-form-bounds}
  Given $d \geq 2$, there exists $\rho_0 > 0$ so that if $\rho \geq \rho_0$, then
  \begin{align}
    \| \hat{Z}_i^k \hat{f}_i \|_\infty &\le 2 \rho^{-1} & & \forall i \geq 0, 1\le k \le d \label{e:f-sobolev-bound} \\
    \| \hat{Z}_i^k \hat{\partial}_i^j \hat{\nu}_i \|_\infty &\lesssim_j 1  & & \forall i \geq 0, 0\le j \le d, 0\le k \le d. \label{e:eta-sobolev-bound}
  \end{align}
\end{lemma}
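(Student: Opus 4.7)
The plan is to prove both bounds simultaneously by induction on $i$, alternating within each step: at the passage from level $i$ to level $i+1$, first use the recursion coming from $f_{i+1} = f_i + \nu_i$ to upgrade \eqref{e:f-sobolev-bound} from level $i$ to level $i+1$, then use flow coordinates on each pseudoquad $Q_{i+1,j}$ with $j\in J_{i+1}$ to establish \eqref{e:eta-sobolev-bound} at level $i+1$. The base case $i=0$ is immediate: $f_0 \equiv 0$ makes \eqref{e:f-sobolev-bound} vacuous, and since $\nabla_{\psi_0}=X$ each $R_{0,j}$ is affine, so in the rescaled coordinates $(\hat s,\hat t) = (A^{-1} r_0^{-1} s,\, r_0^{-2} t)$ one has $\hat Z_0^k \hat\partial_0^j \hat\nu_0 = \partial_{\hat t}^{\,k} \partial_{\hat s}^{\,j} \kappa$ on each $Q_{0,j}$ with $j\in J_0$, a quantity controlled by $\|\kappa\|_{C^{k+j}}$.

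For the inductive step, the identities $\hat f_{i+1} = \rho(\hat f_i + \hat\nu_i)$ and $\hat Z_{i+1}^k = \rho^{-2k}\,\hat Z_i^k$ yield the recursion
\begin{align*}
  \hat Z_{i+1}^k \hat f_{i+1} = \rho^{1-2k}\bigl(\hat Z_i^k \hat f_i + \hat Z_i^k \hat\nu_i\bigr),\qquad k\ge 1.
\end{align*}
Since $\rho^{1-2k}\le \rho^{-1}$, the inductive hypotheses $\|\hat Z_i^k \hat f_i\|_\infty \le 2\rho^{-1}$ and $\|\hat Z_i^k \hat\nu_i\|_\infty \lesssim_k 1$ combine to give $\|\hat Z_{i+1}^k \hat f_{i+1}\|_\infty \le 2\rho^{-1}$, provided $\rho$ is large enough that the implicit constant in the second bound is beaten by the extra factor of $\rho$.

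Having secured \eqref{e:f-sobolev-bound} at level $i+1$, I would turn to \eqref{e:eta-sobolev-bound} at level $i+1$. Since $\nu_{i+1}$ is supported in the union of the pseudoquads $Q_{i+1,j}$ with $j\in J_{i+1}$, and on the interior of each such pseudoquad $f_{i+1}=\psi_{i+1}$, $\nu_{i+1}=\kappa_{i+1,j}$, and $\nabla_{f_{i+1}}=\nabla_{\psi_{i+1}}$, we may work in the $R_{i+1,j}$-coordinates. There $\partial_s R_{i+1,j}=\nabla_{\psi_{i+1}}$, so $\hat\partial_{i+1}$ pulls back to $\partial_{\hat s}$. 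The vertical field $Z$ is not preserved by the flow $\Phi(\psi_{i+1})_s$; its pushforward equals $\partial_{\hat t}$ modified by corrections coming from iterating the Lie bracket $[Z,\nabla_{\psi_{i+1}}] = -(Z\psi_{i+1})\,Z$. Applying Lemma~\ref{l:R-vert-bound} to control the Jacobian of $R_{i+1,j}$, the operator $\hat Z_{i+1}$ becomes $\partial_{\hat t}$ plus a polynomial in the quantities $\hat Z_{i+1}^\ell \hat f_{i+1}$ for $1\le \ell\le k$, each $O(\rho^{-1})$ by the step already completed. After at most $k+j$ applications of the two operators, the final expression is a bounded linear combination of derivatives $\partial_{\hat t}^{\,k'}\partial_{\hat s}^{\,j'}\kappa$ with $k'+j' \le k+j$, plus $O(\rho^{-1})$ errors; the total is controlled by $\|\kappa\|_{C^{k+j}}$.

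The main obstacle is the combinatorial bookkeeping required to push $Z$ through the flow of $\nabla_{\psi_{i+1}}$: each successive commutator produces new coefficient factors of the form $Z^\ell\psi_{i+1}$, and these nest into increasingly complicated polynomial expressions. The saving point, and the reason the second half of the lemma requires a $W'$-style rather than a $W$-style norm, is that every such coefficient involves at least one $Z$-derivative of $\psi_{i+1}$ and hence corresponds to $\hat Z_{i+1}^\ell \hat f_{i+1}$ with $\ell\ge 1$, which is exactly the quantity controlled by \eqref{e:f-sobolev-bound}. The words $\id$ and $\hat\partial_{i+1}$ excluded from $W'_{i+1,d}$ correspond to $\hat f_{i+1}$ and $\hat\partial_{i+1}\hat f_{i+1}$, neither of which is small, which is precisely why they cannot appear in the commutator expansion.
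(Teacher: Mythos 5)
Your overall strategy---induction on $i$, passing to flow coordinates $(\hat s,\hat t)$ on each pseudoquad, and controlling the change of variables $\partial_{\hat t}\leftrightarrow\partial_{\hat z_i}$ via the Lie bracket $[\nabla_{\psi},Z]=(Z\psi)Z$---is the same as the paper's, which packages the bracket calculus into the explicit formula
\[
\frac{\partial z}{\partial t}=\exp\Bigl(-\int_0^{s}\tfrac{\partial f_i}{\partial z}(R_{i,j}(\sigma,t))\,d\sigma\Bigr)
\]
and differentiates it inductively to bound $\partial^k\hat z_i/\partial\hat t^k$ for $2\le k\le d$. Your base case is correct, and your description of the $\nu$-step is morally right, though the careful bookkeeping (expressing $\partial^n/\partial\hat z_i^n$ as a combination of $\partial^\ell/\partial\hat t^\ell$ with coefficients built from $\partial^m\hat z_i/\partial\hat t^m$, then bounding those coefficients uniformly in $i$) is the real work, and your sketch glosses over it.

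There is, however, a genuine gap in your inductive step for \eqref{e:f-sobolev-bound} at $k=1$. Your one-step recursion gives
\[
\hat Z_{i+1}\hat f_{i+1}=\rho^{-1}\bigl(\hat Z_i\hat f_i+\hat Z_i\hat\nu_i\bigr),
\]
so $\|\hat Z_{i+1}\hat f_{i+1}\|_\infty\le\rho^{-1}(2\rho^{-1}+C_1)$ where $C_1$ is the implicit constant in $\|\hat Z_i\hat\nu_i\|_\infty\lesssim 1$. To conclude $\le 2\rho^{-1}$ you need $2\rho^{-1}+C_1\le 2$, which forces $C_1<2$ \emph{strictly}; no choice of large $\rho$ can repair this if $C_1\ge 2$. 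Your phrase ``provided $\rho$ is large enough that the implicit constant \ldots{} is beaten by the extra factor of $\rho$'' is true for $k\ge 2$ (where the prefactor is $\rho^{1-2k}\le\rho^{-3}$), but for $k=1$ the prefactor is exactly $\rho^{-1}$ and there is no gain. The hypothesis `` $\lesssim_k 1$'' does not give you $C_1<2$. The gap is closable---a direct computation with $\partial\hat z_i/\partial\hat t\in[\tfrac34,\tfrac43]$ and $\|\partial_{\hat t}\kappa\|_\infty\le 1$ gives $C_1\le\tfrac43$---but you would need to verify and propagate this specific constant through the induction. The paper sidesteps this entirely by proving the $k=1$ bound from the already-established NY2 estimate $\|\partial_z\psi_{i'}\|_\infty\le 2\rho^{i'-1}$ (see \eqref{eq:first-deriv-f}), and reserves the recursive/summation argument for $k\ge 2$, where the extra factor $\rho^{1-2k}\le\rho^{-3}$ really does absorb any $d$-dependent constant once $\rho$ is large.

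One smaller point: your closing paragraph refers to a $W'$-style norm, but that distinction ($W'$ vs.\ $W$) belongs to Lemma~\ref{lem:word-sup-bound} and the later appendix lemmas, not to Lemma~\ref{lem:standard-form-bounds} itself; in this lemma the second bound \eqref{e:eta-sobolev-bound} is about $\hat\nu_i$ in a full Sobolev norm.
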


In the second part, we use Lemma~\ref{lem:standard-form-bounds} as part of an inductive argument. First, we bound $\hat{\nu}_i$ in terms of $\|\hat{f}_i\|_{W'_{i,d}}$.
\begin{lemma}\label{lem:induct-eta}
  For any $d>0$, there is a $c_1>1$ such that for any $i\ge 0$, if $\rho>1$ is sufficiently large and $\|\hat{f}_i\|_{W'_{i,d}} < 1$, then 
  \begin{equation}\label{eq:induct-eta}
    \|\hat{\nu}_i\|_{W_{i,d}} \le c_1.
  \end{equation}
\end{lemma}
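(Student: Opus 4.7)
The plan is to reduce an arbitrary word $D\in\{\hat\partial_i,\hat Z_i\}^*$ to the normal form $\hat Z_i^k\hat\partial_i^j$ covered by Lemma~\ref{lem:standard-form-bounds}, via a single commutator identity. The key identity I would establish first is
$$[\hat\partial_i,\hat Z_i] \;=\; (\hat Z_i\hat f_i)\,\hat Z_i,$$
interpreted as an operator equality on smooth functions constant on cosets of $\paramY$, with $\hat Z_i\hat f_i$ acting by pointwise multiplication. This follows from the vector-field computation $[\partial_i,Z] = [\XR-f_iZ,Z]=(Zf_i)Z$ (using $[\XR,Z]=0$), together with the rescaling factors in \eqref{eq:def-rescaled}.

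With this identity in hand, the plan is to prove by induction on $\ell(D)$ that any word $D$ with $\ell(D)\le d$ admits an operator decomposition
$$Dg \;=\; \sum_{\alpha} c_\alpha\,\prod_{j=1}^{m_\alpha}\bigl(E^\alpha_j\hat f_i\bigr)\,F^\alpha g,$$
where the number of summands, the constants $|c_\alpha|$, and the multiplicities $m_\alpha$ are bounded by functions of $d$ alone; each $F^\alpha$ is a normal-form word $\hat Z_i^{k}\hat\partial_i^{j}$ with $k+j\le \ell(D)$; and each $E^\alpha_j\in\{\hat\partial_i,\hat Z_i\}^*$ is a word of length at most $d$ that contains at least one occurrence of $\hat Z_i$. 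In the induction step $D=\hat Z_i D'$ or $D=\hat\partial_i D'$, I would apply Leibniz to the inductive decomposition of $D'g$: differentiating a coefficient $(E^\alpha_j\hat f_i)$ merely prepends one letter to $E^\alpha_j$, preserving both the length bound and the presence of a $\hat Z_i$; differentiating $F^\alpha$ by $\hat Z_i$ keeps normal form; and differentiating $F^\alpha$ by $\hat\partial_i$ requires pushing the $\hat\partial_i$ past the $\hat Z_i$'s of $F^\alpha$ via the key identity, each such commutation generating one new factor of the form $(\hat Z_i\hat f_i)$, which again contains a $\hat Z_i$.

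Once the decomposition is established, the lemma follows immediately on setting $g=\hat\nu_i$: since every $E^\alpha_j\notin\{\id,\hat\partial_i\}$, we have $\|E^\alpha_j\hat f_i\|_\infty\le \|\hat f_i\|_{W'_{i,d}}<1$, while $\|F^\alpha\hat\nu_i\|_\infty\lesssim_d 1$ by Lemma~\ref{lem:standard-form-bounds}; hence
$$\|D\hat\nu_i\|_\infty \;\le\; \sum_\alpha |c_\alpha|\,\|\hat f_i\|_{W'_{i,d}}^{m_\alpha}\|F^\alpha\hat\nu_i\|_\infty \;\lesssim_d\; 1,$$
giving $\|\hat\nu_i\|_{W_{i,d}}\le c_1$ for a constant $c_1$ depending only on $d$. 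The main obstacle is the combinatorial bookkeeping in the induction: verifying throughout the unfolding that the invariant \emph{``every coefficient word $E^\alpha_j$ contains at least one $\hat Z_i$''} is preserved by every Leibniz expansion and every commutation. This invariant is precisely what makes the hypothesis $\|\hat f_i\|_{W'_{i,d}}<1$ (which excludes $\id$ and $\hat\partial_i$, the two derivatives of $\hat f_i$ for which no small bound is available) applicable to each factor, and it is the reason the statement can afford the weaker $W'_{i,d}$ control on $\hat f_i$ rather than full $W_{i,d}$ control.
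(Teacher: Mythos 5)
Your proposal is correct and rests on precisely the same two ingredients as the paper's proof of Lemma~\ref{lem:induct-eta}: the commutator identity $[\hat\partial_i,\hat Z_i]=(\hat Z_i\hat f_i)\,\hat Z_i$, which reduces a general word to a sum over normal-form words $\hat Z_i^k\hat\partial_i^j$ times coefficients built from derivatives of $\hat f_i$, together with the $L_\infty$ bounds on $\hat Z_i^k\hat\partial_i^j\hat\nu_i$ supplied by Lemma~\ref{lem:standard-form-bounds}. The only real difference is bookkeeping: the paper's Lemma~\ref{l:D-reorder} runs an induction on word length carrying abstract coefficients $g_{j,k}(D)$ bounded in the Sobolev norm $W_{i,d-l}$ (using the algebra property of Lemma~\ref{lem:op-norm} and the single estimate $\|\hat Z_i\hat f_i\|_{W_{i,d-1}}\le 1$), whereas you make the coefficients explicit as products $\prod_j(E^\alpha_j\hat f_i)$ and track the invariant that each $E^\alpha_j$ ends in $\hat Z_i$. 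Both correctly exploit the fact that the commutator term always produces a word $E$ with a $\hat Z_i$, so that the $W'_{i,d}$ hypothesis (which excludes only $\id$ and $\hat\partial_i$) covers every coefficient that arises; your accounting identity $\sum_j\ell(E^\alpha_j)+\ell(F^\alpha)=\ell(D)$ is the explicit form of the degree drop that the paper encodes by putting $g_{j,k}(D)$ in $W_{i,d-l}$.
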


To bound $\|\hat{f}_{i+1}\|_{W_{i+1,d}'}$, we compare $\|\cdot\|_{W_{i+1,d}'}$ and $\|\cdot\|_{W_{i,d}}$.
\begin{lemma}\label{lem:induct-f}
  For any $d>0$, if $\rho$ is sufficiently large, then for any $i$ and any $g\in W_{i,d}'$,
  \begin{equation}\label{eq:induct-f-1}
    \|g\|_{W_{i+1,d}'} \lesssim_d (1 + \|\hat{\nu}_i\|_{W_{i,d}})^d \rho^{-2} \|g\|_{W_{i,d}'}.
  \end{equation}
  In particular, if $\hat{\nu}_i$ satisfies \eqref{eq:induct-eta}, then there is a $c_2>0$ depending only on $d$ such that $\|g\|_{W_{i+1,d}'} \le c_2 \rho^{-2} \|g\|_{W_{i,d}'}.$
\end{lemma}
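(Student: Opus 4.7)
The plan is to expand operators built from $\hat{\partial}_{i+1}$ and $\hat{Z}_{i+1}$ into operators built from $\hat{\partial}_i$ and $\hat{Z}_i$. Using $\nabla_{f_{i+1}} = \nabla_{f_i} - \nu_i Z$ together with $r_{i+1} = \rho^{-1}r_i$ and the definitions \eqref{eq:def-rescaled}, one computes the two key identities
\begin{equation*}
  \hat{\partial}_{i+1} = \rho^{-1}\bigl(\hat{\partial}_i - \hat{\nu}_i\,\hat{Z}_i\bigr), \qquad \hat{Z}_{i+1} = \rho^{-2}\hat{Z}_i.
\end{equation*}
Let $D$ be a word of length $\ell \le d$ in the letters $\{\hat{\partial}_{i+1}, \hat{Z}_{i+1}\}$ with $D \notin \{\id, \hat{\partial}_{i+1}\}$, containing $a$ copies of $\hat{\partial}_{i+1}$ and $b$ copies of $\hat{Z}_{i+1}$. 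Substituting the identities above writes $D$ as $\rho^{-(a+2b)}$ times a sum of compositions of atomic first-order operators drawn from $\{\hat{\partial}_i,\,-\hat{\nu}_i\hat{Z}_i,\,\hat{Z}_i\}$. The hypothesis $D \notin \{\id, \hat{\partial}_{i+1}\}$ forces $b \ge 1$ or $a \ge 2$, so $a + 2b \ge 2$ and we gain a uniform scalar prefactor of at most $\rho^{-2}$.

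For each such atomic composition I would apply the Leibniz rule repeatedly to push every coefficient $\hat{\nu}_i$ to the outside, rewriting $Dg$ as a sum of terms of the form $\rho^{-(a+2b)}\,c_F\cdot Fg$. Here $F$ is a word in $\{\hat{\partial}_i, \hat{Z}_i\}$ obtained from the atomic composition by deleting the \emph{consumed} derivatives, that is, those assigned by the product rule to differentiate some $\hat{\nu}_i$ rather than to act on $g$; and $c_F$ is a product of at most $\ell \le d$ derivatives of $\hat{\nu}_i$, each of order at most $\ell$. Each factor is then bounded by $\|\hat{\nu}_i\|_{W_{i,d}}$, giving $|c_F|\le(1+\|\hat{\nu}_i\|_{W_{i,d}})^d$, while the number of terms produced is $O_d(1)$. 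Combining these estimates would yield \eqref{eq:induct-f-1}, \emph{provided} that each $\|Fg\|_\infty$ can be bounded by $\|g\|_{W_{i,d}'}$.

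The main obstacle is precisely this last point: verifying that $F \notin \{\id, \hat{\partial}_i\}$ in every term of the expansion. The key combinatorial observation is that a derivative originating at position $j$ of the atomic composition can only be consumed by an $\hat{\nu}_i$ located at a strictly later position $k > j$. Consequently the derivative at the rightmost position $\ell$ is never consumed, so $F$ is nonempty and $F \neq \id$. If the atomic operator at position $\ell$ has the form $\hat{Z}_i$ or $\hat{\nu}_i\hat{Z}_i$, its surviving derivative is $\hat{Z}_i$, so the last letter of $F$ is $\hat{Z}_i\neq\hat{\partial}_i$. In the remaining case the position-$\ell$ atom is $\hat{\partial}_i$, and then no $\hat{\nu}_i$ sits at position $\ell$; the derivative at position $\ell-1$ therefore also cannot be consumed (all surviving $\hat{\nu}_i$'s lie at positions $\le\ell-1$), forcing $\ell(F)\ge 2$ and thus $F\neq\hat{\partial}_i$. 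This combinatorial argument, together with the $\rho^{-2}$ prefactor and the coefficient bound on $c_F$, yields \eqref{eq:induct-f-1}; the statement about $c_2$ in the final sentence is then immediate once $(1+c_1)^d$ is absorbed into a constant depending only on $d$.
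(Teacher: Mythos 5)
Your proof is correct, and it takes a genuinely different route from the paper's, though both hinge on the same combinatorial observation. The paper proves the result without ever invoking Leibniz on $\hat{\nu}_i$: after the same initial expansion into words $D_j \in \{\hat{\partial}_i,\hat{\nu}_i,\hat{Z}_i\}^*$, it factors $D_j = D_j' E_j$ with $E_j \in \{\hat{Z}_i,\hat{\partial}_i^2,\hat{Z}_i\hat{\partial}_i\}$ and composes \emph{operator norms}: since $E_j \not\in \{\id,\hat{\partial}_i\}$, one has $\|E_j\|_{W'_{i,d}\to W_{i,d-\ell(E_j)}}\le 1$, and each remaining letter has operator norm $\lesssim_d 1+\|\hat{\nu}_i\|_{W_{i,d}}$ acting $W_{i,k}\to W_{i,k-1}$ (or $W_{i,k}\to W_{i,k}$ for $\hat{\nu}_i$ as a multiplier), using the paper's Lemma~\ref{lem:op-norm}. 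Your version instead expands $D_j g$ via Leibniz into $\sum_F c_F \cdot Fg$ and then must check $F\not\in\{\id,\hat{\partial}_i\}$ for every surviving word $F$. What you gain is an explicit decomposition; what you pay is the bookkeeping of which derivatives are consumed. The key fact enabling both arguments is the same: $\hat{\nu}_i$ only enters the alphabet as the left (outer) factor of the block $\hat{\nu}_i\hat{Z}_i$, so the operator applied closest to $g$ at the tail of $D_j$ can never be a bare multiplication, and the right end of $D_j$ is either $\hat{Z}_i$, $\hat{Z}_i\hat{\partial}_i$, or $\hat{\partial}_i\hat{\partial}_i$. Your combinatorial argument (the rightmost derivative is never consumed; if the atom at position $\ell$ is $\hat{\partial}_i$ then the one at $\ell-1$ is also never consumed since there is no $\hat{\nu}_i$ to its right) is a correct, if slightly heavier, encoding of this; the paper gets the same effect more economically by simply reading off $E_j$ from the last one or two letters of $D_j$ before applying any Leibniz at all.
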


Given these lemmas, we prove Proposition~\ref{prop:deriv-bounds} by induction on $i$.
\begin{proof}[Proof of Proposition~\ref{prop:deriv-bounds}]
  Let $c_1>1$ and $c_2$ be as in Lemma~\ref{lem:induct-eta} and Lemma~\ref{lem:induct-f}. Let $c_3 = 2 c_1 c_2$. We claim that if $\rho$ is sufficiently large, then $\|\hat{\nu}_i\|_{W_{i,d}} \le c_1$ and $\|\hat{f}_i\|_{W_{i,d}'} \le c_3 \rho^{-1}$ for all $i\ge 0$. We suppose that $\rho > c_3$ so that this bound implies $\|\hat{f}_i\|_{W_{i,d}'} < 1$.

  We proceed by induction on $i$. When $i=0$, we have $f_0=0$, so $\|\hat{f}_0 \|_{W'_{0,d}}=0$. 
  Suppose that $i\ge 0$ and $\|\hat{f}_i\|_{W_{i,d}'} \le c_3 \rho^{-1} < 1$. Lemma~\ref{lem:induct-eta} implies that $\|\hat{\nu}_{i}\|_{W_{i,d}} \le c_1$. By Lemma~\ref{lem:induct-f},
  \begin{multline*}
    \|\hat{f}_{i+1}\|_{W_{i+1,d}'} \le A r_{i+1}^{-1} c_2 \rho^{-2} \|f_i+\nu_i\|_{W_{i,d}'} \\ \le c_2 \rho^{-1}(\|\hat{f}_i\|_{W_{i,d}'} + \|\hat{\nu}_i\|_{W_{i,d}}) \le c_2 c_3 \rho^{-2} + c_1 c_2 \rho^{-1} \le 2c_1 c_2 \rho^{-1}.
  \end{multline*}
  That is, $\|\hat{f}_{i+1}\|_{W_{i+1,d}'}\le c_3 \rho^{-1}$. By induction, $\|\hat{\nu}_i\|_{W_{i,d}} \le c_1$ and $\|\hat{f}_i\|_{W_{i,d}'} \le c_3 \rho^{-1}$ for all $i\ge 0$.
\end{proof}

In the following subsections, we will prove these lemmas.

\subsection{Proof of Lemma~\ref{lem:standard-form-bounds}}
Recall that in Section~\ref{sec:construction}, we defined $\nu_i$ using maps $R_{i,j}\from [0,A r_i] \times [0,r_i^2] \to Q_{i,j}$ for $j=1,\dots, A^{-1} r_i^{-3}$ and a subset $J_i\subset \{1,\dots, A^{-1} r_i^{-3}\}$. For each $j\in J_i$, $Q_{i,j}$ is a pseudoquad for $\Gamma_{f_{i}}$ and $R_{i,j}$ is the parametrization of $Q_{i,j}$ that satisfies $\partial_s[R_{i,j}(s,t)] = \nabla_{f_i}$ and $\partial_t[R_{i,j}(0,t)] = Z$. The $Q_{i,j}$'s have disjoint interiors and their union is the unit square $U=[0,1]\times\{0\}\times [0,1]$. 

We then define $\nu_i$ and $f_i$ by letting $\kappa\from [0,1]^2\to \R$ be a bump function, 
\begin{equation}\label{eq:def-kappa-2}
  \kappa_{i,j}(R_{i,j}(s,t)) = A^{-1}r_i \kappa(A^{-1}r_i s, r_i^{-2} t),
\end{equation}
$\nu_i=\sum_{j\in J_i} \kappa_{i,j}$, and $f_{i+1}=f_i+\nu_i$. Let $S_{i} = \bigcup_{j\not\in J_i} Q_{i,j}$, so that $f_i|_{S_i} = f_{i+1}|_{S_i}$. Recall that $S_0=\emptyset$ and that $S_i\subset S_{i+1}$ for all $i$.

  We prove Lemma~\ref{lem:standard-form-bounds} by induction on $i$.  We will show that if
  \eqref{e:f-sobolev-bound} holds for some $i\ge 0$, then \eqref{e:eta-sobolev-bound} holds for $i$ and \eqref{e:f-sobolev-bound} holds for $i+1$. Since $\nu_i=0$ and $f_{i+1}=f_i$ on $S_{i}$, it suffices to prove that \eqref{e:eta-sobolev-bound} and \eqref{e:f-sobolev-bound} hold on  $Q_{i,j}$ for all $j\in J_i$. 
 
  First, we restate \eqref{e:eta-sobolev-bound} and \eqref{e:f-sobolev-bound} in terms of \emph{flow coordinates} on $Q_{i,j}$. Let $j\in J_i$ and define coordinates $(s,t)$ on $Q_{i,j}$ by letting $(s(v), t(v)) = R_{i,j}^{-1}(v)$ for all $v\in Q_{i,j}$. Then $\frac{\partial}{\partial s} = \nabla_{f_i} = \partial_i$ on $Q_{i,j}$.
  We define rescaled coordinate systems on $Q_{i,j}$ by $(\hat{s}, \hat{t}) = (A^{-1} r_i^{-1} s, r_i^{-2} t)$ and $(\hat{x}_i, \hat{z}_i) = (A^{-1} r_i^{-1} x, r_i^{-2} z)$, so that $0\le \hat{s}\le 1$ and $0\le \hat{t}\le 1$ on $Q_{i,j}$. 
  Let $\hat{f}_i=A r_i^{-1} f_i$ and $\hat{\nu}_i=A r_i^{-1} \nu_i$ as in \eqref{eq:def-rescaled}.
  Let $\hat{S} := \frac{\partial}{\partial \hat{s}} = \hat{\partial}_i$ and $\hat{T} := \frac{\partial}{\partial \hat{t}}$. 
  By \eqref{eq:def-kappa-2}, we have 
  $$\hat{\nu}_i(R_{i,j}(A r_i \hat{s}, r_i^2\hat{t})) = A r_i^{-1}\kappa_{i,j_0}(R_{i,j}(s,t)) = \kappa(\hat{s}, \hat{t}),$$
  so $\|\hat{T}^k \hat{S}^l \hat{\nu}_i\|_\infty = \|\partial_{lk} \kappa \|_\infty \lesssim_{k,l} 1,$
  where $\partial_{lk} \kappa$ is the appropriate partial derivative of $\kappa$.
  With this notation, \eqref{e:eta-sobolev-bound} and \eqref{e:f-sobolev-bound} can be stated as
  \begin{align}\tag{F$_{i}$}
    \|\hat{Z}_{i}^k \hat{f}_{i}\|_{L_\infty(Q_{i,j})} & \le 2\rho^{-1} &&\text{for any $j\in J_i$ and $1\le k \le d$,}\\
    \tag{H$_{i}$}
    \|\hat{Z}_{i}^k \hat{S}^m \hat{\nu}_i\|_{L_\infty(Q_{i,j})} &\lesssim_{k,m} 1 &&\text{for any $j\in J_i$, $0\le k \le d$, and $0\le m\le d$.}
  \end{align}
  
  To prove (F$_i$) and (H$_i$), we will need some bounds from \cite{NY2}. The bounds in \cite{NY2} apply to $\psi_i$ rather than $f_i$, but for each $v\in U$, there is an $i'\le i$ such that $f_i$ and $\psi_{i'}$ agree on a neighborhood of $v$. Therefore, by Lemma~3.10 of \cite{NY2}, 
  $$\left\|\frac{\partial f_{i}}{\partial z}\right\|_{\infty} \le \max_{i'\le i} \left\|\frac{\partial \psi_{i'}}{\partial z}\right\|_\infty \le \max_{i'\le i} 2\rho^{i'-1} \le 2\rho^{i-1},$$
  and
  \begin{equation}\label{eq:z-t-bound}
    \frac{\partial \hat{z}_i}{\partial \hat{t}} = \left(\frac{\partial \hat{t}}{\partial \hat{z}_i}\right)^{-1} = \frac{\partial z}{\partial t} \in \left[\frac{3}{4},\frac{4}{3}\right].
  \end{equation}
  Thus
  \begin{equation}\label{eq:first-deriv-f}
    \left\|\hat{Z}_i \hat{f}_i\right\|_{\infty} = \frac{A r_i^{-1}}{r_i^{-2}} \left\|\frac{\partial f_i}{\partial z}\right\|_\infty \le 2 r_i A \rho^{i-1} = 2\rho^{-1}.
  \end{equation}
  
  Suppose by induction on $i$ that (F$_{i}$) holds for some $i\ge 0$. Note that $f_0=0$, so (F$_0$) holds. For $i\ge 0$ and $1\le d_0\le d$, let (P$_{i,d_0}$) be the statement
  \begin{equation}\tag{P$_{i,d_0}$}
    \left\|\frac{\partial^k \hat{z}_i}{\partial \hat{t}^k}\right\|_\infty \lesssim \rho^{-1} \text{ for $2\le k \le d_0$.}
  \end{equation}
  We will show that (F$_i$) implies (P$_{i,d}$) and use (P$_{i,d}$) to prove (H$_i$) and (F$_{i+1}$). We must be careful to ensure that the implicit constants in (P$_{i,d}$) and (H$_i$) are independent of $i$.
  
  Suppose that (P$_{i,d_0}$) holds for some $1\le d_0< d$; note that (P$_{i,1}$) is vacuous. In \cite[Lemma 3.10]{NY2}, it was calculated that
  \begin{equation}\label{eq:logarithmic integral}
    \frac{\partial z}{\partial t}=\exp \left(-\int_0^{s} \frac{\partial f_i}{\partial z} \big(R_{i,j}(\sigma, t)\big) \ud \sigma\right)
  \end{equation}
  and
  \begin{equation}\label{eq:logarithmic integral2}
    \frac{\partial^2 z}{\partial t^2} = -\frac{\partial z}{\partial t}\int_0^{s} \frac{\partial}{\partial t}\left[\frac{\partial f_i}{\partial z_i}\right] \ud \sigma,
  \end{equation}
  where the integrand is evaluated at $R_{i,j}(\sigma, t)$. Thus
  \begin{equation}\label{eq:logarithmic integral2 rescaled}
    \frac{\partial^2 \hat{z}_i}{\partial \hat{t}^2} = - r_i^2 \frac{\partial \hat{z}_i}{\partial \hat{t}}\int_0^{\hat{s}} r_i^{-2} \frac{\partial}{\partial \hat{t}}\left[\frac{A^{-1} r_i}{r_i^2}\frac{\partial \hat{f}_i}{\partial \hat{z}_i}\right] A r_i \ud \hat{\sigma} = - \frac{\partial \hat{z}_i}{\partial \hat{t}}\int_0^{\hat{s}}  \frac{\partial}{\partial \hat{t}}\left[\frac{\partial \hat{f}_i}{\partial \hat{z}_i}\right] \ud \hat{\sigma},
  \end{equation}
  where the integrand is evaluated at $R_{i,j}(A r_i \hat{\sigma}, t)$. By the product rule, for $k\ge 2$,
  \begin{align}\label{eq:log-intk}
    \frac{\partial^k \hat{z}_i}{\partial \hat{t}^k} = - 
    \sum_{j=1}^{k-1} \binom{k-2}{j-1} \frac{\partial^{j} \hat{z}_i}{\partial \hat{t}^{j}} \int_0^{\hat{s}} \frac{\partial^{k-j}}{\partial \hat{t}^{k-j}}\left[\frac{\partial \hat{f}_i}{\partial \hat{z}_i}\right]\ud \hat{\sigma}.
  \end{align}
  
  Since $\frac{\partial}{\partial \hat{t}} = \frac{\partial \hat{z}_i}{\partial \hat{t}} \frac{\partial}{\partial \hat{z}_i}$, an inductive argument (or the Fa\`{a} di Bruno formula) shows that there are constants $c_{\mathbf{a},n}$ such that
  $$\frac{\partial^n}{\partial \hat{t}^n} = \left(\frac{\partial \hat{z}_i}{\partial \hat{t}} \right)^n  \frac{\partial^n}{\partial \hat{z}_i^n} + \sum_{j=1}^{n-1} \sum_{\substack{\mathbf{a} \in \N^j, \\ \|a\|_1 = n}} c_{\mathbf{a},n} \left(\prod_{\ell=1}^{j} \frac{\partial^{a_\ell} \hat{z}_i}{\partial \hat{t}^{a_\ell}} \right) \frac{\partial^j}{\partial \hat{z}_i^j}.$$
  Suppose that $n\le d_0$. By (P$_{i,d_0}$) and \eqref{eq:z-t-bound}, $\frac{\partial^{m} \hat{z}_i}{\partial \hat{t}^{m}} \lesssim 1$ if $m=1$ and $\frac{\partial^{m} \hat{z}_i}{\partial \hat{t}^{m}} \lesssim \rho^{-1}$ if $2\le m \le d_0$. For each $\mathbf{a}$ in the sum, the coefficients of $\mathbf{a}$ are between $1$ and $d_0$, and not all of them are $1$. Therefore,
  $\prod_{\ell=1}^{j} \frac{\partial^{a_\ell} \hat{z}_i}{\partial \hat{t}^{a_\ell}}\lesssim_{d_0} \rho^{-1}$, and
  \begin{equation}\label{eq:pdt-pdz}
    \frac{\partial^n}{\partial \hat{t}^n} = \left(\frac{\partial \hat{z}_i}{\partial \hat{t}} \right)^n  \frac{\partial^n}{\partial \hat{z}_i^n} + \sum_{j=1}^{n-1} O_{d_0}(\rho^{-1}) \frac{\partial^j}{\partial \hat{z}_i^j}.
  \end{equation}
  We apply this to \eqref{eq:log-intk}.
  By (F$_i$), when $1\le n\le d_0$,
  \begin{equation}\label{eq:ptpsi}
    \left|\frac{\partial^{n}}{\partial \hat{t}^{n}}\left[\frac{\partial \hat{f}_i}{\partial \hat{z}_i}\right]\right| \le \left|\left(\frac{\partial \hat{z}_i}{\partial \hat{t}} \right)^n \frac{\partial^{n+1}\hat{f}_i}{\partial \hat{z}_i^{n+1}}\right| + \sum_{j=1}^{n-1} O_{d_0}(\rho^{-1}) \left|\frac{\partial^{j+1}\hat{f}_i}{\partial \hat{z}_i^{j+1}}\right| \lesssim_{d_0} \rho^{-1}.
  \end{equation}
  By (P$_{i,d_0}$), \eqref{eq:log-intk}, and \eqref{eq:ptpsi},
  $$\left|\frac{\partial^{d_0+1} \hat{z}_i}{\partial \hat{t}^{d_0+1}}\right| \lesssim_{d_0}  \sum_{j=1}^{d_0} \left|\frac{\partial^{j} \hat{z}_i}{\partial \hat{t}^{j}}\right| \int_0^{\hat{s}} \left|\frac{\partial^{d_0+1-j}}{\partial \hat{t}^{d_0+1-j}}\left[\frac{\partial \hat{f}_i}{\partial \hat{z}_i}\right]\right|\ud \hat{\sigma} \lesssim_{d_0} \sum_{j=1}^{d_0} |\hat{s}| \rho^{-1} \lesssim_{d_0} \rho^{-1}.$$
  That is, (F$_i$) and (P$_{i,d_0}$) imply (P$_{i,d_0+1}$). By induction, this implies (P$_{i,d}$). Furthermore, the implicit constant in (P$_{i,d_0+1}$) depends only on $d$ and the implicit constant in (P$_{i,d_0}$), so the implicit constant in (P$_{i,d}$) depends only on $d$.
  
  Consequently, \eqref{eq:pdt-pdz} holds for all $1\le n\le d$. Solving the resulting system of equations for $\frac{\partial^n}{\partial \hat{z}^n_i}$, we obtain
  $$\frac{\partial^n}{\partial \hat{z}^n_i} = \left(\frac{\partial \hat{t}}{\partial \hat{z}_i} \right)^n  \frac{\partial^n}{\partial \hat{t}^n} + \sum_{\ell=1}^{n-1} O_d(\rho^{-1}) \frac{\partial^\ell}{\partial \hat{t}^\ell}.$$
  In particular, for $m,k \in\{1,\dots, d\}$,
  \begin{multline*}
    \left|\hat{Z}^k_i \hat{S}^m \hat{\nu}_i\right| \le \left|\left(\frac{\partial \hat{t}}{\partial \hat{z}_i} \right)^k \hat{T}^k \hat{S}^m\hat{\nu}_i \right| + \sum_{\ell=1}^{k-1} O_d(\rho^{-1}) \left|\hat{T}^\ell \hat{S}^m \hat{\nu}_i\right| \\
    \lesssim_d  \left\|\frac{\partial \hat{t}}{\partial \hat{z}_i} \right\|_\infty^k \|\partial_{mk} \kappa\|_\infty + \sum_{\ell=1}^{k-1} \rho^{-1} \|\partial_{m\ell} \kappa\|_\infty\lesssim_d 1.
  \end{multline*}
  This proves (H$_i$), with implicit constant depending only on $d$.
  
  It remains to prove (F$_{i+1}$). Suppose that $\rho>2$.
  By \eqref{eq:first-deriv-f}, we have $\|\hat{Z}_{i+1} \hat{f}_{i+1} \|_\infty \le 2\rho^{-1}$.
  Since
  $$\hat{f}_{i+1} = A r_{i+1}^{-1} \sum_{j=0}^{i} \nu_j = A r_{i+1}^{-1} \sum_{j=0}^{i} A^{-1}r_j\hat{\nu}_j = \sum_{j=0}^{i} \rho^{i+1-j} \hat{\nu}_j = \sum_{m=1}^{i+1} \rho^{m} \hat{\nu}_{i+1-m},$$
  and $\hat{Z}_{i+1} = \rho^{-2m}\hat{Z}_{i+1-m}$, for $2\le k \le d$,
  $$\left\|\hat{Z}^k_{i+1} \hat{f}_{i+1} \right\|_\infty \le \sum_{m=1}^{i+1} \left\|\rho^{-2km}\hat{Z}^k_{i+1-m}[\rho^{m} \hat{\nu}_{i+1-m}]\right\|_\infty \lesssim_d \sum_{m=1}^{i+1} \rho^{(1-2k)m} \le 2 \rho^{1-2k}.$$
  That is, there is a $c>0$ depending only on $d$ such that $\|\hat{Z}^k_{i+1} \hat{f}_{i+1} \|_\infty\le c \rho^{-3}$. We take $\rho>\sqrt{c}$, so that $\|\hat{Z}^k_{i+1} \hat{f}_{i+1} \|_\infty \le c \rho^{-3} \le 2\rho^{-1}$ for all $2\le k \le d$. This proves (F$_{i+1}$). By induction, (F$_i$) and (H$_i$) hold for all $i$.
\subsection{Proof of Lemmas~\ref{lem:induct-eta} and \ref{lem:induct-f}}

First, we prove Lemma~\ref{lem:induct-eta} by rewriting words $D\in \{\hat{\partial}_i,\hat{Z}_i\}$ as sums of operators of the form $\hat{Z}_i^k \hat{\partial}_i^j$. 

\begin{lemma} \label{l:D-reorder}
  Let $d>0$. Suppose that $\|f_i\|_{W_{i,d}'}<A^{-1} r_i$. 

  For any $0\le l<d$, there is a $c_l>0$ such that any word $D\in \{\hat{\partial}_i, \hat{Z}_i\}^{l}$ can be written as
  \begin{equation}\label{eq:std-form-2}
    D = \sum_{j + k \leq l} g_{j,k}(D) \hat{Z}_i^k \hat{\partial}_i^j,
  \end{equation}
  where for all $j$ and $k$, $g_{j,k}(D)$ is a smooth function such that $\|g_{j,k}(D)\|_{W_{i,d-l}}\le c_l$. 
\end{lemma}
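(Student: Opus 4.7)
The proof will go by induction on $l$, with the base case $l=0$ immediate by setting $g_{0,0}(\mathrm{id}) = 1$ and all other $g_{j,k}$ to zero. The central ingredient is the commutator identity
\[ [\hat{\partial}_i, \hat{Z}_i] = (\hat{Z}_i \hat{f}_i) \cdot \hat{Z}_i, \]
which I would verify by computing $[\nabla_{f_i}, Z] = (Zf_i) Z$ in the Heisenberg Lie algebra (using $[X_L, Z] = 0$ together with $[(y-f_i) Z, Z] = (Zf_i) Z$) and rescaling via \eqref{eq:def-rescaled}.

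For the inductive step, I would write a word $D \in \{\hat{\partial}_i, \hat{Z}_i\}^{l+1}$ as $D = E \cdot D'$ with $\ell(D')=l$ and $E \in \{\hat{\partial}_i, \hat{Z}_i\}$, invoke the inductive hypothesis to expand $D' = \sum g_{j,k}(D') \hat{Z}_i^k \hat{\partial}_i^j$, and then distribute $E$. If $E = \hat{Z}_i$, the operator Leibniz rule $\hat{Z}_i g = (\hat{Z}_i g) + g \hat{Z}_i$ immediately produces two terms already in standard form. If $E = \hat{\partial}_i$, one more Leibniz application separates off $(\hat{\partial}_i g)$ plus $g\, \hat{\partial}_i \hat{Z}_i^k \hat{\partial}_i^j$, after which the commutator identity is iterated to move $\hat{\partial}_i$ past $\hat{Z}_i^k$, giving
\[ \hat{\partial}_i \hat{Z}_i^k = \hat{Z}_i^k \hat{\partial}_i + R_k, \]
where $R_k$ is a sum of terms of the form (polynomial in derivatives of $\hat{f}_i$) $\cdot\, \hat{Z}_i^{k'}$ with $k' \le k$. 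Each piece then takes the form $g_{j',k'}(D)\, \hat{Z}_i^{k'} \hat{\partial}_i^{j'}$ with $j' + k' \le l + 1$.

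The structural invariant I would propagate is that every coefficient $g_{j,k}(D)$ is a polynomial (with integer coefficients depending only on $l$ and $d$) in expressions of the form $\hat{F} \hat{f}_i$, where $\hat{F} \in \{\hat{\partial}_i, \hat{Z}_i\}^*$ always \emph{begins with} $\hat{Z}_i$, and in particular $\hat{F} \notin \{\mathrm{id}, \hat{\partial}_i\}$, and $\ell(\hat{F}) \le \ell(D)$. This invariant is automatic in the Leibniz step and is preserved in the commutator step because the identity $[\hat{\partial}_i, \hat{Z}_i] = (\hat{Z}_i \hat{f}_i) \hat{Z}_i$ always inserts a $\hat{Z}_i$ at the left of every derivative of $\hat{f}_i$ that appears. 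To obtain the quantitative bound $\|g_{j,k}(D)\|_{W_{i,d-l-1}} \le c_{l+1}$, I would apply any word $F$ of length $\le d-l-1$ to such a polynomial using the Leibniz rule together with the same commutator identity to normalize each new derivative of $\hat{f}_i$ into $W'$-form; each factor that emerges is of the form $\hat{F}' \hat{f}_i$ with $\hat{F}'$ starting in $\hat{Z}_i$ and $\ell(\hat{F}') \le (l+1) + (d-l-1) = d$. Each such factor is therefore controlled in $L_\infty$ by $\|\hat{f}_i\|_{W'_{i,d}} < 1$, coming from the hypothesis $\|f_i\|_{W'_{i,d}} < A^{-1} r_i$, and the number of terms in the polynomial is a combinatorial constant depending only on $d$ and $l$.

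The main obstacle, and the reason the \emph{primed} norm appears in the hypothesis, is precisely to avoid uncontrolled factors of $\hat{f}_i$ or $\hat{\partial}_i \hat{f}_i$. The key delicate point is therefore to confirm that the cascade of Leibniz and commutator expansions never produces such a stray factor: every derivative of $\hat{f}_i$ generated by the rearrangement inherits a leading $\hat{Z}_i$ from the commutator identity, so it always belongs to the $W'$-regime in which the hypothesis supplies control. Once that invariant is verified, the remaining work is mechanical bookkeeping of the combinatorial constants, which yields the $c_l$ in the statement.
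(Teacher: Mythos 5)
Your proof is correct and follows the same overall architecture as the paper's: induction on the word length, decomposing $D$ by prepending one letter, the Leibniz rule, and the commutator identity $[\hat{\partial}_i,\hat{Z}_i]=(\hat{Z}_i\hat{f}_i)\hat{Z}_i$. The principal divergence is in how the coefficient bounds are obtained. The paper handles the residual commutator terms by a one-step reduction back to the already-treated case (``starts with $\hat{Z}_i$''), and controls all coefficients via the operator-norm estimates of Lemma~\ref{lem:op-norm} together with the single inequality $\|\hat{Z}_i\hat{f}_i\|_{W_{i,d-1}}\le\|\hat{f}_i\|_{W'_{i,d}}$; this keeps the bookkeeping abstract and short. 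You instead iterate the commutator to fully commute $\hat{\partial}_i$ past $\hat{Z}_i^k$, and you track an explicit structural invariant --- that every coefficient is a polynomial in factors $\hat F\hat{f}_i$ with $\hat F$ beginning in $\hat{Z}_i$ and $\ell(\hat F)\le\ell(D)$, the leading $\hat{Z}_i$ being what places each factor in the $W'$-regime supplied by the hypothesis. This invariant does hold and does give the stated bound, but it requires an extra normalization step each time a $\hat{\partial}_i$ hits a factor $(\hat{Z}_i\cdots)\hat{f}_i$ (otherwise $\hat{\partial}_i\hat{Z}_i\hat{f}_i$ momentarily fails to begin with $\hat{Z}_i$) --- whereas in the paper's framing, any word ending in $\hat{Z}_i$ already lies in the $W'$-class, so no re-normalization is needed. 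Your version buys a more explicit description of the coefficients at the cost of heavier bookkeeping; the paper's abstract operator-norm route is shorter and more robust. Your observation that the primed norm is in the hypothesis precisely to exclude uncontrolled factors $\hat{f}_i$ and $\hat{\partial}_i\hat{f}_i$ is the right heart of the matter.
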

We call the right side of \eqref{eq:std-form-2} the \emph{standard form} of $D$ and we call the $g_{j,k}(D)$'s the \emph{coefficients} of $D$.

The following lemma will be helpful in proving Lemma~\ref{l:D-reorder}. Let $\|\cdot\|_{K\to K'}$ denote the operator norm with respect to the norms $K$ and $K'$.
\begin{lemma}\label{lem:op-norm}
  For any $d\ge 0$,
  $\|\hat{Z}_i\|_{W'_{i,d}\to W_{i,d-1}} \le 1$ and $\|\hat{\partial}_i\|_{W_{i,d}\to W_{i,d-1}} \le 1$.
  For $g, h\in W_{i,d}$, $\|gh\|_{W_{i,d}}\lesssim_d \|g\|_{W_{i,d}} \|h\|_{W_{i,d}}$.
\end{lemma}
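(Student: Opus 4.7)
The lemma has three assertions. The first two operator-norm bounds are immediate from the definitions: if $D$ is any word in $\{\hat{\partial}_i,\hat Z_i\}$ of length at most $d-1$, then $D\hat Z_i$ is a word of length at most $d$ that ends in $\hat Z_i$, so in particular $D\hat Z_i\notin\{\id,\hat\partial_i\}$, and hence $\|D\hat Z_i g\|_\infty\le\|g\|_{W'_{i,d}}$. Taking the maximum over such $D$ gives $\|\hat Z_i g\|_{W_{i,d-1}}\le\|g\|_{W'_{i,d}}$. Similarly, $D\hat\partial_i$ is a word of length at most $d$, so $\|D\hat\partial_i g\|_\infty\le\|g\|_{W_{i,d}}$, which gives the second bound. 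So these require no work beyond unpacking the definitions.

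The product bound reduces to a Leibniz-rule argument. Both $\hat Z_i=r_i^2 Z$ and $\hat\partial_i=Ar_i(X+(y-f_i)Z)$ are smooth vector fields, hence act as derivations on products of smooth functions. I plan to prove by induction on the length $k=\ell(D)$ that for any word $D=V_1 V_2\cdots V_k$ in the letters $\{\hat\partial_i,\hat Z_i\}$,
\begin{equation*}
  D(gh) \;=\; \sum_{S\subseteq\{1,\dots,k\}} D_S(g)\cdot D_{S^c}(h),
\end{equation*}
where $D_S$ is the sub-word of $D$ obtained by retaining, in their original order, the letters in positions indexed by $S$. The inductive step is just the product rule: if the formula holds for $V_2\cdots V_k(gh)$, then applying $V_1$ to each term $D_S(g)\,D_{S^c}(h)$ yields $V_1D_S(g)\,D_{S^c}(h)+D_S(g)\,V_1D_{S^c}(h)$, which corresponds exactly to choosing whether or not to include the index $1$ in $S$.

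To conclude, I would simply estimate each summand. Since $D_S$ and $D_{S^c}$ are words of length at most $k\le d$ in $\{\hat\partial_i,\hat Z_i\}$, the definitions of the norms give $\|D_S g\|_\infty\le\|g\|_{W_{i,d}}$ and $\|D_{S^c} h\|_\infty\le\|h\|_{W_{i,d}}$. Summing the $2^k\le 2^d$ terms yields $\|D(gh)\|_\infty\le 2^d\|g\|_{W_{i,d}}\|h\|_{W_{i,d}}$, and taking the maximum over words $D$ of length at most $d$ gives $\|gh\|_{W_{i,d}}\lesssim_d \|g\|_{W_{i,d}}\|h\|_{W_{i,d}}$. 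There is no real obstacle here; the only mild subtlety is to verify that the Leibniz decomposition is correct despite the non-commutativity of the two vector fields, and this is handled by tracking the order of the letters in each sub-word.
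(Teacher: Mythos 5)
Your proof is correct and follows essentially the same argument as the paper: the operator bounds are unpacked from the definitions (the paper does not even spell out the observation that $D\hat Z_i$ lands outside $\{\id,\hat\partial_i\}$, which you do correctly), and the product bound uses the same ordered-subword Leibniz decomposition $D(gh)=\sum_{S} D_S(g)\,D_{S^c}(h)$ followed by the trivial bound on each of the $2^{\ell(D)}\le 2^d$ terms. The only cosmetic difference is that you spell out the induction proving the Leibniz formula, which the paper simply attributes to the product rule.
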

\begin{proof}
  The operator bounds on $\hat{Z}_i$ and $\hat{\partial}_i$ follow from the definitions of $W_{i,d}$ and $W'_{i,d}$.  Let $D \in \{\hat{Z}_i,\hat{\partial}_i\}^l$ with $0\le l\le d$ and suppose that $D = D_1 \cdots D_l$ where $D_i \in \{\hat{Z}_i,\hat{\partial}_i\}$.  Given a subset $A \subseteq \{1,...,l\}$, we let
  \begin{align*}
    D_A = D_{i_1} \cdots D_{i_j}
  \end{align*}
  where $i_1 < ... < i_j$ are the elements of $A$. By the product rule,
  \begin{align*}
    \|D(gh)\|_\infty &= \left\|\sum_{A \subseteq \{1,...,l\}} D_A(g) D_{A^c}(h)\right\|_\infty 
    \leq 2^d \|g\|_{W_{i,d}} \|h\|_{W_{i,d}}.
  \end{align*}
\end{proof}

We prove Lemma~\ref{l:D-reorder} by induction on $l$.
\begin{proof}[Proof of Lemma~\ref{l:D-reorder}]
  If $l=0$, then $D=\id$, so we can take $g_{0,0}(D)=1$ and $c_l=1$. Let $0\le l <d$ and suppose that the lemma holds for words of length $l$. Let $D\in \{\hat{\partial}_i, \hat{Z}_i\}^{l+1}$. Then $D=\delta D_0$ for some $\delta\in \{\hat{\partial}_i, \hat{Z}_i\}$ and $D_0 \in \{\hat{\partial}_i, \hat{Z}_i\}^{l}$, and there are coefficients $g_{j,k}=g_{j,k}(D_0)\from \HH \to \R$ such that 
  \begin{align*}
    D_0 = \sum_{j + k \leq l} g_{j,k} \hat{Z}_i^k \hat{\partial}_i^j,
  \end{align*}
  where $\|g_{j,k}\|_{W_{i,d-l}} \le c_l$ for all $j$ and $k$.

  First, we consider the case that $\delta=\hat{Z}_i$. Then
  \begin{align}\label{eq:Z-case}
    D= \hat{Z}_i D_0 = 
    \sum_{j + k \leq l} \hat{Z}_ig_{j,k} \cdot  \hat{Z}_i^k \hat{\partial}_i^j + \sum_{j + k \leq l} g_{j,k} \hat{Z}_i^{k+1} \hat{\partial}_i^j.
  \end{align}
  This sum is in standard form, and by Lemma~\ref{lem:op-norm}, $\|\hat{Z}_i g_{j,k}\|_{W_{i,d-l-1}} \le \|g_{j,k}\|_{W_{i,d-l}}\le c_l$. Moreover, $\|g_{j,k}\|_{W_{i,d-l-1}} \le \|g_{j,k}\|_{W_{i,d-l}}\le c_l$. Thus, the lemma holds for words of length $l+1$ that start with $\hat{Z}_i$.

  Second, we consider the case that $D=\hat{\partial}_i D_0$.  We have
  \begin{align}\label{eq:piD}
    D=\hat{\partial}_i D_0 = \sum_{j + k \leq l} \hat{\partial}_i g_{j,k} \cdot \hat{Z}_i^k \hat{\partial}_i^j + \sum_{j + k \leq l} g_{j,k} \hat{\partial}_i \hat{Z}_i^k \hat{\partial}_i^j =: \rI + \rII.
  \end{align}
  Then $\rI$ is already in standard form, and its coefficients satisfy $\|\hat{\partial}_i g_{j,k}\|_{W_{i,d-l-1}} \le \|g_{j,k}\|_{W_{i,d-l}}\le c_l$.
  To write $\rII$ in standard form, we use the identity
  $$\hat{\partial}_i\hat{Z}_i - \hat{Z}_i \hat{\partial}_i = Ar_i^3 [\partial_i,Z] = Ar_i^3 [X + (y - f_i)Z, Z] = Ar_i^3 Zf_i\cdot Z = \hat{Z}_i \hat{f}_i\cdot \hat{Z}_i.$$
  Since $\|f_i\|_{W'_{i,d}}\le A^{-1} r_i$, 
  \begin{equation}\label{eq:hatZf}
    \|\hat{Z}_i \hat{f}_i\|_{W_{i,d-1}} \le \|\hat{f}_i\|_{W'_{i,d}} \le A r_i^{-1} \|f_i\|_{W'_{i,d}} \le 1.
  \end{equation}
  
  Suppose that $E=g_{j,k} \cdot \hat{\partial}_i \hat{Z}_i^k \hat{\partial}_i^j$ is a summand of $\rII$. If $k=0$, there is nothing to do. Otherwise, if $k>0$, then
  $$E = g_{j,k} \cdot \hat{Z}_i\hat{\partial}_i \hat{Z}_i^{k-1} \hat{\partial}_i^j + g_{j,k} \cdot \hat{Z}_i\hat{f}_i \cdot \hat{Z}_i^k \hat{\partial}_i^j.$$
  The first term is a multiple of a word of length at most $l+1$ that starts with $\hat{Z}_i$. By the argument above it can be written in standard form, and by Lemma~\ref{lem:op-norm}, the norms of its coefficients are bounded by a function of $c_l$. The second term is already in standard form, and by Lemma~\ref{lem:op-norm} and \eqref{eq:hatZf}, its coefficient $g_{j,k} \cdot \hat{Z}_i\hat{f}_i$ is bounded by a function of $c_l$.
  
  Thus, $\rII$ can be written as a sum of terms in standard form. The coefficients of each term are bounded by a function of $c_l$ and there are at most $(l+1)^2$ terms, so $D=\rI+\rII$ can be written in standard form, with coefficients bounded by some $c_{l+1}$ that depends only on $l$.
\end{proof}

Lemma~\ref{lem:induct-eta} follows directly.
\begin{proof}[Proof of Lemma~\ref{lem:induct-eta}]
  Let $D \in \{\hat{Z}_i,\hat{\partial}_i\}^l$ for $l \leq d$.  By Lemma \ref{l:D-reorder}, we can write $D$ in standard form as
  \begin{align*}
    D = \sum_{j + k \leq l} g_{j,k} \hat{Z}_i^k \hat{\partial}_i^j,
  \end{align*}
  where $\|g_{j,k}\|_\infty \lesssim_d 1$.
  Then, by Lemma~\ref{lem:standard-form-bounds},
  \begin{multline*}
    \|D \nu_i\|_\infty = \left\| \sum_{j + k \leq l} g_{j,k} \hat{Z}_i^k \hat{\partial}_i^j \nu_i \right\|_\infty \leq \sum_{j + k \leq l} \|g_{j,k}\|_\infty \|\hat{Z}_i^k \hat{\partial}_i^j \nu_i\|_\infty \\ \overset{\eqref{e:eta-sobolev-bound}}{\lesssim_d} \sum_{j + k \leq l} A^{-i} r_i\lesssim_d A^{-1}r_i.
  \end{multline*}
\end{proof}

Finally, we prove Lemma~\ref{lem:induct-f}.
\begin{proof}[Proof of Lemma~\ref{lem:induct-f}]
  Note that
  \begin{equation}\label{eq:z-rescale}
    \hat{Z}_{i+1} = r_{i+1}^2 Z = \rho^{-2} r_{i}^2 Z = \rho^{-2}\hat{Z}_i
  \end{equation}
  and
  \begin{equation}\label{eq:p-rescale}
    \hat{\partial}_{i+1} = A r_{i+1}\partial_{i+1} = \rho^{-1} A r_{i}(\partial_i - \nu_{i} Z) = \rho^{-1}(\hat{\partial}_{i} - \hat{\nu}_i \hat{Z}_i).
  \end{equation}

  Let $0\le l\le d$ and let $D\in \{\hat{\partial}_{i+1}, \hat{Z}_{i+1}\}^l$ be a word of length $l$ such that $D\not\in \{\id, \hat{\partial}_{i+1}\}$. Let $n = 2 \# \hat{Z}_{i+1}(D) + \# \hat{\partial}_{i+1}(D)$ and note that $n \ge 2$.
  We replace $\hat{Z}_{i+1}$ by $\rho^{-2} \hat{Z}_{i}$ and $\hat{\partial}_{i+1}$ by $\rho^{-1}(\hat{\partial}_{i} - \hat{\nu}_i \hat{Z}_i)$ and distribute to get an expression $D = \rho^{-n} \sum_{j=1}^m \pm D_j$, where $D_j\in \{\hat{\partial}_{i}, \hat{\nu}_i, \hat{Z}_i\}^*$ for each $j=1,\dots,m$ and $m\le 2^l$. Furthermore, $l\le \ell(D_j) \le 2l$ for all $j$.
  
  If $\ell(D)=1$, then $D=\hat{Z}_{i+1}$, so $m=1$ and $D_1=\hat{Z}_{i}$. Otherwise, $\ell(D)\ge 2$ and $\ell(D_j)\ge 2$. Since every $\hat{\nu}_i$ in $D_j$ is followed by $\hat{Z}_i$, if $D_j$ ends in $\hat{\partial}_i$, then the previous letter is either $\hat{\partial}_i$ or $\hat{Z}_i$. That is, we can write $D_j=D'_j E_j$, where $E_j=\hat{Z}_i$, $E_j=\hat{\partial}_i^2$, or $E_j=\hat{Z}_i \hat{\partial}_i$. 
  
  Since $E_j\not\in \{\id, \hat{\partial}_i\}$, we have $\|E_j\|_{W'_{i,d}\to W_{i,d-\ell(E_j)}}\le 1$. By Lemma~\ref{lem:op-norm}, for any $0\le k\le d$, we have $\|\hat{\partial}_{i}\|_{W_{i,k}\to W_{i,k-1}} \le 1$,  $\|\hat{Z}_{i}\|_{W_{i,k}\to W_{i,k-1}} \le 1$, and
  $\|\hat{\nu}_{i}\|_{W_{i,k}\to W_{i,k}} \lesssim_d \|\hat{\nu}_{i}\|_{W_{i,d}}.$
  Let $L \lesssim_d 1 + \|\hat{\nu}_{i}\|_{W_{i,d}}$ be such that each letter of $D_j$ has operator norm at most $L$. Then
  $$\|D_j\|_{W'_{i,d}\to L_\infty} \le \|E_j\|_{W'_{i,d}\to W_{i,d-\ell(E_j)}} \|D_j\|_{W_{i,d-\ell(E_j)} \to W_{i,d-l}} \lesssim_d L^d.$$
  Therefore,
  $$\|D g\|_\infty \le \rho^{-n} \sum_{j=1}^m \|D_j\|_{W'_{i,d}\to L_\infty} \|g\|_{W'_{i,d}} \lesssim \rho^{-2} L^d \|g\|_{W'_{i,d}}.$$
  Since this holds for all $D\in \{\hat{\partial}_{i+1}, \hat{Z}_{i+1}\}^*$ such that $\ell(D)\le d$ and $D\not\in\{\id,\hat{\partial}_{i+1}\}$,
  $$\|g\|_{W'_{i+1,d}} \lesssim \rho^{-2} L^d \|g\|_{W'_{i,d}} \lesssim_d (1 + \|\hat{\nu}_i\|_{W_{i,d}})^d \rho^{-2}\|g\|_{W'_{i,d}},$$
  as desired.
\end{proof}
\bibliographystyle{alpha}
\bibliography{riesztr}
\end{document}